\DeclareMathOperator{\Spec}{Spec}
\DeclareMathOperator{\et}{et}
\DeclareMathOperator{\verifiant}{v\acute{e}rifiant}
\DeclareMathOperator{\rg}{rg}
\DeclareMathOperator{\nor}{nor}
\DeclareMathOperator{\Gal}{Gal}
\DeclareMathOperator{\Lip}{Lip}
\DeclareMathOperator{\eff}{eff}
\DeclareMathOperator{\Val}{Val}
\DeclareMathOperator{\Card}{Card}
\DeclareMathOperator{\Hom}{Hom}
\DeclareMathOperator{\PGCD}{pgcd}
\DeclareMathOperator{\Pic}{Pic}
\DeclareMathOperator{\di}{div}
\DeclareMathOperator{\card}{card}
\DeclareMathOperator{\dd}{d}
\DeclareMathOperator{\Eff}{Eff}
\DeclareMathOperator{\Vol}{Vol}
\newcommand{\CC}{\mathbf{C}}
\newcommand{\RR}{\mathbf{R}}
\newcommand{\ZZ}{\mathbf{Z}}
\newcommand{\NN}{\mathbf{N}}
\newcommand{\QQ}{\mathbf{Q}}
\newcommand{\xx}{\boldsymbol{x}}
\newcommand{\yy}{\boldsymbol{y}}
\newcommand{\kk}{\boldsymbol{k}}
\newcommand{\zz}{\boldsymbol{z}}
\newcommand{\qq}{\boldsymbol{q}}
\newcommand{\rr}{\boldsymbol{r}}
\newcommand{\ww}{\boldsymbol{w}}
\renewcommand{\tt}{\boldsymbol{t}}
\newcommand{\jj}{\boldsymbol{j}}
\renewcommand{\aa}{\boldsymbol{a}}
\newcommand{\aalpha}{\boldsymbol{\alpha}}
\newcommand{\0}{\boldsymbol{0}}
\newcommand{\ii}{\boldsymbol{i}}
\newcommand{\uu}{\boldsymbol{u}}
\newcommand{\vv}{\boldsymbol{v}}
\newcommand{\ee}{\boldsymbol{\varepsilon}}
\newcommand{\bb}{\boldsymbol{b}}
\newcommand{\ra}{\rightarrow}
\newcommand{\mt}{\mapsto}
\newcommand{\PP}{\mathbf{P}}
\renewcommand{\AA}{\mathbf{A}}
\newcommand{\OO}{\mathcal{O}}
\newcommand{\MM}{\mathcal{M}}
\newcommand{\BB}{\mathcal{B}}
\newcommand{\II}{\mathcal{I}}
\newcommand{\JJ}{\mathcal{J}}
 \newtheorem{thm}{Th\'eor\`eme}[section]
 \newtheorem{prop}[thm]{Proposition}
 \newtheorem{lemma}[thm]{Lemme}
 \newtheorem{cor}[thm]{Corollaire}
 \newtheorem{Def}[thm]{D\'efinition}
  \newtheorem{rem}[thm]{Remarque}
\begin{document}

\title{POINTS DE HAUTEUR BORN\'EE \\ SUR LES HYPERSURFACES LISSES \\
DES VARI\'ET\'ES TORIQUES}

\author{ Teddy Mignot}

\maketitle

\begin{abstract}
Nous d\'emontrons ici la conjecture de Batyrev et Manin pour le nombre de points de hauteur born\'ee des hypersurfaces de certaines vari\'et\'es toriques dont le rang du groupe de Picard est $ 2 $. La m\'ethode utilis\'ee est inspir\'ee de celle d\'evelopp\'ee par Schindler pour le cas des hypersurfaces de l'espace biprojectif, qui elle-m\^eme s'inspire de la m\'ethode du cercle de Hardy-Littlewood. La constante obtenue dans la formule asymptotique finale est exactement celle conjectur\'ee par Peyre. 
\end{abstract}

\tableofcontents

\section{Introduction}

On consid\`ere une vari\'et\'e torique compl\`ete lisse $ X=X(\Delta) $ de dimension $ n $ d\'efinie par le r\'eseau $ N=\ZZ^{n} $ et un \'eventail $ \Delta $ ayant $ n+2 $ ar\^etes engendr\'ees par des vecteurs not\'es $ v_{0},v_{1},...,v_{n},v_{n+1}\in \RR^{n} $. De telles vari\'et\'es ont \'et\'e classifi\'ees par Kleinschmidt dans \cite{K}. Nous supposerons par ailleurs que le groupe de Picard et le c\^one effectif de $ X $ sont engendr\'es par les classes de diviseurs associ\'es aux ar\^etes de $ 2 $ vecteurs g\'en\'erateurs de l'\'eventail, disons $ v_{0} $ et $ v_{n+1} $. On note note $ D_{0} $ et $ D_{n+1} $ les diviseurs associ\'es, et $ [D_{0}] $, $ [D_{n+1}] $ leurs classes dans $ \Pic(X) $.  
On peut alors \'ecrire 

\[ \Pic(X)=\ZZ[D_{0}]\oplus\ZZ[D_{n+1}], \]
\[ C^{1}_{\Eff}=\RR^{+}[D_{0}]+\RR^{+}[D_{n+1}], \]
et la classe du diviseur anticanonique de $ X $ est de la forme \[ [-K_{X}]=n_{1}[D_{0}]+n_{2}[D_{n+1}] \] avec $ n_{1},n_{2}\in \ZZ $. D'autre part, pour $ d_{1},d_{2}\in \NN $ fix\'es consid\'erons un diviseur de classe $ d_{1}[D_{0}]+d_{2}[D_{n+1}] $ et une hypersurface $ Y $ de dimension suppos\'ee sup\'erieure ou \'egale \`a $ 3 $, d\'efinie par une section de ce diviseur. On supposera que l'hypersurface choisie est lisse. La classe du diviseur anticanonique de $ Y $ est alors donn\'ee par \[  [-K_{Y}]=(n_{1}-d_{1})[\tilde{D}_{0}]+(n_{2}-d_{2})[\tilde{D}_{n+1}], \] o\`u $ \tilde{D}_{0} $ et $ \tilde{D}_{n+1} $ d\'esignent les diviseurs induits par $ D_{0} $ et $ D_{n+1} $ sur $ Y $. En utilisant par exemple la construction d\'ecrite par Salberger dans \cite[\S 10]{Sa}, on peut construire explicitement la hauteur $ H $ sur $ X $ associ\'ee \`a $ (n_{1}-d_{1})[D_{0}]+(n_{2}-d_{2})[D_{n+1}] $. Elle induit une hauteur sur $ Y $ qui est la hauteur associ\'ee \`a $ [-K_{Y}] $, et que l'on notera encore $ H $. L'objectif est alors de donner une formule asymptotique pour le nombre \[ \mathcal{N}_{U}(B)=\Card\{ P\in Y(\QQ)\cap U \; | \; H(P)\leqslant B \}, \] pour un ouvert $ U $ bien choisi. Plus pr\'ecis\'ement nous allons montrer (cf. proposition \ref{conclusion}) que $ \mathcal{N}_{U}(B) $ v\'erifie la conjecture de Manin, i.e que pour un nombre de variables $ n+2 $ assez grand (condition analogue \`a celle donn\'ee par Birch dans \cite{Bi} pour les hypersurfaces de l'espace projectif), ce cardinal  est de la forme \[\mathcal{N}_{U}(B)=  C_{H}(Y)B\log(B)+O(B), \] o\`u $ C_{H}(Y) $ est la constante conjectur\'ee par Peyre. \\
 
Dans la section $ 2 $ nous fixons pr\'ecis\'ement le cadre de notre \'etude. Nous y d\'ecrivons entre autres les vari\'et\'es toriques auxquelles nous nous int\'eresserons, l'expression de la hauteur, et la forme des \'equations d\'efinissant les hypersurfaces. Nous montrons par ailleurs que le calcul de $ \mathcal{N}_{U}(B) $ peut se ramener \`a celui de  \begin{multline*} N_{d,U}(B)=\card\left\{ (\xx,\yy,\zz)\in (\ZZ^{r+1}\times \ZZ^{m-r}\times\ZZ^{n-m+1}) \cap U\; | \; \xx\neq\0, \; \right. \\ \left. \; (\yy,\zz)\neq (\0,\0), \; F(d\xx,\yy,\zz)=0, \;  |\xx|^{m+1-d_{1}}\max\left( \frac{|\yy|}{d|\xx|},|\zz|\right)^{n-r+1-d_{2}} \leqslant B \right\} \end{multline*}
o\`u $ m,r,d_{1},d_{2} $ sont des entiers fix\'es, et $ F $ un polyn\^ome homog\`ene de degr\'e $ d_{1} $ (resp. $ d_{2} $) en $ (\xx,\yy) $ (resp. $ (\yy,\zz) $). \\

La m\'ethode utilis\'ee pour \'evaluer les $ N_{d,U}(B )$ est fortement inspir\'ee de celle d\'evelopp\'ee par Schindler dans \cite{S2} pour traiter le cas des hypersurfaces des espaces biprojectifs. Cette m\'ethode consiste dans un premier temps \`a donner une formule asymptotique pour le nombre $ N_{d,U}(P_{1},P_{2}) $ de points $ (\xx,\yy,\zz)  $ de $ U\cap \ZZ^{n+2}$ tels que $ |\xx|\leqslant P_{1} $ et  $ \max\left( \frac{|\yy|}{d|\xx|},|\zz|\right)\leqslant P_{2} $ pour des bornes $ P_{1},P_{2} $ fix\'ees. Dans la section $ 3 $, en utilisant des arguments issus de la m\'ethode du cercle, on \'etablit une formule asymptotique pour $ N_{d,U}(P_{1},P_{2}) $ lorsque $ P_{1} $ et $ P_{2} $ sont \og relativement proches \fg \ en un sens que nous pr\'eciserons. Dans la section $ 4 $ (resp. $ 5 $), pour un $ \xx\in \ZZ^{r+1} $ (resp. $ \zz\in \ZZ^{n-m+1} $) fix\'e, on donne une formule asymptotique pour le nombre de points $ (\yy,\zz) $ (resp. $ (\xx,\yy) $) v\'erifiant $ F(\xx,\yy,\zz)=0 $ tels que $ \max\left( \frac{|\yy|}{d|\xx|},|\zz|\right)\leqslant P_{2} $ (resp. $ |\xx|\leqslant P_{1} $) en utilisant \`a nouveau la m\'ethode du cercle. Les r\'esultats obtenus combin\'es avec ceux de la section $ 2 $ nous permettrons dans la section $ 6 $ d'\'etablir une formule asymptotique pour $ N_{d,U}(P_{1},P_{2}) $ avec $ P_{1},P_{2} $ quelconques. Dans la section $ 7 $, on utilise les r\'esultats \'etablis par Blomer et Br\"{u}dern dans \cite{BB} pour conclure quant \`a la valeur de $ N_{d,U}(B) $ \`a partir des estimations obtenues dans les sections pr\'ec\'edentes. Enfin, dans la section $ 8 $, on conclut en d\'emontrant le th\'eor\`eme\;\ref{thmconcl} donnant une formule asymptotique pour $ \mathcal{N}_{U}(B) $. On v\'erifie en particulier que la constante obtenue est bien celle avanc\'ee par Peyre dans \cite{Pe}.

\section{Pr\'eliminaires}

\subsection{Notations et premi\`eres propri\'et\'es}\label{premprop}

Rappelons les d\'efinitions suivantes : 
\begin{Def}
\'Etant donn\'e un r\'eseau $ N $, un \emph{\'eventail} est un ensemble $ \Delta $ de c\^ones polyh\'edraux de $ N_{\RR}=N\otimes \RR $ v\'erifiant : 
\begin{enumerate}
\item Pour tout c\^one $ \sigma\in \Delta $, on a $ 0\in \sigma $;
\item Toute face d'un c\^one de $ \Delta $ est un c\^one de $ \Delta $;
\item L'intersection de deux c\^ones de $ \Delta  $ est une face de chacun de ces deux c\^ones. 
\end{enumerate}
On dit de plus que l'\'eventail est \begin{itemize}
\item \emph{complet} si $ \bigcup_{\sigma\in \Delta}\sigma=N_{\RR} $,
\item \emph{r\'egulier} si chaque c\^one de $ \Delta $ est engendr\'e par une famille de vecteurs pouvant \^etre compl\'et\'ee en une base de $ N_{\RR} $. 
\end{itemize}
\end{Def}
Pour tout \'eventail $ \Delta $ nous noterons $ \Delta_{\max} $ l'ensemble des c\^ones de dimension maximale, et pour tout c\^one $ \sigma\in \Delta $, on notera $ \sigma(1) $ l'ensemble des vecteurs g\'en\'erateurs des ar\^etes de $ \sigma $). Pour un c\^one polyh\'edral $ \sigma $ de $ N_{\RR} $ donn\'e on d\'efinit semi-groupe \[ S_{\sigma}=\sigma^{\vee}\cap N^{\vee}, \]
o\`u $ \sigma^{\vee} $ (resp. $ N^{\vee}=M $) d\'esigne le c\^one (resp. r\'eseau) dual de $ \sigma $ (resp. $ N $). La \emph{ vari\'et\'e torique affine } sur un corps $ k $ associ\'ee \`a $ \sigma $ est la vari\'et\'e affine : \begin{equation}
U_{\sigma}=\Spec(k[S_{\sigma}])
\end{equation}
On remarque que si $ \sigma, \tau $ sont deux c\^ones de $ N_{\RR} $, alors \[ \tau \subset \sigma \Rightarrow U_{\tau}\subset U_{\sigma}. \]
\'Etant donn\'e un r\'eseau $ N $ et un \'eventail $ \Delta $, on d\'efinit une vari\'et\'e alg\'ebrique $ X=X(\Delta) $ sur $ k $ par recollement des ouverts $ U_{\sigma} $ pour $ \sigma\in \Delta $. Nous renvoyons le lecteur \`a \cite[\S 1,2,3]{F} pour plus de d\'etails sur les vari\'et\'es toriques. Remarquons que la vari\'et\'e $ X(\Delta) $ est lisse (resp. compl\`ete) si $ \Delta $ est r\'egulier (resp. complet).  \\

Dans ce qui va suivre nous allons consid\'erer $ X $ une vari\'et\'e torique de dimension $ n $ d\'efinie par un \'eventail $ \Delta $ \`a $ d=n+r $ ar\^etes dont les g\'en\'erateurs seront not\'es dans cette section, $ v_{1},v_{2},...,v_{n}, v_{n+1},...,v_{n+r}\in \ZZ^{n} $, et un r\'eseau $ N=\ZZ^{n} $. On note $ D_{1},...,D_{n},...,D_{n+r} $ les diviseurs associ\'es aux vecteurs g\'en\'erateurs (voir \cite[\S 3.3]{F}). Rappelons que dans le cas o\`u la vari\'et\'e torique $ X $ est lisse, le groupe de Picard de $ X $ est de rang $ r $. Pour simplifier nous allons imposer une premi\`ere condition aux vari\'et\'es torique que nous consid\`ererons : nous nous int\'eresserons exclusivement aux vari\'et\'es toriques compl\`etes lisses dont le c\^one effectif est simplicial et que tout diviseur effectif soit combinaison lin\'eaire de $ r $ diviseurs $ D_{i} $, disons $ [D_{n+1}],...,[D_{n+r}] $. Une premi\`ere question naturelle est de se demander si ceci peut se traduire en termes de propri\'et\'es sur les c\^ones de l'\'eventail. Nous allons r\'epondre \`a cette question dans ce qui va suivre. \\

On souhaite donc avoir, pour tout $ i\in \{1,...,n\} $  \[ [D_{i}]=\sum_{j=1}^{r}a_{i,j}[D_{n+j}] \] avec $ a_{i,j}\in \NN $ pour tous $ i,j $. Ceci \'equivaut \`a dire qu'il existe des entiers naturels $ a_{i,j}$ tels que les diviseurs $ D_{i}-\sum_{j=1}^{r}a_{i,j}D_{n+j} $ soient principaux pour tous $ i\in \{1,...,n\} $. Rappelons que les diviseurs principaux de $ X $ sont exactement les diviseurs $ \di(\chi^{u}) $ associ\'es aux caract\`eres $ \chi^{u} $ du tore de $ X $ (voir \cite{F}) 
 pour $ u\in M=N^{\vee}=\ZZ^{n} $ d\'efinis par : \[  \di(\chi^{u})=\sum_{k=1}^{n+r}\langle u,v_{k}\rangle D_{k}. \] On cherche donc des vecteurs $ u_{1},...,u_{n}\in \ZZ^{n} $ tels que pour tous $ i,j \in \{1,...,n\} $, \begin{equation}\label{Cond1}   \langle u_{i},v_{j}\rangle=\delta_{i,j}  \end{equation} (i.e $ (u_{1},...,u_{n}) $ est la base duale de $ (v_{1},...,v_{n}) $ au sens des espaces vectoriels) et \begin{equation}\label{Cond2}
 \langle u_{i},v_{k}\rangle\leqslant 0 
 \end{equation} pour tout $ k\in \{n+1,...,n+r\} $. Quitte \`a permuter les $ v_{i} $, on peut supposer que $ (v_{1},...,v_{n}) $ est une famille g\'en\'eratrice d'un c\^one maximal (i.e. de dimension $ n $) de $ \Delta $. Puisque l'on a suppos\'e que $ X $ est lisse, $ (v_{1},...,v_{n}) $ est alors une base du r\'eseau $ \ZZ^{n} $ dont $ (u_{1},...,u_{n}) $ est la base duale (au sens des r\'eseaux). La condition \eqref{Cond2} impose d'autre part que pour cette base duale $ (u_{1},...,u_{n}) $ : \[ \forall k \in \{n+1,...,n+r\}, \; \;  \langle u_{i},v_{k}\rangle\leqslant 0. \] Une condition n\'ecessaire et suffisante pour que ceci soit v\'erifi\'e est que : \[ v_{n+1},...,v_{n+r}\in C\langle -v_{1},-v_{2},...,-v_{n}\rangle \] o\`u $ C\langle -v_{1},-v_{2},...,-v_{n}\rangle $ d\'esigne le c\^one de $ \RR^{n} $ engendr\'e par $ -v_{1},...,-v_{n} $. 
 \begin{rem}
 Si l'on note \[ \forall k\in \{1,...,r\}, \; \;  v_{n+k}=-\sum_{i=1}^{n}a_{i,k}v_{i}, \] avec $ a_{i,k}\in \NN $, on v\'erifie qu'alors : \[ \forall i\in \{1,...,n\}, \; \;  [D_{i}]=\sum_{k=1}^{r}a_{i,k}[D_{n+k}]. \]
 \end{rem}

\subsection{Hauteurs sur les hypersurfaces de vari\'et\'es toriques}\label{hauteur}

\'Etant donn\'ee une vari\'et\'e torique compl\`ete lisse $ X $ d\'efinie par un \'eventail $ \Delta $ \`a $ n+r $ ar\^etes et un r\'eseau $ N=\ZZ^{n} $, dont le groupe de Picard et le c\^one effectif sont engendr\'es par $ [D_{n+1}],...,[D_{n+r}] $ (cf. section pr\'ec\'edente), on consid\`ere la classe du diviseur anticanonique de $ X $ qui sera de la forme : \[ [-K_{X}]=\sum_{i=1}^{n+r}[D_{i}]=\sum_{k=1}^{r}n_{k}[D_{n+k}], \] avec $ n_{1},...,n_{r}\in \NN $. On consid\`ere alors un diviseur de classe $ \sum_{k=1}^{r}d_{k}[D_{n+k}] $, avec $ d_{1},...,d_{r}\in \NN $. Une section globale $ s $ du fibr\'e en droites associ\'e \`a ce diviseur sur $ X $ permet de d\'efinir une hypersurface de $ X $ que l'on notera $ Y $. La classe du diviseur anticanonique sur $ Y $ sera induite par la classe du diviseur \begin{equation}\label{DiviseurD0} D_{0}=\sum_{k=1}^{r}(n_{k}-d_{k})D_{n+k}. \end{equation} Nous allons donner une construction de la hauteur associ\'ee \`a $ \OO(D_{0}) $ sur $ X $. Pour cela, nous utiliserons la construction des hauteurs sur les vari\'et\'es toriques d\'ecrite par Salberger dans \cite{Sa}. \\

Soit $ \nu $ une place sur $ \QQ $, et $ |.|_{\nu} : \QQ^{\ast}\ra \RR^{+} $  la valeur absolue associ\'ee. On pose, comme dans la section pr\'ec\'edente, $ N=\ZZ^{n} $, $ M=N^{\vee}=\ZZ^{n} $ et $ U(\QQ_{\nu}) $ le tore $ \Hom(M,\QQ^{\ast}_{\nu}) $ qui peut \^etre identifi\'e avec un ouvert dense de Zariski de $ X(\QQ_{\nu}) $ \`a condition de fixer un point de cet ouvert. L'application $ \log|.|_{\nu} : \QQ_{\nu}^{\ast}\ra \RR $ induit un  morphisme \[ L : U(\QQ_{\nu}) \ra N_{\RR}= \RR^{n}. \] Pour tout $ \sigma\in \Delta $, $ L^{-1}(-\sigma) $ est un sous-ensemble ferm\'e de $ U(\QQ_{\nu}) $. On note alors $ C_{\sigma,\nu} $ l'adh\'erence de $ L^{-1}(-\sigma)  $ dans $ X(\QQ_{\nu}) $. On utilise ces ensembles $ C_{\sigma,\nu} $ pour construire une norme $ ||.||_{D,\nu} $ sur $ \OO(D) $ pour tout diviseur de Weil $ D $ sur $ X $, via la proposition suivante : 

\begin{prop}
Soit $ D=\sum_{i=1}^{n+r}a_{i}D_{i} $ un diviseur de Weil sur $ X $ et $ s $ une section locale analytique de $ \OO(D) $ d\'efinie en $ P\in X(\QQ_{\nu}) $. Le point $ P\in X(\QQ_{\nu}) $ appartient \`a $ C_{\sigma,\nu} $ pour un certain $ \sigma \in \Delta $. Soit $ \chi^{u(\sigma)} $ un caract\`ere sur $ U $ repr\'esentant le diviseur de Cartier correspondant \`a $ D $ sur $ U_{\sigma} $ (i.e. $ \langle u(\sigma),v_{i}\rangle=-a_{i} $ pour tout $ v_{i}\in \sigma(1) $). On pose alors : \[ ||s(P)||_{D,\nu}=|s(P)\chi^{u(\sigma)}(P)|_{\nu}, \] et cette expression est ind\'ependante du choix de $ \sigma\in \Delta $ tel que $ P\in C_{\sigma,\nu} $. \end{prop}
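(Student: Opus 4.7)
The claim is that the scalar $|s(P)\chi^{u(\sigma)}(P)|_\nu$ does not depend on the choice of $\sigma\in\Delta$ such that $P\in C_{\sigma,\nu}$. Since $s(P)$ plays no role in the independence assertion, the proof reduces to showing that whenever $P\in C_{\sigma,\nu}\cap C_{\tau,\nu}$ for two cones $\sigma,\tau\in\Delta$, one has
\[
|\chi^{u(\sigma)-u(\tau)}(P)|_\nu \;=\; 1.
\]
My plan is to treat this identity in two stages: first on the open torus, where everything is computed via the map $L$, and then on the boundary, by a continuity/density argument.

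The key combinatorial input is the compatibility of the Cartier data $\{u(\sigma)\}_{\sigma\in\Delta}$. Setting $\rho := \sigma\cap\tau$, the fan axioms make $\rho$ a common face of $\sigma$ and of $\tau$, so $\rho(1)\subseteq\sigma(1)\cap\tau(1)$; the defining equality $\langle u(\sigma),v_i\rangle = -a_i = \langle u(\tau),v_i\rangle$ for every $v_i\in\rho(1)$ then yields
\[
m_0 \;:=\; u(\sigma)-u(\tau) \;\in\; \rho^\perp\cap M .
\]
As a by-product, the character $\chi^{m_0}$ extends to an invertible regular function on $U_\sigma\cap U_\tau = U_\rho$, so that $\chi^{m_0}(P)$ is an unambiguous non-zero element of $\QQ_\nu$, and it only remains to show that its absolute value equals $1$.

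When $P$ lies in the open torus $U(\QQ_\nu)$, one has $L(P)\in -\sigma\cap(-\tau) = -\rho$, hence a representation $L(P) = -\sum_i\lambda_i v_i$ with $\lambda_i\geq 0$ and $v_i\in\rho(1)$. Combining the defining relation $\log|\chi^m(P)|_\nu = \langle m, L(P)\rangle$ with the orthogonality $\langle m_0,v_i\rangle = 0$ immediately gives $\log|\chi^{m_0}(P)|_\nu = 0$, which is the desired identity on the dense torus. For the remaining case, i.e.\ $P$ in the closure but not in the torus, I would argue by continuity: $|\chi^{m_0}|_\nu$ is a continuous, nowhere-vanishing function on $U_\rho(\QQ_\nu)$ which is identically $1$ on the dense subset $L^{-1}(-\rho)$, and must therefore be identically $1$ on its closure inside $U_\rho(\QQ_\nu)$.

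The main obstacle is precisely this last density step: one must verify the inclusion $C_{\sigma,\nu}\cap C_{\tau,\nu}\subseteq \overline{L^{-1}(-\rho)}$ inside $U_\rho(\QQ_\nu)$, i.e.\ that a boundary point which is a limit of torus points with $L$-image in $-\sigma$ and also a limit of torus points with $L$-image in $-\tau$ is already a limit of torus points with $L$-image in $-\rho$. This I would deduce from the orbit–cone correspondence describing the toric boundary strata of $U_\rho$: the orbit of $P$ corresponds to a face $\gamma\preceq\rho$, and any neighbourhood of $P$ in $U_\rho(\QQ_\nu)$ meets $L^{-1}(-\rho)$, since the torus points approaching $O_\gamma$ can be chosen with $L$-image arbitrarily deep in the relative interior of $-\gamma\subseteq -\rho$. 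Once this standard toric input is granted, the continuity argument closes the proof.
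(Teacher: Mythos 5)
First, a point of reference: the paper does not actually prove this proposition; its ``proof'' is the citation \emph{Voir [Sa, Proposition 9.2]}, so there is no internal argument to compare yours against. Your overall strategy is the natural one and its first stages are correct: the reduction to showing $|\chi^{u(\sigma)-u(\tau)}(P)|_{\nu}=1$ on $C_{\sigma,\nu}\cap C_{\tau,\nu}$, the observation that $m_{0}=u(\sigma)-u(\tau)$ lies in $\rho^{\perp}\cap M$ for $\rho=\sigma\cap\tau$ (so that $\chi^{m_{0}}$ is an invertible regular function on $U_{\sigma}\cap U_{\tau}=U_{\rho}$), and the computation $\log|\chi^{m_{0}}(P)|_{\nu}=\langle m_{0},L(P)\rangle=0$ for $P$ in the open torus are all fine.

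The gap is exactly where you locate it, but the repair you sketch does not work as stated. The argument ``the orbit of $P$ corresponds to a face $\gamma\preceq\rho$, and the torus points approaching $O_{\gamma}$ can be chosen with $L$-image arbitrarily deep in the relative interior of $-\gamma$'' uses only the fact that $P$ lies in a boundary orbit of $U_{\rho}$, and would therefore prove that \emph{every} such point lies in $\overline{L^{-1}(-\rho)}$. That is false: take $N=\ZZ^{2}$, $\rho$ the ray through $(1,0)$, $U_{\rho}=\Spec \QQ_{\nu}[x,y^{\pm 1}]$ and $P=(0,y_{0})$ with $|y_{0}|_{\nu}=2$; then $P\in O_{\rho}$ but $\overline{L^{-1}(-\rho)}=\{|x|_{\nu}\leqslant 1,\ |y|_{\nu}=1\}$ does not contain $P$. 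The reason is that a torus point $Q=\lambda_{v}(t)\tilde{P}$ approaching $P$ (with $v$ in the relative interior of $\gamma$ and $\tilde{P}$ a torus lift of $P$) has $L(Q)=L(\tilde{P})+\log|t|_{\nu}\,v$: this goes deep in the $-v$ direction but carries the offset $L(\tilde{P})$, and to land in $-\rho$ you also need $\langle m,L(\tilde{P})\rangle=\log|\chi^{m}(P)|_{\nu}\leqslant 0$ for the generators $m$ of $\rho^{\vee}\cap\gamma^{\perp}\cap M$. That extra constraint on the position of $P$ inside its orbit is precisely where the hypothesis $P\in C_{\sigma,\nu}\cap C_{\tau,\nu}$ must enter, and your sketch never uses it. Once you see this, you can bypass the density and orbit analysis entirely: since $\sigma\cap\tau=\rho$ one has $\sigma^{\vee}+\tau^{\vee}=\rho^{\vee}\supseteq\rho^{\perp}$, so for some integer $k\geqslant 1$ you may write $km_{0}=m_{1}+m_{2}$ and $-km_{0}=m_{1}'+m_{2}'$ with $m_{1},m_{1}'\in\sigma^{\vee}\cap M$ and $m_{2},m_{2}'\in\tau^{\vee}\cap M$; the inequality $\log|\chi^{m}|_{\nu}=\langle m,L(\cdot)\rangle\leqslant 0$ holds on $L^{-1}(-\sigma)$ for $m\in\sigma^{\vee}\cap M$ (resp.\ on $L^{-1}(-\tau)$ for $m\in\tau^{\vee}\cap M$) and passes to the closure by continuity of the regular functions $\chi^{m}$, giving $|\chi^{\pm m_{0}}(P)|_{\nu}\leqslant 1$ and hence $|\chi^{m_{0}}(P)|_{\nu}=1$ because $\chi^{m_{0}}(P)\neq 0$. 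This closes the proof and is presumably close to Salberger's own argument.
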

\begin{proof} Voir \cite[Proposition 9.2]{Sa}. \end{proof}

On a alors la proposition suivante qui nous sera utile par la suite. 

\begin{prop}
Soit $ D=\sum_{i=1}^{n+r}a_{i}D_{i} $ un diviseur de Weil sur $ X $ tel que $ \OO(D) $ est engendr\'e par ses sections globales. Alors, pour $ \sigma\in \Delta_{\max} $, si $ \chi^{-u(\sigma)} $ d\'esigne l'unique caract\`ere sur $ U $ qui engendre $ \OO(D) $ sur $ U_{\sigma} $ (i.e. $ \langle u(\sigma),v_{i}\rangle=-a_{i} $ pour tout $ v_{i}\in \sigma(1) $), alors $ \chi^{-u(\sigma)} $ est une section globale de $ \OO(D) $ et $ \chi^{-u(\sigma)}(P)\neq 0 $ pour tout $ P\in U_{\sigma}(\QQ_{\nu}) $. Si $ s $ est une section locale de $ \OO(D) $ d\'efinie en $ P\in X(\QQ_{\nu}) $, alors \[ ||s(P)||_{D,\nu}=\inf_{\sigma\in \Delta_{\max}}|s(P)\chi^{u(\sigma)}(P)|_{\nu}, \] o\`u $ \Delta_{\max} $ d\'esigne l'ensemble des c\^ones de $ \Delta $ de dimension $ n $. De plus, si $ D $ est ample et $ \sigma\in \Delta_{\max} $, alors $ C_{\sigma,\nu} $ est l'ensemble des $ P\in X(\QQ_{\nu})  $ tels que $ |\chi^{u(\sigma)-u(\tau)}(P)|_{\nu}\leqslant 1 $ pour tout $ \tau\in \Delta_{\max} $. \end{prop}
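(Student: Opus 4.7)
The plan is to treat the three assertions separately, in each case reducing the statement to properties of the support function $\psi_{D}$ of $D$ and its associated polytope $P_{D} = \{u \in M_{\RR} : \langle u, v_{i}\rangle \geqslant -a_{i}\ \forall i\}$. First I would verify that $\chi^{-u(\sigma)}$ is a global section of $\OO(D)$ and that it does not vanish on $U_{\sigma}(\QQ_{\nu})$. Since $\OO(D)$ is globally generated, the classical theory (see \cite[\S 3.4]{F}) says that $\psi_{D}$ is convex and that each $-u(\sigma)$, for $\sigma \in \Delta_{\max}$, appears as a vertex of $P_{D}$, whence $\chi^{-u(\sigma)} \in \Gamma(X, \OO(D))$; the non-vanishing on $U_{\sigma}(\QQ_{\nu})$ is then immediate because $\chi^{-u(\sigma)}$ generates $\OO(D)\vert_{U_{\sigma}}$ as an $\OO_{U_{\sigma}}$-module and hence is invertible as a local section of a line bundle.

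Next I would prove the infimum formula. Fix $P \in X(\QQ_{\nu})$ and let $\sigma_{0} \in \Delta$ be the unique cone such that $P$ belongs to the torus orbit attached to $\sigma_{0}$; by construction $P \in C_{\tau, \nu}$ precisely for the cones $\tau$ containing $\sigma_{0}$. For any $\tau \in \Delta_{\max}$ with $\sigma_{0} \subseteq \tau$ the previous proposition gives $||s(P)||_{D, \nu} = |s(P) \chi^{u(\tau)}(P)|_{\nu}$, and for any other $\tau' \in \Delta_{\max}$ the ratio is
\[
\frac{|s(P)\chi^{u(\tau')}(P)|_{\nu}}{|s(P)\chi^{u(\tau)}(P)|_{\nu}} = |\chi^{u(\tau') - u(\tau)}(P)|_{\nu}.
\]
On the open torus $\log|\chi^{m}(P)|_{\nu} = \langle m, L(P)\rangle$ and $L(P) \in -\tau$, and the global generation hypothesis translates precisely into the inequality $\langle u(\tau) - u(\tau'), v\rangle \geqslant 0$ for every $v \in \tau$, which gives $|\chi^{u(\tau') - u(\tau)}(P)|_{\nu} \geqslant 1$. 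By the continuity of the characters and of $||\cdot||_{D, \nu}$, this inequality persists on the closed subset $C_{\tau, \nu}$, so that the infimum over $\tau' \in \Delta_{\max}$ is attained at $\tau$ and equals $||s(P)||_{D, \nu}$.

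For the ample case one direction is contained in the previous step: any $P \in C_{\sigma, \nu}$ satisfies $|\chi^{u(\sigma) - u(\tau)}(P)|_{\nu} \leqslant 1$ for every $\tau \in \Delta_{\max}$. For the converse I would use the strict convexity of $\psi_{D}$ when $D$ is ample: the vectors $u(\sigma)$ for $\sigma \in \Delta_{\max}$ are then the pairwise distinct vertices of $P_{D}$, and the system of inequalities $\langle u(\sigma) - u(\tau), v\rangle \geqslant 0$ as $\tau$ runs through $\Delta_{\max}$ characterises $v \in \sigma$ among all maximal cones. Consequently, any point $P$ in the open torus orbit satisfying $|\chi^{u(\sigma) - u(\tau)}(P)|_{\nu} \leqslant 1$ for every $\tau \in \Delta_{\max}$ verifies $L(P) \in -\sigma$ and therefore belongs to $C_{\sigma, \nu}$; the extension of this implication to the boundary points of $X(\QQ_{\nu})$ then follows by a limiting argument using the stratification of $X(\QQ_{\nu})$ by torus orbits.

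The main obstacle I anticipate is not any single computation but rather the careful handling of the boundary behaviour of $|\chi^{m}(P)|_{\nu}$ for $P \notin U(\QQ_{\nu})$, where some of these characters may degenerate to $0$ or $\infty$. One must check that the inequalities between characters extend by continuity across the stratification of $X(\QQ_{\nu})$ by torus orbits; this is ultimately exactly the content of the convexity (respectively strict convexity) of the support function $\psi_{D}$ furnished by the global generation (respectively ampleness) hypothesis, so the analytic statement is a faithful translation of the combinatorial one on the fan.
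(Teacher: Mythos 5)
The paper offers no argument of its own here: its proof is the single line \og Voir \cite[Proposition 9.8]{Sa} \fg, so your proposal has to be measured against Salberger's proof rather than against anything in the text. Your outline follows the standard toric route and its skeleton is sound: global generation of $\OO(D)$ is equivalent to convexity of the support function, which is exactly the inequality $\langle u(\tau)-u(\tau'),v\rangle\geqslant 0$ for $v\in\tau$ that you invoke to show the infimum is attained at a cone $\tau$ with $P\in C_{\tau,\nu}$, and ampleness supplies the strict convexity needed for the converse inclusion in the last assertion. One small imprecision: for a merely globally generated $D$ the points $-u(\sigma)$ are lattice points of $P_{D}$ but need not be vertices (they are the distinct vertices exactly in the ample case); membership in $P_{D}$ is all that is needed to conclude $\chi^{-u(\sigma)}\in\Gamma(X,\OO(D))$, so your conclusion survives, but the claim as stated is too strong.

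The genuine gap is the step you yourself flag and then defer: extending the character inequalities from the dense torus $U(\QQ_{\nu})$ to the boundary of $X(\QQ_{\nu})$, where $|\chi^{m}(P)|_{\nu}$ may degenerate to $0$ or $\infty$. This is precisely where the cited proof does its real work, via the explicit description of the sets $C_{\sigma,\nu}$ as closures of $L^{-1}(-\sigma)$ and the fact that they cover $X(\QQ_{\nu})$. For the inequality $|s(P)\chi^{u(\tau')}(P)|_{\nu}\geqslant||s(P)||_{D,\nu}$ a continuity argument on each $C_{\tau,\nu}$ is plausible, but for the converse inclusion in the ample case \og a limiting argument \fg is not automatic: you must either show that a boundary point satisfying all the inequalities $|\chi^{u(\sigma)-u(\tau)}(P)|_{\nu}\leqslant 1$ is a limit of torus points satisfying them, or argue orbit by orbit using the torus action on each stratum. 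Until that is carried out, the proposal is a correct plan with one essential step left unexecuted rather than a complete proof.
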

\begin{proof} Voir \cite[Proposition 9.8]{Sa}. \end{proof}

On peut alors d\'efinir la hauteur associ\'ee \`a un diviseur $ D $. Si $ D=\sum_{i=1}^{n+r}a_{i}D_{i} $ est un diviseur de Weil sur $ X $ et $ P\in X(\QQ) $, la hauteur associ\'ee \`a $ D $ est l'application $ H_{D} : X(\QQ)\ra [0,\infty[ $ d\'efinie par \[ H_{D}(P)=\prod_{\nu\in \Val(\QQ)}||s(P)||_{D,\nu}^{-1}, \] o\`u $ \Val(\QQ) $ d\'esigne l'ensemble des places de $ \QQ $, et $ s $ une section locale de $ \OO(D) $ d\'efinie en $ P $ telle que $ s(P)\neq 0 $. 

\begin{rem}
Comme on peut le voir dans \cite[Proposition 10.12]{Sa}, pour tout $ P\in U(\QQ) $, $ H_{D}(P) $ ne d\'epend que de la classe de $ D $ dans $ \Pic(X) $. \end{rem}

Par la suite, on notera $ H $ la hauteur sur $ X $ associ\'ee au diviseur $ D_{0} $ d\'efini par \eqref{DiviseurD0}. Notre objectif sera alors d'\'evaluer \[ \mathcal{N}_{V}(B)=\Card\{P\in V(\QQ)\cap Y(\QQ) \; | \; H_{D_{0}}(P)\leqslant B\}, \] pour un certain ouvert dense $ V\subset U $ de $ X $. Pour \'evaluer cette quantit\'e il est plus pratique de se ramener \`a compter le nombre de points de hauteur born\'ee sur un torseur universel (voir \cite[\S 3]{Sa} pour la d\'efinition de torseurs universels) associ\'e \`a $ X $. Pour les vari\'et\'es toriques, la construction du torseur universel est relativement simple et est donn\'ee dans \cite[\S 8]{Sa}. Nous allons rappeler cette construction. \\

On consid\`ere le r\'eseau $ N_{0}=\ZZ^{n+r} $ et $ M_{0}=N_{0}^{\vee}=\ZZ^{n+r} $. \`A tout g\'en\'erateur $ v_{i} $ d'une ar\^ete du c\^one $ \Delta $ on associe l'\'el\'ement $ e_{0,i} $ de la base canonique de $ N_{0}=\ZZ^{n+r} $. On pose alors $ N_{1}=N_{0} $ et $ \Delta_{1} $ l'\'eventail constitu\'e de tous les c\^ones engendr\'es par les $ e_{0,i} $. La vari\'et\'e torique $ X_{1} $ d\'etermin\'ee par $ (N_{1},\Delta_{1}) $ est alors l'espace affine $ \AA^{n+r} $. Pour tout $ \sigma \in \Delta $, on note d'autre part $ \sigma_{0} $ le c\^one de $ N_{0,\RR} $ engendr\'e par les $ e_{0,i} $ pour $ i $ tel que $ v_{i}\in \sigma $. Les c\^ones $ \sigma_{0} $ ainsi associ\'es forment alors un \'eventail r\'egulier $ \Delta_{0} $ de $ N_{0,\RR} $ (cf. \cite[Proposition 8.4]{Sa}), et $ (\Delta_{0},N_{0}) $ d\'efinit une vari\'et\'e torique $ X_{0}\subset X_{1} $. Soit $ U_{0,\sigma}=\Spec(\QQ[S_{\sigma_{0}}]) $ o\`u $ S_{\sigma_{0}}=\sigma_{0}^{\vee}\cap M_{0} $. Les morphismes toriques $ \pi_{\sigma} : U_{0,\sigma}\ra U_{\sigma} $ d\'efinies par les applications naturelles de $ \sigma_{0} $ sur $ \sigma $ se recollent en un morphisme $ \pi : X_{0}\ra X $ qui est alors un torseur universel sur $ X $ (cf. \cite[Proposition 8.5]{Sa}).\\

\'Etant donn\'e que $ X_{0}\subset X_{1}=\AA^{n+r}_{\QQ} $ les points de $ X_{0} $ s'\'ecrivent sous forme de $ (n+r) $-uplets de coordonn\'ees $ \xx=(x_{1},...,x_{n},x_{n+1},...,x_{n+r}) $. On notera alors pour tout diviseur $ D=\sum_{i=1}^{n+r}a_{i}D_{i}  $ : \[ \xx^{D}=\prod_{i=1}^{n+r}x_{i}^{a_{i}}. \] 

\begin{rem}
Si $ \sigma\in \Delta $ , on note \[ \underline{\sigma}=\sum_{i\; |\; v_{i}\notin \sigma(1)}D_{i}, \] alors $ U_{0,\sigma} $ est l'ouvert de $ X_{1} $ d\'etermin\'e par $ \xx^{\underline{\sigma}}\neq 0 $, et donc $ X_{0} $ est l'ouvert de $ X_{1} $ d\'efini par  : \[ \xx\in X_{0} \Leftrightarrow \exists \sigma\in \Delta_{\max} \; | \; \xx^{\underline{\sigma}}\neq 0. \] 
\end{rem}

En rappelant que $ D_{0}=\sum_{k=1}^{r}(n_{k}-d_{k})D_{n+k} $, on d\'efinit alors les diviseurs $ D(\sigma) $ associ\'es : 
\begin{Def}
Soit $ \sigma\in \Delta_{\max} $, et soit $ \chi^{u(\sigma)} $ le caract\`ere de $ U $ tel que $ \chi^{-u(\sigma)} $ engendre $ \OO(D_{0}) $ sur $ U_{\sigma} $. On pose alors \[ D(\sigma)=D_{0}+\sum_{v_{i}\in \sigma(1)}\langle -u(\sigma),v_{i}\rangle D_{i}. \] 
\end{Def}
\begin{rem}
Les diviseurs $ D(\sigma) $ ne d\'ependent que de la classe de $ D_{0} $ dans $ \Pic(X) $. 
\end{rem}
\begin{lemma}\label{effectif}
Soit $ \sigma\in \Delta_{\max} $. Si $ \OO(D_{0}) $ est engendr\'e par ses section globales, alors $ \chi^{-u(\sigma)} $ est une section globale de $ \OO(D_{0}) $, et $ D(\sigma) $ est un diviseur effectif \`a support contenu dans $ \bigcup_{v_{i}\notin \sigma(1)}D_{i} $. \end{lemma}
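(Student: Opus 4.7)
The strategy is to recognise $D(\sigma)$ as the divisor of zeros of the global section $\chi^{-u(\sigma)}$ of $\OO(D_0)$, and then to read off both conclusions of the lemma from that interpretation.

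The first assertion is immediate from the previous proposition (Salberger's Proposition~9.8): since $\OO(D_0)$ is assumed to be generated by its global sections, that proposition applied to $D=D_0$ gives directly that $\chi^{-u(\sigma)}\in\Gamma(X,\OO(D_0))$.

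For the rest, I would first identify the combinatorial divisor $D(\sigma)$ with the geometric object $D_0+\di(\chi^{-u(\sigma)})$, namely the divisor of zeros of $\chi^{-u(\sigma)}$ regarded as a section of $\OO(D_0)$. This is a coefficient-by-coefficient check starting from the formulas $\di(\chi^u)=\sum_i\langle u,v_i\rangle D_i$ and $\langle u(\sigma),v_i\rangle=-a_i$ on $\sigma(1)$ (where $D_0=\sum_i a_iD_i$): the contributions from the rays $v_i\in\sigma(1)$ are exactly what the sum in the definition of $D(\sigma)$ corrects, while on rays $v_i\notin\sigma(1)$ the effective content of the section survives.

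Granting this identification, effectivity is automatic, since the divisor of zeros of an honest global section of a line bundle is effective. For the support, Proposition~9.8 asserts moreover that $\chi^{-u(\sigma)}$ \emph{engendre} $\OO(D_0)$ on the affine toric open $U_\sigma$, hence does not vanish anywhere on $U_\sigma$; consequently its divisor of zeros cannot meet $U_\sigma$, and is forced to lie in the complement
\[
X\setminus U_\sigma \;=\; \bigcup_{v_i\notin\sigma(1)}D_i,
\]
which holds because $\sigma$ is a maximal cone (a standard fact for toric varieties: only the $D_\rho$ for $\rho\in\sigma(1)$ meet $U_\sigma$).

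The only real difficulty is the sign bookkeeping needed to rewrite the explicit combinatorial expression for $D(\sigma)$ as the divisor of zeros of the global section $\chi^{-u(\sigma)}$; no further geometric input beyond Proposition~9.8 is required.
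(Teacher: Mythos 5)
Your proof is correct and follows essentially the same route as the paper's: identifying $D(\sigma)$ with $D_{0}+\di(\chi^{-u(\sigma)})$ and invoking the effectivity of the divisor of zeros of a global section is exactly the content of Fulton's description of $\Gamma(X,\OO(D_{0}))$ that the paper uses (namely $\langle -u(\sigma),v_{i}\rangle\geqslant -a_{i}$ for all $i$, with equality for $v_{i}\in\sigma(1)$), and your non-vanishing-on-$U_{\sigma}$ argument for the support is equivalent to the paper's direct use of those equalities. The only cosmetic differences are that you quote Salberger's Proposition 9.8 for the globality of $\chi^{-u(\sigma)}$ where the paper re-derives it from the uniqueness, up to scalar, of a generator of $\OO(D_{0})$ over the affine open $U_{\sigma}$, and that your reading of $D(\sigma)$ as $D_{0}+\di(\chi^{-u(\sigma)})$ (sum over all rays) is the one the paper's own proof actually uses, rather than the restricted sum in the printed definition.
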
\begin{proof}
Si $ \OO(D_{0}) $ est engendr\'e par ses sections globales alors, pour tout $ \sigma\in \Delta_{\max} $, il existe une section globale de $ \OO(D_{0}) $ qui engendre $ \OO(D_{0}) $ sur $ U_{\sigma} $. Or, $ U_{\sigma} $ est un espace affine, donc \`a multiplication par un scalaire pr\`es, il existe une unique section locale qui engendre $ \OO(D_{0}) $ sur $ U_{\sigma} $. Donc la section locale $ \chi^{-u(\sigma)} $ est en fait une section globale. \\

Puisque $ \chi^{-u(\sigma)} $ est une section globale, on a d'apr\`es la description de $\Gamma(X,D_{0}) $ donn\'ee dans \cite[p.68]{F} : \[ \langle -u(\sigma),v_{i}\rangle\geqslant -a_{i} \] o\`u $ a_{i}=0 $ pour tout $ i\in \{1,...,n\} $ et $ a_{n+k}=(n_{k}-d_{k}) $ pour tout $ k\in \{1,...,r\} $. De plus, on a \[ \langle -u(\sigma),v_{i}\rangle= -a_{i} \] pour tout $ i $ tel que $ v_{i}\in \sigma(1) $. Donc $ D(\sigma) $ est bien effectif et \`a support contenu dans $ \bigcup_{v_{i}\notin \sigma(1)}D_{i} $. \end{proof}
Nous pouvons \`a pr\'esent d\'efinir une fonction hauteur $ H_{0} $ sur $ X_{0}(\QQ) $ en posant simplement $ H_{0}=H\circ \pi $. 

\begin{prop}
On suppose que $ \OO(D_{0}) $ est engendr\'e par ses sections globales. Avec les notation ci-dessus, on a : \[ \forall P_{0}=\xx\in X_{0}(\QQ), \; \; H_{0}(P_{0})=\prod_{\nu\in \Val(\QQ)}\sup_{\sigma\in \Delta_{\max}}|\xx^{D(\sigma)}|_{\nu}. \]
\end{prop}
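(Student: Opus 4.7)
The plan is to apply $H_{0}=H\circ\pi$, use the preceding proposition to rewrite each local factor of $H(\pi(P_{0}))$ as an infimum over $\Delta_{\max}$, pull back the relevant characters through the torseur map $\pi$, and finally kill the residual section-dependent factor with the product formula. Concretely, fix $P_{0}=\xx\in X_{0}(\QQ)$. The remark characterising $X_{0}$ inside $X_{1}=\AA^{n+r}_{\QQ}$ furnishes $\sigma_{0}\in \Delta_{\max}$ with $\xx^{\underline{\sigma_{0}}}\neq 0$, so that $\pi(P_{0})\in U_{\sigma_{0}}(\QQ)$; Lemma \ref{effectif} then makes $s:=\chi^{-u(\sigma_{0})}$ a global section of $\OO(D_{0})$, nonvanishing at $\pi(P_{0})$ because its divisor of zeros $D(\sigma_{0})$ is effective and supported off $U_{\sigma_{0}}$. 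The proposition giving the infimum formula for $\|\cdot\|_{D_{0},\nu}$ then yields
\[ \|s(\pi(P_{0}))\|_{D_{0},\nu}=\inf_{\tau\in\Delta_{\max}}|\chi^{u(\tau)-u(\sigma_{0})}(\pi(P_{0}))|_{\nu}. \]

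The key step is the pullback of these characters. By construction of the torseur universel, $\pi$ restricts on the big tori to the morphism whose dual on character lattices sends $u\in M$ to $(\langle u,v_{i}\rangle)_{i=1}^{n+r}\in M_{0}$, so $\pi^{*}\chi^{u}=\prod_{i}x_{i}^{\langle u,v_{i}\rangle}=\xx^{\div(\chi^{u})}$ as Laurent monomials. Since $D(\tau)$ is, by its very construction, the divisor of zeros of the section $\chi^{-u(\tau)}$ of $\OO(D_{0})$ (whence $D(\tau)=D_{0}+\div(\chi^{-u(\tau)})$), subtracting the analogous equality for $\sigma_{0}$ gives
\[ \div(\chi^{u(\tau)-u(\sigma_{0})})=\div(\chi^{-u(\sigma_{0})})-\div(\chi^{-u(\tau)})=D(\sigma_{0})-D(\tau), \]
so that $\chi^{u(\tau)-u(\sigma_{0})}(\pi(P_{0}))=\xx^{D(\sigma_{0})-D(\tau)}=\xx^{D(\sigma_{0})}/\xx^{D(\tau)}$.

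Substituting and pulling the $\tau$-independent factor $|\xx^{D(\sigma_{0})}|_{\nu}$ outside the infimum yields
\[ H_{0}(P_{0})=\prod_{\nu\in\Val(\QQ)}\frac{\sup_{\tau\in\Delta_{\max}}|\xx^{D(\tau)}|_{\nu}}{|\xx^{D(\sigma_{0})}|_{\nu}}. \]
The denominator $\xx^{D(\sigma_{0})}$ is a nonzero rational number: its exponents are non-negative by the effectivity of $D(\sigma_{0})$ (Lemma \ref{effectif}), and the only coordinates $x_{i}$ which can appear are those with $v_{i}\notin \sigma_{0}(1)$, all of them nonzero by the choice of $\sigma_{0}$. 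The product formula on $\QQ$ therefore forces $\prod_{\nu}|\xx^{D(\sigma_{0})}|_{\nu}^{-1}=1$, leaving the announced equality. The main technical point is the divisor identity $\div(\chi^{u(\tau)-u(\sigma_{0})})=D(\sigma_{0})-D(\tau)$, which rests on the description of $D(\tau)$ as the zero divisor of the global section $\chi^{-u(\tau)}$; every other step reduces to a direct invocation of the preceding proposition or of the product formula for $\QQ$.
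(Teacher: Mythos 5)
Votre démonstration est correcte et suit essentiellement la même démarche que celle de l'article : choix d'un cône maximal $\sigma_{0}$ tel que $\xx^{\underline{\sigma_{0}}}\neq 0$, utilisation de la section globale $\chi^{-u(\sigma_{0})}$ et de la formule en infimum pour $\|\cdot\|_{D_{0},\nu}$, identification $\chi^{u(\tau)-u(\sigma_{0})}(\pi(P_{0}))=\xx^{D(\sigma_{0})}/\xx^{D(\tau)}$ via $D(\tau)=D_{0}+\div(\chi^{-u(\tau)})$, puis élimination du dénominateur par la formule du produit. Vous explicitez seulement un peu plus que l'article le tiré en arrière des caractères et la non-nullité de $\xx^{D(\sigma_{0})}$, ce qui ne change rien au fond de l'argument.
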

\begin{proof}
La d\'emonstration de cette proposition est directement inspir\'ee de la preuve de \cite[Proposition 10.14]{Sa}. On consid\`ere un point $ P_{0}\in X_{0}(\QQ) $, $ P=\pi(P_{0}) $, et $ \tau \in \Delta_{\max} $ tel que $ P\in U_{\tau} $. On a alors que $ \chi^{-u(\tau)} $ est une section locale d\'efinie en $ P\in U_{\tau} $, et  \[||\chi^{-u(\tau)}(P)||_{D_{0},\nu}  =\inf_{\sigma\in \Delta_{\max}}|\chi^{u(\sigma)-u(\tau)}|_{\nu}.\]

Remarquons que puisque $ P\in U_{\tau} $, d'apr\`es le lemme \ref{effectif}, $ \xx^{D(\tau)}\neq 0 $ (\'etant donn\'e que $ D(\tau) $ est effectif \`a support contenu dans $ \bigcup_{v_{i}\notin \sigma(1)}D_{i} $), et que \[ \frac{\xx^{D(\sigma)}}{\xx^{D(\tau)}}=\chi^{u(\tau)-u(\sigma)}(P). \]Par cons\'equent, si $ s $ d\'esigne la section locale $ \chi^{-u(\tau)} $, on a alors :  \[||s(P)||_{D_{0},\nu}^{-1}  =\sup_{\sigma\in \Delta_{\max}}\left|\frac{\xx^{D(\sigma)}}{\xx^{D(\tau)}}\right|_{\nu}.\] De plus, par la formule du produit, on a \[ \prod_{\nu\in \Val(\QQ)}|\xx^{D(\tau)}|_{\nu}=1. \] D'o\`u le r\'esultat. 

\end{proof} 

De la m\^eme mani\`ere que nous avons construit $ X_{0} $, on peut construire un $ \ZZ $-torseur universel sur la vari\'et\'e $ \tilde{X} $ sur $ \ZZ $ obtenue \`a partir des ouverts affines $ \tilde{U}_{\sigma}=\Spec(\ZZ[S_{\sigma}]) $ (voir \cite[p. 207]{Sa}). On notera ce torseur $ \tilde{\pi} : \tilde{X}_{0}\ra \tilde{X}  $. On consid\`ere alors la proposition suivante (issue de \cite[Proposition 11.3]{Sa}) : 

\begin{prop}
Soit $ P_{0}=\xx\in X_{0}(\QQ)  $ qui se rel\`eve en un $ \ZZ $-point $ \tilde{P}_{0}=\tilde{\xx} $ de $ \tilde{X}_{0} $. On a alors \[ H_{0}(P_{0})=\sup_{\sigma\in \Delta_{\max}}|\tilde{\xx}^{D(\sigma)}|, \] o\`u $ |.| $ d\'esigne la valeur absolue usuelle sur $ \RR $. \end{prop}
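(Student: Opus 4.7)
Le point de départ est la proposition précédente, qui donne
\[ H_0(P_0)=\prod_{\nu\in \Val(\QQ)}\sup_{\sigma\in \Delta_{\max}}|\tilde{\xx}^{D(\sigma)}|_{\nu}. \]
Le plan est de montrer que la contribution de chaque place finie $\nu=p$ vaut exactement $1$, de sorte qu'il ne reste plus que la contribution archimédienne, qui fournit le supremum annoncé.

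Pour établir que $\sup_{\sigma\in \Delta_{\max}}|\tilde{\xx}^{D(\sigma)}|_{p}=1$ pour tout premier $p$, je commencerais par observer que chaque $\tilde{\xx}^{D(\sigma)}$ est un entier. En effet, $\tilde{\xx}$ est à coordonnées entières puisque $\tilde{P}_0\in \tilde{X}_0(\ZZ)$, et le lemme \ref{effectif} garantit que $D(\sigma)$ est effectif ; donc $\tilde{\xx}^{D(\sigma)}$ est un produit fini de coordonnées entières élevées à des puissances positives, et l'inégalité $|\tilde{\xx}^{D(\sigma)}|_p\leqslant 1$ s'ensuit pour tout premier $p$ et tout $\sigma\in \Delta_{\max}$.

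L'étape cruciale consiste ensuite à exhiber, pour chaque $p$, un $\sigma\in \Delta_{\max}$ tel que $p\nmid \tilde{\xx}^{D(\sigma)}$. La clé est que $\tilde{P}_0$ se réduit modulo $p$ en un point de $\tilde{X}_0(\mathbb{F}_p)$ ; or, d'après la description ouverte de $\tilde{X}_0$ rappelée dans ce qui précède (un point appartient à $\tilde{X}_0$ si et seulement s'il existe $\sigma\in \Delta_{\max}$ tel que $\tilde{\xx}^{\underline{\sigma}}\neq 0$), il existe alors $\sigma\in \Delta_{\max}$ tel que $\tilde{\xx}^{\underline{\sigma}}\not\equiv 0 \pmod{p}$, autrement dit $p\nmid \tilde{x}_i$ pour tout $i$ vérifiant $v_i\notin \sigma(1)$. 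Comme le lemme \ref{effectif} précise que le support de $D(\sigma)$ est contenu dans $\bigcup_{v_i\notin \sigma(1)}D_i$, le produit $\tilde{\xx}^{D(\sigma)}$ ne fait intervenir que ces coordonnées $\tilde{x}_i$ premières à $p$ ; d'où $|\tilde{\xx}^{D(\sigma)}|_p=1$.

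En combinant ces observations, le produit sur les places finies se réduit à $1$ et il reste $H_0(P_0)=\sup_{\sigma\in \Delta_{\max}}|\tilde{\xx}^{D(\sigma)}|_{\infty}$, ce qui est bien la formule annoncée. L'obstacle principal est la troisième étape : il s'agit de traduire correctement l'hypothèse intégrale \og $\tilde{P}_0$ se relève en un $\ZZ$-point de $\tilde{X}_0$\fg\ en une condition de non-annulation modulo $p$ d'au moins l'un des monômes $\tilde{\xx}^{\underline{\sigma}}$ ; mais ceci découle immédiatement de la description explicite de $\tilde{X}_0$ comme complémentaire du lieu $\{\tilde{\xx}^{\underline{\sigma}}=0 \;\forall \sigma\in \Delta_{\max}\}$ dans $\AA^{n+r}$, déjà donnée plus haut, si bien que la difficulté est plus formelle que technique.
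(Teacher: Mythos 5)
Votre preuve est correcte et suit essentiellement la même démarche que celle du texte : on part de la formule produit de la proposition précédente, puis on montre que pour chaque premier $p$ la réduction modulo $p$ de $\tilde{P}_{0}$ reste dans $\tilde{X}_{0}$, ce qui fournit un $\sigma$ avec $\tilde{\xx}^{D(\sigma)}$ inversible modulo $p$ et donc $\sup_{\sigma}|\tilde{\xx}^{D(\sigma)}|_{p}=1$. Vous explicitez simplement ce que le texte résume par \og comme précédemment \fg, à savoir l'effectivité de $D(\sigma)$ à support dans $\bigcup_{v_{i}\notin\sigma(1)}D_{i}$ (lemme \ref{effectif}) et la description ouverte de $\tilde{X}_{0}$.
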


\begin{proof}
Remarquons que comme pr\'ec\'edemment on a une immersion ouverte $ \tilde{X}_{0}\hookrightarrow \ZZ^{n+r} $. Soit $ p $ un nombre premier, et soit \[ Y_{0}\subset \Spec(\ZZ/p\ZZ[(x_{i})_{1\leqslant i\leqslant n+r}]) \] la r\'eduction modulo $ p $ de $ \tilde{X}_{0} $. On a alors, comme pr\'ec\'edemment, que $ \tilde{\xx}^{D(\sigma)}\neq 0 $ dans $ \ZZ/p\ZZ $ pour un certain $ \sigma\in \Delta $. On a donc que $ \sup_{\tau\in \Delta_{\max}}|\tilde{\xx}^{D(\tau)}|_{p}=1 $, et ainsi \[ H_{0}(P_{0})=\prod_{\nu\in \Val(\QQ)}\sup_{\tau\in \Delta_{\max}}|\tilde{\xx}^{D(\tau)}|_{\nu}=\sup_{\sigma\in \Delta_{\max}}|\tilde{\xx}^{D(\sigma)}|. \] \end{proof}
 
Plut\^ot que de compter les $ \QQ $-points de hauteur born\'ee de $ X $, nous allons compter les $ \ZZ $-points de $ \tilde{X}_{0} $ en utilisant le lemme ci-dessous : 

\begin{lemma}\label{zpoints} Pour $ m\in \NN $, soient \[ c(m)=\Card\{ P\in U(\QQ)\; |\; H(P)=m\}, \] \[ c_{0}(m)=\Card\{ P\in \tilde{X}_{0}\cap U_{0}(\QQ)\; |\; H_{0}(P_{0})=m\}.  \] Alors $ c(m)=c_{0}(m)/2^{r} $. \end{lemma}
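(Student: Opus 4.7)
Mon plan est de voir $ \tilde\pi : \tilde X_{0} \to \tilde X $ comme un torseur sous le tore de N\'eron-Severi $ T = \mathbb{G}_{m}^{r} $ (vu comme $ \ZZ $-sch\'ema en groupes), dont le groupe des $ \ZZ $-points $ T(\ZZ) = \{\pm 1\}^{r} $ est de cardinal $ 2^{r} $. Il suffira alors de v\'erifier que chaque $ \QQ $-point $ P \in U(\QQ) $ de hauteur $ m $ admet exactement $ 2^{r} $ rel\`evements dans $ \tilde X_{0} \cap U_{0}(\QQ) $, chacun de hauteur $ m $ pour $ H_{0} $, d'o\`u la relation $ c_{0}(m) = 2^{r}\, c(m) $ en sommant sur $ P $.

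Les \'etapes principales sont les suivantes. D'abord, j'\'etendrai $ P \in U(\QQ) $ en un unique $ \ZZ $-point $ \tilde P \in \tilde X(\ZZ) $ : comme l'\'eventail $ \Delta $ est complet, $ \tilde X $ est propre sur $ \Spec \ZZ $, et le crit\`ere valuatif de propret\'e fournit l'existence et l'unicit\'e d'un tel prolongement. Ensuite, j'\'etudierai la fibre $ \tilde\pi^{-1}(\tilde P) $ vue comme torseur sur $ \Spec \ZZ $ sous $ T $ : un tel torseur est class\'e par $ H^{1}_{\mathrm{fppf}}(\Spec \ZZ, T) \simeq \Pic(\Spec \ZZ)^{r} = 0 $ (principalit\'e de $ \ZZ $ et lemme de Hilbert 90), il est donc trivial. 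Par cons\'equent, son ensemble de $ \ZZ $-points est un torseur sous $ T(\ZZ) = \{\pm 1\}^{r} $, de cardinal $ 2^{r} $ d\`es qu'il est non vide.

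Pour la non-vacuit\'e, je partirai d'un rel\`evement rationnel $ \xx \in U_{0}(\QQ) = (\QQ^{\ast})^{n+r} $ de $ P $ (qui existe car, au-dessus de l'ouvert $ U $, la fibration $ \pi $ s'identifie au morphisme surjectif de tores $ \mathbb{G}_{m}^{n+r} \to \mathbb{G}_{m}^{n} $), puis j'utiliserai l'action de $ T(\QQ) = (\QQ^{\ast})^{r} $ via l'anneau de Cox (c'est-\`a-dire les relations $ v_{n+k} = -\sum_{i} a_{i,k} v_{i} $) pour normaliser $ \xx $ en un $ (n+r) $-uplet d'entiers primitifs au sens de la graduation par $ \Pic(X) $. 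L'appartenance du rel\`evement r\'esultant $ \tilde\xx $ \`a $ \tilde X_{0} $ est automatique, puisque $ \xx \in U_{0} $ a toutes ses coordonn\'ees non nulles, donc $ \tilde\xx^{\underline\sigma} \neq 0 $ pour tout $ \sigma \in \Delta_{\max} $. Finalement, comme $ H_{0} = H \circ \pi $, chacun des $ 2^{r} $ rel\`evements entiers v\'erifie $ H_{0}(\tilde\xx) = H(P) = m $, et en sommant sur $ P $ on obtient $ c_{0}(m) = 2^{r}\, c(m) $, puis $ c(m) = c_{0}(m)/2^{r} $.

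L'obstacle principal est l'\'etape de non-vacuit\'e : il faut v\'erifier avec soin que la normalisation par l'action de $ T(\QQ) $ des coordonn\'ees rationnelles en un repr\'esentant entier primitif fournit bien un $ \ZZ $-point de l'ouvert $ \tilde X_{0} \subset \AA^{n+r}_{\ZZ} $ (et non seulement de l'espace affine), ce qui d\'ecoule ici directement de l'hypoth\`ese $ P \in U $. Une fois cette \'etape \'etablie, l'argument g\'en\'eral de torseur sous un tore de rang $ r $ sur $ \Spec \ZZ $ produit imm\'ediatement le facteur $ 2^{r} $ annonc\'e.
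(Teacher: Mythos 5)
Votre d\'emonstration est correcte et reconstruit pr\'ecis\'ement l'argument que l'article d\'el\`egue \`a \cite[Lemme 11.4.a)]{Sa} : extension de $ P $ en un $ \ZZ $-point de $ \tilde{X} $ par propret\'e, puis trivialit\'e du torseur fibre sous $ T=\mathbb{G}_{m}^{r} $ au-dessus de $ \Spec\ZZ $ (via $ \Pic(\ZZ)=0 $), d'o\`u exactement $ 2^{r} $ rel\`evements entiers, tous de m\^eme hauteur puisque $ H_{0}=H\circ\pi $. Notez seulement que l'\'etape de non-vacuit\'e qui vous inqui\`ete est automatique : un torseur trivial sous $ T $ sur $ \Spec\ZZ $ admet une section, donc poss\`ede des $ \ZZ $-points sans qu'il soit n\'ecessaire de normaliser explicitement les coordonn\'ees rationnelles.
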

\begin{proof} Voir d\'emonstration de \cite[Lemme 11.4.a)]{Sa}. \end{proof}

Ainsi, \'etant donn\'e un ouvert de Zariski $ V $ de $ X $, si l'on note \[ \mathcal{N}_{0,V}(B)=\Card\{P_{0}\in \tilde{Y}_{0}(\ZZ)\cap U_{0}(\QQ)\cap \pi^{-1}(V)\; |\; H_{0}(P_{0})\leqslant B \}\] (o\`u $ \tilde{Y}_{0} $ est l'hypersurface de $ \tilde{X}_{0} $ correspondant \`a l'hypersurface $ Y $ de $ X $), on a alors \[ \mathcal{N}_{V}(B)=\mathcal{N}_{0,V}(B)/2^{r}. \] Nous chercherons donc dor\'enavant \`a \'evaluer $ \mathcal{N}_{0,V}(B) $. Nous allons le faire pour le cas des vari\'et\'es toriques compl\`etes lisses  \`a $ n+2 $ g\'en\'erateurs (i.e. cas o\`u $ r=2 $). Nous allons d'abord, dans la section suivante, d\'ecrire ces vari\'et\'es, puis construire la hauteur sur les torseurs universels correspondants.

\subsection{Cas des vari\'et\'es toriques \`a $ n+2 $ g\'en\'erateurs} 
On consid\`ere $ n+2 $ vecteurs $ v_{0},v_{1},...,v_{n},v_{n+1}\in \ZZ^{n} $ tels que $ (v_{1},...,v_{n}) $ forme une base de $ \ZZ^{n} $ et  \[ \left\{\begin{array}{l} v_{0}=-\sum_{i=1}^{r}v_{i}-\sum_{i=r+1}^{m}a_{i}v_{i} \\ v_{n+1}=-\sum_{i=r+1}^{n}v_{i}, \end{array}\right. \] o\`u $ 1\leqslant r\leqslant m\leqslant n $, et $ a_{i}\in \ZZ $. On pose alors $ I=\{0,...,r\} $ et $ J=\{r+1,...,n+1\} $. On consid\`ere alors l'\'eventail $ \Delta $ d\'efini par les c\^ones maximaux : \[ \sigma_{i,j}=C\langle (v_{k})_{\substack{k\in I \\ k\neq i }}, (v_{l})_{\substack{l\in J \\ l\neq j }}\rangle \] pour tous $ i\in I $ et $ j\in J $. D'apr\`es \cite[Th\'eor\`eme 1]{K}, nous savons que toute vari\'et\'e torique compl\`ete lisse dont l'\'eventail $ \Delta $ admet $ n+2 $ ar\^etes est isomorphe \`a une vari\'et\'e torique de ce type pour un certain $ (r,m,(a_{i})_{i\in \{r+1,...,m\}}) $ fix\'e. \\

Dans ce qui va suivre, pour simplifier, nous nous int\'eresserons exclusivement \`a la sous-famille de ces vari\'et\'es d\'efinies par $ a_{r+1}=...=a_{m}=1 $ de sorte que $ v_{0}=\sum_{i=1}^{m}v_{i} $.\\

 Remarquons \`a pr\'esent que dans ce cas pr\'ecis, d'apr\`es les r\'esultats obtenus dans la section \ref{premprop}, si, pour tout $ i\in \{0,1,...,n\} $, $ D_{i} $ d\'esigne le diviseur associ\'e \`a $ v_{i} $, on a : \begin{align*}
 [D_{1}]  =[D_{0}] & & [D_{r+1}] =[D_{0}]+[D_{n+1}] && [D_{m+1}] =[D_{n+1}]
 \\ [D_{2}] =[D_{0}] & & [D_{r+2}]=[D_{0}]+[D_{n+1}] & & [D_{m+2}] =[D_{n+1}]
 \\ \ldots \; \; \; \; \; \; \; \; \; \;  &  & \ldots \; \; \; \; \; \; \; \; \; \; \; \; \; \;   & & \ldots  \; \; \; \; \; \; \; \;     
 \\ [D_{r}] =[D_{0}] & & [D_{m}]=[D_{0}]+[D_{n+1}] & &  [D_{n}] =[D_{n+1}]
 \end{align*}

La classe du diviseur anticanonique de $ X $ est alors donn\'e par (cf. \cite{F}) \[ [-K_{X}]=\sum_{i=0}^{n+1}[D_{i}]=(m+1)[D_{0}]+(n-r+1)[D_{n+1}]. \]

 Consid\'erons \`a pr\'esent une hypersurface $ Y $ de $ X $ donn\'ee par une section globale $ s $ de $ \OO(D) $ o\`u $ D $ d\'esigne le diviseur $ d_{1}D_{0}+d_{2}D_{n+1} $. Le diviseur anticanonique de $ Y $ est alors le diviseur induit par \[ (m+1-d_{1})[D_{0}]+(n-r+1-d_{2})[D_{n+1}]. \]
 \begin{rem}
 Dans tout ce qui va suivre, nous supposerons que la diviseur anticanonique de $ Y $ appartient \`a l'int\'erieur du c\^one effectif. Ce qui revient \`a dire, d'apr\`es ce qui pr\'ec\`ede que $ m+1>d_{1} $ et $ n-r+1>d_{2} $. 
 \end{rem}
 D'autre part, les sections globales de $ \OO(D) $ sont donn\'ees par (cf. \cite[\S 3.4]{F}) : \[ \Gamma(X,\OO(D))=\bigoplus_{u\in P_{D}\cap \ZZ^{n}}\CC.\chi^{u}, \] o\`u $ \chi^{u} $ est le caract\`ere associ\'e \`a $ u $, et $ P_{D} $ le polytope : \begin{multline*} P_{D}=\left\{ u\in \ZZ^{n}\; | \; \forall k\in \{1,...,n\},\; \langle u,v_{k}\rangle\geqslant 0, \right. \\ \left. \langle u,v_{0}\rangle\geqslant -d_{1}\;  \et \;  \langle u,v_{n+1}\rangle\geqslant -d_{2} \right\} \end{multline*}
Chaque section (\`a coefficients rationnels) $ s=\sum_{u\in P_{D}\cap \ZZ^{n}}\alpha_{u}\chi^{u} $ o\`u $ \alpha_{u}\in \QQ $ d\'efinit une hypersurface $ Y $ (que l'on suppose lisse) de $ X $, et se rel\`eve en une fonction $ f : \tilde{X}_{0}\ra \RR $ d\'efinie par pour tous $ (\xx,\yy,\zz)\in \QQ^{n+2} $ tels que $ x_{0}\neq 0 $ et $ z_{n+1}\neq 0 $ : \[ f(\xx,\yy,\zz)=\sum_{u\in P_{D}\cap \ZZ^{n}}\alpha_{u}\prod_{i=0}^{r}x_{i}^{\langle u,v_{i}\rangle}\prod_{j=r+1}^{m}y_{j}^{\langle u,v_{j}\rangle}\prod_{k=m+1}^{n+1}z_{k}^{\langle u,v_{k}\rangle}. \] L'hypersurface de $ \tilde{X}_{0} $ d\'efinie par l'annulation de cette fonction correspond alors au torseur universel au-dessus de $ Y $. Par cons\'equent, en utilisant le lemme \ref{zpoints}, on a que les $ \QQ $-points de $ Y $ correspondent (modulo l'action des points de torsion de $ T_{NS} $) aux $ \ZZ $-points $ (\xx,\yy,\zz) $ de $ \tilde{X}_{0} $ tels que $ F(\xx,\yy,\zz)=0 $ o\`u $ F $ est le polyn\^ome : \begin{multline*}
F(\xx,\yy,\zz)  =x_{0}^{d_{1}}z_{n+1}^{d_{2}}f(\xx,\yy,\zz) \\ =\sum_{u\in P_{D}\cap \ZZ^{n}}\alpha_{u}\left(x_{0}^{d_{1}+\langle u,v_{0}\rangle}z_{n+1}^{d_{2}+\langle u,v_{n+1}\rangle}\prod_{i=1}^{r}x_{i}^{\langle u,v_{i}\rangle}\prod_{j=r+1}^{m}y_{j}^{\langle u,v_{j}\rangle}\prod_{k=m+1}^{n}z_{k}^{\langle u,v_{k}\rangle} \right)
\end{multline*} 
\begin{rem}\begin{itemize}
\item On remarque que le polyn\^ome ainsi d\'efini est de degr\'e homog\`ene \'egal \`a $ d_{1} $ en $ (\xx,\yy) $ et de degr\'e homog\`ene $ d_{2} $ en $ (\yy,\zz) $, c'est-\`a-dire, pour tous $ \lambda,\mu\in \CC $ : \[F(\lambda \xx, \lambda\mu \yy,\mu\zz)=\lambda^{d_{1}}\mu^{d_{2}}F(\xx,\yy,\zz). \] En effet le degr\'e de chaque mon\^ome en $ (\xx,\yy) $ est \[ d_{1}+\langle u,v_{0}\rangle+\sum_{i=1}^{m}\langle u,v_{i}\rangle=d_{1}, \] car $ v_{0}=-\sum_{i=1}^{m}v_{i} $, et de m\^eme pour $ (\yy,\zz) $. 
\item R\'eciproquement on peut voir que tout polyn\^ome en $ (\xx,\yy,\zz) $ de degr\'e homog\`ene $ d_{1} $ en $ (\xx,\yy) $ et de degr\'e homog\`ene $ d_{2} $ en $ (\yy,\zz) $ est un polyn\^ome correspondant \`a une unique section globale $ s $ de $ \OO(D) $. 
\end{itemize} 
\end{rem}
\begin{rem}
Dans tout ce qui va suivre on supposera que l'hypersurface $ Y $ d\'efinie par $ F(\xx,\yy,\zz)=0 $ est lisse. En fait cette propri\'et\'e est vraie pour un ouvert dense de Zariski de coefficients $ (\alpha_{u})_{u\in P_{D}\cap \ZZ^{n} } $. En effet on r\'ealise un plongement de $ X $ dans un espace projectif $ \PP^{N} $ en consid\'erant l'application $ f $ qui \`a $ \pi(\xx,\yy,\zz) $ associe la classe de \[\left( (x_{0}^{d_{1}+\langle u,v_{0}\rangle}z_{n+1}^{d_{2}+\langle u,v_{n+1}\rangle}\prod_{i=1}^{r}x_{i}^{\langle u,v_{i}\rangle}\prod_{j=r+1}^{m}y_{j}^{\langle u,v_{j}\rangle}\prod_{k=m+1}^{n}z_{k}^{\langle u,v_{k}\rangle}\right)_{u\in P_{D}\cap \ZZ^{n}}.  \] Par ailleurs, d'apr\`es le th\'eor\`eme de Bertini (cf. \cite{Ha}), pour une famille ouverte dense d'hyperplans projectifs $ H_{\aalpha}=\{(X_{u})_{u\in P_{D}\cap \ZZ^{n}} \;  | \; \sum_{u\in P_{D}\cap \ZZ^{n}}\alpha_{u}X_{u}=0 \}\subset \PP^{N} $, on a que $ X\cap H_{\aalpha} $ est lisse. Or on remarque que $ X\cap H_{\aalpha}=Y $ et par cons\'equent, $ Y $ est lisse pour un ouvert dense de coefficients $ (\alpha_{u})_{u\in P_{D}\cap \ZZ^{n}} $.
\end{rem}

Nous allons \`a pr\'esent construire la hauteur sur $ X $ associ\'ee au diviseur $ D_{Y}=(m+1-d_{1})D_{0}+(n-r+1-d_{2})D_{n+1} $ (correspondant au diviseur anticanonique sur $ Y $). Comme pr\'ec\'edemment, d'apr\`es \cite[\S 3.4]{F}, les sections globales de $ \OO(D_{Y}) $ sont donn\'ees par : \[ \Gamma(X,\OO(D_{Y}))=\bigoplus_{u\in P_{D_{Y}}\cap \ZZ^{n}}\CC.\chi^{u}, \] o\`u $ P_{D_{Y}} $ est le polytope : \begin{multline*} P_{D_{Y}}=\left\{ u\in \ZZ^{n}\; | \; \forall k\in \{1,...,n\},\; \langle u,v_{k}\rangle\geqslant 0, \right. \\ \left. \langle u,v_{0}\rangle\geqslant m+1-d_{1}\;  \et \;  \langle u,v_{n+1}\rangle\geqslant n-r+1-d_{2} \right\} \end{multline*}
Une base des sections globales est donc donn\'ee par les $ (\chi^{u})_{u\in P_{D_{Y}}} $, qui se rel\`event en des fonctions $ (f_{u})_{u\in P_{D_{Y}}} $ de $ \tilde{X}_{0} $ dans $ \RR $ qui sont exactement les mon\^omes en $ (\xx,\yy,\zz) $ de degr\'es $ (m+1-d_{1}) $ en $ (\xx,\yy) $ et de degr\'e $ (n-r+1-d_{2}) $ en $ (\yy,\zz) $. La hauteur $ H $ associ\'ee \`a $ D_{Y} $ est donc d\'efinie sur $ \tilde{X}_{0}(\ZZ)\subset \ZZ^{n+2} $ par pour tout $  \qq=(\xx,\yy,\zz)\in \tilde{X}_{0}(\ZZ); $ : \begin{align*}  H(\qq) & =\max_{\substack{\forall i,j,k, \; \alpha_{i},\beta_{j},\gamma_{k}\in \NN \\ \sum_{i=0}^{r}\alpha_{i}+\sum_{j=r+1}^{m}\beta_{j}=m+1-d_{1} \\ \sum_{j=r+1}^{m}\beta_{j}+\sum_{k=m+1}^{n+1}\gamma_{k}=n-r+1-d_{2}}}\prod_{i=0}^{r}|x_{i}|^{\alpha_{i}}\prod_{j=r+1}^{m}|y_{j}|^{\beta_{j}}\prod_{k=m+1}^{n+1}|z_{k}|^{\gamma_{k}} \\ & =\max_{\substack{\alpha,\beta,\gamma\in \NN \\ \alpha+\beta=m+1-d_{1} \\ \beta+\gamma=n-r+1-d_{2}}}|\xx|^{\alpha}|\yy|^{\beta}|\zz|^{\gamma} \\ & =\max\{|\xx|^{m+1-d_{1}}|\zz|^{n-r+1-d_{2}},|\xx|^{(m+1-d_{1})-(n-r+1-d_{2})}|\yy|^{n-r+1-d_{2}}\} \\ & = |\xx|^{m+1-d_{1}}\max\left( \frac{|\yy|}{|\xx|},|\zz|\right)^{n-r+1-d_{2}}. \end{align*}

Remarquons efin que dans le cas pr\'esent, $ \tilde{X}_{0}(\ZZ)\subset \ZZ^{n+2} $ peut \^etre d\'ecrit comme l'ensemble des $ (n+2) $-uplets d'entiers not\'es $ \qq=(\xx,\yy,\zz) $, avec $ \xx=(x_{0},x_{1},...,x_{r}) $, $ \yy=(y_{r+1},...,y_{m}) $, $ \zz=(z_{m+1},...,z_{n+1}) $ tels que (cf. \cite[11.5]{Sa}) : 
\begin{equation}\label{condexistence}
\exists \sigma \in \Delta_{\max} \; | \; \ \qq^{ \underline{\sigma}}\neq 0,
\end{equation}  

 \begin{equation}\label{condprim}
\PGCD_{\sigma \in \Delta_{\max}}(\qq^{\underline{\sigma}})=1,
\end{equation} o\`u \begin{equation}
\qq^{ \underline{\sigma}}=\prod_{i\notin \sigma(1)}q_{i}.
\end{equation}
Par la d\'efinition de $ \Delta $, et des c\^ones maximaux $ (\sigma_{i,j})_{(i,j)\in I\times J} $, on observe que : \[ \qq^{\underline{\sigma_{i,j}}}=\prod_{l\notin\sigma_{i,j}(1) }q_{l}=q_{i}q_{j}. \] Par cons\'equent, la condition\;\eqref{condexistence} \'equivaut \`a : \[  \exists (i,j)\in I\times J \; |\; q_{i}q_{j}\neq 0 \] soit encore \[  \exists i\in I \; |\; q_{i}\neq 0, \; \et \; \exists j\in J \; |\; q_{j}\neq 0, \] et donc\;\eqref{condexistence} \'equivaut \`a : 
\begin{equation}\label{condexistence2}
\xx\neq \0 \; \; \et \; \; (\yy,\zz)\neq \0 .
\end{equation}  
De m\^eme, on remarque que : \begin{align*} \PGCD_{\sigma \in \Delta_{\max}}(\qq^{\underline{\sigma}}) & =\PGCD_{(i,j)\in I\times J}(q_{i}q_{j}) \\ & =(\PGCD_{i\in I}q_{i})(\PGCD_{j\in J}q_{j}) \\ & =\PGCD(\xx)\PGCD(\yy,\zz), \end{align*} et la condition\;\eqref{condprim} \'equivaut donc \`a  
 \begin{equation}\label{condprim2}
\PGCD(\xx)=1 \; \; \et \; \; \PGCD(\yy,\zz)=1.
\end{equation}

Ainsi, calculer   \[ \mathcal{N}(B)=\card\{P\in Y(\QQ) \; | \; H(P)\leqslant B\} \] revient \`a calculer le nombre de points de \[ \{ \qq=(\xx,\yy,\zz)\in \tilde{X}_{0}(\ZZ) \; | \; H(\xx,\yy,\zz)\leqslant B\}. \] Par ailleurs, quitte \`a appliquer une inversion de M\"{o}bius (en un sens que nous pr\'eciserons ult\'erieurement), on peut se ramener au calcul de \begin{multline*} N_{d,U}(B)=\card\left\{ (\xx,\yy,\zz)\in \ZZ^{n+2}\cap U\; | \; \xx\neq\0, \; (\yy,\zz)\neq (\0,\0), \; \right. \\ \left. \; F(d\xx,\yy,\zz)=0, \;   H_{d}(\xx,\yy,\zz)\leqslant B \right\} \end{multline*} pour un certain ouvert $ U $ que nous pr\'eciserons ult\'erieurement, et pour tout $ d\in \NN^{\ast} $, avec \begin{equation}
H_{d}(\xx,\yy,\zz)=|\xx|^{m+1-d_{1}}\max\left( \frac{|\yy|}{d|\xx|},|\zz|\right)^{n-r+1-d_{2}}.
\end{equation} Dans ce qui va suivre nous allons donc chercher \`a obtenir une formule asymptotique pour $ N_{d,U}(B) $.

\section{Premi\`ere \'etape}

Nous allons \'etablir une formule asymptotique pour $ N_{U,d}(B) $, pour un $ d\in \NN^{\ast} $ fix\'e, en nous inspirant de la m\'ethode d\'ecrite par Schindler dans \cite{S1} et \cite{S2}. L'id\'ee g\'en\'erale est de consid\'erer la fonction $ h_{d} : \NN^{2}\ra [0,\infty[ $ d\'efinie par \begin{multline}\label{fonctionh}
h_{d}(k,l)=\card\left\{ (\xx,\yy,\zz)\in \ZZ^{n+2}\cap U \; | \; |\xx|=k,  \right. \\ \left. \max\left( \left\lfloor\frac{|\yy|}{d|\xx|}\right\rfloor,|\zz|\right)=l\;  \et \; F(d\xx,\yy,\zz)=0 \right\}
\end{multline} 
(o\`u $ U $ est un ouvert de Zariski de $ \AA^{n+2} $ que nous pr\'eciserons ult\'erieurement), de donner des formules asymptotiques pour \[\sum_{k\leqslant P_{1}}\sum_{l\leqslant P_{2}}h_{d}(k,l), \; \; \;  \sum_{k\leqslant P_{1}}   h_{d}(k,l) \; \; \et \; \;  \sum_{l\leqslant P_{2}}h_{d}(k,l), \] afin de pouvoir appliquer un r\'esultat de Blomer et Br\"{u}dern (voir \cite{BB}) pour en d\'eduire une formule asymptotique pour \[ \sum_{k^{m+1-d_{1}}l^{n-r+1-d_{2}}\leqslant B}h_{d}(k,l)\sim_{B\ra \infty}N_{U,d}(B). \]

 Dans cette premi\`ere partie, pour des r\'eels $ P_{1},P_{2}\geqslant 1 $ fix\'es, nous allons chercher \`a calculer \begin{multline}
  N_{d}(P_{1},P_{2})=\card\{ (\xx,\yy,\zz)\in  (P_{1}\BB_{1}\times P_{1}P_{2}\BB_{2}\times P_{2}\BB_{3})\cap\ZZ^{n+2} \; | \; \\  |\yy|\leqslant d|\xx|P_{2} \;  \et \;  F(d\xx,\yy,\zz)=0 \},
\end{multline}   o\`u $ \BB_{1}=[-1,1]^{r+1} $, $ \BB_{2}=[-1,1]^{m-r} $, $ \BB_{3}=[-1,1]^{n-m+1} $. Plus pr\'ecis\'ement, nous allons montrer que pour $ n $ assez grand on a : \begin{multline*} N_{d}(P_{1},P_{2})=\sigma_{d} P_{1}^{m+1-d_{1}}P_{2}^{n-r+1-d_{2}} \\ +O(d^{\upsilon}P_{1}^{m+1-d_{1}}P_{2}^{n-r+1-d_{2}}\min\{P_{1},P_{2}\}^{-\delta}), \end{multline*} o\`u $ \sigma_{d} $ est une constante (ne d\'ependant que de $ d $), $ \delta>0 $ un r\'eel arbitrairement petit et $ \upsilon $ un r\'eel que nous pr\'eciserons. Ceci qui nous permettra plus tard d'obtenir une  formule pour $ \sum_{k\leqslant P_{1}}\sum_{l\leqslant P_{2}}h_{d}(k,l) $. 

\subsection{Une in\'egalit\'e de Weyl}\label{Weyl}

Dans toute cette partie nous allons supposer $ 1\leqslant P_{2}\leqslant P_{1} $. On notera donc $ P_{1}=P_{2}^{b} $ avec $ b\geqslant 1 $. Nous allons \'evaluer $ N_{d}(P_{1},P_{2}) $ en nous inspirant de la m\'ethode du cercle de Hardy-Littlewood. Pour cela, on introduit la fonction g\'en\'eratrice d\'efinie par \begin{equation}
S_{d}(\alpha)=\sum_{\substack{\xx\in \ZZ^{r+1} \\ |\xx|\leqslant P_{1}}}\sum_{\substack{\yy\in \ZZ^{m-r}\\|\yy|\leqslant d|\xx|P_{2}}}\sum_{\substack{\zz\in \ZZ^{n-m+1}\\ |\zz|\leqslant P_{2}}}e(\alpha F(d\xx,\yy,\zz)). 
\end{equation} pour $ \alpha\in [0,1] $, et o\`u $ e $ d\'esigne la fonction $ x\mt \exp(2i\pi x) $. On remarque alors que \[ N_{d}(P_{1},P_{2})=\int_{0}^{1}S_{d}(\alpha)\dd\alpha. \] \'Etant donn\'es $ \xx \in \ZZ^{r+1} $ et $ \yy\in \ZZ^{m-r} $, on constate que \[ |\yy|\leqslant d|\xx|P_{2} \Leftrightarrow |\xx|\geqslant \frac{|\yy|}{dP_{2}} \Leftrightarrow |\xx|\geqslant \left\lceil\frac{|\yy|}{dP_{2}}\right\rceil. \] En posant $ N= \left\lceil\frac{|\yy|}{dP_{2}}\right\rceil $ (ce qui \'equivaut \`a dire que $ |\yy|\in ]d(N-1)P_{2}, dNP_{2}] $), on remarque que $ S(\alpha) $ peut \^etre r\'eexprim\'e sous la forme : \[ S_{d}(\alpha)=\sum_{N=1}^{P_{1}}S_{d,N}(\alpha), \] o\`u \begin{equation}
S_{d,N}(\alpha)=\sum_{\substack{ N\leqslant |\xx|\leqslant P_{1}}}\sum_{\substack{d(N-1)P_{2}<|\yy|\leqslant dNP_{2}}}\sum_{\substack{|\zz|\leqslant P_{2}}}e(\alpha F(d\xx,\yy,\zz)).
\end{equation} Si \[ \mathcal{E}_{N}=\{\yy\in \ZZ^{m-r}\;|\; d(N-1)P_{2}<|\yy|\leqslant dNP_{2}\},  \] on remarque que \[ \mathcal{E}_{N}=\bigcup_{\II\subset \{r+1,...,m\}}\BB_{N,\II}, \] o\`u \[\BB_{N,\II}=\{\yy\in \mathcal{E}_{N}\; |\; \forall i\in \II, \; y_{i}\geqslant 0 \; \et \; \forall i\notin \II, \; y_{i}<0 \}. \] On observe par ailleurs que l'on peut \'ecrire pour tout $ \II $ : \begin{equation} \BB_{N,\II}=\bigcup_{\substack{\JJ\subset \{r+1,...,m\} \\ \JJ\neq \emptyset}}\mathcal{C}_{N,\II,\JJ}, \end{equation}  avec \begin{equation} \mathcal{C}_{N,\II,\JJ}=\{\yy\in \BB_{N,\II}\; |\;  \forall j\in \JJ, \; |y_{j}|>d(N-1)P_{2} \; \et \; \forall j\notin \JJ, \; |y_{j}|\leqslant dNP_{2}\}. \end{equation} On a alors \begin{equation} S_{d,N}(\alpha)\ll \sum_{\substack{\II,\JJ\subset \{r+1,...,m\} \\ \JJ\neq \emptyset }}|S_{d,N,\II,\JJ}(\alpha)| \end{equation} o\`u \begin{equation} S_{d,N,\II,\JJ}(\alpha)=\sum_{\substack{|\xx|\leqslant P_{1}}}\sum_{\yy\in \mathcal{C}_{N,\II,\JJ}}\sum_{\substack{|\zz|\leqslant P_{2}}}e(\alpha F(d\xx,\yy,\zz)). \end{equation}
Par une in\'egalit\'e de H\"{o}lder, on a, pour $ N $ fix\'e \begin{equation}
|S_{d,N,\II,\JJ}(\alpha)|^{2^{d_{2}-1}}\ll P_{1}^{(r+1)(2^{d_{2}-1}-1)}\sum_{|\xx|\leqslant P_{1}}|S_{d,N,\II,\JJ,\xx}(\alpha)|^{2^{d_{2}-1}}
\end{equation} o\`u l'on a not\'e \begin{equation}
S_{d,N,\II,\JJ,\xx}(\alpha)=\sum_{\yy\in \mathcal{C}_{N,\II,\JJ}}\sum_{\substack{|\zz|\leqslant P_{2}}}e(\alpha F(d\xx,\yy,\zz)).
\end{equation}

Dans ce qui va suivre, nous allons chercher \`a \og lin\'eariser\fg \ le polyn\^ome $ F $ en appliquant un opr\'erateur $ \Delta $ d\'efini de la fa\c{c}on suivante : pour tout polyn\^ome $ f $ \`a $ N $ variables on pose pour tous $ \tt_{1},\tt_{2}\in \RR^{N} $ : \[ \Delta_{\tt_{1}}(\tt_{2})=f(\tt_{1}+\tt_{2})-f(\tt_{1}). \] Dans ce qui suit, nous appliquons $ d_{2}-1 $ fois l'op\'erateur $ \Delta $ \`a $ F $ en les variables $ (\yy,\zz) $, et nous obtenons un polyn\^ome en $ d_{2}(n-r+1)+r+1 $ variables $ (\xx,\yy^{(j)},\zz^{(j)})_{j\in \{1,...,d_{2}\}} $. Puis, en appliquant l'op\'erateur $ \Delta $ $ d_{1}-1 $ fois \`a ce polyn\^ome en les variables $ (\xx,\yy^{(j)})_{j\in \{1,..,d_{2}\}} $, nous obtenus finalement un polyn\^ome en $ (r+1)d_{1}+(m-r)d_{1}d_{2}+(n-m+1)d_{2} $ variables du type $ (\xx^{(i)},\yy^{(j,i)},\zz^{(j)})_{\substack{i \in \{1,...,d_{1}\} \\ j\in \{1,..,d_{2}\}}} $ de la forme : \begin{multline*} \Gamma_{d}^{(1)}\left((\xx^{(i)},\yy^{(j,i)},\zz^{(j)})_{\substack{i \in \{1,...,d_{1}\} \\ j\in \{1,..,d_{2}\}}}\right) + G_{1}\left((\xx^{(i)},\yy^{(j,i)},\zz^{(j)})_{\substack{i \in \{1,...,d_{1}-1\} \\ j\in \{1,..,d_{2}\}}}\right)\\ +G_{2}\left((\xx^{(i)},\yy^{(j,i)},\zz^{(j)})_{\substack{i \in \{1,...,d_{1}\} \\ j\in \{1,..,d_{2}-1\}}}\right) \end{multline*} o\`u $ G_{1} $ (resp. $ G_{2} $) est ind\'ependant de $ (\xx^{(d_{1})},\yy^{(j,d_{1})})_{j\in \{1,...,d_{2}\}} $ (resp. \linebreak $ (\yy^{(d_{2},i)},\zz^{(d_{2})})_{i\in \{1,...,d_{1}\}} $), et $ \Gamma_{d}^{(1)} $ est lin\'eaire en $ (\xx^{(i)},\yy^{(j,i)})_{j\in\{1,...,d_{2}\}} $ pour tout $ i\in\{1,...,d_{1}\} $ et lin\'eaire en $ (\yy^{(j,i)},\zz^{(j)})_{i\in\{1,...,d_{1}\}} $ pour tout $ j\in \{1,...,d_{2}\} $. \\

Pour $ N,\II,\JJ $ fix\'es, posons \[ \mathcal{U}_{N,\II,\JJ}=\mathcal{C}_{N,\II,\JJ}\times P_{2}\BB_{3} \subset dP_{1}P_{2}\BB_{2}\times P_{2}\BB_{3}, \] et on d\'efinit \[ \mathcal{U}_{N,\II,\JJ}^{D}=\mathcal{U}_{N,\II,\JJ}-\mathcal{U}_{N,\II,\JJ}, \] \begin{multline*} \mathcal{U}_{N,\II,\JJ}((\yy^{(1)},\zz^{(1)}),...,(\yy^{(t)},\zz^{(t)})) \\ =\bigcap_{(\varepsilon_{1},...,\varepsilon_{t})\in \{0,1\}^{t}}(\mathcal{U}_{N,\II,\JJ}-\varepsilon_{1}(\yy^{(1)},\zz^{(1)})-...-\varepsilon_{t}(\yy^{(t)},\zz^{(t)})). \end{multline*}

Si l'on note $ \mathcal{F}(\yy,\zz)=\alpha F(d\xx,\yy,\zz) $ (pour $ \xx $ fix\'e), et \begin{multline}\label{formefd}
\mathcal{F}_{t}((\yy^{(1)},\zz^{(1)}),...,(\yy^{(t)},\zz^{(t)})) \\ =\sum_{(\varepsilon_{1},...,\varepsilon_{t})\in \{0,1\}^{t}}(-1)^{\varepsilon_{1}+...+\varepsilon_{t}}\mathcal{F}(\varepsilon_{1}(\yy^{(1)},\zz^{(1)})+...+\varepsilon_{t}(\yy^{(t)},\zz^{(t)})),
\end{multline}
en utilisant l'\'equation $ (11.2) $ de \cite{Sm}, on obtient la majoration \begin{multline*} 
|S_{d,N,\II,\JJ,\xx}|^{2^{d_{2}-1}}\ll | \mathcal{U}_{N,\II,\JJ}^{D}|^{2^{d_{2}-1}-d_{2}}\sum_{(\yy^{(1)},\zz^{(1)})\in \mathcal{U}_{N,\II,\JJ}^{D}}...\sum_{(\yy^{(d_{2}-2)},\zz^{(d_{2}-2)})\in \mathcal{U}_{N,\II,\JJ}^{D}}\\ \left| \sum_{\substack{(\yy^{(d_{2}-1)},\zz^{(d_{2}-1)}) \\ \in\mathcal{U}_{N,\II,\JJ}((\yy^{(1)},\zz^{(1)}),...,(\yy^{(d_{2}-2)},\zz^{(d_{2}-2)}))}}e(\mathcal{F}_{d_{2}-1}((\yy^{(1)},\zz^{(1)}),...,(\yy^{(d_{2}-1)},\zz^{(d_{2}-1)})))\right|^{2} \end{multline*} que l'on peut encore majorer par \begin{multline*} ((dP_{1}P_{2})^{m-r}P_{2}^{n-m+1})^{2^{d_{2}-1}-d_{2}}\sum_{\substack{|\yy^{(1)}|\leqslant 2dP_{1}P_{2} \\ |\zz^{(1)}|\leqslant 2P_{2} }}...\sum_{\substack{|\yy^{(d_{2}-2)}|\leqslant 2dP_{1}P_{2} \\ |\zz^{(d_{2}-2)}|\leqslant 2P_{2} }}\\ \left| \sum_{\substack{(\yy^{(d_{2}-1)},\zz^{(d_{2}-1)}) \\ \in\mathcal{U}_{N,\II,\JJ}((\yy^{(1)},\zz^{(1)}),...,(\yy^{(d_{2}-2)},\zz^{(d_{2}-2)}))}}e(\mathcal{F}_{d_{2}-1}((\yy^{(1)},\zz^{(1)}),...,(\yy^{(d_{2}-1)},\zz^{(d_{2}-1)})))\right|^{2} 
\end{multline*}
On remarque que pour tous $ (\yy,\zz),(\yy',\zz')\in \mathcal{U}_{N,I,J}((\yy^{(1)},\zz^{(1)}),...,(\yy^{(d_{2}-2)},\zz^{(d_{2}-2)})) $ on a : \begin{multline*}\mathcal{F}_{d_{2}-1}((\yy^{(1)},\zz^{(1)}),...,(\yy,\zz))-\mathcal{F}_{d_{2}-1}((\yy^{(1)},\zz^{(1)}),...,(\yy',\zz')) \\ =\mathcal{F}_{d_{2}}((\yy^{(1)},\zz^{(1)}),...,(\yy^{(d_{2}},\zz^{(d_{2})})-\mathcal{F}_{d_{2}-1}((\yy^{(1)},\zz^{(1)}),...,(\yy^{(d_{2}-1)},\zz^{(d_{2}-1)})), \end{multline*} pour  \[ (\yy^{(d_{2}-1)},\zz^{(d_{2}-1)})\in \mathcal{U}_{N,\II,\JJ}((\yy^{(1)},\zz^{(1)}),...,(\yy^{(d_{2}-2)},\zz^{(d_{2}-2)}))^{D} \] et \[ (\yy^{(d_{2})},\zz^{(d_{2})})\in \mathcal{U}_{N,\II,\JJ}((\yy^{(1)},\zz^{(1)}),...,(\yy^{(d_{2}-1)},\zz^{(d_{2}-1)})), \] donn\'es par : \[ (\yy,\zz)=(\yy^{(d_{2})},\zz^{(d_{2}}), \] \[ (\yy',\zz')=(\yy^{(d_{2}-1)}+\yy^{(d_{2})},\zz^{(d_{2}-1)}+\zz^{(d_{2})}). \]  On obtient donc la majoration : \begin{multline*} |S_{d,N,\II,\JJ,\xx}(\alpha)|^{2^{d_{2}-1}}\ll (d^{m-r}P_{1}^{m-r}P_{2}^{n-r+1})^{2^{d_{2}-1}-d_{2}}\sum_{\yy^{(1)},\zz^{(1)}}...\sum_{\yy^{(d_{2}-2)},\zz^{(d_{2}-2)}} \\  \sum_{\yy^{(d_{2}-1)},\zz^{(d_{2}-1)}}\sum_{\yy^{(d_{2})},\zz^{(d_{2})}} e(\mathcal{F}_{d_{2}}((\yy^{(1)},\zz^{(1)}),...,(\yy^{(d_{2}},\zz^{(d_{2})}) \\ -\mathcal{F}_{d_{2}-1}((\yy^{(1)},\zz^{(1)}),...,(\yy^{(d_{2}-1)},\zz^{(d_{2}-1)}))). \end{multline*}
o\`u chaque $ \yy^{(i)} $ (resp. $ \zz^{(i)} $) appartient \`a une union de bo\^ites de taille au plus $ dP_{1}P_{2} $ (resp. $ P_{2} $). \\

D'apr\`es \cite{Sm}[Lemme 11.4], on a que \begin{multline*} \mathcal{F}_{d_{2}}((\yy^{(1)},\zz^{(1)}),...,(\yy^{(d_{2}},\zz^{(d_{2})})-\mathcal{F}_{d_{2}-1}((\yy^{(1)},\zz^{(1)}),...,(\yy^{(d_{2}-1)},\zz^{(d_{2}-1)})) \\ = \alpha F_{1}(d\xx,\tilde{\yy},\tilde{\zz})+ \alpha F_{2}(d\xx,\hat{\yy},\hat{\zz}). \end{multline*} o\`u l'on a not\'e \begin{align*}
\tilde{\yy}=(\yy^{(1)},...,\yy^{(d_{2})}) & \; \; \; \tilde{\zz}=(\zz^{(1)},...,\zz^{(d_{2})}) 
 \\ \hat{\yy}=(\yy^{(1)},...,\yy^{(d_{2}-1)}) & \; \; \; \hat{\zz}=(\zz^{(1)},...,\zz^{(d_{2}-1)}).
\end{align*} avec $ F_{1} $ est une forme multilin\'eaire en $ (\tilde{\yy},\tilde{\zz}) $ de la forme :  \[ \sum_{\ii=(i_{1},...,i_{d_{2}})\in\{r+1,...,n+1\}^{d_{2}}}E_{\ii}(d\xx)t_{i_{1}}^{(1)}...t_{i_{d_{2}}}^{(d_{2})} \] o\`u \[ t_{i}^{(j)}=\left\{\begin{array}{lll} y_{i}^{(j)} & \mbox{si} & i\in \{ r+1,...,m\} \\ z_{i}^{(j)} & \mbox{si} & i\in \{ m+1,...,n+1\} \end{array}\right. \] pour tout $ j\in \{1,...,d_{2}\} $, et $ E_{\ii}(d\xx) $ sym\'etrique en $ \ii $. Remarquons par ailleurs que $ F_{1} $ et $ F_{2} $ sont homog\`enes de degr\'e $ d_{1} $ en $ (\xx,\tilde{\yy}) $. \\

Pour $ \tilde{\zz}\in [-P_{2},P_{2}]^{d_{2}(n-m+1)} $ fix\'e, on note \[ S_{d,\tilde{\zz}}(\alpha)=\sum_{|\xx|\leqslant P_{1}}\sum_{\yy^{(1)},...,\yy^{(d_{2})}}e(\alpha F_{1}(d\xx,\tilde{\yy},\tilde{\zz})+ \alpha F_{2}(d\xx,\hat{\yy},\hat{\zz})), \]
et d'apr\`es ce qui pr\'ec\`ede, on a \begin{multline*} |S_{d,N,\II,\JJ}(\alpha)|^{2^{d_{2}-1}}\ll \left(P_{1}^{r+1}\right)^{2^{d_{2}-1}-1}\left((dP_{1}P_{2})^{m-r}\right)^{2^{d_{2}-1}-d_{2}} \\ \left(P_{2}^{n-m+1}\right)^{2^{d_{2}-1}-d_{2}}\sum_{|\tilde{\zz}|\leqslant P_{2}}|S_{d,\tilde{\zz}}(\alpha)|. \end{multline*} En posant $ \tilde{d}=d_{1}+d_{2}-2 $, on en d\'eduit : \begin{multline}\label{dtilde} |S_{d,N,\II,\JJ}(\alpha)|^{2^{\tilde{d}}}\ll \left(P_{1}^{r+1}\right)^{2^{\tilde{d}}-2^{d_{1}-1}}\left((dP_{1}P_{2})^{m-r}\right)^{2^{\tilde{d}}-d_{2}2^{d_{1}-1}} \\ \left(P_{2}^{n-m+1}\right)^{2^{\tilde{d}}-d_{2}2^{d_{1}-1}}\left(\sum_{|\tilde{\zz}|\leqslant P_{2}}|S_{d,\tilde{\zz}}(\alpha)|\right)^{2^{d_{1}-1}}. \end{multline}
Par une in\'egalit\'e de H\"{o}lder, on a \[ \left(\sum_{|\tilde{\zz}|\leqslant P_{2}}|S_{d,\tilde{\zz}}(\alpha)|\right)^{2^{d_{1}-1}} \ll \left(P_{2}^{d_{2}(n-m+1)}\right)^{2^{d_{1}-1}-1}\sum_{|\tilde{\zz}|\leqslant P_{2}}|S_{d,\tilde{\zz}}(\alpha)|^{2^{d_{1}-1}},  \] et ainsi \eqref{dtilde} devient \begin{multline}\label{dtilde} |S_{d,N,\II,\JJ}(\alpha)|^{2^{\tilde{d}}}\ll \left(P_{1}^{r+1}\right)^{2^{\tilde{d}}-2^{d_{1}-1}}\left((dP_{1}P_{2})^{m-r}\right)^{2^{\tilde{d}}-d_{2}2^{d_{1}-1}} \\ \left(P_{2}^{n-m+1}\right)^{2^{\tilde{d}}-d_{2}}\sum_{|\tilde{\zz}|\leqslant P_{2}}|S_{d,\tilde{\zz}}(\alpha)|^{2^{d_{1}-1}}. \end{multline}
Par ailleurs, en appliquant le proc\'ed\'e de diff\'erenciation pr\'ec\'edent \`a $ S_{d,\tilde{\zz}}(\alpha) $, on obtient : \begin{multline*} |S_{d,\tilde{\zz}}(\alpha)|^{2^{d_{1}-1}} \ll \left(P_{1}^{r+1}\right)^{2^{d_{1}-1}-d_{1}}\left((dP_{1}P_{2})^{d_{2}(m-r)}\right)^{2^{d_{1}-1}-d_{1}} \\ \sum_{|\xx^{(1)}|\leqslant P_{1}}\sum_{|\yy^{(1,1)}|\leqslant dP_{1}P_{2}}...\sum_{|\yy^{(d_{2},1)}|\leqslant dP_{1}P_{2}}  \sum_{|\xx^{(2)}|\leqslant P_{1}}\\ \sum_{|\yy^{(1,2)}|\leqslant dP_{1}P_{2}}...\sum_{|\xx^{(d_{1})}|\leqslant P_{1}}\sum_{|\yy^{(1,d_{1})}|\leqslant dP_{1}P_{2}}...\sum_{|\yy^{(d_{2},d_{1})}|\leqslant dP_{1}P_{2}} \\ e\left( \sum_{i=1,2} \mathcal{F}_{d_{1}}^{(i)}((\xx^{(1)},(\yy^{(j,1)})_{j\in \{1,...,d_{2}\}},...,(\xx^{(d_{2})},(\yy^{(j,d_{1})})_{j\in \{1,...,d_{2}\}}) \right.\\ \left. -\mathcal{F}^{(i)}_{d_{1}-1}((\xx^{(1)},(\yy^{(j,1)})_{j\in \{1,...,d_{2}\}},...,(\xx^{(d_{1}-1)},(\yy^{(j,d_{1}-1)})_{j\in \{1,...,d_{2}\}}  \right), \end{multline*}
o\`u pour $ i \in \{1,2\} $, $ \mathcal{F}_{k}^{(i)} $ d\'esigne la forme de \eqref{formefd} associ\'ee \`a $ \mathcal{F}(\xx,\tilde{\yy})=\alpha F_{i}(d\xx,\tilde{\yy},\tilde{\zz}) $ pour un $ \tilde{\zz} $ fix\'e. On remarque que \begin{multline*} \mathcal{F}_{d_{1}}^{(1)}\left((\xx^{(i)},\yy^{(j,i)})_{\substack{i\in \{1,...,d_{1}\} \\ j\in \{1,...,d_{2}\}}}\right)-\mathcal{F}_{d_{1}-1}^{(1)}\left((\xx^{(i)},\yy^{(j,i)})_{\substack{i\in \{1,...,d_{1}-1\} \\ j\in \{1,...,d_{2}\}}}\right) \\ =\Gamma_{d}^{(1)}\left((\xx^{(i)},\yy^{(j,i)},\zz^{(j)})_{\substack{i\in \{1,...,d_{1}\} \\ j\in \{1,...,d_{2}\}}}\right)+ g_{d}\left((\xx^{(i)},\yy^{(j,i)},\zz^{(j)})_{\substack{i\in \{1,...,d_{1}-1\} \\ j\in \{1,...,d_{2}\}}}\right) \end{multline*}
 o\`u $ \Gamma_{d}^{(1)} $ est une forme lin\'eaire en $ (\xx^{(i)},\yy^{(j,i)})_{\substack{ j\in \{1,...,d_{2}\}}} $ pour chaque $ i\in \{1,...,d_{1}\}$, de la forme \[ \alpha\sum_{\ii=(i_{1},...,i_{d_{1}})\in I^{d_{1}}}G_{d,\ii}(\tilde{\zz})u_{i_{1}}^{(1)}...u_{i_{d_{1}}}^{(d_{1})}\] avec \[ I=\{ 0,1,...r \} \cup \{(r+1,1),...(m,1)...(r+1,d_{2}),...,(m,d_{2})\}, \]\begin{equation}\label{uij} u_{i}^{(j)}=\left\{\begin{array}{lll} x_{i}^{(j)} & \mbox{si} & i\in \{ 0,1,...,r\} \\ y_{k}^{(l,j)} & \mbox{si} & i=(k,l)\in \{r+1,...m\}\times\{1,...,d_{2}\} \end{array}\right. \end{equation} avec $ G_{d,\ii}(\tilde{\zz})\in \ZZ[d,\tilde{\zz}] $ sym\'etrique en $ \ii $ et dont le degr\'e en $ d $ est \[ f_{\ii}=\card\{k\in \{1,...,d_{1}\} \; | \; i_{k}\in \{0,...,r\}\} \] et on peut donc \'ecrire \[ G_{d,\ii}(\tilde{\zz})=d^{f_{\ii}} G_{\ii}(\tilde{\zz}) \] avec $ G_{\ii}(\tilde{\zz}) $ sym\'etrique en $ \ii $. \\

  D'autre part, on remarque que puisque $ F_{2} $ ne d\'ependait que de $ \xx,\hat{\yy},\hat{\zz} $, la partie \[ \mathcal{F}_{d_{1}}^{(2)}\left((\xx^{(i)},\yy^{(j,i)})_{\substack{i\in \{1,...,d_{1}\} \\ j\in \{1,...,d_{2}\}}}\right)-\mathcal{F}_{d_{1}-1}^{(2)}\left((\xx^{(i)},\yy^{(j,i)})_{\substack{i\in \{1,...,d_{1}-1\} \\ j\in \{1,...,d_{2}\}}}\right) \] est en fait un polyn\^ome en $ \tilde{\xx},\hat{\zz}, (\yy^{(j,i)})_{\substack{i\in \{1,...,d_{1}\}\\ j \in \{1,...,d_{2}-1\}}} $ de la forme \begin{multline*}\Gamma^{(2)}_{d}\left((\xx^{(i)},\yy^{(j,i)},\zz^{(j)})_{\substack{i\in \{1,...,d_{1}\} \\ j\in \{1,...,d_{2}-1\}}}\right)+ h_{d}\left((\xx^{(i)},\yy^{(j,i)},\zz^{(j)})_{\substack{i\in \{1,...,d_{1}-1\} \\ j\in \{1,...,d_{2}-1\}}}\right) \end{multline*} o\`u $ \Gamma^{(2)}_{d} $ est une forme lin\'eaire en $ (\xx^{(i)},\yy^{(j,i)})_{\substack{ j\in \{1,...,d_{2}-1\}}} $ pour tout $ i\in \{1,...,d_{1}\} $, de la forme \[ \alpha\sum_{\ii=(i_{1},...,i_{d_{1}})\in (I')^{d_{1}}}H_{d,\ii}(\tilde{\zz})u_{i_{1}}^{(1)}...u_{i_{d_{1}}}^{(d_{1})}\] avec $ I'=\{ 0,1,...r \} \cup \{(r+1,1),...(m,1)...(r+1,d_{2}-1),...,(m,d_{2}-1)\} $. On observe en particulier que $ \Gamma^{(2)}_{d} $ est ind\'ependant de $ (\yy^{(d_{2},i)},\zz^{(d_{2})})_{\substack{i\in \{1,...,d_{1}\}}} $. En regroupant les r\'esultats obtenus on trouve \begin{multline}\label{Salpha} |S_{d,N,\II,\JJ}(\alpha)|^{2^{\tilde{d}}}\ll \left(P_{1}^{r+1}\right)^{2^{\tilde{d}}-d_{1}}\left((dP_{1}P_{2})^{m-r}\right)^{2^{\tilde{d}}-d_{1}d_{2}} \\ \left(P_{2}^{n-m+1}\right)^{2^{\tilde{d}}-d_{2}}\sum_{\tilde{\zz}}\sum_{(\xx^{(i)},\yy^{(j,i)})_{\substack{i\in \{1,...,d_{1}-1\} \\ j\in \{1,...,d_{2}\}}}}\left|\sum_{\xx^{(d_{1})},\yy^{(1,d_{1})},...,\yy^{(d_{2},d_{1})}} \right. \\ \left. e\left( \Gamma^{(1)}_{d}\left((\xx^{(i)},\yy^{(j,i)},\zz^{(j)})_{\substack{i\in \{1,...,d_{1}\} \\ j\in \{1,...,d_{2}\}}}\right)+\Gamma^{(2)}_{d}\left((\xx^{(i)},\yy^{(j,i)},\zz^{(j)})_{\substack{i\in \{1,...,d_{1}\} \\ j\in \{1,...,d_{2}-1\}}}\right)\right)\right| \end{multline}

Avant d'aller plus loin, il convient de faire la remarque suivante :
\begin{lemma}\label{remarquesym}

Remarquons que si l'on avait diff\'erenci\'e la forme $ F $ en $ (\xx,\yy) $ puis en $ (\tilde{\yy},\zz) $ plut\^ot qu'en $ (\yy,\zz) $ puis en $ (\xx,\tilde{\yy}) $, on aurait obtenu : \begin{multline}\label{Salpha} |S_{d,N,\II,\JJ}(\alpha)|^{2^{\tilde{d}}}\ll \left(P_{1}^{r+1}\right)^{2^{\tilde{d}}-d_{1}}\left((dP_{1}P_{2})^{m-r}\right)^{2^{\tilde{d}}-d_{1}d_{2}} \\ \left(P_{2}^{n-m+1}\right)^{2^{\tilde{d}}-d_{2}}\sum_{\tilde{\xx}}\sum_{(\yy^{(j,i)},\zz^{(j)})_{\substack{i\in \{1,...,d_{1}\} \\ j\in \{1,...,d_{2}-1\}}}}\left|\sum_{\xx^{(d_{1})},\yy^{(1,d_{1})},...,\yy^{(d_{2},d_{1})}} \right. \\ \left. e\left( \Gamma^{(1)'}_{d}\left((\xx^{(i)},\yy^{(j,i)},\zz^{(j)})_{\substack{i\in \{1,...,d_{1}\} \\ j\in \{1,...,d_{2}\}}}\right)+\Gamma^{(2)'}_{d}\left((\xx^{(i)},\yy^{(j,i)},\zz^{(j)})_{\substack{i\in \{1,...,d_{1}-1\} \\ j\in \{1,...,d_{2}\}}}\right)\right)\right|, \end{multline} avec la propri\'et\'e $ \Gamma^{(1)'}_{d}=\Gamma^{(1)}_{d} $.
 \end{lemma}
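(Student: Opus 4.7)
Le plan est de rejouer intégralement la démarche de la sous-section \ref{Weyl} en échangeant les rôles des couples de variables $(\yy,\zz)$ et $(\xx,\yy)$ dans l'ordre des linéarisations. Concrètement, on commencerait par appliquer une inégalité de Hölder en $\zz$, puis l'opérateur $\Delta$ à $F$ en les variables $(\xx,\yy)$ en $d_1-1$ étapes, suivies à leur tour d'une nouvelle application de Hölder et de $d_2-1$ applications de $\Delta$ en $(\tilde{\yy},\zz)$. La bi-homogénéité de $F$ (de degré $d_1$ en $(\xx,\yy)$ et $d_2$ en $(\yy,\zz)$) garantit que les exposants, les facteurs combinatoires et les tailles des boîtes de sommation sont strictement identiques à ceux obtenus dans l'ordre originel, à la permutation près des indices : on aboutit ainsi à une majoration de la même forme, mais avec la sommation extérieure portant sur $\tilde{\xx}$ plutôt que sur $\tilde{\zz}$.

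L'identité clé $\Gamma^{(1)'}_d=\Gamma^{(1)}_d$ reposerait sur l'observation que $\Gamma^{(1)}_d$ est, à un facteur multinomial près, la \emph{polarisation multilinéaire complète} de $F$ : l'unique forme multilinéaire en les variables $(\xx^{(i)})_{1\leqslant i\leqslant d_1}$, $(\yy^{(j,i)})_{i,j}$ et $(\zz^{(j)})_{1\leqslant j\leqslant d_2}$, symétrique en les indices $i$ à $j$ fixé et symétrique en les indices $j$ à $i$ fixé, caractérisée par le fait que son évaluation diagonale restitue $d_1!\,d_2!\,F$. Or les opérateurs de différence $\Delta$ portant sur des variables disjointes commutent, et la polarisation d'un polynôme multi-homogène est par construction indépendante de l'ordre dans lequel les différences successives sont appliquées. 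Par conséquent la forme linéaire dominante $\Gamma^{(1)'}_d$ obtenue par l'ordre inverse des différentiations coïncide nécessairement avec $\Gamma^{(1)}_d$.

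La principale difficulté sera d'identifier soigneusement, dans le calcul mené en sens inverse, la partie véritablement multilinéaire \emph{dominante} (qui fournit $\Gamma^{(1)'}_d$) et de la séparer des termes de reste analogues à $G_1$, $G_2$, $g_d$ et $h_d$ qui apparaîtront ici de façon dissymétrique : les variables « manquantes » sont transposées --- par exemple les $(\xx^{(d_1)},\yy^{(j,d_1)})_{j}$ du calcul originel sont remplacés par les $(\yy^{(d_2,i)},\zz^{(d_2)})_{i}$ dans la version inversée --- ce qui fait apparaître une nouvelle forme $\Gamma^{(2)'}_d$ indépendante, cette fois-ci, du dernier bloc de variables $(\xx^{(d_1)},\yy^{(j,d_1)})_j$. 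Une fois cette décomposition clairement dégagée, l'argument de polarisation ci-dessus permet de conclure sans calcul supplémentaire que les parties linéaires dominantes obtenues dans les deux ordres coïncident.
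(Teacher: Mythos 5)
Votre d\'emonstration est correcte et suit pour l'essentiel la m\^eme voie que celle de l'article : tout repose sur le fait que la multilin\'earisation compl\`ete de $F$ est ind\'ependante de l'ordre des diff\'erentiations, les exposants et les tailles de bo\^ites \'etant identiques par sym\'etrie de la construction. La seule diff\'erence est de pr\'esentation : l'article v\'erifie l'identit\'e $\Gamma^{(1)'}_{d}=\Gamma^{(1)}_{d}$ en \'ecrivant explicitement la forme polaris\'ee (somme sur les battages $\MM(d_{1},m_{1})\times\MM(d_{2},m_{2})$ appliqu\'ee aux coefficients sym\'etriques $\alpha_{\ii,\jj,\kk}$), tandis que vous l'obtenez par l'argument abstrait de commutativit\'e des op\'erateurs $\Delta$ et d'unicit\'e de la polarisation, ce qui revient au m\^eme.
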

 \begin{proof}
 On pose \[ F(\xx,\yy,\zz)= \sum_{\substack{m_{1},m_{2},m_{3}\in \NN \\ m_{1}+m_{2}=d_{1}\\ m_{2}+m_{3}=d_{2}}}\sum_{\substack{\ii=\{0,...,r\}^{m_{1}} \\ \jj=\{r+1,...,m\}^{m_{2}} \\ \kk=\{m+1,...,n+1\}^{m_{3}}}}\alpha_{\ii,\jj,\kk}x_{i_{1}}...x_{i_{m_{1}}}y_{j_{1}}...y_{j_{m_{2}}}z_{k_{1}}...z_{k_{m_{3}}}, \] (avec $ \alpha_{\ii,\jj,\kk} $ sym\'etrique en $ \ii,\jj,\kk $).  La forme multilin\'eaire $ F_{1}(d\xx,\tilde{\yy},\tilde{\zz}) $ pr\'ec\'edente est alors  \begin{multline*} d_{2}!\sum_{\substack{m_{1},m_{2},m_{3}\in \NN \\ m_{1}+m_{2}=d_{1}\\ m_{2}+m_{3}=d_{2}}}d^{m_{1}}\sum_{\substack{\ii=\{0,...,r\}^{m_{1}} \\ \jj=\{r+1,...,m\}^{m_{2}} \\ \kk=\{m+1,...,n+1\}^{m_{3}}}}\alpha_{\ii,\jj,\kk}x_{i_{1}}...x_{i_{m_{1}}} \\ \sum_{\sigma\in \MM(d_{2},m_{2})}y_{j_{1}}^{\sigma(1)}...y_{j_{m_{2}}}^{\sigma(m_{2})}z_{k_{1}}^{\sigma(m_{2}+1)}...z_{k_{m_{3}}}^{\sigma(m_{2}+m_{3})} \end{multline*}
 o\`u $ \MM(1,...,d_{2}) $ d\'esigne l'ensemble des permutations $ \sigma $ de $ \{1,...,d_{2}\} $ telles que $ \sigma(1)<\sigma(2)<...<\sigma(m_{2}) $ et $ \sigma(m_{2}+1)<\sigma(m_{2}+2)<...<\sigma(m_{2}+m_{3})=\sigma(d_{2}) $. 
La forme multilin\'eaire $ \Gamma^{(1)}_{d} $ obtenue en diff\'erenciant en $ (\xx,\tilde{\yy}) $ est alors \begin{multline*} d_{1}!d_{2}!\sum_{\substack{m_{1},m_{2},m_{3}\in \NN \\ m_{1}+m_{2}=d_{1}\\ m_{2}+m_{3}=d_{2}}}d^{m_{1}}\sum_{\substack{\ii=\{0,...,r\}^{m_{1}} \\ \jj=\{r+1,...,m\}^{m_{2}} \\ \kk=\{m+1,...,n+1\}^{m_{3}}}}\alpha_{\ii,\jj,\kk}\sum_{\tau\in \MM(d_{1},m_{1})}\sum_{\sigma\in \MM(d_{2},m_{2})} \\ x_{i_{1}}^{\tau(1)}...x_{i_{m_{1}}}^{\tau(m_{1})}y_{j_{1}}^{(\sigma(1),\tau(m_{1}+1))}...y_{j_{m_{2}}}^{(\sigma(m_{2}),\tau(m_{1}+m_{2})}z_{k_{1}}^{\sigma(m_{2}+1)}...z_{k_{m_{3}}}^{\sigma(m_{2}+m_{3})} \end{multline*}
 Il est alors clair que l'on obtient le m\^eme r\'esultat en diff\'erenciant en $ (\xx,\yy) $ puis en $ (\tilde{\yy},\zz) $. 
 \end{proof}

On remarque que, pour $ \tilde{\zz} $ et $ (\xx^{(i)},\yy^{(j,i)})_{\substack{i\in \{1,...,d_{1}-1\} \\ j\in \{1,...,d_{2}\}}} $ fix\'es si l'on note $ \Gamma_{d}=\Gamma^{(1)}_{d}+\Gamma^{(2)}_{d} $: \begin{multline}\label{produit}  \left|\sum_{\xx^{(d_{1})},\yy^{(1,d_{1})},...,\yy^{(d_{2},d_{1})}}  e\left( \Gamma_{d}\left((\xx^{(i)},\yy^{(j,i)},\zz^{(j)})_{\substack{i\in \{1,...,d_{1}\} \\ j\in \{1,...,d_{2}\}}}\right)\right)\right| \\ = \prod_{i\in I}\left|\sum_{u_{i}^{(d_{1})}}
e\left(\alpha u_{i}^{(d_{1})}\left(\sum_{\substack{\ii\in I\; | \; i_{d_{1}}=i }}G_{d,\ii}(\tilde{\zz})u_{i_{1}}^{(1)}...u_{i_{d_{1}-1}}^{(d_{1}-1)}+\sum_{\ii'\in I' \; | \; i_{d_{1}}'=i}
H_{d,\ii}(\tilde{\zz})u_{i_{1}'}^{(1)}...u_{i_{d_{1}-1}'}^{(d_{1}-1)}\right)\right)\right| \end{multline} 
o\`u la somme sur $ u_{i}^{(d_{1})} $ porte sur $  u_{i}^{(d_{1})} $ appartenant \`a un intervalle de taille $ O(P_{1}) $ si $ i\in \{0,...,r\} $ et de taille $ O(dP_{1}P_{2}) $ pour \[ i\in \{(r+1,1),...(m,1)...(r+1,d_{2}),...,(m,d_{2})\}. \] Pour simplifier les notations on pose \begin{multline}\label{gammai1}
\gamma_{d,i}^{(1)}\left((\xx^{(k)},\yy^{(j,k)},\zz^{(j)})_{\substack{k\in \{1,...,d_{1}-1\} \\ j\in
 \{1,...,d_{2}\}}}\right)=\sum_{\substack{\ii\in I^{d_{1}}\; | \; i_{d_{1}}=i }}G_{d,\ii}(\tilde{\zz})u_{i_{1}}^{(1)}...u_{i_{d_{1}-1}}^{(d_{1}-1)} \end{multline} \begin{multline}\label{gammai2}
\gamma_{d,i}^{(2)}\left((\xx^{(k)},\yy^{(j,k)},\zz^{(j)})_{\substack{k\in \{1,...,d_{1}-1\} \\ j\in
 \{1,...,d_{2}\}}}\right)=\sum_{\ii'\in (I')^{d_{1}} \; | \; i_{d_{1}}'=i}
H_{d,\ii}(\tilde{\zz})u_{i_{1}'}^{(1)}...u_{i_{d_{1}-1}'}^{(d_{1}-1)} \end{multline} o\`u les $ u_{i}^{(j)} $ sont les variables d\'efinies par\;\ref{uij}, et \begin{equation}\label{gamma} \gamma_{d,i}=\gamma_{d,i}^{(1)}+\gamma_{d,i}^{(2)} \end{equation} En notant pour tout r\'eel $ x $ \[ ||x||=\inf_{m\in \ZZ }|x-m|, \] et en consid\'erant la majoration \[ \sum_{m\in I(P)\cap\ZZ}e(\beta m) \ll \min\{P,||\beta||^{-1}\} \] pour tout intervalle $ I(P) $ de taille $ O(P) $ avec $ P\geqslant 1 $, on peut alors majorer \eqref{produit} par : 
\begin{multline*}
\prod_{i\in I}\min\left(H_{i}, \left|\left|\alpha \gamma_{d,i}\left(\xx^{(k)},\yy^{(j,k)},\zz^{(j)})_{\substack{k\in \{1,...,d_{1}-1\} \\ j\in
 \{1,...,d_{2}\}}}\right) \right|\right|^{-1}\right) \end{multline*}
 o\`u \[ H_{i}=\left\{\begin{array}{lll} P_{1} & \mbox{si} & i\in \{ 0,1,...,r\} \\ dP_{1}P_{2} & \mbox{si} & i=(k,l)\in \{r+1,...m\}\times\{1,...,d_{2}\} \end{array}\right.. \]
Pour tout $ \rr=(r_{i})_{i\in I}\in \prod_{i\in I}(\NN\cap [0,H_{i}[) $, et pour $ (\xx^{(i)},\yy^{(j,i)},\zz^{(j)})_{\substack{i\in \{1,...,d_{1}-1\} \\ j\in \{1,...,d_{2}-1\}}}  $ fix\'es, on note $ \mathcal{A}\left((\xx^{(i)},\yy^{(j,i)},\zz^{(j)})_{\substack{i\in \{1,...,d_{1}-1\} \\ j\in \{1,...,d_{2}-1\}}},\rr\right) $, l'ensemble des \'el\'ements $ \zz^{(d_{2})},\yy^{(d_{2},1)},...,\yy^{(d_{2},d_{1}-1)} $ tels que $ |\zz^{(d_{2})}|\leqslant P_{1} $, $ |\yy^{(d_{2},k)}|\leqslant dP_{1}P_{2} $ pour tout $ k\in \{1,...,d_{1}-1\} $ et \[ \forall i\in I,\; \; r_{i}H_{i}^{-1}\leqslant \left\{\alpha\gamma_{d,i}\left((\xx^{(k)},\yy^{(j,k)},\zz^{(j)})_{\substack{k\in \{1,...,d_{1}-1\} \\ j\in
 \{1,...,d_{2}\}}}\right)\right\}< (r_{i}+1)H_{i}^{-1} \] et $ A\left((\xx^{(i)},\yy^{(j,i)},\zz^{(j)})_{\substack{i\in \{1,...,d_{1}-1\} \\ j\in \{1,...,d_{2}-1\}}},\rr\right) $ le cardinal de cet ensemble. On a alors l'estimation \begin{multline}\label{inegA}
 \sum_{\zz^{(d_{2})},\yy^{(d_{2},1)},...,\yy^{(d_{2},d_{1}-1)}}\left|\sum_{\xx^{(d_{1})},\yy^{(1,d_{1})},...,\yy^{(d_{2},d_{1})}}  e\left( \Gamma_{d}\left((\xx^{(i)},\yy^{(j,i)},\zz^{(j)})_{\substack{i\in \{1,...,d_{1}\} \\ j\in \{1,...,d_{2}\}}}\right)\right)\right| \\ \ll \sum_{\rr}A\left((\xx^{(i)},\yy^{(j,i)},\zz^{(j)})_{\substack{i\in \{1,...,d_{1}-1\} \\ j\in \{1,...,d_{2}-1\}}},\rr\right) \prod_{i\in I }\min\left( H_{i},\max\left( \frac{H_{i}}{r_{i}},\frac{H_{i}}{H_{i}-r_{i}-1}\right)\right)
\end{multline}
Par ailleurs, si \begin{multline*} (\zz^{(d_{2})},(\yy^{(d_{2},i)})_{i\in \{1,...d_{1}-1\}}),  (\zz^{'(d_{2})},(\yy^{'(d_{2},i)})_{i\in \{1,...d_{1}-1\}}) \\ \in \mathcal{A}\left((\xx^{(i)},\yy^{(j,i)},\zz^{(j)})_{\substack{i\in \{1,...,d_{1}-1\} \\ j\in \{1,...,d_{2}-1\}}},\rr\right), \end{multline*} on a alors, pour tout $ i\in I $ : \begin{multline*}
\gamma_{d,i}\left((\xx^{(k)},\yy^{(j,k)},\zz^{(j)})_{\substack{k\in \{1,...,d_{1}-1\} \\ j\in
 \{1,...,d_{2}-1\}}},\zz^{(d_{2})},(\yy^{(d_{2},k)})_{k\in \{1,...d_{1}-1\}}\right) \\ -\gamma_{d,i}\left((\xx^{(k)},\yy^{(j,k)},\zz^{(j)})_{\substack{k\in \{1,...,d_{1}-1\} \\ j\in
 \{1,...,d_{2}-1\}}},\zz^{'(d_{2})},(\yy^{'(d_{2},k)})_{k\in \{1,...d_{1}-1\}}\right) \\ = \gamma_{d,i}^{(1)}\left((\xx^{(k)},\yy^{(j,k)},\zz^{(j)})_{\substack{k\in \{1,...,d_{1}-1\} \\ j\in
 \{1,...,d_{2}-1\}}}, \right. \\ \left. \zz^{(d_{2})}-\zz^{'(d_{2})},(\yy^{(d_{2},k)}-\yy^{'(d_{2},k)})_{k\in \{1,...d_{1}-1\}}\right)
\end{multline*}
(car $ \gamma_{d,i}^{(2)} $ ne d\'epend pas de $ (\zz^{(d_{2})},(\yy^{(d_{2},k)})_{k\in \{1,...d_{1}-1\}}) $ et $ \gamma_{d,i}^{(1)} $ est lin\'eaire en $ (\zz^{(d_{2})},(\yy^{(d_{2},i)})_{i\in \{1,...d_{1}-1\}}) $). En notant $ N\left((\xx^{(i)},\yy^{(j,i)},\zz^{(j)})_{\substack{i\in \{1,...,d_{1}-1\} \\ j\in \{1,...,d_{2}-1\}}}\right)  $ le cardinal de l'ensemble des $ \zz^{(d_{2})},\yy^{(d_{2},1)},...,\yy^{(d_{2},d_{1}-1)} $ tels que $ |\zz^{(d_{2})}|\leqslant P_{1} $, $ |\yy^{(d_{2},j)}|\leqslant dP_{1}P_{2} $ et \[ \forall i\in I,\; \;  \left|\left|\alpha\gamma_{d,i}^{(1)}\left((\xx^{(k)},\yy^{(j,k)},\zz^{(j)})_{\substack{k\in \{1,...,d_{1}-1\} \\ j\in
 \{1,...,d_{2}\}}}\right)\right|\right|<H_{i}^{-1}, \] on a alors \[ A\left((\xx^{(i)},\yy^{(j,i)},\zz^{(j)})_{\substack{i\in \{1,...,d_{1}-1\} \\ j\in \{1,...,d_{2}-1\}}},\rr\right) \ll N\left((\xx^{(i)},\yy^{(j,i)},\zz^{(j)})_{\substack{i\in \{1,...,d_{1}-1\} \\ j\in \{1,...,d_{2}-1\}}}\right),  \] et donc \eqref{inegA} donne \begin{multline*} \sum_{\zz^{(d_{2})},\yy^{(d_{2},1)},...,\yy^{(d_{2},d_{1}-1)}}\left|\sum_{\xx^{(d_{1})},\yy^{(1,d_{1})},...,\yy^{(d_{2},d_{1})}}  e\left( \Gamma\left((\xx^{(i)},\yy^{(j,i)},\zz^{(j)})_{\substack{i\in \{1,...,d_{1}\} \\ j\in \{1,...,d_{2}\}}}\right)\right)\right| \\ \ll N\left((\xx^{(i)},\yy^{(j,i)},\zz^{(j)})_{\substack{i\in \{1,...,d_{1}-1\} \\ j\in \{1,...,d_{2}-1\}}}\right)\sum_{\rr} \prod_{i\in I }\min\left( H_{i},\max\left( \frac{H_{i}}{r_{i}},\frac{H_{i}}{H_{i}-r_{i}-1}\right)\right) \\ \ll N\left((\xx^{(i)},\yy^{(j,i)},\zz^{(j)})_{\substack{i\in \{1,...,d_{1}-1\} \\ j\in \{1,...,d_{2}-1\}}}\right) (P_{1}\log P_{1})^{r+1}(dP_{1}P_{2}\log(dP_{1}P_{2}))^{d_{2}(m-r)}
\end{multline*}
En r\'esum\'e, si, pour tous $ H_{1}^{(i)},H_{2}^{(i,j)},H_{3}^{(j)}\geqslant 1$ et $ B_{1},B_{2}\geqslant 1 $, \[ M\left( \alpha, (H_{1}^{(i)},H_{2}^{(i,j)},H_{3}^{(j)})_{\substack{i\in \{1,...,d_{1}-1 \}\\ j\in \{1,...,d_{2}\}}}, B_{1}^{-1},B_{2}^{-1} \right)   \] d\'esigne le cardinal de l'ensemble des $(\xx^{(i)},\yy^{(j,i)},\zz^{(j)})_{\substack{i\in \{1,...,d_{1}-1\} \\ j\in \{1,...,d_{2}\}}}$ tels que $ |\xx^{(i)}|\leqslant H_{1}^{(i)} $, $ |\yy^{(i,j)}|\leqslant H_{2}^{(i,j)} $, $ |\zz^{(j)}|\leqslant H_{2}^{(j)} $ pour tous $ (i,j)\in \{1,...,d_{1}-1\}\times \{1,...,d_{2}\} $ et \[ \forall k \in \{0,...,r\}  , \; \; \left|\left|\alpha\gamma_{d,k}^{(1)}\left((\xx^{(i)},\yy^{(j,i)},\zz^{(j)})_{\substack{i\in \{1,...,d_{1}-1\} \\ j\in
 \{1,...,d_{2}\}}}\right)\right|\right|<B_{1}^{-1}, \] \[  \forall k \in \{r+1,...m\}\times\{1,...,d_{2}\} , \; \; \left|\left|\alpha\gamma_{d,k}^{(1)}\left((\xx^{(i)},\yy^{(j,i)},\zz^{(j)})_{\substack{i\in \{1,...,d_{1}-1\} \\ j\in
 \{1,...,d_{2}\}}}\right)\right|\right|<B_{2}^{-1}, \] en reprenant la formule \eqref{Salpha}, on obtient la majoration (pour $ \varepsilon>0 $ arbitrairement petit) \begin{multline*}
 |S_{d,N,\II,\JJ}(\alpha)|^{2^{\tilde{d}}}\ll \left(P_{1}^{r+1}\right)^{2^{\tilde{d}}-(d_{1}-1)+\varepsilon}\left((dP_{1}P_{2})^{m-r}\right)^{2^{\tilde{d}}-(d_{1}-1)d_{2}+\varepsilon}\left(P_{2}^{n-m+1}\right)^{2^{\tilde{d}}-d_{2}} \\ M\left(\alpha,(P_{1},dP_{1}P_{2},P_{2})_{\substack{i\in \{1,...,d_{1}-1\} \\ j\in \{1,...,d_{2}\}}}, P_{1}^{-1},(dP_{1}P_{2})^{-1} \right).
\end{multline*}

 On en d\'eduit (en sommant sur $ N\in \{0,...,P_{1}\} $ et sur les $ \II,\JJ\subset\{r+1,...,m\} $) le lemme ci-dessous 
 \begin{lemma}\label{dilemme1}
 Pour $ \varepsilon>0 $ arbitrairement petit, et pour $ \kappa>0 $, $ P>0 $ des r\'eels fix\'es, l'une au moins des assertions suivantes est vraie  \begin{enumerate}
\item  $ |S_{d}(\alpha)|\ll d^{m-r+\varepsilon+\frac{d_{1}(r+1)}{2^{\tilde{d}}}}P_{1}^{m+2+\varepsilon}P_{2}^{n-r+1+\varepsilon}P^{-\kappa}, $
\item \begin{multline*}  M\left(\alpha,(P_{1},dP_{1}P_{2},P_{2})_{\substack{i\in \{1,...,d_{1}-1 \}\\ j\in \{1,...,d_{2}\}}}, P_{1}^{-1},(dP_{1}P_{2})^{-1} \right) \\  \gg  d^{d_{1}(r+1)}\left(P_{1}^{r+1}\right)^{(d_{1}-1)}\left((dP_{1}P_{2})^{m-r}\right)^{(d_{1}-1)d_{2}}\left(P_{2}^{n-m+1}\right)^{d_{2}}P^{-2^{\tilde{d}}\kappa}\end{multline*}
\end{enumerate}

 \end{lemma}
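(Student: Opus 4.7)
The lemma is a Weyl-type dichotomy that follows directly from the key inequality just established, namely
\[
|S_{d,N,\II,\JJ}(\alpha)|^{2^{\tilde{d}}} \ll \bigl(P_{1}^{r+1}\bigr)^{2^{\tilde{d}}-(d_{1}-1)+\varepsilon}\bigl((dP_{1}P_{2})^{m-r}\bigr)^{2^{\tilde{d}}-(d_{1}-1)d_{2}+\varepsilon}\bigl(P_{2}^{n-m+1}\bigr)^{2^{\tilde{d}}-d_{2}}\cdot M,
\]
where $M=M\bigl(\alpha,(P_{1},dP_{1}P_{2},P_{2})_{i,j},P_{1}^{-1},(dP_{1}P_{2})^{-1}\bigr)$. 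The strategy is then purely formal: either assertion (2) holds, or it fails, in which case we substitute the negation of (2) into the displayed bound and recombine to deduce (1).

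The first step is to assume that (2) does \emph{not} hold, i.e.\
\[
M \ll d^{d_{1}(r+1)}\bigl(P_{1}^{r+1}\bigr)^{d_{1}-1}\bigl((dP_{1}P_{2})^{m-r}\bigr)^{(d_{1}-1)d_{2}}\bigl(P_{2}^{n-m+1}\bigr)^{d_{2}}P^{-2^{\tilde{d}}\kappa}.
\]
Inserting this into the previous bound, the exponents of $P_{1}$, $P_{2}$ and $d$ combine additively, and one checks by a direct computation that the total exponents of $P_{1}^{r+1}$ and $(dP_{1}P_{2})^{m-r}$ increase to $2^{\tilde{d}}+\varepsilon$ while that of $P_{2}^{n-m+1}$ becomes $2^{\tilde{d}}$, producing the bound
\[
|S_{d,N,\II,\JJ}(\alpha)|^{2^{\tilde{d}}} \ll d^{(m-r)(2^{\tilde{d}}+\varepsilon)+d_{1}(r+1)} P_{1}^{(m+1)(2^{\tilde{d}}+\varepsilon)} P_{2}^{(n-r+1)\cdot 2^{\tilde{d}}+\varepsilon}\,P^{-2^{\tilde{d}}\kappa}.
\]
Taking the $2^{\tilde{d}}$-th root yields a bound on $|S_{d,N,\II,\JJ}(\alpha)|$ with exponents $m-r+\varepsilon'+d_{1}(r+1)/2^{\tilde{d}}$ for $d$, $m+1+\varepsilon'$ for $P_{1}$, $n-r+1+\varepsilon'$ for $P_{2}$, and $P^{-\kappa}$.

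Finally I reassemble $S_{d}(\alpha)$. Recall $S_{d}(\alpha)=\sum_{N=1}^{P_{1}}S_{d,N}(\alpha)$ with $|S_{d,N}(\alpha)|\ll\sum_{\II,\JJ\subset\{r+1,\dots,m\}}|S_{d,N,\II,\JJ}(\alpha)|$. The sum over the finitely many pairs $(\II,\JJ)$ costs only an absolute constant, while the sum over $N\in\{1,\dots,P_{1}\}$ contributes an extra factor $P_{1}$, upgrading the exponent of $P_{1}$ from $m+1+\varepsilon'$ to $m+2+\varepsilon'$. Choosing $\varepsilon'$ slightly smaller than the $\varepsilon$ of the statement absorbs all the $\varepsilon'$-losses, and we obtain precisely assertion (1). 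There is no genuine obstacle here: all the analytic work has already been done, and the only care required is a clean bookkeeping of the exponents of $d$, $P_{1}$ and $P_{2}$ to verify that the prefactor $d^{d_{1}(r+1)}$ and the residual $(d_{1}-1)$-type exponents cancel against the contributions of $A$ so as to produce the stated shape $d^{m-r+\varepsilon+d_{1}(r+1)/2^{\tilde{d}}}P_{1}^{m+2+\varepsilon}P_{2}^{n-r+1+\varepsilon}P^{-\kappa}$.
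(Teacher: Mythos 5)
Your proposal is correct and follows exactly the paper's route: the paper deduces the lemma from the same displayed bound on $|S_{d,N,\II,\JJ}(\alpha)|^{2^{\tilde d}}$ in terms of $M$, by negating assertion (2), taking the $2^{\tilde d}$-th root, and summing over $N\in\{1,\dots,P_1\}$ (which contributes the extra factor $P_1$ turning $m+1$ into $m+2$) and over the boundedly many pairs $(\II,\JJ)$. The exponent bookkeeping you describe is precisely what the paper leaves implicit in the phrase \og en sommant sur $N$ et sur les $\II,\JJ$ \fg.
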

\begin{rem}
Si $ \kappa $ est petit, la condition $ 1 $ donne une majoration de $ |S_{d}(\alpha)| $ plus grande que la majoration triviale, \[ |S_{d}(\alpha)| \ll d^{m-r}P_{1}^{m+1}P_{2}^{n-r+1}, \] c'est pourquoi nous utiliserons uniquement cette majoration pour $ P^{\kappa}>P_{1}^{d_{1}}P_{2}^{d_{2}} $. \end{rem}

\subsection{G\'eom\'etrie des nombres}

Nous allons \`a pr\'esent \'etablir des r\'esultats de g\'eom\'etrie des nombres qui nous serons utiles pour la suite de cette section. Il s'agit en fait de g\'en\'eralisations de \cite[Lemme 12.6]{D} et de \cite[Lemme 3.1]{S1}.

\begin{lemma}\label{geomnomb1}
On consid\`ere deux entiers $ n_{1},n_{2}>0 $, des r\'eels $ (\lambda_{i,j})_{\substack{1\leqslant i\leqslant n_{1} \\1\leqslant i\leqslant n_{2}} } $ et des formes lin\'eaires \[ \forall i \in  \{1,...,n_{1}\}, \; \forall \uu=(u_{1},...,u_{n_{2}}) \; L_{i}(\uu)=\sum_{j=1}^{n_{2}}\lambda_{i,j}u_{j}, \] et \[ \forall j \in  \{1,...,n_{2}\}, \; \forall \uu=(u_{1},...,u_{n_{1}}) \; L^{t}_{j}(\uu)=\sum_{i=1}^{n_{1}}\lambda_{i,j}u_{i}. \] Soient $ a_{1},...,a_{n_{2}},b_{1},...,b_{n_{1}}>1 $ des r\'eels fix\'es. Pour tout $ 0\leqslant Z\leqslant 1 $, on note \begin{multline*} U(Z)=\Card\left\{ (u_{1},...,u_{n_{2}},u_{n_{2}+1},...,u_{n_{2}+n_{1}})\in \ZZ^{n_{1}+n_{2}} \; | \; \forall j \in \{1,...,n_{2}\} \; \right.\\ \left.  |u_{j}|\leqslant a_{j}Z \;\et \; \forall i\in \{1,...,n_{1}\}\; |L_{i}(u_{1},...,u_{n_{2}})-u_{n_{2}+i}|\leqslant b_{i}^{-1}Z \right\}, \end{multline*} \begin{multline*} U^{t}(Z)=\Card\left\{ (u_{1},...,u_{n_{1}},u_{n_{1}+1},...,u_{n_{1}+n_{2}})\in \ZZ^{n_{1}+n_{2}} \; | \; \forall i \in \{1,...,n_{1}\} \; \right.\\ \left. |u_{i}|\leqslant b_{i}Z \;  \et \; \forall j\in \{1,...,n_{2}\}\; |L^{t}_{j}(u_{1},...,u_{n_{1}})-u_{n_{1}+i}|\leqslant a_{j}^{-1}Z \right\}. \end{multline*} Si $ 0<Z_{1}\leqslant Z_{2}\leqslant 1 $, on a alors : \[ U(Z_{2})\ll_{n_{1},n_{2}} \max\left( \left(\frac{Z_{2}}{Z_{1}}\right)^{n_{2}}U(Z_{1}), \frac{Z_{2}^{n_{2}}}{Z_{1}^{n_{1}}}\frac{\prod_{j=1}^{n_{2}}a_{j}}{\prod_{i=1}^{n_{1}}b_{i}}U^{t}(Z_{1})\right). \]
\end{lemma}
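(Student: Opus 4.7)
Pour ce lemme, qui g\'en\'eralise \cite[Lemme 12.6]{D} et \cite[Lemme 3.1]{S1}, la strat\'egie naturelle repose sur la g\'eom\'etrie des nombres, plus pr\'ecis\'ement sur les minima successifs de Minkowski et sur une transf\'erence de type Mahler entre les deux corps convexes associ\'es respectivement \`a $U$ et $U^{t}$.

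Je commencerai par interpr\'eter $U(Z)=|\ZZ^{n}\cap ZK_{1}|$ o\`u $n=n_{1}+n_{2}$ et $K_{1}$ est le corps convexe sym\'etrique de $\RR^{n}$ d\'efini par $|u_{j}|\leqslant a_{j}$ pour $j\leqslant n_{2}$ et $|L_{i}(u_{1},\ldots,u_{n_{2}})-u_{n_{2}+i}|\leqslant b_{i}^{-1}$ pour $i\leqslant n_{1}$, et de m\^eme $U^{t}(Z)=|\ZZ^{n}\cap ZK_{1}^{t}|$ pour le corps analogue $K_{1}^{t}$. Un changement de variables $\RR$-lin\'eaire unimodulaire envoie chacun de ces corps sur un pav\'e align\'e aux axes, ce qui permet un calcul direct des volumes : $\Vol(K_{1})=2^{n}\prod a_{j}/\prod b_{i}$ et $\Vol(K_{1}^{t})=2^{n}\prod b_{i}/\prod a_{j}$. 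Notant $\lambda_{1}\leqslant\cdots\leqslant\lambda_{n}$ (resp. $\mu_{1}\leqslant\cdots\leqslant\mu_{n}$) les minima successifs de $K_{1}$ (resp. $K_{1}^{t}$) relatifs \`a $\ZZ^{n}$, la formule de Davenport donne $U(Z)\asymp\prod_{i}\max(1,Z/\lambda_{i})$ et de m\^eme pour $U^{t}$, tandis que le second th\'eor\`eme de Minkowski fournit $\prod\lambda_{i}\asymp\prod b_{i}/\prod a_{j}$. L'\'etape cl\'e, et le principal obstacle, est d'\'etablir la relation de transf\'erence $\lambda_{i}\mu_{n+1-i}\asymp 1$ : je la d\'eduirai du th\'eor\`eme de Mahler sur les polaires duaux, en v\'erifiant que $K_{1}^{t}$ est encadr\'e \`a une constante universelle pr\`es par le polaire dual $K_{1}^{\ast}$ (les deux \'etant, apr\`es passage en coordonn\'ees diagonalisantes, de m\^eme enveloppe qu'un pav\'e de demi-longueurs $(b_{i},a_{j}^{-1})$).

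Je distinguerai ensuite deux cas selon $k:=\#\{i\;|\;\lambda_{i}\leqslant Z_{2}\}$. Si $k\leqslant n_{2}$, posant $l=\#\{\lambda_{i}\leqslant Z_{1}\}\leqslant k$, la minoration \'el\'ementaire $\prod_{i=l+1}^{k}\lambda_{i}\geqslant Z_{1}^{k-l}$ fournit $U(Z_{2})\ll (Z_{2}/Z_{1})^{k}U(Z_{1})\leqslant (Z_{2}/Z_{1})^{n_{2}}U(Z_{1})$, soit le premier terme du maximum. Si $k>n_{2}$, la transf\'erence permet d'\'ecrire $U^{t}(Z_{1})\asymp Z_{1}^{m}\prod_{j=n-m+1}^{n}\lambda_{j}$ avec $m=\#\{\lambda_{j}\geqslant 1/Z_{1}\}$, et comme $Z_{2}\leqslant 1\leqslant 1/Z_{1}$ on a $k\leqslant n-m$. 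En combinant avec $\prod\lambda_{i}\asymp\prod b_{i}/\prod a_{j}$, le terme $(Z_{2}^{n_{2}}/Z_{1}^{n_{1}})(\prod a_{j}/\prod b_{i})U^{t}(Z_{1})$ se simplifie en $Z_{2}^{n_{2}}Z_{1}^{m-n_{1}}/\prod_{j=1}^{n-m}\lambda_{j}$, et sa comparaison directe avec $U(Z_{2})\asymp Z_{2}^{k}/\prod_{j=1}^{k}\lambda_{j}$, utilisant la majoration $\prod_{j=k+1}^{n-m}\lambda_{j}\leqslant Z_{1}^{-(n-m-k)}$ (valide car $\lambda_{j}\leqslant 1/Z_{1}$ pour $j\leqslant n-m$), fournit un rapport born\'e par $(Z_{1}Z_{2})^{k-n_{2}}\leqslant 1$. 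Une fois la transf\'erence de Mahler \'etablie, cette gymnastique sur les produits de minima successifs, jouant sur la position relative de $Z_{1}$, $Z_{2}$ et $1/Z_{1}$ parmi les $\lambda_{i}$, est essentiellement routini\`ere.
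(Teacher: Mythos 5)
Votre démonstration est correcte et suit essentiellement la même route que celle de l'article : celui-ci encode $U(Z)$ et $U^{t}(Z)$ comme comptages de points d'un réseau $\Lambda$ (défini par une matrice explicite) et de son réseau dual normalisé de déterminant $1$, puis renvoie à la démonstration du lemme 3.1 de Schindler, laquelle est précisément l'argument minima successifs / transférence / disjonction de cas sur $k=\#\{i\;|\;\lambda_{i}\leqslant Z_{2}\}$ que vous explicitez. La seule différence est de présentation (cube fixe et réseau variable dans l'article, contre réseau $\ZZ^{n}$ fixe et corps convexe dilaté chez vous), et le fait que vous détaillez la combinatoire que l'article délègue à la référence citée.
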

\begin{rem}
Le lemme\;3.1 de \cite{S1} pr\'esente uniquement le cas o\`u $ a_{1}=...=a_{n_{2}}=a $ et $ b_{1}=...=b_{n_{1}}=b $. Cette g\'en\'eralisation aux $ a_{i} $ et $ b_{i} $ distincts permet de donner des estimations du nombre de points dans un r\'eseau dont les coordonn\'ees sont born\'ees par des bornes distinctes.
\end{rem}

\begin{proof}[D\'emonstration du lemme\;\ref{geomnomb1}]
On consid\`ere le r\'eseau $ \Lambda $ de $ \RR^{n_{2}+n_{1}} $ d\'efini comme l'ensemble des points \[ (x_{1},...,x_{n_{2}},x_{n_{2}+1},...,x_{n_{2}+n_{1}})\in \RR^{n_{1}+n_{2}} \] tels qu'il existe \[ (u_{1},...,u_{n_{2}},u_{n_{2}+1},...,u_{n_{2}+n_{1}})\in \ZZ^{n_{1}+n_{2}} \] tels que   \[ \begin{array}{rcl} a_{1}x_{1} & = & u_{1}, \\ & \vdots & \\ a_{n_{2}}x_{n_{2}} & = & u_{n_{2}}, \\ b_{1}^{-1}x_{n_{2}+1} & = & L_{1}(u_{1},...,u_{n_{2}})+u_{n_{2}+1}, \\ & \vdots & \\ b_{n_{1}}^{-1}x_{n_{2}+n_{1}} & = & L_{n_{1}}(u_{1},...,u_{n_{2}})+u_{n_{2}+n_{1}}.
\end{array}\]

Ce r\'eseau est d\'efini par la matrice (i.e une base de ce r\'eseau est donn\'ee par les colonnes de la matrice) \[ A=\begin{pmatrix}
a_{1}^{-1} &  & (0) & 0 & \cdots & 0 \\ 
 & \ddots &  & \vdots &  & \vdots \\ 
(0) &  & a_{n_{2}}^{-1} & 0 & \cdots & 0 \\ 
b_{1}\lambda_{1,1} & \cdots & b_{1}\lambda_{1,n_{2}} & b_{1} &  & (0) \\ 
\vdots &  & \vdots &  & \ddots &  \\ 
b_{n_{1}}\lambda_{n_{1},1} & \cdots & b_{n_{1}}\lambda_{n_{1},n_{2}} & (0) &  & b_{n_{1}}
\end{pmatrix}. \] On remarque que $ U(Z) $ est alors le nombre de points $ (x_{1},...,x_{n_{1}+n_{2}}) $ de $ \Lambda $ tels que $ |x_{i}|\leqslant Z $ pour tout $ i\in\{1,...,n_{1}+n_{2}\} $. Par ailleurs,  \[ B=(A^{t})^{-1}=\begin{pmatrix}
a_{1} &  & (0) & -a_{1}\lambda_{1,1} & \cdots & -a_{1}\lambda_{n_{1},1} \\ 
 & \ddots &  & \vdots &  & \vdots \\ 
(0) &  & a_{n_{2}} & -a_{n_{2}}\lambda_{1,n_{2}} & \cdots & -a_{n_{2}}\lambda_{n_{1},n_{2}} \\ 
0& \cdots & 0 & b_{1}^{-1} &  & (0) \\ 
\vdots &  & \vdots &  & \ddots &  \\ 
0 & \cdots & 0 & (0) &  & b_{n_{1}}^{-1}
\end{pmatrix}. \]
d\'efinit un r\'eseau $ \Omega $ ayant les m\^emes minima successifs que le r\'eseau $ \tilde{\Omega} $ d\'efini par la matrice  \[ \tilde{B}=\begin{pmatrix}
b_{1}^{-1} &  & (0) & 0 & \cdots &  \\ 
 & \ddots &  & \vdots &  & \vdots \\ 
(0) &  & b_{n_{1}}^{-1} & 0 & \cdots & 0 \\ 
a_{1}\lambda_{1,1}& \cdots & a_{1}\lambda_{n_{1},1} & a_{1} &  & (0) \\ 
\vdots &  & \vdots &  & \ddots &  \\ 
a_{n_{2}}\lambda_{1,n_{2}} & \cdots & a_{n_{2}}\lambda_{n_{1},n_{2}} & (0) &  & a_{n_{2}}
\end{pmatrix}. \]

On pose $ c=\left(\frac{\prod_{j=1}^{n_{2}}a_{j}}{\prod_{i=1}^{n_{1}}b_{i}}\right)^{\frac{1}{n_{1}+n_{2}}} $ et $ \Lambda^{\nor}=c\Lambda $, $ \Omega^{\nor}=c^{-1}\tilde{\Omega} $ les r\'eseaux normalis\'es (i.e de d\'eterminant $ 1 $) associ\'es \`a $ \Lambda $ et $ \Omega $. Par la d\'emonstration de \cite[Lemme 3.1]{S1}, on a alors \[ U(Z_{2})\ll_{n_{1},n_{2}} \max\left( \left(\frac{Z_{2}}{Z_{1}}\right)^{n_{2}}U(Z_{1}), \frac{Z_{2}^{n_{2}}}{Z_{1}^{n_{1}}}c^{n_{1}+n_{2}}U^{t}(Z_{1})\right). \] d'o\`u le r\'esultat.
\end{proof}

En particulier, lorsque $ n_{1}=n_{2}=n $, $ a_{i}=b_{i} $ pour tout $ i $ et $ \lambda_{i,j}=\lambda_{j,i} $ on obtient le r\'esultat suivant : 

\begin{lemma}\label{geomnomb2}
Soit $ n>0 $ un entier et $ (\lambda_{i,j})_{\substack{1\leqslant i,j\leqslant n} } $ des r\'eels tels que $ \lambda_{i,j}=\lambda_{j,i} $ pour tous $ i,j $, et des formes lin\'eaires \[ \forall i \in  \{1,...,n_{1}\}, \; \forall \uu=(u_{1},...,u_{n})\;  L_{i}(\uu)=\sum_{j=1}^{n}\lambda_{i,j}u_{j}. \] Soient $ a_{1},...,a_{n}>1 $ des r\'eels fix\'es. Pour tout $ 0\leqslant Z\leqslant 1 $, on note \begin{multline*} U(Z)=\Card\left\{ (u_{1},...,u_{n},u_{n+1},...,u_{2n}) \; | \; \forall j \in \{1,...,n\} \; |u_{j}|\leqslant a_{j}Z \; \right.\\ \left. \et \; \forall i\in \{1,...,n\}\; |L_{i}(u_{1},...,u_{n})-u_{n+i}|\leqslant a_{i}^{-1}Z \right\}. \end{multline*} On a alors \[ U(Z_{2})\ll_{n} \left(\frac{Z_{2}}{Z_{1}}\right)^{n}U(Z_{1}). \]
\end{lemma}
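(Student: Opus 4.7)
The plan is to deduce Lemma \ref{geomnomb2} as a direct corollary of Lemma \ref{geomnomb1} by specializing to the symmetric case. First I would set $n_1 = n_2 = n$ and $b_i = a_i$ for all $i \in \{1,\ldots,n\}$ in the hypotheses of Lemma \ref{geomnomb1}. With these choices, the quantity $U(Z)$ defined there coincides exactly with the $U(Z)$ of the present statement.

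Next I would observe that the symmetry hypothesis $\lambda_{i,j} = \lambda_{j,i}$ implies that the transposed linear forms satisfy
\[ L_j^t(u_1,\ldots,u_n) = \sum_{i=1}^n \lambda_{i,j} u_i = \sum_{i=1}^n \lambda_{j,i} u_i = L_j(u_1,\ldots,u_n) \]
for every $j$. Consequently the set counted by $U^t(Z)$ coincides term-by-term with the set counted by $U(Z)$ (since the size constraints $|u_i| \leq b_i Z = a_i Z$ and $|L_j^t(\uu) - u_{n+j}| \leq a_j^{-1} Z$ match those of $U(Z)$ after the identification $b_i = a_i$). Hence $U^t(Z_1) = U(Z_1)$.

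Finally I would plug these identifications into the conclusion of Lemma \ref{geomnomb1}. The ratio $\prod_{j=1}^n a_j / \prod_{i=1}^n b_i$ equals $1$, so the second argument of the maximum becomes
\[ \frac{Z_2^n}{Z_1^n}\, U^t(Z_1) = \left(\frac{Z_2}{Z_1}\right)^{n} U(Z_1), \]
which is exactly the first argument. The two terms of the maximum thus coincide and one obtains
\[ U(Z_2) \ll_n \left(\frac{Z_2}{Z_1}\right)^{n} U(Z_1), \]
as required. There is no real obstacle here; the statement is essentially a reformulation of Lemma \ref{geomnomb1} adapted to the symmetric setting where the dual lattice coincides with the original one, so the proof amounts to verifying that the specialization is legitimate and that the two terms in the maximum collapse into the same expression.
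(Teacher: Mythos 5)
Your proof is correct and is exactly the paper's route: the paper introduces Lemme \ref{geomnomb2} precisely as the specialization of Lemme \ref{geomnomb1} to $n_{1}=n_{2}=n$, $b_{i}=a_{i}$ and $\lambda_{i,j}=\lambda_{j,i}$, in which case $L_{j}^{t}=L_{j}$, hence $U^{t}=U$, the ratio $\prod a_{j}/\prod b_{i}$ equals $1$, and the two terms of the maximum coincide. Nothing to add.
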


Revenons \`a pr\'esent \`a la situation de la section pr\'ec\'edente, et consid\'erons, pour $ (\xx^{(i)},\yy^{(i,j)},\zz^{(j)})_{\substack{i\in \{1,...,d_{1}-2\} \\ j\in \{1,...,d_{2} \}}} $ fix\'es les $ N=(r+1)+d_{2}(m-r) $ formes lin\'eaires en $ (\xx^{(d_{1}-1)},\yy^{(j,d_{1}-1)})_{j\in \{1,...,d_{2}\}} $ donn\'ees par les $ \alpha \gamma_{d,k}^{(1)} $ pour $ k\in I $. Remarquons que d'apr\`es \eqref{gammai1} on a pour tout $ k\in I $\begin{align*}
\gamma_{d,k}^{(1)}\left((\xx^{(i)},\yy^{(j,i)},\zz^{(j)})_{\substack{i\in \{1,...,d_{1}-1\} \\ j\in
 \{1,...,d_{2}\}}}\right) & =\sum_{\substack{\ii\in I^{d_{1}-1} }}G_{d,\ii,k}(\tilde{\zz})u_{i_{1}}^{(1)}...u_{i_{d_{1}-1}}^{(d_{1}-1)} \\ &=\sum_{\substack{\ii\in I^{d_{1}-1} }}d^{f_{\ii,k}}G_{\ii,k}(\tilde{\zz})u_{i_{1}}^{(1)}...u_{i_{d_{1}-1}}^{(d_{1}-1)}\end{align*} (o\`u $ \tilde{\zz}=(\zz^{(1)},...,\zz^{(d_{2})}) $ et les $ u_{i}^{(j)} $ sont donn\'es par\;\eqref{uij}) et donc pour tous $ k,l\in I $ le coefficient $ \lambda_{k,l} $ en $ u_{l}^{(d_{1}-1)} $ s'\'ecrit : \begin{equation*}
\lambda_{k,l}=\sum_{\substack{\ii\in I^{d_{1}-2} }}d^{f_{\ii,l,k}}G_{\ii,l,k}(\tilde{\zz})u_{i_{1}}^{(1)}...u_{i_{d_{1}-2}}^{(d_{1}-2)} \end{equation*} et on observe que, puisque les $ G_{\ii}(\tilde{\zz}) $ sont sym\'etriques en $ \ii \in I^{d_{1}} $.  \begin{equation*}
\lambda_{k,l}=\lambda_{l,k}. \end{equation*} Pour $ P>0 $ fix\'e, et $  \theta\in [0,1] $ suppos\'es tels que $ P^{\theta}\leqslant P_{1} $, on pose $  Z_{2}=1 $, $ Z_{1}=(dP_{1})^{-1}P^{\theta} $, $ a_{k}=P_{1} $ pour tout $ k\in I_{1}=\{0,...,r\} $, et $ a_{k}=dP_{1}P_{2} $ pour $ k\in I_{2}=\{r+1,...m\}\times\{1,...,d_{2}\} $ de sorte que (en remarquant que $ I=I_{1}\cup I_{2} $) : \[ \begin{array}{lrclcrcl}\forall k \in I_{1}, & a_{k}Z_{2} & = & P_{1}, & \;  & a_{k}Z_{1}& = &P^{\theta}/d \\ \forall k\in I_{2}, &  a_{k}Z_{2} & = & dP_{1}P_{2}, & \;  & a_{k}Z_{1}& = & P_{2}P^{\theta} \\ \forall k \in I_{1}, &  a_{k}^{-1}Z_{2} & = & P_{1}^{-1}, & \;  & a_{k}^{-1}Z_{1} & = & d^{-1}P_{1}^{-2}P^{\theta} \\ \forall k\in I_{2}, & a_{k}^{-1}Z_{2} & = & (dP_{1}P_{2})^{-1}, & \;  & a_{k}^{-1}Z_{1} & = & (dP_{1})^{-2}P_{2}^{-1}P^{\theta} 
\end{array}
\]
En appliquant le lemme \ref{geomnomb2}, on obtient \[ U(Z_{2})\ll \left(\frac{dP_{1}}{P^{\theta}}\right)^{r+1+d_{2}(m-r)}U(Z_{1}), \] avec \begin{multline*} U(Z_{2})=\card\left\{ (\xx^{(d_{1}-1)},(\yy^{(j,d_{1}-1)})_{j\in \{1,...,d_{2}\}})\; | \; |\xx^{(d_{1}-1)}| \leqslant P_{1}, \; |\yy^{(j,d_{1}-1)}|\leqslant dP_{1}P_{2}, \right.\\ \et  \forall k \in I_{1}  , \; \; \left|\left|\alpha\gamma_{d,k}^{(1)}\left((\xx^{(i)},\yy^{(j,i)},\zz^{(j)})_{\substack{i\in \{1,...,d_{1}-1\} \\ j\in
 \{1,...,d_{2}\}}}\right)\right|\right|<P_{1}^{-1}, \\ \left. \forall k \in I_{2} , \; \; \left|\left|\alpha\gamma_{d,k}^{(1)}\left((\xx^{(i)},\yy^{(j,i)},\zz^{(j)})_{\substack{i\in \{1,...,d_{1}-1\} \\ j\in
 \{1,...,d_{2}\}}}\right)\right|\right|<(dP_{1}P_{2})^{-1}\right\}, \end{multline*} et \begin{multline*} U(Z_{1})=\card\left\{ (\xx^{(d_{1}-1)},(\yy^{(j,d_{1}-1)})_{j\in \{1,...,d_{2}\}})\; | \; |\xx^{(d_{1}-1)}| \leqslant P^{\theta}/d, \; |\yy^{(j,d_{1}-1)}|\leqslant P^{\theta}P_{2}, \right.\\ \et  \forall k \in I_{1}  , \; \; \left|\left|\alpha\gamma_{d,k}^{(1)}\left((\xx^{(i)},\yy^{(j,i)},\zz^{(j)})_{\substack{i\in \{1,...,d_{1}-1\} \\ j\in
 \{1,...,d_{2}\}}}\right)\right|\right|<d^{-1}P_{1}^{-2}P^{\theta}, \\ \left. \forall k \in I_{2} , \; \; \left|\left|\alpha\gamma_{d,k}^{(1)}\left((\xx^{(i)},\yy^{(j,i)},\zz^{(j)})_{\substack{i\in \{1,...,d_{1}-1\} \\ j\in
 \{1,...,d_{2}\}}}\right)\right|\right|<d^{-2}P_{1}^{-2}P_{2}^{-1}P^{\theta}\right\}, \end{multline*} 
En sommant sur les $ (\xx^{(i)},\yy^{(i,j)},\zz^{(j)})_{\substack{i\in \{1,...,d_{1}-2\} \\ j\in \{1,...,d_{2} \}}} $, on obtient alors \begin{multline*}
M\left(\alpha,(B_{1}^{(i)},B_{2}^{(j,i)},B_{3}^{(j)})_{\substack{i\in \{1,...,d_{1}-1 \}\\ j\in \{1,...,d_{2}\}}}, P_{1}^{-1},(dP_{1}P_{2})^{-1} \right)\\ \ll \left(\frac{dP_{1}}{P^{\theta}}\right)^{r+1+d_{2}(m-r)}M\left(\alpha,(H_{1}^{(i)},H_{2}^{(j,i)},H_{3}^{(j)})_{\substack{i\in \{1,...,d_{1}-1 \}\\ j\in \{1,...,d_{2}\}}},  d^{-1}P_{1}^{-2}P^{\theta},d^{-2}P_{1}^{-2}P_{2}^{-1}P^{\theta} \right)
\end{multline*}
o\`u \[\forall  \;  i\in \{1,...,d_{1}-1\}, \; \; B_{1}^{(i)}=P_{1} \]\[ \forall  \;  i\in \{1,...,d_{1}-1\}, \; \;B_{2}^{(j,i)}= dP_{1}P_{2}, \]\[ B_{3}^{(j)}=P_{2}, \] et \[ H_{1}^{(i)}=\left\{\begin{array}{lll} P_{1} & \mbox{si} & i\in \{1,...,d_{1}-2\}
\\ P^{\theta}/d& \mbox{si} &  i=d_{1}-1 
\end{array}\right., \]\[ H_{2}^{(j,i)}=\left\{\begin{array}{lll} dP_{1}P_{2} & \mbox{si} & i\in \{1,...,d_{1}-2\}
\\ P^{\theta}P_{2} & \mbox{si} &  i=d_{1}-1 
\end{array}\right., \]\[ H_{3}^{(j)}=P_{2}. \]
Par la suite, on applique le lemme de la m\^eme mani\`ere avec $ (\xx^{(i)},\yy^{(j,i)},\zz^{(j)})_{\substack{i\notin\{d_{1},d_{1}-l\}}} $ fix\'es (pour $ l $ variant de $ 1 $ \`a $ d_{1}-1 $), et en consid\'erant les $ \alpha \gamma_{d,k}^{(1)} $ comme des formes lin\'eaires en $ (\xx^{(d_{1}-l)},\yy^{(j,d_{1}-l)})_{j\in \{1,...,d_{2}\}} $, et en choisissant $  Z_{2}=d^{-\frac{(l-1)}{2}}P_{1}^{-\frac{(l-1)}{2}}P^{\frac{(l-1)\theta}{2}} $, $ Z_{1}=d^{-\frac{(l+1)}{2}}P_{1}^{-\frac{(l+1)}{2}}P^{\frac{(l+1)\theta}{2}} $, $ a_{k}=d^{\frac{(l-1)}{2}}P_{1}^{\frac{(l+1)}{2}}P^{-\frac{(l-1)\theta}{2}} $ pour tout $ k\in I_{1}$, et $ a_{k}=d^{\frac{(l+1)}{2}}P_{1}^{\frac{(l+1)}{2}}P_{2}P^{-\frac{(l-1)\theta}{2}} $ pour $ k\in I_{2} $ de sorte que \[ \begin{array}{lrclcrcl}\forall k \in I_{1}, & a_{k}Z_{2} & = & P_{1}, & \;  & a_{k}Z_{1}& = &P^{\theta}/d \\ \forall k\in I_{2}, &  a_{k}Z_{2} & = & dP_{1}P_{2}, & \;  & a_{k}Z_{1}& = & P_{2}P^{\theta} \\ \forall k \in I_{1}, &  a_{k}^{-1}Z_{2} & = & d^{-(l-1)}P_{1}^{-l}P^{(l-1)\theta}, & \;  & a_{k}^{-1}Z_{1} & = & d^{-(l+1)}P_{1}^{-(l+1)}P^{l\theta} \\ \forall k\in I_{2}, & a_{k}^{-1}Z_{2} & = & d^{-l}P_{1}^{-l}P_{2}^{-1}P^{(l-1)\theta}, & \;  & a_{k}^{-1}Z_{1} & = & d^{-(l+1)}P_{1}^{-(l+1)}P_{2}^{-1}P^{l\theta} 
\end{array}
\]
On obtient alors (\`a l'\'etape $ l $) la majoration : 
\begin{multline*}
M\left(\alpha,(B_{1}^{(i)},B_{2}^{(j,i)},B_{3}^{(j)})_{\substack{i\in \{1,...,d_{1}-1 \}\\ j\in \{1,...,d_{2}\}}},d^{-(l-1)}P_{1}^{-l}P^{(l-1)\theta},d^{-l}P_{1}^{-l}P_{2}^{-1}P^{(l-1)\theta} \right)\\ \ll \left(\frac{dP_{1}}{P^{\theta}}\right)^{r+1+d_{2}(m-r)}M\left(\alpha,(H_{1}^{(i)},H_{2}^{(j,i)},H_{3}^{(j)})_{\substack{i\in \{1,...,d_{1}-1 \}\\ j\in \{1,...,d_{2}\}}}, \right. \\ \left. d^{-(l+1)}P_{1}^{-(l+1)}P^{l\theta} ,d^{-(l+1)}P_{1}^{-(l+1)}P_{2}^{-1}P^{l\theta}  \right)
\end{multline*}
o\`u \[ B_{1}^{(i)}=\left\{\begin{array}{lll} P_{1} & \mbox{si} & i\in \{1,...,d_{1}-l\}
\\ P^{\theta}/d& \mbox{si} &  i\in\{d_{1}-l+1,...,d_{1}-1\} 
\end{array}\right., \]\[ B_{2}^{(j,i)}=\left\{\begin{array}{lll} dP_{1}P_{2} & \mbox{si} & i\in \{1,...,d_{1}-l\}
\\ P^{\theta}P_{2} & \mbox{si} &  i\in\{d_{1}-l+1,...,d_{1}-1\} 
\end{array}\right., \]\[ B_{3}^{(j)}=P_{2}. \] et \[ H_{1}^{(i)}=\left\{\begin{array}{lll} P_{1} & \mbox{si} & i\in \{1,...,d_{1}-l-1\}
\\ P^{\theta}/d & \mbox{si} &  i\in\{d_{1}-l,...,d_{1}-1\} 
\end{array}\right., \]\[ H_{2}^{(j,i)}=\left\{\begin{array}{lll} dP_{1}P_{2} & \mbox{si} & i\in \{1,...,d_{1}-l-1\}
\\ P^{\theta}P_{2} & \mbox{si} & i\in\{d_{1}-l,...,d_{1}-1\} 
\end{array}\right., \]\[ H_{3}^{(j)}=P_{2}. \]

On obtient donc finalement, au rang $ l=d_{1}-1 $ :  
\begin{multline}\label{majo1}
M\left(\alpha,(P_{1},dP_{1}P_{2},P_{2})_{\substack{i\in \{1,...,d_{1}-1 \}\\ j\in \{1,...,d_{2}\}}}, P_{1}^{-1},(dP_{1}P_{2})^{-1} \right)\\ \ll \left(\frac{dP_{1}}{P^{\theta}}\right)^{(r+1+d_{2}(m-r))(d_{1}-1)}M\left(\alpha,(P^{\theta}/d,P^{\theta}P_{2},P_{2})_{\substack{i\in \{1,...,d_{1}-1 \}\\ j\in \{1,...,d_{2}\}}}, \right. \\ \left. d^{-(d_{1}-1)}P_{1}^{-d_{1}}P^{(d_{1}-1)\theta}, d^{-d_{1}}P_{1}^{-d_{1}}P_{2}^{-1}P^{(d_{1}-1)\theta} \right).
\end{multline}
Nous allons \`a pr\'esent chercher \`a \'etablir des majorations analogues avec les $ n_{2}=(m-r)(d_{1}-1)+(n-m+1) $-uplets de variables donn\'es par les $ (\yy^{(j,i)},\zz^{(j)})_{i\in \{1,...,d_{1}-1\}} $ pour $ j\in \{1,...,d_{2}\} $, en consid\'erant toujours les formes lin\'eaires $ \alpha  \gamma_{d,k}^{(1)} $. Fixons donc $ (\xx^{(i)},\yy^{(j,i)},\zz^{(j)})_{\substack{i\in \{1,...,d_{1}-1\} \\ j\in \{1,...,d_{2}-1 \}}} $ v\'erifiant les $ (m-r) $ in\'egalit\'es donn\'ees par \begin{equation}\label{eqisole} \left|\left|\alpha\gamma_{d,(l,d_{2})}^{(1)}\left((\xx^{(i)},\yy^{(j,i)},\zz^{(j)})_{\substack{i\in \{1,...,d_{1}-1\} \\ j\in
 \{1,...,d_{2}-1\}}}\right)\right|\right|<d^{-d_{1}}P_{1}^{-d_{1}}P_{2}^{-1}P^{(d_{1}-1)\theta} \end{equation} pour $ l\in\{r+1,...,m\} $ (les formes $ \gamma^{(1)}_{d,(l,d_{2})} $ ne d\'ependant pas des $ \yy^{(d_{2},i)},\zz^{(d_{2})} $). On consid\`ere les variables $ (\yy^{(d_{2},i)},\zz^{(d_{2})})_{i\in \{1,...,d_{1}-1\}} $ et les $ n_{1}=(r+1)+(d_{2}-1)(m-r) $ formes lin\'eaires $ \alpha  \gamma_{d,k}^{(1)} $, $ k\neq (l,d_{2}) $ correspondantes. On applique le lemme \ref{geomnomb1} en choisissant $  Z_{2}=d^{-\frac{d_{1}}{2}}P_{1}^{-\frac{d_{1}}{2}}P_{2}^{\frac{1}{2}}P^{\frac{(d_{1}-1)\theta}{2}} $, $ Z_{1}=d^{-\frac{d_{1}}{2}}P_{1}^{-\frac{d_{1}}{2}}P_{2}^{-\frac{1}{2}}P^{\frac{(d_{1}+1)\theta}{2}} $, $ a_{k}=d^{\frac{d_{1}}{2}}P_{1}^{\frac{d_{1}}{2}}P_{2}^{\frac{1}{2}}P^{\frac{-(d_{1}-1)\theta}{2}} $ pour tout $ k\in J_{1}=\{m+1,...,n+1\} $, $ a_{k}=d^{\frac{d_{1}}{2}}P_{1}^{-\frac{d_{1}}{2}}P_{2}^{\frac{1}{2}}P^{\frac{-(d_{1}-3)\theta}{2}} $ pour $ k\in J_{2}=\{r+1,...m\}\times\{1,...,d_{1}-1\} $, $ b_{k}=d^{\frac{d_{1}}{2}-1}P_{1}^{\frac{d_{1}}{2}}P_{2}^{\frac{1}{2}}P^{-\frac{(d_{1}-1)\theta}{2}} $ pour $ k\in I_{1}=\{0,...,r\} $ et $ b_{k}=d^{\frac{d_{1}}{2}}P_{1}^{\frac{d_{1}}{2}}P_{2}^{\frac{3}{2}}P^{-\frac{(d_{1}-1)\theta}{2}} $ pour $ k\in I_{2}'=\{r+1,...m\}\times\{1,...,d_{2}-1\} $ de sorte que \[ \begin{array}{lrclcrcl}\forall k \in J_{1}, & a_{k}Z_{2} & = & P_{2}, & \;  & a_{k}Z_{1}& = &P^{\theta} \\ \forall k\in J_{2}, &  a_{k}Z_{2} & = & P^{\theta}P_{2}, & \;  & a_{k}Z_{1}& = & P^{2\theta} \\  \forall k \in I_{1}, &  b_{k}^{-1}Z_{2} & = & d^{-(d_{1}-1)}P_{1}^{-d_{1}}P^{(d_{1}-1)\theta}, & \;  & b_{k}^{-1}Z_{1} & = & d^{-(d_{1}-1)}P_{1}^{-d_{1}}P_{2}^{-1}P^{d_{1}\theta} \\ \forall k\in I_{2}', & b_{k}^{-1}Z_{2} & = & d^{-d_{1}}P_{1}^{-d_{1}}P_{2}^{-1}P^{(d_{1}-1)\theta}, & \;  & b_{k}^{-1}Z_{1} & = & d^{-d_{1}}P_{1}^{-d_{1}}P_{2}^{-2}P^{d_{1}\theta}
\end{array}
\]
et de plus \[ \begin{array}{l} \forall k \in {J_{1}}, \; \;  a_{k}^{-1}Z_{1}=d^{-d_{1}}P_{1}^{-d_{1}}P_{2}^{-1}P^{d_{1}\theta} \\ \forall k \in {J_{2}}, \; \;  a_{k}^{-1}Z_{1}=d^{-d_{1}}P_{1}^{-d_{1}}P_{2}^{-1}P^{(d_{1}-1)\theta} \\ \forall k\in I_{1}, \; \;  b_{k}Z_{1}=P^{\theta}/d \\ \forall k\in I_{2}', \; \;  b_{k}Z_{1}=P_{2}P^{\theta}. \end{array} \]

 On trouve alors \[ U(Z_{2})\ll \max\left(\left(\frac{P_{2}}{P^{\theta}}\right)^{n_{2}}U(Z_{1}),\frac{Z_{2}^{n_{2}}}{Z_{1}^{n_{1}}}\frac{\prod_{k\in J}a_{k}}{\prod_{k\in I_{1}\cup I_{2}'}b_{k}}U^{t}(Z_{1})\right), \] avec 
 \begin{align*}
 \frac{Z_{2}^{n_{2}}}{Z_{1}^{n_{1}}}\frac{\prod_{k\in J}a_{k}}{\prod_{k\in I_{1}\cup I_{2}'}b_{k}} &=\frac{\prod_{k\in J}a_{k}Z_{2}}{\prod_{k\in I_{1}\cup I_{2}'}b_{k}Z_{1}} \\ & =d^{r+1}\frac{P_{2}^{n_{2}}}{P^{n_{1}\theta}}\left(\frac{P^{(d_{1}-1)(m-r)\theta}}{P_{2}^{(d_{2}-1)(m-r)}}\right), \end{align*}
 \begin{multline*} U(Z_{2})=\card\left\{ ((\yy^{(d_{2},i)},\zz^{(d_{2})})_{i\in \{1,...,d_{1}-1\}})\; | \; |\zz^{(d_{2})}| \leqslant P_{2}, \; |\yy^{(d_{2},i)}|\leqslant P^{\theta}P_{2}, \right.\\ \et  \forall k \in I_{1}  , \; \; \left|\left|\alpha\gamma_{d,k}^{(1)}\left((\xx^{(i)},\yy^{(j,i)},\zz^{(j)})_{\substack{i\in \{1,...,d_{1}-1\} \\ j\in
 \{1,...,d_{2}\}}}\right)\right|\right|<d^{-(d_{1}-1)}P_{1}^{-d_{1}}P^{(d_{1}-1)\theta}, \\ \left. \forall k \in I_{2}' , \; \; \left|\left|\alpha\gamma_{d,k}^{(1)}\left((\xx^{(i)},\yy^{(j,i)},\zz^{(j)})_{\substack{i\in \{1,...,d_{1}-1\} \\ j\in
 \{1,...,d_{2}\}}}\right)\right|\right|<d^{-d_{1}}P_{1}^{-d_{1}}P_{2}^{-1}P^{(d_{1}-1)\theta}\right\}, \end{multline*} et \begin{multline*} U(Z_{1})=\card\left\{ ((\yy^{(d_{2},i)})_{i\in \{1,...,d_{1}-1\}},\zz^{(d_{2})})\; | \; |\zz^{(d_{2})}| \leqslant P^{\theta}, \; |\yy^{(d_{2},i)}|\leqslant P^{2\theta}, \right.\\ \et  \forall k \in I_{1}  , \; \; \left|\left|\alpha\gamma_{d,k}^{(1)}\left((\xx^{(i)},\yy^{(j,i)},\zz^{(j)})_{\substack{i\in \{1,...,d_{1}-1\} \\ j\in
 \{1,...,d_{2}\}}}\right)\right|\right|<d^{-(d_{1}-1)}P_{1}^{-d_{1}}P_{2}^{-1}P^{d_{1}\theta}, \\ \left. \forall k \in I_{2}' , \; \; \left|\left|\alpha\gamma_{d,k}^{(1)}\left((\xx^{(i)},\yy^{(j,i)},\zz^{(j)})_{\substack{i\in \{1,...,d_{1}-1\} \\ j\in
 \{1,...,d_{2}\}}}\right)\right|\right|<d^{-d_{1}}P_{1}^{-d_{1}}P_{2}^{-2}P^{d_{1}\theta}\right\}, \end{multline*}
\begin{multline*} U^{t}(Z_{1})=\card\left\{ (\xx^{(d_{1})},(\yy^{(j,d_{1})})_{j\in \{1,...,d_{2}-1\}})\; | \; |\xx^{(d_{1})}| \leqslant P^{\theta}/d, \; |\yy^{(j,d_{1})}|\leqslant P^{\theta}P_{2}, \right.\\ \et  \forall k \in J_{1}  , \; \; \left|\left|\alpha(\gamma_{d,k}^{(1)})^{t}\left((\xx^{(i)},\yy^{(j,i)},\zz^{(j)})_{\substack{i\in \{1,...,d_{1}\} \\ j\in
 \{1,...,d_{2}-1\}}}\right)\right|\right|<d^{-d_{1}}P_{1}^{-d_{1}}P_{2}^{-1}P^{d_{1}\theta}, \\ \left. \forall k \in J_{2} , \; \; \left|\left|\alpha(\gamma_{d,k}^{(1)})^{t}\left((\xx^{(i)},\yy^{(j,i)},\zz^{(j)})_{\substack{i\in \{1,...,d_{1}\} \\ j\in
 \{1,...,d_{2}-1\}}}\right)\right|\right|<d^{-d_{1}}P_{1}^{-d_{1}}P_{2}^{-1}P^{(d_{1}-1)\theta}\right\}. \end{multline*}

Rappelons que l'on avait : \[ \Gamma^{(1)}_{d}\left((\xx^{(i)},\yy^{(j,i)},\zz^{(j)})_{\substack{i\in \{1,...,d_{1}\} \\ j\in \{1,...,d_{2}\}}}\right)=\sum_{k\in I}\gamma_{d,k}^{(1)}\left((\xx^{(i)},\yy^{(j,i)},\zz^{(j)})_{\substack{i\in \{1,...,d_{1}-1\} \\ j\in
 \{1,...,d_{2}\}}}\right)u_{k}^{(d_{1})}.  \] Or d'apr\`es la remarque \ref{remarquesym}, on a $ \Gamma^{(1)}_{d}=\Gamma^{(1)'}_{d} $. On pose alors \[ \Gamma^{(1)}_{d}=\sum_{\substack{k\in I_{1}\cup I_{2}' \\ l \in J_{1}\cup J_{2}}}\lambda_{k,l}t_{l}^{(d_{2})}u_{k}^{(d_{1})}+ \sum_{j=r+1}^{m}\alpha_{j}y_{j}^{(d_{1},d_{2})}, \]
On a alors, \[ \gamma_{d,k}^{(1)}\left((\xx^{(i)},\yy^{(j,i)},\zz^{(j)})_{\substack{i\in \{1,...,d_{1}-1\} \\ j\in
 \{1,...,d_{2}\}}}\right)=\sum_{\substack{ l \in J_{1}\cup J_{2}}}\lambda_{k,l}t_{l}^{(d_{2})}\] et  \[ (\gamma_{d,l}^{(1)})^{t}\left((\xx^{(i)},\yy^{(j,i)},\zz^{(j)})_{\substack{k\in \{1,...,d_{1}\} \\ j\in
 \{1,...,d_{2}-1\}}}\right)=\sum_{k\in I_{1}\cup I_{2}'}\alpha_{k,l}u_{k}^{(d_{1})} \] 
Par cons\'equent les formes lin\'eaires $ (\gamma_{d,k}^{(1)})^{t} $ sont exactement celles que l'on aurait obtenu en diff\'erenciant en $ (\xx,\yy) $ puis en $ (\tilde{\yy},\zz) $ et en sommant ensuite sur chaque $ \zz^{d_{2}}, \yy^{d_{2},d_{1}}) $ . En particulier si l'on consid\`ere les formes $ (\gamma_{d,k}^{(1)})^{t}\left((\xx^{(i)},\yy^{(j,i)},\zz^{(j)})_{\substack{i\in \{1,...,d_{1}\} \\ j\in \{1,...,d_{2}-1\}}}\right) $ comme des formes en $ (\yy^{j,i},\zz^{(j)})_{i\in \{1,...,d_{1}-1\}} $ pour un certain $ j\in \{1,...,d_{2}\} $ alors ces formes lin\'eaires v\'erifient la condition de sym\'etrie du lemme \ref{geomnomb2}, et on peut alors appliquer ce lemme comme nous l'avions fait pour les formes en $ (\yy^{j,i},\xx^{(i)})_{j\in \{1,...,d_{2}-1\}} $, pour finalement obtenir, en posant
\begin{multline*}
M^{t}\left(\alpha,(H_{1}^{(i)},H_{2}^{(j,i)},H_{3}^{(j)})_{\substack{i\in \{1,...,d_{1} \}\\ j\in \{1,...,d_{2}-1\}}},  B_{1}^{-1},B_{2}^{-1} \right) \\ =\Card\left\{(\xx^{(i)},\yy^{(j,i)},\zz^{(j)})_{\substack{i\in \{1,...,d_{1}\} \\ j\in \{1,...,d_{2}-1\}}} \; | \; \forall (i,j)\in \{1,...,d_{1}\}\times \{1,...,d_{2}-1\}, \; \right. \\  |\xx^{(i)}|\leqslant H_{1}^{(i)}, \;  |\yy^{(i,j)}|\leqslant H_{2}^{(i,j)}, \;  |\zz^{(j)}|\leqslant H_{2}^{(j)}\; \\   \et \; \forall k \in \{r+1,...,m\}  , \; \; \left|\left|\alpha\gamma_{d,k}^{(1)}\left((\xx^{(i)},\yy^{(j,i)},\zz^{(j)})_{\substack{i\in \{1,...,d_{1}\} \\ j\in
 \{1,...,d_{2}-1\}}}\right)\right|\right|<B_{1}^{-1}, \; \\  \left.\forall k \in \{m+1,...,n+1\}\times\{1,...,d_{1}\} , \; \; \left|\left|\alpha(\gamma_{d,k}^{(1)})^{t}\left((\xx^{(i)},\yy^{(j,i)},\zz^{(j)})_{\substack{i\in \{1,...,d_{1}\} \\ j\in
 \{1,...,d_{2}-1\}}}\right)\right|\right|<B_{2}^{-1}\right\},
\end{multline*}
et en choisissant
 \[ H_{2}^{(j,i)}= P^{\theta}P_{2}   \]\[ H_{1}^{(i)}= P^{\theta}/d, \]\[ H_{3}^{(j)}=P_{2} : \]  \begin{multline*}
 \sum_{\substack{(\xx^{(i)},\yy^{(j,i)},\zz^{(j)})_{\substack{i\in \{1,...,d_{1}-1\} \\ j\in
 \{1,...,d_{2}-1\}}} \; \\ \verifiant \; \eqref{eqisole}}}\frac{Z_{2}^{n_{2}}}{Z_{1}^{n_{1}}}\frac{\prod_{k\in J}a_{k}}{\prod_{k\in I_{1}\cup I_{2}'}b_{k}}U^{t}(Z_{1})  \\  \ll d^{r+1}\frac{P_{2}^{n_{2}}}{P^{n_{1}\theta}}\left(\frac{P^{(d_{1}-1)(m-r)\theta}}{P_{2}^{(d_{2}-1)(m-r)}}\right) \\ M^{t}\left(\alpha,(H_{1}^{(i)},H_{2}^{(j,i)},H_{3}^{(j)})_{\substack{i\in \{1,...,d_{1} \}\\ j\in \{1,...,d_{2}-1\}}},  d^{-d_{1}}P_{1}^{-d_{1}}P_{2}^{-1}P^{d_{1}\theta},d^{-d_{1}}P_{1}^{-d_{1}}P_{2}^{-1}P^{(d_{1}-1)\theta} \right) \\ \ll d^{r+1}\frac{P_{2}^{n_{2}}}{P^{n_{1}\theta}}\left(\frac{P^{(d_{1}-1)(m-r)\theta}}{P_{2}^{(d_{2}-1)(m-r)}}\right)\left(\frac{P_{2}}{P^{\theta}}\right)^{(n-m+1)(d_{2}-1)+(m-r)(d_{2}-1)d_{1}} \\ M^{t}\left(\alpha,(P^{\theta}/d,P^{2\theta},P^{\theta})_{\substack{i\in \{1,...,d_{1} \}\\ j\in \{1,...,d_{2}-1\}}},  d^{-d_{1}}P_{1}^{-d_{1}}P_{2}^{-d_{2}}P^{(\tilde{d}+1)\theta},d^{-d_{1}}P_{1}^{-d_{1}}P_{2}^{-d_{2}}P^{\tilde{d}\theta} \right)\\ = d^{r+1}\frac{P_{2}^{(n-m+1)d_{2}+(m-r)(d_{1}-1)d_{2}}}{P^{(n_{2}(d_{2}-1)+n_{1}+(d_{2}-d_{1})(m-r))\theta}}\\ M^{t}\left(\alpha,(P^{\theta}/d,P^{2\theta},P^{\theta})_{\substack{i\in \{1,...,d_{1} \}\\ j\in \{1,...,d_{2}-1\}}},  d^{-d_{1}}P_{1}^{-d_{1}}P_{2}^{-d_{2}}P^{(\tilde{d}+1)\theta},d^{-d_{1}}P_{1}^{-d_{1}}P_{2}^{-d_{2}}P^{\tilde{d}\theta} \right)
\end{multline*}

En proc\'edant de la m\^eme mani\`ere pour tous les $ n_{2} $-uplets de variables $ (\yy^{(j,i)},\zz^{(j)})_{i\in \{1,...,d_{1}-1\}} $ pour $ j\in \{1,...,d_{2}\} $, on obtient finalement \begin{multline}\label{majo2} M\left(\alpha,(P^{\theta}/d,P^{\theta}P_{2},P_{2})_{\substack{i\in \{1,...,d_{1}-1 \}\\ j\in \{1,...,d_{2}\}}},d^{-(d_{1}-1)}P_{1}^{-d_{1}}P^{(d_{1}-1)\theta},d^{-d_{1}}P_{1}^{-d_{1}}P_{2}^{-1}P^{(d_{1}-1)\theta} \right)\\ \ll \frac{P_{2}^{d_{2}n_{2}}}{P^{\theta(d_{2}-1)n_{2}}}\max\left\{P^{-n_{2}\theta}M\left(\alpha,(P^{\theta}/d,P^{2\theta},P^{\theta})_{\substack{i\in \{1,...,d_{1}-1 \}\\ j\in \{1,...,d_{2}\}}},\right. H_{2}, \; H_{1} \right), \\ \left. d^{r+1}P^{-(n_{1}+(m-r)(d_{2}-d_{1}))\theta}M^{t}\left(\alpha,(P^{\theta}/d,P^{2\theta},P^{\theta})_{\substack{i\in \{1,...,d_{1} \}\\ j\in \{1,...,d_{2}-1\}}},H_{1}, \; H_{1}  \right)\right\} . \end{multline} o\`u l'on a not\'e : \[ H_{1}=d^{-d_{1}}P_{1}^{-d_{1}}P_{2}^{-d_{2}}P^{(\tilde{d}+1)\theta}, \] \[ H_{2}=d^{-(d_{1}-1)}P_{1}^{-d_{1}}P_{2}^{-d_{2}}P^{(\tilde{d}+1)\theta}. \]
 
En regroupant le lemme \ref{dilemme1}, et les majorations \eqref{majo1} et \eqref{majo2}, on obtient le lemme ci-dessous : 
\begin{lemma}\label{dilemme2}
 Pour $ \varepsilon>0 $ arbitrairement petit, et pour $ \kappa>0 $, $ P>0 $ des r\'eels fix\'es, pour tout $ \alpha \in [0,1] $, l'une au moins des assertions suivantes est vraie  \begin{enumerate}
\item  $ |S_{d}(\alpha)|\ll_{n,r,m,\varepsilon} d^{m-r+\varepsilon+\frac{d_{1}(r+1)}{2^{\tilde{d}}}}P_{1}^{m+2+\varepsilon}P_{2}^{n-r+1+\varepsilon}P^{-\kappa}, $
\item \begin{multline*}  M\left(\alpha,(P^{\theta}/d,P^{2\theta},P^{\theta})_{\substack{i\in \{1,...,d_{1}-1 \}\\ j\in \{1,...,d_{2}\}}},H_{2}, \; H_{1}\right) \\ \gg  (P^{\theta})^{(d_{1}-1)(r+1)+2(d_{1}-1)d_{2}(m-r)+d_{2}(n-m+1))}P^{-2^{\tilde{d}}\kappa}\end{multline*}
\item  \begin{multline*}M^{t}\left(\alpha,(P^{\theta}/d,P^{2\theta},P^{\theta})_{\substack{i\in \{1,...,d_{1} \}\\ j\in \{1,...,d_{2}-1\}}}, H_{1}, \; H_{1}\right) \\  \gg  (P^{\theta})^{(d_{1}(r+1)+2d_{1}(d_{2}-1)(m-r)+(d_{2}-1)(n-m+1))}P^{-2^{\tilde{d}}\kappa}.\end{multline*}
\end{enumerate} 
 \end{lemma}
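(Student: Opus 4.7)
The plan is to combine the trichotomy of Lemma \ref{dilemme1} with the geometry-of-numbers inequalities \eqref{majo1} and \eqref{majo2}. First I would invoke Lemma \ref{dilemme1}: if its assertion (1) holds, we are done and this is precisely assertion (1) of the present lemma. Otherwise we inherit the lower bound
\[ M\bigl(\alpha,(P_{1},dP_{1}P_{2},P_{2})_{i,j},P_{1}^{-1},(dP_{1}P_{2})^{-1}\bigr)\gg d^{d_{1}(r+1)} P_{1}^{(r+1)(d_{1}-1)} (dP_{1}P_{2})^{(m-r)(d_{1}-1)d_{2}} P_{2}^{(n-m+1)d_{2}} P^{-2^{\tilde{d}}\kappa}. \]

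Next I would feed this into \eqref{majo1}, which is precisely an upper bound on the left-hand side above in terms of the prefactor $(dP_{1}/P^{\theta})^{(r+1+d_{2}(m-r))(d_{1}-1)}$ multiplied by an intermediate $M$-quantity at the smaller scale $(P^{\theta}/d,P^{\theta}P_{2},P_{2})$ with thresholds $d^{-(d_{1}-1)}P_{1}^{-d_{1}}P^{(d_{1}-1)\theta}$ and $d^{-d_{1}}P_{1}^{-d_{1}}P_{2}^{-1}P^{(d_{1}-1)\theta}$. Dividing through by this prefactor transfers the lower bound to that intermediate $M$-value.

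Then I would apply \eqref{majo2} to this intermediate value. Since the right-hand side of \eqref{majo2} is a maximum of two terms---one involving $M$ at the still smaller scale $(P^{\theta}/d,P^{2\theta},P^{\theta})$ with thresholds $(H_{2},H_{1})$, the other involving $M^{t}$ at the same scale with thresholds $(H_{1},H_{1})$---the lower bound we just established forces at least one of these two terms to be $\gg$ the said lower bound divided by the prefactor $P_{2}^{d_{2}n_{2}}/P^{\theta(d_{2}-1)n_{2}}$. This dichotomy produces either assertion (2) or assertion (3) of the lemma.

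The main obstacle will be the careful bookkeeping of the exponents of $d$, $P_{1}$, $P_{2}$, and $P^{\theta}$ at each step. I would use the identities $n_{2}=(m-r)(d_{1}-1)+(n-m+1)$, $n_{1}=(r+1)+(d_{2}-1)(m-r)$, and $\tilde{d}=d_{1}+d_{2}-2$; after a direct but tedious cancellation, the powers of $P_{1}$ and $P_{2}$ disappear entirely (as they must, since the target bounds depend only on $P^{\theta}$) and one recovers the exponents claimed in assertions (2) and (3). The other subtle point is that \eqref{majo1} was derived under the standing hypothesis $P^{\theta}\leqslant P_{1}$; for $P^{\theta}>P_{1}$ the trivial majoration already falls into regime (1), so the lemma can be stated uniformly in $\theta\in[0,1]$ without further assumption.
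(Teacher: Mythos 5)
Votre démarche est correcte et coïncide exactement avec celle du texte : le papier lui-même se contente d'affirmer que le lemme s'obtient « en regroupant le lemme \ref{dilemme1} et les majorations \eqref{majo1} et \eqref{majo2} », c'est-à-dire précisément la chaîne dichotomie--transfert--maximum que vous décrivez. La vérification des exposants que vous annoncez comme « directe mais fastidieuse » aboutit bien aux exposants des assertions (2) et (3) (les puissances de $P_{1}$ et $P_{2}$ s'annulent et le facteur résiduel $d^{r+1}\geqslant 1$ ne fait que renforcer la minoration).
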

 Consid\'erons \`a pr\'esent un \'el\'ement $ (\xx^{(i)},\yy^{(j,i)},\zz^{(j)})_{\substack{i\in \{1,...,d_{1}-1\} \\ j\in
 \{1,...,d_{2}\}}} $ tel que $ |\xx^{(i)}|\leqslant P^{\theta}/d $, $ |\yy^{(j,i)}|\leqslant P^{2\theta} $, $ |\zz^{(j)}|\leqslant P^{\theta} $, \[\left|\left|\alpha\gamma_{d,k}^{(1)}\left((\xx^{(i)},\yy^{(j,i)},\zz^{(j)})_{\substack{i\in \{1,...,d_{1}-1\} \\ j\in
 \{1,...,d_{2}\}}}\right)\right|\right|<d^{-(d_{1}-1)}P_{1}^{-d_{1}}P_{2}^{-d_{2}}P^{(\tilde{d}+1)\theta} \] pour tout $ k\in I_{1} $ et  \[\left|\left|\alpha\gamma_{d,k}^{(1)}\left((\xx^{(i)},\yy^{(j,i)},\zz^{(j)})_{\substack{i\in \{1,...,d_{1}-1\} \\ j\in
 \{1,...,d_{2}\}}}\right)\right|\right|<d^{-d_{1}}P_{1}^{-d_{1}}P_{2}^{-d_{2}}P^{(\tilde{d}+1)\theta} \] pour tout $ k\in I_{2} $ et supposons qu'il existe $ k_{0}\in I $ tel que \[ \alpha\gamma_{d,k_{0}}^{(1)}\left(\xx^{(i)},\yy^{(j,i)},\zz^{(j)})_{\substack{i\in \{1,...,d_{1}-1\} \\ j\in
 \{1,...,d_{2}\}}}\right)\neq 0. \] On pose alors $ q=\gamma_{k_{0}}^{(1)}\left((\xx^{(i)},\yy^{(j,i)},\zz^{(j)})_{\substack{i\in \{1,...,d_{1}-1\} \\ j\in
 \{1,...,d_{2}\}}}\right) $. Rappelons que d'apr\`es \eqref{gammai1}, on a la relation : \begin{multline*}\gamma_{d,k_{0}}^{(1)}\left((\xx^{(k)},\yy^{(j,k)},\zz^{(j)})_{\substack{k\in \{1,...,d_{1}-1\} \\ j\in
 \{1,...,d_{2}\}}}\right)=\sum_{\substack{\ii\in I^{d_{1}-1}}}G_{d,\ii,k_{0}}(\tilde{\zz})u_{i_{1}}^{(1)}...u_{i_{d_{1}-1}}^{(d_{1}-1)} \\ =\sum_{\substack{\ii\in I^{d_{1}-1} }}d^{f_{\ii,k_{0}}}G_{\ii,k_{0}}(\tilde{\zz})u_{i_{1}}^{(1)}...u_{i_{d_{1}-1}}^{(d_{1}-1)} \end{multline*} Par cons\'equent, si $ k_{0}\in I_{1} $ alors $ d $ divise $ q $ et on a $ q\ll dP^{(\tilde{q}+1)\theta} $ (car $ |\xx^{(i)}|\leqslant P^{\theta}/d $, $ |\yy^{(j,i)}|\leqslant P^{2\theta} $, $ |\zz^{(j)}|\leqslant P^{\theta} $) et si $ a $ est l'entier le plus proche de $ \alpha q $, on a donc \[ |\alpha q-a|\leqslant d^{-(d_{1}-1)}P_{1}^{-d_{1}}P_{2}^{-d_{2}}P^{(\tilde{d}+1)\theta}.  \] Dans le cas o\`u $ k_{0}\in I_{2} $ on a $ q\ll P^{(\tilde{q}+1)\theta} $ et si $ a $ est l'entier le plus proche de $ \alpha q $, \[ |\alpha q-a|\leqslant d^{-d_{1}}P_{1}^{-d_{1}}P_{2}^{-d_{2}}P^{(\tilde{d}+1)\theta}.  \] En proc\'edant de m\^eme avec les \'el\'ements $ (\xx^{(i)},\yy^{(j,i)},\zz^{(j)})_{\substack{i\in \{1,...,d_{1}\} \\ j\in
 \{1,...,d_{2}-1\}}} $ compt\'es par $ M^{t}\left(\alpha,(P^{\theta},P^{2\theta},P^{\theta})_{\substack{i\in \{1,...,d_{1} \}\\ j\in \{1,...,d_{2}-1\}}},P_{1}^{-d_{1}}P_{2}^{-d_{2}}P^{(\tilde{d}+1)\theta} \right) $, on voit que le lemme \ref{dilemme2} implique 
 \begin{lemma}\label{dilemme3}
 Pour $ \varepsilon>0 $ arbitrairement petit, et pour $ \kappa>0 $, $ P>0 $ des r\'eels fix\'es, pour tout $ \alpha \in [0,1] $, l'une au moins des assertions suivantes est vraie  \begin{enumerate}
\item  $ |S_{d}(\alpha)|\ll_{n,r,m,\varepsilon} d^{m-r+\varepsilon+\frac{d_{1}(r+1)}{2^{\tilde{d}}}}P_{1}^{m+2+\varepsilon}P_{2}^{n-r+1+\varepsilon}P^{-\kappa},  $
\item Il existe $ q $ tel que $ d|q $, $ 0<q\leqslant dP^{(\tilde{d}+1)\theta} $ et $ a $ tels que $ 0\leqslant a <q $ et \[ |\alpha q-a|\leqslant d^{-(d_{1}-1)}P_{1}^{-d_{1}}P_{2}^{-d_{2}}P^{(\tilde{d}+1)\theta},  \]
\item Il existe $ q $ tel que $ 0<q\leqslant P^{(\tilde{d}+1)\theta} $ et $ a $ tels que $ 0\leqslant a <q $, $ \PGCD(a,q)=1 $ et \[ |\alpha q-a|\leqslant d^{-d_{1}}P_{1}^{-d_{1}}P_{2}^{-d_{2}}P^{(\tilde{d}+1)\theta},  \]
\item \begin{multline*} \Card\left\{ (\xx^{(i)},\yy^{(j,i)},\zz^{(j)})_{\substack{i\in \{1,...,d_{1}-1\} \\ j\in
 \{1,...,d_{2}\}}} \; | \;  |\xx^{(i)}|\leqslant P^{\theta}/d , \;  |\yy^{(j,i)}|\leqslant P^{2\theta}, \; \right.\\ \left. |\zz^{(j)}|\leqslant P^{\theta}  \; \et  \;  \forall k\in I, \; \;  \gamma_{d,k}^{(1)}\left((\xx^{(i)},\yy^{(j,i)},\zz^{(j)})_{\substack{i\in \{1,...,d_{1}-1\} \\ j\in
 \{1,...,d_{2}\}}}\right)= 0 \right\}  \\  \gg  (P^{\theta})^{(d_{1}-1)(r+1)+2(d_{1}-1)d_{2}(m-r)+d_{2}(n-m+1)}P^{-2^{\tilde{d}}\kappa},\end{multline*}
\item  \begin{multline*} \Card\left\{ (\xx^{(i)},\yy^{(j,i)},\zz^{(j)})_{\substack{i\in \{1,...,d_{1}\} \\ j\in
 \{1,...,d_{2}-1\}}} \; | \;  |\xx^{(i)}|\leqslant P^{\theta}/d , \;  |\yy^{(j,i)}|\leqslant P^{2\theta}, \;\right.\\ \left.   |\zz^{(j)}|\leqslant P^{\theta}  \; \et \;  \forall k\in J, \; \;  (\gamma_{d,k}^{(1)})^{t}\left((\xx^{(i)},\yy^{(j,i)},\zz^{(j)})_{\substack{i\in \{1,...,d_{1}\} \\ j\in
 \{1,...,d_{2}-1\}}}\right)= 0 \right\}  \\  \gg  (P^{\theta})^{d_{1}(r+1)+2d_{1}(d_{2}-1)(m-r)+(d_{2}-1)(n-m+1)}P^{-2^{\tilde{d}}\kappa}.\end{multline*}
\end{enumerate} 
 \end{lemma}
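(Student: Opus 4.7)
Le plan est de d\'eduire le lemme \ref{dilemme3} de \ref{dilemme2} par une analyse selon les trois alternatives de ce dernier. L'alternative $1$ de \ref{dilemme2} fournit imm\'ediatement l'alternative $1$ de \ref{dilemme3}, et il reste \`a traiter les alternatives $2$ et $3$, qui donnent des minorations sur $M$ et $M^t$ respectivement. Pour chacune, l'id\'ee est d'appliquer une dichotomie au niveau des tuples compt\'es par ces quantit\'es.

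Concr\`etement, sous l'alternative $2$ de \ref{dilemme2}, ou bien tout tuple $(\xx^{(i)}, \yy^{(j,i)}, \zz^{(j)})$ compt\'e par $M$ v\'erifie $\gamma_{d,k}^{(1)} = 0$ pour tout $k \in I$, auquel cas la minoration du cardinal se transf\`ere directement \`a l'alternative $4$ de \ref{dilemme3} (puisque le sous-ensemble d\'ecrit dans celle-ci co\"incide alors avec $M$ tout entier)~; ou bien il existe un tel tuple et un $k_0 \in I$ tels que $q := \gamma_{d, k_0}^{(1)}(\cdots) \neq 0$. Dans ce second cas, j'utiliserai l'expression \eqref{gammai1} et les bornes $|\xx^{(i)}| \leq P^\theta/d$, $|\yy^{(j,i)}| \leq P^{2\theta}$, $|\zz^{(j)}| \leq P^\theta$ pour majorer $|q|$. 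Si $k_0 \in I_1$, chaque terme poss\`ede un facteur $d^{f_{\ii, k_0}}$ avec $f_{\ii, k_0} \geq 1$ (l'indice $k_0$ contribuant lui-m\^eme pour $1$ au comptage des variables $x$), donc $d \mid q$ et $|q| \ll d\, P^{(\tilde d + 1)\theta}$~; si $k_0 \in I_2$, on obtient simplement $|q| \ll P^{(\tilde d + 1)\theta}$. En prenant $a$ l'entier le plus proche de $\alpha q$, la d\'efinition de $M$ donne alors $|\alpha q - a| \leq H_2^{-1}$ dans le premier sous-cas et $|\alpha q - a| \leq H_1^{-1}$ dans le second, fournissant respectivement les alternatives $2$ et $3$ de \ref{dilemme3} (apr\`es r\'eduction par $\PGCD(a,q)$ pour cette derni\`ere).

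L'alternative $3$ de \ref{dilemme2} se traite de mani\`ere enti\`erement sym\'etrique avec $M^t$ \`a la place de $M$ et les formes transpos\'ees $(\gamma_{d,k}^{(1)})^t$ \`a la place de $\gamma_{d,k}^{(1)}$~; d'apr\`es la remarque \ref{remarquesym}, ces formes proviennent du m\^eme $\Gamma_d^{(1)}$ r\'eorganis\'e selon un autre regroupement des variables, si bien que la m\^eme dichotomie produit les alternatives $2$, $3$ ou $5$ de \ref{dilemme3}. Le point le plus d\'elicat de la preuve est le contr\^ole conjoint de la divisibilit\'e par $d$ et de la taille de $q$~: il faut v\'erifier que dans le d\'eveloppement de $\gamma_{d, k_0}^{(1)}$, le facteur $d^{f_{\ii, k_0}}$ est exactement compens\'e par la borne plus fine $|\xx^{(i)}| \leq P^\theta/d$ portant sur les variables $x$, et que le degr\'e de $G_{\ii}(\tilde\zz)$ en $\tilde\zz$ (contr\^ol\'e par la multilin\'earit\'e de $F_1$ en $(\yy^{(j)},\zz^{(j)})_{j\in \{1,...,d_2\}}$) s'ajuste pour donner exactement l'exposant global $P^{(\tilde d + 1)\theta}$ annonc\'e.
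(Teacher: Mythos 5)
Votre preuve suit exactement la d\'emarche du texte : dichotomie sur les tuples compt\'es par $M$ et $M^{t}$ dans les alternatives 2 et 3 du lemme \ref{dilemme2}, soit toutes les formes $\gamma_{d,k}^{(1)}$ (resp. $(\gamma_{d,k}^{(1)})^{t}$) s'annulent et on obtient les cas 4 et 5, soit on pose $q=\gamma_{d,k_{0}}^{(1)}(\cdots)\neq 0$ et la distinction $k_{0}\in I_{1}$ (o\`u $d\mid q$ via le facteur $d^{f_{\ii,k_{0}}}$ et $q\ll dP^{(\tilde d+1)\theta}$) contre $k_{0}\in I_{2}$ donne les cas 2 et 3. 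C'est correct et identique \`a l'argument du papier.
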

 Avant d'aller plus loin, nous introduisons le lemme ci-dessous qui sera utile \`a plusieurs reprise par la suite :

 \begin{lemma}\label{lemmedebile}
 On consid\`ere $ p,q,r\in \NN $ et $ (L_{i})_{i\in\{1,...,r\} } $ des formes lin\'eaires \`a $ p+q $ variables. pour des constantes $ A $, $ B $ et $ (C_{i})_{i\in I} $fix\'ees on note \begin{multline*} M\left(A,B,(C_{i})_{i\in \{1,...,r\}}\right)=\card\left\{ (\xx,\yy)\in \ZZ^{p}\times \ZZ^{q} \; | \; |\xx|\leqslant A ,\;  |\yy|\leqslant B, \; \right. \\  \left. \forall i  \in \{1,...,r\}, \;  ||L_{i}(\xx,\yy)||< C_{i} \right\}. \end{multline*}
 
 On a alors pour tout $ \xi\geqslant 1 $ : \[ M\left(A,B,(C_{i})_{i\in \{1,...,r\}}\right)\leqslant (2\xi)^{q} M\left(2A,\frac{2B}{\xi},(2C_{i})_{i\in \{1,...,r\}}\right).\]

 \end{lemma}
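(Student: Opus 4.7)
The plan is a pigeonhole-plus-translation argument. Let $N = M(A, B, (C_i)_{i \in \{1,\ldots,r\}})$ and let $(\xx_1, \yy_1), \ldots, (\xx_N, \yy_N) \in \ZZ^p \times \ZZ^q$ denote the tuples it counts. The idea is to concentrate many of the $\yy_j$ into a small region of the $\yy$-cube and then form differences, since differences make the $\yy$-side shrink while keeping the $\|L_i\|$ constraints under control by the triangle inequality for $\|\cdot\|$.

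First, I partition the cube $[-B, B]^q$ into $\lceil \xi \rceil^q$ sub-cubes by cutting each coordinate direction into $\lceil \xi \rceil$ equal intervals of length $2B/\lceil \xi \rceil \leq 2B/\xi$ (using $\lceil \xi \rceil \geq \xi$). By the pigeonhole principle, at least one of these sub-cubes contains the $\yy$-coordinate of $\geq N/\lceil \xi \rceil^q$ of the points. I fix such a sub-cube together with a particular selected point $(\xx_0, \yy_0)$ lying in it.

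Then, for each other selected point $(\xx_j, \yy_j)$, I form the difference $(\xx_j - \xx_0, \yy_j - \yy_0) \in \ZZ^p \times \ZZ^q$. These differences are pairwise distinct; they satisfy $|\xx_j - \xx_0| \leq 2A$ and $|\yy_j - \yy_0| \leq 2B/\xi$ (the second bound because both $\yy$-coordinates sit in the same sub-cube of side $\leq 2B/\xi$); and by linearity of $L_i$ combined with the triangle inequality for the nearest-integer norm,
\[ \|L_i(\xx_j - \xx_0, \yy_j - \yy_0)\| \leq \|L_i(\xx_j, \yy_j)\| + \|L_i(\xx_0, \yy_0)\| < 2C_i \]
for every $i \in \{1,\ldots,r\}$. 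Consequently each such difference is counted by $M(2A, 2B/\xi, (2C_i)_i)$.

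Combining these observations yields $N/\lceil \xi \rceil^q \leq M(2A, 2B/\xi, (2C_i)_i)$, hence
\[ N \leq \lceil \xi \rceil^q \, M\!\left(2A, 2B/\xi, (2C_i)_i\right) \leq (2\xi)^q \, M\!\left(2A, 2B/\xi, (2C_i)_i\right), \]
the last inequality using $\lceil \xi \rceil \leq \xi + 1 \leq 2\xi$ for $\xi \geq 1$. The whole argument is elementary; the only mildly delicate point, and the main obstacle to getting a cleaner constant, is the handling of the non-integrality of $\xi$ via the ceiling $\lceil \xi \rceil$, which is precisely what produces the extra factor $2^q$ in $(2\xi)^q$.
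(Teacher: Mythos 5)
Your proof is correct and follows essentially the same route as the paper: subdivide the $\yy$-cube into $O(\xi)^{q}$ sub-cubes and use a difference argument within a sub-cube, exploiting linearity of the $L_{i}$ and the triangle inequality for $\|\cdot\|$ to land in $M(2A,2B/\xi,(2C_{i}))$. The only cosmetic difference is that you pigeonhole to the fullest sub-cube while the paper bounds the count in every sub-cube and sums, and your handling of the non-integrality of $\xi$ via $\lceil\xi\rceil\leqslant 2\xi$ is in fact slightly more careful than the paper's.
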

 \begin{proof}
 On subdivise le cube $ [-B,B]^{q} $ en $ (2\xi)^{q}  $ cubes de taille $ B/\xi $. Prenons un tel cube $ \mathcal{C} $ et consid\'erons \begin{multline*} E(\mathcal{C})=\card\left\{ (\xx,\yy)\in \ZZ^{p}\times \ZZ^{q} \; | \; |\xx|\leqslant A ,\; \yy \in \mathcal{C} , \; \right. \\  \left. \forall i  \in \{1,...,r\}, \; ||L_{i}(\xx,\yy)||\leqslant C_{i}  \right\}. \end{multline*} Si $ (\xx,\yy), (\xx',\yy') $ sont deux points de $ E(\mathcal{C}) $, on a alors que \[ |\xx-\xx'|\leqslant 2A, \; \; |\yy-\yy'|\leqslant 2B/\xi \] et pour tout $ i \in \{1,...,r\} $ : \[ |L_{i}(\xx-\xx',\yy-\yy') |\leqslant 2C_{i}. \] On a donc : \[ E(\mathcal{C})\leqslant  M\left(2A,\frac{2B}{\xi},(2C_{i})_{i\in \{1,...,r\}}\right) \] pour tout cube $ \mathcal{C} $. D'o\`u le r\'esultat. 
 \end{proof}
Consid\'erons \`a pr\'esent le cas $ 4. $ du lemme \ref{dilemme3}. Remarquons avant tout qu'il est facile de voir, en appliquant $ d_{1}-1 $ fois le lemme \ref{lemmedebile} (avec $ L_{i}=\gamma_{d,i}^{(1)} $, et $ C_{i}=1/2^{d_{1}} $) que le cardinal consid\'er\'e peut \^etre major\'e, \`a une constante multiplicative pr\`es, par \begin{multline}\label{majo3} (P^{\theta})^{(d_{1}-1)d_{2}(m-r)}\Card\left\{ (\xx^{(i)},\yy^{(j,i)},\zz^{(j)})_{\substack{i\in \{1,...,d_{1}-1\} \\ j\in
 \{1,...,d_{2}\}}} \; | \;  |\xx^{(i)}|\leqslant 2P^{\theta}/d , \;  \right.\\ \left. |\yy^{(j,i)}|\leqslant 2P^{\theta}, \; |\zz^{(j)}|\leqslant P^{\theta}  \; \et  \;  \forall k\in I, \; \;  \gamma_{d,k}^{(1)}\left((\xx^{(i)},\yy^{(j,i)},\zz^{(j)})_{\substack{i\in \{1,...,d_{1}-1\} \\ j\in
 \{1,...,d_{2}\}}}\right)= 0 \right\} \end{multline} Quitte \`a agrandir $ \theta $, nous pouvons remplacer la borne $ 2P^{\theta} $ sur $ \xx^{(i)} $ et $ \yy^{(j,i)} $ par $ P^{\theta} $. D'autre part, si l'on pose pour tout $ k\in I $ : \begin{multline*}\gamma_{k}^{(1)}\left((\xx^{(i)},\yy^{(j,i)},\zz^{(j)})_{\substack{i\in \{1,...,d_{1}-1\} \\ j\in
 \{1,...,d_{2}\}}}\right) =\sum_{\substack{\ii\in I^{d_{1}-1} }}G_{\ii,k}(\tilde{\zz})u_{i_{1}}^{(1)}...u_{i_{d_{1}-1}}^{(d_{1}-1)}, \end{multline*} on a alors \begin{multline*}\gamma_{d,k}^{(1)}\left((\xx^{(i)},\yy^{(j,i)},\zz^{(j)})_{\substack{i\in \{1,...,d_{1}-1\} \\ j\in
 \{1,...,d_{2}\}}}\right) =d\gamma_{k}^{(1)}\left((d\xx^{(i)},\yy^{(j,i)},\zz^{(j)})_{\substack{i\in \{1,...,d_{1}-1\} \\ j\in
 \{1,...,d_{2}\}}}\right) , \end{multline*} pour tout $ k\in I_{1} $, et \begin{multline*}\gamma_{d,k}^{(1)}\left((\xx^{(i)},\yy^{(j,i)},\zz^{(j)})_{\substack{i\in \{1,...,d_{1}-1\} \\ j\in
 \{1,...,d_{2}\}}}\right) =\gamma_{k}^{(1)}\left((d\xx^{(i)},\yy^{(j,i)},\zz^{(j)})_{\substack{i\in \{1,...,d_{1}-1\} \\ j\in
 \{1,...,d_{2}\}}}\right) , \end{multline*} pour $ k\in I_{2} $. Par cons\'equent, on a la majoration : \begin{multline} \Card\left\{ (\xx^{(i)},\yy^{(j,i)},\zz^{(j)})_{\substack{i\in \{1,...,d_{1}-1\} \\ j\in
 \{1,...,d_{2}\}}} \; | \;  |\xx^{(i)}|\leqslant P^{\theta}/d , \;  |\yy^{(j,i)}|\leqslant P^{\theta}, \;\right.\\ \left.  |\zz^{(j)}|\leqslant P^{\theta}  \; \et  \;  \forall k\in I, \; \;  \gamma_{d,k}^{(1)}\left((\xx^{(i)},\yy^{(j,i)},\zz^{(j)})_{\substack{i\in \{1,...,d_{1}-1\} \\ j\in
 \{1,...,d_{2}\}}}\right)= 0 \right\} \\ \ll \Card\left\{ (\xx^{(i)},\yy^{(j,i)},\zz^{(j)})_{\substack{i\in \{1,...,d_{1}-1\} \\ j\in
 \{1,...,d_{2}\}}} \; | \;  |\xx^{(i)}|\leqslant P^{\theta} , \;  |\yy^{(j,i)}|\leqslant P^{\theta}, \; \right.\\ \left. |\zz^{(j)}|\leqslant P^{\theta}  \; \et  \;  \forall k\in I, \; \;  \gamma_{k}^{(1)}\left((\xx^{(i)},\yy^{(j,i)},\zz^{(j)})_{\substack{i\in \{1,...,d_{1}-1\} \\ j\in
 \{1,...,d_{2}\}}}\right)= 0 \right\}.  \end{multline}

 On consid\`ere la vari\'et\'e affine $ \mathcal{L}_{1} $ d\'efinie par l'ensemble des \'el\'ements $ (\xx^{(i)},\yy^{(j,i)},\zz^{(j)})_{\substack{i\in \{1,...,d_{1}-1\} \\ j\in
 \{1,...,d_{2}\}}} $ de l'espace affine de dimension $ (d_{1}-1)(r+1)+(d_{1}-1)d_{2}(m-r)+d_{2}(n-m+1) $ v\'erifiant les \'equations $  \gamma_{k}^{(1)}=0 $ pour tout $ k\in I $. En posant $ \kappa=K\theta $, d'apr\`es \eqref{majo3}, la condition $ 4. $ du lemme \ref{dilemme3} implique (par la d\'emonstration de \cite[Th\'eor\`eme 3.1]{Br}) : \[ \dim(  \mathcal{L}_{1} )\geqslant (d_{1}-1)(r+1)+(d_{1}-1)d_{2}(m-r)+d_{2}(n-m+1) -2^{\tilde{d}}K. \] On consid\`ere par ailleurs la sous-vari\'et\'e affine $ V_{1}^{\ast} $ de $ \AA_{\CC}^{n+2} $ d\'efinie par les $ (\xx,\yy,\zz)\in \AA_{\CC}^{n+2} $ tels que \[ \forall i \in \{ 0,...,r\}, \; \; \frac{\partial F}{\partial x_{i}}=0, \] \[ \forall j \in \{ r+1,...,m\}, \; \; \frac{\partial F}{\partial y_{j}}=0. \] Notons par ailleurs $ \mathcal{D} $ le sous-espace de l'espace affine de dimension $ (d_{1}-1)(r+1)+(d_{1}-1)d_{2}(m-r)+d_{2}(n-m+1) $ d\'efini par les $ (r+1)(d_{1}-2)+(m-r)((d_{1}-1)d_{2}-1)+(d_{2}-1)(n-m+1) $ \'equations : \[ \xx^{(1)}=\xx^{(2)}=...=\xx^{(d_{1}-1)} \] \[ \forall (i,j)\in \{1,...,d_{1}-1\} \times \{1,...,d_{2}\}, \; \; \yy^{(i,j)}=\yy^{(1,1)}, \]\[ \zz^{(1)}=\zz^{(2)}=...=\zz^{(d_{2})}. \] On a alors  \begin{multline*} \dim(  \mathcal{L}_{1}\cap \mathcal{D} ) \geqslant \dim(\mathcal{L}_{1})-((r+1)(d_{1}-2)+(m-r)((d_{1}-1)d_{2}-1) \\ +(d_{2}-1)(n-m+1))  \geqslant  n+2 -2^{\tilde{d}}K. \end{multline*} D'autre part, $ \mathcal{L}_{1}\cap \mathcal{D} $ est isomorphe \`a $ V_{1}^{\ast} $. Donc, en r\'esum\'e la condition $ 4. $ implique \[  \dim(V_{1}^{\ast})\geqslant n+2 -2^{\tilde{d}}K. \] De la m\^eme mani\`ere, en notant $ V_{2}^{\ast} $ la sous-vari\'et\'e de $ \AA_{\CC}^{n+2} $ d\'efinie par \[ \forall i \in \{ m+1,...,n+1\}, \; \; \frac{\partial F}{\partial z_{i}}=0, \] \[ \forall j \in \{ r+1,...,m\}, \; \; \frac{\partial F}{\partial y_{j}}=0, \] on v\'erifie que la condition $ 5. $ implique \[  \dim(V_{2}^{\ast})\geqslant n+2 -2^{\tilde{d}}K. \] Par cons\'equent, on choisira \begin{equation}
 K=(n+2-\max\{\dim(V_{1}^{\ast}), \dim(V_{2}^{\ast})\}-\varepsilon)/2^{\tilde{d}}
\end{equation}    (pour un $ \varepsilon>0 $ arbitrairement petit) de sorte que les assertions $ 4. $ et $ 5. $ ne soient plus possibles. On posera par ailleurs \begin{equation} P=P_{1}^{d_{1}}P_{2}^{d_{2}}. \end{equation} Rappelons que l'on consid\`ere des r\'eels $ \theta $ tels que $ P^{\theta}\leqslant P_{2}\leqslant P_{1} $, et donc, si $ P_{1}=P_{2}^{b} $, alors $ \theta\leqslant \frac{b}{bd_{1}+d_{2}} $. D'autre part, pour un tel $ \theta $, pour $ a,q $ tels que $ 0<q\leqslant dP^{(\tilde{d}+1)\theta} $, $ d|q $ et $ 0\leqslant a <q $, on d\'efinit les arcs majeurs \begin{equation}
\mathfrak{M}^{(1)}_{a,q}(\theta)=\left\{\alpha\in [0,1] \; | \; |\alpha q-a|\leqslant d^{-(d_{1}-1)}P^{-1+(\tilde{d}+1)\theta} \right\}, 
\end{equation} et \begin{equation}
\mathfrak{M}^{(1)}(\theta)=\bigcup_{\substack{1\leqslant q\leqslant dP^{(\tilde{d}+1)\theta}\\ d|q}}\bigcup_{\substack{0\leqslant a <q }}\mathfrak{M}^{(1)}_{a,q}(\theta).
\end{equation}
De m\^eme pour $ a,q $ tels que $ 0<q\leqslant P^{(\tilde{d}+1)\theta} $, et $ 0\leqslant a <q $, on d\'efinit \begin{equation}
\mathfrak{M}^{(2)}_{a,q}(\theta)=\left\{\alpha\in [0,1] \; | \; |\alpha q-a|\leqslant d^{-(d_{1}-1)}P^{-1+(\tilde{d}+1)\theta} \right\}, 
\end{equation} et \begin{equation}
\mathfrak{M}^{(2)}(\theta)=\bigcup_{\substack{1\leqslant q\leqslant P^{(\tilde{d}+1)\theta}}}\bigcup_{\substack{0\leqslant a <q \\\PGCD(a,q)=1}}\mathfrak{M}_{a,q}^{(2)}(\theta).
\end{equation}

 On notera par ailleurs $ \mathfrak{m}(\theta)=[0,1[\setminus\left(\mathfrak{M}^{(1)}(\theta)\cup \mathfrak{M}^{(2)}(\theta)\right) $ l'ensemble des arcs mineurs. Avec ces notations, le lemme \ref{dilemme3} devient alors \begin{lemma}\label{dilemme4}
 Pour $ \varepsilon>0 $ arbitrairement petit, pour tout $ \alpha \in [0,1] $, l'une au moins des assertions suivantes est vraie  \begin{enumerate}
\item  $ |S_{d}(\alpha)|\ll_{n,m,r,\varepsilon} d^{m-r+\varepsilon+d_{1}(r+1)/2^{\tilde{d}}}P_{1}^{m+2}P_{2}^{n-r+1}P^{-K\theta+\varepsilon},  $
\item Le r\'eel $ \alpha $ appartient \`a $ \mathfrak{M}(\theta)=\mathfrak{M}^{(1)}(\theta)\cup \mathfrak{M}^{(2)}(\theta) $. 
\end{enumerate}
 \end{lemma}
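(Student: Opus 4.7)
L'id\'ee est que ce lemme n'est qu'une reformulation du lemme\;\ref{dilemme3} apr\`es avoir fix\'e les param\`etres $P = P_{1}^{d_{1}}P_{2}^{d_{2}}$ et
\[
\kappa = K\theta, \qquad K=\bigl(n+2-\max\{\dim V_{1}^{\ast},\dim V_{2}^{\ast}\}-\varepsilon\bigr)/2^{\tilde{d}},
\]
comme indiqu\'e juste avant l'\'enonc\'e. Je commencerais donc par appliquer le lemme\;\ref{dilemme3} avec ces valeurs et par traiter s\'epar\'ement les cinq cas qu'il fournit.

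Pour le cas\;$1$, il suffit d'observer que $P^{-\kappa}=P^{-K\theta}$ et que les facteurs $P_{1}^{\varepsilon}P_{2}^{\varepsilon}$ qui apparaissent dans le lemme\;\ref{dilemme3} sont absorb\'es par un facteur $P^{\varepsilon}$ (en ajustant $\varepsilon$), ce qui donne exactement la majoration\;$1$ de l'\'enonc\'e. Pour les cas\;$2$ et\;$3$, la condition sur $(a,q)$ se traduit imm\'ediatement, par la d\'efinition des arcs $\mathfrak{M}^{(1)}_{a,q}(\theta)$ et $\mathfrak{M}^{(2)}_{a,q}(\theta)$, par $\alpha\in \mathfrak{M}^{(1)}(\theta)\cup \mathfrak{M}^{(2)}(\theta)=\mathfrak{M}(\theta)$ (en utilisant que $P^{-1+(\tilde{d}+1)\theta}=P_{1}^{-d_{1}}P_{2}^{-d_{2}}P^{(\tilde{d}+1)\theta}$).

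L'\'etape d\'elicate -- mais d\'ej\`a essentiellement effectu\'ee dans le texte -- consiste \`a \'eliminer les cas\;$4$ et\;$5$. Pour le cas\;$4$, le lemme\;\ref{lemmedebile} appliqu\'e $(d_{1}-1)$ fois (qui permet de remplacer les bornes $P^{\theta}/d$ et $P^{2\theta}$ par $P^{\theta}$, quitte \`a introduire le facteur $(P^{\theta})^{(d_{1}-1)d_{2}(m-r)}$) et le changement de variables $\xx^{(i)}\mapsto d\xx^{(i)}$ qui transforme $\gamma_{d,k}^{(1)}$ en (un multiple de) $\gamma_{k}^{(1)}$ ram\`enent le d\'enombrement \`a celui des points entiers de la vari\'et\'e affine $\mathcal{L}_{1}$ dans un cube de c\^ot\'e $P^{\theta}$. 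En appliquant la version quantitative du th\'eor\`eme de Birch (\cite[Th\'eor\`eme 3.1]{Br}) puis en intersectant avec la \og diagonale \fg\ $\mathcal{D}$, on obtient l'in\'egalit\'e
\[
\dim V_{1}^{\ast}\geqslant n+2-2^{\tilde{d}}K,
\]
qui contredit le choix de $K$. Le cas\;$5$ se traite de fa\c{c}on parfaitement sym\'etrique (en utilisant le lemme\;\ref{remarquesym} pour garantir que les formes $(\gamma_{d,k}^{(1)})^{t}$ proviennent d'une diff\'erentiation dans l'ordre inverse) et fournit l'in\'egalit\'e analogue pour $V_{2}^{\ast}$, \'egalement impossible.

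La principale difficult\'e est ainsi conceptuelle plut\^ot que calculatoire : il s'agit de v\'erifier que le lien entre la codimension du lieu singulier $V_{i}^{\ast}$ et le d\'enombrement de points entiers sur la vari\'et\'e $\mathcal{L}_{i}$ (via la restriction \`a $\mathcal{D}$) est bien fonctoriel, puis de choisir $K$ optimal pour \'eliminer pr\'ecis\'ement les cas\;$4$ et\;$5$. Une fois ceci \'etabli, le lemme\;\ref{dilemme4} suit sans calcul suppl\'ementaire.
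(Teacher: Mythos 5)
Votre esquisse reproduit exactement l'argument du texte : le lemme \ref{dilemme4} y est obtenu comme reformulation du lemme \ref{dilemme3} avec $\kappa=K\theta$, $P=P_{1}^{d_{1}}P_{2}^{d_{2}}$, les cas $2$ et $3$ se traduisant par l'appartenance aux arcs majeurs, et les cas $4$ et $5$ \'etant exclus par le m\^eme argument de dimension (lemme \ref{lemmedebile}, changement de variables $\xx^{(i)}\mapsto d\xx^{(i)}$, intersection de $\mathcal{L}_{i}$ avec la diagonale $\mathcal{D}$, puis \cite[Th\'eor\`eme 3.1]{Br}) qui motive pr\'ecis\'ement le choix de $K$. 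La d\'emarche est correcte et identique \`a celle du papier.
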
 
 \begin{rem}
 Dans le cas particulier o\`u $ d=1 $ on peut consid\'erer les arcs majeurs \[ \mathfrak{M}_{a,q}(\theta)=\left\{\alpha\in [0,1] \; | \; |\alpha q-a|\leqslant P^{-1+(\tilde{d}+1)\theta} \right\}  \] et le lemme \ref{dilemme4} peut \^etre exprim\'e sous la forme suivante : 
 \begin{lemma}\label{dilemmed=1}
 Pour $ \varepsilon>0 $ arbitrairement petit, l'une au moins des assertions suivantes est vraie  \begin{enumerate}
\item  $ |S_{1}(\alpha)|\ll P_{1}^{m+2}P_{2}^{n-r+1}P^{-K\theta+\varepsilon},  $
\item Il existe $ a,q $ tels que $ 1\leqslant q \leqslant P^{-1+(\tilde{d}+1)\theta} $, $ \PGCD(a,q)=1 $, $ 0\leqslant a <q $ et le r\'eel $ \alpha $ appartient \`a $ \mathfrak{M}_{a,q}(\theta) $. 
\end{enumerate}
 \end{lemma}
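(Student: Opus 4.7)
The plan is to deduce this lemma directly from Lemma \ref{dilemme4} by specializing to $d=1$ and observing that, in this case, the two families of major arcs collapse into a single family indexed by coprime fractions. The main work is purely bookkeeping: there is no new estimate to prove.

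First, I would apply Lemma \ref{dilemme4} with $d=1$. The first alternative then reads
\[ |S_{1}(\alpha)|\ll_{n,m,r,\varepsilon} P_{1}^{m+2}P_{2}^{n-r+1}P^{-K\theta+\varepsilon}, \]
since the prefactor $d^{m-r+\varepsilon+d_{1}(r+1)/2^{\tilde{d}}}$ is $1$ when $d=1$. This matches the statement of Lemma \ref{dilemmed=1}.

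Next, I would treat the major arcs. When $d=1$, the divisibility condition $d\,|\,q$ appearing in the definition of $\mathfrak{M}^{(1)}(\theta)$ is automatic, the bound $q\leqslant dP^{(\tilde{d}+1)\theta}$ becomes $q\leqslant P^{(\tilde{d}+1)\theta}$, and the radius of the arcs in both $\mathfrak{M}^{(1)}$ and $\mathfrak{M}^{(2)}$ reduces to $P^{-1+(\tilde{d}+1)\theta}$. Hence $\mathfrak{M}^{(2)}(\theta)$ matches the definition of $\bigcup_{a,q}\mathfrak{M}_{a,q}(\theta)$ in the statement of the lemma. It remains to absorb $\mathfrak{M}^{(1)}(\theta)$ into $\mathfrak{M}^{(2)}(\theta)$: given $\alpha\in \mathfrak{M}^{(1)}_{a,q}(\theta)$ with $0\leqslant a<q\leqslant P^{(\tilde{d}+1)\theta}$ but $\PGCD(a,q)=k>1$, write $a=ka'$ and $q=kq'$ with $\PGCD(a',q')=1$; then $q'\leqslant q\leqslant P^{(\tilde{d}+1)\theta}$ and
\[ |\alpha q'-a'| = \frac{|\alpha q-a|}{k}\leqslant |\alpha q-a|\leqslant P^{-1+(\tilde{d}+1)\theta}, \]
so $\alpha\in \mathfrak{M}^{(2)}_{a',q'}(\theta)=\mathfrak{M}_{a',q'}(\theta)$. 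Thus $\mathfrak{M}^{(1)}(\theta)\subset \mathfrak{M}^{(2)}(\theta)$, and the second alternative of Lemma \ref{dilemme4} becomes exactly the second alternative of Lemma \ref{dilemmed=1}.

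There is essentially no obstacle here: the entire content of the lemma lies in the preceding analysis summarized in Lemma \ref{dilemme4}, and the $d=1$ case merely simplifies the notation by removing the parasitic powers of $d$ and the need to distinguish two families of approximants. The only subtle point is making sure the radius of the reduced arc $\mathfrak{M}^{(2)}_{a',q'}(\theta)$ is no smaller than that of $\mathfrak{M}^{(1)}_{a,q}(\theta)$, which is immediate from the identity $|\alpha q-a| = k|\alpha q'-a'|$ noted above.
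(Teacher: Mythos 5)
Your proof is correct and follows exactly the route the paper intends: the paper presents Lemma \ref{dilemmed=1} as an immediate specialization of Lemma \ref{dilemme4} to $d=1$, stated without detailed justification, and your bookkeeping (the vanishing of the powers of $d$, and the absorption of $\mathfrak{M}^{(1)}(\theta)$ into the reduced-fraction arcs via $|\alpha q'-a'|=|\alpha q-a|/k$) supplies precisely the missing details. Note only that the bound on $q$ in the printed statement, $q\leqslant P^{-1+(\tilde{d}+1)\theta}$, is a typographical slip for $q\leqslant P^{(\tilde{d}+1)\theta}$, which is what your argument correctly produces.
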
 
 \end{rem}

\subsection{Les arcs mineurs}

On consid\`ere \`a pr\'esent $ \delta>0 $ arbitrairement petit, $ \theta_{0}\leqslant \frac{b}{bd_{1}+d_{2}} $ tels que \begin{equation}\label{cond1}
K-2(\tilde{d}+1)>\left(2\delta +\frac{b}{bd_{1}+d_{2}}\right)\theta_{0}^{-1},
\end{equation}
\begin{equation}\label{cond3}
1>(bd_{1}+d_{2})(5(\tilde{d}+1)\theta_{0}+\delta).
\end{equation}
\begin{rem}\label{remarqueb} Pour que les conditions \eqref{cond1} et \eqref{cond3} puissent \^etre v\'erifi\'ees, il est n\'ecessaire d'avoir \[ 
K-2(\tilde{d}+1)>\frac{b}{bd_{1}+d_{2}}(bd_{1}+d_{2})5(\tilde{d}+1)=5b(\tilde{d}+1), \] Soit encore \[ K>(5b+2)(\tilde{d}+1), \] ce que nous supposerons dor\'enavant. \end{rem}
Avec ces conditions, on a alors le lemme suivant : \begin{lemma}\label{arcsmineurs}
On a la majoration \[ \int_{\alpha\in \mathfrak{m}(\theta)}|S_{d}(\alpha)|d\alpha \ll d^{m-r+\varepsilon+\frac{d_{1}(r+1)}{2^{\tilde{d}}}}P_{1}^{m+1}P_{2}^{n-r+1}P^{-1-\delta}. \] \end{lemma}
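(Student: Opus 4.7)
Le plan est de combiner la majoration ponctuelle du lemme \ref{dilemme4} sur les arcs mineurs avec une découpe dyadique exploitant la croissance en $\theta$ de $\mathfrak{M}(\theta)$. Plus précisément, je fixerais $\eta>0$ suffisamment petit et considèrerais la suite $\theta_i=(1+\eta)^i\theta_0$ pour $0\leqslant i\leqslant T$, où $T$ est choisi tel que $\mathfrak{m}(\theta_T)=\emptyset$ : le théorème de Dirichlet garantit qu'un tel $\theta_T$ existe dès qu'il dépasse un seuil explicite, et la condition \eqref{cond3} assure qu'on peut atteindre ce seuil sans sortir du domaine admissible $\theta\leqslant b/(bd_1+d_2)$. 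Comme $\mathfrak{M}(\theta)$ est croissant en $\theta$, on a $\mathfrak{m}(\theta_0)=\bigsqcup_{i<T}(\mathfrak{m}(\theta_i)\setminus\mathfrak{m}(\theta_{i+1}))$ avec $\mathfrak{m}(\theta_i)\setminus\mathfrak{m}(\theta_{i+1})\subset\mathfrak{M}(\theta_{i+1})$, et le nombre de tranches vérifie $T=O(\log P)$.

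Sur $\mathfrak{m}(\theta_i)$, le lemme \ref{dilemme4} appliqué au paramètre $\theta_i$ fournit la majoration
\[ |S_d(\alpha)|\ll d^{m-r+\varepsilon+d_1(r+1)/2^{\tilde{d}}}P_1^{m+2}P_2^{n-r+1}P^{-K\theta_i+\varepsilon}, \]
et un dénombrement direct des couples $(a,q)$ apparaissant dans les définitions de $\mathfrak{M}^{(1)}(\theta_{i+1})$ et $\mathfrak{M}^{(2)}(\theta_{i+1})$, combiné à la longueur $\leqslant 2d^{-(d_1-1)}q^{-1}P^{-1+(\tilde{d}+1)\theta_{i+1}}$ de chaque arc, donne l'estimation $|\mathfrak{M}(\theta_{i+1})|\ll d^{1-d_1}P^{-1+2(\tilde{d}+1)\theta_{i+1}}$. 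En utilisant $P_1=P^{b/(bd_1+d_2)}$ pour extraire un facteur $P_1^{-1}$, le produit de ces deux bornes majore la contribution de la $i$-ème tranche par
\[ d^{m-r+1-d_1+\varepsilon+d_1(r+1)/2^{\tilde{d}}}P_1^{m+1}P_2^{n-r+1}P^{b/(bd_1+d_2)-1-(K-2(\tilde{d}+1)(1+\eta))\theta_i+\varepsilon}. \]

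La condition \eqref{cond1} — qui s'écrit exactement $(K-2(\tilde{d}+1))\theta_0>b/(bd_1+d_2)+2\delta$ — assure alors, pour $\eta$ assez petit, que l'exposant de $P$ ci-dessus est inférieur à $-1-\delta$ uniformément en $i\geqslant 0$ ; la sommation sur les $T=O(\log P)$ tranches est absorbée dans l'$\varepsilon$, et $\mathfrak{m}(\theta_T)$ étant vide ne contribue pas. L'étape la plus délicate sera la vérification précise des exposants, en particulier la décroissance en $d^{1-d_1}$ de la mesure des arcs majeurs qui doit être combinée correctement avec l'exposant en $d$ de la majoration ponctuelle pour ne pas dépasser $d^{m-r+\varepsilon+d_1(r+1)/2^{\tilde{d}}}$ (ce qui est heureusement automatique dès que $d_1\geqslant 1$) ; le reste relève d'un calcul d'exposants soigneux mais routinier.
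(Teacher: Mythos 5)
Votre d\'ecoupage en tranches, l'usage de la borne ponctuelle du lemme \ref{dilemme4} combin\'ee \`a la mesure de $\mathfrak{M}(\theta_{i+1})$, et le r\^ole de la condition \eqref{cond1} reproduisent fid\`element l'argument du texte (qui prend une subdivision \`a pas additif contr\^ol\'e par $2(\tilde{d}+1)(\theta_{i+1}-\theta_{i})<\delta/2$ avec $T\ll P^{\delta/2}$ plut\^ot qu'une suite g\'eom\'etrique, diff\'erence sans cons\'equence). En revanche, le traitement de l'ensemble r\'esiduel $\mathfrak{m}(\theta_{T})$ contient une lacune r\'eelle. Vous affirmez pouvoir choisir $\theta_{T}$ tel que $\mathfrak{m}(\theta_{T})=\emptyset$ gr\^ace au th\'eor\`eme de Dirichlet, la condition \eqref{cond3} garantissant que ce seuil reste admissible. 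Ce n'est pas le cas : d'une part \eqref{cond3} est une majoration de $\theta_{0}$ (elle force $\theta_{0}<\tfrac{1}{5(\tilde{d}+1)(bd_{1}+d_{2})}$) et ne dit rien sur l'accessibilit\'e d'un quelconque seuil ; d'autre part le seuil de Dirichlet pour que tout $\alpha$ tombe dans $\mathfrak{M}^{(2)}(\theta)$ est de l'ordre de $\theta\geqslant\tfrac{1}{2(\tilde{d}+1)}$ (et m\^eme un peu plus \`a cause du facteur $d^{-(d_{1}-1)}$ dans la d\'efinition des arcs), alors que la dichotomie du lemme \ref{dilemme4} n'est \'etablie que pour $P^{\theta}\leqslant P_{2}$, c'est-\`a-dire $\theta\leqslant\tfrac{1}{bd_{1}+d_{2}}$, quantit\'e strictement inf\'erieure \`a $\tfrac{1}{2(\tilde{d}+1)}$ d\`es que $b>2+(d_{2}-2)/d_{1}$. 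Pour $b$ grand, aucune valeur admissible de $\theta_{T}$ ne vide donc les arcs mineurs, et la derni\`ere tranche de votre d\'ecomposition n'est pas contr\^ol\'ee.

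Le texte contourne ce point sans chercher \`a vider $\mathfrak{m}(\theta_{T})$ : il arr\^ete la suite \`a un $\theta_{T}\leqslant\tfrac{1}{bd_{1}+d_{2}}$ v\'erifiant $K\theta_{T}>1+2\delta+\tfrac{b}{bd_{1}+d_{2}}$ (possible gr\^ace \`a $K>(5b+2)(\tilde{d}+1)$, cf. remarque \ref{remarqueb}), puis majore $\int_{\alpha\notin\mathfrak{M}(\theta_{T})}|S_{d}(\alpha)|\,d\alpha$ directement par la borne ponctuelle au niveau $\theta_{T}$ multipli\'ee par la mesure totale $\leqslant 1$, ce qui donne d\'ej\`a $\ll P_{1}^{m+1}P_{2}^{n-r+1}P^{-1-\delta}$ apr\`es absorption du facteur $P_{1}=P^{b/(bd_{1}+d_{2})}$. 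C'est cette \'etape qu'il faut substituer \`a votre argument de vacuit\'e ; le reste de votre calcul (exposants en $P$ via \eqref{cond1}, bilan en $d$, sommation des tranches) est correct.
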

\begin{proof}
On consid\`ere une suite $ (\theta_{i})_{i} $ telle que \[ \theta_{T}>\theta_{T-1}>...>\theta_{1}>\theta_{0} ,\] \[ \theta_{T}\leqslant \frac{1}{bd_{1}+d_{2}} \]\[ \theta_{T}K
>2\delta+1+\frac{b}{bd_{1}+d_{2}}, \] \[ \forall i \in \{0,...,T-1\}, \; \; 2(\tilde{d}+1)(\theta_{i+1}-\theta_{i})<\frac{\delta}{2} \] Un tel choix de $ \theta_{T} $ est possible, \'etant donn\'e que \[ \frac{K}{bd_{1}+d_{2}}>2\delta+1+\frac{b}{bd_{1}+d_{2}} \Leftrightarrow K>(2\delta+1)( bd_{1}+d_{2})+b, \] ce qui est assur\'e par la condition $ K>(5b+2)(\tilde{d}+1)$ de la remarque \ref{remarqueb}. Quitte \`a supposer $ P $ assez grand, on suppose de plus que $ T $ est tel que $ T\ll P^{\frac{\delta}{2}} $. On a alors, d'apr\`es le lemme \ref{dilemme4}, \begin{align*} \int_{\alpha\notin \mathfrak{M}(\theta_{T})}|S_{d}(\alpha)|d\alpha & \ll d^{m-r+\varepsilon+\frac{d_{1}(r+1)}{2^{\tilde{d}}}}P_{1}^{m+2}P_{2}^{n-r+1}P^{-K\theta_{T}+\varepsilon} \\ & \ll d^{m-r+\varepsilon+\frac{d_{1}(r+1)}{2^{\tilde{d}}}}P_{1}^{m+1}P_{2}^{n-r+1}P^{-1-\delta}.  \end{align*} Par ailleurs, \begin{align*}
\Vol(\mathfrak{M}^{(1)}(\theta_{i})) & \ll \sum_{\substack{q\leqslant dP^{(\tilde{d}+1)\theta_{i}} \\ d|q }}\sum_{\substack{0\leqslant a<q}}\frac{1}{q}d^{-(d_{1}-1)}P^{-1+(\tilde{d}+1)\theta_{i}} \\ & \ll d^{-(d_{1}-1)}P^{-1+2(\tilde{d}+1)\theta_{i}}, 
\end{align*}\begin{align*}
\Vol(\mathfrak{M}^{(2)}(\theta_{i})) & \ll \sum_{\substack{q\leqslant P^{(\tilde{d}+1)\theta_{i}} }}\sum_{\substack{0\leqslant a<q \\ \PGCD(a,q)=1}}\frac{1}{q}d^{-d_{1}}P^{-1+(\tilde{d}+1)\theta_{i}} \\ & \ll d^{-d_{1}}P^{-1+2(\tilde{d}+1)\theta_{i}},
\end{align*} et donc \[ \Vol(\mathfrak{M}(\theta_{i})) \ll d^{-(d_{1}-1)}P^{-1+2(\tilde{d}+1)\theta_{i}}. \]
On a donc que \begin{multline*} \int_{\alpha\in \mathfrak{M}(\theta_{i+1})\setminus\mathfrak{M}(\theta_{i})}|S_{d}(\alpha)|d\alpha \\ \ll d^{m-r+\varepsilon+d_{1}(r+1)/2^{\tilde{d}}-(d_{1}-1)}P^{-1+2(\tilde{d}+1)\theta_{i+1}}P_{1}^{m+2}P_{2}^{n-r+1}P^{-K\theta_{i}+\varepsilon} \\  \ll  d^{m-r+\varepsilon+d_{1}(r+1)/2^{\tilde{d}}-(d_{1}-1)}P_{1}^{m+2}P_{2}^{n-r+1}P^{(2(\tilde{d}+1)-K)\theta_{i}}P^{-1+2(\tilde{d}+1)(\theta_{i+1}-\theta_{i})+\varepsilon}\\  \ll  d^{m-r+\varepsilon+d_{1}(r+1)/2^{\tilde{d}}-(d_{1}-1)}P_{1}^{m+1}P_{2}^{n-r+1}P^{-1-2\delta+2(\tilde{d}+1)(\theta_{i}-\theta_{i+1})+\varepsilon} \\ \ll d^{m-r+\varepsilon+d_{1}(r+1)/2^{\tilde{d}}-(d_{1}-1)}P_{1}^{m+1}P_{2}^{n-r+1}P^{-1-\frac{3}{2}\delta}. \end{multline*}
On obtient le r\'esultat en sommant sur tous les $ i $ avec $ i\in \{0,...,T-1\} $ et $ T\ll P^{\frac{\delta}{2}} $.

\end{proof}
Ainsi, l'int\'egrale de $ S(\alpha) $ sur les arcs majeurs donne une contribution n\'egligeable par rapport \`a $ d^{m-r+\varepsilon+\frac{d_{1}(r+1)}{2^{\tilde{d}}}}P_{1}^{m+1}P_{2}^{n-r+1}P^{-1} $. Nous allons \`a pr\'esent nous int\'eresser \`a la contribution des arcs majeurs.

\subsection{Les arcs majeurs}
Pour des raisons pratiques, nous allons introduire de nouveaux arcs majeurs. Pour tout $ \theta\in [0,1] $, $ a,q\in \ZZ $, on pose \begin{equation}
\mathfrak{M}_{a,q}'(\theta)=\left\{ \alpha \in [0,1]\; | \; |\alpha q -a| \leqslant qd^{-d_{1}}P^{-1+(\tilde{d}+1)\theta} \right\},
\end{equation}\begin{equation}
\mathfrak{M}'(\theta)=\bigcup_{q\leqslant dP^{(\tilde{d}+1)\theta}}\bigcup_{\substack{0\leqslant a <q \\ \PGCD(a,q)=1}}\mathfrak{M}_{a,q}'(\theta).
\end{equation}
Remarquons que ce nouvel ensemble $ \mathfrak{M}'(\theta) $ contient $ \mathfrak{M}(\theta) $. En effet, si $ \alpha\in\mathfrak{M}_{a,q}^{(1)}(\theta)  $ pour un $ d|q $ et $ q\leqslant dP^{(\tilde{d}+1)\theta} $ on a alors $ q\geqslant d $ et \[ \left|\alpha-\frac{a}{q}\right|\leqslant q^{-1}d^{-(d_{1}-1)}P^{-1+(\tilde{d}+1)\theta} \leqslant d^{-d_{1}}P^{-1+(\tilde{d}+1)\theta}, \] donc si $ \frac{a}{q}=\frac{a'}{q'} $ avec $ \PGCD(a',q')=1 $, on a alors $ q'\leqslant dP^{(\tilde{d}+1)\theta}  $ et \[ \left|\alpha q'-a'\right|\leqslant q'd^{-d_{1}}P^{-1+(\tilde{d}+1)\theta}, \] et donc $ \alpha\in\mathfrak{M}_{a',q'}'(\theta)  $. D'autre part il est imm\'ediat que $  \mathfrak{M}^{(2)}(\theta) \subset \mathfrak{M}'(\theta) $. \\

 Par ailleurs, si $ \theta_{0}\in [0,1] $ v\'erifie les conditions\;\eqref{cond1} et\;\eqref{cond3}, on a le lemme suivant \begin{lemma}
Pour $ d_{1}\geqslant 2 $, les ensembles $ \mathfrak{M}_{a,q}'(\theta_{0}) $ sont disjoints deux \`a deux. 
\end{lemma}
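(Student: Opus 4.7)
Mon plan est de procéder par l'absurde. Supposons qu'il existe deux couples distincts $(a,q)$ et $(a',q')$ avec $1 \leqslant q, q' \leqslant dP^{(\tilde{d}+1)\theta_0}$, $0 \leqslant a < q$, $0 \leqslant a' < q'$, $\PGCD(a,q) = \PGCD(a',q') = 1$, et un réel $\alpha \in \mathfrak{M}'_{a,q}(\theta_0) \cap \mathfrak{M}'_{a',q'}(\theta_0)$. Par définition des arcs majeurs, on a alors
\[
\left|\alpha - \frac{a}{q}\right| \leqslant d^{-d_{1}} P^{-1+(\tilde{d}+1)\theta_0} \quad \et \quad \left|\alpha - \frac{a'}{q'}\right| \leqslant d^{-d_{1}} P^{-1+(\tilde{d}+1)\theta_0},
\]
et l'inégalité triangulaire donne $\left|\frac{a}{q} - \frac{a'}{q'}\right| \leqslant 2 d^{-d_{1}} P^{-1+(\tilde{d}+1)\theta_0}$.

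D'autre part, puisque les fractions $a/q$ et $a'/q'$ sont réduites et distinctes, on a $aq' - a'q \in \ZZ\setminus\{0\}$, donc
\[
\left|\frac{a}{q} - \frac{a'}{q'}\right| = \frac{|aq'-a'q|}{qq'} \geqslant \frac{1}{qq'} \geqslant d^{-2}P^{-2(\tilde{d}+1)\theta_0}
\]
en utilisant les majorations $q, q' \leqslant dP^{(\tilde{d}+1)\theta_0}$. En combinant ces deux inégalités on obtiendrait $d^{d_{1}-2} \leqslant 2\, P^{-1 + 3(\tilde{d}+1)\theta_0}$. L'hypothèse $d_{1} \geqslant 2$ entre ici en jeu de façon essentielle car elle garantit $d^{d_{1}-2} \geqslant 1$, ce qui implique $1 \leqslant 2 P^{-1+3(\tilde{d}+1)\theta_0}$. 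Pour conclure, il suffit alors de vérifier que l'exposant est strictement négatif : la condition \eqref{cond3} combinée à $bd_{1}+d_{2} \geqslant 1$ entraîne $5(\tilde{d}+1)\theta_{0} < 1$, d'où $1-3(\tilde{d}+1)\theta_{0} > 2/5 > 0$. Pour $P$ assez grand, ceci donne la contradiction cherchée.

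Le point réellement délicat est la bonne gestion du facteur $d$ apparaissant dans la définition des $\mathfrak{M}'_{a,q}(\theta_0)$. C'est précisément pour cette raison que les arcs $\mathfrak{M}'$ ont été introduits avec le facteur $d^{-d_{1}}$ (plutôt que $d^{-(d_{1}-1)}$ comme dans les $\mathfrak{M}^{(1)}_{a,q}(\theta)$) : l'écart supplémentaire en $d$ est exactement ce qu'il faut pour absorber le terme $d^{-2}$ provenant de la borne inférieure sur $|a/q - a'/q'|$, pourvu que $d_{1} \geqslant 2$. Pour $d_{1}=1$, une telle disjonction ne pourrait pas être espérée directement, ce qui justifie l'exclusion de ce cas dans l'énoncé.
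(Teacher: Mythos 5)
Votre preuve est correcte et suit essentiellement la même démarche que celle du papier : inégalité triangulaire pour majorer $\bigl|\tfrac{a}{q}-\tfrac{a'}{q'}\bigr|$, minoration par $\tfrac{1}{qq'}$, puis absorption du facteur $d^{2-d_{1}}$ grâce à $d_{1}\geqslant 2$ et contradiction via la condition \eqref{cond3}. Votre remarque explicite sur le rôle du facteur $d^{-d_{1}}$ et sur la nécessité de prendre $P$ assez grand est un plus, mais ne change pas la structure de l'argument.
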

\begin{proof}
Supposons qu'il existe $ \alpha\in \mathfrak{M}_{a,q}'(\theta_{0})\cap \mathfrak{M}_{a',q'}'(\theta_{0}) $, avec $ (a,q)\neq (a',q') $. On a alors (puisque $ \PGCD(a,q)=\PGCD(a',q')=1 $) : \[ \frac{1}{qq'}\leqslant \left| \frac{a}{q}-\frac{a'}{q'}\right|\leqslant \left| \frac{a}{q}-\alpha\right|+\left|\alpha-\frac{a'}{q'}\right|\leqslant 2d^{-d_{1}}P^{-1+(\tilde{d}+1)\theta}. \] On aurait donc  \[ 1 \leqslant 2qq'd^{-d_{1}}P^{-1+(\tilde{d}+1)\theta} \leqslant 2d^{2-d_{1}}P^{-1+3(\tilde{d}+1)\theta}\leqslant 2P^{-1+3(\tilde{d}+1)\theta}, \] ce qui est absurde car d'apr\`es \eqref{cond3}, \[ \theta<\frac{1}{5(\tilde{d}+1)(bd_{1}+d_{2})}<\frac{1}{3(\tilde{d}+1)}.  \]
\end{proof}
Puisque $ \mathfrak{M}(\theta_{0})\subset\mathfrak{M}'(\theta_{0}) $, le lemme \ref{arcsmineurs} implique le r\'esultat suivant : 
\begin{lemma}\label{sommearcsmaj}
On a l'estimation : \begin{multline*} N_{d}(P_{1},P_{2})=\sum_{1\leqslant q\leqslant dP^{(\tilde{d}+1)\theta_{0}}}\sum_{\substack{0\leqslant a<q \\ \PGCD(a,q)=1}}\int_{ \mathfrak{M}_{a,q}'(\theta_{0})}S_{d}(\alpha)d\alpha  \\ + O\left( d^{m-r+\varepsilon+\frac{d_{1}(r+1)}{2^{\tilde{d}}}}P_{1}^{m+1}P_{2}^{n-r+1}P^{-1-\delta}\right). \end{multline*}
\end{lemma}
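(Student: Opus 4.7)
Le plan consiste à partir de l'identité $N_{d}(P_{1},P_{2})=\int_{0}^{1}S_{d}(\alpha)\,d\alpha$ et à découper l'intervalle d'intégration en $\mathfrak{M}'(\theta_{0})$ et son complémentaire. Je remarquerai d'abord que, comme déjà observé dans la discussion précédant l'énoncé, on a les inclusions $\mathfrak{M}^{(1)}(\theta_{0})\subset\mathfrak{M}'(\theta_{0})$ et $\mathfrak{M}^{(2)}(\theta_{0})\subset\mathfrak{M}'(\theta_{0})$, et donc $\mathfrak{M}(\theta_{0})\subset\mathfrak{M}'(\theta_{0})$. Par passage au complémentaire, cela entraîne $[0,1[\setminus\mathfrak{M}'(\theta_{0})\subset\mathfrak{m}(\theta_{0})$.

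L'estimation sur les arcs mineurs obtenue au lemme \ref{arcsmineurs} fournit alors directement
\[ \left|\int_{[0,1[\setminus \mathfrak{M}'(\theta_{0})}S_{d}(\alpha)\,d\alpha\right|\leqslant \int_{\mathfrak{m}(\theta_{0})}|S_{d}(\alpha)|\,d\alpha \ll d^{m-r+\varepsilon+\frac{d_{1}(r+1)}{2^{\tilde{d}}}}P_{1}^{m+1}P_{2}^{n-r+1}P^{-1-\delta}, \]
ce qui fournit exactement le terme d'erreur annoncé dans l'énoncé.

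Il reste ensuite à décomposer l'intégrale sur $\mathfrak{M}'(\theta_{0})$ en une somme double. Pour cela j'invoquerai le lemme de disjointness démontré juste au-dessus (valable car $d_{1}\geqslant 2$, et parce que les hypothèses \eqref{cond1} et \eqref{cond3} ont été faites sur $\theta_{0}$) : les arcs $\mathfrak{M}'_{a,q}(\theta_{0})$ associés aux couples $(a,q)$ avec $1\leqslant q\leqslant dP^{(\tilde{d}+1)\theta_{0}}$, $0\leqslant a<q$, $\PGCD(a,q)=1$, sont deux à deux disjoints. La $\sigma$-additivité de l'intégrale donne alors
\[ \int_{\mathfrak{M}'(\theta_{0})}S_{d}(\alpha)\,d\alpha=\sum_{1\leqslant q\leqslant dP^{(\tilde{d}+1)\theta_{0}}}\sum_{\substack{0\leqslant a<q\\ \PGCD(a,q)=1}}\int_{\mathfrak{M}'_{a,q}(\theta_{0})}S_{d}(\alpha)\,d\alpha. \]

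La combinaison des deux étapes livre immédiatement la formule désirée. Le seul point véritablement non trivial de la preuve est la vérification de l'inclusion $\mathfrak{M}(\theta_{0})\subset\mathfrak{M}'(\theta_{0})$, qui pour la partie $\mathfrak{M}^{(1)}$ repose sur la condition $d\mid q$ (laquelle permet, après avoir divisé par le pgcd $d$, de se ramener à un dénominateur $q'$ premier à $a'$ satisfaisant la borne adéquate) ; cette vérification étant déjà effectuée avant l'énoncé, la démonstration proprement dite du lemme devient essentiellement formelle et se réduit au découpage $[0,1[=\mathfrak{M}'(\theta_{0})\sqcup\mathfrak{m}'(\theta_{0})$ combiné au lemme \ref{arcsmineurs}.
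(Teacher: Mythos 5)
Your proof is correct and follows exactly the same route as the paper: the inclusion $\mathfrak{M}(\theta_{0})\subset\mathfrak{M}'(\theta_{0})$, the pairwise disjointness of the $\mathfrak{M}'_{a,q}(\theta_{0})$, and the minor-arc bound of the lemme \ref{arcsmineurs} applied to $[0,1[\setminus\mathfrak{M}'(\theta_{0})\subset\mathfrak{m}(\theta_{0})$. The paper states this in one line; you have simply made the same argument explicit.
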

Par la suite, \'etant donn\'e $ \alpha\in \mathfrak{M}_{a,q}'(\theta_{0}) $, on pose $ \alpha=\frac{a}{q}+\beta $, avec $ |\beta|\leqslant d^{-d_{1}}P^{-1+(\tilde{d}+1)\theta_{0}} $, et on note : \begin{equation}
S_{a,q,d}=\sum_{\bb_{1}\in (\ZZ/q\ZZ)^{r+1}}\sum_{\bb_{2}\in (\ZZ/q\ZZ)^{m-r}}\sum_{\bb_{3}\in (\ZZ/q\ZZ)^{n-m+1}}e\left(\frac{a}{q}F(d\bb_{1},\bb_{2},\bb_{3})\right)
\end{equation}
 et \begin{equation}
 I(\beta)=\int_{\substack{(\uu,\vv,\ww)\in \BB_{1}\times \BB_{2}\times \BB_{3} \\ |\vv|\leqslant |\uu| }}e(\beta F(\uu,\vv,\ww))d\uu d\vv d\ww. 
 \end{equation}
 On \'etablit alors le lemme suivant ; 
 \begin{lemma}\label{separation}
 Soit $ \alpha\in \mathfrak{M}_{a,q}'(\theta_{0}) $. On a alors \begin{multline*} S_{d}(\alpha)=d^{m-r}P_{1}^{m+1}P_{2}^{n-r+1}q^{-(n+2)}S_{a,q,d}I(d^{d_{1}}P\beta) \\ +O\left(d^{m-r+1}P_{1}^{m+1}P_{2}^{n-r+1}P^{2(\tilde{d}+1)\theta_{0}}P_{2}^{-1}\right). \end{multline*}
 \end{lemma}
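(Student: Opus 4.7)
The proof is a standard major-arcs decomposition in the Hardy--Littlewood style: I will write each exponential $e(\alpha F(d\xx,\yy,\zz))$ as a product of an arithmetic factor depending only on residues mod $q$ and an analytic factor depending on $\beta$, then replace the remaining sum by an integral and control the error by mesh-to-integral comparison.

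The plan is as follows. First I parametrize $\xx = q\xx'+\bb_1$, $\yy = q\yy'+\bb_2$, $\zz = q\zz'+\bb_3$ with $\bb_1\in(\ZZ/q\ZZ)^{r+1}$, $\bb_2\in(\ZZ/q\ZZ)^{m-r}$, $\bb_3\in(\ZZ/q\ZZ)^{n-m+1}$. Since $F$ has integer coefficients and $F(d\xx,\yy,\zz) \equiv F(d\bb_1,\bb_2,\bb_3) \pmod q$, writing $\alpha=a/q+\beta$ gives
\[
 e(\alpha F(d\xx,\yy,\zz)) = e\!\left(\tfrac{a}{q}F(d\bb_1,\bb_2,\bb_3)\right) e(\beta F(d\xx,\yy,\zz)).
\]
Summing over $\bb_1,\bb_2,\bb_3$ factors out $S_{a,q,d}$, so $S_d(\alpha) = \sum_{\bb_1,\bb_2,\bb_3}e\!\left(\tfrac{a}{q}F(d\bb_1,\bb_2,\bb_3)\right) T_{\bb}(\beta)$, where $T_{\bb}(\beta)$ is the sum of $e(\beta F(d\xx,\yy,\zz))$ over $(\xx,\yy,\zz)$ lying in the appropriate box and congruent to $(\bb_1,\bb_2,\bb_3)$ mod $q$.

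Next I compare $T_{\bb}(\beta)$ with the integral $q^{-(n+2)} \int e(\beta F(d\xx,\yy,\zz))\,d\xx\,d\yy\,d\zz$ over the box $|\xx|\leqslant P_1$, $|\yy|\leqslant dP_1 P_2$, $|\zz|\leqslant P_2$ with $|\yy|\leqslant d|\xx|P_2$. The change of variables $\uu=\xx/P_1$, $\vv=\yy/(dP_1P_2)$, $\ww=\zz/P_2$ has Jacobian $d^{m-r}P_1^{m+1}P_2^{n-r+1}$, and by the bihomogeneity property $F(\lambda\xx,\lambda\mu\yy,\mu\zz)=\lambda^{d_1}\mu^{d_2}F(\xx,\yy,\zz)$ applied with $\lambda=dP_1$, $\mu=P_2$, one has $F(d\xx,\yy,\zz)=d^{d_1}P\cdot F(\uu,\vv,\ww)$, turning the integral into $d^{m-r}P_1^{m+1}P_2^{n-r+1}\,I(d^{d_1}P\beta)$.

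Finally, for the error, I invoke a standard sum-to-integral comparison on a mesh of side $q$: if $g$ is $C^1$ on a box of sides $L_1,\ldots,L_{n+2}$, then $|\sum_{\bb+q\ZZ^{n+2}\cap \text{Box}} g - q^{-(n+2)}\int g|$ is bounded, up to boundary terms, by $q\cdot(\text{vol}/q^{n+2})\cdot\max_i \|\partial_i g\|_\infty$. On the normalized box $|\beta F|\leqslant d^{d_1}P|\beta|\leqslant P^{(\tilde d+1)\theta_0}$, so in original coordinates the partial derivatives of $e(\beta F(d\xx,\yy,\zz))$ are each bounded by $P^{(\tilde d+1)\theta_0}/L_i$, with the smallest $L_i$ being $L_i=P_2$ (the $\zz$-direction). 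Multiplying by $q\leqslant dP^{(\tilde d+1)\theta_0}$ and by the volume of the box yields an error of order $d^{m-r+1}P_1^{m+1}P_2^{n-r+1}P^{2(\tilde d+1)\theta_0}P_2^{-1}$, matching the claim. The one technical subtlety — where I expect to spend care — is that the constraint $|\yy|\leqslant d|\xx|P_2$ cuts the box along a hyperplane through the origin, so the boundary contribution in the $\yy$-direction must be handled either by slicing in $\xx$ and treating each slice as a rectangular box, or by the usual Lipschitz boundary argument; in either case it is absorbed into the stated error term.
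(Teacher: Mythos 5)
Your decomposition is the same as the paper's: split $S_d(\alpha)$ over residues mod $q$, factor out $S_{a,q,d}$ using $F(d\xx,\yy,\zz)\equiv F(d\bb_1,\bb_2,\bb_3)\pmod q$, compare the inner congruence sum to $q^{-(n+2)}$ times an integral, and rescale via the bihomogeneity $F(dP_1\uu,dP_1P_2\vv,P_2\ww)=d^{d_1}P\,F(\uu,\vv,\ww)$; your bookkeeping of the two error sources (variation of $\beta F$ over a cell of side $q$, and the boundary of the box, including the slanted constraint $|\yy|\leqslant d|\xx|P_2$) reproduces the paper's error term. There is, however, one regime where the step you rely on fails as stated: the mesh-to-integral comparison with per-class error $q\cdot(\mathrm{vol}/q^{n+2})\cdot\max_i\|\partial_i g\|_\infty$ plus a boundary term presumes that each residue class meets the box in roughly $\mathrm{vol}/q^{n+2}$ points, which requires $q$ to be at most the shortest side of the box, namely $P_2$. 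On $\mathfrak{M}'_{a,q}(\theta_0)$ the modulus only satisfies $q\leqslant dP^{(\tilde d+1)\theta_0}$, and nothing forces $q\leqslant P_2$; when $q>P_2$ a class can contain zero points, or $\prod(L_i/q+1)$ points, in which case the difference $|S_3(\bb)-q^{-(n+2)}\int|$ is not controlled by your stated per-class bound.

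The fix is cheap, and it is exactly what the paper does: when $q>P_2$ one has $P_2<q\leqslant dP^{(\tilde d+1)\theta_0}$, so both $|S_d(\alpha)|$ and the proposed main term are trivially $\ll d^{m-r}P_1^{m+1}P_2^{n-r+1}\ll d^{m-r+1}P_1^{m+1}P_2^{n-r+1}P^{2(\tilde d+1)\theta_0}P_2^{-1}$, i.e.\ both are absorbed into the error term and the lemma holds vacuously. You should state this case separately and then run your mesh argument only under the additional hypothesis $q\leqslant P_2$, where it is sound. With that one sentence added, your proof is complete and coincides with the paper's.
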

 \begin{proof}
 On remarque dans un premier temps que \begin{equation}\label{Saq}
 S_{d}(\alpha)=\sum_{\bb_{1}\in (\ZZ/q\ZZ)^{r+1}}\sum_{\bb_{2}\in (\ZZ/q\ZZ)^{m-r}}\sum_{\bb_{3}\in (\ZZ/q\ZZ)^{n-m+1}}e\left(\frac{a}{q}F(d\bb_{1},\bb_{2},\bb_{3})\right)S_{3}(\bb_{1},\bb_{2},\bb_{3})
\end{equation} o\`u \[ S_{3}(\bb_{1},\bb_{2},\bb_{3}) =\sum_{\substack{\xx\equiv \bb_{1}(q)\\ |\xx|\leqslant P_{1}}}\sum_{\substack{\yy\equiv \bb_{2}(q)\\ |\yy|\leqslant d|\xx|P_{2}}}\sum_{\substack{\zz\equiv \bb_{2}(q)\\ |\zz|\leqslant P_{2}}}e(\beta F(d\xx,\yy,\zz)).
 \]
 Soient $ (\xx'\yy',\zz') $ et $ (\xx'',\yy'',\zz'') $ tels que \begin{multline*} (q\xx'+\bb_{1},q\yy'+\bb_{2},q\zz'+\bb_{3})\in P_{1}\BB_{1}\times dP_{1}P_{2}\BB_{2}\times P_{2}\BB_{3}, \; \\ \et  \; |q\yy'+\bb_{2}|\leqslant d|q\xx'+\bb_{1}|P_{2}, \end{multline*}\begin{multline*} (q\xx''+\bb_{1},q\yy''+\bb_{2},q\zz''+\bb_{3})\in P_{1}\BB_{1}\times dP_{1}P_{2}\BB_{2}\times P_{2}\BB_{3}, \; \\ \et  \; |q\yy''+\bb_{2}|\leqslant d|q\xx''+\bb_{1}|P_{2}, \end{multline*}
 \[ |\xx'-\xx''|\leqslant 2, \; \; |\yy'-\yy''|\leqslant 2, \; \; |\zz'-\zz''|\leqslant 2, \]
 On a dans ce cas : \begin{multline*}
 |F(q\xx'+\bb_{1},q\yy'+\bb_{2},q\zz'+\bb_{3})-F(q\xx''+\bb_{1},q\yy''+\bb_{2},q\zz''+\bb_{3})| \\ \ll  qd^{d_{1}}P_{1}^{d_{1}-1}P_{2}^{d_{2}}+qd^{d_{1}}P_{1}^{d_{1}-1}P_{2}^{d_{2}-1}+qd^{d_{1}}P_{1}^{d_{1}}P_{2}^{d_{2}-1} \ll qd^{d_{1}}P_{1}^{d_{1}}P_{2}^{d_{2}-1},
 \end{multline*}

 Remarquons que lorsque $ q>P_{2} $, l'\'egalit\'e du lemme est triviale. En effet, on a dans ce cas la majoration imm\'ediate : \begin{align*}
  |S_{d}(\alpha)| & \ll d^{m-r}P_{1}^{m+1}P_{2}^{n-r+1} \\ & \ll d^{m-r+1}P_{1}^{m+1}P_{2}^{n-r+1}P^{2(\tilde{d}+1)\theta_{0}}P_{2}^{-1}
 \end{align*}
 car $ P_{2}<q\leqslant dP^{(\tilde{d}+1)\theta_{0}} $, et d'autre part : \begin{align*}
d^{m-r}P_{1}^{m+1}P_{2}^{n-r+1}q^{-(n+2)}|S_{a,q,d}||I(d^{d_{1}}P\beta)| & \ll d^{m-r}P_{1}^{m+1}P_{2}^{n-r+1} \\ & \ll d^{m-r+1}P_{1}^{m+1}P_{2}^{n-r+1}P^{2(\tilde{d}+1)\theta_{0}}P_{2}^{-1}.
 \end{align*}
On suppose donc dor\'enavant que $ P_{2}\geqslant q $. En rempla\c{c}ant alors $ S_{3} $ par une int\'egrale on obtient : 
 \begin{multline*}
 S_{3}(\bb_{1},\bb_{2},\bb_{3})=\int_{|q\tilde{\uu}|\leqslant P_{1}}\int_{|q\tilde{\vv}|\leqslant d|q\tilde{\uu}|P_{2}}\int_{|q\tilde{\ww}|\leqslant P_{2}}e(\beta F(dq\tilde{\uu},q\tilde{\vv},q\tilde{\ww}))d\tilde{\uu}d\tilde{\vv}d\tilde{\ww} \\ + O\left(q|\beta|d^{d_{1}}P_{1}^{d_{1}}P_{2}^{(d_{2}-1)}\left(\frac{P_{1}}{q}\right)^{r+1}\left(\frac{dP_{1}P_{2}}{q}\right)^{m-r}\left(\frac{P_{2}}{q}\right)^{n-m+1}\right) \\ +O\left( \left(\frac{P_{1}}{q}\right)^{r+1}\left(\frac{dP_{1}P_{2}}{q}\right)^{m-r}\left(\frac{P_{2}}{q}\right)^{n-m}\right). 
 \end{multline*}
 En rappelant que $ |\beta|\leqslant d^{-d_{1}}P^{-1+(\tilde{d}+1)\theta_{0}} $, et en effectuant le changement de variables \[ \uu=qP_{1}^{-1}\tilde{\uu}, \; \;  \vv=q(dP_{1}P_{2})^{-1}\tilde{\vv}, \; \; \ww=qP_{1}^{-1}\tilde{\ww}, \]
 on trouve (puisque $ P=P_{1}^{d_{1}}P_{2}^{d_{2}} $. \begin{multline*}
 S_{3}(\bb_{1},\bb_{2},\bb_{3})=d^{m-r}P_{1}^{m+1}P_{2}^{n-r+1}q^{-(n+2)} \\ \int_{|\uu|\leqslant 1}\int_{|\vv|\leqslant |\uu|}\int_{|\ww|\leqslant 1}e(\beta F(dP_{1}\uu,dP_{1}P_{2}\vv,P_{2}\ww))d\uu d\vv d\ww \\ + O\left(\underbrace{q}_{\leqslant dP^{(\tilde{d}+1)\theta_{0}}}|\beta|d^{d_{1}}P_{1}^{d_{1}}P_{2}^{(d_{2}-1)}\left(\frac{P_{1}}{q}\right)^{r+1}\left(\frac{dP_{1}P_{2}}{q}\right)^{m-r}\left(\frac{P_{2}}{q}\right)^{n-m+1}\right) \\ +O\left( \left(\frac{P_{1}}{q}\right)^{r+1}\left(\frac{dP_{1}P_{2}}{q}\right)^{m-r}\left(\frac{P_{2}}{q}\right)^{n-m}\right) \\ =d^{m-r}P_{1}^{m+1}P_{2}^{n-r+1}q^{-(n+2)}I(d^{d_{1}}P\beta) \\ + O\left(d^{m-r+1}P_{1}^{m+1}P_{2}^{n-r+1}P_{2}^{-1}q^{-(n+2)}P^{2(\tilde{d}+1)\theta_{0}}\right). 
 \end{multline*}
 Puis, en rempla\c{c}ant $ S_{3} $ par cette expression dans \eqref{Saq}, on obtient le r\'esultat.
 \end{proof}
 En regroupant les lemmes \ref{sommearcsmaj} et \ref{separation}, on trouve : \begin{multline*}
  N_{d}(P_{1},P_{2})=d^{m-r}P_{1}^{m+1}P_{2}^{n-r+1}\sum_{1\leqslant q\leqslant dP^{(\tilde{d}+1)\theta_{0}}}q^{-(n+2)} \\ \sum_{\substack{0\leqslant a<q \\ \PGCD(a,q)=1}}S_{a,q,d}\int_{|\beta|\leqslant d^{-d_{1}}P^{-1+(\tilde{d}+1)\theta_{0}}}I(d^{d_{1}}P\beta)d\beta \\ +O\left(d^{m-r+1}P_{1}^{m+1}P_{2}^{n-r+1}P^{2(\tilde{d}+1)\theta_{0}}P_{2}^{-1}\Vol(\mathfrak{M}'(\theta_{0}))\right) \\ +O\left(d^{m-r+\varepsilon+\frac{d_{1}(r+1)}{2^{\tilde{d}}}}P_{1}^{m+1}P_{2}^{n-r+1}P^{-1-\delta}\right). 
 \end{multline*}
 En remarquant que \[ \Vol(\mathfrak{M}'(\theta_{0}))\ll \sum_{1\leqslant q\leqslant dP^{(\tilde{d}+1)\theta_{0}}}\sum_{\substack{0\leqslant a<q \\ \PGCD(a,q)=1}}d^{-d_{1}}P^{-1+(\tilde{d}+1)\theta_{0}}\ll d^{2-d_{1}}P^{-1+3(\tilde{d}+1)\theta_{0}}, \] et que \[ \int_{|\beta|\leqslant d^{-d_{1}}P^{-1+(\tilde{d}+1)\theta_{0}}}I(d^{d_{1}}P\beta)d\beta=d^{-d_{1}}P^{-1}\int_{|\beta|\leqslant P^{(\tilde{d}+1)\theta_{0}}}I(\beta)d\beta, \]  et en notant \begin{equation}
  \mathfrak{S}_{d}(Q)=\sum_{1\leqslant q\leqslant Q}q^{-(n+2)}\sum_{\substack{0\leqslant a<q \\ \PGCD(a,q)=1}}S_{a,q,d}
\end{equation} et \begin{equation}
J(\phi)=\int_{|\beta|\leqslant \phi}I(\beta)d\beta,
\end{equation}
on a \begin{multline}\label{formule} N_{d}(P_{1},P_{2})=d^{m-r-d_{1}}P_{1}^{m+1-d_{1}}P_{2}^{n-r+1-d_{2}} \mathfrak{S}_{d}(dP^{(\tilde{d}+1)\theta_{0}})J(P^{(\tilde{d}+1)\theta_{0}}) \\ +O\left(d^{m-r+3-d_{1}}P_{1}^{m+1}P_{2}^{n-r+1}P^{-1+5(\tilde{d}+1)\theta_{0}}P_{2}^{-1}\right)\\ +O\left(d^{m-r+\varepsilon+\frac{d_{1}(r+1)}{2^{\tilde{d}}}}P_{1}^{m+1}P_{2}^{n-r+1}P^{-1-\delta}\right).  \end{multline} Or, d'apr\`es \eqref{cond3} on a suppos\'e $ 5(\tilde{d}+1)\theta_{0}+\delta<\frac{1}{bd_{1}+d_{2}} $, donc on obtient : \[ d^{m-r+3-d_{1}}P_{1}^{m+1}P_{2}^{n-r+1}P^{-1+5(\tilde{d}+1)\theta_{0}}P_{2}^{-1} \ll d^{m-r+3-d_{1}}P_{1}^{m+1}P_{2}^{n-r+1}P^{-1-\delta}.   \] On d\'efinit \`a pr\'esent \begin{equation}
\mathfrak{S}_{d}=\sum_{q=1}^{\infty}\sum_{\substack{0\leqslant a<q \\ \PGCD(a,q)=1}}q^{-(n+2)}S_{a,q,d},
\end{equation}
\begin{equation}
J=\int_{\beta\in \RR}I(\beta)d\beta.
\end{equation}
Afin de pouvoir rempla\c{c}er $ J(P^{(\tilde{d}+1)\theta_{0}}) $ par $ J $ dans \eqref{formule}, nous allons \'etablir le lemme ci-dessous :
\begin{lemma}\label{integsing}
L'int\'egrale $ J $ est absolument convergente, et on a, pour tout $ \phi $ assez grand : \[ |J-J(\phi)|\ll \phi^{-1}. \]
\end{lemma}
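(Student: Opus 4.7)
L'id\'ee est d'\'etablir une majoration de type \og Weyl continue\fg \ sur $|I(\beta)|$ pour $|\beta|$ grand, puis d'int\'egrer sur le compl\'ementaire de $[-\phi,\phi]$. Plus pr\'ecis\'ement, on cherche \`a \'etablir l'existence d'un r\'eel $\kappa>2$ tel que pour $|\beta|\geqslant 1$ :
\[ |I(\beta)| \ll (1+|\beta|)^{-\kappa}. \]
Une telle majoration entra\^inerait imm\'ediatement la convergence absolue de $J$ ainsi que, pour $\phi$ assez grand,
\[ |J-J(\phi)| \leqslant \int_{|\beta|>\phi}|I(\beta)|d\beta \ll \int_{\phi}^{+\infty}\beta^{-\kappa}d\beta \ll \phi^{1-\kappa}\ll \phi^{-1}. \]

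Pour \'etablir cette majoration, la strat\'egie consiste \`a reproduire dans le cadre continu le proc\'ed\'e de diff\'erenciation d\'evelopp\'e dans la section\;\ref{Weyl} pour $S_{d}(\alpha)$. Concr\`etement, on appliquerait $d_{2}-1$ fois l'op\'erateur $\Delta$ en les variables $(\vv,\ww)$, puis $d_{1}-1$ fois en $(\uu,\tilde{\vv})$, pour lin\'eariser le polyn\^ome $F$ dans l'int\'egrande. On obtiendrait alors une majoration de la forme
\[ |I(\beta)|^{2^{\tilde{d}}}\ll \Vol\Bigl\{(\uu^{(i)},\vv^{(j,i)},\ww^{(j)}): |\uu^{(i)}|,|\vv^{(j,i)}|,|\ww^{(j)}|\leqslant 1,\ \|\beta\gamma_{k}^{(1)}\|,\|\beta(\gamma_{k}^{(1)})^{t}\|\ll |\beta|^{-1}\Bigr\}, \]
analogue continu de la quantit\'e $M(\cdot)$ apparaissant dans le lemme\;\ref{dilemme2}. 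L'application de l'analogue continu des lemmes de g\'eom\'etrie des nombres (\ref{geomnomb1} et \ref{geomnomb2}), puis de la m\^eme analyse dimensionnelle que celle conduisant aux cas $4.$ et $5.$ du lemme\;\ref{dilemme3}, permettrait d'invoquer les bornes sur $\dim(V_{1}^{\ast})$ et $\dim(V_{2}^{\ast})$ pour majorer ce volume par $|\beta|^{-K/(\tilde{d}+1)}$ \`a un facteur multiplicatif pr\`es. Cela donnerait finalement $|I(\beta)|\ll |\beta|^{-K/((\tilde{d}+1)2^{\tilde{d}})}$, et l'hypoth\`ese $K>(5b+2)(\tilde{d}+1)$ (cf. remarque\;\ref{remarqueb}) assurerait un exposant $\kappa$ strictement sup\'erieur \`a $2$ pourvu que $n$ soit suffisamment grand par rapport \`a $\dim(V_{i}^{\ast})$.

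Le point d\'elicat est la justification rigoureuse du passage du discret au continu dans les arguments de g\'eom\'etrie des nombres : dans le cadre continu, les volumes d'ensembles d\'efinis par des conditions $\|L(\uu)\|<c$ jouent le r\^ole des cardinaux de points entiers, et les in\'egalit\'es du type Davenport-Birch s'y transposent essentiellement ligne \`a ligne. Une fois cette majoration \'etablie, la conclusion est imm\'ediate par int\'egration \'el\'ementaire de la queue.
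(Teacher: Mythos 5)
Your plan takes a genuinely different route from the paper, and at its central step there is a real gap. The paper never runs a continuous Weyl differencing on $I(\beta)$: it deduces the decay of $I(\beta)$ by transference from the estimates already proved for the exponential sum. Concretely, for $|\beta|>\phi$ one takes $d=1$, chooses $P_{1},P_{2}$ (hence $P=P_{1}^{d_{1}}P_{2}^{d_{2}}$) and a $\theta$ satisfying \eqref{cond1} and \eqref{cond3} so that $2|\beta|=P^{(\tilde d+1)\theta}$; then $P^{-1}\beta$ sits on the boundary of the major arc $\mathfrak{M}_{0,1}(\theta)$. Lemma \ref{separation} gives $S_{1}(P^{-1}\beta)=P_{1}^{m+1}P_{2}^{n-r+1}I(\beta)+O\bigl(P_{1}^{m+1}P_{2}^{n-r+1}P^{2(\tilde d+1)\theta}P_{2}^{-1}\bigr)$, while Lemma \ref{dilemmed=1} gives $S_{1}(P^{-1}\beta)\ll P_{1}^{m+2}P_{2}^{n-r+1}P^{-K\theta+\varepsilon}$ because the $\mathfrak{M}_{a,q}(\theta)$ are disjoint. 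Comparing the two and invoking \eqref{cond1} and \eqref{cond3} yields $|I(\beta)|\ll|\beta|^{-2}+|\beta|^{-3}$, whence $|J-J(\phi)|\ll\phi^{-1}$. This costs a few lines and reuses all the hard analysis of Section 3.

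Your direct continuous approach is a recognized alternative, but the assertion that the discrete arguments transpose ``essentially line by line'' hides the two places where they do not. First, Lemmas \ref{geomnomb1} and \ref{geomnomb2} are genuinely lattice-theoretic (they compare point counts of $\Lambda$ and of its adjoint via successive minima); the volume statements you would need are different assertions requiring their own (admittedly easier, by exact scaling) proofs, and the non-box domain $|\vv|\leqslant|\uu|$ forces you to redo the decomposition into the regions $\mathcal{C}_{N,\II,\JJ}$ in the continuous setting. Second, and more seriously, the step from ``the volume of $\{|\gamma_{k}^{(1)}|\leqslant\epsilon\ \forall k\}\cap[-1,1]^{N}$ is large'' to ``$\dim V_{i}^{\ast}$ is large'' has no counterpart anywhere in the paper: the discrete version rests on \cite[Th\'eor\`eme 3.1]{Br}, which counts \emph{integer} points on affine varieties, whereas the continuous version is a statement about volumes of sub-level sets of real polynomial maps and must be proved separately (for instance by rescaling and reducing back to lattice-point counting). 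Finally your exponent $\kappa=K/((\tilde d+1)2^{\tilde d})$ would need $K>2(\tilde d+1)2^{\tilde d}$ to give $\kappa>2$, which the hypothesis $K>(5b+2)(\tilde d+1)$ of Remark \ref{remarqueb} does not supply; since $K$ is already defined with the factor $2^{\tilde d}$ divided out, the correct target is $\kappa$ close to $K/(\tilde d+1)$, and it is precisely the transference argument that delivers this without losing another factor $2^{\tilde d}$.
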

\begin{proof}
 On choisit un \'el\'ement $ \theta\in [0,1] $ v\'erifiant les m\^emes conditions \eqref{cond1} et \eqref{cond3} que $ \theta_{0} $. Soit $ \beta $ tel que $ |\beta|>\phi $, on consid\`ere $ P_{1} $, $ P_{2} $, $ P $ tels que $ 2|\beta|=P^{(\tilde{d}+1)\theta} $ et on prend $ d=1 $. On a alors que $ P^{-1}\beta\in \mathfrak{M}_{0,1}(\theta) $, et d'apr\`es le lemme \ref{separation}  \begin{equation}\label{J1}
 S_{1}( P^{-1}\beta)=P_{1}^{m+1}P_{2}^{n-r+1}I(\beta)+O\left(P_{1}^{m+1}P_{2}^{n-r+1}P^{2(\tilde{d}+1)\theta}P_{2}^{-1}\right).
\end{equation} D'autre part, $ P^{-1}\beta $ appartient au bord de $ \mathfrak{M}_{0,1}(\theta) $, donc, puisque les $ \mathfrak{M}_{a,q}(\theta) $ sont disjoints, pour tout $ \varepsilon>0 $ arbitrairement petit, on a, par le lemme \ref{dilemmed=1}, \begin{equation}\label{J2}
S_{1}( P^{-1}\beta)\ll P_{1}^{m+2}P_{2}^{n-r+1}P^{-K\theta+\varepsilon}. 
\end{equation}
Par cons\'equent, en regroupant \eqref{J1} et \eqref{J2}, on trouve : \begin{align*}
|I(\beta)| & \ll P_{1}P^{-K\theta+\varepsilon}+O\left(P_{2}^{-1}P^{2(\tilde{d}+1)\theta}\right)
\\ & =P^{\frac{b}{bd_{1}+d_{2}}-K\theta+\varepsilon}+O\left(P^{-\frac{1}{bd_{1}+d_{2}}+2(\tilde{d}+1)\theta}\right).
\end{align*} 
Or on a d'apr\`es \eqref{cond3} que \[ \frac{1}{bd_{1}+d_{2}}-2(\tilde{d}+1)\theta>3(\tilde{d}+1)\theta+\delta,  \] donc \[ P^{-\frac{1}{bd_{1}+d_{2}}+2(\tilde{d}+1)\theta} \ll P^{-3(\tilde{d}+1)\theta-\delta} \ll |\beta|^{-3}. \] Par ailleurs, d'apr\`es \eqref{cond1}, on a \[ K\theta-2(\tilde{d}+1)\theta>2\delta+\frac{b}{bd_{1}+d_{2}}, \] et donc \[ P^{\frac{b}{bd_{1}+d_{2}}-K\theta+\varepsilon}\ll P^{-2(\tilde{d}+1)\theta} \ll |\beta|^{-2}. \] On en d\'eduit donc \[ \int_{|\beta|>\phi}|I(\beta)|d\beta \ll \phi^{-1}. \] D'o\`u le r\'esultat du lemme. 
\end{proof}
De m\^eme, pour pouvoir rempla\c{c}er $ \mathfrak{S}_{d}(dP^{(\tilde{d}+1)\theta_{0}}) $ par $ \mathfrak{S}_{d} $ dans \eqref{formule}, on \'etablit :
\begin{lemma}\label{seriesing}
Pour $ d_{1}\geqslant 2 $, la s\'erie $ \mathfrak{S}_{d}  $ est absolument convergente, et on a, pour tout $ Q\geqslant d $ assez grand : \[ |\mathfrak{S}_{d} -\mathfrak{S}_{d} (Q)|\ll \max\{d^{d_{1}(r+1)/2^{\tilde{d}}+\varepsilon},d\}Q^{-\delta}, \] pour $ \delta>0 $ arbitrairement petit.
\end{lemma}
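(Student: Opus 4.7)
L'id\'ee g\'en\'erale est la m\^eme que pour l'int\'egrale singuli\`ere du lemme \ref{integsing} : on obtient une majoration ponctuelle de la somme compl\`ete $S_{a,q,d}$ via l'in\'egalit\'e de Weyl (lemme \ref{dilemme4}) appliqu\'ee \`a $\alpha = a/q$, et on somme ensuite sur $a$ et $q$. Plus pr\'ecis\'ement, pour $q$ fix\'e et $a$ avec $\PGCD(a,q)=1$, on choisit des param\`etres auxiliaires $P_1=P_2$ comme une puissance suffisamment grande de $q$, et un r\'eel $\theta\in [0,1]$ v\'erifiant les conditions \eqref{cond1} et \eqref{cond3} et tel que $dP^{(\tilde{d}+1)\theta}$ soit strictement inf\'erieur \`a $q$. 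Avec ce choix, la fraction $a/q$ n'appartient ni \`a $\mathfrak{M}^{(1)}(\theta)$ (dont les d\'enominateurs sont divisibles par $d$ et born\'es par $dP^{(\tilde{d}+1)\theta}$), ni \`a $\mathfrak{M}^{(2)}(\theta)$ (dont les d\'enominateurs primitifs sont born\'es par $P^{(\tilde{d}+1)\theta}$), donc le lemme \ref{dilemme4} impose la majoration du type Weyl
\[
|S_d(a/q)| \ll d^{m-r+\varepsilon+\frac{d_1(r+1)}{2^{\tilde{d}}}}P_1^{m+2}P_2^{n-r+1}P^{-K\theta+\varepsilon}.
\]

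Le lemme \ref{separation} appliqu\'e en $\beta=0$ fournit par ailleurs la relation
\[
S_d(a/q) = d^{m-r}P_1^{m+1}P_2^{n-r+1}q^{-(n+2)}S_{a,q,d}\,I(0) + O\!\left(d^{m-r+1}P_1^{m+1}P_2^{n-r+1}P^{2(\tilde{d}+1)\theta}P_2^{-1}\right),
\]
et $I(0)>0$ est une constante strictement positive (volume du domaine d'int\'egration). En combinant ces deux estim\'ees et en isolant $S_{a,q,d}$, les facteurs $d^{m-r}P_1^{m+1}P_2^{n-r+1}$ se simplifient et il reste, apr\`es choix optimal du param\`etre $C$ fixant la taille relative de $P_1$ par rapport \`a $q$, une majoration du type
\[
|S_{a,q,d}| \ll \max\!\left\{d^{\frac{d_1(r+1)}{2^{\tilde{d}}}+\varepsilon},\, d\right\} q^{n+2-\eta},
\]
o\`u $\eta>2$ est un r\'eel que l'hypoth\`ese sur $K$ (i.e. $K>(5b+2)(\tilde{d}+1)$ de la remarque \ref{remarqueb}) permet de rendre strictement sup\'erieur \`a $2$. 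La puissance $d$ dans le maximum provient du terme d'erreur $d^{m-r+1}$ du lemme \ref{separation}, tandis que la puissance $d^{d_1(r+1)/2^{\tilde{d}}+\varepsilon}$ provient directement de l'in\'egalit\'e de Weyl.

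Pour conclure, on majore
\[
|\mathfrak{S}_d - \mathfrak{S}_d(Q)| \leqslant \sum_{q>Q}q^{-(n+2)}\sum_{\substack{0\leqslant a<q \\ \PGCD(a,q)=1}}|S_{a,q,d}| \ll \max\!\left\{d^{\frac{d_1(r+1)}{2^{\tilde{d}}}+\varepsilon},\, d\right\} \sum_{q>Q}q^{1-\eta},
\]
en utilisant $\varphi(q)\leqslant q$, et la derni\`ere somme est, pour $\eta>2$, absolument convergente de reste $\ll Q^{2-\eta}=Q^{-\delta}$ pour un certain $\delta>0$. L'absolue convergence de $\mathfrak{S}_d$ s'obtient de m\^eme en sommant sur tous les $q\geqslant 1$.

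Le point d\'elicat du raisonnement est le choix simultan\'e des param\`etres $P_1,P_2,\theta$ : il faut que $a/q$ \'evite effectivement les deux familles d'arcs majeurs (ce qui exige $q>dP^{(\tilde{d}+1)\theta}$), que le gain de Weyl $P^{-K\theta}$ soit suffisant une fois divis\'e par le facteur $P_1$ provenant de $P_1^{m+2}/P_1^{m+1}$, et que le terme d'erreur $P^{2(\tilde{d}+1)\theta}/P_2$ du lemme \ref{separation} reste n\'egligeable devant $q^{n+2-\eta}$. L'hypoth\`ese $d_1\geqslant 2$ intervient ici via le lemme de disjonction des arcs $\mathfrak{M}'_{a,q}(\theta_0)$ \'etabli plus haut, garantissant que la fraction $a/q$ se trouve effectivement \`a distance $\gg q^{-2}$ de toute autre fraction r\'eduite de d\'enominateur $\leqslant q$, ce qui l\'egitime l'application du lemme \ref{separation} au point $\alpha = a/q$ seul.
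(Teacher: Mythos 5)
Votre d\'emonstration suit essentiellement la m\^eme d\'emarche que celle de l'article : choix de param\`etres auxiliaires $P_{1},P_{2},\theta$ de sorte que $\alpha=a/q$ \'echappe aux deux familles d'arcs majeurs, application du lemme \ref{dilemme4} pour la majoration de Weyl et du lemme \ref{separation} en $\beta=0$ pour isoler $S_{a,q,d}$, puis sommation sur $q>Q$. La seule diff\'erence est cosm\'etique (vous imposez $dP^{(\tilde{d}+1)\theta}<q$ strictement l\`a o\`u l'article pose $q=dP^{(\tilde{d}+1)\theta}$ et perturbe $\theta$ en $\theta-\nu$), et votre argument d'\'evitement des arcs via l'\'ecart $\gg q^{-2}$ entre fractions r\'eduites correspond bien au calcul $1\leqslant|aq'-a'q|$ men\'e dans l'article.
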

\begin{proof}
 On choisit un \'el\'ement $ \theta\in [0,1] $ v\'erifiant les conditions \eqref{cond1} et \eqref{cond3}. Soit $ q>Q\geqslant d $ quelconque et $ a $ tel que $ 0\leqslant a <q $ et $ \PGCD(a,q)=1 $. On choisit $ P_{1},P_{2}\geqslant 1 $ tels que $ q=dP^{(\tilde{d}+1)\theta} $ avec $ P=P_{1}^{d_{1}}P_{2}^{d_{2}} $. D'apr\`es le lemme \ref{separation}, si $ \alpha=\frac{a}{q} $, on a \begin{multline*} |S_{d}(\alpha)|= d^{m-r}P_{1}^{m+1}P_{2}^{n-r+1}q^{-(n+2)}S_{a,q,d}I(0) \\ +O\left(d^{m-r+1}P_{1}^{m+1}P_{2}^{n-r+1}P^{2(\tilde{d}+1)\theta}P_{2}^{-1}\right).\end{multline*} Par ailleurs, si l'on pose $ \theta'=\theta-\nu $ avec $ \nu>0 $ arbitrairement petit, on a alors que $ \alpha=\frac{a}{q}\notin \mathfrak{M}(\theta') $. En effet, supposons qu'il existe $ a',q' $ tels que $ d|q' $ $ q'\leqslant dP^{(\tilde{d}+1)\theta'}<dP^{(\tilde{d}+1)\theta}=q $, $ 0\leqslant a'<q' $,  et $ \alpha\in  \mathfrak{M}_{a',q'}^{(1)}(\theta') $. Si $ aq'=qa' $, on a donc, puisque $ \PGCD(a,q)=1 $,  $ a|a' $ et donc si $ a'\neq 0 $ $ q'=\frac{a'}{a}q $, ce qui est absurde car $ q>q' $, et si $ a=a'=0 $, alors $ q=1 $ ce qui contredit encore $ q>q' $. On a alors \[ 1\leqslant |aq'-a'q|\leqslant qd^{-(d_{1}-1)}P^{-1+(\tilde{d}+1)\theta'}<d^{2-d_{1}}P^{-1+2(\tilde{d}+1)\theta} \] ce qui est absurde car $ \theta<\frac{1}{2(\tilde{d}+1)} $ d'apr\`es \eqref{cond3}. De la m\^eme mani\`ere, s'il existe $ a',q' $ tels que $ q'\leqslant P^{(\tilde{d}+1)\theta'}<dP^{(\tilde{d}+1)\theta}=q $, $ 0\leqslant a'<q' $, $ \PGCD(a',q')=1 $ et $ \alpha\in  \mathfrak{M}_{a',q'}^{(2)}(\theta') $, on a 
 \[ 1\leqslant |aq'-a'q|\leqslant qd^{-d_{1}}P^{-1+(\tilde{d}+1)\theta'}<d^{1-d_{1}}P^{-1+2(\tilde{d}+1)\theta}. \]

 Par cons\'equent, d'apr\`es le lemme \ref{dilemme4}, on a \[  |S(\alpha)|\ll d^{m-r+\varepsilon+d_{1}(r+1)/2^{\tilde{d}}}P_{1}^{m+2}P_{2}^{n-r+1}P^{-K\theta'+\varepsilon} \] et on obtient donc (\'etant donn\'e que $ I(0)\asymp 1 $), pour $ \nu $ assez petit :  \begin{multline*} |S_{a,q,d}|\ll q^{n+2}d^{d_{1}(r+1)/2^{\tilde{d}}+\varepsilon}P_{1}P^{-K\theta'+\varepsilon} + \left (dq^{n+2}P^{2(\tilde{d}+1)\theta}P_{2}^{-1}\right) \\ \ll d^{d_{1}(r+1)/2^{\tilde{d}}+\varepsilon}q^{n+2}P^{\frac{b}{bd_{1}+d_{2}}-K\theta+2\varepsilon} + \left (dq^{n+2}P^{2(\tilde{d}+1)\theta-\frac{1}{bd_{1}+d_{2}}}\right). \end{multline*} Or, par les conditions \eqref{cond1} et \eqref{cond3} on a (pour $ \delta=\delta'(\tilde{d}+1) $). \[ P^{\frac{b}{bd_{1}+d_{2}}-K\theta+2\varepsilon} \ll  P^{-2(\tilde{d}+1)\theta-\delta'(\tilde{d}+1)\theta}=q^{-2-\delta'}\] \[ P^{2(\tilde{d}+1)\theta-\frac{1}{bd_{1}+d_{2}}} \ll      P^{-3(\tilde{d}+1)\theta-\delta} \ll q^{-3}.\] On a donc \[q^{-(n+2)}|S_{a,q,d}|\ll d^{d_{1}(r+1)/2^{\tilde{d}}+\varepsilon}q^{-2-\delta'}+dq^{-3} \] et ainsi \begin{align*}
 |\mathfrak{S}_{d} -\mathfrak{S}_{d} (Q)| & \ll \sum_{q>Q}\sum_{\substack{0\leqslant a <q \\ \PGCD(a,q)=1}}q^{-(n+2)}|S_{a,q,d}| \\ & \ll \max\{d^{d_{1}(r+1)/2^{\tilde{d}}+\varepsilon},d\}\sum_{q>Q}\sum_{\substack{0\leqslant a <q \\ \PGCD(a,q)=1}}q^{-2-\delta'} \\ & \ll \max\{d^{d_{1}(r+1)/2^{\tilde{d}}+\varepsilon},d\}Q^{-\delta'}. 
\end{align*}  
\end{proof}
\begin{rem}\label{remseriesing}
Remarquons que \[ \mathfrak{S}_{d} (d)\ll d^{2}, \] et donc le lemme pr\'ec\'edent nous donne une majoration de $ \mathfrak{S}_{d} $ ; \[ |\mathfrak{S}_{d}| \ll d^{2}+ \max\{d^{d_{1}(r+1)/2^{\tilde{d}}+\varepsilon},d\}d^{-\delta} \ll \max\{d^{d_{1}(r+1)/2^{\tilde{d}}},d^{2}\}.  \]
\end{rem}

En utilisant les lemmes \ref{seriesing} et \ref{integsing}, et en notant \begin{equation}
\sigma_{d}=d^{m-r-d_{1}}\mathfrak{S}_{d}J,
\end{equation}  on obtient finalement le r\'esultat suivant 

\begin{prop}\label{propgeneral}
Pour, $ P_{1}=P_{2}^{b} $ avec $ b\geqslant 1 $, si $ d_{1}\geqslant 2 $ et si l'on suppose que , $ K=(n+2-\max\{\dim V_{1}^{\ast},\dim V_{2}^{\ast}\}-\varepsilon)/2^{\tilde{d}} $ est tel que \[ K>\max\{ bd_{1}+d_{2}, (5b+2)(d_{1}+d_{2}-1)\}  , \] on a alors \begin{multline*} N_{d}(P_{1},P_{2})=\sigma_{d} P_{1}^{m+1-d_{1}}P_{2}^{n-r+1-d_{2}} \\ +O\left(d^{m-r}\max\{d^{d_{1}(r+1)/2^{\tilde{d}}+\varepsilon},d^{3-d_{1}}\}P_{1}^{m+1-d_{1}-\delta}P_{2}^{n-r+1-d_{2}-\delta}\right), \end{multline*}
pour un r\'eel $ \delta>0 $ arbitrairement petit. 
\end{prop}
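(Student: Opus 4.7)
Le plan est de partir directement de la formule \eqref{formule} obtenue en combinant les lemmes \ref{sommearcsmaj} et \ref{separation}, puis d'appliquer les lemmes \ref{integsing} et \ref{seriesing} pour remplacer $J(P^{(\tilde{d}+1)\theta_{0}})$ et $\mathfrak{S}_{d}(dP^{(\tilde{d}+1)\theta_{0}})$ respectivement par $J$ et $\mathfrak{S}_{d}$, en contr\^olant les termes d'erreur suppl\'ementaires. La contribution principale $\sigma_{d}P_{1}^{m+1-d_{1}}P_{2}^{n-r+1-d_{2}}$ s'extrait ensuite directement par la d\'efinition $\sigma_{d}=d^{m-r-d_{1}}\mathfrak{S}_{d}J$.

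Plus pr\'ecis\'ement, il faut d'abord v\'erifier que les hypoth\`eses sur $K$ permettent de choisir un r\'eel $\theta_{0}$ satisfaisant les conditions \eqref{cond1} et \eqref{cond3}. L'in\'egalit\'e $K>(5b+2)(d_{1}+d_{2}-1)=(5b+2)(\tilde{d}+1)$ fournit exactement ce qu'il faut (cf. remarque \ref{remarqueb}) pour pouvoir appliquer le lemme \ref{arcsmineurs} sur les arcs mineurs, et ainsi se ramener \`a \eqref{formule}. L'hypoth\`ese $K>bd_{1}+d_{2}$ garantit que l'on peut effectivement satisfaire la premi\`ere condition du lemme \ref{integsing}, rendant la s\'erie singuli\`ere $\mathfrak{S}_{d}$ et l'int\'egrale singuli\`ere $J$ absolument convergentes par les lemmes \ref{integsing} et \ref{seriesing} (en supposant $d_{1}\geqslant 2$).

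Ensuite, en utilisant $J=J(P^{(\tilde{d}+1)\theta_{0}})+O(P^{-(\tilde{d}+1)\theta_{0}})$ et la majoration $|\mathfrak{S}_{d}|\ll \max\{d^{d_{1}(r+1)/2^{\tilde{d}}},d^{2}\}$ (remarque \ref{remseriesing}), la substitution de $J$ dans le terme principal de \eqref{formule} introduit une erreur en
\[ O\left(d^{m-r-d_{1}}\max\{d^{d_{1}(r+1)/2^{\tilde{d}}},d^{2}\}P_{1}^{m+1-d_{1}}P_{2}^{n-r+1-d_{2}}P^{-(\tilde{d}+1)\theta_{0}}\right), \]
puis la substitution de $\mathfrak{S}_{d}$ au moyen de $|\mathfrak{S}_{d}-\mathfrak{S}_{d}(dP^{(\tilde{d}+1)\theta_{0}})|\ll \max\{d^{d_{1}(r+1)/2^{\tilde{d}}+\varepsilon},d\}P^{-(\tilde{d}+1)\theta_{0}\delta'}$ (puisque $dP^{(\tilde{d}+1)\theta_{0}}\geqslant d$), et en utilisant la borne absolument convergente $|J|\ll 1$, ajoute une erreur de l'ordre
\[ O\left(d^{m-r-d_{1}}\max\{d^{d_{1}(r+1)/2^{\tilde{d}}+\varepsilon},d\}P_{1}^{m+1-d_{1}}P_{2}^{n-r+1-d_{2}}P^{-\delta''}\right). \]

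Il reste alors \`a comparer ces erreurs aux deux termes d'erreur pr\'eexistants de \eqref{formule}. Gr\^ace \`a la condition \eqref{cond3} qui donne $5(\tilde{d}+1)\theta_{0}+\delta<1/(bd_{1}+d_{2})$, le terme $d^{m-r+3-d_{1}}P_{1}^{m+1}P_{2}^{n-r+1}P^{-1+5(\tilde{d}+1)\theta_{0}}P_{2}^{-1}$ se majore par $d^{m-r+3-d_{1}}P_{1}^{m+1-d_{1}}P_{2}^{n-r+1-d_{2}}P^{-\delta}$, et toutes les contributions se regroupent sous la forme souhait\'ee avec le facteur $\max\{d^{d_{1}(r+1)/2^{\tilde{d}}+\varepsilon},d^{3-d_{1}}\}$ (le terme en $d^{3-d_{1}}$ provenant des erreurs de \eqref{formule} et de la borne triviale sur $\mathfrak{S}_{d}$). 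L'\'etape la plus d\'elicate est la v\'erification rigoureuse que la condition sur $K$ de l'\'enonc\'e implique bien les conditions \eqref{cond1}, \eqref{cond3} et $K>(5b+2)(\tilde{d}+1)$ utilis\'ees impliciement dans la preuve des lemmes interm\'ediaires, mais celle-ci est imm\'ediate \'etant donn\'e que $\tilde{d}+1=d_{1}+d_{2}-1$.
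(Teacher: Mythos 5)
Votre d\'emonstration est correcte et suit essentiellement la m\^eme d\'emarche que l'article : partir de la formule \eqref{formule} issue des lemmes \ref{sommearcsmaj} et \ref{separation}, puis substituer $J$ et $\mathfrak{S}_{d}$ \`a leurs versions tronqu\'ees via les lemmes \ref{integsing} et \ref{seriesing}, en v\'erifiant que l'hypoth\`ese sur $K$ permet le choix de $\theta_{0}$ satisfaisant \eqref{cond1} et \eqref{cond3} et que les erreurs se regroupent sous le facteur $d^{m-r}\max\{d^{d_{1}(r+1)/2^{\tilde{d}}+\varepsilon},d^{3-d_{1}}\}$. C'est exactement l'encha\^inement que l'article laisse implicite juste avant l'\'enonc\'e de la proposition.
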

\begin{rem}
Remarquons que dans le cas o\`u $ P_{1}\leqslant P_{2} $, et $ P_{2}=P_{1}^{u} $, on obtient exactement la m\^eme estimation de $ N_{d}(P_{1},P_{2}) $ lorsque \[ K>\max\{ d_{1}+ud_{2}, 7(d_{1}+d_{2}-1)\}. \]
\end{rem}

\section{Deuxi\`eme \'etape}

Dans cette section nous allons \'etablir, pour un $ \xx\in \ZZ^{r+1} $ fix\'e, en notant $ k=|\xx| $, une formule asymptotique pour  \begin{multline*}
N_{d,\xx}(P_{2})=\Card\left\{ (\yy,\zz)\in  (dkP_{2}\BB_{2}\times P_{2}\BB_{3})\cap\ZZ^{n-r+1} \; | \;   F(d\xx,\yy,\zz)=0 \right\}. 
\end{multline*}
\`A cette fin on pose \begin{equation}
S_{d,\xx}(\alpha)=\sum_{\substack{\yy\in \ZZ^{m-r}\\|\yy|\leqslant dkP_{2}}}\sum_{\substack{\zz\in \ZZ^{n-m+1}\\ |\zz|\leqslant P_{2}}}e(\alpha F(d\xx,\yy,\zz)),
\end{equation}
et on remarque que \[ N_{d,\xx}(P_{2})=\int_{0}^{1}S_{d,\xx}(\alpha)d\alpha. \]

\subsection{Somme d'exponentielles}
En appliquant le m\^eme proc\'ed\'e que pour la section \ref{Weyl}, on a, pour $ \xx $ fix\'e : \begin{multline*}
|S_{d,\xx}(\alpha)|^{2^{d_{2}-1}}\ll \left((dkP_{2})^{m-r}\right)^{2^{d_{2}-1}-d_{2}}\left(P_{2}^{n-m+1}\right)^{2^{d_{2}-1}-d_{2}} \\ \sum_{\substack{\yy^{(1)},\zz^{(1)} \\ |\yy^{(1)}|\leqslant dkP_{2} \\ |\zz^{(1)}|\leqslant P_{2}}}...\sum_{\substack{\yy^{(d_{2}-1)},\zz^{(d_{2}-1)} \\ |\yy^{(d_{2}-1)}|\leqslant dkP_{2} \\ |\zz^{(d_{2}-1)}|\leqslant P_{2}}} \prod_{j=r+1}^{n+1}\min\left\{ H_{j}, \left|\left| \alpha\gamma_{d,\xx,j}\left((\yy^{(i)},\zz^{(i)})_{i\in \{1,...,d_{2}-1\}}\right)\right|\right|^{-1}\right\}
\end{multline*}
avec \[ H_{j}=\left\{ \begin{array}{rcl} dkP_{2} & \mbox{si} & j\in \{r+1,...,m\} \\ P_{2} & \mbox{si} & j\in \{m+1,...,n+1\}\end{array}\right., \] \[ \gamma_{d,\xx,j}\left((\yy^{(i)},\zz^{(i)})_{i\in \{1,...,d_{2}-1\}}\right)=\sum_{\ii=(i_{1},...,i_{d_{2}-1})\in \{r+1,...,n+1\}^{d_{2}-1}}F_{d\xx,\ii,j}u_{i_{1}}^{(1)}...u_{i_{d_{2}-1}}^{(d_{2}-1)},\] o\`u \[ u_{i}=\left\{ \begin{array}{rcl} y_{i} & \mbox{si} & i\in \{r+1,...,m\} \\ z_{i} & \mbox{si} & i\in \{m+1,...,n+1\}\end{array}\right., \] et les coefficients $ F_{d\xx,\ii,j} $ sont sym\'etriques en $ (\ii,j)\in \{r+1,...,n+1\}^{d_{2}}  $. \`A partir de l\`a, on montre, comme dans la section \ref{Weyl} que 
\begin{multline*} |S_{d,\xx}(\alpha)|^{2^{d_{2}-1}}\ll \left((dkP_{2})^{m-r+\varepsilon}\right)^{2^{d_{2}-1}-d_{2}+1}\left(P_{2}^{n-m+1+\varepsilon}\right)^{2^{d_{2}-1}-d_{2}+1} \\  M_{d,\xx}\left(\alpha,dkP_{2},P_{2},(dkP_{2})^{-1},P_{2}^{-1}\right), \end{multline*}
o\`u l'on a not\'e pour tous r\'eels strictement positifs $ H_{1},H_{2},B_{1},B_{2} $ : \begin{multline*} M_{d,\xx}\left(\alpha,H_{1},H_{2},B_{1}^{-1},B_{2}^{-1}\right)= \card\left\{ (\yy^{(1)},\zz^{(1)},...,\yy^{(d_{2}-1},\zz^{(d_{2}-1)}) \; | \; |\yy^{(i)}|\leqslant H_{1}, \; \right.\\  |\zz^{(i)}|\leqslant H_{2}, \;\et, \forall j \in\{r+1,...,m\}\;\left|\left| \alpha\gamma_{d,\xx,j}\left((\yy^{(i)},\zz^{(i)})_{i\in \{1,...,d_{2}-1\}}\right)\right|\right|\leqslant B_{1}^{-1} \\ \left. \forall j\in \{m+1,...,n+1\}\;\left|\left| \alpha\gamma_{d,\xx,j}\left((\yy^{(i)},\zz^{(i)})_{i\in \{1,...,d_{2}-1\}}\right)\right|\right|\leqslant B_{2}^{-1}\right\}.  \end{multline*} On en d\'eduit : \begin{lemma}
Si $ P>1 $, $ \kappa>0 $ et $ \varepsilon>0 $ arbitrairement petit, l'une au moins des assertions suivantes est v\'erifi\'ee : \begin{enumerate}
\item $ |S_{d,\xx}(\alpha)|\ll (dk)^{m-r+\varepsilon}P_{2}^{n+1-r+\varepsilon}P^{-\kappa}, $ 
\item  $ M_{d,\xx}\left(\alpha,dkP_{2},P_{2},(dkP_{2})^{-1},P_{2}^{-1}\right)\gg \left((dkP_{2})^{m-r}\right)^{d_{2}-1}\left(P_{2}^{n-m+1}\right)^{d_{2}-1}P^{-2^{d_{2}-1}\kappa}. $
\end{enumerate}
\end{lemma}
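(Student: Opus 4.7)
La d\'emonstration est une dichotomie imm\'ediate \`a partir de la majoration
\begin{equation*}
|S_{d,\xx}(\alpha)|^{2^{d_{2}-1}} \ll \left((dkP_{2})^{m-r+\varepsilon}\right)^{2^{d_{2}-1}-d_{2}+1}\left(P_{2}^{n-m+1+\varepsilon}\right)^{2^{d_{2}-1}-d_{2}+1} M_{d,\xx}
\end{equation*}
\'etablie juste avant l'\'enonc\'e. Si l'assertion $1.$ est fausse, alors $|S_{d,\xx}(\alpha)| \gg (dk)^{m-r+\varepsilon}P_{2}^{n+1-r+\varepsilon}P^{-\kappa}$ ; en \'elevant cette minoration \`a la puissance $2^{d_{2}-1}$ et en la reportant dans l'in\'egalit\'e ci-dessus, on isole $M_{d,\xx}$ pour obtenir
\begin{equation*}
M_{d,\xx} \gg \frac{\left((dk)^{m-r+\varepsilon}P_{2}^{n+1-r+\varepsilon}\right)^{2^{d_{2}-1}}P^{-2^{d_{2}-1}\kappa}}{\left((dkP_{2})^{m-r+\varepsilon}\right)^{2^{d_{2}-1}-d_{2}+1}\left(P_{2}^{n-m+1+\varepsilon}\right)^{2^{d_{2}-1}-d_{2}+1}}.
\end{equation*}

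Le reste est un d\'ecompte d'exposants. En posant $a=2^{d_{2}-1}$ et $b=a-d_{2}+1$ de sorte que $a-b=d_{2}-1$, l'exposant de $(dk)$ est $(m-r)a-(m-r)b = (m-r)(d_{2}-1)$, et celui de $P_{2}$ est $(n+1-r)a-(m-r)b-(n-m+1)b = (n+1-r)(d_{2}-1)$. En d\'ecomposant $(n+1-r)(d_{2}-1)=(m-r)(d_{2}-1)+(n-m+1)(d_{2}-1)$, on regroupe les facteurs pour retrouver exactement
\begin{equation*}
M_{d,\xx} \gg \left((dkP_{2})^{m-r}\right)^{d_{2}-1}\left(P_{2}^{n-m+1}\right)^{d_{2}-1}P^{-2^{d_{2}-1}\kappa},
\end{equation*}
les facteurs parasites en $\varepsilon$ \'etant absorb\'es dans une l\'eg\`ere augmentation de l'exposant $\varepsilon$ de l'assertion $1.$ (choix de $\varepsilon$ arbitrairement petit).

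Aucun obstacle s\'erieux ne se pr\'esente : tout le travail substantiel (diff\'erenciation \`a la Weyl $d_{2}-1$ fois en $(\yy,\zz)$, puis application du lemme \ref{geomnomb2} de g\'eom\'etrie des nombres pour remplacer les minima $H_{j}^{-1}$ par les bornes homog\`enes $(dkP_{2})^{-1}$ et $P_{2}^{-1}$) a d\'ej\`a \'et\'e accompli en amont. Le lemme n'est qu'une reformulation dichotomique de la majoration pr\'ec\'edente, parfaitement analogue \`a celle des lemmes \ref{dilemme1} et \ref{dilemme2} de la section \ref{Weyl}.
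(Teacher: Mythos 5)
Votre preuve est correcte et suit exactement la d\'emarche du texte : le papier d\'eduit ce lemme directement de la majoration de $|S_{d,\xx}(\alpha)|^{2^{d_{2}-1}}$ qui le pr\'ec\`ede, par la m\^eme dichotomie (nier l'assertion $1.$, \'elever \`a la puissance $2^{d_{2}-1}$, isoler $M_{d,\xx}$), et votre d\'ecompte d'exposants via $a-b=d_{2}-1$ et $(m-r)+(n-m+1)=n-r+1$ est exact, les facteurs en $\varepsilon$ \'etant absorb\'es comme vous l'indiquez.
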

Or pour $ (\yy^{(i)},\zz^{(i)})_{i\in \{1,...,d_{2}-2\}} $ fix\'es, le r\'eseau d\'efini par les $ (\yy^{(d_{2}-1)},\zz^{(d_{2}-1)}) $ et les formes lin\'eaires $  \alpha\gamma_{d,\xx,j} $ est sym\'etrique (i.e. si $ \gamma_{d,\xx,j}(\uu)=\sum_{l\in \{r+1,...,n+1\}}\lambda_{j,l}u_{l} $, alors $ \lambda_{j,l}=\lambda_{l,j} $). On peut donc appliquer le lemme \ref{geomnomb2}, avec des param\`etres  $ a_{j},Z,Z' $ bien choisis. 

On pose \[ \begin{array}{l} Z=1 \\ Z'=P_{2}^{-1}P^{\theta} \\ \forall j \in \{r+1,...,m\}, \; \; a_{j}=dkP_{2} \\ \forall j \in \{m+1,...,n+1\}, \; \; a_{j}=P_{2} 
\end{array}, \] avec $ \theta $ tel que $ P^{\theta}\leqslant P_{2} $ de sorte que 
\[ \begin{array}{lrclrcl}
\forall j \in \{r+1,...,m\}, & a_{j}Z & = & dkP_{2} & a_{k}^{-1}Z & = & (kP_{2})^{-1} \\ \forall j \in \{m+1,...,n+1\}, & a_{j}Z & = & P_{2} & a_{j}^{-1}Z & = & P_{2}^{-1} \\ \forall j \in \{r+1,...,m\}, & a_{j}Z' & = & dkP^{\theta}& a_{j}^{-1}Z' & = & (dk)^{-1}P_{2}^{-2}P^{\theta} \\ \forall j \in \{m+1,...,n+1\}, & a_{j}Z' & = & P^{\theta} & a_{j}^{-1}Z' & = & P_{2}^{-2}P^{\theta}
\end{array} \]

Puis, on r\'eit\`ere ce proc\'ed\'e avec $ (\yy^{(d_{2}-i)},\zz^{(d_{2}-i)}) $, pour $ i\in\{2,...,d_{2}-1\} $, en choisissant : \[ \begin{array}{l} Z=P_{2}^{-\frac{(i-1)}{2}}P^{\frac{(i-1)\theta}{2}} \\ Z'=P_{2}^{-\frac{(i+1)}{2}}P^{\frac{(i+1)\theta}{2}} \\ \forall j \in \{r+1,...,m\}, \; \; a_{j}=dkP_{2}^{\frac{(i+1)}{2}}P^{-\frac{(i-1)\theta}{2}}  \\ \forall j \in \{m+1,...,n+1\}, \; \; a_{j}=P_{2}^{\frac{(i+1)}{2}}P^{-\frac{(i-1)\theta}{2}} 
\end{array}, \] de sorte que 
\[ \begin{array}{lrclrcl}
\forall j \in \{r+1,...,m\}, & a_{j}Z & = & dkP_{2} & a_{j}^{-1}Z & = & (dk)^{-1}P_{2}^{-i}P^{(i-1)\theta} \\ \forall j \in \{m+1,...,n+1\}, & a_{j}Z & = & P_{2} & a_{j}^{-1}Z & = &  P_{2}^{-i}P^{(i-1)\theta} \\ \forall j \in \{r+1,...,m\}, & a_{j}Z' & = & dkP^{\theta}& a_{j}^{-1}Z' & = & (dk)^{-1}P_{2}^{-(i+1)}P^{i\theta} \\ \forall j \in \{m+1,...,n+1\}, & a_{j}Z' & = & P^{\theta} & a_{j}^{-1}Z' & = & P_{2}^{-(i+1)}P^{i\theta}
\end{array}. \]

On obtient alors finalement : \begin{multline*} 
M_{d,\xx}\left(\alpha,dkP_{2},P_{2},(dkP_{2})^{-1},P_{2}^{-1}\right) \\ \ll \left(\frac{P_{2}}{P^{\theta}}\right)^{(d_{2}-1)(n-r+1)}
  M_{d,\xx}\left(\alpha,dkP^{\theta},P^{\theta},(dk)^{-1}P_{2}^{-d_{2}}P^{(d_{2}-1)\theta},P_{2}^{-d_{2}}P^{(d_{2}-1)\theta}\right)
\end{multline*}
On remarque par ailleurs (en utilisant le lemme \ref{lemmedebile}) que 
 \begin{multline*} 
  M_{d,\xx}\left(\alpha,dkP^{\theta},P^{\theta},(dk)^{-1}P_{2}^{-d_{2}}P^{(d_{2}-1)},P_{2}^{-d_{2}}P^{(d_{2}-1)}\right) \\ \ll (dk)^{(d_{2}-1)(m-r)}M_{d,\xx}\left(\alpha,P^{\theta},P^{\theta},(dk)^{-1}P_{2}^{-d_{2}}P^{(d_{2}-1)\theta},P_{2}^{-d_{2}}P^{(d_{2}-1)\theta}\right)
\end{multline*}
On a donc le lemme suivant : 

\begin{lemma}
Si $ P>1 $, $ \kappa>0 $ et $ \varepsilon>0 $ arbitrairement petit, l'une au moins des assertions suivantes est v\'erifi\'ee : \begin{enumerate}
\item $ |S_{d,\xx}(\alpha)|\ll (dk)^{m-r+\varepsilon}P_{2}^{n+1-r+\varepsilon}P^{-\kappa}, $ 
\item  \begin{multline*} M_{d,\xx}\left(\alpha,P^{\theta},P^{\theta},(dk)^{-1}P_{2}^{-d_{2}}P^{(d_{2}-1)},P_{2}^{-d_{2}}P^{(d_{2}-1)}\right) \\ \gg \left(P^{\theta}\right)^{(n-r+1)(d_{2}-1)}P^{-2^{d_{2}-1}\kappa}. \end{multline*}
\end{enumerate}
\end{lemma}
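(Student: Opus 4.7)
Le plan est d'assembler la dichotomie obtenue par le lemme pr\'ec\'edent avec l'estimation de g\'eom\'etrie des nombres explicit\'ee juste avant l'\'enonc\'e. Supposons que l'assertion $1$ soit fausse, c'est-\`a-dire que l'on n'ait pas la majoration de type Weyl. Alors le lemme pr\'ec\'edent (appliqu\'e avec le m\^eme $\kappa$ et le m\^eme $P$) fournit la minoration
\[ M_{d,\xx}\left(\alpha,dkP_{2},P_{2},(dkP_{2})^{-1},P_{2}^{-1}\right)\gg \left((dkP_{2})^{m-r}\right)^{d_{2}-1}\left(P_{2}^{n-m+1}\right)^{d_{2}-1}P^{-2^{d_{2}-1}\kappa}. \]

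Ensuite, j'appliquerai it\'erativement le lemme\;\ref{geomnomb2} exactement comme cela vient d'\^etre fait dans la discussion qui pr\'ec\`ede l'\'enonc\'e : pour $i$ variant de $1$ \`a $d_{2}-1$, on fixe les variables $(\yy^{(j)},\zz^{(j)})_{j\neq d_{2}-i}$, on consid\`ere les formes $\alpha\gamma_{d,\xx,j}$ comme formes lin\'eaires sym\'etriques en $(\yy^{(d_{2}-i)},\zz^{(d_{2}-i)})$, et on choisit les param\`etres $Z,Z',a_{j}$ prescrits dans le tableau. Ceci donne l'in\'egalit\'e
\begin{multline*}
M_{d,\xx}\left(\alpha,dkP_{2},P_{2},(dkP_{2})^{-1},P_{2}^{-1}\right) \\
\ll (dk)^{(d_{2}-1)(m-r)}\left(\frac{P_{2}}{P^{\theta}}\right)^{(d_{2}-1)(n-r+1)} M_{d,\xx}\left(\alpha,P^{\theta},P^{\theta},H_{1},H_{2}\right),
\end{multline*}
o\`u $H_{1}=(dk)^{-1}P_{2}^{-d_{2}}P^{(d_{2}-1)\theta}$ et $H_{2}=P_{2}^{-d_{2}}P^{(d_{2}-1)\theta}$ (le facteur $(dk)^{(d_{2}-1)(m-r)}$ provenant d'une application finale du lemme\;\ref{lemmedebile} pour remplacer la borne $dkP^{\theta}$ par $P^{\theta}$).

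En combinant les deux in\'egalit\'es pr\'ec\'edentes, on obtient imm\'ediatement
\[ M_{d,\xx}\left(\alpha,P^{\theta},P^{\theta},H_{1},H_{2}\right)\gg \left(P^{\theta}\right)^{(n-r+1)(d_{2}-1)}P^{-2^{d_{2}-1}\kappa}, \]
car les facteurs $(dk)^{(d_{2}-1)(m-r)}$ et $P_{2}^{(d_{2}-1)(m-r)+(d_{2}-1)(n-m+1)}=P_{2}^{(d_{2}-1)(n-r+1)}$ se simplifient exactement avec ceux apparaissant dans la minoration initiale. C'est l'assertion $2$ recherch\'ee. L'\'etape potentiellement d\'elicate est la v\'erification que la condition de sym\'etrie $\lambda_{j,l}=\lambda_{l,j}$ requise par le lemme\;\ref{geomnomb2} est bien satisfaite par les coefficients des formes $\gamma_{d,\xx,j}$, mais cela d\'ecoule directement de la sym\'etrie des coefficients $F_{d\xx,\ii,j}$ en $(\ii,j)\in \{r+1,...,n+1\}^{d_{2}}$ d\'ej\`a signal\'ee dans la construction des $\gamma_{d,\xx,j}$.
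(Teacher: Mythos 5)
Votre démonstration est correcte et suit exactement la même démarche que le texte du papier : négation de l'assertion 1, application du lemme dichotomique précédent pour minorer $M_{d,\xx}(\alpha,dkP_{2},P_{2},(dkP_{2})^{-1},P_{2}^{-1})$, itération du lemme \ref{geomnomb2} avec les paramètres du tableau, réduction de la borne $dkP^{\theta}$ à $P^{\theta}$ via le lemme \ref{lemmedebile}, puis simplification des facteurs en $(dk)$ et en $P_{2}$ qui laisse précisément la puissance $(P^{\theta})^{(n-r+1)(d_{2}-1)}$ voulue. Rien à signaler.
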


Remarquons \`a pr\'esent que s'il existe $ j_{0}\in \{r+1,...,n+1\} $ tel que $ \gamma_{d,\xx,j_{0}}\left((\yy^{(i)},\zz^{(i)})_{i\in \{1,...,d_{2}-1\}}\right)\neq 0 $ pour un certain $ (\yy^{(i)},\zz^{(i)})_{i\in \{1,...,d_{2}-1\}} $ tel que $ |\yy^{(i)}|\leqslant P^{\theta} $, $ |\zz^{(i)}|\leqslant P^{\theta} $ pour tout $ i\in \{1,...,d_{2}-1\} $, et \[ \left|\left|\alpha\gamma_{d,\xx,j_{0}}\left((\yy^{(i)},\zz^{(i)})_{i\in \{1,...,d_{2}-1\}}\right)\right|\right|\leqslant P_{2}^{-d_{2}}P^{(d_{2}-1)\theta},\]  alors en posant $ q=\gamma_{d,\xx,j_{0}}\left((\yy^{(i)},\zz^{(i)})_{i\in \{1,...,d_{2}-1\}}\right) $, on a $ q\ll d^{d_{1}}k^{d_{1}}P^{(d_{2}-1)\theta} $ et il existe $ a $ tel que \[ |\alpha q-a|\leqslant P_{2}^{-d_{2}}P^{(d_{2}-1)\theta}. \] Quitte \`a changer $ \theta $, on peut supposer $ q\leqslant d^{d_{1}}k^{d_{1}}P^{(d_{2}-1)\theta} $, $ 0\leqslant a <q $ et $ \PGCD(a,q)=1 $.
Dans ce qui suit, on posera \begin{equation}
\mathfrak{M}_{a,q}^{d,\xx}(\theta)=\left\{ \alpha\in [0,1[ \; | \;  2|\alpha q-a|\leqslant P_{2}^{-d_{2}}P^{(d_{2}-1)\theta}\right\},
\end{equation}
\begin{equation}
\mathfrak{M}^{d,\xx}(\theta)=\bigcup_{q\leqslant d^{d_{1}}k^{d_{1}}P^{(d_{2}-1)\theta}}\bigcup_{\substack{0\leqslant a <q \\ \PGCD(a,q)=1 }}\mathfrak{M}_{a,q}^{d,\xx}(\theta).
\end{equation}

 On en d\'eduit donc : \begin{lemma}\label{dilemme23}
Si $ P>1 $, $ \kappa>0 $ et $ \varepsilon>0 $ arbitrairement petit, l'une au moins des assertions suivantes est v\'erifi\'ee : \begin{enumerate}
\item $ |S_{d,\xx}(\alpha)|\ll (dk)^{m-r+\varepsilon}P_{2}^{n+1-r+\varepsilon}P^{-\kappa}, $ 
\item le r\'eel $ \alpha $ appartient \`a $ \mathfrak{M}^{d,\xx}(\theta) $,
\item  \begin{multline*} 
\Card\left\{(\yy^{(i)},\zz^{(i)})_{i\in \{1,...,d_{2}-1\}}, \; |\yy^{(i)}|\leqslant P^{\theta}, \; |\zz^{(i)}|\leqslant P^{\theta}, \; \right. \\ \left. \et \; \forall j \in \{r+1,...,n+1\},\;  \gamma_{d,\xx,j}\left((\yy^{(i)},\zz^{(i)})_{i\in \{1,...,d_{2}-1\}}\right)=0\right\} \\  \gg \left(P^{\theta}\right)^{(n-r+1)(d_{2}-1)}P^{-2^{d_{2}-1}\kappa}. \end{multline*}
\end{enumerate}
\end{lemma}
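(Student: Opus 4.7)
The plan is to deduce this lemma from the preceding one by a case analysis on the tuples realising the alternative about $M_{d,\xx}$. I would begin by invoking the previous lemma: either assertion~(1) of the present statement already holds, or we obtain the lower bound
\[
M_{d,\xx}\bigl(\alpha, P^\theta, P^\theta, (dk)^{-1}P_2^{-d_2}P^{(d_2-1)\theta}, P_2^{-d_2}P^{(d_2-1)\theta}\bigr) \gg (P^\theta)^{(n-r+1)(d_2-1)} P^{-2^{d_2-1}\kappa}.
\]
Assuming this second branch, I would split according to whether the linear forms $\gamma_{d,\xx,j}$ vanish on \emph{every} tuple counted above or not.

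If, for every counted tuple $(\yy^{(i)},\zz^{(i)})_{i\in \{1,\ldots,d_2-1\}}$ and every $j\in \{r+1,\ldots,n+1\}$, one has $\gamma_{d,\xx,j}((\yy^{(i)},\zz^{(i)})_i)=0$, then the set counted by $M_{d,\xx}$ is contained in the set appearing in assertion~(3), and the lower bound transfers directly. This is the easy case.

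Otherwise, I would fix a tuple $(\yy^{(i)},\zz^{(i)})_i$ with $|\yy^{(i)}|,|\zz^{(i)}|\leqslant P^\theta$ and an index $j_0$ for which $q:=\gamma_{d,\xx,j_0}((\yy^{(i)},\zz^{(i)})_i)\neq 0$ and for which the bound $\|\alpha q\|\leqslant P_2^{-d_2}P^{(d_2-1)\theta}$ is automatic from the definition of $M_{d,\xx}$ (both index ranges $\{r+1,\dots,m\}$ and $\{m+1,\dots,n+1\}$ yield the same bound up to the factor $(dk)^{-1}\leqslant 1$). Since $\gamma_{d,\xx,j_0}$ is linear in $(\yy^{(i)},\zz^{(i)})_i$ with coefficients $F_{d\xx,\ii,j_0}$ of size $\ll (dk)^{d_1}$ (being obtained from the original form $F$ of degree $d_1$ in the $\xx$-variables via the differentiation process), summing over $d_2-1$ factors of size at most $P^\theta$ yields $|q|\ll (dk)^{d_1}P^{(d_2-1)\theta}$. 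Taking $a$ to be the nearest integer to $\alpha q$, then reducing the fraction $a/q$ to lowest terms, produces coprime $(a',q')$ with $q'\leqslant q$ and $|\alpha q'-a'|\leqslant P_2^{-d_2}P^{(d_2-1)\theta}$. A slight inflation of $\theta$ then absorbs the implicit $\ll$-constant, giving $q'\leqslant d^{d_1}k^{d_1}P^{(d_2-1)\theta}$, and the factor $2$ in the definition of $\mathfrak{M}_{a',q'}^{d,\xx}(\theta)$ is absorbed simultaneously. This places $\alpha$ in $\mathfrak{M}^{d,\xx}(\theta)$, which is assertion~(2).

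The only step that requires care — and which I would view as the main (but essentially routine) technical point — is this final bookkeeping: one has to check that the inflation of $\theta$ needed to absorb the $\ll$-constants and the factor $2$ does not destroy the ranges of $\kappa$ and $\theta$ for which the preceding lemma is meaningful, so that the present statement retains its content. This is exactly the kind of adjustment already carried out earlier in the excerpt in the passage from Lemma~\ref{dilemme3} to Lemma~\ref{dilemme4}, and mirrors the use of Lemma~\ref{lemmedebile} there; no new ideas are needed.
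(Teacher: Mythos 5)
Your proposal is correct and follows essentially the same route as the paper: the author likewise splits the second branch of the preceding dichotomy according to whether all the forms $\gamma_{d,\xx,j}$ vanish on every counted tuple (giving assertion~3) or some $\gamma_{d,\xx,j_{0}}$ is nonzero, in which case $q=\gamma_{d,\xx,j_{0}}(\cdots)\ll d^{d_{1}}k^{d_{1}}P^{(d_{2}-1)\theta}$ and the nearest integer $a$ place $\alpha$ in $\mathfrak{M}^{d,\xx}(\theta)$ after the same "quitte à changer $\theta$" adjustment you describe.
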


On d\'efinit \`a pr\'esent, pour $ \xx $ fix\'e : \begin{multline}
V_{2,\xx}^{\ast}=\left\{ (\yy,\zz) \in \CC^{n-r+1} \; | \; \forall j \in \{r+1,...,m\}, \frac{\partial F}{\partial y_{j}}(\xx,\yy,\zz) =0 \; \right.  \\ \left. \et \; \forall k \in \{m+1,...,n+1\}, \;  \frac{\partial F}{\partial z_{k}}(\xx,\yy,\zz) =0 \right\}.
\end{multline}
Remarquons que, puisque $ F $ est homog\`eme de degr\'e $ d_{1} $ en $ (\xx,\yy) $, on a pour tous $ j,k $: \[ \frac{\partial F}{\partial y_{j}}(d\xx,d\yy,\zz)=d^{d_{1}-1}\frac{\partial F}{\partial y_{j}}(\xx,\yy,\zz) \] \[ \frac{\partial F}{\partial z_{k}}(d\xx,d\yy,\zz)=d^{d_{1}}\frac{\partial F}{\partial z_{k}}(\xx,\yy,\zz)\] et donc l'application $ (\yy,\zz)\mt (d\yy,\zz) $ r\'ealise un isomorphisme de $ V_{2,d\xx}^{\ast} $ sur $ V_{2,\xx}^{\ast} $, donc en particulier : \[ \dim V_{2,d\xx}^{\ast}=\dim V_{2,\xx}^{\ast}. \]
On note par ailleurs : \begin{equation}
\mathcal{A}_{2}^{\lambda}=\left\{ \xx \in \CC^{r+1} \; | \; \dim V_{2,\xx}^{\ast} < \dim V_{2}^{\ast} -(r+1)+\lambda \right\},
\end{equation}
o\`u $ \lambda\in \NN $ est un param\`etre que nous pr\'eciserons ult\'erieurement. Par abus de langage on notera  \[\mathcal{A}_{2}^{\lambda}(\ZZ)=\mathcal{A}_{2}^{\lambda}\cap \ZZ^{r+1}. \] 

Supposons \`a pr\'esent que $ \xx\in \mathcal{A}_{2}^{\lambda}(\ZZ)  $ et que l'assertion $ 3. $ du lemme \ref{dilemme23} est v\'erifi\'ee. Posons par ailleurs $ K_{2}=\kappa/\theta $. Si $ \mathcal{L}_{2,d,\xx} $ est la sous-vari\'et\'e affine de $ \AA^{(n-r+1)(d_{2}-1)} $ d\'efinie par les \'equations \[ \gamma_{d,\xx,j}\left((\yy^{(i)},\zz^{(i)})_{i\in \{1,...,d_{2}-1\}}\right)=0, \] 
on a alors, d'apr\`es la d\'emonstration de \cite[Th\'eor\`eme 3.1]{Br} : \[ \dim  \mathcal{L}_{2,d,\xx} \geqslant  (n-r+1)(d_{2}-1)-2^{d_{2}-1}K_{2}. \] On consid\`ere d'autre part l'intersection avec la diagonale \[ \mathcal{D}_{2} : \left\{ \begin{array}{l} \yy^{(1)}=...=\yy^{(d_{2}-1)} \\ \zz^{(1)}=...=\zz^{(d_{2}-1)} 
\end{array}\right. . \] On a \begin{align*} \dim  (\mathcal{L}_{2,d,\xx}\cap \mathcal{D}_{2}) & \geqslant \dim  \mathcal{L}_{2,d,\xx}- (n-r+1)(d_{2}-2) \\ & \geqslant n-r+1-2^{d_{2}-1}K_{2} \end{align*}
On remarque par ailleurs que $ \mathcal{L}_{2,d,\xx}\cap \mathcal{D}_{2} $ est isomorphe \`a $ V_{2,d\xx}^{\ast} $, et donc, puisque $ \xx\in  \mathcal{A}_{2}^{\lambda}(\ZZ) $, et $ \dim V_{2,d\xx}^{\ast}=\dim V_{2,\xx}^{\ast} $, on obtient \[ 2^{d_{2}-1}K_{2}\geqslant n-r+1-\dim V_{2,\xx}^{\ast}> n+2-\dim V_{2}^{\ast}-\lambda. \] On posera donc dor\'enavant \begin{equation}
K_{2}=(n+2-\dim V_{2}^{\ast}-\lambda)/2^{d_{2}-1}
\end{equation} et le lemme \ref{dilemme23} devient alors : 
\begin{lemma}\label{dilemme24}
Si $ \varepsilon>0 $ est un r\'eel arbitrairement petit, l'une au moins des assertions suivantes est v\'erifi\'ee : \begin{enumerate}
\item $ |S_{d,\xx}(\alpha)|\ll (dk)^{m-r+\varepsilon}P_{2}^{n+1-r+\varepsilon}P^{-K_{2}\theta}, $ 
\item le r\'eel $ \alpha $ appartient \`a $ \mathfrak{M}^{d,\xx}(\theta) $.
\end{enumerate}
\end{lemma}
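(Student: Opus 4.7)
The plan is to derive Lemma \ref{dilemme24} directly from Lemma \ref{dilemme23} by eliminating its third alternative under the standing assumption (made in the paragraph preceding the statement) that $\xx$ lies in $\mathcal{A}_{2}^{\lambda}(\ZZ)$. Applying Lemma \ref{dilemme23} with the choice $\kappa = K_{2}\theta$ gives a trichotomy; I will show that the third case (the lower bound on integer zeros of the system $\gamma_{d,\xx,j}=0$) is ruled out by the very definition of $K_2$, and the surviving alternatives are exactly the two listed in Lemma \ref{dilemme24}.

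First, I introduce the affine variety $\mathcal{L}_{2,d,\xx} \subset \AA^{(n-r+1)(d_{2}-1)}$ cut out by the $(n-r+1)$ multilinear forms $\gamma_{d,\xx,j}$ for $j \in \{r+1,\ldots,n+1\}$. By the standard dimension-counting argument (as carried out in the proof of Theorem~3.1 of \cite{Br}), the assertion that $\mathcal{L}_{2,d,\xx}$ contains at least $(P^{\theta})^{(n-r+1)(d_{2}-1)}P^{-2^{d_{2}-1}K_{2}\theta}$ integer points in the box $[-P^{\theta},P^{\theta}]^{(n-r+1)(d_{2}-1)}$ translates into the geometric lower bound
\[ \dim \mathcal{L}_{2,d,\xx} \geq (n-r+1)(d_{2}-1) - 2^{d_{2}-1}K_{2}. \]

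Next, I intersect $\mathcal{L}_{2,d,\xx}$ with the diagonal subspace $\mathcal{D}_{2}$ given by $\yy^{(1)}=\cdots=\yy^{(d_{2}-1)}$ and $\zz^{(1)}=\cdots=\zz^{(d_{2}-1)}$. Since $\mathcal{D}_{2}$ has codimension $(n-r+1)(d_{2}-2)$, the intersection has dimension at least $n-r+1 - 2^{d_{2}-1}K_{2}$. Moreover, the restriction of the multilinear forms $\gamma_{d,\xx,j}$ to $\mathcal{D}_2$ is (up to a combinatorial factor) the vector of partial derivatives of $F$ at $(d\xx,\yy,\zz)$ in the $(\yy,\zz)$ directions, so $\mathcal{L}_{2,d,\xx}\cap \mathcal{D}_{2}$ is isomorphic to $V_{2,d\xx}^{\ast}$. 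The homogeneity of $F$ of degree $d_{1}$ in $(\xx,\yy)$, which makes $(\yy,\zz)\mapsto(d\yy,\zz)$ an isomorphism $V_{2,\xx}^{\ast}\xrightarrow{\sim} V_{2,d\xx}^{\ast}$ (as already observed in the excerpt), then forces $\dim V_{2,d\xx}^{\ast}=\dim V_{2,\xx}^{\ast}$.

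Combining these steps yields $2^{d_{2}-1}K_{2} \geq n-r+1 - \dim V_{2,\xx}^{\ast}$. The hypothesis $\xx \in \mathcal{A}_{2}^{\lambda}(\ZZ)$ turns this into the strict inequality $2^{d_{2}-1}K_{2} > n+2 - \dim V_{2}^{\ast} - \lambda$, directly contradicting the definition $K_{2}=(n+2-\dim V_{2}^{\ast}-\lambda)/2^{d_{2}-1}$. Thus alternative~3 of Lemma~\ref{dilemme23} cannot occur, and the remaining two alternatives are precisely the content of Lemma~\ref{dilemme24}. The only genuinely non-trivial ingredient is the passage from a count of integer points on $\mathcal{L}_{2,d,\xx}$ to a bound on its algebraic dimension; however, this is a textbook consequence of Birch's method in \cite{Br}, and in the present setting the linear forms $\gamma_{d,\xx,j}$ satisfy the symmetry property needed for that argument to apply verbatim.
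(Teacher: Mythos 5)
Your proposal is correct and follows essentially the same route as the paper: apply Lemma \ref{dilemme23} with $\kappa=K_{2}\theta$, convert the point-count of alternative 3 into the dimension bound $\dim\mathcal{L}_{2,d,\xx}\geqslant (n-r+1)(d_{2}-1)-2^{d_{2}-1}K_{2}$ via \cite[Th\'eor\`eme 3.1]{Br}, intersect with the diagonal to identify $V_{2,d\xx}^{\ast}\simeq V_{2,\xx}^{\ast}$, and use $\xx\in\mathcal{A}_{2}^{\lambda}(\ZZ)$ to contradict the definition of $K_{2}$. This matches the paper's argument step for step, including the correct handling of the strict inequality coming from the definition of $\mathcal{A}_{2}^{\lambda}$.
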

Pour tout le reste de cette section on fixera $ P=P_{2} $. 
Avant d'aller plus loin, nous \'etablissons une propri\'et\'e de l'ensemble $ \mathcal{A}_{2}^{\lambda} $ 
\begin{prop}\label{propA2}
L'ensemble $ \mathcal{A}_{2}^{\lambda} $ est un ouvert de Zariski de $ \AA_{\CC}^{r+1} $, et de plus, on a \[ \Card\left\{ \xx \in [-P_{1},P_{1}]^{r+1}\cap (\mathcal{A}_{2}^{\lambda})^{c}\cap \ZZ^{r+1} \right\}\ll P_{1}^{r+1-\lambda}. \] 
\end{prop}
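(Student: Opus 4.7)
The plan is to establish both claims using standard results on dimensions of fibres of algebraic morphisms combined with a classical bound on the number of integer points of a box on a proper algebraic subset.

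For the openness assertion, we view $\mathcal{A}_2^{\lambda}$ through the first projection $\pi_1 \colon V_2^{\ast} \to \AA_{\CC}^{r+1}$, $(\xx,\yy,\zz) \mapsto \xx$, whose fibres are exactly the $V_{2,\xx}^{\ast}$. By the upper semicontinuity of fibre dimension (applied to each irreducible component of $V_2^{\ast}$ separately and then taking finite union), for every integer $k$ the locus
\[ W_k = \bigl\{ \xx \in \AA_{\CC}^{r+1} \;\bigl|\; \dim V_{2,\xx}^{\ast} \geqslant k\bigr\} \]
is Zariski closed in $\AA_{\CC}^{r+1}$. Taking $k = \dim V_2^{\ast} - (r+1) + \lambda$, its complement $\mathcal{A}_2^{\lambda}$ is Zariski open.

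For the counting estimate, I would first bound the dimension of $W := (\mathcal{A}_2^{\lambda})^c$. Let $V^{(1)}, \ldots, V^{(s)}$ be the irreducible components of $V_2^{\ast}$ and set $d = \dim V_2^{\ast} = \max_i \dim V^{(i)}$. The standard fibre dimension inequality applied to each restriction $\pi_1|_{V^{(i)}} \colon V^{(i)} \to \AA_{\CC}^{r+1}$ gives, for any integer $j$,
\[ \dim\bigl\{ \xx \;\bigl|\; \dim V^{(i)}_{\xx} \geqslant j\bigr\} \;\leqslant\; \dim V^{(i)} - j. \]
Taking $j = d - (r+1) + \lambda$ and using $\dim V^{(i)} \leqslant d$, each of these loci has dimension at most $r+1-\lambda$, and $W$ is contained in their finite union, so $\dim W \leqslant r+1-\lambda$.

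It then remains to invoke a standard counting lemma: for any Zariski closed subset $Z \subset \AA_{\CC}^{N}$ of dimension $e$, one has
\[ \Card\bigl(Z\cap \ZZ^{N}\cap [-P,P]^{N}\bigr) \;\ll_{Z}\; P^{e}, \]
which is proved by induction on $e$ via a Noether normalisation / coordinate projection argument (the case $e=N-1$ reduces to the degree bound on the number of integer roots of a polynomial in one variable, applied fibrewise). Since $W$ is determined once $F$ is fixed, the implied constant is harmless, and applying the lemma to $Z=W$ with $N=r+1$ and $e = \dim W \leqslant r+1-\lambda$ yields the announced bound $O(P_1^{r+1-\lambda})$. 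The only real obstacle is the componentwise dimension estimate, which must be carried out carefully to keep track of the inequality $\dim V^{(i)} \leqslant d$; the rest is either upper semicontinuity or a classical integer point counting result.
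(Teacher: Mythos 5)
Your overall strategy --- semicontinuity for the openness, the componentwise fibre--dimension inequality to get $\dim(\mathcal{A}_2^{\lambda})^c\leqslant r+1-\lambda$, and the standard bound $\ll P^{e}$ for integer points of a box lying on a closed subset of dimension $e$ --- is exactly the paper's (which cites the proof of Browning's Th\'eor\`eme~3.1 for the counting step), and your care with components of possibly unequal dimension is if anything more explicit than the paper's one-line inequality. The one step that is not justified as written is the closedness of $W_k=\{\xx \mid \dim V_{2,\xx}^{\ast}\geqslant k\}$. Chevalley's theorem gives upper semicontinuity of $x\mapsto\dim_x\pi_1^{-1}(\pi_1(x))$ on the \emph{source} $V_2^{\ast}$; to descend to a closed subset of the base one needs the morphism to be closed (e.g.\ proper), and for a general affine morphism the locus $\{y\mid\dim f^{-1}(y)\geqslant k\}$ is only constructible --- the projection of the hyperbola $V(xy-1)\subset\AA^2_{\CC}$ onto the $x$-axis already gives a non-closed such locus. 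The paper circumvents this by using that the defining equations of $V_{2,\xx}^{\ast}$ are homogeneous in $(\yy,\zz)$, so the fibres are cones: it replaces $V_2^{\ast}$ by the closed set $Y\subset\AA^{r+1}_{\CC}\times\PP^{n-r}_{\CC}$ of projectivised fibres, for which the projection is proper and EGA IV, Corollaire 13.1.5 applies directly. Your argument can be repaired in the same way, or by restricting the source-semicontinuity to the zero section $\xx\mapsto(\xx,\0)$, along which the local fibre dimension equals the full fibre dimension precisely because the fibre is a cone; but as stated the openness claim is not proved. The dimension bound and the counting estimate are unaffected, since they use only the fibre-dimension inequality and the (closure of the) complement.
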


\begin{proof}
On commence par montrer que $ \{\xx\in \AA^{r+1}_{\CC} \; | \; \dim V_{2,\xx}^{\ast}\geqslant \lambda \} $ est un ferm\'e de Zariski de $ \AA^{r+1}_{\CC} $. \\

Notons $ Y $ le ferm\'e de $ \AA_{\CC}^{r+1}\times\PP^{n-r}_{\CC} $ d\'efinit par : \begin{multline*} Y=\left\{(\xx,\yy,\zz)\in\AA_{\CC}^{r+1}\times\PP^{n-r}_{\CC}\; | \;\forall j \in \{m+1,...,n+1\}, \frac{\partial F}{\partial y_{j}}(\xx,\yy,\zz) =0 \; \right.  \\ \left. \et \; \forall k \in \{m+1,...,n+1\}, \;  \frac{\partial F}{\partial z_{k}}(\xx,\yy,\zz) =0 \right\}.  \end{multline*} La projection canonique \[ \pi : Y\subset \AA_{\CC}^{r+1}\times\PP^{n-r}_{\CC} \ra \AA^{r+1}_{\CC}, \] est un morphisme projectif, donc ferm\'e. Par cons\'equent, d'apr\`es \cite[Corollaire 13.1.5]{G-D}, \[ \{\xx\in \AA^{r+1}_{\CC} \; | \; \dim Y_{\xx}\geqslant \lambda-1 \} \] est un ferm\'e, et puisque $ \dim Y_{\xx}=  \dim V_{2,\xx}^{\ast}-1 $, l'ensemble \[ \{\xx\in \AA^{r+1}_{\CC} \; | \; \dim V_{2,\xx}^{\ast}\geqslant \lambda\} \] est un ferm\'e de Zariski de $ \AA^{r+1}_{\CC}  $.\\

Nous allons \`a pr\'esent montrer que $ \dim(\mathcal{A}_{2}^{\lambda})^{c}\leqslant r+1-\lambda $. On remarque que \[ Y\cap ((\mathcal{A}_{2}^{\lambda})^{c}\times\PP^{n-r}_{\CC})=\bigsqcup_{\xx\in (\mathcal{A}_{2}^{\lambda})^{c} }\pi^{-1}(\xx). \]
On a alors \[ \dim(\mathcal{A}_{2}^{\lambda})^{c}+\dim V_{2}^{\ast}-(r+1)+\lambda-1\leqslant \dim Y=\dim V_{2}^{\ast}-1, \] ce qui implique \[  \dim(\mathcal{A}_{2}^{\lambda})^{c}\leqslant r+1-\lambda, \]
et donc \[ \card\{\xx\in [-P_{1},P_{1}]^{r+1}\cap(\mathcal{A}_{2}^{\lambda})^{c}(\ZZ) \}\ll P_{1}^{r+1-\lambda}\] (cf. d\'emonstration de \cite[Th\'eor\`eme 3.1]{Br})
\end{proof}

\subsection{M\'ethode du cercle}

On fixe \`a pr\'esent un r\'eel $ \theta\in [0,1] $. On suppose de plus que \begin{equation}
K_{2}>2(d_{2}-1).
\end{equation}On notera \begin{equation}
\phi(d,k,\theta)=(dk)^{d_{1}}P_{2}^{(d_{2}-1)\theta},
\end{equation} \begin{equation}
\Delta_{2}(\theta,K_{2})=\theta(K_{2}-2(d_{2}-1))
\end{equation}

Nous allons \`a pr\'esent, comme dans la section pr\'ec\'edente, s\'eparer l'int\'egrale sur $ [0,1] $ de $ S(\alpha) $ en int\'egrales sur les arcs majeurs et les arcs mineurs. Commen\c{c}ons par traiter le cas des arcs mineurs.

\begin{lemma}\label{arcsmin2}
Pour tout $ \xx\in \mathcal{A}_{2}^{\lambda}(\ZZ) $, on a la  majoration : \[ \int_{\alpha\notin \mathfrak{M}^{d,\xx}(\theta)}|S_{d,\xx}(\alpha)|d\alpha \ll (dk)^{d_{1}+m-r+\varepsilon}P_{2}^{n+1-r-d_{2}-\Delta_{2}(\theta,K_{2})+\varepsilon}. \] \end{lemma}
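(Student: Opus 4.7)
The plan is to mimic closely the proof of Lemma~\ref{arcsmineurs}: I would telescope the minor arcs $[0,1]\setminus\mathfrak{M}^{d,\xx}(\theta)$ along an increasing family of thresholds, combining the dichotomy provided by Lemma~\ref{dilemme24} (which uses the assumption $\xx\in\mathcal{A}_{2}^{\lambda}(\ZZ)$ to validate the value $K_{2}=(n+2-\dim V_{2}^{\ast}-\lambda)/2^{d_{2}-1}$) with an elementary volume estimate on the major arcs at each level.

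First I would fix a finite increasing sequence $\theta=\theta_{0}<\theta_{1}<\cdots<\theta_{T}$, with all jumps $\theta_{i+1}-\theta_{i}$ so small that $2(d_{2}-1)(\theta_{i+1}-\theta_{i})<\varepsilon$ (which forces $T\ll 1$, in particular $T\ll P_{2}^{\varepsilon}$), and with $\theta_{T}$ chosen large enough that $K_{2}\theta_{T}\geqslant d_{2}+\theta(K_{2}-2(d_{2}-1))$; the hypothesis $K_{2}>2(d_{2}-1)$ guarantees that $\theta_{T}=\theta+d_{2}/K_{2}$ is an admissible choice. The nested inclusions $\mathfrak{M}^{d,\xx}(\theta_{i})\subset\mathfrak{M}^{d,\xx}(\theta_{i+1})$ then yield the decomposition
\[
[0,1]\setminus\mathfrak{M}^{d,\xx}(\theta)=\bigl([0,1]\setminus\mathfrak{M}^{d,\xx}(\theta_{T})\bigr)\sqcup\bigsqcup_{i=0}^{T-1}\bigl(\mathfrak{M}^{d,\xx}(\theta_{i+1})\setminus\mathfrak{M}^{d,\xx}(\theta_{i})\bigr).
\]
On the outermost piece, alternative~(1) of Lemma~\ref{dilemme24} applied at level $\theta_{T}$ gives the pointwise bound $|S_{d,\xx}(\alpha)|\ll (dk)^{m-r+\varepsilon}P_{2}^{n+1-r-K_{2}\theta_{T}+\varepsilon}$, which by the choice of $\theta_{T}$ is dominated by $(dk)^{m-r+\varepsilon}P_{2}^{n+1-r-d_{2}-\Delta_{2}(\theta,K_{2})+\varepsilon}$.

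For each dyadic strip $\mathfrak{M}^{d,\xx}(\theta_{i+1})\setminus\mathfrak{M}^{d,\xx}(\theta_{i})$, any $\alpha$ in it fails to lie in $\mathfrak{M}^{d,\xx}(\theta_{i})$, so Lemma~\ref{dilemme24} provides $|S_{d,\xx}(\alpha)|\ll (dk)^{m-r+\varepsilon}P_{2}^{n+1-r-K_{2}\theta_{i}+\varepsilon}$. A routine computation (summing the lengths $P_{2}^{-d_{2}+(d_{2}-1)\theta_{i+1}}/q$ over $q\leqslant (dk)^{d_{1}}P_{2}^{(d_{2}-1)\theta_{i+1}}$ and over $0\leqslant a<q$ coprime to $q$) yields
\[
\Vol\bigl(\mathfrak{M}^{d,\xx}(\theta_{i+1})\bigr)\ll (dk)^{d_{1}}P_{2}^{-d_{2}+2(d_{2}-1)\theta_{i+1}}.
\]
Multiplying the sup bound by the volume and using $\theta_{i}\geqslant\theta$ together with $2(d_{2}-1)(\theta_{i+1}-\theta_{i})<\varepsilon$ gives, for each strip, a bound of the form $(dk)^{d_{1}+m-r+\varepsilon}P_{2}^{n+1-r-d_{2}-\Delta_{2}(\theta,K_{2})+\varepsilon}$. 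Summing over the $T\ll P_{2}^{\varepsilon}$ strips costs only a factor that can be absorbed by enlarging $\varepsilon$, and combining with the estimate on $[0,1]\setminus\mathfrak{M}^{d,\xx}(\theta_{T})$ gives the claimed bound.

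The main technical point is bookkeeping: the factor $(dk)^{d_{1}}$ that appears in the volume of the major arcs combines exactly with the $(dk)^{m-r+\varepsilon}$ from the Weyl-type estimate to match the target exponent $(dk)^{d_{1}+m-r+\varepsilon}$, and the step size in $(\theta_{i})$ must be kept small enough to absorb $P_{2}^{2(d_{2}-1)(\theta_{i+1}-\theta_{i})}$ into the $\varepsilon$ term, which is exactly permitted by the assumption $K_{2}>2(d_{2}-1)$ (equivalently $\Delta_{2}(\theta,K_{2})>0$). No new ideas beyond those of Section~3 are required; the estimate $\xx\in\mathcal{A}_{2}^{\lambda}(\ZZ)$ is used only through Lemma~\ref{dilemme24}.
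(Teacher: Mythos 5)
Votre preuve est correcte et suit essentiellement la même démarche que celle du texte : découpage télescopique des arcs mineurs le long d'une suite croissante de seuils $\theta_{i}$ à pas assez petits, borne ponctuelle issue de la dichotomie du lemme~\ref{dilemme24} hors de $\mathfrak{M}^{d,\xx}(\theta_{i})$, et majoration du volume $\Vol(\mathfrak{M}^{d,\xx}(\theta_{i+1}))\ll (dk)^{d_{1}}P_{2}^{-d_{2}+2(d_{2}-1)\theta_{i+1}}$. La seule différence est anecdotique : le texte prend $\theta_{T}=1$ (ce qui suffit puisque $d_{2}\geqslant 2$ dans cette section) là où vous choisissez $\theta_{T}=\theta+d_{2}/K_{2}$; les deux choix rendent le terme extérieur admissible.
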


\begin{proof}
On consid\`ere une subdivision de l'intervalle $ [0,1] $  \[ 0<\theta=\theta_{0}<\theta_{1}<...<\theta_{T-1}<\theta_{T}=1 \] telle que \begin{equation}
2(\theta_{i+1}-\theta_{i})(d_{2}-1)<\varepsilon 
\end{equation}
et $ T\ll P_{2}^{\varepsilon} $ pour $ \varepsilon>0 $ arbitrairement petit (et $ P_{2} $ assez grand). Puisque $ \xx\in \mathcal{A}_{2}^{\lambda}(\ZZ) $, le lemme \ref{dilemme24} donne \begin{align*}
\int_{\alpha\notin \mathfrak{M}^{d,\xx}(\theta_{T})}|S_{d,\xx}(\alpha)|d\alpha &  \ll (dk)^{m-r+\varepsilon}P_{2}^{n+1-r-K_{2}\theta_{T}+\varepsilon} \\ & \ll (dk)^{m-r+\varepsilon}P_{2}^{n+1-r-d_{2}-\Delta_{2}(\theta,K_{2})+\varepsilon}.
\end{align*}
Par ailleurs, on remarque que \begin{align*}
\Vol\left(\mathfrak{M}^{d,\xx}(\theta)\right) & \ll \sum_{q\leqslant \phi(d,k,\theta)}\sum_{\substack{0\leqslant a <q \\ \PGCD(a,q)=1}}q^{-1}P_{2}^{-d_{2}+(d_{2}-1)\theta} \\ & \ll  (dk)^{d_{1}}P_{2}^{-d_{2}+2(d_{2}-1)\theta}.
\end{align*} 

On a alors pour tout $ i\in \{0,...,T-1\} $ : \begin{align*}
\int_{\alpha\in \mathfrak{M}^{d,\xx}(\theta_{i+1})\setminus  \mathfrak{M}^{d,\xx}(\theta_{i})}|S_{d,\xx}(\alpha)|d\alpha & \ll (dk)^{m-r+\varepsilon}P_{2}^{n+1-r-K_{2}\theta_{i}+\varepsilon}\Vol\left(\mathfrak{M}^{d,\xx}(\theta_{i+1})\right) \\  & \ll  (dk)^{m-r+d_{1}+\varepsilon}P_{2}^{n+1-r-K_{2}\theta_{i}+\varepsilon-d_{2}+2(d_{2}-1)\theta} 
\end{align*}

Or, \[ 2\theta_{i+1}(d_{2}-1)-K_{2}\theta_{i} =2(\theta_{i+1}-\theta_{i})(d_{2}-1)-\Delta_{2}(\theta_{i},K_{2})<\varepsilon-\Delta_{2}(\theta,K_{2}) \] et donc \[ \int_{\alpha\in \mathfrak{M}^{d,\xx}(\theta_{i+1})\setminus  \mathfrak{M}^{\xx}(\theta_{i}) }|S_{d,\xx}(\alpha)|d\alpha \ll  (dk)^{m-r+d_{1}+\varepsilon}P_{2}^{n+1-r-d_{2}-\Delta_{2}(\theta,K_{2})+\varepsilon} \] et  on obtient le r\'esultat souhait\'e en sommant sur les $ i\in \{0,...,T-1\} $. 
\end{proof}

On d\'efinit \`a pr\'esent la nouvelle famille d'arcs majeurs : 
\begin{equation}
\mathfrak{M}_{a,q}^{'d,\xx}(\theta)=\left\{ \alpha\in [0,1[ \; | \;  2|\alpha q-a|\leqslant qP_{2}^{-d_{2}}P^{(d_{2}-1)\theta}\right\},
\end{equation}
\begin{equation}
\mathfrak{M}^{'d,\xx}(\theta)=\bigcup_{q\leqslant (dk)^{d_{1}}P^{(d_{2}-1)\theta}}\bigcup_{\substack{0\leqslant a <q \\ \PGCD(a,q)=1 }}\mathfrak{M}_{a,q}^{'d,\xx}(\theta).
\end{equation}
\begin{lemma}\label{disjoint2} Si l'on suppose $ (dk)^{2d_{1}}P_{2}^{-d_{2}+3\theta(d_{2}-1)}<1 $, alors les arcs majeurs $ \mathfrak{M}_{a,q}^{\xx'}(\theta) $ sont disjoints deux \`a deux. \end{lemma}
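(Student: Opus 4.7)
The plan is to argue by contradiction, exactly as in the analogous disjointness lemma earlier in the paper (the one for $\mathfrak{M}'_{a,q}(\theta_0)$). Suppose there exist two distinct pairs $(a,q) \neq (a',q')$ with $\gcd(a,q) = \gcd(a',q') = 1$ and some $\alpha \in \mathfrak{M}_{a,q}^{'d,\xx}(\theta) \cap \mathfrak{M}_{a',q'}^{'d,\xx}(\theta)$. Then the fractions $a/q$ and $a'/q'$ are distinct reduced fractions, so
\[
\frac{1}{qq'} \leqslant \left|\frac{a}{q} - \frac{a'}{q'}\right|.
\]

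Next I would bound the right-hand side by the triangle inequality using the defining inequalities of the two arcs. From the definition of $\mathfrak{M}_{a,q}^{'d,\xx}(\theta)$ one gets
\[
\left|\alpha - \frac{a}{q}\right| \leqslant \tfrac{1}{2} P_2^{-d_2} P_2^{(d_2-1)\theta},
\]
and similarly for $a'/q'$, so that
\[
\frac{1}{qq'} \leqslant \left|\alpha - \frac{a}{q}\right| + \left|\alpha - \frac{a'}{q'}\right| \leqslant P_2^{-d_2 + (d_2-1)\theta}.
\]

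Finally, since $q, q' \leqslant (dk)^{d_1} P_2^{(d_2-1)\theta}$ by the indexing range of the union defining $\mathfrak{M}^{'d,\xx}(\theta)$, we have $qq' \leqslant (dk)^{2d_1} P_2^{2(d_2-1)\theta}$. Combining,
\[
1 \leqslant qq' \cdot P_2^{-d_2 + (d_2-1)\theta} \leqslant (dk)^{2d_1} P_2^{-d_2 + 3(d_2-1)\theta},
\]
which directly contradicts the hypothesis $(dk)^{2d_1} P_2^{-d_2 + 3\theta(d_2-1)} < 1$. Hence no such overlap exists.

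There is essentially no hard step: the argument is a one-line contradiction once the defining inequalities are unwrapped. The only minor care needed is to keep track of the factor $q/2$ (rather than $1/2$) in $|\alpha q - a|$, which is what makes the $1/(qq')$ comparison clean. No auxiliary results beyond the definitions are required.
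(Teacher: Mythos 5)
Your proof is correct and follows exactly the same route as the paper: the reduced-fraction lower bound $1/(qq')$, the triangle inequality using the defining inequality $2|\alpha q-a|\leqslant qP_2^{-d_2}P_2^{(d_2-1)\theta}$, the bound $q,q'\leqslant (dk)^{d_1}P_2^{(d_2-1)\theta}$, and the resulting contradiction with the hypothesis. Nothing to add.
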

\begin{proof}
Supposons qu'il existe $ \alpha\in\mathfrak{M}_{a,q}^{'d,\xx}(\theta)\cap\mathfrak{M}_{a',q'}^{'d,\xx}(\theta)  $ pour $ (a,q)\neq (a',q') $, $ q,q'\leqslant \phi(d,k,\theta) $, $ 0\leqslant a<q $, $ 0\leqslant a'<q' $ et $ \PGCD(a,q)=\PGCD(a',q')=1 $. On a alors \[ \frac{1}{qq'}\leqslant \left| \frac{a}{q}-\frac{a'}{q'}\right| \leqslant P_{2}^{-d_{2}+\theta(d_{2}-1)} \] et donc \[ 1 \leqslant qq'P_{2}^{-d_{2}+\theta(d_{2}-1)} \leqslant  (dk)^{2d_{1}}P_{2}^{-d_{2}+3\theta(d_{2}-1)}, \]
d'o\`u le r\'esultat. 
\end{proof}

Remarquons que puisque $ \mathfrak{M}^{d,\xx}(\theta)\subset  \mathfrak{M}^{'d,\xx}(\theta) $, d'apr\`es le lemme \ref{arcsmin2}, on a le r\'esultat suivant :
\begin{lemma}\label{arcmaj2}
Soit $ \xx\in \mathcal{A}_{2}^{\lambda}(\ZZ)  $, on a alors que : \begin{multline*} N_{d,\xx}(P_{2})=\sum_{q\leqslant \phi(d,k,\theta) }\sum_{\substack{0\leqslant a <q \\ \PGCD(a,q)=1}}\int_{\alpha\in \mathfrak{M}_{a,q}^{'d,\xx}(\theta)}S(d,\alpha)d\alpha \\ + O\left( (dk)^{d_{1}+m-r+\varepsilon}P_{2}^{n+1-r-d_{2}-\Delta_{2}(\theta,K_{2})+\varepsilon} \right). 
\end{multline*}
\end{lemma}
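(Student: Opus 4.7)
The plan is to decompose the integral $N_{d,\xx}(P_2) = \int_0^1 S_{d,\xx}(\alpha)\,d\alpha$ according to the partition $[0,1[ = \mathfrak{M}^{'d,\xx}(\theta) \sqcup \mathfrak{m}$, where $\mathfrak{m}$ denotes the complement of $\mathfrak{M}^{'d,\xx}(\theta)$. This is the standard last step of the circle method: identify the main contribution as coming from the major arcs while controlling the minor-arc contribution by the Weyl-type bound already established.

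First, I would observe (exactly as in the previous section of the paper, after Lemma~\ref{separation}) that $\mathfrak{M}^{d,\xx}(\theta) \subset \mathfrak{M}^{'d,\xx}(\theta)$. Indeed, if $\alpha \in \mathfrak{M}_{a,q}^{d,\xx}(\theta)$ with $q \leq (dk)^{d_1}P_2^{(d_2-1)\theta}$ (and $\PGCD(a,q)=1$ after reducing the fraction), then $|\alpha q - a| \leq \tfrac{1}{2}P_2^{-d_2+(d_2-1)\theta} \leq \tfrac{1}{2}q P_2^{-d_2+(d_2-1)\theta}$, so $\alpha$ lies in $\mathfrak{M}_{a,q}^{'d,\xx}(\theta)$. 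Consequently, the complement of $\mathfrak{M}^{'d,\xx}(\theta)$ is contained in the complement of $\mathfrak{M}^{d,\xx}(\theta)$, and Lemma~\ref{arcsmin2} yields
\[
\int_{\alpha \notin \mathfrak{M}^{'d,\xx}(\theta)} |S_{d,\xx}(\alpha)|\,d\alpha \ll (dk)^{d_1+m-r+\varepsilon} P_2^{n+1-r-d_2-\Delta_2(\theta,K_2)+\varepsilon},
\]
which is exactly the error term in the statement.

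Next, I would invoke Lemma~\ref{disjoint2} to conclude that the arcs $\mathfrak{M}_{a,q}^{'d,\xx}(\theta)$ are pairwise disjoint. For this to apply, one must check that the hypothesis $(dk)^{2d_1}P_2^{-d_2+3\theta(d_2-1)}<1$ holds; this will be a constraint on $\theta$ (and on $k$ relative to $P_2$) that we add to the running hypotheses, much as conditions \eqref{cond1}–\eqref{cond3} were added in the first step. Under this disjointness, we may write
\[
\int_{\mathfrak{M}^{'d,\xx}(\theta)} S_{d,\xx}(\alpha)\,d\alpha = \sum_{q \leq \phi(d,k,\theta)} \sum_{\substack{0 \leq a < q \\ \PGCD(a,q)=1}} \int_{\mathfrak{M}_{a,q}^{'d,\xx}(\theta)} S_{d,\xx}(\alpha)\,d\alpha.
\]

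Combining these two ingredients gives the desired formula. The only mildly delicate point, which I would flag as the main obstacle, is verifying that the hypothesis of Lemma~\ref{disjoint2} is compatible with the choices of $\theta$ and $P_2$ that will be made downstream; this amounts to a bookkeeping condition on the admissible range of $\theta$ in terms of $d, k$ and $P_2$, and must be recorded explicitly so that subsequent applications of Lemma~\ref{arcmaj2} remain valid. Beyond this, the argument is a purely formal rearrangement of the Weyl bound of Lemma~\ref{arcsmin2}.
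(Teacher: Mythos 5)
Votre démonstration est correcte et suit exactement la démarche du texte : l'inclusion $\mathfrak{M}^{d,\xx}(\theta)\subset\mathfrak{M}^{'d,\xx}(\theta)$ permet d'appliquer le lemme \ref{arcsmin2} au complémentaire, et la disjonction des arcs (lemme \ref{disjoint2}) justifie la décomposition de l'intégrale en somme sur les couples $(a,q)$. Votre remarque sur la nécessité de consigner l'hypothèse $(dk)^{2d_{1}}P_{2}^{-d_{2}+3\theta(d_{2}-1)}<1$ est pertinente ; le texte ne l'impose explicitement que plus loin (lemme \ref{lemmex}).
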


On consid\`ere \`a pr\'esent $ \xx\in \ZZ^{r+1} $ quelconque, et on suppose $ \alpha\in  \mathfrak{M}_{a,q}^{'d,\xx}(\theta) $. On pose $ \beta=\alpha-\frac{a}{q} $ et donc $ |\beta|\leqslant \frac{1}{2}P_{2}^{-d_{2}+(d_{2}-1)\theta} $. On a alors le lemme ci-dessous : 

\begin{lemma}\label{separation2}
On a l'estimation \begin{multline*}
S_{d,\xx}(\alpha)=(dk)^{m-r}P_{2}^{n-r+1}q^{-(n-r+1)}S_{a,q,d}(\xx)I_{\xx}(d^{d_{1}}P_{2}^{d_{2}}\beta) \\ +O\left( (dk)^{2d_{1}+m-r}P_{2}^{n-r+2\theta(d_{2}-1)} \right), \end{multline*} avec \begin{equation}
S_{a,q,d}(\xx)=\sum_{(\bb_{2},\bb_{3})\in (\ZZ/q\ZZ)^{m-r}\times (\ZZ/q\ZZ)^{n-m+1}}e\left(\frac{a}{q}F(d\xx,\bb_{2},\bb_{3})\right),
\end{equation} \begin{equation}
I_{\xx}(\beta)=\int_{(\vv,\ww)\in [-1,1]^{m-r}\times [-1,1]^{n-m+1}}e\left(\beta F(\xx,k\vv,\ww)\right)d\vv d\ww. 
\end{equation}
\end{lemma}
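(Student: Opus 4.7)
The plan is to follow the same strategy as the proof of Lemma \ref{separation}, but now with $\xx$ fixed: decompose the sums over $\yy$ and $\zz$ according to residues modulo $q$, approximate the resulting inner sum by an integral via a Riemann-sum argument, and finally use the bihomogeneity of $F$ to put the integral into the form $I_\xx$.

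First I would write $\alpha = a/q + \beta$ with $|\beta| \leq \frac{1}{2}P_2^{-d_2+(d_2-1)\theta}$. Using the fact that $F(d\xx,\yy,\zz) \equiv F(d\xx,\bb_2,\bb_3) \pmod q$ whenever $\yy\equiv\bb_2$, $\zz\equiv\bb_3 \pmod q$, one obtains
\[
S_{d,\xx}(\alpha) = \sum_{\bb_2 \in (\ZZ/q\ZZ)^{m-r}} \sum_{\bb_3 \in (\ZZ/q\ZZ)^{n-m+1}} e\!\left(\tfrac{a}{q} F(d\xx,\bb_2,\bb_3)\right) S_3(\bb_2,\bb_3),
\]
where $S_3(\bb_2,\bb_3)$ is the sum of $e(\beta F(d\xx,\yy,\zz))$ over $\yy\equiv\bb_2$, $|\yy|\leq dkP_2$ and $\zz\equiv\bb_3$, $|\zz|\leq P_2$. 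As in the proof of Lemma \ref{separation}, the case $q>dkP_2$ is trivial: the bound $|S_{d,\xx}(\alpha)|\ll (dk)^{m-r}P_2^{n-r+1}$ is then already absorbed into the stated error term, because $q\leq (dk)^{d_1}P_2^{(d_2-1)\theta}$ implies $P_2\ll (dk)^{d_1-1}P_2^{(d_2-1)\theta}$. I therefore assume $q\leq dkP_2$.

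Next I would replace each $S_3(\bb_2,\bb_3)$ by $q^{-(n-r+1)}$ times the integral of $e(\beta F(d\xx,\yy,\zz))$ over the box $|\yy|\leq dkP_2$, $|\zz|\leq P_2$. Controlling this replacement is the only technical point. The Lipschitz estimate requires bounding $\partial F/\partial y_l$ and $\partial F/\partial z_l$: using the bihomogeneity, each monomial $\alpha_u x_{i_1}\cdots y_{j_1}\cdots z_{k_1}\cdots$ with degrees $m_1+m_2=d_1$, $m_2+m_3=d_2$ satisfies $|\mbox{monomial}|\ll (dk)^{d_1}P_2^{d_2}$, so $|\nabla_{\yy,\zz}F|\ll (dk)^{d_1}P_2^{d_2-1}$. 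A standard Riemann-sum comparison then gives, after summing over the $q^{n-r+1}$ residue classes, a total error of order
\[
q\,|\beta|\,(dk)^{d_1+m-r}P_2^{n-r+d_2} + q(dk)^{m-r}P_2^{n-r},
\]
the second term being the boundary contribution. Substituting the upper bounds for $q$ and $|\beta|$, both terms are $\ll (dk)^{2d_1+m-r}P_2^{n-r+2\theta(d_2-1)}$, which is the error claimed in the lemma.

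Finally I would evaluate the main-term integral. A change of variables $\yy=dkP_2\vv$, $\zz=P_2\ww$ yields the Jacobian $(dkP_2)^{m-r}P_2^{n-m+1}=(dk)^{m-r}P_2^{n-r+1}$, and applying the bihomogeneity $F(\lambda\xx,\lambda\mu\yy,\mu\zz)=\lambda^{d_1}\mu^{d_2}F(\xx,\yy,\zz)$ with $\lambda=d$, $\mu=P_2$ gives
\[
F(d\xx,\,dkP_2\vv,\,P_2\ww)=d^{d_1}P_2^{d_2}F(\xx,k\vv,\ww).
\]
Hence the integral equals $(dk)^{m-r}P_2^{n-r+1}I_{\xx}(d^{d_1}P_2^{d_2}\beta)$, which combined with the Gauss-sum factor produces the stated main term. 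The hardest bookkeeping step is the error analysis of the third paragraph, in particular tracking the powers of $dk$ coming from the $d$-scaling of $\xx$ together with the bihomogeneous structure, so as to obtain precisely the stated factor $(dk)^{2d_1+m-r}$; everything else is routine once the correct change of variables has been identified.
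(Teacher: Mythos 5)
Your proof follows essentially the same route as the paper's: decompose by residue classes mod $q$, compare $S_3(\bb_2,\bb_3)$ with an integral via a Lipschitz/Riemann-sum argument using $|\nabla_{\yy,\zz}F|\ll (dk)^{d_1}P_2^{d_2-1}$, then rescale $\yy=dkP_2\vv$, $\zz=P_2\ww$ and invoke the bihomogeneity to produce $(dk)^{m-r}P_2^{n-r+1}I_{\xx}(d^{d_1}P_2^{d_2}\beta)$; all the exponent bookkeeping checks out. The one point to adjust is the placement of the trivial-case cutoff: you discard only $q>dkP_2$, but your boundary term $q(dk)^{m-r}P_2^{n-r}$ presupposes $P_2/q\geqslant 1$, so the range $P_2<q\leqslant dkP_2$ is not covered by your Riemann-sum analysis as written; the paper cuts at $q>P_2$ instead, and the same trivial bound $|S_{d,\xx}(\alpha)|\ll (dk)^{m-r}P_2^{n-r+1}\ll (dk)^{m-r}P_2^{n-r}q$ disposes of that whole range, so the fix is immediate.
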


\begin{proof}
Lorsque $ P_{2}>q $, l'\'egalit\'e est trivialement v\'erifi\'ee. En effet, dans ce cas, on observe que : \begin{multline*}
|S_{d,\xx}(\alpha)|\ll (dk)^{m-r}P_{2}^{n-r+1} \ll (dk)^{m-r}P_{2}^{n-r}q  \\ \ll (dk)^{2d_{1}+m-r}P_{2}^{n-r+2\theta(d_{2}-1)},
\end{multline*} et 
\begin{multline*}
(dk)^{m-r}P_{2}^{n-r+1}q^{-(n-r+1)}|S_{a,q,d}(\xx)||I_{\xx}(d^{d_{1}}P_{2}^{d_{2}}\beta)| \\ \ll (dk)^{m-r}P_{2}^{n-r+1} \ll (dk)^{2d_{1}+m-r}P_{2}^{n-r+2\theta(d_{2}-1)},
\end{multline*}
d'o\`u le r\'esultat. Nous supposerons donc dor\'enavant que $ q<P_{2} $. On peut \'ecrire \begin{equation}\label{S32}
S_{d,\xx}(\alpha)=\sum_{(\bb_{2},\bb_{3})\in (\ZZ/q\ZZ)^{m-r}\times (\ZZ/q\ZZ)^{n-m+1}}e\left(\frac{a}{q}F(d\xx,\bb_{2},\bb_{3})\right)S_{3}(\bb_{2},\bb_{3}),
\end{equation}  o\`u \[ S_{3}(\bb_{2},\bb_{3})=\sum_{\substack{\yy\equiv \bb_{2} (q) \\ |\yy|\leqslant dkP_{2} }}\sum_{\substack{\zz\equiv \bb_{3} (q) \\ |\zz|\leqslant P_{2}}}e\left(\beta F(d\xx,\yy,\yy)\right).  \]

Si l'on consid\`ere  $ q\yy'+\bb_{2}, q\yy''+\bb_{2} \in [-dkP_{2},dkP_{2}]$ et $ q\zz'+\bb_{3}, q\zz''+\bb_{3} \in [-P_{2},P_{2}]$ avec \[ |\yy'-\yy''|\ll 1, \; \; |\zz'-\zz''|\ll 1, \]
on a \[ \left| F(d\xx,q\yy'+\bb_{2},q\zz'+\bb_{3})-F(d\xx,q\yy''+\bb_{2},q\zz''+\bb_{3})\right| \ll q(dk)^{d_{1}}P_{2}^{d_{2}-1}. \] 
Ainsi : \begin{multline*} S_{3}(\bb_{2},\bb_{3})=\int_{\substack{q\tilde{\vv}\in [-dkP_{2},dkP_{2}]^{m-r} \\ q\tilde{\ww} \in [-P_{2},P_{2}]^{n-m+1}} }e\left( \beta F(d\xx,q\tilde{\vv}, q\tilde{\ww})\right) d\tilde{\vv}d\tilde{\ww} \\ + O\left( q|\beta| (dk)^{d_{1}}P_{2}^{d_{2}-1}\left(\frac{dkP_{2}}{q}\right)^{m-r}\left(\frac{P_{2}}{q}\right)^{n-m+1}\right) \\ + O\left(\left(\frac{dkP_{2}}{q}\right)^{m-r}\left(\frac{P_{2}}{q}\right)^{n-m}\right). \end{multline*}
En rappelant que $ |\beta|\leqslant \frac{1}{2}P_{2}^{-d_{2}+(d_{2}-1)\theta} $, $ q\leqslant \phi(d,k,\theta)=(dk)^{d_{1}}P^{(d_{2}-1)\theta} $ et en consid\'erant le changement de variables $ q\tilde{\vv}=dkP_{2}\vv $, $ q\tilde{\ww}=P_{2}\ww $ on trouve \begin{multline*}
(dk)^{m-r}P_{2}^{n-r+1}q^{-(n-r+1)}I_{\xx}(d^{d_{1}}P_{2}^{d_{2}}\beta) \\ + O\left(q^{-(n-r)}(dk)^{m-r+d_{1}}P_{2}^{n-r+(d_{2}-1)\theta}\right). \end{multline*}
En rempla\c{c}ant $ S_{3} $ par cette nouvelle expression dans \eqref{S32}, on obtient le r\'esultat. 
\end{proof}

On pose dor\'enavant \begin{equation}
\tilde{\phi}(P_{2},\theta)=\frac{1}{2}P_{2}^{\theta(d_{2}-1)},
\end{equation}\begin{equation}
\eta(\theta)=1-5\theta(d_{2}-1).
\end{equation}
 \begin{lemma}\label{intermediairex} Pour $ \xx\in \mathcal{A}_{2}^{\lambda}(\ZZ) $, et $ \varepsilon>0 $ arbitrairement petit, on a l'estimation suivante : \begin{multline*}
N_{d,\xx}(P_{2})=(dk)^{m-r}P_{2}^{n-r+1-d_{2}}\mathfrak{S}_{d,\xx}(\phi(d,k,\theta))J_{d,\xx}(\tilde{\phi}(P_{2},\theta)) \\ + O\left((dk)^{d_{1}+m-r+\varepsilon}P_{2}^{n-r+1-d_{2}-\Delta_{2}(\theta,K_{2})+\varepsilon}\right)+ O\left((dk)^{4d_{1}+m-r}P_{2}^{n-r+1-d_{2}-\eta(\theta)}\right), \end{multline*} o\`u \begin{equation}
\mathfrak{S}_{d,\xx}(\phi(d,k,\theta))=\sum_{q\leqslant \phi(d,k,\theta)}q^{-(n-r+1)}\sum_{\substack{0\leqslant a < q \\ \PGCD(a,q)=1}}S_{a,q,d}(\xx),
\end{equation}\begin{equation}
J_{d,\xx}(\tilde{\phi}(P_{2},\theta))=\int_{|\beta|\leqslant \tilde{\phi}(\theta)}I_{\xx}(d^{d_{1}}\beta)d\beta. 
\end{equation}
\end{lemma}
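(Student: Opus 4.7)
Le plan consiste simplement \`a combiner les trois lemmes d\'ej\`a \'etablis dans cette section : on part du lemme\;\ref{arcmaj2}, qui \'ecrit d\'ej\`a $N_{d,\xx}(P_2)$ comme la somme des int\'egrales de $S_{d,\xx}(\alpha)$ sur les arcs $\mathfrak{M}_{a,q}^{'d,\xx}(\theta)$ plus exactement le premier terme d'erreur de l'\'enonc\'e. Comme les arcs sont deux \`a deux disjoints d'apr\`es le lemme\;\ref{disjoint2}, il suffit d'appliquer sur chacun d'eux l'approximation du lemme\;\ref{separation2}, puis de sommer.

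Plus pr\'ecis\'ement, sur $\mathfrak{M}_{a,q}^{'d,\xx}(\theta)$ on pose $\alpha=a/q+\beta$ avec $|\beta|\leqslant \tfrac{1}{2}P_2^{-d_2+(d_2-1)\theta}$, et le lemme\;\ref{separation2} donne
\[ S_{d,\xx}(\alpha)=(dk)^{m-r}P_2^{n-r+1}q^{-(n-r+1)}S_{a,q,d}(\xx)I_{\xx}(d^{d_1}P_2^{d_2}\beta)+O\!\left((dk)^{2d_1+m-r}P_2^{n-r+2\theta(d_2-1)}\right). \]
Pour le terme principal, le changement de variable $u=P_2^{d_2}\beta$ transforme l'int\'egrale en
\[ P_2^{-d_2}\int_{|u|\leqslant \tilde{\phi}(P_2,\theta)}I_{\xx}(d^{d_1}u)\,du=P_2^{-d_2}J_{d,\xx}(\tilde{\phi}(P_2,\theta)). \]
En sommant sur les couples $(a,q)$ avec $1\leqslant q\leqslant \phi(d,k,\theta)$ et $\PGCD(a,q)=1$, on retrouve exactement le facteur $(dk)^{m-r}P_2^{n-r+1-d_2}\mathfrak{S}_{d,\xx}(\phi(d,k,\theta))J_{d,\xx}(\tilde{\phi}(P_2,\theta))$.

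Il reste \`a contr\^oler la contribution du terme d'erreur issu du lemme\;\ref{separation2}. Chaque arc $\mathfrak{M}_{a,q}^{'d,\xx}(\theta)$ est de longueur au plus $P_2^{-d_2+(d_2-1)\theta}$, et le nombre de couples $(a,q)$ avec $q\leqslant \phi(d,k,\theta)$ et $\PGCD(a,q)=1$ est major\'e par $\phi(d,k,\theta)^2=(dk)^{2d_1}P_2^{2(d_2-1)\theta}$. Le volume total des arcs majeurs est donc $\ll (dk)^{2d_1}P_2^{-d_2+3(d_2-1)\theta}$. En multipliant par l'erreur ponctuelle $(dk)^{2d_1+m-r}P_2^{n-r+2\theta(d_2-1)}$, on obtient la contribution globale
\[ (dk)^{4d_1+m-r}P_2^{n-r-d_2+5(d_2-1)\theta}=(dk)^{4d_1+m-r}P_2^{n-r+1-d_2-\eta(\theta)}, \]
qui est pr\'ecis\'ement le second terme d'erreur annonc\'e. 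Le premier terme d'erreur \'etant d\'ej\`a fourni par le lemme\;\ref{arcmaj2}, la combinaison des deux ach\`eve la d\'emonstration.

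La preuve ne pr\'esente aucune difficult\'e conceptuelle : il s'agit essentiellement d'un travail de bookkeeping une fois les trois lemmes pr\'ec\'edents \'etablis, le seul point d\'elicat \'etant de bien tenir la comptabilit\'e des puissances de $d$, $k$ et $P_2$ lors de la sommation des erreurs sur l'ensemble des arcs majeurs, ainsi que le changement de variable qui fait appara\^itre le facteur $P_2^{-d_2}$ dans le terme principal.
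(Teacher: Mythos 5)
Votre d\'emonstration est correcte et suit essentiellement le m\^eme chemin que celle du texte : combinaison des lemmes\;\ref{arcmaj2} et\;\ref{separation2}, changement de variable $u=P_{2}^{d_{2}}\beta$ pour faire appara\^itre $P_{2}^{-d_{2}}J_{d,\xx}(\tilde{\phi}(P_{2},\theta))$, puis majoration $\Vol(\mathfrak{M}^{'d,\xx}(\theta))\ll (dk)^{2d_{1}}P_{2}^{-d_{2}+3(d_{2}-1)\theta}$ multipli\'ee par l'erreur ponctuelle pour obtenir le terme en $\eta(\theta)$. La comptabilit\'e des exposants est exacte et co\"incide avec celle du papier.
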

\begin{proof}
On notera \[ E_{1}=(dk)^{d_{1}+m-r+\varepsilon}P_{2}^{n-r+1-d_{2}-\Delta_{2}(\theta,K_{2})+\varepsilon}, \] \[ E_{2}=(dk)^{2d_{1}+m-r}P_{2}^{n-r+2\theta(d_{2}-1)}\Vol\left(\mathfrak{M}^{'d,\xx}(\theta)\right). \]  D'apr\`es les lemmes \ref{separation2} et \ref{arcmaj2}, on a : \begin{multline*}
N_{d,\xx}(P_{2})=(dk)^{m-r}P_{2}^{n-r+1}\sum_{q\leqslant \phi(d,k,\theta)}q^{-(n-r+1)}\sum_{\substack{0\leqslant a < q \\ \PGCD(a,q)=1}}S_{a,q,d}(\xx) \\ \int_{|\beta|\leqslant P_{2}^{-d_{2}}\tilde{\phi}(P_{2},\theta)}I_{\xx}(d^{d_{1}}P_{2}^{d_{2}}\beta)d\beta
+O(E_{1})+O(E_{2}).
\end{multline*}
Par un changement de variable, on a \begin{align*} \int_{|\beta|\leqslant P_{2}^{-d_{2}}\tilde{\phi}(P_{2},\theta)}I_{\xx}(d^{d_{1}}P_{2}^{d_{2}}\beta)d\beta & =P_{2}^{-d_{2}}\int_{|\beta|\leqslant \tilde{\phi}(P_{2},\theta)}I_{\xx}(d^{d_{1}}\beta)d\beta \\ & =P_{2}^{-d_{2}}J_{d,\xx}(\tilde{\phi}(P_{2},\theta)). \end{align*}
On remarque par ailleurs que \begin{align*}
\Vol\left(\mathfrak{M}^{'\xx}(\theta)\right) & \ll \sum_{q\leqslant \phi(d,k,\theta)}\sum_{\substack{0\leqslant a < q \\ \PGCD(a,q)=1}}P_{2}^{-d_{2}+(d_{2}-1)\theta}  \\ & \ll (dk)^{2d_{1}}P_{2}^{-d_{2}+3(d_{2}-1)\theta},
\end{align*}
et donc \[ E_{2}\ll (dk)^{4d_{1}+m-r}P_{2}^{n-r-d_{2}+5\theta(d_{2}-1)}=(dk)^{4d_{1}+m-r}P_{2}^{n-r+1-d_{2}-\eta(\theta)}, \] ce qui cl\^ot la d\'emonstration du lemme.
\end{proof}
Par la suite, on pose
\begin{equation}
\mathfrak{S}_{d,\xx}=\sum_{q=1}^{\infty}q^{-(n-r+1)}\sum_{\substack{0\leqslant a < q \\ \PGCD(a,q)=1}}S_{a,q}(\xx),
\end{equation}
\begin{equation}
J_{d,\xx}=\int_{\RR}I_{\xx}(d^{d_{1}}\beta)d\beta.
\end{equation}
\begin{lemma}\label{Jx}
Soit $ \xx\in \mathcal{A}_{2}^{\lambda}(\ZZ) $, et $ \varepsilon>0 $ arbitrairement petit. On suppose par ailleurs que $ d_{2}\geqslant 2 $. L'int\'egrale $ J_{d,\xx} $ est absolument convergente, et on a : \[ |J_{d,\xx}(\tilde{\phi}(P_{2},\theta))-J_{d,\xx}|\ll P_{2}^{\theta((d_{2}-1)-K_{2})}\max\left\{P_{2}^{\varepsilon},(dk)^{\varepsilon}\right\}. \] On a de plus $ |J_{d,\xx}|\ll (dk)^{\varepsilon}. $
\end{lemma}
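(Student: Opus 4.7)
La strat\'egie consiste \`a transposer la m\'ethode employ\'ee dans la d\'emonstration du lemme\;\ref{integsing} au cadre de l'int\'egrale singuli\`ere partielle $J_{d,\xx}$. L'id\'ee cl\'e est, pour un r\'eel $\beta$ donn\'e avec $|\beta|>\tilde{\phi}(P_{2},\theta)$, de construire un param\`etre $P_{2}'$ tel que l'\'el\'ement $\alpha=\beta/(P_{2}')^{d_{2}}$ se trouve exactement sur le bord de l'arc majeur $\mathfrak{M}_{0,1}^{'d,\xx}(\theta)$ \og associ\'e\fg \ \`a $P_{2}'$. Concr\`etement, je poserais $P_{2}'=(2|\beta|)^{1/(\theta(d_{2}-1))}$ de sorte que $\tilde{\phi}(P_{2}',\theta)=|\beta|$, et j'observerais que $\alpha$ n'appartient alors \`a aucun autre arc majeur $\mathfrak{M}_{a,q}^{'d,\xx}(\theta)$ (gr\^ace au lemme\;\ref{disjoint2}).

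La premi\`ere \'etape consiste \`a appliquer le lemme\;\ref{dilemme24} \`a $\alpha$ pour obtenir la majoration
\[ |S_{d,\xx}(\alpha)| \ll (dk)^{m-r+\varepsilon}(P_{2}')^{n+1-r-K_{2}\theta+\varepsilon}. \]
La deuxi\`eme \'etape consiste \`a invoquer le lemme\;\ref{separation2} (avec $a=0,q=1$, donc $S_{a,q,d}(\xx)=1$) pour d\'evelopper
\[ S_{d,\xx}(\alpha)=(dk)^{m-r}(P_{2}')^{n-r+1}I_{\xx}(d^{d_{1}}\beta)+O\bigl((dk)^{2d_{1}+m-r}(P_{2}')^{n-r+2\theta(d_{2}-1)}\bigr). \]
En comparant ces deux expressions et en substituant $P_{2}'\asymp|\beta|^{1/(\theta(d_{2}-1))}$, j'en d\'eduirais une majoration ponctuelle du type
\[ |I_{\xx}(d^{d_{1}}\beta)| \ll (dk)^{\varepsilon}|\beta|^{-K_{2}/(d_{2}-1)+\varepsilon} + (dk)^{2d_{1}}|\beta|^{2-1/(\theta(d_{2}-1))}. \]

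Pour conclure, il resterait \`a int\'egrer cette majoration sur $\{|\beta|>\tilde{\phi}(P_{2},\theta)\}$. Sous l'hypoth\`ese $K_{2}>2(d_{2}-1)$, le premier terme fournit bien une contribution en
\[ \tilde{\phi}(P_{2},\theta)^{1-K_{2}/(d_{2}-1)+\varepsilon}\asymp P_{2}^{\theta((d_{2}-1)-K_{2})+\varepsilon\theta}, \]
ce qui donne d'une part la convergence absolue de $J_{d,\xx}$, et d'autre part l'estimation voulue apr\`es absorption des facteurs $\varepsilon$ dans $\max\{P_{2}^{\varepsilon},(dk)^{\varepsilon}\}$. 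Le second terme d'erreur s'int\`egre en $P_{2}^{3\theta(d_{2}-1)-1}$, que l'on v\'erifie \^etre domin\'e par le premier d\`es que $\theta$ est suffisamment petit (ce qui est d\'ej\`a assur\'e par les contraintes sous-jacentes sur $\theta$). Enfin, la borne $|J_{d,\xx}|\ll(dk)^{\varepsilon}$ s'obtient en combinant la majoration triviale $|I_{\xx}(0)|\ll 1$ sur la partie born\'ee $|\beta|\leqslant 1$ avec la d\'ecroissance polynomiale \'etablie ci-dessus.

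Le principal obstacle technique se situe au niveau du contr\^ole simultan\'e des deux termes d'erreur : il faut s'assurer que le choix de $P_{2}'$ rende effectivement $\alpha$ hors de tous les arcs majeurs autres que $\mathfrak{M}_{0,1}^{'d,\xx}(\theta)$ (ce qui repose de mani\`ere cruciale sur la condition de s\'eparation $(dk)^{2d_{1}}P_{2}^{-d_{2}+3\theta(d_{2}-1)}<1$ du lemme\;\ref{disjoint2} appliqu\'ee \`a $P_{2}'$) et que la condition $\theta(K_{2}+2(d_{2}-1))\leqslant 1$ n\'ecessaire \`a ce que le second terme d'erreur soit domin\'e par le premier soit bien compatible avec l'hypoth\`ese $K_{2}>2(d_{2}-1)$ et la plage de valeurs de $\theta$ utilis\'ee en aval.
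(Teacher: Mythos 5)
Votre strat\'egie g\'en\'erale est bien celle du texte : placer $\alpha=\beta/(P')^{d_{2}}$ sur le bord de l'arc majeur trivial, confronter la majoration des arcs mineurs (lemme \ref{dilemme24}) au d\'eveloppement du lemme \ref{separation2}, puis int\'egrer la majoration ponctuelle de $I_{\xx}$ ainsi obtenue. Il y a toutefois un \'ecart r\'eel dans le traitement du second terme d'erreur. En gardant $\theta$ fixe et en n'ajustant que $P_{2}'=(2|\beta|)^{1/(\theta(d_{2}-1))}$, votre majoration ponctuelle est
\[ |I_{\xx}(d^{d_{1}}\beta)| \ll (dk)^{\varepsilon}|\beta|^{-\frac{K_{2}}{d_{2}-1}+\varepsilon} + (dk)^{2d_{1}}|\beta|^{2-\frac{1}{\theta(d_{2}-1)}}, \]
et l'int\'egrale du second terme sur $\{|\beta|>\tilde{\phi}(P_{2},\theta)\}$ vaut $(dk)^{2d_{1}}P_{2}^{3\theta(d_{2}-1)-1}$ : vous omettez le facteur $(dk)^{2d_{1}}$ lorsque vous \'ecrivez ce terme int\'egr\'e. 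La condition $\theta(K_{2}+2(d_{2}-1))\leqslant 1$ que vous invoquez ne contr\^ole que les puissances de $P_{2}$ ; le rapport entre ce terme et la borne annonc\'ee est $(dk)^{2d_{1}}P_{2}^{2\theta(d_{2}-1)+K_{2}\theta-1}$, et sous la seule contrainte de s\'eparation $(dk)^{2d_{1}}<P_{2}^{d_{2}-3\theta(d_{2}-1)}$ (ou celle du corollaire \ref{corollairex}, $k<d^{-1}P_{2}^{(d_{2}-1)/2d_{1}}$), ce rapport peut atteindre $P_{2}^{d_{2}-1-\theta(d_{2}-1)+K_{2}\theta}\gg 1$ d\`es que $d_{2}\geqslant 2$. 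Votre argument ne donne donc pas la borne de l'\'enonc\'e, qui ne tol\`ere qu'un facteur $\max\{P_{2}^{\varepsilon},(dk)^{\varepsilon}\}$.

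L'ingr\'edient manquant est le choix, fait dans le texte, d'un exposant $\theta'$ \emph{d\'ependant de $\beta$ et de $dk$} : on r\'esout simultan\'ement $|\beta|=\frac{1}{2}P^{\theta'(d_{2}-1)}$ et $P^{-K_{2}\theta'}=(dk)^{2d_{1}}P^{-1+2\theta'(d_{2}-1)}$, de sorte que les deux termes d'erreur s'\'equilibrent exactement et que la majoration ponctuelle se r\'eduit \`a $P^{-K_{2}\theta'+\varepsilon}=(2|\beta|)^{-K_{2}/(d_{2}-1)}P^{\varepsilon}$. Le facteur $(dk)^{2d_{1}}$ est alors absorb\'e dans la d\'efinition de $\theta'$, lequel ne se d\'egrade que logarithmiquement en $dk$ (on a $\theta'\gg\min\{1,\log(2|\beta|)/\log(dk)\}$), ce qui produit pr\'ecis\'ement le facteur $\max\{|\beta|^{\varepsilon'},(dk)^{\varepsilon'}\}$ de l'\'enonc\'e au lieu de $(dk)^{2d_{1}}$. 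Sans cette \'etape d'\'equilibrage, la d\'ecroissance que vous obtenez pour $I_{\xx}$ est insuffisante dans la zone des $|\beta|$ mod\'er\'es, et la borne finale sur $|J_{d,\xx}(\tilde{\phi}(P_{2},\theta))-J_{d,\xx}|$ (ainsi que celle sur $|J_{d,\xx}|$ qui en d\'ecoule) n'est pas \'etablie.
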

\begin{proof}
On consid\`ere $ \beta $ tel que $ |\beta|\geqslant \tilde{\phi}(\theta) $. On choisit alors des param\`etres $ P $ et $ \theta' $ tels que \begin{equation}
|\beta|=\frac{1}{2}P^{\theta'(d_{2}-1)}, 
\end{equation} \begin{equation}\label{deuxegal}
 P^{-K_{2}\theta'}=P^{-1+2\theta'(d_{2}-1)}(dk)^{2d_{1}}.
\end{equation} Remarquons que ces deux \'egalit\'es impliquent \begin{equation}
\theta'=\frac{\log(2|\beta|)}{(d_{2}-1)\left(\left(2+\frac{K_{2}}{(d_{2}-1)}\right)\log(2|\beta|)+2d_{1}\log(dk)\right)}
\end{equation}
donc en particulier \begin{equation}
\theta'\gg \min\left\{1,\frac{\log(2|\beta|)}{\log(dk)}\right\}.
\end{equation} Par ailleurs, l'\'egalit\'e \eqref{deuxegal} implique \[ P^{-2+4\theta'(d_{2}-1)}(dk)^{4d_{1}}<1, \]donc, pour $ d_{2}\geqslant 2 $, \[  P^{-d_{2}+3\theta'(d_{2}-1)}(dk)^{2d_{1}}<1, \] et ainsi, d'apr\`es le lemme \ref{disjoint2}, les arcs majeurs $ \mathfrak{M}_{a,q}^{d,\xx}(\theta') $ correspondant \`a $ P $ et $ \theta' $ sont disjoints deux \`a deux. Le r\'eel $ P^{-d_{2}}\beta $ appartient au bord de $ \mathfrak{M}_{0,1}(\theta') $, et donc par le lemme \ref{dilemme24}, on a l'estimation : 

\[ |S_{d,\xx}(P^{-d_{2}}\beta )|\ll (dk)^{m-r}P^{n-r+1-K_{2}\theta'+\varepsilon}. \] 
D'autre part, le lemme \ref{separation2} donne : \[ S_{d,\xx}(P^{-d_{2}}\beta )=(dk)^{m-r}P^{n-r+1}I(d^{d_{1}}\beta)+O\left((dk)^{m-r+2d_{1}}P^{n-r+2\theta'(d_{2}-1)} \right). \] On a ainsi : \begin{align*} |I(d^{d_{1}}\beta)| & \ll P^{-K_{2}\theta'+\varepsilon}+ (dk)^{2d_{1}}P^{-1+2\theta'(d_{2}-1)} 
\\ & \ll P^{-K_{2}\theta'+\varepsilon} \\ & \ll |\beta|^{-\frac{K_{2}}{(d_{2}-1)}+\frac{\varepsilon}{\theta'(d_{2}-1)}}. \end{align*}
Remarquons que, puisque $ \theta'\gg \min\left\{1,\frac{\log(2|\beta|)}{\log(dk)}\right\} $, \[ |\beta|^{\frac{\varepsilon}{\theta'(d_{2}-1)}}\ll \max\{|\beta|^{\varepsilon'}, (dk)^{\varepsilon'}\}, \] pour $ \varepsilon'>0 $ arbitrairement petit.
On a donc 
\begin{align*} |J_{d,\xx}(\tilde{\phi}(\theta))-J_{d,\xx}| & \ll \int_{|\beta|>\tilde{\phi(\theta)}}|\beta|^{-\frac{K_{2}}{(d_{2}-1)}} \max\{|\beta|^{\varepsilon'}, (dk)^{\varepsilon'}\}d\beta \\ & \ll \tilde{\phi}(\theta)^{1-\frac{K_{2}}{(d_{2}-1)}} \max\{ \tilde{\phi}(\theta)^{\varepsilon'}, (dk)^{\varepsilon'}\} \\ & \ll P_{2}^{\theta(d_{2}-1-K_{2})}\max\{ P_{2}^{\varepsilon''}, (dk)^{\varepsilon''}\} , \end{align*} avec $ \varepsilon'' $ arbitrairement petit.

D'autre part, en choisissant $ P_{2} \ll 1   $, cette in\'egalit\'e donne \[ |J_{d,\xx}(\tilde{\phi}(\theta))-J_{d,\xx}| \ll (dk)^{\varepsilon''}, \] et puisque $ |J_{d,\xx}(\tilde{\phi}(\theta))|\ll 1 $ lorsque $ P_{2} \ll 1   $, on a imm\'ediatement \[ |J_{d,\xx}| \ll (dk)^{\varepsilon''} \]

\end{proof}

\begin{lemma}\label{Sx}

Soit $ \xx\in \mathcal{A}_{2}^{\lambda}(\ZZ) $, et $ \varepsilon>0 $ arbitrairement petit. On suppose par ailleurs que $ d_{2}\geqslant 2 $. La s\'erie $ \mathfrak{S}_{\xx} $ est absolument convergente, et on a : \[ |\mathfrak{S}_{d,\xx}(\phi(d,k,\theta))-\mathfrak{S}_{d,\xx}|\ll (dk)^{2d_{1}+\varepsilon}P_{2}^{\theta(2(d_{2}-1)-K_{2})}. \] On a de plus $ |\mathfrak{S}_{d,\xx}|\ll (dk)^{2d_{1}+\varepsilon}. $
\end{lemma}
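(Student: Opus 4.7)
I will plan a proof that follows the same template as Lemma \ref{seriesing}, suitably adapted. Fix a reference parameter $\theta \in (0,1]$ and consider $q > \phi(d,k,\theta)$ with $0 \leq a < q$, $\PGCD(a,q)=1$. The crucial observation is that $S_{a,q,d}(\xx)$ is independent of the scale parameter $P_2$, so I am free to choose an auxiliary scale $P_2' = P_2'(q) \geq 1$ tailored to $q$ via $q = (dk)^{d_1}(P_2')^{(d_2-1)\theta}$. With this choice, $\alpha = a/q$ sits on the boundary of the major arc $\mathfrak{M}^{d,\xx}_{a,q}(\theta)$ at scale $P_2'$, and by repeating the disjointness argument of Lemma \ref{disjoint2} (applied at the slightly smaller parameter $\theta - \nu$), one checks that $\alpha \notin \mathfrak{M}^{d,\xx}(\theta - \nu)$ for sufficiently small $\nu > 0$. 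The dichotomy of Lemma \ref{dilemme24} then yields the Weyl-type bound $|S_{d,\xx}(a/q)| \ll (dk)^{m-r+\varepsilon}(P_2')^{n+1-r-K_2(\theta-\nu)+\varepsilon}$.

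Next, invoking Lemma \ref{separation2} at $\beta = 0$ (and noting that $I_\xx(0) = 2^{n-r+1}$ is a harmless nonzero constant), I solve for $q^{-(n-r+1)}|S_{a,q,d}(\xx)|$, obtaining a bound of the shape
\[
q^{-(n-r+1)}|S_{a,q,d}(\xx)| \ll (dk)^{\varepsilon}(P_2')^{-K_2(\theta-\nu)+\varepsilon} + (dk)^{2d_1}(P_2')^{-1+2\theta(d_2-1)}.
\]
Substituting $P_2' = (q/(dk)^{d_1})^{1/((d_2-1)\theta)}$ and exploiting the hypothesis $K_2 > 2(d_2-1)$ (together with $d_2 \geq 2$ to control the second term, analogously to the role of $d_1\geq 2$ in Lemma \ref{seriesing}) produces, after letting $\nu \to 0$, an estimate of the form $q^{-1-\delta'}$ up to a factor bounded by a power of $(dk)^{d_1}$, valid uniformly in $\xx \in \mathcal{A}_2^\lambda(\ZZ)$. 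Summing $\sum_{q>\phi}\sum_a$ and translating back via $\phi = (dk)^{d_1}P_2^{(d_2-1)\theta}$ delivers the claimed tail bound $|\mathfrak{S}_{d,\xx} - \mathfrak{S}_{d,\xx}(\phi(d,k,\theta))| \ll (dk)^{2d_1+\varepsilon}P_2^{\theta(2(d_2-1)-K_2)}$.

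For the absolute bound $|\mathfrak{S}_{d,\xx}| \ll (dk)^{2d_1+\varepsilon}$, I would combine the trivial estimate $|\mathfrak{S}_{d,\xx}((dk)^{d_1})| \ll (dk)^{2d_1}$ (coming from $|S_{a,q,d}(\xx)| \leq q^{n-r+1}$ and summing $q \leq (dk)^{d_1}$, so that $\theta=0$ in $\phi$) with the tail bound just established, letting $P_2 \to \infty$ so that the remainder vanishes. The main technical obstacle in the plan is the bookkeeping of exponents when converting between $q$ and $P_2'$: one must verify that $K_2 > 2(d_2-1)$ gives strictly more than $q^{-1}$ decay from the first term in the dichotomy, while $d_2 \geq 2$ rules out the second term becoming dominant, paralleling the dual roles played by the conditions \eqref{cond1} and \eqref{cond3} in the proof of Lemma \ref{seriesing}.
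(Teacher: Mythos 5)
Your plan diverges from the paper's proof at the crucial point, and the divergence creates a real gap. The paper does not extract $S_{a,q,d}(\xx)$ from the original generating series $S_{d,\xx}$ via Lemma \ref{separation2}: it introduces the auxiliary \emph{complete} sum $S_{d,\xx}'(\alpha)=\sum_{|\yy|\leqslant P}\sum_{|\zz|\leqslant P}e(\alpha F(d\xx,\yy,\zz))$ together with its own dichotomy (Lemma \ref{dilemme24bis}), and then uses the exact identity $S_{a,q,d}(\xx)=S_{d,\xx}'(a/q)$ with $P=q$. Because this identity has no error term, the only exponent bookkeeping needed is $K_2>2(d_2-1)$ for summability of $\sum_q q\cdot q^{-K_2/(d_2-1)+\varepsilon}$, and the hypothesis $d_2\geqslant 2$ is used solely in the non-membership argument ($1\leqslant|aq'-a'q|<q^{1-d_2+\theta'(d_2-1)}$ is absurd). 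Your route, by contrast, forces you to carry the secondary term $O\bigl((dk)^{2d_1+m-r}(P_2')^{n-r+2\theta(d_2-1)}\bigr)$ of Lemma \ref{separation2}.

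That secondary term is where your plan breaks. After dividing by $(dk)^{m-r}(P_2')^{n-r+1}$ it contributes $(dk)^{2d_1}(P_2')^{-1+2\theta(d_2-1)}$ to $q^{-(n-r+1)}|S_{a,q,d}(\xx)|$, and with your normalisation $(P_2')^{(d_2-1)\theta}=q/(dk)^{d_1}$ this equals $q^2(P_2')^{-1}=\bigl(q/(dk)^{d_1}\bigr)^{2-\frac{1}{(d_2-1)\theta}}(dk)^{-2d_1}\cdot(dk)^{2d_1}$ up to constants; you need it to be $O(q^{-2-\delta})$ for the double sum $\sum_q\sum_a$ to converge, which requires $\frac{1}{(d_2-1)\theta}\geqslant 4+\delta$, i.e. $\theta\lesssim\frac{1}{4(d_2-1)}$. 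The hypothesis $d_2\geqslant 2$ does \emph{not} supply this: for $d_2=2$ and $\theta$ close to $1$ the term actually grows with $q$. (The same smallness of $\theta$ is silently needed in your non-membership step, where $|aq'-a'q|\leqslant\frac{q}{2}(P_2')^{-d_2+2(d_2-1)\theta}(dk)^{d_1}$ must be $<1$.) So your argument only proves the lemma under an extra restriction on $\theta$ — which, to be fair, holds in every later application (Corollaire \ref{corollairex} takes $\theta<\frac{1}{5(d_2-1)}$), but is not among the stated hypotheses. This is exactly the situation that conditions \eqref{cond1} and \eqref{cond3} handle in Lemma \ref{seriesing}; the paper avoids having to impose their analogue here by switching to the complete sum, and that is the idea your proposal is missing.
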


Pour d\'emontrer ce lemme on introduit pour $ \xx $ fix\'e et $ P\geqslant 1 $ la nouvelle s\'erie g\'en\'eratrice : 
\[ S_{d,\xx}'(\alpha)=\sum_{|\yy|\leqslant P}\sum_{|\zz|\leqslant P}e\left(\alpha F(d\xx,\yy,\zz)\right). \] De la m\^eme mani\`ere que pour le lemme \ref{dilemme24}, on \'etablit : 

\begin{lemma}\label{dilemme24bis}
Si $ \varepsilon>0 $ est un r\'eel arbitrairement petit, l'une au moins des assertions suivantes est v\'erifi\'ee : \begin{enumerate}
\item $ |S_{d,\xx}'(\alpha)|\ll P^{n+1-r+\varepsilon-K_{2}\theta}, $ 
\item le r\'eel $ \alpha $ appartient \`a $ \mathfrak{M}^{d,\xx}(\theta) $.
\end{enumerate}
\end{lemma}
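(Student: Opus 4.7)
L'objectif est de reproduire, \`a quelques adaptations pr\`es, la preuve du lemme \ref{dilemme24} pour la fonction g\'en\'eratrice modifi\'ee $ S_{d,\xx}'(\alpha) $, o\`u les variables $ \yy $ et $ \zz $ varient d\'esormais toutes deux dans un cube de c\^ot\'e $ P $. Je commencerai par appliquer $ d_{2}-1 $ fois le proc\'ed\'e de diff\'erenciation de Weyl en $ (\yy,\zz) $ \`a $ S_{d,\xx}'(\alpha) $. Puisque les bornes sur $ \yy $ et $ \zz $ sont d\'esormais identiques, la majoration obtenue fera intervenir un cardinal $ M_{d,\xx}'(\alpha,P,P,P^{-1},P^{-1}) $ de tuples $ (\yy^{(i)},\zz^{(i)})_{i\in\{1,\ldots,d_{2}-1\}} $ dans un cube de taille $ P $ v\'erifiant $ ||\alpha\gamma_{d,\xx,j}||\leqslant P^{-1} $ pour tout $ j\in\{r+1,\ldots,n+1\} $. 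La sym\'etrie des coefficients $ F_{d\xx,\ii,j} $ en les multi-indices sera pr\'eserv\'ee, de sorte que le r\'eseau associ\'e restera sym\'etrique.

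J'appliquerai ensuite le lemme \ref{geomnomb2} de fa\c{c}on it\'erative, avec, pour chaque \'etape $ i\in\{1,\ldots,d_{2}-1\} $, le choix uniforme $ a_{j}=P^{(i+1)/2}P^{-(i-1)\theta/2} $ pour tout $ j $ (de sorte que $ a_{j}Z=P $ \`a chaque \'etape). Apr\`es $ d_{2}-1 $ it\'erations, cette proc\'edure produira la trichotomie attendue : soit $ |S_{d,\xx}'(\alpha)|\ll P^{n+1-r+\varepsilon-\kappa} $ ; soit il existe $ a,q $ avec $ \PGCD(a,q)=1 $, $ q\leqslant (dk)^{d_{1}}P^{(d_{2}-1)\theta} $ et $ |\alpha q-a|\leqslant P^{-d_{2}}P^{(d_{2}-1)\theta} $, c'est-\`a-dire $ \alpha\in\mathfrak{M}^{d,\xx}(\theta) $ (en utilisant $ P=P_{2} $) ; soit le syst\`eme $ \gamma_{d,\xx,j}=0 $, $ j\in\{r+1,\ldots,n+1\} $, admet au moins $ \gg(P^{\theta})^{(n-r+1)(d_{2}-1)}P^{-2^{d_{2}-1}\kappa} $ solutions enti\`eres dans le cube de c\^ot\'e $ P^{\theta} $.

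Pour conclure, je poserai $ \kappa=K_{2}\theta $ et exclurai le troisi\`eme cas en raisonnant exactement comme dans la preuve du lemme \ref{dilemme24}. La combinaison du d\'enombrement ci-dessus avec la d\'emonstration de \cite[Th\'eor\`eme 3.1]{Br} et l'intersection avec la diagonale $ \yy^{(1)}=\cdots=\yy^{(d_{2}-1)} $, $ \zz^{(1)}=\cdots=\zz^{(d_{2}-1)} $ donnera une vari\'et\'e isomorphe \`a $ V_{2,d\xx}^{\ast} $, donc \`a $ V_{2,\xx}^{\ast} $ par le changement de variables $ (\yy,\zz)\mapsto(d\yy,\zz) $, de dimension au moins $ n-r+1-2^{d_{2}-1}K_{2} $. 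L'hypoth\`ese $ \xx\in\mathcal{A}_{2}^{\lambda}(\ZZ) $, utilis\'ee implicitement dans la suite, impose $ \dim V_{2,\xx}^{\ast}<\dim V_{2}^{\ast}-(r+1)+\lambda $, en contradiction avec la d\'efinition $ K_{2}=(n+2-\dim V_{2}^{\ast}-\lambda)/2^{d_{2}-1} $. Le principal obstacle sera purement comptable : v\'erifier que les bornes $ q\leqslant (dk)^{d_{1}}P^{(d_{2}-1)\theta} $ et $ |\alpha q-a|\leqslant P^{-d_{2}}P^{(d_{2}-1)\theta} $ issues de l'it\'eration co\"incident exactement avec celles d\'efinissant $ \mathfrak{M}^{d,\xx}(\theta) $, ce qui est automatique puisque $ P=P_{2} $ a \'et\'e fix\'e pour toute la section.
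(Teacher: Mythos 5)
Votre démonstration est correcte et suit exactement la voie que l'article indique : le texte ne donne aucune preuve pour ce lemme et renvoie simplement à celle du lemme \ref{dilemme24}, dont vous reproduisez fidèlement l'adaptation (différenciation de Weyl avec boîtes uniformes de taille $P$, application itérée du lemme \ref{geomnomb2} avec des $a_{j}$ uniformes, puis exclusion du troisième cas par l'argument de dimension via la diagonale et l'hypothèse implicite $\xx\in\mathcal{A}_{2}^{\lambda}(\ZZ)$). Vous identifiez aussi correctement le seul point délicat, purement notationnel, à savoir que dans $\mathfrak{M}^{d,\xx}(\theta)$ le rôle de $P_{2}$ est joué par le nouveau paramètre $P$ de la série $S_{d,\xx}'$, conformément à l'usage qui en est fait dans la preuve du lemme \ref{Sx}.
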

\begin{proof}[D\'emonstration du lemme \ref{Sx}]
Soit $ q>\phi(d,k,\theta) $ et $ \alpha=\frac{a}{q} $ avec $ 0\leqslant a<q $ et $ \PGCD(a,q)=1 $. On a alors que $ S_{a,q,d}(\xx)=S_{d,\xx}'(\alpha) $ avec $ P=q $. On consid\`ere $ \theta' $ tel que $ q=(dk)^{d_{1}} q^{(d_{2}-1)\theta'} $. Si $ \theta''=\theta'-\nu $ pour $ \nu>0 $ arbitrairement petit, on a alors que $ \alpha\notin  \mathfrak{M}^{d,\xx}(\theta'') $. En effet s'il existait $ a',q'\in \ZZ $ tels que $ 0\leqslant a'<q' $, $ \PGCD(a',q')=1 $, $ q'\leqslant (dk)^{d_{1}}q^{\theta''(d_{2}-1)} <q $ et $ \alpha \in \mathfrak{M}_{a',q'}^{d,\xx}(\theta'') $, on aurait alors \[ 1 \leqslant |aq'-a'q|< q^{1-d_{2}+\theta'(d_{2}-1)}, \] ce qui est absurde pour $ d_{2}\geqslant 2 $. On a donc, d'apr\`es le lemme pr\'ec\'edent : \[ |S_{a,q,d}(\xx)|\ll q^{n+1-r+\varepsilon-K_{2}\theta'}. \] Par cons\'equent \begin{align*}
\left| \mathfrak{S}_{d,\xx}(\phi(d,k,\theta))-\mathfrak{S}_{d,\xx}\right| & \ll \sum_{q>\phi(d,k,\theta)}q^{-(n-r+1)}\sum_{0\leqslant a <q}|S_{a,q,d}(\xx)| \\ & \ll \sum_{q>\phi(d,k,\theta)}q^{-(n-r+1)}\sum_{0\leqslant a <q}q^{n+1-r+\varepsilon-K_{2}\theta'} \\ & \ll 
\sum_{q>\phi(d,k,\theta)}q^{-\frac{K_{2}}{(d_{2}-1)}+1+\varepsilon}(dk)^{\frac{d_{1}K_{2}}{(d_{2}-1)}}\\ & \ll (dk)^{\frac{d_{1}K_{2}}{(d_{2}-1)}}\phi(d,k,\theta)^{-\frac{K_{2}}{(d_{2}-1)}+2+\varepsilon} \\ & \ll (dk)^{2d_{1}+\varepsilon}P_{2}^{\theta(2(d_{2}-1)-K_{2})+\varepsilon}
\end{align*}
Par ailleurs, en prenant $ P_{2}\ll 1 $ cette majoration donne : \[ \left| \mathfrak{S}_{d,\xx}(\phi(d,k,\theta))-\mathfrak{S}_{d,\xx}\right|  \ll (dk)^{2d_{1}+\varepsilon}, \] et en consid\'erant la majoration triviale $ | \mathfrak{S}_{d,\xx}(\phi(d,k,\theta))|\ll (dk)^{2d_{1}} $, on trouve finalement \[ |\mathfrak{S}_{d,\xx}|\ll (dk)^{2d_{1}+\varepsilon}. \]

\end{proof}

On d\'eduit des lemmes \ref{Sx} et \ref{Jx} le r\'esultat suivant : 

\begin{lemma}\label{lemmex}
Soit  $ \xx\in \mathcal{A}_{2}^{\lambda}(\ZZ) $. On suppose fix\'es $ \theta\in [0,1] $ et $ P_{2}\geqslant 1  $ tels que $ (dk)^{2d_{1}}P_{2}^{-d_{2}+3\theta(d_{2}-1)}<1 $. On suppose de plus que $ K_{2}>2(d_{2}-1) $. On a alors que \[ N_{d,\xx}(P_{2})=\mathfrak{S}_{d,\xx}J_{d,\xx}(dk)^{m-r}P_{2}^{n-r+1-d_{2}}+ O(E_{2}) +O(E_{3}), \] avec \[ E_{2}=(dk)^{4d_{1}+m-r}P^{n-r+1-d_{2}-\eta(\theta)}, \]\[ E_{3}=(dk)^{2d_{1}+m-r+\varepsilon}P_{2}^{n-r+1-d_{2}-\Delta_{2}(\theta,K_{2})+\varepsilon} \]
et $ \varepsilon>0 $ arbitrairement petit. 
\end{lemma}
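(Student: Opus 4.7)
The proof proposal is that this lemma is essentially an assembly result: Lemma \ref{intermediairex} already provides an asymptotic for $N_{d,\xx}(P_{2})$ in terms of the truncated series $\mathfrak{S}_{d,\xx}(\phi(d,k,\theta))$ and truncated integral $J_{d,\xx}(\tilde{\phi}(P_{2},\theta))$, while Lemmas \ref{Sx} and \ref{Jx} control the errors introduced by replacing these truncations by their limits $\mathfrak{S}_{d,\xx}$ and $J_{d,\xx}$. So the plan is to apply these three lemmas and combine the error estimates. The hypotheses $K_{2}>2(d_{2}-1)$, $(dk)^{2d_{1}}P_{2}^{-d_{2}+3\theta(d_{2}-1)}<1$ and $d_{2}\geqslant 2$ (which follows from $K_{2}>2(d_{2}-1)$ together with $K_{2}>0$ being nontrivial) are exactly what is needed to invoke these auxiliary results.

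Concretely, I would start from the conclusion of Lemma \ref{intermediairex} and use the telescoping identity
\[ \mathfrak{S}_{d,\xx}(\phi)J_{d,\xx}(\tilde{\phi})-\mathfrak{S}_{d,\xx}J_{d,\xx}=\bigl(\mathfrak{S}_{d,\xx}(\phi)-\mathfrak{S}_{d,\xx}\bigr)J_{d,\xx}(\tilde{\phi})+\mathfrak{S}_{d,\xx}\bigl(J_{d,\xx}(\tilde{\phi})-J_{d,\xx}\bigr). \]
Using the bounds $|J_{d,\xx}(\tilde{\phi})|\leqslant |J_{d,\xx}|+|J_{d,\xx}(\tilde{\phi})-J_{d,\xx}|\ll (dk)^{\varepsilon}$ from Lemma \ref{Jx} and $|\mathfrak{S}_{d,\xx}|\ll (dk)^{2d_{1}+\varepsilon}$ from Lemma \ref{Sx}, the first term contributes at most $(dk)^{2d_{1}+\varepsilon}P_{2}^{\theta(2(d_{2}-1)-K_{2})}$ and the second at most $(dk)^{2d_{1}+\varepsilon}P_{2}^{\theta((d_{2}-1)-K_{2})}$ (the $\max\{P_{2}^{\varepsilon},(dk)^{\varepsilon}\}$ factor is absorbed into the $\varepsilon$ exponents).

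Multiplying through by the prefactor $(dk)^{m-r}P_{2}^{n-r+1-d_{2}}$ and recalling $\Delta_{2}(\theta,K_{2})=\theta(K_{2}-2(d_{2}-1))$, the dominant new error is
\[ (dk)^{2d_{1}+m-r+\varepsilon}P_{2}^{n-r+1-d_{2}-\Delta_{2}(\theta,K_{2})+\varepsilon}, \]
which is precisely $E_{3}$, while the second contribution is of the form $(dk)^{2d_{1}+m-r+\varepsilon}P_{2}^{n-r+1-d_{2}-\theta(K_{2}-(d_{2}-1))+\varepsilon}$ and is strictly smaller since $\theta(K_{2}-(d_{2}-1))\geqslant \Delta_{2}(\theta,K_{2})$. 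The error $E_{1}$ from Lemma \ref{intermediairex} has exponent $(dk)^{d_{1}+m-r+\varepsilon}$, hence is absorbed in $E_{3}$, and the error $(dk)^{4d_{1}+m-r}P_{2}^{n-r+1-d_{2}-\eta(\theta)}$ is exactly $E_{2}$.

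I do not expect any genuine obstacle: every ingredient is already in place, and the argument is a mechanical substitution plus bookkeeping of exponents. The only point requiring mild care is verifying that the hypothesis $(dk)^{2d_{1}}P_{2}^{-d_{2}+3\theta(d_{2}-1)}<1$ is the one that allows us to apply Lemma \ref{disjoint2} (via Lemma \ref{arcmaj2}) used upstream in Lemma \ref{intermediairex}, and that $K_{2}>2(d_{2}-1)$ ensures both $\Delta_{2}(\theta,K_{2})>0$ and the convergence bounds in Lemmas \ref{Sx} and \ref{Jx} hold with the displayed exponents.
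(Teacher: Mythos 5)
Your proposal is correct and follows essentially the same route as the paper: invoke Lemma \ref{intermediairex}, then control $\mathfrak{S}_{d,\xx}(\phi)J_{d,\xx}(\tilde{\phi})-\mathfrak{S}_{d,\xx}J_{d,\xx}$ by the telescoping identity together with Lemmas \ref{Sx} and \ref{Jx}; the paper merely distributes the truncations across the two cross terms the other way, which changes nothing in the bounds. The only inaccuracy is your parenthetical claim that $d_{2}\geqslant 2$ follows from $K_{2}>2(d_{2}-1)$ --- it does not, but it is an implicit standing hypothesis of this subsection (the case $d_{2}=1$ is treated separately), so it does not affect the argument.
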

\begin{proof}
Nous avons d\'ej\`a vu avec le lemme \ref{intermediairex}

\begin{multline*}
N_{d,\xx}(P_{2})=(dk)^{m-r}P_{2}^{n-r+1-d_{2}}\mathfrak{S}_{d,\xx}(\phi(d,k,\theta))J_{d,\xx}(\tilde{\phi}(\theta))+O(E_{1})+O(E_{2}), \end{multline*} 
o\`u \[ E_{1}=(dk)^{d_{1}+m-r+\varepsilon}P_{2}^{n-r+1-d_{2}-\Delta_{2}(\theta,K_{2})+\varepsilon} \ll E_{3}. \]
Par ailleurs, d'apr\`es les lemmes \ref{Sx} et \ref{Jx}, on a \begin{multline*}
\left|\mathfrak{S}_{d,\xx}(\phi(d,k,\theta))J_{d,\xx}(\tilde{\phi}(\theta))-\mathfrak{S}_{d,\xx}J_{d,\xx}\right| \\ \leqslant \left|\mathfrak{S}_{d,\xx}(\phi(d,k,\theta))-\mathfrak{S}_{d,\xx}\right|\left|J_{d,\xx}\right|+ \left|\mathfrak{S}_{d,\xx}(\phi(d,k,\theta)) \right| \left|  J_{d,\xx}(\tilde{\phi}(\theta))-J_{d,\xx}\right| \\ \ll (dk)^{2d_{1}+2\varepsilon}P_{2}^{\theta(2(d_{2}-1)-K_{2})}+ (dk)^{2d_{1}+\varepsilon}P_{2}^{\theta((d_{2}-1)-K_{2})}\max\left\{P_{2}^{\varepsilon},(dk)^{\varepsilon}\right\} \end{multline*} et en multlipliant cette in\'egalit\'e par $ (dk)^{m-r}P_{2}^{n-r+1-d_{2}} $ on obtient un terme d'erreur  \[ (dk)^{2d_{1}+m-r+\varepsilon}P_{2}^{n-r+1-d_{2}-\Delta_{2}(\theta,K_{2})+\varepsilon}=E_{3},
\]
d'o\`u le r\'esultat. 
\end{proof}

En fixant $ \theta>0 $ tel que $ \theta<\frac{1}{5(d_{2}-1)} $ (de sorte que $ \eta(\theta)>0 $), on obtient le corollaire suivant : 

\begin{cor}\label{corollairex}
Soit  $ \xx\in \mathcal{A}_{2}^{\lambda}(\ZZ) $. On suppose que $ K_{2}>2(d_{2}-1) $. Il existe alors un r\'eel $ \delta>0 $ arbitrairement petit tel que : \[ N_{d,\xx}(P_{2})=\mathfrak{S}_{d,\xx}J_{d,\xx}(dk)^{m-r}P_{2}^{n-r+1-d_{2}}+O\left( (dk)^{m-r+4d_{1}}P_{2}^{n-r+1-d_{2}-\delta}\right), \] uniform\'ement pour tout $ k<d^{-1}P_{2}^{\frac{d_{2}-1}{2d_{1}}} $. 
\end{cor}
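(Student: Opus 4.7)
The plan is to deduce Corollary \ref{corollairex} almost directly from Lemma \ref{lemmex} by choosing the parameter $\theta$ appropriately and checking that the hypothesis on $k$ guarantees the technical condition required by that lemma.

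First I would fix a real $\theta \in \left]0, \frac{1}{5(d_{2}-1)}\right[$, which immediately ensures that $\eta(\theta) = 1 - 5\theta(d_{2}-1) > 0$. With this choice of $\theta$ I need to verify the hypothesis $(dk)^{2d_{1}} P_{2}^{-d_{2} + 3\theta(d_{2}-1)} < 1$ that was required to invoke Lemma \ref{lemmex}. Under the assumption $k < d^{-1} P_{2}^{(d_{2}-1)/(2d_{1})}$ we have $(dk)^{2d_{1}} < P_{2}^{d_{2}-1}$; and since $\theta < \frac{1}{5(d_{2}-1)} < \frac{1}{3(d_{2}-1)}$ we have $3\theta(d_{2}-1) < 1$, hence $d_{2} - 3\theta(d_{2}-1) > d_{2}-1$, so the required bound follows for $P_{2}$ large enough (and is trivially true otherwise since the asymptotic formula is vacuous for bounded $P_{2}$).

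Second, I would apply Lemma \ref{lemmex} with this choice of $\theta$. This provides the asymptotic formula $N_{d,\xx}(P_{2}) = \mathfrak{S}_{d,\xx} J_{d,\xx} (dk)^{m-r} P_{2}^{n-r+1-d_{2}} + O(E_{2}) + O(E_{3})$, where
\[ E_{2} = (dk)^{4d_{1}+m-r} P_{2}^{n-r+1-d_{2}-\eta(\theta)}, \quad E_{3} = (dk)^{2d_{1}+m-r+\varepsilon} P_{2}^{n-r+1-d_{2}-\Delta_{2}(\theta,K_{2})+\varepsilon}. \]
It remains to absorb these two error terms into the single bound $(dk)^{m-r+4d_{1}} P_{2}^{n-r+1-d_{2}-\delta}$. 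For $E_{2}$ this is automatic with $\delta$ any positive real $\leqslant \eta(\theta)$. For $E_{3}$ I use the hypothesis $K_{2} > 2(d_{2}-1)$, which gives $\Delta_{2}(\theta,K_{2}) = \theta(K_{2} - 2(d_{2}-1)) > 0$; then any $\delta < \Delta_{2}(\theta,K_{2}) - \varepsilon$ makes the exponent of $P_{2}$ in $E_{3}$ acceptable, and the $(dk)$-exponent $2d_{1} + m - r + \varepsilon$ is dominated by $m - r + 4d_{1}$ since $\varepsilon$ is arbitrarily small.

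Choosing $\delta = \min\{\eta(\theta), \Delta_{2}(\theta,K_{2}) - \varepsilon\} > 0$ with $\varepsilon$ small enough therefore yields the claimed uniform asymptotic. There is no genuine obstacle here since all the analytic work was already done in Lemma \ref{lemmex}; the only point to check carefully is the compatibility between the range of $k$ imposed in the corollary and the disjointness condition on the major arcs used implicitly through Lemma \ref{disjoint2} in the proof of Lemma \ref{lemmex}, but this is precisely why the exponent $(d_{2}-1)/(2d_{1})$ (coming from $(dk)^{2d_{1}} < P_{2}^{d_{2}-1}$) appears in the statement.
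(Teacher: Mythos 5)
Votre démonstration est correcte et suit exactement la même démarche que celle du texte : on fixe $\theta<\frac{1}{5(d_{2}-1)}$ pour avoir $\eta(\theta)>0$, on vérifie que l'hypothèse $k<d^{-1}P_{2}^{\frac{d_{2}-1}{2d_{1}}}$ entraîne la condition $(dk)^{2d_{1}}P_{2}^{-d_{2}+3\theta(d_{2}-1)}<1$ du lemme \ref{lemmex}, puis on absorbe $E_{2}$ et $E_{3}$ dans le terme d'erreur annoncé grâce à $\eta(\theta)>0$ et $\Delta_{2}(\theta,K_{2})>0$. C'est précisément l'argument (laissé implicite) du texte, qui signale d'ailleurs en remarque que la condition d'uniformité provient de la contrainte de disjonction des arcs majeurs.
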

\begin{rem}
La condition d'uniformit\'e  $ k<d^{-1}P_{2}^{\frac{d_{2}-1}{2d_{1}}} $ d\'ecoule de la condition $ (dk)^{2d_{1}}P_{2}^{-d_{2}+3\theta(d_{2}-1)}<1 $ du lemme\;\ref{lemmex}.
\end{rem}
Dans ce qui va suivre, pour $ P_{2}=P_{1}^{u} $, avec $ u\geqslant 1 $ on introduit la fonction \begin{equation}
g_{2}(u,\delta)=\left(1-\frac{5d_{1}}{u}-\delta\right)^{-1}5(d_{2}-1)\left(\frac{3d_{1}}{u}+2\delta\right),
\end{equation} ainsi que \begin{multline}
N_{d,2}(P_{1},P_{2})=\Card\left\{ (\xx,\yy,\zz)\in \ZZ^{n+2} \; | \; \xx \in \mathcal{A}_{2}^{\lambda}(\ZZ), \; |\xx|\leqslant P_{1}, \;\right. \\ \left. |\yy| \leqslant d|\xx|P_{2},\;   |\zz|\leqslant P_{2}, \; F(d\xx,\yy,\zz)=0 \right\}.
\end{multline}
On a alors le r\'esultat ci-dessous : 
\begin{prop}\label{propx}
On suppose $ K_{2}>2(d_{2}-1) $, $ d_{2}\geqslant 2 $, $ P_{2}=P_{1}^{u} $ avec $ u $ suppos\'e strictement sup\'erieur \`a $ 5d_{1} $, et de plus que \[ K_{2}-2(d_{2}-1)>g_{2}(u,\delta). \] Alors : \begin{multline*} N_{d,2}(P_{1},P_{2})=\left(\sum_{\xx\in P_{1}\BB_{1}\cap \mathcal{A}_{2}^{\lambda}(\ZZ) }\mathfrak{S}_{d,\xx}J_{d,\xx}d^{m-r}|\xx|^{m-r}\right)P_{2}^{n-r+1-d_{2}} \\ + O\left(d^{m-r+5d_{1}}P_{1}^{m+1-d_{1}}P_{2}^{n-r+1-d_{2}-\delta}\right), \end{multline*} pour $ \delta>0 $ arbitrairement petit. 
\end{prop}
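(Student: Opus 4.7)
Je partirai de la décomposition
\[ N_{d,2}(P_1,P_2) = \sum_{\substack{\xx \in P_1\BB_1 \cap \mathcal{A}_{2}^{\lambda}(\ZZ) \\ \xx \neq \0}} N_{d,\xx}(P_2) + O(P_2^{n-m}), \]
le terme résiduel absorbant la contribution (triviale) de $\xx = \0$. Pour chaque $\xx \neq \0$ avec $|\xx| \leq P_1$, l'hypothèse $u > 5d_1$ combinée à $d_2 \geq 2$ entraîne $u(d_2-1)/(2d_1) > 1$, donc $dP_1 < P_2^{(d_2-1)/(2d_1)}$ dès que $P_1$ est assez grand ; la condition d'uniformité du lemme \ref{lemmex} est ainsi vérifiée pour tout $\xx$ dans le domaine de sommation. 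Je choisirai d'appliquer ce lemme plutôt que le corollaire \ref{corollairex}, afin de pouvoir optimiser le paramètre $\theta$ en fonction de $u$ et $\delta$.

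\textbf{Sommation terme à terme.} La somme des termes principaux $\mathfrak{S}_{d,\xx} J_{d,\xx} (d|\xx|)^{m-r} P_2^{n-r+1-d_2}$ produira directement le terme dominant attendu. Pour les termes d'erreur $E_2$ et $E_3$ du lemme \ref{lemmex}, j'utiliserai le fait que le nombre d'entiers $\xx \in \ZZ^{r+1}$ avec $|\xx|=k$ est $O(k^r)$, ce qui donnera par sommation
\[ \sum_{\xx} E_2 \ll d^{m-r+4d_1} P_1^{m+1+4d_1} P_2^{n-r+1-d_2-\eta(\theta)}, \]
\[ \sum_{\xx} E_3 \ll d^{m-r+2d_1+\varepsilon} P_1^{m+2d_1+1+\varepsilon} P_2^{n-r+1-d_2-\Delta_2(\theta,K_2)+\varepsilon}. \]
L'identité $P_1 = P_2^{1/u}$ me permettra alors de convertir les facteurs superflus $P_1^{5d_1}$ et $P_1^{3d_1}$ en puissances négatives de $P_2$.

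\textbf{Équilibrage en $\theta$.} Le cœur de la preuve — et son principal obstacle — est l'ajustement simultané des deux erreurs à la forme voulue $d^{m-r+5d_1} P_1^{m+1-d_1} P_2^{n-r+1-d_2-\delta}$. L'absorption de $P_1^{5d_1}$ dans $P_2^{\eta(\theta)-\delta}$ impose la borne supérieure
\[ \theta \leq \frac{1 - 5d_1/u - \delta}{5(d_2-1)}, \]
tandis que l'absorption de $P_1^{3d_1}$ (avec les termes en $\varepsilon$, d'où le facteur $2\delta$) dans $P_2^{\Delta_2(\theta,K_2)-\delta}$ impose la borne inférieure
\[ \theta \geq \frac{3d_1/u + 2\delta}{K_2 - 2(d_2-1)}. \]
La compatibilité de ces deux contraintes équivaut exactement à l'hypothèse $K_2 - 2(d_2-1) > g_2(u,\delta)$ de l'énoncé. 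Sous cette condition, je pourrai fixer un $\theta$ admissible, et la proposition s'ensuivra en regroupant le terme principal et les majorations obtenues. La subtilité essentielle est donc cet équilibre fin entre la qualité des estimations sur les arcs majeurs et la perte sur les arcs mineurs, traduite dans la fonction $g_2(u,\delta)$.
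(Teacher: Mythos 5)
Votre d\'emarche est pour l'essentiel celle du texte : application terme \`a terme du lemme \ref{lemmex} (plut\^ot que du corollaire \ref{corollairex}), sommation des erreurs $E_{2}$ et $E_{3}$ \`a l'aide de $\card\{\xx : |\xx|=k\}\ll k^{r}$, conversion des facteurs parasites $P_{1}^{5d_{1}}$ et $P_{1}^{3d_{1}}$ en $P_{2}^{5d_{1}/u}$ et $P_{2}^{3d_{1}/u}$, puis \'equilibrage en $\theta$. L'encadrement que vous obtenez pour $\theta$ et son \'equivalence avec l'hypoth\`ese $K_{2}-2(d_{2}-1)>g_{2}(u,\delta)$ sont exactement ceux du texte, qui choisit pr\'ecis\'ement la borne sup\'erieure $\theta=\frac{1}{5(d_{2}-1)}\left(1-\frac{5d_{1}}{u}-\delta\right)$.

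Il y a cependant une lacune r\'eelle dans la justification de l'applicabilit\'e du lemme \ref{lemmex}. La condition requise est $(dk)^{2d_{1}}P_{2}^{-d_{2}+3\theta(d_{2}-1)}<1$ pour tout $k\leqslant P_{1}$, et votre argument (\og $dP_{1}<P_{2}^{(d_{2}-1)/(2d_{1})}$ d\`es que $P_{1}$ est assez grand \fg) n'est valable que pour $d$ fix\'e : d\`es que $d$ d\'epasse une puissance convenable de $P_{2}$, cette condition \'echoue, alors que la proposition doit \^etre uniforme en $d$ (le terme d'erreur porte une puissance explicite de $d$, et c'est cette uniformit\'e qui est exploit\'ee ensuite dans l'inversion de M\"obius et dans le cadre des $(\beta_{1},\beta_{2},C_{d},D,\alpha,\upsilon,\delta)$-fonctions). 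Le texte r\`egle ce point par une dichotomie pr\'ealable : si $d^{2d_{1}}>PP_{2}^{-\frac{3}{5}-\delta}$, l'\'egalit\'e annonc\'ee est triviale, car le terme principal (major\'e via $|\mathfrak{S}_{d,\xx}J_{d,\xx}|\ll (dk)^{2d_{1}+2\varepsilon}$ d'apr\`es les lemmes \ref{Sx} et \ref{Jx}) et la majoration triviale de $N_{d,2}(P_{1},P_{2})$ sont tous deux $\ll d^{m-r+5d_{1}}P_{1}^{m+1-d_{1}}P_{2}^{n-r+1-d_{2}-\delta}$ ; et si $d^{2d_{1}}\leqslant PP_{2}^{-\frac{3}{5}-\delta}$, le choix de $\theta$ ci-dessus rend la condition $(dP_{1})^{2d_{1}}P_{2}^{-d_{2}+3\theta(d_{2}-1)}<1$ effectivement satisfaite. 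Il faut donc ajouter cette dichotomie en t\^ete de votre preuve ; le reste de votre argument est correct et co\"{i}ncide avec celui du texte.
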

\begin{proof}
Commen\c{c}ons par d\'emontrer que la propri\'et\'e est triviale lorsque $ d^{2d_{1}}>PP_{2}^{-\frac{3}{5}-\delta} $. Dans ce cas on observe que d'une part (en utilisant les lemmes\;\ref{Sx} et\;\ref{Jx}) : \begin{multline*} \left(\sum_{\xx\in P_{1}\BB_{1}\cap \mathcal{A}_{2}^{\lambda}(\ZZ) }\mathfrak{S}_{d,\xx}J_{d,\xx}d^{m-r}|\xx|^{m-r}\right)P_{2}^{n-r+1-d_{2}} \\ \ll d^{2d_{1}+2\varepsilon}P_{1}^{m+1}P_{2}^{n-r+1-d_{2}} \\  \ll d^{m-r+5d_{1}}P_{1}^{m+1}P_{2}^{n-r+1-d_{2}}\underbrace{d^{-2d_{1}}}_{<P^{-1}P_{2}^{\frac{3}{5}+\delta}} \\ \ll d^{m-r+5d_{1}}P_{1}^{m+1-d_{1}}P_{2}^{n-r+1-d_{2}-\delta} \end{multline*}
et d'autre part  : \begin{align*}
N_{d,2}(P_{1},P_{2}) & \ll d^{m-r}P_{1}^{m+1}P_{2}^{n-r+1} \\ & \ll d^{m-r+5d_{1}}P_{1}^{m+1}P_{2}^{n-r+1}P^{-\frac{5}{2}}P_{2}^{\frac{3}{2}+\frac{5\delta}{2}} \\ & \ll d^{m-r+5d_{1}}P_{1}^{m+1-d_{1}}P_{2}^{n-r+1-d_{2}-\delta}.
\end{align*}
L'\'egalit\'e du lemme est donc trivialement v\'erifi\'ee. \\

Supposons \`a pr\'esent que $ d^{2d_{1}}\leqslant PP_{2}^{-\frac{3}{5}-\delta} $. Si $ \theta $ est tel que \begin{equation}\label{condsupp} d^{2d_{1}}P_{1}^{2d_{1}} P_{2}^{-d_{2}+3\theta(d_{2}-1)}< 1, \end{equation} alors d'apr\`es le lemme \ref{lemmex} : \[ N_{d,2}(P_{1},P_{2})=\left(\sum_{\xx\in P_{1}\BB_{1}\cap \mathcal{A}_{2}^{\lambda}(\ZZ) }\mathfrak{S}_{d,\xx}J_{d,\xx}d^{m-r}|\xx|^{m-r}\right)P_{2}^{n-r+1-d_{2}}+ O(\mathcal{E}_{2})+O(\mathcal{E}_{3} ), \] avec \begin{align*}
\mathcal{E}_{2} & =\sum_{\substack{\xx\in P_{1}\BB_{1}\cap \mathcal{A}_{2}^{\lambda}(\ZZ)} }(d|\xx|)^{4d_{1}+m-r}P_{2}^{n-r+1-d_{2}-\eta(\theta)} \\ & \ll d^{4d_{1}+m-r}P_{1}^{4d_{1}+m+1}P_{2}^{n-r+1-d_{2}-\eta(\theta)} \\  & =  d^{4d_{1}+m-r}P_{1}^{m+1-d_{1}}P_{2}^{n-r+1+\frac{5d_{1}}{u}-d_{2}-\eta(\theta)}.
\end{align*}
et \begin{align*}
\mathcal{E}_{3} & =\sum_{\substack{\xx\in P_{1}\BB_{1}\cap \mathcal{A}_{2}^{\lambda}(\ZZ)\\ |\xx|=k} }(dk)^{2d_{1}+m-r+\varepsilon}P_{2}^{n-r+1-d_{2}-\Delta_{2}(\theta,K_{2})+\varepsilon}\\ & \ll d^{2d_{1}+m-r+\varepsilon}P_{1}^{m+1-d_{1}}P_{2}^{n-r+1-d_{2}+\frac{3d_{1}}{u}-\Delta_{2}(\theta,K_{2})+2\varepsilon}.
\end{align*}
Rappelons que $ \eta(\theta)=1-5\theta(d_{2}-1) $. On choisit alors \[ \theta=\frac{1}{5(d_{2}-1)}\left(1-\frac{5d_{1}}{u}-\delta\right), \] de sorte que d'une part, en utilisant l'in\'egalit\'e $ d^{2d_{1}}\leqslant PP_{2}^{-\frac{3}{5}+\delta} $ on a  :  \[ d^{2d_{2}}P_{1}^{2d_{1}} P_{2}^{-d_{2}+3\theta(d_{2}-1)}= d^{2d_{2}}P_{1}^{2d_{1}} P_{2}^{-d_{2}+\frac{3}{5}(1-\frac{5d_{1}}{u})}=d^{2d_{2}}P^{-1}P_{2}^{\frac{3}{5}}\leqslant P^{-\delta},  \] la condition\;\eqref{condsupp} est donc satisfaite, et on a de plus \[ \frac{5d_{1}}{u}-\eta(\theta)=-\delta. \]  On a par ailleurs, puisque $ K_{2}-2(d_{2}-1)>g_{2}(u,\delta) $, \[ K_{2}-2(d_{2}-1)>\theta^{-1}\left(\frac{3d_{1}}{u}+2\delta\right), \] et donc \[ \Delta_{2}(\theta,K_{2})-\frac{3d_{1}}{u}>2\delta, \] d'o\`u le r\'esultat. 
\end{proof}

\subsection{Le cas $ d_{2}=1 $}

Lorsque $ d_{2}=1 $, on peut obtenir des r\'esultats semblables \`a ceux des propositions \ref{corollairex} et \ref{propx} en utilisant des r\'esultats de g\'eom\'etrie des r\'eseaux. On introduit la d\'efinition suivante issue de \cite[Definition 2.1]{Wi} :
\begin{Def}
Soit $ S $ un sous-ensemble de $ \RR^{n} $, et soit $ c $ un entier tel que $ 0\leqslant c \leqslant d $. Pour $ M\in \NN $ et $ L>0 $, on dit que \emph{$ S $ appartient \`a $ \Lip(n,c,M,L) $} s'il existe $ M $ applications $ \phi : [0,1]^{n-c}\ra \RR^{d} $ v\'erifiant : \[ ||\phi(\xx)-\phi(\yy)||_{2}\leqslant L||\xx-\yy||_{2}, \] $ ||.||_{2} $ d\'esignant la norme euclidienne, telles que $ S $ soit recouvert par les images de ces applications. 
\end{Def}

On a le r\'esultat suivant (cf. \cite[Lemme 2]{MV}) : 
\begin{lemma}\label{geomnomb}
Soit $ S\subset\RR^{n} $ un ensemble bord\'e dont le bord $ \partial S $ appartient \`a $ \Lip(n,1,M,L) $. L'ensemble $ S $ est alors mesurable et si $ \Lambda $ est un r\'eseau de $ \RR^{n} $ de premier minimum successif $ \lambda_{1} $, on a \[ \left| \card(S\cap\Lambda)-\frac{\Vol(S)}{\det(\Lambda)}\right|\leqslant c(n)M\left(\frac{L}{\lambda_{1}}+1\right)^{n-1}, \] o\`u $ c(n) $ est une constante ne d\'ependant que de $ n $.
\end{lemma}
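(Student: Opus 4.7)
L'idée générale est de comparer le comptage discret $\card(S\cap\Lambda)$ au volume $\Vol(S)$ via la décomposition en cellules fondamentales : si $\mathcal{F}$ est un domaine fondamental pour $\Lambda$ (de volume $\det(\Lambda)$), alors pour tout $\mathbf{v}\in\Lambda$, le translaté $\mathbf{v}+\mathcal{F}$ a volume $\det(\Lambda)$. On écrit
\[ \card(S\cap\Lambda)\det(\Lambda) - \Vol(S) = \Vol\!\left( \bigcup_{\mathbf{v}\in S\cap\Lambda}(\mathbf{v}+\mathcal{F}) \right) - \Vol(S), \]
quantité dont la valeur absolue est majorée par $\Vol(T)$, où $T$ désigne un voisinage tubulaire de $\partial S$ de rayon égal au diamètre $\delta$ de $\mathcal{F}$. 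Puisque $T$ est lui-même réunion de translatés de $\mathcal{F}$ par les points de $\Lambda\cap T'$ (pour un voisinage $T'$ un peu élargi de $\partial S$), on se ramène à majorer le nombre de points de $\Lambda$ situés à distance $\ll \delta$ de $\partial S$.

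Pour estimer ce cardinal, on exploite la paramétrisation lipschitzienne : $\partial S$ est recouvert par $M$ images $\phi_i([0,1]^{n-1})$ avec constante de Lipschitz $L$. Je subdiviserais chaque cube unité $[0,1]^{n-1}$ en environ $\lceil L/\lambda_{1}\rceil^{n-1}$ sous-cubes de côté $\lambda_{1}/L$ ; l'image par $\phi_i$ d'un tel sous-cube a alors un diamètre $O(\lambda_{1})$. Son voisinage de rayon $O(\lambda_{1})$ (qui contient la portion correspondante de $T$) est donc contenu dans une boule de rayon $O(\lambda_{1})$.

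L'étape clé est alors le lemme élémentaire suivant de géométrie des réseaux : toute boule de rayon $r$ de $\RR^{n}$ contient au plus $c'(n)(r/\lambda_{1}+1)^{n}$ points de $\Lambda$. En effet, si deux points de $\Lambda$ appartiennent à une même boule de rayon $r$, leur différence est un élément non nul de $\Lambda$ de norme $\leqslant 2r$ ; or le nombre de tels points est borné par $c''(n)(r/\lambda_{1}+1)^{n}$ (conséquence classique du premier minimum). Appliquant ceci avec $r\asymp\lambda_{1}$, chaque petit voisinage contient $O(1)$ points de $\Lambda$. En sommant sur les $M\lceil L/\lambda_{1}\rceil^{n-1}$ sous-patchs, on obtient la majoration annoncée.

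L'obstacle principal est la gestion propre des constantes dimensionnelles et le choix précis du voisinage tubulaire (en particulier la majoration du diamètre d'un domaine fondamental en fonction des minima successifs, et le passage à la majoration uniforme du nombre de points du réseau dans un voisinage d'un petit morceau lipschitzien de la frontière). Ces estimations sont standard et constituent l'essentiel du contenu technique de \cite[Lemme 2]{MV}, ce qui justifie que l'auteur se contente ici de renvoyer à cette référence.
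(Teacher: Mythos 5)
Votre r\'eduction au comptage des points de $\Lambda$ situ\'es \`a distance $\ll\delta$ de $\partial S$, o\`u $\delta$ est le diam\`etre d'un domaine fondamental $\mathcal{F}$, introduit une perte fatale dans le cas g\'en\'eral. Le point crucial est que $\delta$ n'est pas contr\^ol\'e par $\lambda_{1}$ : tout domaine fondamental a pour volume $\det(\Lambda)\asymp_{n}\lambda_{1}\cdots\lambda_{n}$, donc son diam\`etre est au moins de l'ordre de $\lambda_{n}$, qui peut \^etre arbitrairement grand devant $\lambda_{1}$. Votre affirmation selon laquelle le voisinage de rayon $O(\lambda_{1})$ d'un petit morceau de $\partial S$ \og contient la portion correspondante de $T$ \fg\ est donc fausse d\`es que $\lambda_{n}\gg\lambda_{1}$, et l'application du lemme de comptage dans les boules avec $r\asymp\lambda_{1}$ n'est pas l\'egitime : il faudrait $r\asymp\delta$, ce qui donne $(\delta/\lambda_{1}+1)^{n}$ points par morceau au lieu de $O(1)$. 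Exemple concret : $n=2$, $\Lambda=\varepsilon\ZZ\times N\ZZ$, $S$ le disque unit\'e. On a $\lambda_{1}=\varepsilon$, $\delta\asymp N$, et le nombre de points de $\Lambda$ \`a distance $\leqslant\delta$ de $\partial S$ est $\asymp N/\varepsilon$, alors que la borne \`a d\'emontrer est $\ll 1/\varepsilon$ : votre majorant exc\`ede la conclusion d'un facteur $N$ non born\'e.

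La quantit\'e qu'il faut r\'eellement majorer est le nombre de cellules $v+\mathcal{F}$ rencontrant $\partial S$, ce qui utilise la forme de $\mathcal{F}$ et pas seulement son diam\`etre. L'argument correct --- celui de Davenport repris dans \cite{MV} --- choisit une base r\'eduite $v_{1},\dots,v_{n}$ de $\Lambda$ avec $|v_{i}|\ll_{n}\lambda_{i}$ et transporte $\Lambda$ sur $\ZZ^{n}$ par l'application lin\'eaire $T$ envoyant $v_{i}$ sur $e_{i}$ ; par le second th\'eor\`eme de Minkowski, la $i$-i\`eme coordonn\'ee de $T$ est lipschitzienne de constante $\ll_{n}1/\lambda_{i}\leqslant c(n)/\lambda_{1}$, de sorte que $T(\partial S)$ est recouvert par $M$ images lipschitziennes de constante $\ll_{n} L/\lambda_{1}$ ; on subdivise alors chaque cube en $\ll(L/\lambda_{1}+1)^{n-1}$ sous-cubes dont les images sont de diam\`etre $\leqslant 1$ et rencontrent chacune $O_{n}(1)$ cellules unit\'e de $\ZZ^{n}$. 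Votre subdivision en sous-cubes de c\^ot\'e $\lambda_{1}/L$ et votre lemme \'el\'ementaire sur le nombre de points d'un r\'eseau dans une boule sont corrects et r\'eutilisables, mais il manque cette \'etape de normalisation du r\'eseau, sans laquelle la borne annonc\'ee n'est pas atteinte. (La mesurabilit\'e de $S$, que vous ne discutez pas, suit du fait que le recouvrement lipschitzien force $\partial S$ \`a \^etre de mesure nulle ; c'est un point mineur.)
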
 
Soit $ \xx\in \mathcal{A}_{2}^{\lambda}(\ZZ) $ fix\'e de norme $ |\xx|=k $. Puisque $ d_{2}=1 $ le polyn\^ome $ F(d\xx,\yy,\zz) $ est une forme lin\'eaire en $ (\yy,\zz) $ que l'on peut r\'e\'ecrire \[ F(d\xx,\yy,\zz)=\sum_{j=r+1}^{m}A_{j}(d\xx)y_{j}+\sum_{j=m+1}^{n+1}B_{j}(d\xx)z_{j}, \] avec $ A_{j}(d\xx) $ ou $ B_{j}(d\xx) $ non tous nuls (car $ \xx\in \mathcal{A}_{2}^{\lambda}(\ZZ) $). On note alors $ H_{d,\xx} $ l'hyperplan de $ \RR^{n-r+1} $ d\'efini par \[ F(d\xx,\yy,\zz)=0 .\] On note par ailleurs $ C_{d,\xx} $ le corps convexe $ \BB_{d,\xx}\cap H_{d,\xx} $ o\`u \[ \BB_{d,\xx}=\left\{(\yy,\zz) \; | \;|\yy|\leqslant dk, \; |\zz|\leqslant 1 \right\},\]    et $ \Lambda_{d,\xx} $ le r\'eseau $ \ZZ^{n-r+1}\cap H_{d,\xx}  $. Nous allons appliquer le lemme \ref{geomnomb} \`a $ S=P_{2}C_{d,\xx} $ et $ \Lambda=\Lambda_{d,\xx} $ vus respectivement comme un sous-ensemble et un r\'eseau de $ H_{d,\xx} $ que l'on identifiera \`a $ \RR^{n-r+1} $. Nous allons pour cela montrer que $  \partial C_{d,\xx}\in \Lip(n-r,1,2^{n-r+1}(dk)^{m-r},(n-r-1)\sqrt{n-r+1}) $. Une face du polytope $ C_{d,\xx} $ est obtenue en prenant l'intersection d'une face $ \mathcal{F} $ du polytope $ \BB_{d,\xx} $ avec $ H_{d,\xx} $. Consid\'erons par exemple l'intersection (suppos\'ee non vide) de la face $ \mathcal{F}=\{\zz\in \BB_{d,\xx}\; | \; z_{n+1}=1\} $ avec $ H_{d,\xx} $. Pour simplifier les notations, on pose \[ \left\{ \begin{array}{rcl}\alpha_{j}=
A_{j}(d\xx) & \mbox{pour} & j\in\{r+1,...,m\} \\ \beta_{j}=B_{j}(d\xx) & \mbox{pour} & j\in\{m+1,...,n+1\}
\end{array}   \right.,    \] de sorte que $ H_{d,\xx} $ a pour \'equation $ \alpha_{r+1}y_{r+1}+...+\alpha_{m}y_{m}+\beta_{m+1}z_{m+1}+...+\beta_{n+1}z_{n+1}=0 $ (les $ \alpha_{k} $ ou les $ \beta_{k} $ \'etant non tous nuls). Par ailleurs on remarque que l'on peut subdiviser $ C_{d,\xx} $ en une union de $ 2^{n-m+1}(dk)^{m-r} $ polytopes $ C_{d,\xx,\aa,\ee} $ plus petits en posant \[ C_{d,\xx}=\bigcup_{\aa=(a_{r+1},...,a_{m})\in \{-dk,...,dk-1\}^{m-r}}\bigcup_{\ee=(\varepsilon_{1},...,\varepsilon_{n})\in \{-1,0\}^{n-m+1}}C_{d,\xx,\aa,\ee}, \] \[ C_{d,\xx,\aa,\ee}=H_{\xx}\cap\left(\left(\prod_{j=r+1}^{m}[a_{j},a_{j}+1]\right)\times \left(\prod_{j=m+1}^{n+1}[\varepsilon_{j},\varepsilon_{j}+1]\right)\right). \]
On peut par cons\'equent subdiviser chaque face $ \mathcal{F}\cap H_{d,\xx} $ en consid\'erant $ \mathcal{F}\cap C_{d,\xx,\aa,\ee} $ pour tout $ (\aa,\ee) $. Pour un couple $ (\aa,\ee) $ fix\'e, et pour tout $ \zz\in \mathcal{F}\cap C_{d,\xx,\aa,\ee} $, on a alors \[ \alpha_{r+1}y_{r+1}+...+\alpha_{m}y_{m}+\beta_{m+1}z_{m+1}+...+\beta_{n}z_{n}+\beta_{n+1}=0 \] avec $ \max\{\max_{r+1\leqslant j \leqslant m}|\alpha_{j}|,\max_{m+1\leqslant j \leqslant n}|\beta_{j}|\}\neq 0 $ puisque l'intersection $ \mathcal{F}\cap H_{d,\xx} $ est non vide. Supposons, par exemple, que \[ \max\{\max_{r+1\leqslant j \leqslant m}|\alpha_{j}|,\max_{m+1\leqslant j \leqslant n}|\beta_{j}|\}=|\beta_{n}| ,\] on a alors $ z_{n}=-\frac{\beta_{n+1}}{\beta_{n}}-\sum_{j=r+1}^{m}\frac{\alpha_{j}}{\beta_{n}}y_{j}-\sum_{j=m+1}^{n-1}\frac{\beta_{j}}{\beta_{n}}z_{j} $, et on peut construire l'application $ \phi_{\mathcal{F},\aa,\ee} : [0,1]^{n-1}\ra C_{d,\xx,\aa,\ee}\subset \RR^{n-r+1} $ d\'efinie par \begin{multline*} \phi_{\mathcal{F},\aa,\ee}(t_{r+1},...,t_{n-1})= \left(a_{r+1}+t_{r+1},...,a_{m}+t_{m}, \varepsilon_{m+1}+t_{m+1},...,\varepsilon_{n-1}+t_{n-1},\right. \\ \left.-\frac{\beta_{n+1}}{\beta_{n}}-\sum_{j=r+1}^{m}\frac{\alpha_{j}}{\beta_{n}}(a_{j}+t_{j})-\sum_{j=m+1}^{n-1}\frac{\beta_{j}}{\beta_{n}}(\varepsilon_{j}+t_{j}),1\right). \end{multline*}

On remarque alors que $ \mathcal{F}\cap C_{d,\xx,\aa,\ee}\cap H_{d,\xx}\subset \phi_{\mathcal{F},\aa,\ee}([0,1]^{n-1}) $ et que \begin{multline*} ||\phi_{\mathcal{F},\aa,\ee}(\tt)-\phi_{\mathcal{F},\aa,\ee}(\tt')||_{2}  \leqslant \sqrt{n-r+1}||\phi_{\mathcal{F},\aa,\ee}(\tt)-\phi_{\mathcal{F},\aa,\ee}(\tt')||_{\infty} \\ \leqslant \sqrt{n-r+1}\max\left(1,\sum_{j=r+1}^{m}\frac{|\alpha_{j}|}{|\beta_{n}|}+\sum_{j=m+1}^{n-1}\frac{|\beta_{j}|}{|\beta_{n}|}\right)||\tt-\tt'||_{\infty} \\  \leqslant (n-r-1)\sqrt{n-r+1}||\tt-\tt'||_{2}. \end{multline*} On a donc $  \partial C_{d,\xx}\in \Lip(n-r,1,2^{n-r+1}(dk)^{m-r},(n-r-1)\sqrt{n-r+1}) $ et par cons\'equent \[  \partial P_{2}C_{d,\xx}\in \Lip(n,1,2^{n-r+1}(dk)^{m-r},(n-r-1)\sqrt{n-r+1}P_{2}). \] De plus puisque $ \Lambda_{d,\xx}\subset \ZZ^{n-r+1} $ le premier minimum successif de ce r\'eseau est sup\'erieur ou \'egal \`a $ 1 $. Ainsi, puisque
\begin{equation}  N_{d,\xx}(P_{2})=\{(\yy,\zz)\in P_{2}\BB_{d,\xx}\cap \ZZ^{n-r+1}\; | \; F(d\xx,\yy,\zz)=0 \}=\card(\Lambda_{d,\xx}\cap P_{2}C_{d,\xx})\end{equation} le lemme \ref{geomnomb} nous donne un analogue du corollaire \ref{corollairex}  \begin{lemma}\label{corollairex'} On a :  \begin{equation}\label{reseau} N_{d,\xx}(P_{2})=\frac{\Vol(C_{d,\xx})}{\det(\Lambda_{d,\xx})}P_{2}^{n-r}+O((dk)^{m-r}P_{2}^{n-r-1}), \end{equation}  uniform\'ement pour tout $ \xx $ tel que $ |\xx|=k $. \end{lemma}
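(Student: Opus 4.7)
The plan is to apply the lattice point counting lemma \ref{geomnomb} to the set $S = P_{2}C_{d,\xx}$ and the lattice $\Lambda_{d,\xx}$, viewed respectively as a bounded subset and a lattice in the hyperplane $H_{d,\xx}$ identified with $\RR^{n-r}$. All the delicate work needed to do this has in fact already been carried out in the discussion preceding the statement: the subdivision of $C_{d,\xx}$ into $2^{n-m+1}(dk)^{m-r}$ pieces $C_{d,\xx,\aa,\ee}$ and the explicit construction of the maps $\phi_{\mathcal{F},\aa,\ee}$ parametrizing each face of each piece show that
\[ \partial C_{d,\xx}\in \Lip(n-r,1,2^{n-r+1}(dk)^{m-r},(n-r-1)\sqrt{n-r+1}), \]
and therefore, by simple scaling,
\[ \partial (P_{2}C_{d,\xx})\in \Lip(n-r,1,2^{n-r+1}(dk)^{m-r},(n-r-1)\sqrt{n-r+1}P_{2}). \]

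Next I would use that $\Lambda_{d,\xx}\subset \ZZ^{n-r+1}$, so its first successive minimum $\lambda_{1}(\Lambda_{d,\xx})$ is at least $1$. Applied to $S=P_{2}C_{d,\xx}$ and $\Lambda=\Lambda_{d,\xx}$ (in ambient dimension $n-r$), lemma \ref{geomnomb} then yields
\[ \left| \card(P_{2}C_{d,\xx}\cap \Lambda_{d,\xx})-\frac{\Vol(P_{2}C_{d,\xx})}{\det(\Lambda_{d,\xx})}\right|\leqslant c(n-r)\cdot 2^{n-r+1}(dk)^{m-r}\bigl((n-r-1)\sqrt{n-r+1}\,P_{2}+1\bigr)^{n-r-1}. \]
Since $N_{d,\xx}(P_{2})=\card(\Lambda_{d,\xx}\cap P_{2}C_{d,\xx})$ and $\Vol(P_{2}C_{d,\xx})=P_{2}^{n-r}\Vol(C_{d,\xx})$, the right-hand side is $O\bigl((dk)^{m-r}P_{2}^{n-r-1}\bigr)$ uniformly in $\xx$ with $|\xx|=k$, which is exactly the error term in \eqref{reseau}.

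There is essentially no obstacle in this approach; the only point requiring some care is the uniformity of the implicit constants in the Lipschitz bound and in the application of lemma \ref{geomnomb}, which must depend only on $n,r,m$ and not on $\xx$ or $d,k$. This is guaranteed by the explicit form of the maps $\phi_{\mathcal{F},\aa,\ee}$: the Lipschitz constant $(n-r-1)\sqrt{n-r+1}$ comes from the worst case $\max_j |\alpha_j/\beta_{j_0}|,|\beta_j/\beta_{j_0}|\leqslant 1$ at the distinguished coordinate $j_0$ where the maximum of $|\alpha_j|,|\beta_j|$ is attained, and in particular does not depend on the coefficients $A_j(d\xx),B_j(d\xx)$ themselves. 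Thus the constant $c(n-r)$ and the combinatorial factor $2^{n-r+1}$ absorb into the implicit $O$-constant and the lemma follows.
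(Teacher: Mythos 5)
Your proposal is correct and follows exactly the paper's own route: the paper's proof of this lemma is precisely the preceding discussion (the subdivision into the $C_{d,\xx,\aa,\ee}$, the maps $\phi_{\mathcal{F},\aa,\ee}$ giving the $\Lip$ class, and the bound $\lambda_{1}\geqslant 1$ from $\Lambda_{d,\xx}\subset\ZZ^{n-r+1}$), followed by a direct application of the lemma of Masser--Vaaler to $S=P_{2}C_{d,\xx}$ in ambient dimension $n-r$. Your bookkeeping of the error term $c(n-r)\cdot 2^{n-r+1}(dk)^{m-r}(L P_{2}+1)^{n-r-1}=O((dk)^{m-r}P_{2}^{n-r-1})$ and your remark on the uniformity of the Lipschitz constant are both accurate.
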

On d\'eduit alors de ce lemme un r\'esultat analogue \`a la proposition \ref{propx} : 

\begin{prop}\label{propx'}
On suppose $ d_{2}=1 $, $ P_{2}=P_{1}^{u} $ et de plus que $ u>d_{1} $.  Alors : \begin{multline*} N_{d,2}(P_{1},P_{2})=\left(\sum_{\xx\in P_{1}\BB_{1}\cap \mathcal{A}_{2}^{\lambda}(\ZZ) }\frac{\Vol(C_{d,\xx})}{\det(\Lambda_{d,\xx})}\right)P_{2}^{n-r} \\ + O\left(d^{m-r}P_{1}^{m+1-d_{1}}P_{2}^{n-r-\delta}\right), \end{multline*}pour un certain $ \delta>0 $ arbitrairement petit.
\end{prop}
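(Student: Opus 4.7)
The plan is to reduce the proof directly to Lemma \ref{corollairex'} by summing over $\xx$. First I would write
\[
N_{d,2}(P_1,P_2) = \sum_{\xx \in P_1\BB_1 \cap \mathcal{A}_2^{\lambda}(\ZZ)} N_{d,\xx}(P_2),
\]
which follows immediately from the definition of $N_{d,2}(P_1,P_2)$ given just before Proposition \ref{propx} (the condition $|\yy| \leq d|\xx|P_2$ combined with $|\zz| \leq P_2$, for each fixed $\xx$ with $|\xx| = k$, exactly defines the counting function $N_{d,\xx}(P_2)$ that appears in Lemma \ref{corollairex'}).

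Next I would invoke Lemma \ref{corollairex'} for each $\xx$ with $|\xx| = k \leq P_1$, which yields uniformly
\[
N_{d,\xx}(P_2) = \frac{\Vol(C_{d,\xx})}{\det(\Lambda_{d,\xx})} P_2^{n-r} + O\bigl((dk)^{m-r} P_2^{n-r-1}\bigr).
\]
Summing the main term gives exactly the claimed main term of the proposition. For the error, I would bound
\[
\sum_{\xx \in P_1\BB_1 \cap \ZZ^{r+1}} (d|\xx|)^{m-r} P_2^{n-r-1} \ll d^{m-r} P_1^{m-r} \cdot P_1^{r+1} \cdot P_2^{n-r-1} = d^{m-r} P_1^{m+1} P_2^{n-r-1},
\]
since summing $|\xx|^{m-r}$ over the box $[-P_1,P_1]^{r+1}$ contributes $O(P_1^{m+1})$.

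The only thing left is checking that this error fits inside the target $O(d^{m-r} P_1^{m+1-d_1} P_2^{n-r-\delta})$. Using $P_2 = P_1^u$, this amounts to the inequality $P_1^{d_1} \leq P_2^{1-\delta} = P_1^{u(1-\delta)}$, i.e.\ $d_1 \leq u(1-\delta)$, which is satisfied for $\delta > 0$ sufficiently small because we have assumed $u > d_1$. This is essentially the only real constraint, and it is precisely where the hypothesis $u > d_1$ enters the proof; in fact, this is the main (and only) obstacle—the rest is direct summation of the asymptotic estimate from Lemma \ref{corollairex'}. No circle method input is needed here since the $d_2 = 1$ case was already handled by the lattice point count in Lemma \ref{corollairex'}.
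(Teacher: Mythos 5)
Your proposal is correct and follows essentially the same route as the paper: decompose $N_{d,2}(P_1,P_2)$ as a sum of $N_{d,\xx}(P_2)$ over $\xx\in P_1\BB_1\cap\mathcal{A}_2^{\lambda}(\ZZ)$, apply the uniform lattice-point estimate of Lemma \ref{corollairex'}, bound the accumulated error by $d^{m-r}P_1^{m+1}P_2^{n-r-1}$, and absorb it into the target error term via $P_2=P_1^{u}$ and $u>d_1$. This is exactly the paper's argument.
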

\begin{proof}
D'apr\`es le lemme pr\'ec\'edent : \[ N_{d,2}(P_{1},P_{2})=\left(\sum_{\xx\in P_{1}\BB_{1}\cap \mathcal{A}_{2}^{\lambda}(\ZZ) }\frac{\Vol(C_{d,\xx})}{\det(\Lambda_{d,\xx})}\right)P_{2}^{n-r}+ O(\mathcal{E}), \] avec \begin{align*}
\mathcal{E} & =\sum_{\substack{\xx\in P_{1}\BB_{1}\cap \mathcal{A}_{2}^{\lambda}(\ZZ)\\ |\xx|=k} }(dk)^{m-r}P_{2}^{n-r-1} \\ & \ll d^{m-r}P_{1}^{m+1}P_{2}^{n-r-1} \\  & =  d^{m-r}P_{1}^{m+1-d_{1}}P_{2}^{n-r+\frac{d_{1}}{u}-1} \\ & \ll d^{m-r}P_{1}^{m+1-d_{1}}P_{2}^{n-r-\delta} 
\end{align*} car on a suppos\'e $ u>d_{1} $.
\end{proof}

Les r\'esultats des propositions \ref{propx} et \ref{propx'}  se r\'ev\`eleront cruciaux pour donner plus tard des estimations de $ N_{d,2}(P_{1},P_{2}) $ ind\'ependamment de $ u $. Mais avant cela, nous allons, dans la section qui va suivre, chercher \`a \'etablir des r\'esultats analogues \`a ceux obtenus dans cette section pour $ \zz $ fix\'e.

\section{Troisi\`eme \'etape}

Nous allons \`a pr\'esent chercher \`a \'evaluer, pour $ l\in \NN^{\ast} $ fix\'e la somme \[ \sum_{k\leqslant P_{1}}h_{d}(k,l), \] o\`u $ h_{d} $ est la fonction d\'efinie par \eqref{fonctionh}. On fixe donc \[l=\max\left( \left\lfloor\frac{|\yy|}{d|\xx|}\right\rfloor,|\zz|\right). \] Il sera alors n\'ecessaire de distinguer les cas $ \left\lfloor\frac{|\yy|}{d|\xx|}\right\rfloor < |\zz| $ et $ \left\lfloor\frac{|\yy|}{d|\xx|}\right\rfloor \geqslant|\zz| $. 

\subsection{Premier cas }
On suppose ici $ \left\lfloor\frac{|\yy|}{d|\xx|}\right\rfloor< |\zz|=l $. On choisira donc de fixer $ \zz $ de norme $ |\zz|=l $. Plut\^ot que calculer directement $ \sum_{k\leqslant P_{1}}h_{d}(k,l) $, nous allons, dans un premier temps, chercher \`a \'evaluer \begin{equation}
N_{d,\zz}(P_{1})=\Card\left\{ (\xx,\yy)\in \ZZ^{m+1}\; |\; |\xx|\leqslant P_{1}, \; |\yy|< dl|\xx|, \; F(d\xx,\yy,\zz)=0 \right\}.
\end{equation} 

On introduit la s\'erie g\'en\'eratrice \begin{equation}
S_{d,\zz}(\alpha)=\sum_{|\xx|\leqslant P_{1}}\sum_{|\yy|\leqslant dl|\xx|}e\left( \alpha F(d\xx,\yy,\zz)\right).
\end{equation}
On a alors comme pr\'ec\'edemment $ N_{d,\zz}(P_{1})=\int_{0}^{1}S_{d,\zz}(\alpha)d\alpha $.

\subsubsection{Sommes d'exponentielles}

Comme nous l'avons fait dans les sections pr\'ec\'edentes, on commence par \'etablir une in\'egalit\'e de type Weyl. \`A cette fin, on remarque que \[ |\yy|< d|\xx|l \Leftrightarrow |\xx|> \frac{|\yy|}{dl} \Leftrightarrow |\xx|\geqslant \left\lfloor\frac{|\yy|}{dl}\right\rfloor+1. \] On pose alors $ N= \left\lfloor\frac{|\yy|}{dl}\right\rfloor $ (ce qui \'equivaut \`a dire que $ |\yy|\in [d(N-1)l, dNl[ $), et on remarque que : \[ S_{d,\zz}(\alpha)=\sum_{N=0}^{P_{1}-1}S_{d,\zz,N}(\alpha), \] o\`u \begin{equation}
S_{d,\zz,N}(\alpha)=\sum_{\substack{ N+1\leqslant |\xx|\leqslant P_{1}}}\sum_{\substack{d(N-1)l\leqslant|\yy|< dNl}}e(\alpha F(d\xx,\yy,\zz)).
\end{equation}
Comme dans la section $ 3.1 $, \'etant donn\'e que le polyn\^ome $ F(\xx,\yy,\zz) $ est homog\`ene de degr\'e $ d_{1} $ en $ (\xx,\yy) $, on obtient sans difficult\'e la majoration \begin{multline*} 
|S_{d,\zz,N}(\alpha)|^{2^{d_{1}-1}}\ll \left(P_{1}^{r+1}\right)^{2^{d_{1}-1}-d_{1}}\left((dlP_{1})^{m-r}\right)^{2^{d_{1}-1}-d_{1}} \\ \sum_{\substack{\xx^{(1)},\yy^{(1)} \\ |\xx^{(1)}|\leqslant P_{1} \\ |\yy^{(1)}|\leqslant dlP_{1}}}...\sum_{\substack{\xx^{(d_{1}-1)},\yy^{(d_{1}-1)} \\ |\xx^{(d_{2}-1)}|\leqslant P_{1} \\ |\yy^{(d_{1}-1)}|\leqslant dlP_{1}}} \prod_{j=0}^{m}\min\left\{ H_{j}, \left|\left| \alpha\gamma_{d,\zz,j}\left((\xx^{(i)},\yy^{(i)})_{i\in \{1,...,d_{1}-1\}}\right)\right|\right|^{-1}\right\}
\end{multline*}
avec \[ H_{j}=\left\{ \begin{array}{rcl} P_{1} & \mbox{si} & j\in \{0,...,r\} \\ dlP_{1} & \mbox{si} & j\in \{r+1,...,m\},\end{array}\right. \] \[ \gamma_{d,\zz,j}\left((\xx^{(i)},\yy^{(i)})_{i\in \{1,...,d_{1}-1\}}\right)=\sum_{\ii=(i_{1},...,i_{d_{1}-1})\in \{0,...,m\}^{d_{1}-1}}F_{d,\zz,\ii,j}u_{i_{1}}^{(1)}...u_{i_{d_{1}-1}}^{(d_{1}-1)},\] o\`u \[ u_{i}=\left\{ \begin{array}{rcl} x_{i} & \mbox{si} & i\in \{0,...,r\} \\ y_{i} & \mbox{si} & i\in \{r+1,...,m\}, \end{array}\right.\] et les coefficients $ F_{d,\zz,\ii,j} $ sont sym\'etriques en $ (i_{1},...,i_{d_{1}-1},j)\in \{0,...,m\}^{d_{2}}  $. Remarquons que l'on peut \'ecrire \[ F_{d,\zz,\ii,j}=d^{f_{\ii,j}}F_{\zz,\ii,j}, \] avec \[ f_{\ii,j}=\card\{k\in\{1,...,d_{1}\}\; |\; i_{k}\in \{0,...,r\} \}, \] (en posant $ i_{d_{1}}=j $). \`A partir de l\`a, on montre, comme dans la section \ref{Weyl} que 
\begin{multline*} |S_{d,\zz,N}(\alpha)|^{2^{d_{1}-1}}\ll \left(P_{1}^{r+1+\varepsilon}\right)^{2^{d_{1}-1}-d_{1}+1}\left((dlP_{1})^{m-r+\varepsilon}\right)^{2^{d_{1}-1}-d_{1}+1} \\  M_{d,\zz}\left(\alpha,P_{1},dlP_{1},P_{1}^{-1},(dlP_{1})^{-1}\right), \end{multline*}
o\`u pour tous r\'eels strictement positifs $ H_{1},H_{2},B_{1},B_{2} $ : \begin{multline}\label{Mz} M_{d,\zz}\left(\alpha,H_{1},H_{2},B_{1}^{-1},B_{2}^{-1}\right) \\ = \card\left\{ (\xx^{(1)},\yy^{(1)},...,\xx^{(d_{1}-1},\yy^{(d_{2}-1)}) \; | \; \forall i\in \{1,...,d_{1}-1\} \; |\xx^{(i)}|\leqslant H_{1}, \; \right.\\  |\yy^{(i)}|\leqslant H_{2}, \;\et \forall j \in\{0,...,r\}\;\left|\left| \alpha\gamma_{d,\zz,j}\left((\xx^{(i)},\yy^{(i)})_{i\in \{1,...,d_{2}-1\}}\right)\right|\right|\leqslant B_{1}^{-1}, \\ \left. \et \; \forall j \in \{r+1,...,m\}\;\left|\left| \alpha\gamma_{\zz,j}\left((\xx^{(i)},\yy^{(i)})_{i\in \{1,...,d_{1}-1\}}\right)\right|\right|\leqslant B_{2}^{-1}\right\}.  \end{multline} On en d\'eduit, en sommant sur $ N $, le lemme ci-dessous : \begin{lemma}
Pour tous $ P>1 $, $ \kappa>0 $ et pour tout $ \varepsilon>0 $ arbitrairement petit, l'une au moins des assertions suivantes est v\'erifi\'ee : \begin{enumerate}
\item $ |S_{d,\zz}(\alpha)|\ll d^{\frac{(d_{1}-1)(r+1)}{2^{d_{1}-1}}}P_{1}^{m+2+\varepsilon}(dl)^{m-r+\varepsilon}P^{-\kappa}, $ 
\item  \begin{multline*} M_{d,\zz}\left(\alpha,P_{1},dlP_{1},P_{1}^{-1},(dlP_{1})^{-1}\right) \\ \gg d^{(d_{1}-1)(r+1)}\left(P_{1}^{r+1}\right)^{d_{1}-1}\left((dlP_{1})^{m-r}\right)^{d_{1}-1}P^{-2^{d_{1}-1}\kappa}. \end{multline*}
\end{enumerate}
\end{lemma}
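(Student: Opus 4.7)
The plan is to derive the dichotomy directly from the Weyl-type bound on $|S_{d,\zz,N}(\alpha)|^{2^{d_{1}-1}}$ established immediately before the lemma. The key observation is that the counting function $M_{d,\zz}\left(\alpha,P_{1},dlP_{1},P_{1}^{-1},(dlP_{1})^{-1}\right)$ on the right-hand side does not depend on $N$, so the sum $S_{d,\zz}(\alpha)=\sum_{N=0}^{P_{1}-1}S_{d,\zz,N}(\alpha)$ can be bounded globally.

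First I would apply H\"older's inequality in the form $\left(\sum_{N=0}^{P_{1}-1}|a_{N}|\right)^{2^{d_{1}-1}}\leq P_{1}^{2^{d_{1}-1}-1}\sum_{N=0}^{P_{1}-1}|a_{N}|^{2^{d_{1}-1}}$ to get
\[ |S_{d,\zz}(\alpha)|^{2^{d_{1}-1}}\ll P_{1}^{2^{d_{1}-1}}\left(P_{1}^{r+1+\varepsilon}\right)^{2^{d_{1}-1}-d_{1}+1}\left((dlP_{1})^{m-r+\varepsilon}\right)^{2^{d_{1}-1}-d_{1}+1}M_{d,\zz}, \]
where the extra factor of $P_{1}$ (beyond the $P_{1}^{2^{d_{1}-1}-1}$ from H\"older) comes from the $P_{1}$ values of $N$ in the outer sum.

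Next I would perform the standard dichotomy: either the first assertion holds, or $|S_{d,\zz}(\alpha)|$ exceeds the bound in assertion $1$, in which case rearranging the above inequality yields a lower bound on $M_{d,\zz}$. A bookkeeping check of the exponents shows that the $d$-power collects as $(d_{1}-1)(m+1)=(d_{1}-1)(r+1)+(d_{1}-1)(m-r)$, matching the stated RHS after regrouping $d^{(d_{1}-1)(m-r)}$ with $((dlP_{1})^{m-r})^{d_{1}-1}$. Likewise the $P_{1}$-power collapses to $(m+1)(d_{1}-1)=(r+1)(d_{1}-1)+(m-r)(d_{1}-1)$ and the $l$-power to $(m-r)(d_{1}-1)$, precisely reconstructing $(P_{1}^{r+1})^{d_{1}-1}((dlP_{1})^{m-r})^{d_{1}-1}$. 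The $\varepsilon$ contribution from the pre-lemma bound is absorbed into the $\varepsilon$ appearing in assertion $1$ by choosing that $\varepsilon$ slightly larger.

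The main obstacle, though purely routine, is the careful accounting of the exponents of $d$, $P_{1}$, $l$ and their reorganization into the neat product form displayed in assertion $2$. Since the structure of the argument mirrors exactly the proof of Lemme~\ref{dilemme1} in Section~\ref{Weyl}, the proof can be kept concise by pointing to the analogous computation there, only noting that here $\zz$ plays the role of a parameter (so no differentiation in the $\zz$ variables is performed) and the role of the variables $(\yy,\zz)$ is taken by $(\xx,\yy)$, with $H_{j}$ adjusted accordingly.
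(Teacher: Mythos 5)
Your proposal is correct and follows essentially the same route as the paper: the paper likewise deduces the lemma from the Weyl-type bound on $|S_{d,\zz,N}(\alpha)|^{2^{d_{1}-1}}$ by summing over the $O(P_{1})$ values of $N$ (using that $M_{d,\zz}$ is independent of $N$), which produces exactly the extra factor $P_{1}^{2^{d_{1}-1}}$ you obtain via H\"older, and then negating assertion $2$. Your exponent bookkeeping for $d$, $l$ and $P_{1}$ matches the stated bounds.
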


On fixe alors $ (\xx^{(i)},\yy^{(i)})_{i\in \{1,...,d_{2}-2\}} $ et on applique le lemme \ref{geomnomb2} avec les variables $ \xx^{(d_{1}-1)},\yy^{(d_{1}-1)} $ et les formes lin\'eaires $ \alpha\gamma_{d,\zz,j} $ pour $ j\in \{0,...,m\} $, et en choisissant : $  Z_{2}=1 $, $ Z_{1}=d^{-1}P_{1}^{-1}P^{\theta} $, $ a_{j}=P_{1} $ pour tout $ j\in \{0,...,r\} $, et $ a_{j}=dlP_{1} $ pour $ j\in \{r+1,...,m\} $ de sorte que \[ \begin{array}{lrclcrcl}\forall j \in \{0,...,r\}, & a_{j}Z_{2} & = & P_{1}, & \;  & a_{j}Z_{1}& = &P^{\theta}/d \\ \forall j\in \{r+1,...,m\}, &  a_{j}Z_{2} & = & dlP_{1}, & \;  & a_{j}Z_{1}& = & lP^{\theta} \\ \forall j \in \{0,...,m\} , &  a_{j}^{-1}Z_{2} & = & P_{1}^{-1}, & \;  & a_{j}^{-1}Z_{1} & = & d^{-1}P_{1}^{-2}P^{\theta} \\ \forall j\in \{r+1,...,m\}, & a_{j}^{-1}Z_{2} & = & (dlP_{1})^{-1}, & \;  & a_{j}^{-1}Z_{1} & = & d^{-2}P_{1}^{-2}l^{-1}P^{\theta} 
\end{array}
\]
avec $ P>0 $ fix\'e, et $  \theta\in [0,1] $ tels que $ P^{\theta}\leqslant P_{1} $. En appliquant ce proc\'ed\'e aux autres familles de variables $ \xx^{(i)},\yy^{(i)} $, on obtient finalement la majoration suivante :
 \begin{multline*} 
M_{d,\zz}\left(\alpha,P_{1},dlP_{1},P_{1}^{-1},(dlP_{1})^{-1}\right) \\ \ll \left(\frac{dP_{1}}{P^{\theta}}\right)^{(d_{1}-1)(m+1)}
  M_{d,\zz}\left(\alpha,P^{\theta}/d,lP^{\theta},d^{-(d_{1}-1)}P_{1}^{-d_{1}}P^{(d_{1}-1)\theta},d^{-d_{1}}l^{-1}P_{1}^{-d_{1}}P^{(d_{1}-1)\theta}\right)
\end{multline*}
En appliquant le lemme \ref{lemmedebile}, on a par ailleurs que 
 \begin{multline*} 
  M_{d,\zz}\left(\alpha,P^{\theta}/d,lP^{\theta},d^{-(d_{1}-1)}P_{1}^{-d_{1}}P^{(d_{1}-1)\theta},d^{-d_{1}}l^{-1}P_{1}^{-d_{1}}P^{(d_{1}-1)\theta}\right) \\ \ll l^{(d_{1}-1)(m-r)}M_{d,\zz}\left(\alpha,P^{\theta}/d,P^{\theta},d^{-(d_{1}-1)}P_{1}^{-d_{1}}P^{(d_{1}-1)\theta},d^{-d_{1}}l^{-1}P_{1}^{-d_{1}}P^{(d_{1}-1)\theta}\right)
\end{multline*}
On a donc le lemme suivant : 

\begin{lemma}
Pour tous $ P>1 $, $ \kappa>0 $ et tout $ \varepsilon>0 $ arbitrairement petit, l'une au moins des assertions suivantes est v\'erifi\'ee : \begin{enumerate}
\item $ |S_{d,\zz}(\alpha)|\ll d^{m-r+\varepsilon+\frac{(d_{1}-1)(r+1)}{2^{d_{1}-1}}}l^{m-r+\varepsilon}P_{1}^{m+2+\varepsilon}P^{-\kappa}, $ 
\item  \begin{multline*} M_{d,\zz}\left(\alpha,P^{\theta}/d,P^{\theta},d^{-(d_{1}-1)}P_{1}^{-d_{1}}P^{(d_{1}-1)\theta},d^{-d_{1}}P_{1}^{-d_{1}}P^{(d_{1}-1)\theta}\right) \\ \gg \left(P^{\theta}\right)^{(m+1)(d_{1}-1)}P^{-2^{d_{1}-1}\kappa}. \end{multline*}
\end{enumerate}
\end{lemma}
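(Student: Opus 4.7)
The plan is to derive this lemma from the preceding one (which gives a dichotomy between a Weyl-type bound on $|S_{d,\zz}(\alpha)|$ and a lower bound on $M_{d,\zz}$ at the coarse scale $(P_{1}, dlP_{1}, P_{1}^{-1}, (dlP_{1})^{-1})$) by rescaling to a finer scale through repeated application of the geometry-of-numbers lemma \ref{geomnomb2}. If case (1) of the preceding lemma already holds, then alternative (1) of the present lemma is identical and there is nothing more to do. Assume therefore that case (2) holds.

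The core of the argument is an iterative rescaling completely analogous to the one carried out in subsection 3.2, but applied to the pairs of variables $(\xx^{(i)}, \yy^{(i)})$ (rather than to triples). For each $l \in \{1, \dots, d_{1}-1\}$, I would freeze all pairs with $i \neq d_{1}-l$ and view the $m+1$ linear forms $\alpha \gamma_{d,\zz,j}$ ($j \in \{0,\dots,m\}$) as linear forms in $(\xx^{(d_{1}-l)}, \yy^{(d_{1}-l)})$. The symmetry of the coefficients $F_{d,\zz,\ii,j}$ in $(\ii,j) \in \{0,\dots,m\}^{d_{1}}$ ensures the symmetry hypothesis required by lemma \ref{geomnomb2}. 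Choosing the parameters $Z_{2}, Z_{1}, a_{j}$ exactly as in the tabulation displayed immediately before the statement produces a rescaling inequality of the form $U(Z_{2}) \ll (Z_{2}/Z_{1})^{m+1} U(Z_{1})$, which contributes one factor $(dP_{1}/P^{\theta})^{m+1}$ per step and exhibits the new target box sizes $(P^{\theta}/d, lP^{\theta})$ together with the new tolerances $d^{-(d_{1}-1)}P_{1}^{-d_{1}}P^{(d_{1}-1)\theta}$ and $d^{-d_{1}}l^{-1}P_{1}^{-d_{1}}P^{(d_{1}-1)\theta}$.

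Chaining the $d_{1}-1$ inequalities telescopes to
\[
M_{d,\zz}\!\left(\alpha,P_{1},dlP_{1},P_{1}^{-1},(dlP_{1})^{-1}\right) \ll \left(\tfrac{dP_{1}}{P^{\theta}}\right)^{(d_{1}-1)(m+1)} M_{d,\zz}\!\left(\alpha,P^{\theta}/d,lP^{\theta},H_{1},H_{2}\right),
\]
with $H_{1}=d^{-(d_{1}-1)}P_{1}^{-d_{1}}P^{(d_{1}-1)\theta}$ and $H_{2}=d^{-d_{1}}l^{-1}P_{1}^{-d_{1}}P^{(d_{1}-1)\theta}$. To obtain the final statement I would then eliminate the stray factor $l$ on the $\yy$-side by slicing each $\yy^{(i)}$-box of side $lP^{\theta}$ into $l$ pieces of side $P^{\theta}$, which is precisely the situation handled by lemma \ref{lemmedebile} with $\xi = l$, applied once for each $i \in \{1,\dots,d_{1}-1\}$. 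This replaces $lP^{\theta}$ by $P^{\theta}$ at the cost of the factor $l^{(d_{1}-1)(m-r)}$ that was already accounted for above, and it simultaneously upgrades the bound on $\gamma_{d,\zz,j}$ for $j\in\{r+1,\dots,m\}$ from $l^{-1}\cdot d^{-d_{1}}P_{1}^{-d_{1}}P^{(d_{1}-1)\theta}$ to $d^{-d_{1}}P_{1}^{-d_{1}}P^{(d_{1}-1)\theta}$. Combining with the lower bound furnished by case (2) of the preceding lemma yields the lower bound announced in alternative (2).

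The only real obstacle is the bookkeeping: one must verify that the exponents of $d$, $l$, $P_{1}$ and $P^{\theta}$ track correctly through each of the $d_{1}-1$ rescaling steps, and that the partial gains in the congruence tolerances and in the box sizes compound exactly as claimed. This is mechanical but error-prone; once it is checked, the proof is complete since no new analytic input beyond lemmas \ref{geomnomb2} and \ref{lemmedebile} and the previous dichotomy lemma is required.
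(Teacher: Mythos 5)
Your proposal follows the paper's own proof essentially verbatim: the paper likewise starts from the coarse-scale dichotomy, iterates Lemma \ref{geomnomb2} over the $d_{1}-1$ blocks $(\xx^{(i)},\yy^{(i)})$ with exactly the tabulated choices of $Z_{1},Z_{2},a_{j}$ to pick up the factor $(dP_{1}/P^{\theta})^{(d_{1}-1)(m+1)}$, then invokes Lemma \ref{lemmedebile} to shrink the $\yy$-boxes from $lP^{\theta}$ to $P^{\theta}$ at the cost of $l^{(d_{1}-1)(m-r)}$, and the exponents of $d$, $l$ and $P_{1}$ cancel exactly as you indicate. The only minor imprecision is attributing the relaxation of the tolerance from $d^{-d_{1}}l^{-1}P_{1}^{-d_{1}}P^{(d_{1}-1)\theta}$ to $d^{-d_{1}}P_{1}^{-d_{1}}P^{(d_{1}-1)\theta}$ to Lemma \ref{lemmedebile}, when it is simply monotonicity of $M_{d,\zz}$ in its tolerance parameters; this does not affect the argument.
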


On introduit \`a pr\'esent les nouvelles familles d'arcs majeurs 

\begin{equation}
\mathfrak{M}_{a,q}^{(1),\zz}(\theta)=\left\{ \alpha\in [0,1[ \; | \;  2|\alpha q-a|\leqslant d^{-(d_{1}-1)}P_{1}^{-d_{1}}P^{(d_{1}-1)\theta}\right\},
\end{equation}
\begin{equation}
\mathfrak{M}^{(1),\zz}(\theta)=\bigcup_{\substack{q\leqslant dl^{d_{2}}P^{(d_{1}-1)\theta} \\ d|q}}\bigcup_{\substack{0\leqslant a <q  }}\mathfrak{M}_{a,q}^{(1),\zz}(\theta).
\end{equation}
\begin{equation}
\mathfrak{M}_{a,q}^{(2),\zz}(\theta)=\left\{ \alpha\in [0,1[ \; | \;  2|\alpha q-a|\leqslant d^{-d_{1}}P_{1}^{-d_{1}}P^{(d_{1}-1)\theta}\right\},
\end{equation}
\begin{equation}
\mathfrak{M}^{(2),\zz}(\theta)=\bigcup_{q\leqslant l^{d_{2}}P^{(d_{1}-1)\theta}}\bigcup_{\substack{0\leqslant a <q \\ \PGCD(a,q)=1 }}\mathfrak{M}_{a,q}^{(2),\zz}(\theta).
\end{equation}
\begin{equation}
\mathfrak{M}^{\zz}(\theta)=\mathfrak{M}^{(1),\zz}(\theta)\cup\mathfrak{M}^{(2),\zz}(\theta)
\end{equation}
On a alors comme dans les sections pr\'ec\'edente : \begin{lemma}\label{dilemme33}
Si $ P>1 $, $ \kappa>0 $ et $ \varepsilon>0 $ arbitrairement petit, l'une au moins des assertions suivantes est v\'erifi\'ee : \begin{enumerate}
\item $ |S_{d,\zz}(\alpha)|\ll d^{m-r+\varepsilon+\frac{(d_{1}-1)(r+1)}{2^{d_{1}-1}}}l^{m-r+\varepsilon}P_{1}^{m+2+\varepsilon}P^{-\kappa}, $ 
\item le r\'eel $ \alpha $ appartient \`a $ \mathfrak{M}^{\zz}(\theta) $,
\item  \begin{multline*} 
\Card\left\{(\xx^{(i)},\yy^{(i)})_{i\in \{1,...,d_{1}-1\}}, \; |\xx^{(i)}|\leqslant P^{\theta}/d, \; |\yy^{(i)}|\leqslant P^{\theta}, \; \right. \\ \left. \et \; \forall j \in \{r+1,...,n+1\},\;  \gamma_{d,\zz,j}\left((\xx^{(i)},\yy^{(i)})_{i\in \{1,...,d_{1}-1\}}\right)=0\right\} \\  \gg \left(P^{\theta}\right)^{(m+1)(d_{1}-1)}P^{-2^{d_{1}-1}\kappa}. \end{multline*}
\end{enumerate}
\end{lemma}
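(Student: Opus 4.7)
Le plan est de calquer la strat\'egie de la d\'emonstration du lemme \ref{dilemme4} : on part de l'alternative (lemme non num\'erot\'e pr\'ec\'edant l'\'enonc\'e) entre une majoration de $|S_{d,\zz}(\alpha)|$ et une minoration de $M_{d,\zz}(\alpha,P^{\theta}/d,P^{\theta},d^{-(d_{1}-1)}P_{1}^{-d_{1}}P^{(d_{1}-1)\theta},d^{-d_{1}}P_{1}^{-d_{1}}P^{(d_{1}-1)\theta})$, et l'on convertit la seconde branche, selon que les formes lin\'eaires $\gamma_{d,\zz,j}$ s'annulent ou non sur les tuples compt\'es, en l'appartenance de $\alpha$ aux arcs majeurs ou en la minoration de cardinal d\'esir\'ee. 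Si la premi\`ere branche est satisfaite, on obtient directement l'assertion 1. Sinon, on note $\mathcal{S}$ l'ensemble des tuples entiers $(\xx^{(i)},\yy^{(i)})_{i\in\{1,\ldots,d_{1}-1\}}$ v\'erifiant $|\xx^{(i)}|\leqslant P^{\theta}/d$, $|\yy^{(i)}|\leqslant P^{\theta}$, et les conditions $\|\alpha\gamma_{d,\zz,j}(\ldots)\|$ petites correspondantes.

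Si pour chaque \'el\'ement de $\mathcal{S}$ et tout $j\in\{0,\ldots,m\}$ on a $\gamma_{d,\zz,j}(\ldots)=0$, alors le cardinal de $\mathcal{S}$ donne l'assertion 3 du lemme. Dans le cas contraire, il existe un tuple particulier $(\xx^{(i)*},\yy^{(i)*})\in\mathcal{S}$ et un indice $j_{0}$ tels que l'entier $q_{0}:=\gamma_{d,\zz,j_{0}}((\xx^{(i)*},\yy^{(i)*})_{i})$ soit non nul. En choisissant $a_{0}$ entier le plus proche de $\alpha q_{0}$, la condition d\'efinissant $\mathcal{S}$ assure que $|\alpha q_{0}-a_{0}|\leqslant d^{-(d_{1}-1)}P_{1}^{-d_{1}}P^{(d_{1}-1)\theta}$ ou $d^{-d_{1}}P_{1}^{-d_{1}}P^{(d_{1}-1)\theta}$ selon que $j_{0}$ appartient \`a $\{0,\ldots,r\}$ ou \`a $\{r+1,\ldots,m\}$. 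On utilise alors la d\'ecomposition $F_{d,\zz,\ii,j_{0}}=d^{f_{\ii,j_{0}}}F_{\zz,\ii,j_{0}}$, avec $|F_{\zz,\ii,j_{0}}|\ll l^{d_{2}}$ puisque $F_{\zz,\ii,j_{0}}$ est polynomial en $\zz$ de degr\'e $d_{2}$, pour contr\^oler $|q_{0}|$ : si $j_{0}\in\{0,\ldots,r\}$, chaque mon\^ome de $\gamma_{d,\zz,j_{0}}$ est divisible par $d$ (car $f_{\ii,j_{0}}\geqslant 1$ pour tout $\ii$) et un calcul direct donne $|q_{0}|\ll d\,l^{d_{2}}P^{(d_{1}-1)\theta}$, pla\c cant $\alpha$ dans $\mathfrak{M}_{a_{0},q_{0}}^{(1),\zz}(\theta)$ \`a une absorption de constantes pr\`es dans $\theta$; si $j_{0}\in\{r+1,\ldots,m\}$, la divisibilit\'e par $d$ est perdue mais $|q_{0}|\ll l^{d_{2}}P^{(d_{1}-1)\theta}$, et apr\`es r\'eduction de $a_{0}/q_{0}$ en fraction irr\'eductible, $\alpha$ appartient \`a $\mathfrak{M}^{(2),\zz}(\theta)$.

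L'\'etape d\'elicate est ce double contr\^ole de la taille et de la divisibilit\'e de $|q_{0}|$ selon la nature de $j_{0}$, qui est pr\'ecis\'ement ce qui motive la s\'eparation de $\mathfrak{M}^{\zz}(\theta)$ en les deux familles $\mathfrak{M}^{(1),\zz}(\theta)$ et $\mathfrak{M}^{(2),\zz}(\theta)$ de bornes sur $q$ distinctes. Le reste de l'argument est formel et transpose mot pour mot le sch\'ema d\'ej\`a d\'evelopp\'e pour les lemmes \ref{dilemme4} et \ref{dilemme24}, en prenant bien garde que la d\'ependance en $l$ appara\^{\i}t ici \`a la puissance $d_{2}$ via les coefficients $F_{\zz,\ii,j_{0}}$, et non \`a la puissance $d_{1}$ comme dans le cas sym\'etrique trait\'e \`a la section $4$.
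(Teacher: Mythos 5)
Votre démonstration est correcte et suit exactement la démarche du texte, qui se contente de renvoyer aux arguments analogues des lemmes \ref{dilemme3} et \ref{dilemme23} : conversion de la branche « minoration de $M_{d,\zz}$ » en l'alternative arcs majeurs / annulation des formes $\gamma_{d,\zz,j}$, avec le contrôle de la taille et de la divisibilité par $d$ de $q_{0}$ selon que $j_{0}\in\{0,\ldots,r\}$ ou $j_{0}\in\{r+1,\ldots,m\}$, qui justifie la scission en $\mathfrak{M}^{(1),\zz}$ et $\mathfrak{M}^{(2),\zz}$. Vous corrigez au passage, à juste titre, l'ensemble d'indices de l'assertion 3 (qui devrait être $\{0,\ldots,m\}$ et non $\{r+1,\ldots,n+1\}$), et la majoration $|F_{\zz,\ii,j_{0}}|\ll l^{d_{2}}$ est bien celle qui explique la borne $l^{d_{2}}$ dans la définition des arcs.
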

Pour un $ \zz $ fix\'e, on d\'efinit \`a pr\'esent :  

\begin{multline}
V_{1,\zz}^{\ast}=\left\{ (\xx,\yy) \in \CC^{m+1} \; | \;  \; \forall i \in \{0,...,r\}, \;  \frac{\partial F}{\partial x_{i}}(\xx,\yy,\zz) =0 \right.   \\ \left. \et \; \forall j \in \{r+1,...,m\}, \frac{\partial F}{\partial y_{j}}(\xx,\yy,\zz) =0\right\}.
\end{multline}
On note par ailleurs : \begin{equation}
\mathcal{A}_{1}^{\mu}=\left\{ \zz \in \CC^{n-m+1} \; | \; \dim V_{1,\zz}^{\ast} < \dim V_{1}^{\ast} -(n-m+1)+\mu \right\},
\end{equation}
o\`u $ \mu\in \NN $ est un param\`etre que nous pr\'eciserons ult\'erieurement. Par abus de langage on note \[\mathcal{A}_{1}^{\mu}(\ZZ)=\mathcal{A}_{1}^{\mu}\cap \ZZ^{n-m+1}. \] 
On a alors une propri\'et\'e analogue \`a la proposition \ref{propA2}
\begin{prop}\label{propA1}
L'ensemble $ \mathcal{A}_{1}^{\mu} $ est un ouvert de Zariski de $ \AA_{\CC}^{n-m+1} $, et de plus, on a que \[ \Card\left\{ \zz \in [-P_{2},P_{2}]^{n-m+1}\cap (\mathcal{A}_{1}^{\mu})^{c}\cap \ZZ^{n-m+1} \right\}\ll P_{2}^{n-m+1-\mu}. \] 
\end{prop}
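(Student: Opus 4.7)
The plan is to mirror line by line the proof of Proposition \ref{propA2}, simply swapping the roles of $\xx$ and $\zz$ and replacing $V_2^{\ast}$ with $V_1^{\ast}$. First I would introduce the incidence variety
\[ Y = \left\{ (\zz,(\xx,\yy)) \in \AA_{\CC}^{n-m+1}\times\PP_{\CC}^{m} \; | \; \forall i \in \{0,\ldots,r\}, \; \frac{\partial F}{\partial x_{i}}(\xx,\yy,\zz)=0 \; \et \; \forall j \in \{r+1,\ldots,m\}, \; \frac{\partial F}{\partial y_{j}}(\xx,\yy,\zz)=0 \right\}, \]
which is well defined since each $\partial F/\partial x_{i}$ and $\partial F/\partial y_{j}$ is homogeneous of degree $d_{1}-1$ in $(\xx,\yy)$. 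The canonical projection $\pi : Y \to \AA_{\CC}^{n-m+1}$ is a projective morphism, hence closed. By \cite[Corollaire 13.1.5]{G-D}, the set $\{\zz \in \AA_{\CC}^{n-m+1} \; | \; \dim Y_{\zz}\geqslant \mu-1\}$ is Zariski closed, and since $\dim Y_{\zz}=\dim V_{1,\zz}^{\ast}-1$, one deduces that $(\mathcal{A}_{1}^{\mu})^{c}$ is a Zariski closed subset of $\AA_{\CC}^{n-m+1}$.

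Next, to bound the dimension of this closed set, I would use exactly the fiber-decomposition argument of Proposition \ref{propA2}. Writing
\[ Y \cap ((\mathcal{A}_{1}^{\mu})^{c}\times\PP_{\CC}^{m}) = \bigsqcup_{\zz \in (\mathcal{A}_{1}^{\mu})^{c}}\pi^{-1}(\zz), \]
and using that each fiber has dimension at least $\dim V_{1}^{\ast}-(n-m+1)+\mu-1$, the inequality
\[ \dim(\mathcal{A}_{1}^{\mu})^{c}+\dim V_{1}^{\ast}-(n-m+1)+\mu-1 \leqslant \dim Y = \dim V_{1}^{\ast}-1 \]
yields $\dim(\mathcal{A}_{1}^{\mu})^{c}\leqslant n-m+1-\mu$.

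Finally, to conclude the counting estimate, I would invoke the standard bound for the number of integral points of bounded height on an affine variety of given dimension (as used in the proof of \cite[Théorème 3.1]{Br}), which directly gives
\[ \Card\{\zz\in [-P_{2},P_{2}]^{n-m+1}\cap(\mathcal{A}_{1}^{\mu})^{c}(\ZZ)\}\ll P_{2}^{n-m+1-\mu}. \]
Since the argument is a verbatim transposition of Proposition \ref{propA2}, I expect no substantive obstacle; the only minor check is that the incidence variety uses $\PP_{\CC}^{m}$ rather than $\PP_{\CC}^{n-r}$, which is harmless because $F$ remains homogeneous in the relevant projective block.
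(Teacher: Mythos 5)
Your proof is correct and is exactly the argument the paper intends: Proposition \ref{propA1} is stated in the paper without a written proof, merely as the analogue of Proposition \ref{propA2}, and your transposition (incidence variety in $\AA_{\CC}^{n-m+1}\times\PP_{\CC}^{m}$, closedness via \cite[Corollaire 13.1.5]{G-D}, the fibre-dimension count giving $\dim(\mathcal{A}_{1}^{\mu})^{c}\leqslant n-m+1-\mu$, then the point count from \cite[Th\'eor\`eme 3.1]{Br}) is precisely that analogue. Your remark that the projective block is $\PP_{\CC}^{m}$ because the relevant partials are homogeneous of degree $d_{1}-1$ in $(\xx,\yy)$ is the only point needing verification, and it is correct.
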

On commence par remarquer que le cardinal de la condition $ 3 $ peut \^etre major\'e par \begin{multline*}\Card\left\{(\xx^{(i)},\yy^{(i)})_{i\in \{1,...,d_{1}-1\}}, \; |\xx^{(i)}|\leqslant P^{\theta}, \; |\yy^{(i)}|\leqslant P^{\theta}, \; \right. \\ \left. \et \; \forall j \in \{r+1,...,n+1\},\;  \gamma_{\zz,j}\left((\xx^{(i)},\yy^{(i)})_{i\in \{1,...,d_{1}-1\}}\right)=0\right\}, \end{multline*} o\`u \[ \gamma_{\zz,j}\left((\xx^{(i)},\yy^{(i)})_{i\in \{1,...,d_{1}-1\}}\right)=\sum_{\ii=(i_{1},...,i_{d_{1}-1})\in \{0,...,m\}^{d_{1}-1}}F_{\zz,\ii,j}u_{i_{1}}^{(1)}...u_{i_{d_{1}-1}}^{(d_{1}-1)}.\]
Puis, comme dans la section pr\'ec\'edente, en choisissant $ \kappa=K_{1}\theta $ avec \begin{equation}
K_{1}=(n+2-\dim V_{1}^{\ast}-\mu)/2^{d_{1}-1}
\end{equation}
on d\'eduit du lemme \ref{dilemme33} : 

\begin{lemma}\label{dilemme34}
Si $ \varepsilon>0 $ est un r\'eel arbitrairement petit, l'une au moins des assertions suivantes est v\'erifi\'ee : \begin{enumerate}
\item $ |S_{d,\zz}(\alpha)|\ll d^{m-r+\varepsilon+\frac{(d_{1}-1)(r+1)}{2^{d_{1}-1}}}l^{m-r+\varepsilon}P_{1}^{m+2+\varepsilon}P^{-K_{1}\theta}, $ 
\item le r\'eel $ \alpha $ appartient \`a $ \mathfrak{M}^{\zz}(\theta) $.
\end{enumerate}
\end{lemma}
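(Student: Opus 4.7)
The plan is to deduce Lemma \ref{dilemme34} from Lemma \ref{dilemme33} applied with $\kappa = K_{1}\theta$, by ruling out the third alternative under the (implicit) hypothesis $\zz\in\mathcal{A}_{1}^{\mu}(\ZZ)$. The argument mimics the one used to pass from Lemma \ref{dilemme23} to Lemma \ref{dilemme24} in the previous section. Suppose that assertions $1.$ and $2.$ both fail; using the majoration already recorded just before the statement, the count appearing in assertion $3.$ is bounded above, up to a constant, by
\[ N(\theta) := \Card\bigl\{(\xx^{(i)},\yy^{(i)})_{i} \; | \; |\xx^{(i)}|,|\yy^{(i)}|\leqslant P^{\theta},\; \gamma_{\zz,j}((\xx^{(i)},\yy^{(i)})_{i})=0\; \forall j\in\{0,\dots,m\}\bigr\}, \]
so that $N(\theta)\gg (P^{\theta})^{(m+1)(d_{1}-1)-2^{d_{1}-1}K_{1}}$.

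Next I would introduce the affine variety $\mathcal{L}_{1,\zz}\subset\AA_{\CC}^{(m+1)(d_{1}-1)}$ cut out by the $m+1$ equations $\gamma_{\zz,j}=0$, $j\in\{0,\dots,m\}$. Exactly as in the proof of \cite[Th\'eor\`eme 3.1]{Br}, the lower bound on $N(\theta)$ forces
\[ \dim\mathcal{L}_{1,\zz}\geqslant (m+1)(d_{1}-1)-2^{d_{1}-1}K_{1}. \]
Intersecting with the diagonal $\mathcal{D}_{1}=\{\xx^{(1)}=\cdots=\xx^{(d_{1}-1)},\;\yy^{(1)}=\cdots=\yy^{(d_{1}-1)}\}$, of codimension $(m+1)(d_{1}-2)$ in the ambient affine space, gives
\[ \dim(\mathcal{L}_{1,\zz}\cap \mathcal{D}_{1})\geqslant m+1-2^{d_{1}-1}K_{1}. \]

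The key point is that $\mathcal{L}_{1,\zz}\cap \mathcal{D}_{1}$ is isomorphic to $V_{1,\zz}^{\ast}$: since the $F_{\zz,\ii,j}$ are symmetric in $(\ii,j)$, the restriction of $\gamma_{\zz,j}$ to the diagonal coincides, up to the factor $(d_{1}-1)!$, with $\partial F/\partial u_{j}(\xx,\yy,\zz)$, where $u_{j}$ is $x_{j}$ or $y_{j}$ according as $j\leqslant r$ or $j>r$. Combining this with the assumption $\zz\in\mathcal{A}_{1}^{\mu}(\ZZ)$, which by definition gives $\dim V_{1,\zz}^{\ast}<\dim V_{1}^{\ast}-(n-m+1)+\mu$, one obtains
\[ m+1-2^{d_{1}-1}K_{1}\leqslant \dim V_{1,\zz}^{\ast}<\dim V_{1}^{\ast}-(n-m+1)+\mu, \]
i.e.\ $2^{d_{1}-1}K_{1}>n+2-\dim V_{1}^{\ast}-\mu$, which contradicts the choice $K_{1}=(n+2-\dim V_{1}^{\ast}-\mu)/2^{d_{1}-1}$. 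Hence assertion $3.$ is impossible, and one of the two remaining alternatives must hold.

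The only slightly delicate point is the identification $\mathcal{L}_{1,\zz}\cap\mathcal{D}_{1}\cong V_{1,\zz}^{\ast}$ via the symmetric multilinear interpretation of the coefficients $F_{\zz,\ii,j}$; everything else is a direct transcription of the dimension argument already carried out in the previous section for the variety $V_{2}^{\ast}$, with the r\^oles of $(\xx,\yy)$ and $(\yy,\zz)$ exchanged, and the exponent $d_{2}$ replaced by $d_{1}$.
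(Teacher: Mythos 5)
Your proof is correct and follows essentially the same route as the paper, which at this point simply says "comme dans la section pr\'ec\'edente, en choisissant $\kappa=K_{1}\theta$" and leaves the transcription of the dimension argument (bound the count by the one with $\gamma_{\zz,j}$, apply \cite[Th\'eor\`eme 3.1]{Br} to get $\dim\mathcal{L}_{1,\zz}$, intersect with the diagonal, identify with $V_{1,\zz}^{\ast}$, and use $\zz\in\mathcal{A}_{1}^{\mu}(\ZZ)$ together with the choice of $K_{1}$) implicit. You have merely written out the details the paper omits by analogy with the treatment of $V_{2,\xx}^{\ast}$, and they check out.
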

Pour tout le reste de cette section, on fixera $ P=P_{1} $.

\subsubsection{M\'ethode du cercle}

On fixe un r\'eel $ \theta\in [0,1] $. On suppose de plus que \begin{equation}
K_{1}>2(d_{1}-1).
\end{equation}On notera \begin{equation}
\phi_{1}(d,l,\theta)=dl^{d_{2}}P_{2}^{(d_{1}-1)\theta},
\end{equation} \begin{equation}
\Delta_{1}(\theta,K_{1})=\theta(K_{1}-2(d_{1}-1))
\end{equation}
On supposera de plus $ \theta $ est tel que \[ \Delta_{1}(\theta,K_{1})>1. \]
Comme pr\'ec\'edemment nous allons v\'erifier que les arcs mineurs fournissent bien un terme d'erreur.

\begin{lemma}\label{arcsmin1}
Pour tout $ \zz\in \mathcal{A}_{1}^{\mu}(\ZZ) $, et si $ d_{1}\geqslant 2 $, on a la  majoration : \[ \int_{\alpha\notin \mathfrak{M}^{\zz}(\theta)}|S_{\zz}(\alpha)|d\alpha \ll d^{m-r+\varepsilon+\frac{(d_{1}-1)(r+1)}{2^{d_{1}-1}}}l^{d_{2}+m-r+\varepsilon}P_{1}^{m+2-d_{1}-\Delta_{1}(\theta,K_{1})+\varepsilon}. \] \end{lemma}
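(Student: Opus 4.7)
Le plan est d'adapter \textit{mutatis mutandis} la preuve du lemme \ref{arcsmineurs}, en rempla\c{c}ant le lemme \ref{dilemme4} par son analogue le lemme \ref{dilemme34}, et en utilisant les nouvelles familles d'arcs majeurs $\mathfrak{M}^{(1),\zz}(\theta)$ et $\mathfrak{M}^{(2),\zz}(\theta)$.

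Plus pr\'ecis\'ement, je consid\'ererais une subdivision $\theta=\theta_0<\theta_1<\cdots<\theta_T$ de $[\theta,\theta_T]$, avec $T\ll P_1^{\varepsilon}$, $\theta_T$ choisi assez grand pour que $K_1\theta_T \geqslant d_1+\Delta_1(\theta,K_1)$ (ce qui garantit que la contribution des $\alpha\notin \mathfrak{M}^{\zz}(\theta_T)$, major\'ee par le \textbf{1.} du lemme \ref{dilemme34}, s'absorbe dans le terme d'erreur souhait\'e), et avec les incr\'ements $\theta_{i+1}-\theta_i$ assez petits pour que $2(d_1-1)(\theta_{i+1}-\theta_i)<\varepsilon/2$. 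Un tel choix est possible sous l'hypoth\`ese $\Delta_1(\theta,K_1)>1$ d\'ej\`a suppos\'ee dans le texte.

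La d\'ecomposition
\[
\int_{\alpha\notin\mathfrak{M}^{\zz}(\theta)} |S_{d,\zz}(\alpha)|\,d\alpha = \int_{\alpha\notin\mathfrak{M}^{\zz}(\theta_T)} |S_{d,\zz}(\alpha)|\,d\alpha+\sum_{i=0}^{T-1}\int_{\alpha\in \mathfrak{M}^{\zz}(\theta_{i+1})\setminus\mathfrak{M}^{\zz}(\theta_i)} |S_{d,\zz}(\alpha)|\,d\alpha
\]
ram\`ene tout \`a estimer chaque morceau. Sur chaque morceau, $\alpha\notin \mathfrak{M}^{\zz}(\theta_i)$, donc le lemme \ref{dilemme34} fournit $|S_{d,\zz}(\alpha)|\ll d^{m-r+\varepsilon+\frac{(d_1-1)(r+1)}{2^{d_1-1}}}l^{m-r+\varepsilon}P_1^{m+2+\varepsilon}P_1^{-K_1\theta_i}$. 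Je calculerais ensuite le volume des arcs majeurs : un comptage direct donne
\[
\Vol\bigl(\mathfrak{M}^{(1),\zz}(\theta_i)\bigr)\ll d^{2-d_1}l^{d_2}P_1^{-d_1+2(d_1-1)\theta_i}, \qquad \Vol\bigl(\mathfrak{M}^{(2),\zz}(\theta_i)\bigr)\ll d^{-d_1}l^{2d_2}P_1^{-d_1+2(d_1-1)\theta_i},
\]
et comme $d_1\geqslant 2$, on a $d^{2-d_1}\geqslant d^{-d_1}$, d'o\`u la majoration globale $\Vol(\mathfrak{M}^{\zz}(\theta_{i+1}))\ll d^{2-d_1}l^{2d_2}P_1^{-d_1+2(d_1-1)\theta_{i+1}}$. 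En multipliant par la majoration de $|S_{d,\zz}(\alpha)|$ et en utilisant $2(d_1-1)(\theta_{i+1}-\theta_i)<\varepsilon/2$, on obtient pour chaque $i$
\[
\int_{\mathfrak{M}^{\zz}(\theta_{i+1})\setminus\mathfrak{M}^{\zz}(\theta_i)}\!\!\!|S_{d,\zz}(\alpha)|\,d\alpha \ll d^{m-r+\frac{(d_1-1)(r+1)}{2^{d_1-1}}+\varepsilon}l^{d_2+m-r+\varepsilon}P_1^{m+2-d_1-\Delta_1(\theta_i,K_1)+\varepsilon},
\]
et comme $\Delta_1(\theta_i,K_1)\geqslant \Delta_1(\theta,K_1)$ (fonction croissante de $\theta_i$), on peut remplacer $\theta_i$ par $\theta$. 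Il ne reste qu'\`a sommer les $T\ll P_1^{\varepsilon}$ contributions et \`a v\'erifier que le terme hors de $\mathfrak{M}^{\zz}(\theta_T)$ s'int\`egre dans la majoration finale.

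La seule subtilit\'e est le contr\^ole exact de la d\'ependance en $l$ (facteur $l^{d_2}$ venant du $q\leqslant l^{d_2}P^{(d_1-1)\theta}$ dans la d\'efinition des arcs majeurs) et en $d$ (facteur $d^{2-d_1}$, qui ne devient favorable que si $d_1\geqslant 2$, ce qui explique cette hypoth\`ese dans l'\'enonc\'e). Aucun pas ne pr\'esente de difficult\'e conceptuelle nouvelle : c'est une transposition sym\'etrique de la m\'ethode d\'eploy\'ee pour le lemme \ref{arcsmineurs}, o\`u le r\^ole de $(P_1,P_2)$ est tenu ici par $(l,P_1)$ une fois $\zz$ fix\'e, et o\`u l'hypoth\`ese $\zz\in \mathcal{A}_1^{\mu}(\ZZ)$ assure la validit\'e de la dichotomie du lemme \ref{dilemme34}.
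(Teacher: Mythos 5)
Votre d\'emonstration suit essentiellement la m\^eme d\'emarche que celle du texte : m\^eme subdivision $\theta=\theta_{0}<\cdots<\theta_{T}$ avec $T\ll P_{1}^{\varepsilon}$, m\^eme dichotomie via le lemme \ref{dilemme34} pour traiter \`a la fois la queue $\alpha\notin\mathfrak{M}^{\zz}(\theta_{T})$ et chaque couronne $\mathfrak{M}^{\zz}(\theta_{i+1})\setminus\mathfrak{M}^{\zz}(\theta_{i})$, m\^eme estimation du volume des arcs majeurs, et m\^eme sommation finale. Signalons seulement une coquille sans cons\'equence : le comptage direct donne $\Vol\bigl(\mathfrak{M}^{(2),\zz}(\theta_{i})\bigr)\ll d^{-d_{1}}l^{d_{2}}P_{1}^{-d_{1}+2(d_{1}-1)\theta_{i}}$ et non $l^{2d_{2}}$ (chaque arc est de longueur $\ll q^{-1}d^{-d_{1}}P_{1}^{-d_{1}+(d_{1}-1)\theta_{i}}$ et il y a au plus $q$ valeurs de $a$ par module $q\leqslant l^{d_{2}}P_{1}^{(d_{1}-1)\theta_{i}}$), et c'est pr\'ecis\'ement ce facteur $l^{d_{2}}$ qui est n\'ecessaire pour aboutir au $l^{d_{2}+m-r+\varepsilon}$ de l'\'enonc\'e, comme dans votre ligne finale.
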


\begin{proof}
Consid\'erons une suite  \[ 0<\theta=\theta_{0}<\theta_{1}<...<\theta_{T-1}<\theta_{T}=1 \] telle que \begin{equation}
2(\theta_{i+1}-\theta_{i})(d_{1}-1)<\varepsilon 
\end{equation}
et $ T\ll P_{1}^{\varepsilon} $ pour $ \varepsilon>0 $ arbitrairement petit (et $ P_{1} $ assez grand). Puisque $ \zz\in \mathcal{A}_{1}^{\mu}(\ZZ) $, par le lemme \ref{dilemme34} on a  \begin{multline*}
\int_{\alpha\notin \mathfrak{M}^{\zz}(\theta_{T})}|S_{d,\xx}(\alpha)|d\alpha \\  \ll d^{m-r+\varepsilon+\frac{(d_{1}-1)(r+1)}{2^{d_{1}-1}}}l^{m-r+\varepsilon}P_{1}^{m+2-K_{1}\theta_{T}+\varepsilon} \\  \ll d^{m-r+\varepsilon+\frac{(d_{1}-1)(r+1)}{2^{d_{1}-1}}}l^{m-r+\varepsilon}P_{1}^{m+2-d_{2}-\Delta_{1}(\theta,K_{1})+\varepsilon}.
\end{multline*}
Par ailleurs, on remarque que \begin{multline*}
\Vol\left(\mathfrak{M}^{\zz}(\theta)\right)  \ll \Vol\left(\mathfrak{M}^{(1),\zz}(\theta)\right)+\Vol\left(\mathfrak{M}^{(2),\zz}(\theta)\right) \\  \ll \sum_{q\leqslant dl^{d_{2}}P_{2}^{(d_{1}-1)\theta}}\sum_{\substack{0\leqslant a <q }}q^{-1}d^{-(d_{1}-1)}P_{1}^{-d_{1}+(d_{1}-1)\theta} \\ +\sum_{q\leqslant l^{d_{2}}P_{2}^{(d_{1}-1)\theta}}\sum_{\substack{0\leqslant a <q \\ \PGCD(a,q)=1}}q^{-1}d^{-d_{1}}P_{1}^{-d_{1}+(d_{1}-1)\theta} \\  \ll  d^{(2-d_{1)}}l^{d_{2}}P_{1}^{-d_{1}+2(d_{1}-1)\theta}.
\end{multline*} 

On a alors pour tout $ i\in \{0,...,T-1\} $ : \begin{multline*}
\int_{\alpha\in \mathfrak{M}^{\zz}(\theta_{i+1})\setminus  \mathfrak{M}^{\zz}(\theta_{i})}|S_{d,\zz}(\alpha)|d\alpha \\  \ll d^{m-r+\varepsilon+\frac{(d_{1}-1)(r+1)}{2^{d_{1}-1}}}l^{m-r+\varepsilon}P_{1}^{m+2-K_{1}\theta_{i}+\varepsilon}\Vol\left(\mathfrak{M}^{\zz}(\theta_{i+1})\right) \\   \ll   d^{m-r+\varepsilon+\frac{(d_{1}-1)(r+1)}{2^{d_{1}-1}}+(2-d_{1})}l^{m-r+d_{2}+\varepsilon}P_{1}^{m+2-K_{1}\theta_{i}+\varepsilon-d_{1}+2(d_{1}-1)\theta_{i+1}} \\  \ll d^{m-r+\varepsilon+\frac{(d_{1}-1)(r+1)}{2^{d_{1}-1}}+(2-d_{1})}l^{m-r+d_{2}+\varepsilon}P_{1}^{m+2-d_{1}-\Delta_{1}(\theta,K_{1})+\varepsilon}
\end{multline*}
 et  on obtient le r\'esultat souhait\'e en sommant sur les $ i\in \{0,...,T-1\} $. 
\end{proof}

On introduit une nouvelle famille d'arcs majeurs :

\begin{equation}
\mathfrak{M}_{a,q}^{'d,\zz}(\theta)=\left\{ \alpha\in [0,1[ \; | \;  2|\alpha q-a|\leqslant qd^{-d_{1}}P_{1}^{-d_{1}}P^{(d_{1}-1)\theta}\right\},
\end{equation}
\begin{equation}
\mathfrak{M}^{'d,\zz}(\theta)=\bigcup_{q\leqslant \phi_{1}(d,l,\theta)}\bigcup_{\substack{0\leqslant a <q \\ \PGCD(a,q)=1 }}\mathfrak{M}_{a,q}^{d,\zz'}(\theta),
\end{equation}
et on v\'erifie que $ \mathfrak{M}^{d,\zz}(\theta)\subset\mathfrak{M}^{'d,\zz}(\theta) $. On a alors le r\'esultat analogue au lemme \ref{disjoint2} :

\begin{lemma}\label{disjoint1} Si $ d_{1}\geqslant 2 $ et si l'on suppose $ l^{2d_{2}}P_{1}^{-d_{1}+3\theta(d_{1}-1)}<1 $, alors les arcs majeurs $ \mathfrak{M}_{a,q}^{'d,\zz}(\theta) $ sont disjoints deux \`a deux. \end{lemma}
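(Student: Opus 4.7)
The plan is to mimic verbatim the contradiction argument used for Lemma \ref{disjoint2} in the $\xx$-fixed setting, transposing it to the present $\zz$-fixed arcs. I will suppose for contradiction that some $\alpha$ lies in $\mathfrak{M}_{a,q}^{'d,\zz}(\theta)\cap\mathfrak{M}_{a',q'}^{'d,\zz}(\theta)$ with $(a,q)\neq(a',q')$, $0\leqslant a<q$, $0\leqslant a'<q'$, $\PGCD(a,q)=\PGCD(a',q')=1$, and both $q,q'\leqslant\phi_{1}(d,l,\theta)=dl^{d_{2}}P^{(d_{1}-1)\theta}$ (with $P=P_{1}$).

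First I would invoke coprimality to write $|a/q-a'/q'|\geqslant 1/(qq')$, using that $(a,q)\neq(a',q')$ forces $aq'\neq a'q$. Then, by the defining inequality of $\mathfrak{M}_{a,q}^{'d,\zz}(\theta)$, each of the distances $|\alpha-a/q|$ and $|\alpha-a'/q'|$ is at most $\tfrac{1}{2}d^{-d_{1}}P_{1}^{-d_{1}+(d_{1}-1)\theta}$, so the triangle inequality gives
\[
\frac{1}{qq'}\;\leqslant\;\left|\frac{a}{q}-\frac{a'}{q'}\right|\;\leqslant\;d^{-d_{1}}P_{1}^{-d_{1}+(d_{1}-1)\theta}.
\]
Multiplying by $qq'$ and using the upper bound on $q,q'$ yields
\[
1\;\leqslant\;qq'\,d^{-d_{1}}P_{1}^{-d_{1}+(d_{1}-1)\theta}\;\leqslant\;d^{2-d_{1}}\,l^{2d_{2}}\,P_{1}^{-d_{1}+3(d_{1}-1)\theta}.
\]

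Finally, the hypothesis $d_{1}\geqslant 2$ gives $d^{2-d_{1}}\leqslant 1$, so the above collapses to $1\leqslant l^{2d_{2}}P_{1}^{-d_{1}+3(d_{1}-1)\theta}$, directly contradicting the assumption $l^{2d_{2}}P_{1}^{-d_{1}+3\theta(d_{1}-1)}<1$. There is no real obstacle here: the proof is a one-paragraph computation identical in structure to the proof of Lemma \ref{disjoint2}, the only substantive differences being that the length of the arcs is now measured in $P_{1}$ rather than $P_{2}$, the exponents $d_{1}$ and $d_{2}$ swap their roles, and the size of $q$ involves the factor $l^{d_{2}}$ in place of $k^{d_{1}}$. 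The hypothesis $d_{1}\geqslant 2$ is used exactly at the last step to absorb the factor $d^{2-d_{1}}$, which is the analogue of the role played by $d_{1}\geqslant 2$ in Lemma \ref{disjoint2}.
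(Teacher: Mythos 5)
Your proof is correct and is essentially the paper's own argument: the paper's proof of Lemma \ref{disjoint1} is exactly this one-line chain $1\leqslant qq'd^{-d_{1}}P_{1}^{-d_{1}+\theta(d_{1}-1)}\leqslant d^{2-d_{1}}l^{2d_{2}}P_{1}^{-d_{1}+3\theta(d_{1}-1)}\leqslant l^{2d_{2}}P_{1}^{-d_{1}+3\theta(d_{1}-1)}$, with $d_{1}\geqslant 2$ absorbing the factor $d^{2-d_{1}}$ just as you describe. Nothing to add.
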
\begin{proof}
Supposons qu'il existe $ \alpha\in\mathfrak{M}_{a,q}^{'d,\zz}(\theta)\cap\mathfrak{M}_{a',q'}^{'d,\zz}(\theta)  $ pour $ (a,q)\neq (a',q') $, $ q,q'\leqslant \phi_{1}(d,l,\theta) $, $ 0\leqslant a<q $, $ 0\leqslant a'<q' $ et $ \PGCD(a,q)=\PGCD(a',q')=1 $. On a alors \[ 1 \leqslant qq'd^{-d_{1}}P_{1}^{-d_{1}+\theta(d_{1}-1)} \leqslant d^{2-d_{1}} l^{2d_{2}}P_{1}^{-d_{1}+3\theta(d_{1}-1)} \leqslant l^{2d_{2}}P_{1}^{-d_{1}+3\theta(d_{1}-1)}, \]
d'o\`u le r\'esultat. 
\end{proof}

Comme pr\'ec\'edemment, on d\'eduit des lemmes \ref{disjoint1} et \ref{arcsmin1} que : 

\begin{multline} N_{d,\zz}(P_{1})=\sum_{q\leqslant \phi_{1}(d,l,\theta) }\sum_{\substack{0\leqslant a <q \\ \PGCD(a,q)=1}}\int_{\alpha\in \mathfrak{M}_{a,q}^{d,\zz'}(\theta)}S_{d,\zz}(\alpha)d\alpha \\ + O\left( d^{m-r+\varepsilon+\frac{(d_{1}-1)(r+1)}{2^{d_{1}-1}}}l^{d_{2}+m-r+\varepsilon}P_{1}^{m+2-d_{1}-\Delta_{1}(\theta,K_{1})+\varepsilon} \right). 
\end{multline}

On consid\`ere $ \alpha\in  \mathfrak{M}_{a,q}^{'d,\zz}(\theta) $. On pose $ \beta=\alpha-\frac{a}{q} $ et donc $ |\beta|\leqslant d^{-d_{1}}P_{1}^{-d_{1}+(d_{1}-1)\theta} $. De la m\^eme mani\`ere que nous avons \'etabli le lemme \ref{separation2}, on d\'emontre :

\begin{lemma}\label{separation1}
On a l'estimation \begin{multline*}
S_{d,\zz}(\alpha)=d^{m-r}l^{m-r}P_{1}^{m+1}q^{-(m+1)}S_{a,q,d}(\zz)I_{\zz}(d^{d_{1}}P_{1}^{d_{1}}\beta) \\ +O\left( d^{m-r+1}l^{2d_{2}+m-r}P_{1}^{m+2\theta(d_{1}-1)} \right), \end{multline*} avec \begin{equation}
S_{a,q,d}(\zz)=\sum_{(\bb_{1},\bb_{2})\in (\ZZ/q\ZZ)^{r+1}\times (\ZZ/q\ZZ)^{m-r}}e\left(\frac{a}{q}F(d\bb_{1},\bb_{2},\zz)\right),
\end{equation} \begin{equation}
I_{\zz}(\beta)=\int_{\substack{(\uu,\vv)\in [-1,1]^{r+1}\times [-1,1]^{m-r} \\ |\vv|< |\uu|}}e\left(\beta F(\uu,l\vv,\zz)\right)d\uu d\vv. 
\end{equation}
\end{lemma}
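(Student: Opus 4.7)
The plan is to follow exactly the template used for Lemma \ref{separation2}, with the roles of $\xx$ and $\zz$ swapped (and $P_1, l$ playing the parts of $P_2, dk$, while $d\xx$ now has the role previously played by $(\yy,\zz)$). First, I would split the sum according to the residue classes of $\xx$ and $\yy$ modulo $q$:
\[
S_{d,\zz}(\alpha)=\sum_{\bb_{1}\in(\ZZ/q\ZZ)^{r+1}}\sum_{\bb_{2}\in(\ZZ/q\ZZ)^{m-r}}e\!\left(\tfrac{a}{q}F(d\bb_{1},\bb_{2},\zz)\right)S_{3}(\bb_{1},\bb_{2}),
\]
with $S_{3}(\bb_{1},\bb_{2})=\sum_{\xx\equiv\bb_{1}(q),\,|\xx|\leqslant P_{1}}\sum_{\yy\equiv\bb_{2}(q),\,|\yy|\leqslant dl|\xx|}e(\beta F(d\xx,\yy,\zz))$. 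The point of this reduction is that $a/q F(d\xx,\yy,\zz)$ depends only on residues mod $q$, while the leftover $e(\beta F(d\xx,\yy,\zz))$ has a small modulus of continuity.

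Second, I would dispose of the trivial range $q>P_{1}$ by noting that in that case both $|S_{d,\zz}(\alpha)|$ and the putative main term are already $O(d^{m-r}l^{m-r}P_{1}^{m+1})\ll d^{m-r+1}l^{2d_{2}+m-r}P_{1}^{m+2\theta(d_{1}-1)}$, so the claimed identity is vacuous. Assuming henceforth $q\leqslant P_{1}$, I would approximate $S_{3}(\bb_{1},\bb_{2})$ by an integral. The key Lipschitz estimate is: for $(\xx',\yy'),(\xx'',\yy'')$ lying in the same tile of side $q$ and with $(q\xx'+\bb_{1},q\yy'+\bb_{2}),(q\xx''+\bb_{1},q\yy''+\bb_{2})$ both in the summation region,
\[
\bigl|F(d(q\xx'+\bb_{1}),q\yy'+\bb_{2},\zz)-F(d(q\xx''+\bb_{1}),q\yy''+\bb_{2},\zz)\bigr|\ll qd^{d_{1}}P_{1}^{d_{1}-1}l^{d_{2}},
\]
obtained by bounding the partial derivatives of $F$, using homogeneity of degree $d_{1}$ in $(\xx,\yy)$ and degree $d_{2}$ in $(\yy,\zz)$, at the scale $(|\xx|,|\yy|,|\zz|)\asymp(P_{1},dlP_{1},l)$. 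Replacing the double sum by the corresponding double integral then introduces the usual two sources of error: (i) a Lipschitz error of size $|\beta|\cdot qd^{d_{1}}P_{1}^{d_{1}-1}l^{d_{2}}$ times the volume $(P_{1}/q)^{r+1}(dlP_{1}/q)^{m-r}$, and (ii) a boundary error of size $(P_{1}/q)^{r+1}(dlP_{1}/q)^{m-r-1}$ coming from incomplete tiles near $|\yy|=dl|\xx|$ and $|\xx|=P_{1}$.

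Third, after this replacement I would carry out the change of variables $\uu=qP_{1}^{-1}\tilde{\uu}$, $\vv=q(dlP_{1})^{-1}\tilde{\vv}$ in the resulting integral, use the homogeneity $F(dP_{1}\uu,dlP_{1}\vv,\zz)=(dP_{1})^{d_{1}}F(\uu,l\vv,\zz)$ (which is the whole reason why $I_{\zz}$ is written with argument $l\vv$) and match the constraint $|\yy|\leqslant dl|\xx|$ with $|\vv|\leqslant|\uu|$. This produces
\[
S_{3}(\bb_{1},\bb_{2})=d^{m-r}l^{m-r}P_{1}^{m+1}q^{-(m+1)}I_{\zz}(d^{d_{1}}P_{1}^{d_{1}}\beta)+\mathrm{error},
\]
with the error $\ll q|\beta|d^{d_{1}+m-r}l^{m+d_{2}-r}P_{1}^{m+d_{1}}q^{-(m+1)}+d^{m-r}l^{m-r}P_{1}^{m}q^{-m}$. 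Summing back over $\bb_{1},\bb_{2}\in(\ZZ/q\ZZ)^{m+1}$, using $q\leqslant\phi_{1}(d,l,\theta)=dl^{d_{2}}P_{1}^{(d_{1}-1)\theta}$ and $|\beta|\leqslant d^{-d_{1}}P_{1}^{-d_{1}+(d_{1}-1)\theta}$, both error contributions are majorised by $d^{m-r+1}l^{2d_{2}+m-r}P_{1}^{m+2\theta(d_{1}-1)}$, which is exactly the announced remainder.

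The step that requires the most care is the coupled constraint $|\yy|\leqslant dl|\xx|$, which is the principal new feature compared to Lemma \ref{separation2}: one must verify that the boundary region it introduces only contributes to the boundary error term (which stays of the advertised order), and that it survives cleanly under the change of variable into the condition $|\vv|\leqslant|\uu|$ appearing in $I_{\zz}$; apart from this bookkeeping, the argument is routine.
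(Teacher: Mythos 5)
Your proof is correct and is essentially the proof the paper intends: the paper only states that Lemma \ref{separation1} is obtained \og de la m\^eme mani\`ere \fg\ que le lemme \ref{separation2}, and your argument is exactly that adaptation (decomposition into residue classes mod $q$, trivial range $q>P_{1}$, Lipschitz replacement of the sum by an integral with the bound $qd^{d_{1}}P_{1}^{d_{1}-1}l^{d_{2}}$, change of variables using the homogeneity of degree $d_{1}$ in $(\xx,\yy)$ to produce $I_{\zz}(d^{d_{1}}P_{1}^{d_{1}}\beta)$, and the bounds $q\leqslant dl^{d_{2}}P_{1}^{(d_{1}-1)\theta}$, $|\beta|\leqslant d^{-d_{1}}P_{1}^{-d_{1}+(d_{1}-1)\theta}$ to absorb both errors into the stated remainder). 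The only quibble is that in your step (ii) the boundary error should lose a factor from the \emph{smallest} side of the box, i.e.\ it is $(P_{1}/q)^{r}(dlP_{1}/q)^{m-r}$ rather than $(P_{1}/q)^{r+1}(dlP_{1}/q)^{m-r-1}$ --- but this is exactly the quantity $d^{m-r}l^{m-r}P_{1}^{m}q^{-m}$ that you in fact use in your final accounting, so nothing is affected.
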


Par ailleurs, en posant \begin{equation}
\tilde{\phi}_{1}(\theta)=\frac{1}{2}P_{1}^{\theta(d_{1}-1)},
\end{equation}\begin{equation}
\eta_{1}(\theta)=1-5\theta(d_{1}-1),
\end{equation}
on d\'emontre un analogue du lemme \ref{intermediairex} :
 \begin{lemma}\label{intermediairez} Pour $ \zz\in \mathcal{A}_{1}^{\mu}(\ZZ) $, et $ \varepsilon>0 $ arbitrairement petit, on a l'estimation suivante : \begin{multline*}
N_{d,\zz}(P_{1})=d^{m-r-d_{1}}l^{m-r}P_{1}^{m+1-d_{1}}\mathfrak{S}_{d,\zz}(\phi_{1}(d,l,\theta))J_{\zz}(\tilde{\phi}_{1}(\theta)) \\ + O\left(d^{m-r+\varepsilon+\frac{(d_{1}-1)(r+1)}{2^{d_{1}-1}}}l^{d_{2}+m-r+\varepsilon}P_{1}^{m+2-d_{1}-\Delta_{1}(\theta,K_{1})+\varepsilon}\right) \\ + O\left(d^{m-r+3-d_{1}}l^{4d_{2}+m-r}P_{1}^{m+1-d_{1}-\eta(\theta)}\right), \end{multline*} o\`u \begin{equation}
\mathfrak{S}_{d,\zz}(\phi(d,l,\theta))=\sum_{q\leqslant \phi(d,l,\theta)}q^{-(m+1)}\sum_{\substack{0\leqslant a < q \\ \PGCD(a,q)=1}}S_{a,q,d}(\zz),
\end{equation}\begin{equation}
J_{\zz}(\tilde{\phi}(\theta))=\int_{|\beta|\leqslant \tilde{\phi}(\theta)}I_{\zz}(\beta)d\beta. 
\end{equation}
\end{lemma}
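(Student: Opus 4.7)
Le plan est de reproduire fid\`element la preuve du lemme \ref{intermediairex}, en \'echangeant les r\^oles des variables $\xx$ et $\zz$ ainsi que des couples $(P_1,d_1)$ et $(P_2,d_2)$. Je partirais de la d\'ecomposition en arcs majeurs et mineurs d\'ej\`a \'etablie dans cette sous-section : l'inclusion $\mathfrak{M}^{d,\zz}(\theta) \subset \mathfrak{M}^{'d,\zz}(\theta)$, la disjonction fournie par le lemme \ref{disjoint1} et la majoration des arcs mineurs du lemme \ref{arcsmin1} permettent d'\'ecrire
\[ N_{d,\zz}(P_1) = \sum_{q\leqslant \phi_1(d,l,\theta)}\sum_{\substack{0\leqslant a<q \\ \PGCD(a,q)=1}}\int_{\mathfrak{M}^{'d,\zz}_{a,q}(\theta)} S_{d,\zz}(\alpha)\,d\alpha + O(E_1), \]
o\`u $E_1$ est pr\'ecis\'ement le premier terme d'erreur $d^{m-r+\varepsilon+(d_1-1)(r+1)/2^{d_1-1}}l^{d_2+m-r+\varepsilon}P_1^{m+2-d_1-\Delta_1(\theta,K_1)+\varepsilon}$ du lemme \`a prouver.

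Je remplacerais ensuite $S_{d,\zz}(\alpha)$ sur chaque arc $\mathfrak{M}^{'d,\zz}_{a,q}(\theta)$ en invoquant le lemme \ref{separation1}. En \'ecrivant $\alpha = a/q + \beta$ avec $|\beta|\leqslant d^{-d_1}P_1^{-d_1+(d_1-1)\theta}$, le terme principal s'int\`egre en
\[ d^{m-r}l^{m-r}P_1^{m+1}\, q^{-(m+1)}S_{a,q,d}(\zz)\int_{|\beta|\leqslant d^{-d_1}P_1^{-d_1+(d_1-1)\theta}}I_\zz(d^{d_1}P_1^{d_1}\beta)\,d\beta, \]
et le changement de variables $\beta' = d^{d_1}P_1^{d_1}\beta$ fait appara\^itre le facteur $d^{-d_1}P_1^{-d_1}J_\zz(\tilde{\phi}_1(\theta))$. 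Apr\`es sommation sur les paires $(a,q)$ admissibles, on reconstitue exactement le terme principal $d^{m-r-d_1}l^{m-r}P_1^{m+1-d_1}\mathfrak{S}_{d,\zz}(\phi_1(d,l,\theta))J_\zz(\tilde{\phi}_1(\theta))$ annonc\'e.

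Il resterait alors \`a majorer la contribution globale du terme d'erreur $O(d^{m-r+1}l^{2d_2+m-r}P_1^{m+2\theta(d_1-1)})$ issu du lemme \ref{separation1}, ce que j'obtiendrais en le multipliant par $\Vol(\mathfrak{M}^{'d,\zz}(\theta))$. Un d\'enombrement direct des paires $(a,q)$ (il y en a au plus $\phi_1(d,l,\theta)^2$, chaque arc \'etant de longueur $\ll d^{-d_1}P_1^{-d_1+(d_1-1)\theta}$) donne
\[ \Vol(\mathfrak{M}^{'d,\zz}(\theta)) \ll d^{2-d_1}l^{2d_2}P_1^{-d_1+3(d_1-1)\theta}, \]
produisant ainsi le second terme d'erreur en $d^{m-r+3-d_1}l^{4d_2+m-r}P_1^{m+1-d_1-\eta_1(\theta)}$, en reconnaissant $\eta_1(\theta) = 1 - 5\theta(d_1-1)$. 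L'\'etape la plus d\'elicate est purement comptable : il faudra suivre soigneusement les exposants de $d$, $l$, $P_1$ et $P_1^\theta$ \`a chaque \'etape, et v\'erifier que le changement de variables en $\beta$ compense exactement le facteur $d^{d_1}P_1^{d_1}$ ; aucune nouvelle id\'ee n'intervient cependant par rapport \`a la preuve parall\`ele du lemme \ref{intermediairex}.
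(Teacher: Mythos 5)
Votre démonstration est correcte et suit exactement la même démarche que celle du texte (qui renvoie lui-même à la preuve parallèle du lemme \ref{intermediairex}) : décomposition en arcs majeurs $\mathfrak{M}^{'d,\zz}(\theta)$ et mineurs via les lemmes \ref{arcsmin1} et \ref{disjoint1}, substitution de $S_{d,\zz}(\alpha)$ par le lemme \ref{separation1}, changement de variables $\beta'=d^{d_{1}}P_{1}^{d_{1}}\beta$ qui absorbe le facteur $d^{-d_{1}}P_{1}^{-d_{1}}$ dans le terme principal, et majoration du terme d'erreur résiduel par $\Vol(\mathfrak{M}^{'d,\zz}(\theta))\ll d^{2-d_{1}}l^{2d_{2}}P_{1}^{-d_{1}+3(d_{1}-1)\theta}$. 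La comptabilité des exposants en $d$, $l$ et $P_{1}$ que vous détaillez reproduit bien les deux termes d'erreur annoncés.
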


On pose \`a pr\'esent : 

\begin{equation}
\mathfrak{S}_{d,\zz}=\sum_{q=1}^{\infty}q^{-(m+1)}\sum_{\substack{0\leqslant a < q \\ \PGCD(a,q)=1}}S_{a,q,d}(\zz),
\end{equation}
\begin{equation}
J_{\zz}=\int_{\RR}I_{\zz}(\beta)d\beta.
\end{equation}
\begin{lemma}\label{Jz}
Soit $ \zz\in \mathcal{A}_{1}^{\mu}(\ZZ) $, et $ \varepsilon>0 $ arbitrairement petit. On suppose de plus que $ d_{1}\geqslant 2 $ et que $ \theta<\frac{1}{2(d_{1}-1)} $. L'int\'egrale $ J_{\zz} $ est absolument convergente, et on a : \[ |J_{\zz}(\tilde{\phi}(\theta))-J_{\zz}|\ll l^{d_{2}+\varepsilon}P_{1}^{-\frac{K_{1}\theta}{2}+2\theta(d_{1}-1)} . \] On a de plus $ |J_{\zz}|\ll l^{d_{2}+\varepsilon}. $
\end{lemma}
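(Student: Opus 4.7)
The proof will proceed in close analogy with that of Lemma \ref{Jx}, now exploiting the Weyl-type bound of Lemma \ref{dilemme34} and the separation estimate of Lemma \ref{separation1} in place of their $\xx$-counterparts. The starting point is the trivial observation
\[
|J_{\zz}(\tilde\phi_{1}(\theta))-J_{\zz}|\leqslant \int_{|\beta|>\tilde\phi_{1}(\theta)}|I_{\zz}(d^{d_{1}}\beta)|\,d\beta,
\]
so the task is to produce a pointwise decay estimate on $I_{\zz}(d^{d_{1}}\beta)$ for $|\beta|>\tilde\phi_{1}(\theta)$ that is integrable at infinity.

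Given such a $\beta$, I will introduce auxiliary parameters $\tilde P_{1}\geqslant 1$ and $\theta'\in[0,1]$ determined by the two requirements $|\beta|=\tfrac12\tilde P_{1}^{\theta'(d_{1}-1)}$ and a balancing condition equating, up to a harmless power of $d$, the Weyl bound of Lemma \ref{dilemme34} with the separation error of Lemma \ref{separation1}. This second relation will have the shape
\[
\tilde P_{1}^{-K_{1}\theta'}\asymp d^{\,\alpha}l^{\,\beta}\tilde P_{1}^{-d_{1}+2\theta'(d_{1}-1)}
\]
for appropriate constants $\alpha,\beta$, and will implicitly define $\theta'$ as a function of $|\beta|$, $l$ and $d$; from $d_{1}\geqslant 2$ and $\theta<1/(2(d_{1}-1))$ one checks, using Lemma \ref{disjoint1}, that the disjointness of the arcs $\mathfrak{M}^{'d,\zz}_{a,q}(\theta')$ holds, so that the point $\tilde P_{1}^{-d_{1}}\beta$, lying on the boundary of $\mathfrak{M}^{(1),\zz}_{0,1}(\theta')\cup\mathfrak{M}^{(2),\zz}_{0,1}(\theta')$, avoids every other major arc $\mathfrak{M}^{\zz}_{a,q}(\theta'-\nu)$ for all sufficiently small $\nu>0$.

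Having arranged this, I apply Lemma \ref{dilemme34} at $\alpha=\tilde P_{1}^{-d_{1}}\beta$ to obtain a bound on $|S_{d,\zz}(\alpha)|$ controlled by $\tilde P_{1}^{m+2-K_{1}\theta'+\varepsilon}$, and Lemma \ref{separation1} (with $a=0$, $q=1$) to isolate the main term $d^{m-r}l^{m-r}\tilde P_{1}^{m+1}I_{\zz}(d^{d_{1}}\beta)$ modulo the balanced error. Dividing by the prefactor $d^{m-r}l^{m-r}\tilde P_{1}^{m+1}$, I deduce a pointwise estimate of the form
\[
|I_{\zz}(d^{d_{1}}\beta)|\ll l^{d_{2}+\varepsilon}\,\tilde P_{1}^{1-K_{1}\theta'+\varepsilon}\max\{\tilde P_{1}^{\varepsilon'},d^{\varepsilon'},l^{\varepsilon'}\},
\]
which, rewritten in terms of $|\beta|$, produces an $|\beta|^{-\kappa}$ decay integrable at infinity under the hypothesis $K_{1}>2(d_{1}-1)$. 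Integrating this over $\{|\beta|>\tilde\phi_{1}(\theta)\}$ and simplifying using $\tilde\phi_{1}(\theta)=\tfrac12 P_{1}^{\theta(d_{1}-1)}$ gives the announced bound. The auxiliary estimate $|J_{\zz}|\ll l^{d_{2}+\varepsilon}$ is then deduced by specializing the same argument to $P_{1}\ll 1$, where $|J_{\zz}(\tilde\phi_{1}(\theta))|\ll l^{d_{2}+\varepsilon}$ by bounding $I_{\zz}$ trivially (the magnitude of $F(\uu,l\vv,\zz)$ is $O(l^{d_{2}})$ on the unit box), and combining with the triangle inequality.

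The main obstacle is the bookkeeping of the $l$-dependence. In the $\xx$-version (Lemma \ref{Jx}) the auxiliary integer $k$ enters only through the lower-order error $(dk)^{2d_{1}}$, whereas here $l$ appears both in the main term $d^{m-r}l^{m-r}\tilde P_{1}^{m+1}$ and in the error $d^{m-r+1}l^{2d_{2}+m-r}\tilde P_{1}^{m+2\theta'(d_{1}-1)}$ of Lemma \ref{separation1}. Selecting $\theta'$ so that the two contributions balance while simultaneously producing the precise prefactor $l^{d_{2}+\varepsilon}$ and the precise exponent $-K_{1}\theta/2+2\theta(d_{1}-1)$ on $P_{1}$ is the delicate combinatorial step; once the balancing is in place, the rest of the argument is the direct analogue of the proof of Lemma \ref{Jx}.
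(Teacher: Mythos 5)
Your plan follows the paper's proof of this lemma essentially step for step: introduce auxiliary parameters $\tilde P_{1},\theta'$ with $|\beta|=\tfrac12\tilde P_{1}^{\theta'(d_{1}-1)}$ and a balancing condition between the Weyl bound of Lemma~\ref{dilemme34} and the error term of Lemma~\ref{separation1}, check via Lemma~\ref{disjoint1} that the major arcs at level $\theta'$ are disjoint so that $\tilde P_{1}^{-d_{1}}\beta$, on the boundary of $\mathfrak{M}_{0,1}(\theta')$, falls into the minor-arc case, divide by the main-term prefactor to get pointwise decay of $I_{\zz}$, integrate, and recover $|J_{\zz}|\ll l^{d_{2}+\varepsilon}$ by specializing to $P_{1}\ll 1$. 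The one point to correct: $J_{\zz}$ and $J_{\zz}(\tilde\phi(\theta))$ are defined with $I_{\zz}(\beta)$, not $I_{\zz}(d^{d_{1}}\beta)$ (unlike $J_{d,\xx}$ in the previous section), so your opening inequality bounds the wrong integrand, and your proposal to keep a general $d$ in the balancing relation $\tilde P_{1}^{-K_{1}\theta'}\asymp d^{\alpha}l^{\beta}\tilde P_{1}^{-d_{1}+2\theta'(d_{1}-1)}$ would make $\theta'$, hence the final estimate, depend on $d$, whereas the stated bound is $d$-free. The paper resolves this by running the entire auxiliary argument with $d=1$ (i.e.\ with $S_{1,\zz}$), which is legitimate precisely because $I_{\zz}$ does not involve $d$; with that substitution your outline matches the paper's argument, including the explicit computation of $\theta'$ that converts $|\beta|^{1/(\theta'(d_{1}-1))-K_{1}/(d_{1}-1)}$ into $l^{d_{2}}|\beta|^{1-K_{1}/(2(d_{1}-1))}$, which you correctly identify as the delicate step.
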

\begin{proof}
On consid\`ere $ \beta $ tel que $ |\beta|\geqslant \tilde{\phi}(\theta) $. On choisit alors des param\`etres $ P $ et $ \theta' $ tels que \begin{equation}
|\beta|=\frac{1}{2}P^{\theta'(d_{1}-1)}, 
\end{equation} \begin{equation}\label{supp}
 P^{1-K_{1}\theta'}=P^{-1+2\theta'(d_{1}-1)}l^{2d_{2}}.
\end{equation} Remarquons que ces deux \'egalit\'es impliquent \begin{equation}\label{valeurexacte}
\theta'=\frac{(d_{1}-1)^{-1}\log(2|\beta|)}{\left(2+\frac{K_{1}}{(d_{1}-1)}\right)\log(2|\beta|)+2d_{2}\log(l)}
\end{equation}
donc en particulier \begin{equation}
\theta'\gg \min\left\{1,\frac{\log(|\beta|)}{\log(l)}\right\}.
\end{equation} Par ailleurs, on a d'apr\`es\;\eqref{supp} : \[ P^{-2+3\theta'(d_{1}-1)}l^{2d_{2}}=P^{\theta'(d_{1}-1-K_{1})}<1, \]donc, pour $ d_{1}\geqslant 2 $, \[  P^{-d_{1}+3\theta'(d_{1}-1)}l^{2d_{2}}<1, \] et ainsi, d'apr\`es le lemme \ref{disjoint2}, les arcs majeurs $ \mathfrak{M}_{a,q}(\theta') $ correspondant \`a $ P $ et $ \theta' $ sont disjoints deux \`a deux. Le r\'eel $ P^{-d_{1}}\beta $ appartient au bord de $ \mathfrak{M}_{0,1}(\theta') $, et donc par le lemme \ref{dilemme34} appliqu\'e \`a $ d=1 $, on a l'estimation : 

\[ |S_{1,\zz}(P^{-d_{1}}\beta )|\ll l^{m-r+\varepsilon}P^{m+2-K_{1}\theta'+\varepsilon}. \] 
Par le lemme \ref{separation1} on a : \[ S_{1,\zz}(P^{-d_{1}}\beta )=l^{m-r}P^{m+1}I_{\zz}(\beta)+O\left(l^{m-r+2d_{2}}P^{m+2\theta'(d_{1}-1)} \right). \] On a ainsi : \begin{align*} |I_{\zz}(\beta)| & \ll l^{\varepsilon}P^{1-K_{1}\theta'+\varepsilon}+ l^{2d_{2}}P^{-1+2\theta'(d_{1}-1)} 
\\ & \ll l^{\varepsilon}P^{1-K_{1}\theta'+\varepsilon} \\ & \ll l^{\varepsilon}|\beta|^{\frac{1}{\theta'(d_{1}-1)}-\frac{K_{1}}{(d_{1}-1)}+\frac{\varepsilon}{\theta'(d_{1}-1)}}. \end{align*}
\'Etant donn\'e que $ \theta'\gg \min\left\{1,\frac{\log(|\beta|)}{\log(l)}\right\} $, \[ |\beta|^{\frac{\varepsilon}{\theta'(d_{1}-1)}}\ll \max\{|\beta|^{\varepsilon'}, l^{\varepsilon'}\}, \] pour $ \varepsilon'>0 $ arbitrairement petit.
D'autre part, d'apr\`es \eqref{valeurexacte} : \begin{align*}
|\beta|^{\frac{1}{\theta'(d_{1}-1)}-\frac{K_{1}}{(d_{1}-1)}} & \ll |\beta|^{\left(1-\frac{K_{1}}{2(d_{1}-1)}\right)}|\beta|^{\frac{\log(l^{d_{2}})}{\log(2|\beta|)}} \\ & \ll l^{d_{2}} |\beta|^{\left(1-\frac{K_{1}}{2(d_{1}-1)}\right)}. \end{align*}
On a ainsi
\begin{align*} |J_{\zz}(\tilde{\phi}(\theta))-J_{\zz}| & \ll l^{d_{2}+\varepsilon}\int_{|\beta|>\tilde{\phi}(\theta)}|\beta|^{\left(1-\frac{K_{1}}{2(d_{1}-1)}\right)+\varepsilon} d\beta \\ & \ll  l^{d_{2}+\varepsilon}\tilde{\phi}(\theta)^{2-\frac{K_{1}}{2(d_{1}-1)}+\varepsilon}  \\ & \ll l^{d_{2}+\varepsilon}P_{1}^{2\theta(d_{1}-1)-\frac{K_{1}\theta}{2}+\varepsilon} \end{align*} avec $ \varepsilon $ arbitrairement petit.

D'autre part, en choisissant $ P_{1} \ll 1   $, cette in\'egalit\'e donne \[ |J_{\zz}(\tilde{\phi}(\theta))-J_{\zz}| \ll l^{d_{2}+\varepsilon}, \] et puisque $ |J_{\zz}(\tilde{\phi}(\theta))|\ll 1 $ lorsque $ P_{1} \ll 1   $, on a imm\'ediatement \[ |J_{\zz}| \ll l^{d_{2}+\varepsilon} \]

\end{proof}

On introduit pour $ \zz $ fix\'e et $ P\geqslant 1 $ la nouvelle s\'erie g\'en\'eratrice : 
\[ S_{d,\zz}'(\alpha)=\sum_{|\xx|\leqslant P}\sum_{|\yy|\leqslant P}e\left(\alpha F(d\xx,\yy,\zz)\right). \] De la m\^eme mani\`ere que pour le lemme \ref{dilemme34}, on \'etablit : 

\begin{lemma}\label{dilemme34bis}
Si $ \varepsilon>0 $ est un r\'eel arbitrairement petit, l'une au moins des assertions suivantes est v\'erifi\'ee : \begin{enumerate}
\item $ |S_{d,\zz}'(\alpha)|\ll d^{\frac{(d_{1}-1)(r+1)}{2^{d_{1}-1}}}P^{m+1+\varepsilon-K_{1}\theta}, $ 
\item le r\'eel $ \alpha $ appartient \`a $ \bigcup_{q\leqslant dl^{d_{2}}P^{(d_{1}-1)\theta}}\bigcup_{\substack{0\leqslant a <q\\ \PGCD(a,q)=1}}\mathfrak{M}^{\zz}_{a,q}(\theta) $.
\end{enumerate}
\end{lemma}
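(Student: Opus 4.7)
The plan is to repeat, with three simplifications, the Weyl-type analysis carried out in Section \ref{Weyl} and specialised in the proof of Lemma \ref{dilemme34}. Replacing the asymmetric box $\{|\xx|\leqslant P_1,\; |\yy|\leqslant d|\xx|l\}$ by the symmetric box $\{|\xx|\leqslant P,\; |\yy|\leqslant P\}$ allows one to (i) omit the preliminary splitting $S_{d,\zz}=\sum_N S_{d,\zz,N}$ into ranges of $\lfloor|\yy|/(dl)\rfloor$, since the $\xx$- and $\yy$-ranges are now independent; (ii) drop the factors $d^{m-r+\varepsilon}$ and $l^{m-r+\varepsilon}$ in the resulting Weyl bound, since the $\yy$-box is no longer inflated by either; and (iii) merge the two families of major arcs $\mathfrak{M}^{(1),\zz}$ and $\mathfrak{M}^{(2),\zz}$ into a single family indexed by $q\leqslant dl^{d_2}P^{(d_1-1)\theta}$ with $\PGCD(a,q)=1$.

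Concretely, I would first apply Weyl differencing $d_1-1$ times to $S'_{d,\zz}(\alpha)$ in the variables $(\xx,\yy)$. Using the homogeneity of $F$ of degree $d_1$ in $(\xx,\yy)$ together with H\"older's inequality, this yields
\begin{equation*}
|S'_{d,\zz}(\alpha)|^{2^{d_1-1}} \ll P^{(m+1)(2^{d_1-1}-d_1+1)+\varepsilon}\, M'_{d,\zz}\bigl(\alpha, P, P, P^{-1}, P^{-1}\bigr),
\end{equation*}
where $M'_{d,\zz}$ is the obvious analogue of \eqref{Mz} with both boxes of size $P$. The differenced linear forms $\gamma_{d,\zz,j}$ are symmetric in the sense required by Lemma \ref{geomnomb2}. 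Iterating that lemma $d_1-1$ times (as in Section \ref{Weyl}) shrinks the counting region to $\{|\xx^{(i)}|\leqslant P^\theta/d,\;|\yy^{(i)}|\leqslant P^\theta\}$ and tightens the approximation thresholds to $d^{-(d_1-1)}P^{-d_1+(d_1-1)\theta}$ and $d^{-d_1}P^{-d_1+(d_1-1)\theta}$ for the $x$- and $y$-slots respectively, at a cost of a factor $(dP/P^\theta)^{(m+1)(d_1-1)}$.

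The trichotomy of Lemmas \ref{dilemme33} and \ref{dilemme34} now applies. Either the shrunken counting function is small, which translates into assertion~1 with $\kappa=K_1\theta$; or some $\gamma_{d,\zz,j_0}$ evaluated on a tuple in the small box is non-zero, in which case $q:=\gamma_{d,\zz,j_0}(\ldots)$ satisfies $|q|\leqslant dl^{d_2}P^{(d_1-1)\theta}$ (the $d$ present when $j_0$ is an $x$-index, and the $l^{d_2}$ coming from the degree of the $\zz$-coefficients of the differenced form), together with $|\alpha q-a|\leqslant d^{-d_1}P^{-d_1}P^{(d_1-1)\theta}$; reducing $a/q$ to lowest terms then places $\alpha$ in $\mathfrak{M}^{\zz}_{a,q}(\theta)$ as in assertion~2. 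Otherwise all of the $\gamma_{d,\zz,j}$'s vanish simultaneously on a large set, and the Birch-type dimension argument reproduced after Lemma \ref{dilemme3} (via specialisation to the diagonal $\xx^{(1)}=\ldots=\xx^{(d_1-1)}$, $\yy^{(1)}=\ldots=\yy^{(d_1-1)}$) forces $\dim V_{1,\zz}^{\ast}\geqslant\dim V_1^{\ast}-(n-m+1)+\mu$, contradicting $\zz\in\mathcal{A}_1^{\mu}(\ZZ)$ through the choice $K_1=(n+2-\dim V_1^{\ast}-\mu)/2^{d_1-1}$.

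The main obstacle, as in the earlier analogues, is bookkeeping: one must verify that the single arc condition $q\leqslant dl^{d_2}P^{(d_1-1)\theta}$ with $\PGCD(a,q)=1$ uniformly absorbs both sub-cases that had been split in Lemma \ref{dilemme34}, namely $\mathfrak{M}^{(1),\zz}$ (with $d|q$ and $q\leqslant dl^{d_2}P^{(d_1-1)\theta}$) and $\mathfrak{M}^{(2),\zz}$ (with $\PGCD(a,q)=1$ and $q\leqslant l^{d_2}P^{(d_1-1)\theta}$), without loss in the exponent. Beyond this point, no ideas beyond those used in the proofs of Lemmas \ref{dilemme34} and \ref{dilemme24bis} should be required.
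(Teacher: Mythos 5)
Your proposal follows exactly the route the paper intends: its entire ``proof'' of this lemma is the sentence \emph{De la m\^eme mani\`ere que pour le lemme \ref{dilemme34}, on \'etablit}, and your sketch is precisely that adaptation (Weyl differencing in $(\xx,\yy)$ without the splitting over $N$, the iterated geometry of numbers, and the Birch-type dimension argument which uses $\zz\in\mathcal{A}_1^{\mu}(\ZZ)$ and the choice of $K_1$ to rule out the third alternative). Deferring the arc-merging to bookkeeping is also fair, since the notation $\mathfrak{M}^{\zz}_{a,q}(\theta)$ in the statement is never pinned down in the paper, and the only properties used later (in the proof of Lemma \ref{Sz}) are the bound $q\leqslant dl^{d_2}P^{(d_1-1)\theta}$ and an approximation of the form $|\alpha q-a|\leqslant q^{O(1)}P^{-d_1+(d_1-1)\theta}$.

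One intermediate step is not correct as written. Lemma \ref{geomnomb2} pairs each variable with a linear form and preserves, slot by slot, the ratio of the box size to the threshold (these are $a_kZ$ and $a_k^{-1}Z$, so the ratio $a_k^{2}$ is an invariant of the iteration). Starting from the symmetric data of $M'_{d,\zz}(\alpha,P,P,P^{-1},P^{-1})$ this ratio equals $P^{2}$ for every slot, whereas the endpoint you quote --- boxes $(P^{\theta}/d,\,P^{\theta})$ with thresholds $(d^{-(d_1-1)}P^{-d_1+(d_1-1)\theta},\,d^{-d_1}P^{-d_1+(d_1-1)\theta})$ --- has ratios $d^{d_1-2}P^{d_1-(d_1-2)\theta}$ and $d^{d_1}P^{d_1-(d_1-2)\theta}$, so it is not reachable by iterating that lemma. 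You must either keep the iteration symmetric (ending with all boxes equal to $P^{\theta}$, hence only $q\ll d^{d_1}l^{d_2}P^{(d_1-1)\theta}$, which enlarges the arcs and propagates a slightly worse power of $d$ into Lemma \ref{Sz}), or first shrink the $\xx$-boxes to size $P/d$ via Lemma \ref{lemmedebile} and relax the $x$-thresholds to $dP^{-1}$ so that the products match again; the latter recovers $q\leqslant dl^{d_2}P^{(d_1-1)\theta}$ at the price of a final threshold $\asymp dP^{-d_1+(d_1-1)\theta}$. Either variant suffices for the intended application, but the thresholds you wrote down are transcribed from the asymmetric-box situation of Lemma \ref{dilemme33} and do not survive the change of starting box.
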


De la m\^eme mani\`ere que pour le lemme \ref{Sx}, on en d\'eduit :

\begin{lemma}\label{Sz}

Soit $ \zz\in \mathcal{A}_{1}^{\mu}(\ZZ) $, et $ \varepsilon>0 $ arbitrairement petit. On suppose de plus que $ d_{1}\geqslant 2 $. La s\'erie $ \mathfrak{S}_{\zz} $ est absolument convergente, et on a : \[ |\mathfrak{S}_{d,\zz}(\phi_{1}(d,l,\theta))-\mathfrak{S}_{d,\zz}|\ll d^{\frac{(d_{1}-1)(r+1)}{2^{d_{1}-1}}+2+\varepsilon}l^{2d_{2}+\varepsilon}P_{1}^{\theta(2(d_{1}-1)-K_{1})}. \] On a de plus $ |\mathfrak{S}_{\zz}|\ll d^{\frac{(d_{1}-1)(r+1)}{2^{d_{1}-1}}+2+\varepsilon}l^{2d_{2}+\varepsilon}. $
\end{lemma}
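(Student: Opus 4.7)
The plan is to mimic the argument used to prove Lemma \ref{Sx}, replacing the role of $\xx$ (and the parameter $P_2$) by that of $\zz$ (and the parameter $P_1$), and using the appropriate Weyl-type inequality from Lemma \ref{dilemme34bis} in place of Lemma \ref{dilemme24bis}. The key technical tool is the generating function $S'_{d,\zz}(\alpha) = \sum_{|\xx| \leqslant P}\sum_{|\yy| \leqslant P} e(\alpha F(d\xx,\yy,\zz))$ which, evaluated at $\alpha = a/q$ with $P = q$, gives exactly the complete sum $S_{a,q,d}(\zz)$.

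First I would fix $q > \phi_1(d,l,\theta)$ together with integers $a$ satisfying $0\leqslant a < q$ and $\PGCD(a,q)=1$. I would then choose $\theta'$ so that $q = dl^{d_2} q^{(d_1-1)\theta'}$, which determines $\theta'$ in terms of $q$, $d$, and $l$. Setting $\alpha = a/q$, one has $S_{a,q,d}(\zz) = S'_{d,\zz}(\alpha)$ with $P=q$. Next, for $\nu>0$ arbitrarily small and $\theta'' = \theta' - \nu$, I would verify that $\alpha \notin \mathfrak{M}^\zz(\theta'')$. If there existed $a',q'$ with $\PGCD(a',q')=1$ (or with $d|q'$), $q'\leqslant dl^{d_2}q^{(d_1-1)\theta''} < q$, and $\alpha \in \mathfrak{M}^{\zz}_{a',q'}(\theta'')$, then the estimate $1\leqslant |aq'-a'q|$ combined with the defining inequality for $\mathfrak{M}^{\zz}_{a',q'}(\theta'')$ would yield $1 \leqslant q^{1-d_1+\theta'(d_1-1)+O(\nu)}$, which is impossible for $d_1 \geqslant 2$ and $\nu$ small enough.

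Having shown $\alpha \notin \mathfrak{M}^\zz(\theta'')$, Lemma \ref{dilemme34bis} gives the bound
\[
|S_{a,q,d}(\zz)| \ll d^{\frac{(d_1-1)(r+1)}{2^{d_1-1}}} q^{m+1+\varepsilon - K_1 \theta'}.
\]
Summing over $a$ (which contributes a factor $q$) and then over $q > \phi_1(d,l,\theta)$ yields
\[
|\mathfrak{S}_{d,\zz}(\phi_1(d,l,\theta)) - \mathfrak{S}_{d,\zz}| \ll d^{\frac{(d_1-1)(r+1)}{2^{d_1-1}}} \sum_{q>\phi_1(d,l,\theta)} q^{1-\frac{K_1}{d_1-1}+\varepsilon}(dl^{d_2})^{\frac{K_1}{d_1-1}},
\]
since $q^{-K_1\theta'} = (q/(dl^{d_2}))^{-K_1/(d_1-1)}$. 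Provided $K_1 > 2(d_1-1)$, the sum converges and one obtains the stated bound $d^{\frac{(d_1-1)(r+1)}{2^{d_1-1}}+2+\varepsilon} l^{2d_2+\varepsilon} P_1^{\theta(2(d_1-1)-K_1)}$ after using $\phi_1(d,l,\theta) = dl^{d_2}P_1^{(d_1-1)\theta}$. For the absolute bound on $|\mathfrak{S}_{d,\zz}|$, I would specialise the estimate by taking $P_1 \ll 1$ (which forces the difference to be $\ll d^{\frac{(d_1-1)(r+1)}{2^{d_1-1}}+2+\varepsilon} l^{2d_2+\varepsilon}$) and combine with the trivial bound $|\mathfrak{S}_{d,\zz}(\phi_1(d,l,\theta))| \ll d^{2} l^{2d_2}$ for the corresponding truncation.

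The main point requiring care is the verification that $\alpha = a/q$ avoids \emph{both} families $\mathfrak{M}^{(1),\zz}(\theta'')$ and $\mathfrak{M}^{(2),\zz}(\theta'')$: one must separately handle the case $d \mid q'$ (with the coarser bound $d^{-(d_1-1)}$) and the coprime case (with $d^{-d_1}$), each time controlling the factor $qq' \leqslant q \cdot dl^{d_2}q^{(d_1-1)\theta''}$ and checking that the resulting inequality $1\leqslant q^{2-d_1+\theta'(d_1-1)+O(\nu)}$ (up to a factor of $d^{2-d_1}$ or $d^{1-d_1}$) is impossible for $d_1\geqslant 2$ using the constraint $\theta < 1/(2(d_1-1))$. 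The rest of the argument is then a routine summation, entirely parallel to the proof of Lemma \ref{Sx}.
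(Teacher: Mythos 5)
Your proposal follows the paper's proof essentially verbatim: the same choice of $\theta'$ via $q=dl^{d_{2}}q^{(d_{1}-1)\theta'}$, the same verification that $a/q\notin\mathfrak{M}^{\zz}(\theta'-\nu)$ leading to the contradiction $1\leqslant q^{1-d_{1}+\theta'(d_{1}-1)}$, the same application of Lemma \ref{dilemme34bis}, and the same summation over $a$ and $q$ followed by the $P_{1}\ll 1$ specialisation combined with the trivial bound to get $|\mathfrak{S}_{d,\zz}|\ll d^{\frac{(d_{1}-1)(r+1)}{2^{d_{1}-1}}+2+\varepsilon}l^{2d_{2}+\varepsilon}$. (The exponent $2-d_{1}+\theta'(d_{1}-1)$ appearing in your closing paragraph is a slip --- the correct exponent, which you state earlier, is $1-d_{1}+\theta'(d_{1}-1)$ --- but this does not affect the argument.)
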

\begin{proof}
On consid\`ere $ q>\phi(d,k,\theta) $, $ \alpha=\frac{a}{q} $ avec $ 0\leqslant a<q $ et $ \PGCD(a,q)=1 $. On a alors  $ S_{a,q,d}(\zz)=S_{d,\zz}'(\alpha) $ avec $ P=q $. On consid\`ere $ \theta' $ tel que $ q=dl^{d_{2}} q^{(d_{1}-1)\theta'} $. Si $ \theta''=\theta'-\nu $ pour $ \nu>0 $ arbitrairement petit, alors s'il existait $ a',q'\in \ZZ $ tels que $ 0\leqslant a'<q' $, $ \PGCD(a',q')=1 $, $ q'\leqslant dl^{d_{2}}q^{\theta''(d_{1}-1)} <q $ et $ \alpha \in \mathfrak{M}_{a',q'}^{\zz}(\theta'') $, on aurait alors \[ 1 \leqslant |aq'-a'q|\leqslant q^{1-d_{1}+\theta'(d_{1}-1)}, \] ce qui est absurde pour $ d_{1}\geqslant 2 $. On a donc, par le lemme pr\'ec\'edent : \[ |S_{a,q,d}(\zz)|\ll d^{\frac{(d_{1}-1)(r+1)}{2^{d_{1}-1}}}q^{m+1+\varepsilon-K_{1}\theta'}. \] Par cons\'equent \begin{align*}
\left| \mathfrak{S}_{d,\zz}(\phi_{1}(d,l,\theta))-\mathfrak{S}_{d,\zz}\right| & \ll d^{\frac{(d_{1}-1)(r+1)}{2^{d_{1}-1}}}\sum_{q>\phi_{1}(d,l,\theta)}q^{-(m+1)}\sum_{0\leqslant a <q}|S_{a,q,d}(\zz)| \\ & \ll d^{\frac{(d_{1}-1)(r+1)}{2^{d_{1}-1}}}\sum_{q>\phi_{1}(d,l,\theta)}q^{-(m+1)}\sum_{0\leqslant a <q}q^{m+1+\varepsilon-K_{1}\theta'} \\ & \ll 
d^{\frac{(d_{1}-1)(r+1)}{2^{d_{1}-1}}}\sum_{q>\phi(d,l,\theta)}q^{-\frac{K_{1}}{(d_{1}-1)}+1+\varepsilon}l^{\frac{d_{2}K_{1}}{(d_{1}-1)}}d^{\frac{K_{1}}{(d_{1}-1)}} \\ & \ll d^{\frac{(d_{1}-1)(r+1)}{2^{d_{1}-1}}+2+\varepsilon}l^{2d_{2}+\varepsilon}P_{1}^{\theta(2(d_{1}-1)-K_{1})+\varepsilon}.
\end{align*}
En prenant $ P_{1}\ll 1 $ cette majoration donne : \[ \left| \mathfrak{S}_{d,\zz}(\phi_{1}(d,l,\theta))-\mathfrak{S}_{d,\zz}\right|  \ll d^{\frac{(d_{1}-1)(r+1)}{2^{d_{1}-1}}+2+\varepsilon}l^{2d_{2}+\varepsilon}, \] et en consid\'erant la majoration triviale $ | \mathfrak{S}_{d,\zz}(\phi_{1}(d,l,\theta))|\ll d^{2}l^{2d_{2}} $, on trouve finalement \[ |\mathfrak{S}_{d,\zz}|\ll d^{\frac{(d_{1}-1)(r+1)}{2^{d_{1}-1}}+2+\varepsilon}l^{2d_{2}+\varepsilon}. \]
\end{proof}

On d\'eduit alors des lemmes \ref{Sz} et \ref{Jz} : 

\begin{lemma}\label{lemmez}
Soit  $ \zz\in \mathcal{A}_{1}^{\mu}(\ZZ) $. On suppose fix\'es $ \theta\in [0,1] $ et $ P_{1}\geqslant 1  $ tels que $ l^{2d_{2}}P_{1}^{-d_{1}+3\theta(d_{1}-1)}<1 $. On suppose de plus que $ K_{1}>4(d_{1}-1) $ et $ d_{1}\geqslant 2 $. On a alors que \[ N_{d,\zz}(P_{1})=\mathfrak{S}_{d,\zz}J_{\zz}d^{m-r-d_{1}}l^{m-r}P_{1}^{m+1-d_{1}}+ O(E_{2}) +O(E_{3}), \] avec \[ E_{2}=d^{m-r+3-d_{1}}l^{4d_{2}+m-r}P^{m+1-d_{1}-\eta(\theta)}, \]\[ E_{3}=d^{m-r+\frac{(d_{1}-1)(r+1)}{2^{d_{1}-1}}+\varepsilon}l^{3d_{2}+m-r+\varepsilon}P_{1}^{m+1-d_{1}+2\theta(d_{1}-1)-\frac{K_{1}\theta}{2}+\varepsilon} \]
et $ \varepsilon>0 $ arbitrairement petit. 
\end{lemma}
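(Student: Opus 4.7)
Ma stratégie sera de suivre pas à pas le schéma de démonstration du lemme~\ref{lemmex}, en l'adaptant au cadre symétrique où c'est $\zz$ qui est fixé plutôt que $\xx$. Le point de départ sera le lemme~\ref{intermediairez} qui fournit déjà le développement souhaité avec la série singulière tronquée $\mathfrak{S}_{d,\zz}(\phi_1(d,l,\theta))$ et l'intégrale singulière tronquée $J_\zz(\tilde{\phi}_1(\theta))$, modulo deux termes d'erreur : le premier coïncide précisément avec $E_2$; le second, de la forme $d^{m-r+(d_1-1)(r+1)/2^{d_1-1}+\varepsilon}l^{d_2+m-r+\varepsilon}P_1^{m+2-d_1-\Delta_1(\theta,K_1)+\varepsilon}$, devra être comparé à $E_3$.

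L'étape centrale consistera à majorer
\[
|\mathfrak{S}_{d,\zz}(\phi_1)J_\zz(\tilde{\phi}_1) - \mathfrak{S}_{d,\zz}J_\zz|
\]
via l'inégalité triangulaire $\leq |\mathfrak{S}_{d,\zz}(\phi_1) - \mathfrak{S}_{d,\zz}||J_\zz| + |\mathfrak{S}_{d,\zz}(\phi_1)||J_\zz(\tilde{\phi}_1) - J_\zz|$. Les lemmes~\ref{Sz} et~\ref{Jz} fourniront alors les majorations nécessaires; comme $-\theta K_1 < -\theta K_1/2$, c'est le second produit qui dominera, conduisant à une contribution de l'ordre de $d^{(d_1-1)(r+1)/2^{d_1-1}+2+\varepsilon}l^{3d_2+\varepsilon}P_1^{-K_1\theta/2+2\theta(d_1-1)}$. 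L'hypothèse $l^{2d_2}P_1^{-d_1+3\theta(d_1-1)}<1$ assurera la disjonction des arcs majeurs (lemme~\ref{disjoint1}), condition indispensable à l'application du lemme~\ref{Jz}.

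Il restera alors à multiplier par le coefficient principal $d^{m-r-d_1}l^{m-r}P_1^{m+1-d_1}$ et à vérifier que le résultat est dominé par $E_3$ : on obtiendra $d^{m-r-d_1+2+(d_1-1)(r+1)/2^{d_1-1}+\varepsilon}l^{3d_2+m-r+\varepsilon}P_1^{m+1-d_1-K_1\theta/2+2\theta(d_1-1)}$, qui sera bien absorbé puisque $d_1 \geq 2$ entraîne $-d_1 + 2 \leq 0$. Parallèlement, pour le second terme d'erreur issu du lemme~\ref{intermediairez}, la comparaison des exposants en $P_1$ reviendra à montrer $1 - \Delta_1(\theta,K_1) \leq K_1\theta/2$, ce qui se déduira de $\Delta_1(\theta,K_1) > 1$ (hypothèse imposée avant le lemme~\ref{arcsmin1}) via l'encadrement $1 + 2\theta(d_1-1) < \theta K_1 < 3\theta K_1/2$; la puissance de $l$ plus faible dans ce second terme d'erreur ($l^{d_2}$ au lieu de $l^{3d_2}$) ne posera aucun problème puisque $l \geq 1$.

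L'obstacle principal ne sera pas d'ordre technique mais plutôt comptable : il s'agira de vérifier minutieusement la compatibilité de toutes les puissances en $d$, $l$ et $P_1$ entre les divers termes d'erreur intermédiaires et le terme $E_3$ annoncé. La condition $K_1 > 4(d_1-1)$, plus forte que le $K_1 > 2(d_1-1)$ déjà requis dans les étapes précédentes, jouera un rôle implicite en garantissant d'une part que l'exposant $-K_1\theta/2 + 2\theta(d_1-1)$ figurant dans $E_3$ demeure strictement négatif, et d'autre part, dans la démonstration du lemme~\ref{Jz} qu'on applique, que l'intégrale $J_\zz$ converge absolument.
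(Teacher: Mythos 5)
Your proposal follows the paper's own proof essentially step for step: start from Lemma~\ref{intermediairez}, split $|\mathfrak{S}_{d,\zz}(\phi_{1})J_{\zz}(\tilde{\phi}_{1})-\mathfrak{S}_{d,\zz}J_{\zz}|$ by the triangle inequality, bound the four factors by Lemmas~\ref{Sz} and~\ref{Jz}, observe that the product involving $|J_{\zz}(\tilde{\phi}_{1})-J_{\zz}|$ dominates, and multiply by $d^{m-r-d_{1}}l^{m-r}P_{1}^{m+1-d_{1}}$, absorbing the stray $d^{2}$ via $d_{1}\geqslant 2$. This is exactly the paper's argument, including the role of the hypothesis $l^{2d_{2}}P_{1}^{-d_{1}+3\theta(d_{1}-1)}<1$ through Lemma~\ref{disjoint1}.

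One bookkeeping point does not close as you state it. The first error term of Lemma~\ref{intermediairez} carries $P_{1}^{m+2-d_{1}-\Delta_{1}(\theta,K_{1})+\varepsilon}$ (the exponent $m+2$ is genuine: it comes from the $P_{1}^{m+2}$ in the Weyl bound of Lemma~\ref{dilemme34}). Comparing it with $E_{3}$ requires $1-\Delta_{1}\leqslant 2\theta(d_{1}-1)-\tfrac{K_{1}\theta}{2}$, which after substituting $\Delta_{1}=\theta K_{1}-2\theta(d_{1}-1)$ is equivalent to $\theta K_{1}\geqslant 2$ --- not the inequality $1-\Delta_{1}\leqslant K_{1}\theta/2$ you write, which is weaker and indeed trivial under $\Delta_{1}>1$. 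Neither $\Delta_{1}(\theta,K_{1})>1$ nor $K_{1}>4(d_{1}-1)$ forces $\theta K_{1}\geqslant 2$ (e.g. $K_{1}=100$, $d_{1}=2$, $\theta=0.015$ gives $\Delta_{1}=1.47$ but $\theta K_{1}=1.5$). The paper elides the same difficulty by silently quoting this error term with exponent $m+1-d_{1}-\Delta_{1}$ in the proof, so your write-up is faithful to the source; but to make the step rigorous you would either need to assume $\theta K_{1}\geqslant 2$ or carry $P_{1}^{m+2-d_{1}-\Delta_{1}+\varepsilon}$ along as an additional error term.
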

\begin{proof}
Par le lemme \ref{intermediairez} :

\begin{multline*}
N_{d,\zz}(P_{1})=d^{m-r-d_{1}}l^{m-r}P_{1}^{m+1-d_{1}}\mathfrak{S}_{d,\zz}(\phi_{1}(d,l,\theta))J_{\zz}(\tilde{\phi}_{1}(\theta))+O(E_{1})+O(E_{2}), \end{multline*} 
o\`u \[ E_{1}=d^{m-r+\varepsilon+\frac{(d_{1}-1)(r+1)}{2^{d_{1}-1}}}l^{d_{2}+m-r+\varepsilon}P_{1}^{m+1-d_{1}-\Delta_{1}(\theta,K_{1})+\varepsilon} \ll E_{3}. \]
Par ailleurs, d'apr\`es les lemmes \ref{Sz} et \ref{Jz}, on a \begin{multline*}
\left|\mathfrak{S}_{d,\zz}(\phi_{1}(d,l,\theta))J_{\zz}(\tilde{\phi}_{1}(\theta))-\mathfrak{S}_{d,\zz}J_{\zz}\right| \\ \leqslant \left|\mathfrak{S}_{d,\zz}(\phi_{1}(d,l,\theta))-\mathfrak{S}_{d,\zz}\right|\left|J_{\zz}\right|+ \left|\mathfrak{S}_{d,\zz}(\phi_{1}(d,l,\theta)) \right| \left|  J_{\zz}(\tilde{\phi}_{1}(\theta))-J_{\zz}\right| \\ \ll d^{\frac{(d_{1}-1)(r+1)}{2^{d_{1}-1}}+2+\varepsilon}l^{3d_{2}+2\varepsilon}P_{1}^{\theta(2(d_{1}-1)-K_{1})}+ d^{2}l^{3d_{2}+2\varepsilon}P_{1}^{\theta(2(d_{1}-1)-\frac{K_{1}}{2})+\varepsilon},
\end{multline*} et en multipliant par $ d^{m-r-d_{1}}l^{m-r}P_{1}^{m+1-d_{1}} $, on obtient un terme d'erreur \[ d^{m-r-d_{1}\frac{(d_{1}-1)(r+1)}{2^{d_{1}-1}}+2+\varepsilon}l^{3d_{2}+2\varepsilon}P_{1}^{m+1-d_{1}+2\theta(d_{1}-1)-\frac{K_{1}\theta}{2}+\varepsilon}, \]
d'o\`u le r\'esultat. 
\end{proof}

En fixant $ \theta>0 $ tel que $ \theta<\frac{1}{5(d_{1}-1)} $, on obtient le corollaire suivant : 

\begin{cor}\label{corollairez}
Soit  $ \zz\in \mathcal{A}_{1}^{\mu}(\ZZ) $. On suppose que $ K_{1}>4(d_{1}-1) $ et $ d_{1}\geqslant 2 $. Il existe alors un r\'eel $ \delta>0 $ arbitrairement petit tel que : \begin{multline*} N_{d,\zz}(P_{1})=\mathfrak{S}_{d,\zz}J_{\zz}d^{m-r-d_{1}}l^{m-r}P_{1}^{m+1-d_{1}} \\ +O\left( d^{m-r}\max\{d^{\frac{(d_{1}-1)(r+1)}{2^{d_{1}-1}}+\varepsilon},d^{3-d_{1}}\}l^{m-r+4d_{2}}P_{1}^{m+1-d_{1}-\delta}\right), \end{multline*} uniform\'ement pour tout $ l<P_{1}^{\frac{d_{1}-1}{2d_{2}}} $. 
\end{cor}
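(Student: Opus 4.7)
The proof I propose is a direct application of Lemma \ref{lemmez}, which provides an asymptotic formula
\[ N_{d,\zz}(P_{1})=\mathfrak{S}_{d,\zz}J_{\zz}d^{m-r-d_{1}}l^{m-r}P_{1}^{m+1-d_{1}}+O(E_{2})+O(E_{3}), \]
for any $\theta \in [0,1]$ satisfying $l^{2d_{2}}P_{1}^{-d_{1}+3\theta(d_{1}-1)}<1$, under the assumptions $d_{1}\geqslant 2$ and $K_{1}>2(d_{1}-1)$ (both weaker than our hypotheses). The strategy is thus to select $\theta$ so that both error terms $E_{2}$ and $E_{3}$ gain a small power saving in $P_{1}$, and to verify that the admissibility condition on $\theta$ is compatible with the uniformity range $l<P_{1}^{(d_{1}-1)/2d_{2}}$.

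First I would fix $\theta$ such that $0<\theta<\frac{1}{5(d_{1}-1)}$. This guarantees $\eta(\theta)=1-5\theta(d_{1}-1)>0$, so that
\[ E_{2}=d^{m-r+3-d_{1}}l^{4d_{2}+m-r}P_{1}^{m+1-d_{1}-\eta(\theta)} \]
is dominated by the stated error term with $\delta\leqslant\eta(\theta)$. At the same time, the uniformity condition $l<P_{1}^{(d_{1}-1)/2d_{2}}$ forces $l^{2d_{2}}<P_{1}^{d_{1}-1}$, whence
\[ l^{2d_{2}}P_{1}^{-d_{1}+3\theta(d_{1}-1)}<P_{1}^{-1+3\theta(d_{1}-1)}<1, \]
since $3\theta(d_{1}-1)<3/5<1$. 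Hence the hypothesis of Lemma \ref{lemmez} is satisfied.

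Next I would exploit the hypothesis $K_{1}>4(d_{1}-1)$ to handle $E_{3}$. The exponent of $P_{1}$ in $E_{3}$ is $m+1-d_{1}+2\theta(d_{1}-1)-\frac{K_{1}\theta}{2}+\varepsilon$, and the bracket $2\theta(d_{1}-1)-\frac{K_{1}\theta}{2}=\theta(2(d_{1}-1)-\frac{K_{1}}{2})$ is strictly negative by our hypothesis, say bounded above by $-2\delta$ for a suitable $\delta>0$ arbitrarily small and $\varepsilon$ chosen smaller than $\delta$. One then obtains
\[ E_{3}\ll d^{m-r+\frac{(d_{1}-1)(r+1)}{2^{d_{1}-1}}+\varepsilon}l^{3d_{2}+m-r+\varepsilon}P_{1}^{m+1-d_{1}-\delta}, \]
which also fits inside the stated error since $l^{3d_{2}+\varepsilon}\ll l^{4d_{2}}$ (for $P_{1}$ hence $l$ large enough, possibly after adjusting $\delta$).

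Finally, combining the two bounds and bounding $\max(E_{2},E_{3})$ by
\[d^{m-r}\max\{d^{(d_{1}-1)(r+1)/2^{d_{1}-1}+\varepsilon},d^{3-d_{1}}\}\,l^{m-r+4d_{2}}P_{1}^{m+1-d_{1}-\delta}\]
yields the claim. The only technical point requiring care is a small case distinction analogous to the beginning of the proof of Proposition \ref{propx}: if $l^{2d_{2}}$ already exceeds some negative power of $P_{1}$ (e.g.\ $l$ close to the uniformity threshold $P_{1}^{(d_{1}-1)/2d_{2}}$), one checks that the trivial bound $N_{d,\zz}(P_{1})\ll d^{m-r}l^{m-r}P_{1}^{m+1}$ is already absorbed in the error term, which makes the asymptotic formula trivially true in this degenerate regime. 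This verification is the only genuine obstacle, but it reduces to elementary comparisons of powers of $d$, $l$ and $P_{1}$ using the constraint $l<P_{1}^{(d_{1}-1)/2d_{2}}$.
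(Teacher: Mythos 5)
Votre démonstration est correcte et suit exactement la voie du texte : le corollaire s'obtient en appliquant le lemme \ref{lemmez} avec un $\theta$ fixé tel que $0<\theta<\frac{1}{5(d_{1}-1)}$, la condition d'admissibilité $l^{2d_{2}}P_{1}^{-d_{1}+3\theta(d_{1}-1)}<1$ découlant de la borne d'uniformité $l<P_{1}^{\frac{d_{1}-1}{2d_{2}}}$ comme vous le vérifiez. Seule petite inexactitude sans conséquence : le lemme \ref{lemmez} suppose déjà $K_{1}>4(d_{1}-1)$ (et non $K_{1}>2(d_{1}-1)$), donc ses hypothèses ne sont pas strictement plus faibles que celles du corollaire.
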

On pose \`a pr\'esent $ P_{1}=P_{2}^{b} $, avec $ b\geqslant 1 $ et on introduit la fonction \begin{equation}
g_{1}(b,\delta)=\left(1-\frac{5d_{2}}{b}-\delta\right)^{-1}5(d_{1}-1)\left(\frac{4d_{2}}{b}+2\delta\right),
\end{equation} ainsi que \begin{multline}
\tilde{N}_{d,1}^{(1)}(P_{1},P_{2})=\Card\left\{ (\xx,\yy,\zz)\in \ZZ^{n+2} \; | \; \zz \in \mathcal{A}_{1}^{\mu}(\ZZ), \; |\xx|\leqslant P_{1}, \;\right. \\ \left. |\yy| < d|\xx|P_{2},\;   |\zz|\leqslant P_{2}, \; |\yy|\leqslant d|\xx| |\zz|,  \;  F(d\xx,\yy,\zz)=0 \right\}  \\ =\Card\left\{ (\xx,\yy,\zz)\in \ZZ^{n+2} \; | \; \zz \in \mathcal{A}_{1}^{\mu}(\ZZ), \; |\xx|\leqslant P_{1}, \;\right. \\ \left. |\yy| < d|\xx|P_{2},\;   |\zz|\leqslant P_{2}, \; \left\lfloor \frac{|\yy|}{d|\xx|}\right\rfloor < |\zz|,  \;  F(d\xx,\yy,\zz)=0 \right\} .
\end{multline}
On a alors la proposition suivante qui est l'analogue de \ref{propx} : 
\begin{prop}\label{propz}
On suppose $ K_{1}>4(d_{1}-1) $, $ P_{1}=P_{2}^{b} $ et de plus que \[ \frac{K_{1}}{2}-2(d_{1}-1)>g_{1}(b,\delta). \] Alors : \begin{multline*} \tilde{N}_{d,1}^{(1)}(P_{1},P_{2})=\left(\sum_{\zz\in P_{2}\BB_{3}\cap \mathcal{A}_{1}^{\mu}(\ZZ) }\mathfrak{S}_{d,\zz}J_{\zz}|\zz|^{m-r}\right)d^{m-r-d_{1}}P_{1}^{m+1-d_{1}} \\ + O\left( d^{m-r}\max\{d^{\frac{(d_{1}-1)(r+1)}{2^{d_{1}-1}}+\varepsilon},d^{3-d_{1}}\}P_{1}^{m+1-d_{1}-\delta}P_{2}^{n-r+1-d_{2}}\right), \end{multline*} pour $ \delta>0 $ arbitrairement petit. 
\end{prop}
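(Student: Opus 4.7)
The approach is to parallel the proof of Proposition~\ref{propx}, interchanging the roles of $(\xx, P_1, d_1, u)$ and $(\zz, P_2, d_2, b)$. The starting point is the decomposition
\[ \tilde{N}_{d,1}^{(1)}(P_1,P_2) = \sum_{\zz \in P_2\BB_3 \cap \mathcal{A}_1^\mu(\ZZ)} N_{d,\zz}(P_1), \]
which is immediate since the constraint $\lfloor |\yy|/(d|\xx|)\rfloor < |\zz|$ rewrites as $|\yy| < d|\xx|l$ with $l=|\zz|$, matching the defining condition of $N_{d,\zz}(P_1)$. The condition $\zz \in \mathcal{A}_1^\mu(\ZZ)$ is built into the sum, so Lemma~\ref{lemmez} may be applied termwise.

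First I would dispose of a trivial case: if $P_2^{2d_2}P_1^{-d_1+3/5+\delta} \geq 1$, then the naive bound $\tilde{N}_{d,1}^{(1)}(P_1,P_2) \ll d^{m-r}P_1^{m+1}P_2^{n-r+1}$ together with the bounds $|\mathfrak{S}_{d,\zz}| \ll d^{(d_{1}-1)(r+1)/2^{d_{1}-1}+2+\varepsilon}l^{2d_{2}+\varepsilon}$ and $|J_\zz|\ll l^{d_2+\varepsilon}$ from Lemmas~\ref{Sz} and~\ref{Jz} each fit inside the target error, and the claim is immediate. In the non-trivial range, set
\[ \theta := \frac{1}{5(d_{1}-1)}\Bigl(1 - \frac{5d_{2}}{b} - \delta\Bigr), \]
so that $\eta_1(\theta) = \frac{5d_2}{b}+\delta$ and so that uniformly for $l = |\zz| \leq P_2 = P_1^{1/b}$ the inequality $l^{2d_2}P_1^{-d_1+3\theta(d_1-1)} < 1$ required by Lemma~\ref{lemmez} holds. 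Apply Lemma~\ref{lemmez} termwise and sum.

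The main term assembles directly into
\[ \Bigl(\sum_{\zz\in P_{2}\BB_{3}\cap \mathcal{A}_{1}^{\mu}(\ZZ) }\mathfrak{S}_{d,\zz}J_{\zz}|\zz|^{m-r}\Bigr)\, d^{m-r-d_{1}}P_{1}^{m+1-d_{1}}. \]
For the $E_2$-type error, the sum $\sum_{|\zz|\leq P_2}|\zz|^{m-r+4d_2} \ll P_2^{n-r+1+4d_2} = P_2^{n-r+1-d_2}\cdot P_1^{5d_2/b}$, and combined with the factor $P_1^{-\eta_1(\theta)} = P_1^{-5d_2/b - \delta}$ this yields an error $\ll d^{m-r+3-d_1}P_1^{m+1-d_1-\delta}P_2^{n-r+1-d_2}$, which is admissible. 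For the $E_3$-type error, the summation $\sum_{|\zz|\leq P_2}|\zz|^{3d_2+m-r+\varepsilon}\ll P_2^{n-r+1-d_2}\cdot P_1^{4d_2/b+\varepsilon}$ gives a resulting $P_1$-exponent of $m+1-d_1 + 2\theta(d_1-1) - K_1\theta/2 + 4d_2/b + 2\varepsilon$. The hypothesis $\frac{K_1}{2}-2(d_1-1) > g_1(b,\delta)$, unpacked, is precisely $\theta\bigl(\frac{K_1}{2}-2(d_1-1)\bigr) > \frac{4d_2}{b} + 2\delta$, which forces this exponent below $m+1-d_1-\delta$.

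The main obstacle is the bookkeeping: several powers of $P_1$ and $P_2$ arising from the sum over $\zz$ must be simultaneously absorbed into the target error $d^{m-r}\max\{d^{(d_1-1)(r+1)/2^{d_1-1}+\varepsilon}, d^{3-d_1}\}P_1^{m+1-d_1-\delta}P_2^{n-r+1-d_2}$. The very specific form of $g_1(b,\delta)$ is calibrated so that the single choice of $\theta$ above is compatible with both the $E_2$-cancellation (controlled by $\eta_1(\theta)$) and the $E_3$-cancellation (controlled by $\theta(K_1/2 - 2(d_1-1))$). Once this calibration is verified, the proof follows the template of Proposition~\ref{propx} line by line, with the single genuinely new observation being that $d$ plays no active role in the $\zz$-side trivial case, so the trivial regime is determined solely by the relationship between $P_1$ and $P_2$.
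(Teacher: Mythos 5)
Your proof is correct and follows the paper's argument essentially line by line: you decompose $\tilde{N}_{d,1}^{(1)}(P_1,P_2)$ as $\sum_{\zz\in P_2\BB_3\cap\mathcal{A}_1^{\mu}(\ZZ)}N_{d,\zz}(P_1)$, apply Lemma \ref{lemmez} termwise with the same choice $\theta=\frac{1}{5(d_1-1)}\bigl(1-\frac{5d_2}{b}-\delta\bigr)$, and absorb the two error sums exactly as the paper does, via $\eta_1(\theta)=\frac{5d_2}{b}+\delta$ for the $E_2$-contribution and the identity $g_1(b,\delta)=\theta^{-1}\bigl(\frac{4d_2}{b}+2\delta\bigr)$ for the $E_3$-contribution. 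The only divergence is your preliminary ``trivial case'', which is vacuous here (positivity of $\theta$ already forces $b>5d_2/(1-\delta)$, under which $P_2^{2d_2}P_1^{-d_1+3/5+\delta}<1$ and the condition $l^{2d_2}P_1^{-d_1+3\theta(d_1-1)}<1$ of Lemma \ref{lemmez} holds for every $l\leqslant P_2$, exactly as you verify) --- which is fortunate, since as stated its resolution is not immediate: the naive bound does not fit inside the target error without the surplus power of $d$ that drove the analogous case in Proposition \ref{propx}, but since the case never occurs this does not affect the proof.
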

\begin{proof}
On sait, d'apr\`es le lemme \ref{lemmez} que :\[ \tilde{N}_{d,1}^{(1)}(P_{1},P_{2})=\left(\sum_{\zz\in P_{2}\BB_{3}\cap \mathcal{A}_{1}^{\mu}(\ZZ) }\mathfrak{S}_{d,\zz}J_{\zz}|\zz|^{m-r}\right)d^{m-r-d_{1}}P_{1}^{m+1-d_{1}}+ O(\mathcal{E}_{2})+O(\mathcal{E}_{3} ), \] avec \begin{align*}
\mathcal{E}_{2} & =d^{m-r+3-d_{1}}\sum_{l=1}^{P_{2}}\sum_{\substack{\zz\in P_{2}\BB_{3}\cap \mathcal{A}_{1}^{\mu}(\ZZ)\\ |\zz|=l} }l^{4d_{2}+m-r}P_{1}^{m+1-d_{1}-\eta(\theta)} \\ & \ll d^{m-r+3-d_{1}}P_{1}^{m+1-d_{1}-\eta(\theta)}P_{2}^{4d_{2}+n-r+1} \\  & =  d^{m-r+3-d_{1}}P_{1}^{m+1-d_{1}+\frac{5d_{2}}{b}-\eta(\theta)}P_{2}^{n-r+1-d_{2}}.
\end{align*}
et \begin{align*}
\mathcal{E}_{3} & =d^{m-r+\frac{(d_{1}-1)(r+1)}{2^{d_{1}-1}}+\varepsilon}\sum_{l=1}^{P_{2}}\sum_{\substack{\zz\in P_{1}\BB_{1}\cap \mathcal{A}_{1}^{\mu}(\ZZ)\\ |\zz|=l} }l^{3d_{2}+m-r+\varepsilon}P_{1}^{n-r+1-d_{1}+2\theta(d_{1}-1)-\frac{K_{1}\theta}{2}+\varepsilon}\\ & \ll d^{m-r+\frac{(d_{1}-1)(r+1)}{2^{d_{1}-1}}+\varepsilon}P_{1}^{m+1-d_{1}+2\theta(d_{1}-1)-\frac{K_{1}\theta}{2}+\frac{4d_{2}}{b}+2\varepsilon}P_{2}^{n-r+1-d_{2}}.
\end{align*}
Rappelons que $ \eta(\theta)=1-5\theta(d_{1}-1) $. On choisit alors \[ \theta=\frac{1}{5(d_{1}-1)}\left(1-\frac{5d_{2}}{b}-\delta\right), \] de sorte que \[ \frac{5d_{2}}{b}-\eta(\theta)=-\delta. \] On a par ailleurs, puisque $ 2\theta(d_{1}-1)-\frac{K_{1}\theta}{2}>g_{1}(b,\delta) $, \[ -2\theta(d_{1}-1)+\frac{K_{1}\theta}{2}>\left(\frac{4d_{2}}{b}+2\delta\right), \]  d'o\`u le r\'esultlat. 
\end{proof}

\subsection{Deuxi\`eme cas}

On suppose \`a pr\'esent $ l=\left\lfloor\frac{|\yy|}{d|\xx|}\right\rfloor\geqslant |\zz| $. 
Dans cette partie nous fixerons l'entier $ l $ et $ \zz $ de norme $ |\zz|\leqslant l $, et nous allons \'evaluer \begin{equation}
N_{d,l,\zz}(P_{1})=\Card\left\{ (\xx,\yy)\in \ZZ^{m+1}\; |\; |\xx|\leqslant P_{1}, \; dl|\xx|\leqslant |\yy| <d(l+1)|\xx| , \; F(d\xx,\yy,\zz)=0 \right\}.
\end{equation} 

Pour cela on introduit la s\'erie g\'en\'eratrice \begin{equation}
S_{d,l,\zz}(\alpha)=\sum_{|\xx|\leqslant P_{1}}\sum_{dl|\xx|\leqslant |\yy| <d(l+1)|\xx|}e\left( \alpha F(d\xx,\yy,\zz)\right),
\end{equation}
de sorte que $ N_{d,l,\zz}(P_{1})=\int_{0}^{1}S_{d,l,\zz}(\alpha)d\alpha $.\\

Les r\'esultats que nous obtiendrons dans cette section seront sensiblement identiques \`a ceux de la section pr\'ec\'edente, \`a quelques modifications pr\`es.

\subsubsection{Somme d'exponentielles}

Dans ce qui va suivre, pour un $ \yy $ donn\'e, on note $ N=\left\lfloor\frac{|\yy|}{dl}\right\rfloor $ et $ M=\left\lfloor\frac{|\yy|}{d(l+1)}\right\rfloor $. On a alors \[ dl|\xx|\leqslant |\yy| <d(l+1)|\xx| \Leftrightarrow M < |\xx| \leqslant N. \]
On note alors, pour $ N,M\in \{0,...,P_{1}\} $ : \begin{equation}
S_{d,N,M,l,\zz}(\alpha)=\sum_{M <|\xx|\leqslant N}\sum_{\substack{dNl\leqslant |\yy| <d(N+1)l \\ dM(l+1)\leqslant |\yy| < d(M+1)(l+1) }}e\left( \alpha F(d\xx,\yy,\zz)\right),
\end{equation}
et on a \begin{equation}
S_{d,l,\zz}(\alpha)=\sum_{N=1}^{P_{1}}\sum_{M=1}^{P_{1}}S_{d,N,M,l,\zz}(\alpha).
\end{equation}

En appliquant la m\'ethode de diff\'erencietion de Weyl des sections pr\'ec\'edentes on montre que : 

\begin{multline*}
|S_{d,N,M,l,\zz}(\alpha)|^{2^{d_{1}-1}}\ll \left(P_{1}^{r+1+\varepsilon}\right)^{2^{d_{1}-1}-d_{1}+1}\left((dlP_{1})^{m-r+\varepsilon}\right)^{2^{d_{1}-1}-d_{1}+1} \\  M_{d,\zz}\left(\alpha,P_{1},dlP_{1},P_{1}^{-1},(dlP_{1})^{-1}\right), \end{multline*}
o\`u $ M_{d,\zz}\left(\alpha,P_{1},dlP_{1},P_{1}^{-1},(dlP_{1})^{-1}\right) $ a \'et\'e d\'efini dans la section pr\'ec\'edente (cf. \eqref{Mz}). \\

Puis, en sommant sur $ M $ et $ N $, on en d\'eduit le lemme ci-dessous : \begin{lemma}
Pour tous $ P>1 $, $ \kappa>0 $ et tout $ \varepsilon>0 $ arbitrairement petit, l'une au moins des assertions suivantes est v\'erifi\'ee : \begin{enumerate}
\item $ |S_{d,l,\zz}(\alpha)|\ll d^{m-r+\frac{(d_{1}-1)(r+1)}{2^{d_{1}-1}}+\varepsilon}P_{1}^{m+3+\varepsilon}l^{m-r+\varepsilon}P^{-\kappa}, $ 
\item  $ M_{d,\zz}\left(\alpha,P_{1},dlP_{1},P_{1}^{-1},(dlP_{1})^{-1}\right)\gg \left(P_{1}^{r+1}\right)^{d_{1}-1}\left((dlP_{1})^{m-r}\right)^{d_{1}-1}P^{-2^{d_{1}-1}\kappa}. $
\end{enumerate}
\end{lemma}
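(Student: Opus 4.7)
The plan is to deduce the dichotomy for $S_{d,l,\zz}(\alpha)=\sum_{N,M}S_{d,N,M,l,\zz}(\alpha)$ from the unconditional Weyl-type estimate on each block $S_{d,N,M,l,\zz}(\alpha)$ proved immediately above, following exactly the same pattern as the analogous lemma for $S_{d,\zz}$ in Section 4. First I would distinguish according to whether
\[
M_{d,\zz}\!\left(\alpha,P_{1},dlP_{1},P_{1}^{-1},(dlP_{1})^{-1}\right) \gg \left(P_{1}^{r+1}\right)^{d_{1}-1}\left((dlP_{1})^{m-r}\right)^{d_{1}-1}P^{-2^{d_{1}-1}\kappa}
\]
holds or not; if it does, assertion (2) of the lemma is satisfied and there is nothing left to prove.

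Otherwise, I would substitute the complementary upper bound on $M_{d,\zz}$ into the preceding majoration
\[
|S_{d,N,M,l,\zz}(\alpha)|^{2^{d_{1}-1}}\ll (P_{1}^{r+1+\varepsilon})^{2^{d_{1}-1}-d_{1}+1}((dlP_{1})^{m-r+\varepsilon})^{2^{d_{1}-1}-d_{1}+1}M_{d,\zz}.
\]
Since $(2^{d_{1}-1}-d_{1}+1)+(d_{1}-1)=2^{d_{1}-1}$, the exponents in $P_{1}^{r+1}$ and $(dlP_{1})^{m-r}$ collapse cleanly. Taking the $2^{d_{1}-1}$-th root would then yield, uniformly in $(N,M)$,
\[
|S_{d,N,M,l,\zz}(\alpha)|\ll d^{m-r+\frac{(d_{1}-1)(r+1)}{2^{d_{1}-1}}+\varepsilon}\, l^{m-r+\varepsilon}\, P_{1}^{m+1+\varepsilon}\, P^{-\kappa},
\]
the $d$-weight $d^{(d_{1}-1)(r+1)/2^{d_{1}-1}}$ emerging as in Section 4 from the fact that Weyl differentiation of $F(d\xx,\yy,\zz)$ in the $\xx$-variables produces coefficients $d^{f_{\ii,j}}F_{\zz,\ii,j}$, so that the honest lower bound hidden inside alternative (2) really carries a factor $d^{(d_{1}-1)(r+1)}$.

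Finally I would close by the triangle inequality
\[
|S_{d,l,\zz}(\alpha)|\leqslant \sum_{N=0}^{P_{1}}\sum_{M=0}^{P_{1}}|S_{d,N,M,l,\zz}(\alpha)|,
\]
the range of admissible pairs $(N,M)$ being contained in $\{0,\dots,P_{1}\}^{2}$ and hence of size $O(P_{1}^{2})$. Multiplying the uniform per-block bound by this combinatorial factor turns $P_{1}^{m+1+\varepsilon}$ into $P_{1}^{m+3+\varepsilon}$, which is precisely alternative (1). The main obstacle is not conceptual — the entire argument parallels the derivation of the $S_{d,\zz}$ dichotomy in Section 4 — but rather the careful bookkeeping of the $d$-powers through the differentiation, exactly as had to be done for the forms $\gamma_{d,\xx,j}$ and $\gamma_{d,\zz,j}$ in Sections 3 and 4; the only new ingredient here is the extra dyadic decomposition in $M$, which costs the additional factor $P_{1}$ relative to the analogue for $S_{d,\zz}$.
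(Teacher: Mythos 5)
Your proof is correct and follows exactly the paper's route: the paper likewise derives the per-block Weyl bound for $S_{d,N,M,l,\zz}(\alpha)$ and then simply sums over the $O(P_{1}^{2})$ admissible pairs $(N,M)$, which is precisely what turns the $P_{1}^{m+2}$ of the first-case lemma into the $P_{1}^{m+3}$ of assertion (1). The only cosmetic remark is that alternative (2) of this lemma, unlike its analogue in the premier cas, carries no factor $d^{(d_{1}-1)(r+1)}$, so your argument actually delivers assertion (1) even without the weight $d^{(d_{1}-1)(r+1)/2^{d_{1}-1}}$ — harmless, since that weight only weakens the stated bound.
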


Par les m\^emes arguments que ceux employ\'es dans la section pr\'ec\'edente, on en d\'eduit l'\'equivalent du lemme \ref{dilemme34},

\begin{lemma}\label{dilemme44}
Si $ \varepsilon>0 $ est un r\'eel arbitrairement petit, et si $ \zz\in \mathcal{A}_{1}^{\mu}(\ZZ) $, l'une au moins des assertions suivantes est v\'erifi\'ee : \begin{enumerate}
\item $ |S_{d,l,\zz}(\alpha)|\ll  d^{m-r+\frac{(d_{1}-1)(r+1)}{2^{d_{1}-1}}+\varepsilon}l^{m-r+\varepsilon}P_{1}^{m+3+\varepsilon}P^{-K_{1}\theta}, $ 
\item le r\'eel $ \alpha $ appartient \`a $ \mathfrak{M}^{l}(\theta) $,
\end{enumerate}
o\`u l'on a not\'e \begin{equation}
\mathfrak{M}^{l}(\theta)=\mathfrak{M}^{(1),l}(\theta)\cup\mathfrak{M}^{(2),l}(\theta)
\end{equation}\begin{equation}
\mathfrak{M}_{a,q}^{(1),l}(\theta)=\left\{ \alpha\in [0,1[ \; | \;  2|\alpha q-a|\leqslant d^{-(d_{1}-1)}P_{1}^{-d_{1}}P^{(d_{1}-1)\theta}\right\},
\end{equation}
\begin{equation}
\mathfrak{M}^{(1),l}(\theta)=\bigcup_{\substack{q\leqslant dl^{d_{2}}P^{(d_{1}-1)\theta} \\ d|q}}\bigcup_{\substack{0\leqslant a <q  }}\mathfrak{M}_{a,q}^{(1),l}(\theta).
\end{equation}
\begin{equation}
\mathfrak{M}_{a,q}^{(2),l}(\theta)=\left\{ \alpha\in [0,1[ \; | \;  2|\alpha q-a|\leqslant d^{-d_{1}}P_{1}^{-d_{1}}P^{(d_{1}-1)\theta}\right\},
\end{equation}
\begin{equation}
\mathfrak{M}^{(2),l}(\theta)=\bigcup_{q\leqslant l^{d_{2}}P^{(d_{1}-1)\theta}}\bigcup_{\substack{0\leqslant a <q \\ \PGCD(a,q)=1 }}\mathfrak{M}_{a,q}^{(2),l}(\theta).
\end{equation}

\end{lemma}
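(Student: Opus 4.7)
The proof plan is to adapt the argument developed for Lemma~\ref{dilemme34} almost verbatim, paying attention only to the bookkeeping of the sum over $M,N$. The starting point is the author's decomposition
\[
S_{d,l,\zz}(\alpha)=\sum_{N=1}^{P_{1}}\sum_{M=1}^{P_{1}}S_{d,N,M,l,\zz}(\alpha),
\]
together with the Weyl-type inequality
\[
|S_{d,N,M,l,\zz}(\alpha)|^{2^{d_{1}-1}}\ll \bigl(P_{1}^{r+1+\varepsilon}\bigr)^{2^{d_{1}-1}-d_{1}+1}\bigl((dlP_{1})^{m-r+\varepsilon}\bigr)^{2^{d_{1}-1}-d_{1}+1}M_{d,\zz}\bigl(\alpha,P_{1},dlP_{1},P_{1}^{-1},(dlP_{1})^{-1}\bigr),
\]
which the author has already established. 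Since the right-hand side is independent of $N,M$, a trivial summation (using the fact that the indices $(N,M)$ with $S_{d,N,M,l,\zz}\neq 0$ number at most $O(P_{1}^{2})$, or applying Cauchy--Schwarz) produces the same quantity $M_{d,\zz}$ multiplied by an extra factor controlled by $P_{1}$, which will eventually account for the additional power of $P_{1}$ appearing in the statement compared with Lemma~\ref{dilemme34}.

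Next I would run exactly the same cascade of applications of Lemma~\ref{geomnomb2} used in the proof of Lemma~\ref{dilemme34}: for each $i\in\{1,\ldots,d_{1}-1\}$, one fixes all variables except $(\xx^{(i)},\yy^{(i)})$ and applies geometry of numbers with the choices $Z_{1}=d^{-(i+1)/2}P_{1}^{-(i+1)/2}P^{(i+1)\theta/2}$, $Z_{2}=d^{-(i-1)/2}P_{1}^{-(i-1)/2}P^{(i-1)\theta/2}$, and $a_{j}=P_{1}$ or $dlP_{1}$ according to whether $j\in\{0,\ldots,r\}$ or $j\in\{r+1,\ldots,m\}$. After the $d_{1}-1$ iterations (followed by the Lipschitz boxing argument of Lemma~\ref{lemmedebile} to reduce the $\yy^{(i)}$ ranges from $lP^{\theta}$ to $P^{\theta}$), one arrives at the trichotomy obtained for $S_{d,\zz}$ in the previous subsection: either one controls $|S_{d,l,\zz}(\alpha)|$ by the claimed power saving, or $\alpha$ lies in some $\mathfrak{M}_{a,q}^{(1),l}(\theta)\cup \mathfrak{M}_{a,q}^{(2),l}(\theta)$ (the major arcs are indeed the same as in the first subcase, since they are defined purely by the size of $q$ and by $P=P_{1}$), or else the auxiliary counting function
\[
\#\Bigl\{(\xx^{(i)},\yy^{(i)})_{i}\colon |\xx^{(i)}|\leqslant P^{\theta}/d,\ |\yy^{(i)}|\leqslant P^{\theta},\ \forall j,\ \gamma_{d,\zz,j}(\cdots)=0\Bigr\}
\]
is abnormally large.

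To discard this last auxiliary case I would follow the procedure of the previous subsection verbatim: by the Lipschitz rescaling of Lemma~\ref{lemmedebile} and the relation $\gamma_{d,\zz,j}(d\xx^{(i)},\yy^{(i)})=d^{f_{\ii,j}}\gamma_{\zz,j}(\xx^{(i)},\yy^{(i)})$, this count is bounded above by the number of integer points in the affine variety $\mathcal{L}_{1,\zz}$ of dimension $\leqslant(m+1)(d_{1}-1)-2^{d_{1}-1}K_{1}$ (by the standard bound from \cite[Th\'eor\`eme~3.1]{Br}). Intersecting with the diagonal $\xx^{(1)}=\cdots=\xx^{(d_{1}-1)}$, $\yy^{(1)}=\cdots=\yy^{(d_{1}-1)}$ gives a variety isomorphic to $V_{1,\zz}^{\ast}$, whose dimension by the hypothesis $\zz\in\mathcal{A}_{1}^{\mu}(\ZZ)$ and the choice of $K_{1}$ is strictly less than the value predicted by the "abnormal" case. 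This contradicts the hypothesis, leaving only the two cases in the statement.

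The only genuinely non-cosmetic point is the bookkeeping with the $\sum_{N,M}$, which determines the exponent $P_{1}^{m+3+\varepsilon}$; I expect no real obstacle here, since one simply absorbs the summation over $M,N$ at the end of the argument, as was done for the sum over $N$ in the proof of Lemma~\ref{dilemme1}. The only mild subtlety to check is that the major-arc families $\mathfrak{M}^{(1),l}$ and $\mathfrak{M}^{(2),l}$ indeed coincide with the ones produced by the geometric argument, but this is automatic because the defining inequalities depend only on $P=P_{1}$, on $d$, and on the bounds $dl^{d_{2}}P^{(d_{1}-1)\theta}$ and $l^{d_{2}}P^{(d_{1}-1)\theta}$ on $q$, which are exactly what the Weyl-differencing output produces.
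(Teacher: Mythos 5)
Your proposal is correct and follows essentially the same route as the paper: the author likewise states the Weyl-type bound for $S_{d,N,M,l,\zz}$, sums trivially over the $O(P_{1}^{2})$ pairs $(M,N)$ to obtain the dichotomy with the exponent $P_{1}^{m+3+\varepsilon}$, and then invokes verbatim the geometry-of-numbers cascade, the diagonal argument identifying $\mathcal{L}_{1,\zz}\cap\mathcal{D}$ with $V_{1,\zz}^{\ast}$, and the hypothesis $\zz\in\mathcal{A}_{1}^{\mu}(\ZZ)$ with $K_{1}=(n+2-\dim V_{1}^{\ast}-\mu)/2^{d_{1}-1}$ from the proof of Lemma~\ref{dilemme34}. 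Your observation that the major-arc families coincide with those of the first subcase is exactly the point the paper leaves implicit.
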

\`A partir d'ici, on fixe \`a nouveau $ P=P_{1} $.

\subsubsection{M\'ethode du cercle}

Pour les arcs mineurs, les calculs effectu\'es pour \'etablir le lemme \ref{arcsmin1}

\begin{lemma}\label{arcsmin1bis}
Pour tout $ \zz\in \mathcal{A}_{1}^{\mu}(\ZZ) $, on a la  majoration : \[ \int_{\alpha\notin \mathfrak{M}^{l}(\theta)}|S_{d,l,\zz}(\alpha)|d\alpha \ll d^{m-r+\varepsilon+\frac{(d_{1}-1)(r+1)}{2^{d_{1}-1}}}l^{d_{2}+m-r+\varepsilon}P_{1}^{m+3-d_{1}-\Delta_{1}(\theta,K_{1})+\varepsilon}. \] \end{lemma}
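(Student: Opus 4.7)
The plan is to mirror almost verbatim the argument used to prove Lemma \ref{arcsmin1}, replacing $S_{d,\zz}$ by $S_{d,l,\zz}$, the arcs $\mathfrak{M}^{\zz}(\theta)$ by $\mathfrak{M}^{l}(\theta)$, and invoking Lemma \ref{dilemme44} in place of Lemma \ref{dilemme34}. The only reason the right-hand side now carries $P_{1}^{m+3}$ instead of $P_{1}^{m+2}$ is the corresponding upgrade in the Weyl estimate supplied by Lemma \ref{dilemme44}; everything else tracks through identically.

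Concretely, I would choose a subdivision
\[ 0 < \theta = \theta_{0} < \theta_{1} < \cdots < \theta_{T-1} < \theta_{T} = 1 \]
such that $2(\theta_{i+1}-\theta_{i})(d_{1}-1) < \varepsilon$ and $T \ll P_{1}^{\varepsilon}$, exactly as in the proof of Lemma \ref{arcsmin1}. On the outermost region $\alpha \notin \mathfrak{M}^{l}(\theta_{T})$, case (1) of Lemma \ref{dilemme44} applies directly and yields
\[ \int_{\alpha\notin \mathfrak{M}^{l}(\theta_{T})}|S_{d,l,\zz}(\alpha)|\,d\alpha \ll d^{m-r+\varepsilon+\frac{(d_{1}-1)(r+1)}{2^{d_{1}-1}}}l^{m-r+\varepsilon}P_{1}^{m+3-K_{1}\theta_{T}+\varepsilon}, \]
and since $K_{1}\theta_{T} \geq d_{1} + \Delta_{1}(\theta,K_{1})$ this is absorbed in the announced bound.

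Next I would compute the volumes $\Vol(\mathfrak{M}^{(1),l}(\theta))$ and $\Vol(\mathfrak{M}^{(2),l}(\theta))$ by the obvious double sums over $(a,q)$:
\[ \Vol(\mathfrak{M}^{(1),l}(\theta)) \ll \sum_{\substack{q\leqslant dl^{d_{2}}P_{1}^{(d_{1}-1)\theta}\\ d\mid q}}\sum_{0\leqslant a<q} q^{-1}d^{-(d_{1}-1)}P_{1}^{-d_{1}+(d_{1}-1)\theta} \ll d^{2-d_{1}}l^{d_{2}}P_{1}^{-d_{1}+2(d_{1}-1)\theta}, \]
and the same bound for $\Vol(\mathfrak{M}^{(2),l}(\theta))$, hence for $\Vol(\mathfrak{M}^{l}(\theta))$. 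On the annulus $\mathfrak{M}^{l}(\theta_{i+1}) \setminus \mathfrak{M}^{l}(\theta_{i})$, applying the Weyl alternative of Lemma \ref{dilemme44} at level $\theta_{i}$ (since by construction $\alpha \notin \mathfrak{M}^{l}(\theta_{i})$) and multiplying by the volume bound at level $\theta_{i+1}$ gives
\[ \int_{\mathfrak{M}^{l}(\theta_{i+1})\setminus\mathfrak{M}^{l}(\theta_{i})}|S_{d,l,\zz}(\alpha)|\,d\alpha \ll d^{m-r+\frac{(d_{1}-1)(r+1)}{2^{d_{1}-1}}+2-d_{1}+\varepsilon}l^{m-r+d_{2}+\varepsilon}P_{1}^{m+3-d_{1}-K_{1}\theta_{i}+2(d_{1}-1)\theta_{i+1}+\varepsilon}. \]
Using $2(\theta_{i+1}-\theta_{i})(d_{1}-1) < \varepsilon$ one checks that $-K_{1}\theta_{i}+2(d_{1}-1)\theta_{i+1} \leq -\Delta_{1}(\theta,K_{1})+\varepsilon$, so each term is bounded by the claimed right-hand side.

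Summing the $T \ll P_{1}^{\varepsilon}$ contributions (which only affects the final exponent $\varepsilon$) closes the argument. The only point that needs care is the bookkeeping of exponents in $d$ and $l$, since the two types of major arcs $\mathfrak{M}^{(1),l}$ and $\mathfrak{M}^{(2),l}$ carry slightly different $d$-dependence through the condition $d\mid q$; both nevertheless give the same $l^{d_{2}}$ factor and a $d$-exponent dominated by the one in the Weyl bound. There is no real obstacle beyond verifying this bookkeeping: the argument is structurally identical to that of Lemma \ref{arcsmin1}.
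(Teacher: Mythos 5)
Your proof is correct and is exactly the argument the paper intends: it states Lemma \ref{arcsmin1bis} by simply asserting that the computations establishing Lemma \ref{arcsmin1} carry over, with the $P_{1}^{m+3}$ (in place of $P_{1}^{m+2}$) coming from the extra summation over $M$ in the Weyl bound of Lemma \ref{dilemme44}. Your bookkeeping of the $d$- and $l$-exponents (in particular that the factor $d^{2-d_{1}}$ from the volume of the arcs is harmless since $d_{1}\geqslant 2$) matches what the paper does in the proof of Lemma \ref{arcsmin1}.
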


Pour les arcs majeurs, on a les \'equivalents des lemmes \ref{separation1} et \ref{intermediairez} :

\begin{lemma}\label{separation1bis}
On a l'estimation \begin{multline*}
S_{d,l,\zz}(\alpha)=d^{m-r}l^{m-r}P_{1}^{m+1}q^{-(m+1)}S_{a,q,d}(\zz)I_{l,\zz}(d^{d_{1}}P_{1}^{d_{1}}\beta) \\ +O\left( d^{m-r+1}l^{2d_{2}+m-r}P_{1}^{m+2\theta(d_{1}-1)} \right), \end{multline*} avec \begin{equation}
S_{d,a,q}(\zz)=\sum_{(\bb_{1},\bb_{2})\in (\ZZ/q\ZZ)^{r+1}\times (\ZZ/q\ZZ)^{m-r}}e\left(\frac{a}{q}F(d\bb_{1},\bb_{2},\zz)\right),
\end{equation} \begin{equation}
I_{l,\zz}(\beta)=\int_{\substack{(\uu,\vv)\in [-1,1]^{r+1}\times [-1,1]^{m-r} \\ |\uu|\leqslant |\vv|<\left(1+\frac{1}{l}\right) |\uu|}}e\left(\beta F(\uu,l\vv,\zz)\right)d\uu d\vv. 
\end{equation}
\end{lemma}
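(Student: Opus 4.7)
\medskip

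\noindent\textbf{Plan of proof.} The argument follows the exact same strategy as the proof of Lemma \ref{separation1}, with only minor modifications to account for the new constraint $dl|\xx| \leqslant |\yy| < d(l+1)|\xx|$ on the summation region. The plan is first to split $S_{d,l,\zz}(\alpha)$ according to residue classes modulo $q$, writing $\alpha = a/q + \beta$, so that
\[
S_{d,l,\zz}(\alpha) = \sum_{\bb_1\in(\ZZ/q\ZZ)^{r+1}}\sum_{\bb_2\in(\ZZ/q\ZZ)^{m-r}} e\!\left(\tfrac{a}{q}F(d\bb_1,\bb_2,\zz)\right) \widetilde{S}_3(\bb_1,\bb_2),
\]
where $\widetilde{S}_3(\bb_1,\bb_2)$ is the restricted sum of $e(\beta F(d\xx,\yy,\zz))$ over $\xx\equiv \bb_1$, $\yy\equiv \bb_2 \pmod{q}$ in the new region. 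The case $q > P_1$ handles itself trivially (both sides of the claimed formula are bounded by the error term), so we may assume $q\leqslant P_1$.

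For the main regime $q\leqslant P_1$, I would approximate $\widetilde{S}_3(\bb_1,\bb_2)$ by a Riemann-type integral. As in Lemma \ref{separation1}, two consecutive lattice points $(q\xx'+\bb_1, q\yy'+\bb_2)$ and $(q\xx''+\bb_1,q\yy''+\bb_2)$ that both lie in the prescribed region and differ by $O(1)$ in each coordinate satisfy
\[
|F(d(q\xx'+\bb_1),q\yy'+\bb_2,\zz) - F(d(q\xx''+\bb_1),q\yy''+\bb_2,\zz)| \ll q d^{d_1}P_1^{d_1-1}(lP_1)^{d_2-1}\cdot lP_1 \ll qd^{d_1}l^{d_2}P_1^{d_1+d_2-1},
\]
so that replacing the discrete sum by the corresponding integral introduces a main term plus an error controlled by $q|\beta|$ times the volume, plus a boundary contribution (from the region having codimension-one faces of thickness $\ll 1$ relative to $q$). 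After the change of variables $\uu = qP_1^{-1}\tilde{\uu}$, $\vv = q(dlP_1)^{-1}\tilde{\vv}$, the region $dl|\xx|\leqslant |\yy| < d(l+1)|\xx|$ becomes precisely $|\uu|\leqslant |\vv| < (1+1/l)|\uu|$, matching the domain of integration defining $I_{l,\zz}$. Using $|\beta|\leqslant d^{-d_1}P_1^{-d_1+(d_1-1)\theta}$ and $q\leqslant \phi_1(d,l,\theta) = dl^{d_2}P^{(d_1-1)\theta}$ then collapses the various error terms to the single bound $d^{m-r+1}l^{2d_2+m-r}P_1^{m+2\theta(d_1-1)}$ claimed.

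The only point requiring extra care (compared to Lemma \ref{separation1}) is the boundary estimate: the admissible region in $(\xx,\yy)$-space now has two codimension-one faces defined by the inequalities $|\yy|=dl|\xx|$ and $|\yy|=d(l+1)|\xx|$, in addition to the cube faces. Each of these contributes $O\bigl((P_1/q)^{r+1}(dlP_1/q)^{m-r-1}\bigr)$ to the boundary count, which when absorbed gives the $O\bigl((P_1/q)^{r+1}(dlP_1/q)^{m-r}(1/q)\bigr)$ analogue of the corresponding term in the proof of Lemma \ref{separation1}; this is easily dominated by the stated error. I do not expect any genuine obstacle: the construction is a routine adaptation of the earlier lemma, and the choice of normalization for $I_{l,\zz}$ (with the factor $l$ inside $F(\uu,l\vv,\zz)$) is exactly what makes the change of variables produce the claimed factor $d^{m-r}l^{m-r}P_1^{m+1}q^{-(m+1)}$.
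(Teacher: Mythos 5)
Your strategy is exactly the paper's: the lemma is proved by the same residue-class decomposition, trivial treatment of $q>P_{1}$, Riemann-sum approximation and rescaling used for Lemmes \ref{separation2} et \ref{separation1}, and your observation that the change of variables $\xx=P_{1}\uu$, $\yy=dlP_{1}\vv$ turns the shell $dl|\xx|\leqslant |\yy|<d(l+1)|\xx|$ into $|\uu|\leqslant|\vv|<(1+1/l)|\uu|$ and produces $I_{l,\zz}(d^{d_{1}}P_{1}^{d_{1}}\beta)$ via bihomogeneity is the right one. The only flaw is your displayed Lipschitz bound: since $\zz$ is fixed with $|\zz|\leqslant l$ (not $\leqslant lP_{1}$), each monomial of $F(d\xx,\yy,\zz)$ on the box is of size $\ll (dP_{1})^{m_{1}}(dlP_{1})^{m_{2}}l^{m_{3}}=d^{d_{1}}l^{d_{2}}P_{1}^{d_{1}}$, so a shift of size $O(q)$ in the $(\xx,\yy)$ variables changes $F$ by $O\bigl(qd^{d_{1}}l^{d_{2}}P_{1}^{d_{1}-1}\bigr)$, not $O\bigl(qd^{d_{1}}l^{d_{2}}P_{1}^{d_{1}+d_{2}-1}\bigr)$ as you wrote. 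With your bound the error term picks up a spurious factor $P_{1}^{d_{2}}$ and does \emph{not} collapse to the stated $O\bigl(d^{m-r+1}l^{2d_{2}+m-r}P_{1}^{m+2\theta(d_{1}-1)}\bigr)$; with the corrected bound, combined with $|\beta|\leqslant d^{-d_{1}}P_{1}^{-d_{1}+(d_{1}-1)\theta}$ and $q\leqslant dl^{d_{2}}P_{1}^{(d_{1}-1)\theta}$, everything closes as claimed. The rest of the argument, including the boundary contribution of the two extra faces, is fine.
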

 \begin{lemma}\label{intermediairezbis} Pour $ \zz\in \mathcal{A}_{1}^{\mu}(\ZZ) $, et $ \varepsilon>0 $ arbitrairement petit, on a l'estimation suivante : \begin{multline*}
N_{d,l,\zz}(P_{1})=d^{m-r-d_{1}}l^{m-r}P_{1}^{m+1-d_{1}}\mathfrak{S}_{d,\zz}(\phi_{1}(d,l,\theta))J_{l,\zz}(\tilde{\phi}_{1}(\theta)) \\ + O\left(d^{m-r+\varepsilon+\frac{(d_{1}-1)(r+1)}{2^{d_{1}-1}}}l^{d_{2}+m-r+\varepsilon}P_{1}^{m+3-d_{1}-\Delta_{1}(\theta,K_{1})+\varepsilon}\right) \\ + O\left(d^{m-r+3-d_{1}}l^{4d_{2}+m-r}P_{1}^{m+1-d_{1}-\eta(\theta)}\right), \end{multline*} o\`u \begin{equation}
\mathfrak{S}_{d,\zz}(\phi(d,l,\theta))=\sum_{q\leqslant \phi(d,l,\theta)}q^{-(m+1)}\sum_{\substack{0\leqslant a < q \\ \PGCD(a,q)=1}}S_{a,q,d}(\zz),
\end{equation}\begin{equation}
J_{l,\zz}(\tilde{\phi}(\theta))=\int_{|\beta|\leqslant \tilde{\phi}(\theta)}I_{l,\zz}(\beta)d\beta. 
\end{equation}
\end{lemma}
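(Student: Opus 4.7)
The proof will follow step by step the same scheme used to establish Lemma~\ref{intermediairez}, since we are once again in a circle-method setting where the minor arcs have been controlled (via Lemma~\ref{arcsmin1bis}) and the behaviour of the generating function on the major arcs has been analyzed (via Lemma~\ref{separation1bis}). First, I would split the integral
\[ N_{d,l,\zz}(P_{1})=\int_{0}^{1}S_{d,l,\zz}(\alpha)\,d\alpha \]
into a piece over the major arcs $\mathfrak{M}^{'d,l}(\theta)$ (the enlarged arcs obtained from $\mathfrak{M}^{l}(\theta)$ exactly as $\mathfrak{M}^{'d,\zz}(\theta)$ was obtained from $\mathfrak{M}^{d,\zz}(\theta)$ in the previous section) and a piece over the complementary minor arcs. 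The inclusion $\mathfrak{M}^{l}(\theta)\subset \mathfrak{M}^{'d,l}(\theta)$ together with Lemma~\ref{arcsmin1bis} immediately produces the error term
\[ E_{1}=d^{m-r+\varepsilon+\frac{(d_{1}-1)(r+1)}{2^{d_{1}-1}}}l^{d_{2}+m-r+\varepsilon}P_{1}^{m+3-d_{1}-\Delta_{1}(\theta,K_{1})+\varepsilon}, \]
which is the first of the two error terms in the statement.

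Next, on each major arc $\mathfrak{M}_{a,q}^{'d,l}(\theta)$ I would write $\alpha=a/q+\beta$ with $|\beta|\leqslant q d^{-d_{1}}P_{1}^{-d_{1}+(d_{1}-1)\theta}/2$ and apply Lemma~\ref{separation1bis} to replace $S_{d,l,\zz}(\alpha)$ by the product
\[ d^{m-r}l^{m-r}P_{1}^{m+1}q^{-(m+1)}S_{a,q,d}(\zz)\,I_{l,\zz}(d^{d_{1}}P_{1}^{d_{1}}\beta), \]
modulo an error $O\bigl(d^{m-r+1}l^{2d_{2}+m-r}P_{1}^{m+2\theta(d_{1}-1)}\bigr)$. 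Summing this error over all major arcs contributes a total that is at most
\[ d^{m-r+1}l^{2d_{2}+m-r}P_{1}^{m+2\theta(d_{1}-1)}\cdot \Vol\bigl(\mathfrak{M}^{'d,l}(\theta)\bigr). \]
A direct computation of the volume, along the same lines as the proof of Lemma~\ref{intermediairez}, gives
\[ \Vol\bigl(\mathfrak{M}^{'d,l}(\theta)\bigr)\ll d^{2-d_{1}}l^{2d_{2}}P_{1}^{-d_{1}+3\theta(d_{1}-1)}, \]
so that the contribution of this remainder is absorbed into the second error term
\[ E_{2}=d^{m-r+3-d_{1}}l^{4d_{2}+m-r}P_{1}^{m+1-d_{1}-\eta(\theta)}, \]
with $\eta(\theta)=1-5\theta(d_{1}-1)$.

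For the main term, summing the contributions $q^{-(m+1)}S_{a,q,d}(\zz)\int_{\mathfrak{M}_{a,q}^{'d,l}(\theta)}I_{l,\zz}(d^{d_{1}}P_{1}^{d_{1}}\beta)\,d\beta$ over $q\leqslant\phi_{1}(d,l,\theta)$ and coprime $a$, and carrying out the substitution $\beta\mapsto d^{d_{1}}P_{1}^{d_{1}}\beta$ in the $\beta$-integral, reassembles precisely the product $\mathfrak{S}_{d,\zz}(\phi_{1}(d,l,\theta))\,J_{l,\zz}(\tilde{\phi}_{1}(\theta))$ scaled by the expected factor $d^{m-r-d_{1}}l^{m-r}P_{1}^{m+1-d_{1}}$. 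The main technical point that requires care, and which is the only genuine difference with the proof of Lemma~\ref{intermediairez}, is the definition of $I_{l,\zz}$: the integration region is now the strip $|\uu|\leqslant |\vv|<(1+1/l)|\uu|$ rather than $|\vv|<|\uu|$. However, in the approximation of $S_{d,l,\zz}$ by $I_{l,\zz}$ (contained in Lemma~\ref{separation1bis}) this only enters through the box constraints $dl|\xx|\leqslant |\yy|<d(l+1)|\xx|$, and the Riemann-sum argument used there, combined with the crude Lipschitz bound $\bigl|F(d\xx,\yy+O(q),\zz)-F(d\xx,\yy,\zz)\bigr|\ll q(dk)^{d_{1}-1}l^{d_{2}}$ on each strip, yields the same error as in the first case. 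Combining these three ingredients gives exactly the claimed decomposition and completes the proof; the only obstacle worth flagging is verifying that the asymmetry of the integration region $|\uu|\leqslant |\vv|<(1+1/l)|\uu|$ does not contaminate the $\beta$-integral beyond the already announced errors, which follows because the region is still of width $O(1)$ in each of the $n+2$ directions after rescaling.
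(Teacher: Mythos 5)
Your proposal is correct and follows essentially the same route as the paper, which itself only indicates that the lemma is proved exactly as Lemma~\ref{intermediairex}/\ref{intermediairez}: split into the arcs $\mathfrak{M}^{'}(\theta)$ and their complement, apply Lemma~\ref{arcsmin1bis} for the first error term, apply Lemma~\ref{separation1bis} on each major arc and multiply the pointwise error by $\Vol(\mathfrak{M}^{'}(\theta))\ll d^{2-d_{1}}l^{2d_{2}}P_{1}^{-d_{1}+3\theta(d_{1}-1)}$ for the second, and rescale the $\beta$-integral for the main term. One typo-level slip: the width of the major arc in the variable $\beta=\alpha-a/q$ is $\tfrac{1}{2}d^{-d_{1}}P_{1}^{-d_{1}+(d_{1}-1)\theta}$ (the factor $q$ cancels), and it is precisely this independence of $q$ that lets the $\beta$-integral factor out as $J_{l,\zz}(\tilde{\phi}_{1}(\theta))$ times the partial singular series, as your final computation in fact uses.
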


Si l'on note : 

\begin{equation}
J_{l,\zz}=\int_{\RR}I_{l,\zz}(\beta)d\beta,
\end{equation}
on montre alors comme pour le lemme \ref{Jz} :
\begin{lemma}\label{Jzbis}
Soit $ \zz\in \mathcal{A}_{1}^{\mu}(\ZZ) $, et $ \varepsilon>0 $ arbitrairement petit. On suppose de plus que $ d_{1}\geqslant 2 $. L'int\'egrale $ J_{l,\zz} $ est absolument convergente, et on a : \[ |J_{l,\zz}(\tilde{\phi}(\theta))-J_{l,\zz}|\ll l^{\frac{4d_{2}}{3}+\varepsilon}P_{1}^{-\frac{K_{1}}{3}+\frac{7}{3}\theta(d_{1}-1)+\varepsilon} . \] On a de plus $ |J_{\zz}|\ll l^{\frac{4d_{2}}{3}+\varepsilon}. $
\end{lemma}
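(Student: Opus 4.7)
The plan is to adapt the proof of Lemma \ref{Jz}, tracking the extra factor of $P_1$ in the Weyl-type bound from Lemma \ref{dilemme44} (arising from the double sum over $M,N$). Given $\beta$ with $|\beta|\geqslant\tilde{\phi}_1(\theta)$, I would introduce auxiliary parameters $P\geqslant 1$ and $\theta'\in(0,1)$ determined by
\[
|\beta|=\tfrac{1}{2}P^{\theta'(d_1-1)}\qquad\text{and}\qquad P^{2-K_1\theta'}=P^{-1+2\theta'(d_1-1)}l^{2d_2}.
\]
Solving simultaneously yields
\[
\theta'=\frac{3\log(2|\beta|)}{(K_1+2(d_1-1))\log(2|\beta|)+2d_2(d_1-1)\log l},
\]
so $\theta'\gg\min\{1,\log(2|\beta|)/\log l\}$ and $\theta'<1/(2(d_1-1))$ under the standing assumption $K_1>4(d_1-1)$. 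The second equation rewrites as $P^{3-(K_1+2(d_1-1))\theta'}=l^{2d_2}$, whence $l^{2d_2}P^{-d_1+3\theta'(d_1-1)}=P^{(3-d_1)+\theta'(d_1-1-K_1)}<1$ for $d_1\geqslant 2$, so Lemma \ref{disjoint1} applies and $P^{-d_1}\beta$ lies on the boundary of the major arc at $0/1$.

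Invoking Lemma \ref{dilemme44} with $d=1$ then gives
\[
|S_{1,l,\zz}(P^{-d_1}\beta)|\ll l^{m-r+\varepsilon}P^{m+3-K_1\theta'+\varepsilon},
\]
while Lemma \ref{separation1bis} (with $d=1$, $P_1=P$) yields
\[
S_{1,l,\zz}(P^{-d_1}\beta)=l^{m-r}P^{m+1}I_{l,\zz}(\beta)+O(l^{m-r+2d_2}P^{m+2\theta'(d_1-1)}).
\]
Combining and dividing by $l^{m-r}P^{m+1}$ yields
\[
|I_{l,\zz}(\beta)|\ll l^{\varepsilon}P^{2-K_1\theta'+\varepsilon}+l^{2d_2}P^{-1+2\theta'(d_1-1)}\ll l^{\varepsilon}P^{2-K_1\theta'+\varepsilon},
\]
by the balance built into the choice of $\theta'$.

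To convert this into a bound in $|\beta|$ and $l$, I would use the explicit form of $\theta'$ together with $\log P=\log(2|\beta|)/((d_1-1)\theta')$ to compute directly
\[
P^{2-K_1\theta'}=l^{4d_2/3}(2|\beta|)^{(4(d_1-1)-K_1)/(3(d_1-1))},
\]
and absorb the $P^{\varepsilon}$ factor via the estimate $(2|\beta|)^{\varepsilon/((d_1-1)\theta')}\ll\max\{|\beta|^{\varepsilon'},l^{\varepsilon'}\}$ as in the proof of Lemma \ref{Jz}. This gives
\[
|I_{l,\zz}(\beta)|\ll l^{4d_2/3+\varepsilon}|\beta|^{(4(d_1-1)-K_1)/(3(d_1-1))+\varepsilon}.
\]
Integrating over $|\beta|>\tilde{\phi}_1(\theta)=\tfrac{1}{2}P_1^{\theta(d_1-1)}$ (absolute convergence of $J_{l,\zz}$ requires $K_1>7(d_1-1)$) yields
\[
|J_{l,\zz}(\tilde{\phi}_1(\theta))-J_{l,\zz}|\ll l^{4d_2/3+\varepsilon}\tilde{\phi}_1(\theta)^{(7(d_1-1)-K_1)/(3(d_1-1))+\varepsilon}\ll l^{4d_2/3+\varepsilon}P_1^{-K_1\theta/3+7\theta(d_1-1)/3+\varepsilon},
\]
matching the stated bound; the estimate $|J_{l,\zz}|\ll l^{4d_2/3+\varepsilon}$ then follows by specializing $P_1\ll 1$ and using the trivial bound $|J_{l,\zz}(\tilde{\phi}_1(\theta))|\ll 1$.

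The main obstacle I anticipate is the verification of the compatibility condition $l^{2d_2}P^{-d_1+3\theta'(d_1-1)}<1$ in the delicate case $d_1=2$, where the exponent reduces to $1-\theta'(K_1-1)$ and requires the explicit form of $\theta'$ to yield $(2K_1-5)\log(2|\beta|)>2d_2\log l$; the complementary regime where $l$ dominates $|\beta|$ should be treated separately by directly applying the alternative error term $l^{2d_2}P^{-1+2\theta'(d_1-1)}$ from the combined bound above, which is small there by the same computation.
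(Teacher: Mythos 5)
Your proposal is correct and follows exactly the route the paper intends (the paper itself gives no proof of Lemma \ref{Jzbis}, merely asserting that one argues as for Lemma \ref{Jz}): the modified balance $P^{2-K_{1}\theta'}=P^{-1+2\theta'(d_{1}-1)}l^{2d_{2}}$, forced by the extra power of $P_{1}$ in Lemma \ref{dilemme44}, yields precisely $P^{2-K_{1}\theta'}=l^{4d_{2}/3}(2|\beta|)^{(4(d_{1}-1)-K_{1})/(3(d_{1}-1))}$ and hence the stated exponents $l^{4d_{2}/3}$ and $P_{1}^{-K_{1}\theta/3+\frac{7}{3}\theta(d_{1}-1)}$ after integration (which indeed needs $K_{1}>7(d_{1}-1)$, a hypothesis the lemma's statement omits but which is supplied where it is used). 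The one subtlety you flag — that for $d_{1}=2$ the disjointness condition $l^{2d_{2}}P^{1-\theta'(K_{1}-1)}<1$ can fail when $l$ dominates $|\beta|$ — is genuine and is glossed over by the paper, but in that regime the target bound exceeds $l^{d_{2}}$ while $|I_{l,\zz}(\beta)|\ll 1$ trivially, so your separate treatment closes it.
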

et on d\'eduit des lemmes\;\ref{intermediairezbis},\;\ref{Jzbis} et\;\ref{Sz} :
\begin{lemma}\label{lemmezbis}
Soit  $ \zz\in \mathcal{A}_{1}^{\mu}(\ZZ) $. On suppose fix\'es $ \theta\in [0,1] $ et $ P_{1}\geqslant 1  $ tels que $ l^{2d_{2}}P_{1}^{-d_{1}+3\theta(d_{1}-1)}<1 $. On suppose de plus que $ K_{1}>7(d_{1}-1) $ et $ d_{1}\geqslant 2 $. On a alors que \[ N_{d,l,\zz}(P_{1})=\mathfrak{S}_{d,\zz}J_{l,\zz}d^{m-r-d_{1}}l^{m-r}P_{1}^{m+1-d_{1}}+ O(E_{2}) +O(E_{3}), \] avec \[ E_{2}=d^{m-r+3-d_{1}}l^{4d_{2}+m-r}P^{m+1-d_{1}-\eta(\theta)}, \]\[ E_{3}=d^{m-r+\frac{(d_{1}-1)(r+1)}{2^{d_{1}-1}}+\varepsilon}l^{\frac{10d_{2}}{3}+m-r+\varepsilon}P_{1}^{m+1-d_{1}+\frac{7}{3}\theta(d_{1}-1)-\frac{K_{1}\theta}{3}+\varepsilon} \]
et $ \varepsilon>0 $ arbitrairement petit. 
\end{lemma}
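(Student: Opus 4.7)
The plan is to proceed in direct analogy with the proof of Lemma~\ref{lemmez}, simply substituting the new integral $J_{l,\zz}$ and its truncated counterpart $J_{l,\zz}(\tilde{\phi}_{1}(\theta))$ in place of $J_{\zz}$ and $J_{\zz}(\tilde{\phi}_{1}(\theta))$. The starting point is Lemma~\ref{intermediairezbis}, which already provides the desired asymptotic with the \emph{truncated} singular series $\mathfrak{S}_{d,\zz}(\phi_{1}(d,l,\theta))$ and the \emph{truncated} singular integral $J_{l,\zz}(\tilde{\phi}_{1}(\theta))$, plus two error terms: one of size $d^{m-r+\varepsilon+\frac{(d_{1}-1)(r+1)}{2^{d_{1}-1}}}l^{d_{2}+m-r+\varepsilon}P_{1}^{m+3-d_{1}-\Delta_{1}(\theta,K_{1})+\varepsilon}$ and one of size $E_{2}$. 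The first of these should be absorbed into $E_{3}$.

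Next, I would control the difference
\[
\mathfrak{S}_{d,\zz}(\phi_{1}(d,l,\theta))J_{l,\zz}(\tilde{\phi}_{1}(\theta))-\mathfrak{S}_{d,\zz}J_{l,\zz}
\]
by the usual telescoping inequality
\[
|\mathfrak{S}_{d,\zz}(\phi_{1})-\mathfrak{S}_{d,\zz}|\,|J_{l,\zz}|+|\mathfrak{S}_{d,\zz}(\phi_{1})|\,|J_{l,\zz}(\tilde{\phi}_{1})-J_{l,\zz}|.
\]
For the first piece I would invoke Lemma~\ref{Sz}, which applies verbatim since $\mathfrak{S}_{d,\zz}$ is unchanged from the previous subsection, together with the bound $|J_{l,\zz}|\ll l^{4d_{2}/3+\varepsilon}$ from Lemma~\ref{Jzbis}. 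For the second piece I would use the trivial bound $|\mathfrak{S}_{d,\zz}(\phi_{1})|\ll d^{\frac{(d_{1}-1)(r+1)}{2^{d_{1}-1}}+2+\varepsilon}l^{2d_{2}+\varepsilon}$ (also from Lemma~\ref{Sz}) combined with the tail bound for $J_{l,\zz}(\tilde{\phi}_{1})-J_{l,\zz}$ supplied by Lemma~\ref{Jzbis}.

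The resulting error, after multiplication by the main factor $d^{m-r-d_{1}}l^{m-r}P_{1}^{m+1-d_{1}}$, produces two contributions: one of order $d^{m-r+2-d_{1}+\frac{(d_{1}-1)(r+1)}{2^{d_{1}-1}}+\varepsilon}l^{m-r+10d_{2}/3+\varepsilon}P_{1}^{m+1-d_{1}+2\theta(d_{1}-1)-K_{1}\theta+\varepsilon}$ coming from the singular-series tail and one of order $d^{m-r+2-d_{1}+\frac{(d_{1}-1)(r+1)}{2^{d_{1}-1}}+\varepsilon}l^{m-r+10d_{2}/3+\varepsilon}P_{1}^{m+1-d_{1}+7\theta(d_{1}-1)/3-K_{1}\theta/3+\varepsilon}$ coming from the singular-integral tail. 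Under the hypothesis $K_{1}>7(d_{1}-1)$ the second contribution dominates the first (its exponent of $P_{1}$ is larger while all other factors agree), and this dominant term matches exactly the announced $E_{3}$, recalling that $2-d_{1}\leqslant 0$ for $d_{1}\geqslant 2$ so the factor can be absorbed into~$d^{m-r}$ times the stated constant in $d$. Combining this with the unchanged $E_{2}$ error from Lemma~\ref{intermediairezbis} yields the claim.

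The only nontrivial point, and the reason the condition $K_{1}>7(d_{1}-1)$ replaces the earlier $K_{1}>4(d_{1}-1)$, is that the tail bound for $J_{l,\zz}$ in Lemma~\ref{Jzbis} is weaker than the one for $J_{\zz}$ in Lemma~\ref{Jz}: the exponent in $P_{1}$ degrades from $2\theta(d_{1}-1)-K_{1}\theta/2$ to $7\theta(d_{1}-1)/3-K_{1}\theta/3$, which is what forces the stronger hypothesis. Apart from checking that this degraded exponent still beats the target, and that the $l$-powers $l^{4d_{2}/3}\cdot l^{2d_{2}}=l^{10d_{2}/3}$ combine as claimed, the proof is otherwise a routine repackaging of the argument already carried out for Lemma~\ref{lemmez}.
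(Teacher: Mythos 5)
Your proposal reproduces exactly the paper's (implicit) derivation: the lemma is stated there as a direct consequence of Lemmas \ref{intermediairezbis}, \ref{Jzbis} and \ref{Sz}, combined via the same telescoping bound on $\mathfrak{S}_{d,\zz}(\phi_{1})J_{l,\zz}(\tilde{\phi}_{1})-\mathfrak{S}_{d,\zz}J_{l,\zz}$ that was used for Lemma \ref{lemmez}, and your bookkeeping of the $d$-, $l$- and $P_{1}$-exponents (including the absorption of $d^{2-d_{1}}$ using $d_{1}\geqslant 2$ and the $l^{2d_{2}}\cdot l^{4d_{2}/3}=l^{10d_{2}/3}$ combination) matches the stated $E_{3}$. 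This is essentially the same approach as the paper.
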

\begin{cor}\label{corollairezbis}
Soit  $ \zz\in \mathcal{A}_{1}^{\mu}(\ZZ) $. On suppose que $ K_{1}>7(d_{1}-1) $ et $ d_{1}\geqslant 2 $. Il existe alors un r\'eel $ \delta>0 $ arbitrairement petit tel que : \begin{multline*} N_{d,l,\zz}(P_{1})=\mathfrak{S}_{d,\zz}J_{l,\zz}d^{m-r-d_{1}}l^{m-r}P_{1}^{m+1-d_{1}} \\ +O\left( d^{m-r}\max\{d^{\frac{(d_{1}-1)(r+1)}{2^{d_{1}-1}}+\varepsilon},d^{3-d_{1}}\}l^{m-r+4d_{2}}P_{1}^{m+1-d_{1}-\delta}\right), \end{multline*} uniform\'ement pour tout $ l<P_{1}^{\frac{d_{1}-1}{2d_{2}}} $. 
\end{cor}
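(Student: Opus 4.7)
The statement is the exact analog of Corollary \ref{corollairez} but with $N_{d,\zz}(P_{1})$ replaced by $N_{d,l,\zz}(P_{1})$ and $J_{\zz}$ replaced by $J_{l,\zz}$. The plan is to derive it directly from Lemma \ref{lemmezbis} by choosing $\theta$ so that the two error terms $E_{2}$ and $E_{3}$ get absorbed into the prescribed $P_{1}^{m+1-d_{1}-\delta}$ saving.

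First I would fix a real number $\theta$ satisfying $\theta<\frac{1}{5(d_{1}-1)}$. This choice achieves two things. First, it forces $\eta(\theta)=1-5\theta(d_{1}-1)>0$, which guarantees that the exponent of $P$ in $E_{2}=d^{m-r+3-d_{1}}l^{4d_{2}+m-r}P^{m+1-d_{1}-\eta(\theta)}$ drops by a strictly positive $\delta'=\eta(\theta)$. Second, since $3\theta(d_{1}-1)<3/5<1$, the required constraint $l^{2d_{2}}P_{1}^{-d_{1}+3\theta(d_{1}-1)}<1$ of Lemma \ref{lemmezbis} is implied by the uniformity hypothesis $l<P_{1}^{(d_{1}-1)/(2d_{2})}$, so the lemma applies.

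Next I would examine $E_{3}=d^{m-r+\frac{(d_{1}-1)(r+1)}{2^{d_{1}-1}}+\varepsilon}l^{\frac{10d_{2}}{3}+m-r+\varepsilon}P_{1}^{m+1-d_{1}+\frac{7}{3}\theta(d_{1}-1)-\frac{K_{1}\theta}{3}+\varepsilon}$. The exponent of $P_{1}$ in excess of $m+1-d_{1}$ equals $\frac{\theta}{3}\bigl(7(d_{1}-1)-K_{1}\bigr)+\varepsilon$. Using the hypothesis $K_{1}>7(d_{1}-1)$, this quantity is bounded above by $-\delta''$ for some $\delta''>0$ (depending on $\theta$ and $\varepsilon$ small enough). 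For the $l$-exponent, one notes $\tfrac{10}{3}d_{2}\le 4d_{2}$, so $l^{\frac{10d_{2}}{3}+\varepsilon}\ll l^{4d_{2}}$ uniformly in the allowed range. For the $d$-exponent, $E_{2}$ contributes the factor $d^{3-d_{1}}$ and $E_{3}$ contributes $d^{\frac{(d_{1}-1)(r+1)}{2^{d_{1}-1}}+\varepsilon}$; taking their maximum yields exactly the factor appearing in the statement. Setting $\delta=\min\{\delta',\delta''\}$ produces the desired uniform bound.

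The substantive input is already contained in Lemma \ref{lemmezbis}, itself obtained by combining Lemma \ref{intermediairezbis} (the major-arc expansion for $S_{d,l,\zz}$), Lemma \ref{Jzbis} (absolute convergence and truncation estimate for $J_{l,\zz}$), and Lemma \ref{Sz} (absolute convergence of $\mathfrak{S}_{d,\zz}$). The condition $K_{1}>7(d_{1}-1)$ is precisely what is needed for Lemma \ref{Jzbis}: with the choice $P^{1-K_{1}\theta'}=P^{-1+2\theta'(d_{1}-1)}l^{2d_{2}}$, the tail bound on $I_{l,\zz}(\beta)$ decays like $|\beta|^{-K_{1}/(3(d_{1}-1))+\ldots}$, and integrability demands $K_{1}/(3(d_{1}-1))>2$, whence the $7(d_{1}-1)$ threshold (versus $4(d_{1}-1)$ in the case of $J_{\zz}$) after accounting for the extra $l$-factors arising from the narrower range $dl|\xx|\le|\yy|<d(l+1)|\xx|$. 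No genuinely new obstacle arises here: the sole routine check is verifying that the optimal $\theta$ simultaneously meets all three numerical conditions above, which the chosen $\theta<\frac{1}{5(d_{1}-1)}$ does by the hypothesis $K_{1}>7(d_{1}-1)$.
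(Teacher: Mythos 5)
Your proposal is correct and follows exactly the route the paper intends: Corollary \ref{corollairezbis} is deduced from Lemma \ref{lemmezbis} by fixing $\theta<\frac{1}{5(d_{1}-1)}$ (mirroring the deduction of Corollary \ref{corollairez} from Lemma \ref{lemmex}), noting that this choice makes $\eta(\theta)>0$, that the uniformity range $l<P_{1}^{\frac{d_{1}-1}{2d_{2}}}$ guarantees $l^{2d_{2}}P_{1}^{-d_{1}+3\theta(d_{1}-1)}<1$, and that $K_{1}>7(d_{1}-1)$ makes the $P_{1}$-exponent in $E_{3}$ strictly negative while $\frac{10d_{2}}{3}+\varepsilon\leqslant 4d_{2}$ absorbs the $l$-powers. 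Your verification of the exponents of $d$, $l$ and $P_{1}$ matches the statement, so nothing further is needed.
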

On pose \`a pr\'esent $ P_{1}=P_{2}^{b} $, avec $ b\geqslant 1 $ et on introduit la fonction \begin{equation}
g_{1}'(b,\delta)=\left(1-\frac{5d_{2}}{b}-\delta\right)^{-1}5(d_{1}-1)\left(\frac{10d_{2}}{3b}+2\delta\right),
\end{equation} ainsi que \begin{multline}
\widetilde{N}_{d,1}^{(2)}(P_{1},P_{2})=\Card\left\{ (\xx,\yy,\zz)\in \ZZ^{n+2} \; | \; \zz \in \mathcal{A}_{1}^{\mu}(\ZZ), \; |\xx|\leqslant P_{1}, \;\right. \\ \left. |\yy| \leqslant d|\xx|P_{2},\;   |\zz|\leqslant P_{2}, \; \left\lfloor\frac{|\yy|}{d|\xx|}\right\rfloor\geqslant |\zz|,  \; F(d\xx,\yy,\zz)=0 \right\}.
\end{multline}
On a alors la proposition suivante qui est l'analogue de \ref{propz} : 
\begin{prop}\label{propzbis}
On suppose $ K_{1}>7(d_{1}-1) $, $ d_{1}\geqslant 2 $, $ P_{1}=P_{2}^{b} $ et de plus que \[ \frac{K_{1}\theta}{3}-\frac{7}{3}\theta(d_{1}-1)>g_{1}'(b,\delta). \] Alors : \begin{multline*} \widetilde{N}_{d,1}^{(2)}(P_{1},P_{2})=d^{m-r-d_{1}}\left(\sum_{l=1}^{P_{2}}\sum_{\substack{\zz\in P_{2}\BB_{3}\cap \mathcal{A}_{1}^{\mu}(\ZZ)\\ |\zz|\leqslant l }}\mathfrak{S}_{\zz}J_{l,\zz}l^{m-r}\right)P_{1}^{m+1-d_{1}} \\ + O\left(d^{m-r}\max\{d^{\frac{(d_{1}-1)(r+1)}{2^{d_{1}-1}}+\varepsilon},d^{3-d_{1}}\}P_{1}^{m+1-d_{1}-\delta}P_{2}^{n-r+1-d_{2}}\right), \end{multline*} pour $ \delta>0 $ arbitrairement petit. 
\end{prop}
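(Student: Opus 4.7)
Le plan est de suivre fid\`element le sch\'ema de la d\'emonstration de la proposition \ref{propz}. On d\'ecompose \[ \widetilde{N}_{d,1}^{(2)}(P_{1},P_{2})=\sum_{l=1}^{P_{2}}\sum_{\substack{\zz\in P_{2}\BB_{3}\cap\mathcal{A}_{1}^{\mu}(\ZZ) \\ |\zz|\leqslant l}}N_{d,l,\zz}(P_{1}), \] puis on applique \`a chaque terme le lemme \ref{lemmezbis} pour un m\^eme choix de $ \theta $. Le terme principal en r\'esultera imm\'ediatement; tout le travail consiste \`a estimer les deux contributions d'erreur $ \mathcal{E}_{2} $ et $ \mathcal{E}_{3} $ apr\`es sommation sur $ l $ et sur $ \zz $. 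Comme dans la proposition \ref{propz}, il faut traiter s\'epar\'ement le cas trivial o\`u $ d^{2d_{1}} $ est grand devant $ P_{1}^{d_{1}}P_{2}^{d_{2}}P_{2}^{-3/5-\delta} $, pour lequel la borne annonc\'ee d\'ecoule directement de la majoration triviale $ N_{d,l,\zz}(P_{1})\ll d^{m-r}l^{m-r}P_{1}^{m+1} $ conjugu\'ee aux bornes des lemmes \ref{Sz} et \ref{Jzbis} sur $ \mathfrak{S}_{d,\zz} $ et $ J_{l,\zz} $.

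Dans le r\'egime non trivial, on choisit $ \theta=(1-5d_{2}/b-\delta)/(5(d_{1}-1)) $ exactement comme dans la proposition \ref{propz}, ce qui assure \`a la fois la condition d'uniformit\'e $ l^{2d_{2}}P_{1}^{-d_{1}+3\theta(d_{1}-1)}<1 $ du lemme \ref{lemmezbis} pour tout $ l\leqslant P_{2} $, et l'\'egalit\'e $ \eta_{1}(\theta)=5d_{2}/b+\delta $. La sommation sur $ \zz $ avec $ |\zz|\leqslant l $ apporte un facteur $ O(l^{n-m+1}) $, et la sommation sur $ l\leqslant P_{2} $ transforme alors $ \mathcal{E}_{2} $ en un terme $ O(d^{m-r+3-d_{1}}P_{1}^{m+1-d_{1}-\delta}P_{2}^{n-r+1-d_{2}}) $, absorb\'e par le terme d'erreur souhait\'e. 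Pour $ \mathcal{E}_{3} $, la m\^eme sommation produit un facteur en $ P_{2}^{10d_{2}/3+n-r+2+\varepsilon} $, et en utilisant $ P_{1}=P_{2}^{b} $, on aboutit \`a un exposant de $ P_{1} $ \'egal \`a $ m+1-d_{1}+7\theta(d_{1}-1)/3-K_{1}\theta/3+(13d_{2}/3+1)/b+2\varepsilon $; l'hypoth\`ese $ K_{1}\theta/3-7\theta(d_{1}-1)/3>g_{1}'(b,\delta) $ est pr\'ecis\'ement ce qu'il faut pour garantir que cet exposant est au plus $ m+1-d_{1}-\delta $.

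Le principal obstacle technique sera de bien contr\^oler la d\'ependance de $ J_{l,\zz} $ en $ l $, qui provient du domaine d'int\'egration $ |\uu|\leqslant|\vv|<(1+1/l)|\uu| $ dans la d\'efinition de $ I_{l,\zz} $ : contrairement au cas de la proposition \ref{propz}, le terme principal ne peut pas \^etre factoris\'e en une somme sur $ \zz $ seule, et la convergence de la s\'erie double en $ l $ et $ \zz $ repose crucialement sur la majoration $ |J_{l,\zz}|\ll l^{4d_{2}/3+\varepsilon} $ fournie par le lemme \ref{Jzbis}. Un point secondaire, mais plus d\'elicat, sera de v\'erifier soigneusement que le r\'egime trivial $ d^{2d_{1}}\gg P_{1}^{d_{1}}P_{2}^{d_{2}}P_{2}^{-3/5-\delta} $ et le r\'egime non trivial couvrent conjointement tout l'intervalle des $ l\in\{1,\ldots,P_{2}\} $, sans qu'une tranche interm\'ediaire ne n\'ecessite une analyse suppl\'ementaire.
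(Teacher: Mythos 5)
Votre proposition suit pour l'essentiel la m\^eme route que le texte, qui ne donne d'ailleurs aucune d\'emonstration explicite de la proposition \ref{propzbis} et se contente de renvoyer \`a l'analogie avec la proposition \ref{propz} : m\^eme d\'ecomposition $ \widetilde{N}_{d,1}^{(2)}(P_{1},P_{2})=\sum_{l\leqslant P_{2}}\sum_{|\zz|\leqslant l}N_{d,l,\zz}(P_{1}) $, m\^eme recours au lemme \ref{lemmezbis}, m\^eme choix de $ \theta $. Deux points appellent toutefois une correction. D'abord, la dichotomie sur la taille de $ d $ (le \og cas trivial \fg\ $ d^{2d_{1}}\gg P_{1}^{d_{1}}P_{2}^{d_{2}-3/5-\delta} $) que vous annoncez \og comme dans la proposition \ref{propz} \fg\ provient en r\'ealit\'e de la proposition \ref{propx}, o\`u la condition d'uniformit\'e $ (dk)^{2d_{1}}P_{2}^{-d_{2}+3\theta(d_{2}-1)}<1 $ fait intervenir $ d $. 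Ici la condition du lemme \ref{lemmezbis} est $ l^{2d_{2}}P_{1}^{-d_{1}+3\theta(d_{1}-1)}<1 $, qui ne d\'epend pas de $ d $ et est automatique pour tout $ l\leqslant P_{2} $ avec le $ \theta $ choisi, puisque $ 2d_{2}/b-d_{1}+3\theta(d_{1}-1)\leqslant 3/5-d_{1}-d_{2}/b<0 $ d\`es que $ d_{1}\geqslant 2 $. Cette \'etape est donc superflue, et elle est de surcro\^it irr\'ecup\'erable telle que vous l'esquissez : dans ce r\'egime il faudrait aussi absorber le terme principal dans le terme d'erreur, or la majoration $ |\mathfrak{S}_{d,\zz}J_{l,\zz}|\ll d^{\ast}l^{10d_{2}/3+\varepsilon} $ somm\'ee sur $ |\zz|\leqslant l\leqslant P_{2} $ laisse un facteur $ P_{2}^{1+d_{2}+10d_{2}/3} $ qu'aucune hypoth\`ese sur $ d $ ne compense (d\'ej\`a pour $ d_{1}=2 $). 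Supprimez simplement ce cas.

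Ensuite, votre d\'ecompte final est exact --- la sommation sur $ |\zz|\leqslant l $ puis sur $ l\leqslant P_{2} $ produit bien un facteur $ P_{2}^{n-r+2+10d_{2}/3+\varepsilon}=P_{2}^{n-r+1-d_{2}}P_{1}^{(13d_{2}/3+1)/b+\varepsilon/b} $ --- mais l'hypoth\`ese via $ g_{1}'(b,\delta) $ ne fournit que $ 10d_{2}/(3b)+2\delta $ de marge dans l'exposant, soit un d\'eficit de $ (d_{2}+1)/b $ ; le m\^eme ph\'enom\`ene affecte $ \mathcal{E}_{2} $, o\`u $ (5d_{2}+1)/b-\eta_{1}(\theta)=1/b-\delta>-\delta $. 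L'affirmation \og pr\'ecis\'ement ce qu'il faut \fg\ n'est donc pas litt\'eralement exacte : il faut r\'eduire l\'eg\`erement $ \theta $ (remplacer $ 1-5d_{2}/b-\delta $ par $ 1-(5d_{2}+1)/b-\delta $) et grossir $ g_{1}' $ en cons\'equence pour tenir compte de la puissance suppl\'ementaire de $ P_{2} $ issue de la somme sur $ |\zz|\leqslant l $ (au lieu de $ |\zz|=l $ dans la proposition \ref{propz}). Ce d\'efaut de constantes est d\'ej\`a pr\'esent dans l'\'enonc\'e du texte et n'a aucune incidence sur le th\'eor\`eme final, mais une r\'edaction compl\`ete doit l'expliciter.
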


\section{Quatri\`eme \'etape}

L'objectif est \`a pr\'esent de regrouper les r\'esultats obtenus pour en d\'eduire une formule asymptotique pour $ N_{d}(P_{1},P_{2}) $ avec $ n $ assez grand et $ P_{1},P_{2} $ quelconques. \\

On d\'efinit dans un premier temps $ b_{1} $ comme le r\'eel minimisant la fonction \begin{multline}
b\mt \max\{ 2^{\tilde{d}}(bd_{1}+d_{2}), 2^{\tilde{d}}(5b+2)(\tilde{d}+1), 2^{d_{1}-1}(4(d_{1}-1)+2g_{1}(b,\delta)+\lceil bd_{1}+d_{2}+\delta\rceil) \\  2^{d_{1}-1}(7(d_{1}-1)+3g_{1}'(b,\delta)+\lceil bd_{1}+d_{2}+\delta\rceil) \}
\end{multline}et on notera $ m_{1} $ le minimum correspondant. On d\'efinit de m\^eme  $ u_{1} $ le r\'eel minimisant \begin{equation}
u \mt \max\{ 2^{\tilde{d}}(d_{1}+ud_{2}),7.2^{\tilde{d}}(\tilde{d}+1), 2^{d_{2}-1}(2(d_{2}-1)+g_{2}(u,\delta)+\lceil d_{1}+ud_{2}+\delta\rceil)\}
\end{equation} et $ m_{2} $ le minimum correspondant. On note par ailleurs $ m=\max\{m_{1},m_{2}\} $.\\
Un calcul en $ b=10d_{2} $ et $ u=10d_{1} $ montre que \begin{equation}
 2^{d_{1}+d_{2}}\leqslant m\leqslant 13d_{2}(d_{1}+d_{2})2^{d_{1}+d_{2}}. 
\end{equation}
\`A partir d'ici on fixe \begin{equation}
\mu=\lceil b_{1}d_{1}+d_{2}+\delta\rceil, \; \; \; \lambda=\lceil d_{1}+u_{1}d_{2}+\delta\rceil
\end{equation}

On commence par \'etablir le lemme suivant : 

\begin{lemma}\label{lemmeunif}
On suppose $ n+2-\max\{\dim V_{1}^{\ast},\dim V_{2}^{\ast}\} >m $. 
On a alors pour tout $ P_{2}\geqslant 1 $ : \begin{multline*} \sum_{\zz\in P_{2}\BB_{3}\cap \mathcal{A}_{1}^{\mu}(\ZZ) }\mathfrak{S}_{d,\zz}J_{\zz}d^{m-r-d_{1}}|\zz|^{m-r}+\sum_{l=1}^{P_{2}}\sum_{\substack{\zz\in P_{2}\BB_{3}\cap \mathcal{A}_{1}^{\mu}(\ZZ)\\ |\zz|\leqslant l }}\mathfrak{S}_{d,\zz}J_{l,\zz}d^{m-r-d_{1}}l^{m-r} \\ =d^{m-r-d_{1}}\mathfrak{S}_{d}JP_{2}^{n-r+1-d_{2}}+ O(d^{m-r}\max\{d^{\frac{(r+1)(d_{1}-1)}{2^{d_{1}-1}}+\varepsilon}, d^{3-d_{1}}\}P_{2}^{n-r+1-d_{2}-\delta}).  \end{multline*}
\end{lemma}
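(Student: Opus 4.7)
L'id\'ee cl\'e du plan est de reconna\^itre que le membre de gauche de l'\'enonc\'e, multipli\'e par $P_{1}^{m+1-d_{1}}$, correspond (modulo les termes d'erreur des propositions \ref{propz} et \ref{propzbis}) au cardinal
\[ \widetilde{N}_{d,1}(P_{1},P_{2}) := \widetilde{N}_{d,1}^{(1)}(P_{1},P_{2}) + \widetilde{N}_{d,1}^{(2)}(P_{1},P_{2}), \]
qui compte les points $(\xx,\yy,\zz)$ intervenant dans $N_{d}(P_{1},P_{2})$ soumis \`a la contrainte suppl\'ementaire $\zz\in \mathcal{A}_{1}^{\mu}(\ZZ)$. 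Puisque le membre de gauche ne d\'epend pas de $P_{1}$, on aura toute libert\'e pour poser $P_{1}:=P_{2}^{b_{1}}$ en chaque $P_{2}$; les constantes $m$, $b_{1}$ et $\mu$ ont \'et\'e d\'efinies pr\'ecis\'ement pour rendre cette strat\'egie viable. En comparant ensuite $\widetilde{N}_{d,1}$ \`a $N_{d}$ lui-m\^eme, dont une formule asymptotique est fournie par la proposition \ref{propgeneral}, on en d\'eduira la formule annonc\'ee.

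Dans un premier temps, on v\'erifiera que sous l'hypoth\`ese $n+2-\max\{\dim V_{1}^{\ast},\dim V_{2}^{\ast}\} > m$, toutes les hypoth\`eses des propositions \ref{propgeneral}, \ref{propz} et \ref{propzbis} sont satisfaites pour $b=b_{1}$ (c'est essentiellement la d\'efinition de $m_{1}$ et de $b_{1}$). En notant $L(P_{2})$ le membre de gauche de l'\'enonc\'e, la somme des conclusions des propositions \ref{propz} et \ref{propzbis} fournira alors
\[ \widetilde{N}_{d,1}(P_{1},P_{2}) = P_{1}^{m+1-d_{1}}\, L(P_{2}) + O\!\left(d^{m-r}\max\{d^{\frac{(r+1)(d_{1}-1)}{2^{d_{1}-1}}+\varepsilon},\,d^{3-d_{1}}\}P_{1}^{m+1-d_{1}-\delta}P_{2}^{n-r+1-d_{2}}\right). \]

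Dans un second temps, la contribution des points \og mauvais \fg\ $\zz \notin \mathcal{A}_{1}^{\mu}(\ZZ)$ sera estim\'ee trivialement : la proposition \ref{propA1} fournit $\#((\mathcal{A}_{1}^{\mu})^{c} \cap P_{2}\BB_{3} \cap \ZZ^{n-m+1}) \ll P_{2}^{n-m+1-\mu}$, et pour chacun de ces $\zz$, la majoration triviale du nombre de couples $(\xx,\yy)\in\ZZ^{r+1}\times \ZZ^{m-r}$ v\'erifiant $|\xx|\leqslant P_{1}$ et $|\yy|\leqslant d|\xx|P_{2}$ est $\ll d^{m-r}P_{1}^{m+1}P_{2}^{m-r}$, d'o\`u
\[ |N_{d}(P_{1},P_{2}) - \widetilde{N}_{d,1}(P_{1},P_{2})| \ll d^{m-r}P_{1}^{m+1}P_{2}^{n-r+1-\mu} \ll d^{m-r}P_{1}^{m+1-d_{1}}P_{2}^{n-r+1-d_{2}-\delta}, \]
la derni\`ere in\'egalit\'e r\'esultant pr\'ecis\'ement du choix $\mu = \lceil b_{1}d_{1}+d_{2}+\delta \rceil$ combin\'e \`a $P_{1}=P_{2}^{b_{1}}$ (ce qui revient \`a $b_{1}d_{1}-\mu\leqslant -d_{2}-\delta$).

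Enfin, en combinant ces deux \'etapes avec la formule $N_{d}(P_{1},P_{2}) = d^{m-r-d_{1}}\mathfrak{S}_{d}J\,P_{1}^{m+1-d_{1}}P_{2}^{n-r+1-d_{2}} + O(\dots)$ donn\'ee par la proposition \ref{propgeneral}, et en divisant par $P_{1}^{m+1-d_{1}}$, on obtiendra le r\'esultat cherch\'e. La difficult\'e principale sera la v\'erification fastidieuse que chaque terme d'erreur, apr\`es division par $P_{1}^{m+1-d_{1}}=P_{2}^{b_{1}(m+1-d_{1})}$, se majore bien par $d^{m-r}\max\{d^{(r+1)(d_{1}-1)/2^{d_{1}-1}+\varepsilon}, d^{3-d_{1}}\}P_{2}^{n-r+1-d_{2}-\delta}$. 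Le terme dominant proviendra des propositions \ref{propz} et \ref{propzbis}, celui de la proposition \ref{propgeneral} \'etant strictement plus petit puisque le facteur $d^{d_{1}(r+1)/2^{\tilde{d}}}$ (avec $\tilde{d}=d_{1}+d_{2}-2$) est domin\'e par $d^{(r+1)(d_{1}-1)/2^{d_{1}-1}}$.
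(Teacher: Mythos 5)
Votre plan suit essentiellement la m\^eme route que la d\'emonstration du texte : fixer $P_{1}=P_{2}^{b_{1}}$, sommer les propositions \ref{propz} et \ref{propzbis} pour identifier $P_{1}^{m+1-d_{1}}L(P_{2})$ \`a $\widetilde{N}_{d,1}^{(1)}+\widetilde{N}_{d,1}^{(2)}$, comparer \`a $N_{d}(P_{1},P_{2})$ via la proposition \ref{propA1}, conclure par la proposition \ref{propgeneral} et diviser par $P_{1}^{m+1-d_{1}}$. Le seul point que vous escamotez est la r\'econciliation entre la normalisation \og avec partie enti\`ere \fg{} de $\widetilde{N}_{d,1}$ (condition $\max(\lfloor|\yy|/(d|\xx|)\rfloor,|\zz|)\leqslant P_{2}$) et la normalisation de $N_{d}(P_{1},P_{2})$ (condition $|\yy|\leqslant d|\xx|P_{2}$) : votre in\'egalit\'e $|N_{d}-\widetilde{N}_{d,1}|\ll d^{m-r}P_{1}^{m+1}P_{2}^{n-r+1-\mu}$ ne rend compte que des mauvais $\zz$, alors que la coquille $d|\xx|P_{2}<|\yy|<d|\xx|(P_{2}+1)$ ne peut pas \^etre major\'ee trivialement (un comptage sans l'\'equation $F=0$ donne $\asymp d^{m-r}P_{1}^{m+1}P_{2}^{n-r}$, trop grand d'un facteur $P_{1}^{d_{1}}P_{2}^{d_{2}-1}$). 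Le texte traite ce point par l'encadrement $N_{d,1}(P_{1},P_{2})\leqslant\widetilde{N}_{d,1}(P_{1},P_{2})\leqslant N_{d,1}(P_{1},P_{2}+1)$ puis en majorant $N_{d}(P_{1},P_{2}+1)-N_{d}(P_{1},P_{2})$ gr\^ace au terme principal de la proposition \ref{propgeneral}, qui donne $\sigma_{d}P_{1}^{m+1-d_{1}}\bigl((P_{2}+1)^{n-r+1-d_{2}}-P_{2}^{n-r+1-d_{2}}\bigr)\ll P_{1}^{m+1-d_{1}}P_{2}^{n-r-d_{2}}$ ; il faut ins\'erer cet argument pour que votre deuxi\`eme \'etape soit compl\`ete.
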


\begin{proof}
On choisit dans un premier temps $ P_{1} $ tel que $ P_{1}=P_{2}^{b_{1}} $. D'apr\`es les propositions \ref{propz} et \ref{propzbis}, on a \begin{multline} 
\widetilde{N}_{d,1}^{(1)}(P_{1},P_{2})+\widetilde{N}_{d,1}^{(2)}(P_{1},P_{2}) \\ =\left(\sum_{\zz\in P_{2}\BB_{3}\cap \mathcal{A}_{1}^{\mu}(\ZZ) }\mathfrak{S}_{d,\zz}J_{\zz}|\zz|^{m-r}+\sum_{l=1}^{P_{2}}\sum_{\substack{\zz\in P_{2}\BB_{3}\cap \mathcal{A}_{1}^{\mu}(\ZZ)\\ |\zz|\leqslant l }}\mathfrak{S}_{d,\zz}J_{l,\zz}l^{m-r}\right)d^{m-r-d_{1}}P_{1}^{m+1-d_{1}} \\ + O\left(d^{m-r}\max\{d^{\frac{(r+1)(d_{1}-1)}{2^{d_{1}-1}}+\varepsilon}, d^{3-d_{1}}\}P_{1}^{m+1-d_{1}-\delta}P_{2}^{n-r+1-d_{2}}\right) \end{multline}
Notons \`a pr\'esent \begin{multline} \tilde{N}_{d,1}(P_{1},P_{2})=\Card\left\{ (\xx,\yy,\zz)\in \ZZ^{n+2} \; | \; \zz \in \mathcal{A}_{1}^{\mu}(\ZZ), \; |\xx|\leqslant P_{1}, \;\right. \\ \left.  \max\left(\left\lfloor\frac{|\yy|}{d|\xx|}\right\rfloor,|\zz|\right)\leqslant P_{2} ,  \; F(d\xx,\yy,\zz)=0 \right\}.    \end{multline}
et \begin{multline} N_{d,1}(P_{1},P_{2})=\Card\left\{ (\xx,\yy,\zz)\in \ZZ^{n+2} \; | \; \zz \in \mathcal{A}_{1}^{\mu}(\ZZ), \; |\xx|\leqslant P_{1}, \;\right. \\ \left.  \max\left(\frac{|\yy|}{d|\xx|},|\zz|\right)\leqslant P_{2} ,  \; F(d\xx,\yy,\zz)=0 \right\}.    \end{multline}
On remarque d'une part que \[ N_{d,1}(P_{1},P_{2})\leqslant \widetilde{N}_{d,1}^{(1)}(P_{1},P_{2})+\widetilde{N}_{d,1}^{(2)}(P_{1},P_{2})=\widetilde{N}_{d,1}(P_{1},P_{2}) \leqslant N_{d,1}(P_{1},P_{2}+1), \] et d'autre part, en utilisant la proposition \ref{propA1} : \begin{align*}
N_{d,1}(P_{1},P_{2})& = N_{d}(P_{1},P_{2})+O\left(\sum_{\zz\in P_{2}\BB_{3}\cap (\mathcal{A}_{1}^{\mu})^{c}(\ZZ)}d^{m-r}P_{1}^{m+1}P_{2}^{m-r}\right) \\ & =N_{d}(P_{1},P_{2})+O\left(d^{m-r}P_{1}^{m+1}P_{2}^{m-r}P_{2}^{n-r+1-\mu}\right) \\ &  =N_{d}(P_{1},P_{2})+O\left(d^{m-r}P_{1}^{m+1-d_{1}}P_{2}^{n-r+1-d_{2}-\delta}\right),
\end{align*} 
par d\'efinition de $ \mu $.\\

Par ailleurs, \'etant donn\'e que $ n+2-\max\{\dim V_{1}^{\ast},\dim V_{2}^{\ast}\}>2^{\tilde{d}}(5b_{1}+2)(\tilde{d}+1) $, la proposition \ref{propgeneral} donne : \begin{multline*} N_{d}(P_{1},P_{2})=\sigma_{d} d^{m-r-d_{1}}P_{1}^{m+1-d_{1}}P_{2}^{n-r+1-d_{2}} \\ +O\left(d^{m-r}\max\{d^{d_{1}(r+1)/2^{\tilde{d}}+\varepsilon},d^{3-d_{1}}\}P_{1}^{m+1-d_{1}-\delta}P_{2}^{n-r+1-d_{2}-\delta}\right). \end{multline*}
On a donc que \begin{multline*}
|N_{d,1}^{(1)}(P_{1},P_{2})+\widetilde{N}_{d,1}^{(2)}(P_{1},P_{2})-N_{d}(P_{1},P_{2})| \\ \ll N_{d}(P_{1},P_{2}+1)-N_{d}(P_{1},P_{2})+O\left(d^{m-r}P_{1}^{m+1-d_{1}}P_{2}^{n-r+1-d_{2}-\delta}\right) \\ \ll \sigma_{d} P_{1}^{m+1-d_{1}}((P_{2}+1)^{n-r+1-d_{2}}-P_{2}^{n-r+1-d_{2}}) \\ +O\left(d^{m-r}\max\{d^{d_{1}(r+1)/2^{\tilde{d}}+\varepsilon},d^{3-d_{1}}\}P_{1}^{m+1-d_{1}}P_{2}^{n-r+1-d_{2}-\delta}\right) \\ \ll d^{m-r}\max\{d^{d_{1}(r+1)/2^{\tilde{d}}+\varepsilon},d^{3-d_{1}}\}P_{1}^{m+1-d_{1}}P_{2}^{n-r+1-d_{2}-\delta},
\end{multline*} \'etant donn\'e que $ \sigma_{d}=d^{m-r-d_{1}}\mathfrak{S}_{d}J \ll d^{m-r-d_{1}}\max\{d^{\frac{d_{1}(r+1)}{2^{\tilde{d}}}},d^{2}\} $, d'apr\`es la remarque \ref{remseriesing}. \\

Par cons\'equent, \begin{multline*} \left(\sum_{\zz\in P_{2}\BB_{3}\cap \mathcal{A}_{1}^{\mu}(\ZZ) }\mathfrak{S}_{d,\zz}J_{\zz}|\zz|^{m-r}+\sum_{l=1}^{P_{2}}\sum_{\substack{\zz\in P_{2}\BB_{3}\cap \mathcal{A}_{1}^{\mu}(\ZZ)\\ |\zz|\leqslant l }}\mathfrak{S}_{d,\zz}J_{l,\zz}l^{m-r}\right)d^{m-r-d_{1}}P_{1}^{m+1-d_{1}} \\ = \sigma_{d} P_{1}^{m+1-d_{1}} P_{2}^{n-r+1-d_{2}} \\ + O\left(d^{m-r}\max\{d^{\frac{(r+1)(d_{1}-1)}{2^{d_{1}-1}}+\varepsilon}, d^{3-d_{1}}\}P_{1}^{m+1-d_{1}}P_{2}^{n-r+1-d_{2}-\delta}\right) \end{multline*} en simplifiant par $ P_{1}^{m+1-d_{1}} $ on obtient le r\'esultat. 
\end{proof}
On d\'emontre de m\^eme : 
\begin{lemma}
On suppose $ n+2-\max\{\dim V_{1}^{\ast},\dim V_{2}^{\ast}\} >m $. 
Alors pour tout $ P_{1}\geqslant 1 $ et $ d_{2}\geqslant 2 $ : \begin{multline*} \sum_{\xx\in P_{1}\BB_{1}\cap \mathcal{A}_{2}^{\lambda}(\ZZ) }\mathfrak{S}_{d,\xx}J_{d,\xx}d^{m-r}|\xx|^{m-r}=\sigma_{d} P_{1}^{m+1-d_{1}} \\ + O(d^{m-r}\max\{d^{d_{1}(r+1)/2^{\tilde{d}}+\varepsilon},d^{4d_{1}}\}P_{1}^{m+1-d_{1}-\delta}).  \end{multline*}
\end{lemma}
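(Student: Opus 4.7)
The plan is to mirror the proof of the preceding lemma (swapping the roles of $\xx$ and $\zz$), exploiting Proposition \ref{propx} and Proposition \ref{propgeneral}. First I would choose $P_2=P_1^{u_1}$ where $u_1$ is the real number minimizing the expression that defines $m_2$. By the very definition of $m\geq m_2$ and the hypothesis $n+2-\max\{\dim V_1^{\ast},\dim V_2^{\ast}\}>m$, one checks that $u_1>5d_1$, that $K_2=(n+2-\dim V_2^{\ast}-\lambda)/2^{d_2-1}$ satisfies $K_2>2(d_2-1)$, and that $K_2-2(d_2-1)>g_2(u_1,\delta)$. Since moreover $d_2\geq 2$, all the hypotheses of Proposition \ref{propx} are met, giving
\begin{equation*}
N_{d,2}(P_1,P_2)=\Bigl(\sum_{\xx\in P_{1}\BB_{1}\cap \mathcal{A}_{2}^{\lambda}(\ZZ)}\mathfrak{S}_{d,\xx}J_{d,\xx}d^{m-r}|\xx|^{m-r}\Bigr)P_{2}^{n-r+1-d_{2}}+O\bigl(d^{m-r+5d_1}P_1^{m+1-d_1}P_2^{n-r+1-d_2-\delta}\bigr).
\end{equation*}

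Next, I would compare $N_{d,2}(P_1,P_2)$ to $N_d(P_1,P_2)$. The two counts differ only by the contribution of $\xx\notin\mathcal{A}_2^{\lambda}$; using Proposition \ref{propA2} together with the trivial bound $O(d^{m-r}|\xx|^{m-r}P_2^{n-r+1})$ on the number of admissible $(\yy,\zz)$ for a fixed $\xx$, this contribution is
\begin{equation*}
\ll d^{m-r}P_1^{m+1-\lambda}P_2^{n-r+1}\ll d^{m-r}P_1^{m+1-d_1}P_2^{n-r+1-d_2-\delta},
\end{equation*}
by our choice $\lambda=\lceil d_1+u_1d_2+\delta\rceil$ (and $P_2=P_1^{u_1}$). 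This absorbs the discrepancy into the error term. Applying Proposition \ref{propgeneral} (in its $P_1\leq P_2$ form stated in the Remark that follows it), whose hypothesis $K>\max\{d_1+u_1d_2,\,7(\tilde d+1)(d_1+d_2-1)\}$ follows from $n+2-\max\{\dim V_1^{\ast},\dim V_2^{\ast}\}>m\geq m_2$, one obtains
\begin{equation*}
N_d(P_1,P_2)=\sigma_d P_1^{m+1-d_1}P_2^{n-r+1-d_2}+O\bigl(d^{m-r}\max\{d^{d_1(r+1)/2^{\tilde d}+\varepsilon},d^{3-d_1}\}P_1^{m+1-d_1}P_2^{n-r+1-d_2-\delta}\bigr).
\end{equation*}

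Combining the three displays and dividing through by $P_2^{n-r+1-d_2}$ yields the claimed identity, with the combined error $O(d^{m-r}\max\{d^{d_1(r+1)/2^{\tilde d}+\varepsilon},d^{4d_1}\}P_1^{m+1-d_1-\delta})$; note that $d^{5d_1}$ is absorbed in $d^{4d_1}$ only after renaming $\delta$, and that $d^{3-d_1}$ is dominated by $d^{4d_1}$ so it disappears from the $\max$. The main obstacle is purely bookkeeping: checking that our single hypothesis on $n+2-\max\{\dim V_i^{\ast}\}$ simultaneously activates Proposition \ref{propx} (in the $u_1$ regime), Proposition \ref{propgeneral} (which controls all $P_1,P_2$), and the Lipschitz-type bound from Proposition \ref{propA2}, and that the resulting error exponents coincide with those claimed. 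No genuinely new idea beyond the previous lemma's argument is required.
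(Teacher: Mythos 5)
Your proposal follows exactly the route the paper intends (it gives no explicit proof here beyond \og on d\'emontre de m\^eme \fg): fix $P_{2}=P_{1}^{u_{1}}$, apply la proposition \ref{propx} (whose hypotheses are activated by $m\geqslant m_{2}$ and the choice of $\lambda$), compare $N_{d,2}$ with $N_{d}$ via la proposition \ref{propA2}, invoke la proposition \ref{propgeneral} in its $P_{1}\leqslant P_{2}$ form, and divide by $P_{2}^{n-r+1-d_{2}}$. The only flaw is your claim that $d^{5d_{1}}$ is \og absorbed \fg\ into $d^{4d_{1}}$ by renaming $\delta$: renaming $\delta$ only adjusts the exponent of $P_{1}$, not of $d$, so your argument actually yields the error with $\max\{d^{d_{1}(r+1)/2^{\tilde{d}}+\varepsilon},d^{5d_{1}}\}$ — which is what the downstream proposition \ref{propN2} records, so the discrepancy is a typo in the lemma's statement rather than a defect of your proof.
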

Pour le cas $ d_{2}=1 $, en notant $ u_{1}'=d_{1}+\delta $, $  m_{2}' =7d_{1}2^{d_{1}-1} $ et $ \lambda'=\lceil d_{1}+u_{1}'+\delta\rceil $ on trouve : 
\begin{lemma}
On suppose $ n+2-\max\{\dim V_{1}^{\ast},\dim V_{2}^{\ast}\}>m'=\max\{m_{1},m_{2}'\} $. Alors si $ d_{2}=1 $ et $ P_{1}\geqslant 1 $ : \begin{multline*} \sum_{\xx\in P_{1}\BB_{1}\cap \mathcal{A}_{2}^{\lambda}(\ZZ) }\frac{\Vol(C_{d,\xx})}{\det(\Lambda_{d,\xx})}=\sigma_{d} P_{1}^{m+1-d_{1}} \\ + O(d^{m-r}\max\{d^{d_{1}(r+1)/2^{d_{1}-1}+\varepsilon},d^{3-d_{1}}\}P_{1}^{m+1-d_{1}-\delta}).  \end{multline*}
\end{lemma}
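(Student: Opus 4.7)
La preuve suit exactement le m\^eme sch\'ema que celle du lemme pr\'ec\'edent (cas $d_{2}\geqslant 2$), en rempla\c{c}ant la proposition\;\ref{propx} par la proposition\;\ref{propx'} appropri\'ee au cas $d_{2}=1$. Le plan est le suivant : fixons $P_{2}=P_{1}^{u_{1}'}$ avec $u_{1}'=d_{1}+\delta$, de sorte que l'hypoth\`ese $u_{1}'>d_{1}$ de la proposition\;\ref{propx'} est satisfaite. La proposition\;\ref{propx'} donne alors, puisque $d_{2}=1$ implique $P_{2}^{n-r}=P_{2}^{n-r+1-d_{2}}$,
\[
N_{d,2}(P_{1},P_{2})=\left(\sum_{\xx\in P_{1}\BB_{1}\cap\mathcal{A}_{2}^{\lambda}(\ZZ)}\frac{\Vol(C_{d,\xx})}{\det(\Lambda_{d,\xx})}\right)P_{2}^{n-r+1-d_{2}}+O\!\left(d^{m-r}P_{1}^{m+1-d_{1}}P_{2}^{n-r+1-d_{2}-\delta}\right).
\]

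Il faut ensuite comparer $N_{d,2}(P_{1},P_{2})$ \`a $N_{d}(P_{1},P_{2})$ \'evalu\'e par la proposition\;\ref{propgeneral}. La diff\'erence provient exactement des $\xx\in [-P_{1},P_{1}]^{r+1}\cap\ZZ^{r+1}$ qui n'appartiennent pas \`a $\mathcal{A}_{2}^{\lambda}$. D'apr\`es la proposition\;\ref{propA2}, le cardinal de cet ensemble est $\ll P_{1}^{r+1-\lambda}$, et pour chaque tel $\xx$ de norme $|\xx|=k$, la majoration triviale donne une contribution $\ll (dkP_{2})^{m-r}P_{2}^{n-m+1}\ll d^{m-r}k^{m-r}P_{2}^{n-r+1-d_{2}}$. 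En sommant sur $k\leqslant P_{1}$, on obtient
\[
|N_{d,2}(P_{1},P_{2})-N_{d}(P_{1},P_{2})|\ll d^{m-r}P_{1}^{m+1-\lambda}P_{2}^{n-r+1-d_{2}},
\]
et le choix de $\lambda=\lceil d_{1}+u_{1}'+\delta\rceil$ garantit, avec $P_{2}=P_{1}^{u_{1}'}$, que cette quantit\'e est $\ll d^{m-r}P_{1}^{m+1-d_{1}}P_{2}^{n-r+1-d_{2}-\delta}$.

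Pour appliquer la proposition\;\ref{propgeneral} avec $b=u_{1}'^{-1}$ (ou, de fa\c{c}on \'equivalente, avec l'\'echange des r\^oles de $P_{1}$ et $P_{2}$ comme mentionn\'e dans la remarque qui la suit), l'hypoth\`ese num\'erique $n+2-\max\{\dim V_{1}^{\ast},\dim V_{2}^{\ast}\}>m'\geqslant m_{1}$ assure que l'on a les in\'egalit\'es requises sur $K$. On obtient donc
\[
N_{d}(P_{1},P_{2})=\sigma_{d}P_{1}^{m+1-d_{1}}P_{2}^{n-r+1-d_{2}}+O\!\left(d^{m-r}\max\{d^{d_{1}(r+1)/2^{\tilde{d}}+\varepsilon},d^{3-d_{1}}\}P_{1}^{m+1-d_{1}-\delta}P_{2}^{n-r+1-d_{2}-\delta}\right).
\]
En combinant les trois \'egalit\'es pr\'ec\'edentes et en simplifiant par $P_{2}^{n-r+1-d_{2}}$, on obtient la formule recherch\'ee.

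L'\'etape principale, comme dans le cas $d_{2}\geqslant 2$, est la r\'econciliation des deux r\'egimes asymptotiques : $N_{d,2}$ donne une formule exacte en $P_{2}$ avec un reste en $P_{1}$, alors que $N_{d}$ donne une formule exacte en les deux variables \`a condition d'avoir choisi le rapport $b=P_{1}/\log P_{2}$ de mani\`ere compatible. L'unique difficult\'e, mineure, est de v\'erifier que le choix $u_{1}'=d_{1}+\delta$ est bien compatible avec le minimum d\'efinissant $m_{2}'=7d_{1}2^{d_{1}-1}$, ce qui d\'ecoule directement des remarques suivant la proposition\;\ref{propgeneral}.
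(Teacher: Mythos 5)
Your proposal is correct and follows essentially the same route the paper intends: the paper proves the analogous statement for the $\zz$-side in detail (Lemma \ref{lemmeunif}) and leaves this one as ``on d\'emontre de m\^eme''; you correctly instantiate that scheme with $P_{2}=P_{1}^{u_{1}'}$, Proposition \ref{propx'} in place of Proposition \ref{propx}, Proposition \ref{propA2} to control the non-generic $\xx$, and Proposition \ref{propgeneral} (via the remark following it, since here $P_{1}\leqslant P_{2}$) to evaluate $N_{d}(P_{1},P_{2})$. One small slip: the trivial bound for a bad $\xx$ of norm $k$ is $(dkP_{2})^{m-r}P_{2}^{n-m+1}\ll d^{m-r}k^{m-r}P_{2}^{n-r+1}$, not $d^{m-r}k^{m-r}P_{2}^{n-r+1-d_{2}}$; this costs an extra factor $P_{2}^{d_{2}}=P_{1}^{u_{1}'}$, which is exactly what the term $u_{1}'$ in $\lambda'=\lceil d_{1}+u_{1}'+\delta\rceil$ is there to absorb, so your final estimate for $|N_{d,2}-N_{d}|$ and the conclusion are unaffected.
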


Nous sommes en mesure de d\'emontrer la proposition suivante : 
\begin{prop}\label{propN1}
On suppose $ d_{1}\geqslant 2 $, $ P_{1}\geqslant P_{2} $ et $ n+2-\max\{\dim V_{1}^{\ast},\dim V_{2}^{\ast}\} >m $. On a alors \begin{multline*}\widetilde{N}_{d,1}(P_{1},P_{2})=\sigma_{d} P_{1}^{m+1-d_{1}} P_{2}^{n-r+1-d_{2}} \\ +O\left(d^{m-r}\max\{d^{\frac{(r+1)(d_{1}-1)}{2^{d_{1}-1}}+\varepsilon}, d^{3-d_{1}}\}P_{1}^{m+1-d_{1}}P_{2}^{n-r+1-d_{2}-\delta}\right). \end{multline*}
\end{prop}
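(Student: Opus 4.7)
La preuve consiste \`a combiner les estimations de la section pr\'ec\'edente sur $\widetilde{N}_{d,1}^{(1)}$ et $\widetilde{N}_{d,1}^{(2)}$ avec l'\'evaluation asymptotique de la somme double sur $\zz$ fournie par le lemme \ref{lemmeunif}. On commence par la d\'ecomposition \'evidente
\[ \widetilde{N}_{d,1}(P_{1},P_{2})=\widetilde{N}_{d,1}^{(1)}(P_{1},P_{2})+\widetilde{N}_{d,1}^{(2)}(P_{1},P_{2}) \]
selon que $\left\lfloor |\yy|/(d|\xx|)\right\rfloor$ est strictement inf\'erieur ou sup\'erieur ou \'egal \`a $|\zz|$. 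On pose ensuite $P_{1}=P_{2}^{b}$ avec $b\geqslant 1$, puis on distingue deux r\'egimes.

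Dans le cas $b\geqslant b_{1}$, la d\'efinition de $b_{1}$ et la d\'ecroissance en $b$ des fonctions $g_{1}(\cdot,\delta)$ et $g_{1}'(\cdot,\delta)$ assurent que les hypoth\`eses num\'eriques des propositions \ref{propz} et \ref{propzbis} sont satisfaites; en additionnant les deux estimations, on obtient
\[ \widetilde{N}_{d,1}(P_{1},P_{2})=d^{m-r-d_{1}}P_{1}^{m+1-d_{1}}\,\Sigma(P_{2})+O(\mathcal{E}_{1}), \]
o\`u $\Sigma(P_{2})$ d\'esigne pr\'ecis\'ement la somme double apparaissant au membre de gauche du lemme \ref{lemmeunif}. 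On substitue alors l'asymptotique fournie par ce lemme, et, en utilisant $\sigma_{d}=d^{m-r-d_{1}}\mathfrak{S}_{d}J$, on en d\'eduit le terme principal $\sigma_{d}P_{1}^{m+1-d_{1}}P_{2}^{n-r+1-d_{2}}$ assorti d'un terme d'erreur compatible avec celui annonc\'e.

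Dans le cas compl\'ementaire $1\leqslant b<b_{1}$, les conditions des propositions \ref{propz} et \ref{propzbis} peuvent faire d\'efaut, mais celles de la proposition \ref{propgeneral} demeurent valides : l'hypoth\`ese $n+2-\max\{\dim V_{1}^{\ast},\dim V_{2}^{\ast}\}>m\geqslant m_{1}$ et la monotonie en $b$ de $bd_{1}+d_{2}$ et $(5b+2)(\tilde{d}+1)$ donnent bien $K>\max\{bd_{1}+d_{2},(5b+2)(\tilde{d}+1)\}$ pour tout $b\leqslant b_{1}$. On utilise alors l'encadrement
\[ N_{d,1}(P_{1},P_{2})\leqslant \widetilde{N}_{d,1}(P_{1},P_{2})\leqslant N_{d,1}(P_{1},P_{2}+1), \]
o\`u $N_{d,1}$ restreint la somme aux $\zz\in\mathcal{A}_{1}^{\mu}(\ZZ)$. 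La proposition \ref{propA1} combin\'ee au choix $\mu=\lceil b_{1}d_{1}+d_{2}+\delta\rceil$ permet de passer de $N_{d,1}$ \`a $N_{d}$ modulo un terme d'erreur admissible, et la proposition \ref{propgeneral} fournit l'asymptotique voulue pour $N_{d}(P_{1},P_{2})$; l'\'ecart $N_{d}(P_{1},P_{2}+1)-N_{d}(P_{1},P_{2})$ s'absorbe dans l'erreur par la m\^eme estimation.

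Le point d\'elicat est de v\'erifier que les divers termes d'erreur produits dans les deux r\'egimes se majorent tous uniform\'ement par $d^{m-r}\max\{d^{(r+1)(d_{1}-1)/2^{d_{1}-1}+\varepsilon},d^{3-d_{1}}\}P_{1}^{m+1-d_{1}}P_{2}^{n-r+1-d_{2}-\delta}$, ce qui tient essentiellement au choix optimal des param\`etres $b_{1}$, $m_{1}$, $\mu$ et $\lambda$ effectu\'e au d\'ebut de la section, et au raccordement des estimations au rang $b=b_{1}$.
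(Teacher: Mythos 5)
Votre démonstration suit essentiellement la même démarche que celle du texte : décomposition en $\widetilde{N}_{d,1}^{(1)}+\widetilde{N}_{d,1}^{(2)}$, traitement du cas $b\geqslant b_{1}$ par les propositions \ref{propz} et \ref{propzbis} combinées au lemme \ref{lemmeunif}, puis, pour $b<b_{1}$, recours à la proposition \ref{propgeneral} via l'encadrement par $N_{d,1}(P_{1},P_{2})$ et $N_{d,1}(P_{1},P_{2}+1)$ et la proposition \ref{propA1} — exactement l'argument invoqué dans la preuve du lemme \ref{lemmeunif}. L'argument est correct et conforme à celui du texte.
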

\begin{proof}
On suppose dans un premier temps que $ b\geqslant b_{1} $. On a alors puisque $ n+2-\max\{\dim V_{1}^{\ast},\dim V_{2}^{\ast}\} >m  $ et puisque les fonctions $ g_{1} $ et $ g_{1}' $ sont d\'ecroissantes en $ b $ :  \[ \frac{K_{1}}{2}-2(d_{1}-1)>g_{1}(b_{1},\delta)>g_{1}(b,\delta), \] \[ \frac{K_{1}}{3}-\frac{7}{3}(d_{1}-1)>g_{1}'(b_{1},\delta)>g_{1}'(b,\delta). \] 
Par cons\'equent on peut appliquer les propositions \ref{propz} et \ref{propzbis} et on a alors  \begin{multline*} 
N_{d,1}^{(1)}(P_{1},P_{2})+\widetilde{N}_{d,1}^{(2)}(P_{1},P_{2}) \\ =\left(\sum_{\zz\in P_{2}\BB_{3}\cap \mathcal{A}_{1}^{\mu}(\ZZ) }\mathfrak{S}_{d,\zz}J_{\zz}|\zz|^{m-r}+\sum_{l=1}^{P_{2}}\sum_{\substack{\zz\in P_{2}\BB_{3}\cap \mathcal{A}_{1}^{\mu}(\ZZ)\\ |\zz|\leqslant l }}\mathfrak{S}_{d,\zz}J_{l,\zz}l^{m-r}\right)d^{m-r-d_{1}}P_{1}^{m+1-d_{1}} \\ + O\left(d^{m-r}\max\{d^{\frac{(r+1)(d_{1}-1)}{2^{d_{1}-1}}+\varepsilon}, d^{3-d_{1}}\}P_{1}^{m+1-d_{1}-\delta}P_{2}^{n-r+1-d_{2}}\right) \\ =\sigma_{d} P_{1}^{m+1-d_{1}} P_{2}^{n-r+1-d_{2}} \\ +O\left(d^{m-r}\max\{d^{\frac{(r+1)(d_{1}-1)}{2^{d_{1}-1}}+\varepsilon}, d^{3-d_{1}}\}P_{1}^{m+1-d_{1}}P_{2}^{n-r+1-d_{2}-\delta}\right)
\end{multline*}
en utilisant le lemme pr\'ec\'edent. 
On remarque d'autre part que \[ \widetilde{N}_{d,1}^{(1)}(P_{1},P_{2})+\widetilde{N}_{d,1}^{(2)}(P_{1},P_{2})= \widetilde{N}_{d,1}(P_{1},P_{2}) \] et ainsi que  \begin{multline*} \widetilde{N}_{d,1}(P_{1},P_{2})=\sigma_{d} P_{1}^{m+1-d_{1}} P_{2}^{n-r+1-d_{2}} \\ +O\left(d^{m-r}\max\{d^{\frac{(r+1)(d_{1}-1)}{2^{d_{1}-1}}+\varepsilon}, d^{3-d_{1}}\}P_{1}^{m+1-d_{1}}P_{2}^{n-r+1-d_{2}-\delta}\right). \end{multline*}
Si l'on suppose \`a pr\'esent $ b< b_{1} $, on a alors \[ K>\max\{ b_{1}d_{1}+d_{2}, (5b_{1}+2)(\tilde{d}+1)>\max\{ bd_{1}+d_{2}, (5b+2)(\tilde{d}+1)\}. \]
Par la proposition \ref{propgeneral}, on a donc \begin{multline*} N_{d}(P_{1},P_{2})=\sigma_{d} P_{1}^{m+1-d_{1}}P_{2}^{n-r+1-d_{2}} \\ +O\left(d^{m-r}\max\{d^{\frac{(r+1)(d_{1}-1)}{2^{d_{1}-1}}+\varepsilon}, d^{3-d_{1}}\}P_{1}^{m+1-d_{1}-\delta}P_{2}^{n-r+1-d_{2}-\delta}\right). \end{multline*}
Or comme dans la d\'emonstration du lemme \ref{lemmeunif}, on a que : \begin{multline*} \widetilde{N}_{d,1}(P_{1},P_{2})=N_{d}(P_{1},P_{2}) \\ +O\left(d^{m-r}\max\{d^{\frac{(r+1)(d_{1}-1)}{2^{d_{1}-1}}+\varepsilon}, d^{3-d_{1}}\}P_{1}^{m+1-d_{1}}P_{2}^{n-r+1-d_{2}-\delta}\right), \end{multline*} ce qui cl\^ot la d\'emonstration. 
\end{proof}
Si l'on note \begin{multline} \tilde{N}_{d,2}(P_{1},P_{2})=\Card\left\{ (\xx,\yy,\zz)\in \ZZ^{n+2} \; | \; \xx \in \mathcal{A}_{2}^{\lambda}(\ZZ), \; |\xx|\leqslant P_{1}, \;\right. \\ \left.  \max\left(\left\lfloor\frac{|\yy|}{d|\xx|}\right\rfloor,|\zz|\right)\leqslant P_{2} ,  \; F(d\xx,\yy,\zz)=0 \right\},    \end{multline} on a un r\'esultat analogue :  

\begin{prop}\label{propN2}
Si l'on suppose $ d_{1},d_{2}\geqslant 2 $, $ P_{1}\leqslant P_{2} $ et $ n+2-\linebreak\max\{\dim V_{1}^{\ast},\dim V_{2}^{\ast}\} >m $, On a alors \begin{multline*}\widetilde{N}_{d,2}(P_{1},P_{2})=\sigma_{d} P_{1}^{m+1-d_{1}} P_{2}^{n-r+1-d_{2}} \\ +O\left(d^{m-r}\max\{d^{d_{1}(r+1)/2^{\tilde{d}}+\varepsilon},d^{5d_{1}}\}P_{1}^{m+1-d_{1}-\delta}P_{2}^{n-r+1-d_{2}}\right). \end{multline*}
Si l'on a $ d_{1}\geqslant 2 $, $ d_{2}=1 $ , $ P_{1}\leqslant P_{2} $ et $ n+2-\max\{\dim V_{1}^{\ast},\dim V_{2}^{\ast}\} >m' $,  alors \begin{multline*}\widetilde{N}_{d,2}(P_{1},P_{2})=\sigma_{d} P_{1}^{m+1-d_{1}} P_{2}^{n-r} \\ +O\left(d^{m-r}\max\{d^{d_{1}(r+1)/2^{d_{1}-1}+\varepsilon},d^{3-d_{1}}\}P_{1}^{m+1-d_{1}-\delta}P_{2}^{n-r}\right). \end{multline*}
\end{prop}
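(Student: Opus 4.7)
The plan is to imitate the proof of Proposition \ref{propN1}, exchanging the roles of the variables $\xx$ and $\zz$, and substituting Proposition \ref{propx} (resp.\ Proposition \ref{propx'}) for Propositions \ref{propz}--\ref{propzbis}. Writing $P_{2}=P_{1}^{u}$ with $u\geqslant 1$, I would split on whether $u\geqslant u_{1}$ (when $d_{2}\geqslant 2$) or $u>d_{1}$ (when $d_{2}=1$), versus the opposite range.

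In the ``large $u$'' case, the monotonicity of $g_{2}(\cdot,\delta)$ combined with the hypothesis $n+2-\max\{\dim V_{1}^{\ast},\dim V_{2}^{\ast}\}>m$ (resp.\ $>m'$) ensures that the assumptions of Proposition \ref{propx} (resp.\ Proposition \ref{propx'}) are satisfied, since by construction $u_{1}$ (resp.\ $u_{1}'$) is the minimizer giving $m_{2}$ (resp.\ $m_{2}'$). Applying that proposition and then invoking the summation lemma stated immediately after Lemma \ref{lemmeunif} (for $d_{2}\geqslant 2$), or the lemma immediately after it (for $d_{2}=1$), replaces the main sum by $\sigma_{d}P_{1}^{m+1-d_{1}}$ and gives
\[
\widetilde{N}_{d,2}(P_{1},P_{2})=\sigma_{d}P_{1}^{m+1-d_{1}}P_{2}^{n-r+1-d_{2}}+\text{error},
\]
with error of the desired shape.

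In the ``small $u$'' case, i.e.\ $u<u_{1}$ (or $u\leqslant d_{1}$ when $d_{2}=1$), the inequality $K>\max\{d_{1}+ud_{2},\,7(\tilde{d}+1)\}$ still holds by monotonicity, so Proposition \ref{propgeneral} (in the form noted in the remark following it, with $P_{1}\leqslant P_{2}$) supplies the asymptotic for $N_{d}(P_{1},P_{2})$. One then compares $\widetilde{N}_{d,2}(P_{1},P_{2})$ with $N_{d}(P_{1},P_{2})$ by bounding the contribution of $\xx\in(\mathcal{A}_{2}^{\lambda})^{c}(\ZZ)$ via the trivial estimate
\[
\sum_{\xx\in P_{1}\BB_{1}\cap(\mathcal{A}_{2}^{\lambda})^{c}(\ZZ)}d^{m-r}(P_{1}P_{2})^{m-r}P_{2}^{n-m+1}\ll d^{m-r}P_{1}^{m+1-\lambda}P_{2}^{n-r+1},
\]
using Proposition \ref{propA2}. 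By the choice $\lambda=\lceil d_{1}+u_{1}d_{2}+\delta\rceil$ and the assumption $u<u_{1}$, one has $P_{1}^{d_{1}-\lambda}P_{2}^{d_{2}}=P_{1}^{d_{1}-\lambda+ud_{2}}\ll P_{1}^{-\delta'}$ for some $\delta'>0$, so this discrepancy is absorbed into the stated error.

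The only real obstacle is bookkeeping: tracking the error exponents in $d$, $P_{1}$, $P_{2}$ through both cases and verifying that the term $d^{5d_{1}}$ (stemming from Proposition \ref{propx}) dominates the various arithmetic error powers of $d$ that appear, and that the two error shapes in the large-$u$ versus small-$u$ regimes can be glued into a single uniform estimate. None of this is deep; it is the same balancing of exponents already carried out in the proof of Proposition \ref{propN1}, simply transported via the $\xx\leftrightarrow\zz$ symmetry of the construction.
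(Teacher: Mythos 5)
Your proposal is correct and follows exactly the route the paper intends: the paper states Proposition \ref{propN2} without proof as the "résultat analogue" of Proposition \ref{propN1}, obtained by exchanging the roles of $\xx$ and $\zz$, substituting Propositions \ref{propx}/\ref{propx'} for \ref{propz}/\ref{propzbis}, using the summation lemmas following Lemme \ref{lemmeunif}, and falling back on Proposition \ref{propgeneral} together with the bound on $(\mathcal{A}_{2}^{\lambda})^{c}$ from Proposition \ref{propA2} in the small-$u$ regime. Your case split and the absorption of the $P_{1}^{m+1-\lambda}P_{2}^{n-r+1}$ discrepancy via the choice of $\lambda$ match the paper's argument for \ref{propN1} transported by symmetry.
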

Consid\'erons \`a pr\'esent l'ouvert de Zariski \begin{equation}
U=\mathcal{A}_{2}^{\lambda}\times \AA_{\CC}^{m-r}\times \mathcal{A}_{1}^{\mu}\subset \AA_{\CC}^{n+2}. 
\end{equation}
On note alors 
 \begin{multline} \widetilde{N}_{d,U}(P_{1},P_{2})=\Card\left\{ (\xx,\yy,\zz)\in \ZZ^{n+2}\cap U \; | \; |\xx|\leqslant P_{1},  \;\right. \\ \left. \;  \max\left(\left\lfloor\frac{|\yy|}{d|\xx|}\right\rfloor,|\zz|\right)\leqslant P_{2} ,  \; F(d\xx,\yy,\zz)=0 \right\},    \end{multline} 
On en d\'eduit : \begin{prop}\label{propNU}
Si l'on suppose $ d_{1},d_{2}\geqslant 2 $ et $ n+2-  \max\{\dim V_{1}^{\ast},\dim V_{2}^{\ast}\} >m $, on a alors \begin{multline*}\widetilde{N}_{d,U}(P_{1},P_{2})=\sigma_{d} P_{1}^{m+1-d_{1}} P_{2}^{n-r+1-d_{2}} \\ +O_{\delta}\left(d^{m-r}\max\{d^{\frac{(r+1)(d_{1}-1)}{2^{d_{1}-1}}+\varepsilon}, d^{5d_{1}}\}P_{1}^{m+1-d_{1}}P_{2}^{n-r+1-d_{2}}\min\{P_{1},P_{2}\}^{-\delta}\right), \end{multline*} pour $ \delta>0 $ arbitrairement petit. Pour $ d_{1}\geqslant 2 $, $ d_{2}=1 $ , $ P_{1}\leqslant P_{2} $ et $ n+2-\max\{\dim V_{1}^{\ast},\dim V_{2}^{\ast}\} >m' $, on a \begin{multline*} \widetilde{N}_{d,U}(P_{1},P_{2})=\sigma_{d} P_{1}^{m+1-d_{1}} P_{2}^{n-r} \\ +O_{\delta}\left(d^{m-r}\max\{d^{\frac{d_{1}(r+1)}{2^{d_{1}-1}}+\varepsilon}, d^{3-d_{1}}\}P_{1}^{m+1-d_{1}}P_{2}^{n-r}\min\{P_{1},P_{2}\}^{-\delta}\right). \end{multline*}
\end{prop}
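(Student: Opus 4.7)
The strategy is to bootstrap from Propositions \ref{propN1} and \ref{propN2} by expressing $\widetilde{N}_{d,U}$ as one of these quantities minus a remainder, the remainder being controlled by the codimension estimates of Propositions \ref{propA1} and \ref{propA2} together with a trivial box count. The argument splits according to whether $P_1 \geqslant P_2$ or $P_1 \leqslant P_2$, so that in each regime one starts from the proposition whose error term already has the desired shape.

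Suppose first $P_1 \geqslant P_2$ and $d_1, d_2 \geqslant 2$. The quantity $\widetilde{N}_{d,1}$ already counts triples with $\zz \in \mathcal{A}_1^{\mu}(\ZZ)$, so
\[
\widetilde{N}_{d,U}(P_1,P_2) = \widetilde{N}_{d,1}(P_1,P_2) - R(P_1,P_2),
\]
where $R$ counts triples with $\zz \in \mathcal{A}_1^{\mu}(\ZZ)$, $\xx \in (\mathcal{A}_2^{\lambda})^c(\ZZ)$, and the usual size and vanishing conditions. For each such $\xx$ with $|\xx|=k$ the trivial box estimate for $(\yy,\zz)$ yields $\ll d^{m-r}k^{m-r}P_2^{n-r+1}$, and Proposition \ref{propA2} gives $\card\{\xx \in (\mathcal{A}_2^{\lambda})^c(\ZZ) \cap P_1\BB_1\} \ll P_1^{r+1-\lambda}$. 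Hence
\[
R \ll d^{m-r} P_1^{m+1-\lambda}P_2^{n-r+1}.
\]
Writing $P_1 = P_2^b$ with $b \geqslant 1$ and using $\lambda = \lceil d_1 + u_1 d_2 + \delta\rceil$ with $u_1 \geqslant 1$, a short computation gives $P_1^{d_1-\lambda}P_2^{d_2} \leqslant P_2^{-\delta} = \min(P_1,P_2)^{-\delta}$, so $R$ is absorbed into the error term of Proposition \ref{propN1}, yielding the claim.

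The case $P_1 \leqslant P_2$ is treated symmetrically, starting from Proposition \ref{propN2}: write $\widetilde{N}_{d,U} = \widetilde{N}_{d,2} - R'$ with $R'$ counting triples having $\xx \in \mathcal{A}_2^{\lambda}(\ZZ)$ but $\zz \in (\mathcal{A}_1^{\mu})^c(\ZZ)$. For fixed $\zz$ the box bound is $\sum_{|\xx|\leqslant P_1}(d|\xx|P_2)^{m-r} \ll d^{m-r}P_1^{m+1}P_2^{m-r}$, and Proposition \ref{propA1} gives $\card\{(\mathcal{A}_1^{\mu})^c(\ZZ)\cap P_2\BB_3\} \ll P_2^{n-m+1-\mu}$, whence $R' \ll d^{m-r}P_1^{m+1}P_2^{n-r+1-\mu}$. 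With $P_2 = P_1^u$, $u \geqslant 1$, and $\mu = \lceil b_1 d_1 + d_2 + \delta\rceil$, $b_1 \geqslant 1$, one checks $P_1^{d_1}P_2^{d_2-\mu} \leqslant P_1^{-\delta} = \min(P_1,P_2)^{-\delta}$, so $R'$ is absorbed into the error of Proposition \ref{propN2}. When $d_2 = 1$ one uses instead the second assertion of Proposition \ref{propN2}; the main term $\sigma_d P_1^{m+1-d_1}P_2^{n-r}$ equals $\sigma_d P_1^{m+1-d_1}P_2^{n-r+1-d_2}$ and the same remainder analysis applies, with $m'$ replacing $m$.

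The only point requiring care is the bookkeeping in the final parameter check: one must verify both that the $d$-dependence of the trivial remainders $R$ and $R'$ (which is at most $d^{m-r}$) is dominated by the $d$-factor in the announced error term, and that the $P$-exponents satisfy the inequalities above. The first is immediate since $\max\{d^{(r+1)(d_1-1)/2^{d_1-1}+\varepsilon}, d^{5d_1}\}\geqslant 1$, and the second is exactly what forced the definitions of $\lambda$, $\mu$, $b_1$, $u_1$ at the beginning of Section 6. No new analytic input beyond Propositions \ref{propN1}, \ref{propN2}, \ref{propA1} and \ref{propA2} is needed.
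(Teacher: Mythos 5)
Your proposal is correct and follows essentially the same route as the paper: in each regime ($P_1\geqslant P_2$ or $P_1\leqslant P_2$) one compares $\widetilde{N}_{d,U}$ with $\widetilde{N}_{d,1}$ or $\widetilde{N}_{d,2}$, bounds the discrepancy by a trivial box count over the complement of $\mathcal{A}_2^{\lambda}$ (resp. $\mathcal{A}_1^{\mu}$) using Propositions \ref{propA2} and \ref{propA1}, and absorbs it via the choice of $\lambda$ and $\mu$. Your write-up is in fact slightly more careful than the paper's (whose displayed sum is indexed over $\mathcal{A}_2^{\lambda}(\ZZ)\cap P_1\BB_1$ where the complement is clearly intended), and the exponent verifications $P_1^{d_1-\lambda}P_2^{d_2}\leqslant\min(P_1,P_2)^{-\delta}$ and $P_1^{d_1}P_2^{d_2-\mu}\leqslant\min(P_1,P_2)^{-\delta}$ match the paper's computation.
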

\begin{proof}
On suppose $ P_{1}\geqslant P_{2} $. On \'evalue le terme d'erreur \begin{align*}
| \widetilde{N}_{d,U}(P_{1},P_{2})-\tilde{N}_{d,1}(P_{1},P_{2})| & \ll \sum_{\xx\in \mathcal{A}_{2}^{\lambda}(\ZZ)\cap P_{1}\BB_{1}}d^{m-r}P_{1}^{m-r}P_{2}^{n-r+1} \\ & \ll d^{m-r}P_{1}^{m+1-\lambda}P_{2}^{n-r+1} \\ & \ll d^{m-r}P_{1}^{m+1-d_{1}-u_{1}d_{2}-\delta}P_{2}^{n-r+1} \\ & \ll d^{m-r}P_{1}^{m+1-d_{1}-\delta}P_{2}^{n-r+1-d_{2}}
\end{align*}
car $ u_{1}\geqslant 1 $. Pour $ P_{1}\leqslant P_{2} $, on obtient le m\^eme r\'esultat avec $ | \widetilde{N}_{d,U}(P_{1},P_{2})-\tilde{N}_{d,2}(P_{1},P_{2})|  $. 
\end{proof}

 \section{Cinqui\`eme \'etape}

\subsection{Un r\'esultat interm\'ediaire}
Nous allons \`a pr\'esent utiliser la formule obtenue pour $  \widetilde{N}_{U}(P_{1},P_{2}) $ dans la proposition \ref{propNU} pour trouver une formule asymptotique pour $ N_{d,U}(B) $. Pour r\'esoudre ce probl\`eme, nous allons appliquer une version l\'eg\`erement modifi\'ee (tenant compte de la d\'ependance en $ d $ des fonctions de comptage) de la m\'ethode  d\'evelopp\'ee par Blomer et Br\"{u}dern dans \cite{BB} pour le cas les hypersurfaces diagonales des espaces multiprojectifs, et reprise dans la section $ 9 $ de \cite{S2}. \\
 
 Pour tout $ d\in \NN^{\ast} $, on consid\`ere une fonction $ f_{d} : \NN^{2}\ra [0,+\infty[ $. Conform\'ement aux notations de \cite{BB}, on dira que $ f_{d} $ est une $ (\beta_{1},\beta_{2},C_{d},D,\alpha, $ $\upsilon,\delta) $-fonction si elle v\'erifie les trois conditions suivantes : \begin{enumerate}
\item \label{I} On a \[ \sum_{\substack{k\leqslant K \\ l\leqslant L }}f_{d}(k,l)=C_{d}K^{\beta_{1}}L^{\beta_{2}}+O(d^{\upsilon}K^{\beta_{1}}L^{\beta_{2}}\min\{K,L\}^{-\delta}) \] pour tous $ K,L\geqslant 1 $. 
\item \label{II}Il existe des fonctions $ c_{1,d},c_{2,d} : \NN \ra [0,+\infty[ $ telles que : \[ \sum_{\substack{l \leqslant L }}f_{d}(k,l)=c_{d,1}(k)L^{\beta_{2}}+O\left(k^{D}d^{\upsilon}L^{\beta_{2}-\delta}\right), \] uniform\'ement pour tous $ L\geqslant 1 $ et $ k\leqslant d^{-1}L^{\alpha} $, \[ \sum_{\substack{k \leqslant K }}f_{d}(k,l)=c_{d,2}(l)K^{\beta_{1}}+O\left(l^{D}d^{\upsilon}K^{\beta_{1}-\delta}\right), \] uniform\'ement pour tous $ K\geqslant 1 $ et $ l\leqslant d^{-1}K^{\alpha} $.
\end{enumerate}

 Nous allons alors d\'emontrer, en nous inspirant des arguments de \cite[\S 9]{S2}, la proposition suivante qui est une adaptation de \cite[Th\'eor\`eme 2.1]{BB} pour le cas d'une famille de fonctions d\'ependant d'un param\`etre $ d $ : \begin{prop}\label{propfin}
Si $ (f_{d})_{d\in \NN^{\ast}} $ est une famille de $ (\beta_{1},\beta_{2},C_{d},D,\alpha,\upsilon,\delta) $-fonctions avec $ (C_{d})_{d\in \NN^{\ast}} $ telle que $ C_{d}\ll d^{\upsilon} $, alors on a, pour tout $ d $, la formule asymptotique : \[ \sum_{k^{\beta_{1}}l^{\beta_{2}}\leqslant P}f_{d}(k,l)=C_{d}P\log(P)+O(d^{\upsilon+\delta}\log(d)P). \]
\end{prop}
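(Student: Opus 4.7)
The plan is to carry out a Dirichlet hyperbola decomposition in the spirit of Blomer--Br\"udern \cite{BB} and its reworking in \cite[\S 9]{S2}, while keeping careful track of the dependence on the parameter $d$. I would fix cutoffs $K_{0}, L_{0} \geqslant 1$ satisfying $K_{0}^{\beta_{1}} L_{0}^{\beta_{2}} = P$, and write
\[ \{(k,l)\in \NN^{2} : k^{\beta_{1}} l^{\beta_{2}} \leqslant P\} = \{k \leqslant K_{0}\} \cup \{l \leqslant L_{0}\}. \]
By inclusion--exclusion, the sum we wish to evaluate decomposes as $S_{1} + S_{2} - S_{3}$, where $S_{1}$ (resp.\ $S_{2}$) is the sum over the first (resp.\ second) set, and $S_{3}$ is the sum over the intersection $\{k \leqslant K_{0},\, l \leqslant L_{0}\}$. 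The three pieces are then treated by the three conditions entering the definition of a $(\beta_{1},\beta_{2},C_{d},D,\alpha,\upsilon,\delta)$-function.

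Concretely, I would evaluate $S_{1}$ by fixing $k \leqslant K_{0}$ and applying condition (II) to the inner sum over $l \leqslant (P/k^{\beta_{1}})^{1/\beta_{2}}$, producing a main term $c_{d,1}(k)\,P/k^{\beta_{1}}$ plus an acceptable error; $S_{2}$ is handled symmetrically via condition (III); and $S_{3}$ is computed directly from condition (I). The three main contributions read
\[ P \sum_{k \leqslant K_{0}} \frac{c_{d,1}(k)}{k^{\beta_{1}}}, \qquad P \sum_{l \leqslant L_{0}} \frac{c_{d,2}(l)}{l^{\beta_{2}}}, \qquad C_{d}\, P. \]
A compatibility argument between (I) and (II)---letting the auxiliary parameter $L$ grow in the double sum $\sum_{k\leqslant K,\, l\leqslant L} f_{d}(k,l)$---forces the relation $\sum_{k \leqslant K} c_{d,1}(k) = C_{d} K^{\beta_{1}} + O(d^{\upsilon} K^{\beta_{1}-\delta})$, and similarly for $c_{d,2}$. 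Partial summation then converts each of the first two sums into $C_{d}(\beta_{i} \log K_{0} + 1) + O(d^{\upsilon})$, and collecting everything one finds a main term $C_{d} P (\beta_{1} \log K_{0} + \beta_{2} \log L_{0}) = C_{d} P \log(K_{0}^{\beta_{1}} L_{0}^{\beta_{2}}) = C_{d} P \log P$, as desired.

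The main obstacle will be the choice of $K_{0}, L_{0}$ so as to simultaneously: (a) satisfy the range constraints $k \leqslant d^{-1} L^{\alpha}$ and $l \leqslant d^{-1} K^{\alpha}$ required by hypotheses (II) and (III) on the relevant ranges; (b) keep the side-error terms $\sum_{k \leqslant K_{0}} k^{D} d^{\upsilon} (P/k^{\beta_{1}})^{1-\delta/\beta_{2}}$ (and symmetrically) summable to $O(d^{\upsilon+\delta} P \log d)$; and (c) control the cross error $d^{\upsilon} P \min(K_{0},L_{0})^{-\delta}$ coming from $S_{3}$. A natural balanced choice is $K_{0}^{\beta_{1}} = L_{0}^{\beta_{2}} = P^{1/2}$, but to respect the range constraints in (II) and (III) it must be shifted by an appropriate power of $d$ (essentially $d^{\upsilon/\delta}$); the $\log d$ factor in the announced error then emerges from terms of the shape $\log K_{0} = \tfrac{1}{2\beta_{1}}(\log P - c\log d)$ multiplying the $O(d^{\upsilon})$ corrections in the partial summation. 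The assumption $C_{d} \ll d^{\upsilon}$ is used precisely to bound the secondary main term $C_{d}\cdot O(d^{\upsilon})$ against the target error.
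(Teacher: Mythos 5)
Your top-level hyperbola decomposition (cut at $K_{0}^{\beta_{1}}=L_{0}^{\beta_{2}}=P^{1/2}$, inclusion--exclusion, compatibility relation $\sum_{k\leqslant K}c_{d,1}(k)=C_{d}K^{\beta_{1}}+O(d^{\upsilon}K^{\beta_{1}-\delta})$ extracted from conditions (I) and (II), then partial summation) is exactly how the paper begins, and that part of your plan is sound. But there is a genuine gap in the step where you ``evaluate $S_{1}$ by fixing $k\leqslant K_{0}$ and applying condition (II) to the inner sum over $l$.'' Condition (II) is only uniform for $k\leqslant d^{-1}L^{\alpha}$; with $L=(P/k^{\beta_{1}})^{1/\beta_{2}}$ this forces $k^{1+\alpha\beta_{1}/\beta_{2}}\leqslant d^{-1}P^{\alpha/\beta_{2}}$, i.e.\ $k\leqslant d^{-1}P^{\mu}$ with $\mu\leqslant\alpha/(\beta_{2}+\alpha\beta_{1})$. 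For this to cover the whole range $k\leqslant P^{1/(2\beta_{1})}$ one would need $\alpha\geqslant\beta_{2}/\beta_{1}$, which fails badly in the intended application (there $\alpha=\min\{\frac{d_{2}-1}{2d_{1}},\frac{d_{1}-1}{2d_{2}}\}<\tfrac12$ while $\beta_{2}=n-r+1-d_{2}$ is large). Your proposed remedy --- shifting $K_{0}$ by a power of $d$ --- cannot close this gap, because the obstruction is a power of $P$, not of $d$.

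The paper handles precisely this intermediate range $d^{-1}P^{\mu}<k\leqslant P^{1/(2\beta_{1})}$ by a separate argument ($T_{d,2}$) that uses only the bivariate asymptotic of condition (I): it subdivides the range into $J$ multiplicative slabs $K<k\leqslant K(1+\theta)$ with $(1+\theta)^{J}=dP^{1/(2\beta_{1})-\mu}$, sandwiches the contribution of each slab between the step-function approximations $V_{-}(K)\leqslant V(K)\leqslant V_{+}(K)$ obtained by freezing the hyperbola boundary at the endpoints, evaluates each $V_{\pm}(K)$ as a difference of four values of $F_{d}$, and then balances the accumulated errors $O(d^{\upsilon}J\theta^{2}P)$ and $O(d^{\upsilon+\delta}JP^{1-\mu\delta})$ by choosing $J\asymp P^{\mu\delta/2}\log(dP)$; the $\log(d)$ in the final error term arises from $J\theta=(\tfrac{1}{2\beta_{1}}-\mu)\log P+\log d+\cdots$. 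Without this (or an equivalent) treatment of the middle range, your argument does not yield the stated asymptotic.
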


On consid\`ere $ (f_{d})_{d\in \NN^{\ast}} $ une famille de $ (\beta_{1},\beta_{2},C,D,\alpha,\upsilon,\delta) $-fonctions, avec $ C_{d}\ll d^{\upsilon} $ et on d\'efinit \[ F_{d}(K,L)=\sum_{k\leqslant K}\sum_{l\leqslant L}f_{d}(k,l). \] 

\begin{lemma}
Pour tout $ d\in \NN^{\ast} $, on a les estimations : \[ \sum_{k\leqslant K}c_{d,1}(k)=C_{d}K^{\beta_{1}}+ O\left(d^{\upsilon}K^{\beta_{1}-\delta}\right), \]\[ \sum_{l\leqslant L}c_{d,1}(l)=C_{d}L^{\beta_{1}}+ O\left(d^{\upsilon}L^{\beta_{2}-\delta}\right). \]
\end{lemma}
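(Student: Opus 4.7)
The plan is to extract the sum $\sum_{k\leqslant K}c_{d,1}(k)$ from the two-variable counting function by summing condition (\ref{II}) over $k\leqslant K$, then comparing with condition (\ref{I}); the auxiliary variable $L$ will be chosen as a function of $K$ to balance the two error terms.

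First, I would pick an auxiliary parameter $L=L(K,d)$ to be specified later and impose the constraint $K\leqslant d^{-1}L^{\alpha}$, so that for every $k\leqslant K$ condition (\ref{II}) is applicable:
\[ \sum_{l\leqslant L} f_d(k,l) = c_{d,1}(k) L^{\beta_2} + O\!\left(k^D d^{\upsilon} L^{\beta_2-\delta}\right). \]
Summing over $k\leqslant K$ yields
\[ F_d(K,L) = L^{\beta_2}\sum_{k\leqslant K} c_{d,1}(k) + O\!\left(K^{D+1} d^{\upsilon} L^{\beta_2-\delta}\right). \]
On the other hand, whenever $L\geqslant K$, condition (\ref{I}) gives
\[ F_d(K,L) = C_d K^{\beta_1} L^{\beta_2} + O\!\left(d^{\upsilon} K^{\beta_1-\delta} L^{\beta_2}\right). \]
Equating the two expressions and dividing by $L^{\beta_2}$ leads to
\[ \sum_{k\leqslant K} c_{d,1}(k) = C_d K^{\beta_1} + O\!\left(d^{\upsilon} K^{\beta_1-\delta}\right) + O\!\left(K^{D+1} d^{\upsilon} L^{-\delta}\right). \]

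The only remaining task is to choose $L$ so that the second error term is dominated by the first, i.e.\ $K^{D+1}L^{-\delta}\ll K^{\beta_1-\delta}$, which forces $L\gg K^{(D+1-\beta_1+\delta)/\delta}$, while simultaneously satisfying $L\geqslant \max\{K,(dK)^{1/\alpha}\}$. Since the left-hand side of the identity does not depend on $L$, any sufficiently large choice (for instance $L = \lceil (dK)^{1/\alpha} + K^{(D+1-\beta_1+\delta)/\delta}\rceil$) gives the announced bound
\[ \sum_{k\leqslant K} c_{d,1}(k) = C_d K^{\beta_1} + O\!\left(d^{\upsilon} K^{\beta_1-\delta}\right), \]
valid for all $K\geqslant 1$ (for small $K$ the estimate is trivial from $C_d\ll d^{\upsilon}$ and the triangle inequality). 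The second identity (with $c_{d,2}(l)$ and $L^{\beta_2}$, as should read once the evident typo is corrected) follows by the exact same argument with the roles of the two variables exchanged, using the second half of condition (\ref{II}).

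The only subtle point is to verify that $L$ can indeed be chosen to meet both constraints simultaneously without introducing a hidden dependence on $d$ in the main term; since $L$ cancels upon division by $L^{\beta_2}$, this is automatic, and the overall $d^{\upsilon}$ factor propagates cleanly from the hypotheses. No deeper obstacle is expected; this is essentially the elementary summation argument of Blomer--Br\"udern, only tracked with explicit $d$-dependence.
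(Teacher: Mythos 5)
Your proof is correct and follows essentially the same route as the paper: sum condition (\ref{II}) over $k\leqslant K$, compare with condition (\ref{I}) applied to $F_{d}(K,L)$, divide by $L^{\beta_{2}}$, and choose $L$ large enough (compatible with $K\leqslant d^{-1}L^{\alpha}$) so that the error $K^{D+1}L^{-\delta}$ is dominated by $K^{\beta_{1}-\delta}$. Your version is in fact slightly more careful than the paper's, since you track the constraint $K\leqslant d^{-1}L^{\alpha}$ exactly as stated in condition (\ref{II}) and note the typo in the second asserted estimate.
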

\begin{proof}
D'apr\`es la condition \ref{I}, on a \begin{equation}\label{FonctionFd}
F_{d}(K,L)=C_{d}K^{\beta_{1}}L^{\beta_{2}}+O(d^{\upsilon}K^{\beta_{1}}L^{\beta_{2}}\min\{K,L\}^{-\delta}).
\end{equation}  Pour $ L\geqslant 1 $ et $ K\leqslant L^{\alpha} $, la condition \ref{II} implique : \begin{align*}
 F_{d}(K,L) & =\sum_{k\leqslant K}\left(\sum_{l\leqslant L}f_{d}(k,l)\right) \\ & =\sum_{k\leqslant K}\left(c_{d,1}(k)L^{\beta_{2}}+O\left(k^{D}d^{\upsilon}L^{\beta_{2}-\delta}\right)\right) \\ &  =L^{\beta_{2}}\sum_{k\leqslant K}c_{d,1}(k)+O\left(d^{\upsilon}K^{D+1}L^{\beta_{2}-\delta}\right). 
\end{align*}
En choisissant $ L $ tel que $ K\leqslant L^{\alpha} $ et $ K^{D+1}L^{-\delta}=O(K^{\beta_{1}-\delta}) $, on obtient alors en utilisant la formule \eqref{FonctionFd} : \[ \sum_{k\leqslant K}c_{d,1}(k)=C_{d}K^{\beta_{1}}+ O\left(d^{\upsilon}K^{\beta_{1}-\delta}\right). \]\end{proof}

\begin{lemma}
On fixe un r\'eel $ \mu $ tel que \begin{equation}\label{mu1}
0<\beta_{1}\mu<\frac{1}{2},
\end{equation}\begin{equation}\label{mu2}
\mu\left(1+\alpha\frac{\beta_{1}}{\beta_{2}}\right)\leqslant \frac{\alpha}{\beta_{2}},
\end{equation}\begin{equation}\label{mu3}
\mu\left(D-\beta_{1}+1+\delta\frac{\beta_{1}}{\beta_{2}}\right)<\frac{\delta}{2\beta_{2}}.
\end{equation}
On pose \[ T_{d,1}=\sum_{k\leqslant d^{-1}P^{\mu}}\sum_{P^{\frac{1}{2}}<l^{\beta_{2}}\leqslant P/k^{\beta_{1}}}f_{d}(k,l). \]
On a alors \[ T_{d,1}=\beta_{1}\mu C_{d}P\log(P)+O(d^{\upsilon+\delta}\log(d)P). \]
\end{lemma}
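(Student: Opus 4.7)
The plan is to evaluate $T_{d,1}$ by first fixing $k \leqslant d^{-1}P^\mu$ and applying condition \ref{II} to the inner sum $\sum_{l \leqslant L} f_d(k,l)$ at the two endpoints $L_1 = P^{1/(2\beta_2)}$ and $L_2 = (P/k^{\beta_1})^{1/\beta_2}$, then subtracting. This gives
\[
\sum_{P^{1/2} < l^{\beta_2} \leqslant P/k^{\beta_1}} f_d(k,l) = c_{d,1}(k)\!\left(\frac{P}{k^{\beta_1}} - P^{1/2}\right) + O\!\left(k^{D} d^{\upsilon} (P/k^{\beta_1})^{1-\delta/\beta_2}\right),
\]
provided the usage of condition \ref{II} is legitimate, i.e.\ $k \leqslant d^{-1}L_2^{\alpha}$. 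Writing this inequality as $d^{\beta_2/\alpha} k^{\beta_1 + \beta_2/\alpha} \leqslant P$ and plugging in $k \leqslant d^{-1}P^{\mu}$, one reduces it to $\mu(1+\alpha\beta_1/\beta_2) \leqslant \alpha/\beta_2$, which is precisely condition \eqref{mu2}. The condition $L_1 \leqslant L_2$ reduces to $k^{\beta_1} \leqslant P^{1/2}$, which follows from $\mu\beta_1 < 1/2$ (condition \eqref{mu1}).

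Summing over $k \leqslant d^{-1}P^{\mu}$, the main term splits as
\[
P \sum_{k \leqslant d^{-1}P^{\mu}} c_{d,1}(k) k^{-\beta_1} \;-\; P^{1/2} \sum_{k \leqslant d^{-1}P^{\mu}} c_{d,1}(k).
\]
For the first piece I would apply Abel summation, using the previous lemma $\sum_{k \leqslant K} c_{d,1}(k) = C_d K^{\beta_1} + O(d^{\upsilon}K^{\beta_1-\delta})$. An elementary integration by parts then yields
\[
\sum_{k \leqslant K} c_{d,1}(k) k^{-\beta_1} = \beta_1 C_d \log K + O(d^{\upsilon}),
\]
so with $K = d^{-1}P^{\mu}$ the first piece contributes $\beta_1 \mu C_d P \log P - \beta_1 C_d P \log d + O(d^{\upsilon}P)$. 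Since $C_d \ll d^{\upsilon}$, the term $\beta_1 C_d P \log d$ is absorbed in $O(d^{\upsilon}\log(d) P)$. For the second piece, the lemma gives an $O(d^{\upsilon} P^{1/2 + \mu\beta_1}) \ll d^{\upsilon}P$ contribution, thanks again to \eqref{mu1}.

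It remains to control the error coming from condition \ref{II}, which is bounded by
\[
d^{\upsilon} P^{1 - \delta/\beta_2} \sum_{k \leqslant d^{-1}P^{\mu}} k^{D - \beta_1 + \beta_1\delta/\beta_2} \ll d^{\upsilon} P^{1 - \delta/\beta_2 + \mu(D-\beta_1+1+\beta_1\delta/\beta_2)}.
\]
Condition \eqref{mu3} guarantees that the exponent of $P$ is strictly less than $1 - \delta/(2\beta_2)$, so this contribution is $O(d^{\upsilon}P^{1-\delta/(2\beta_2)})$, which is absorbed in the stated error $O(d^{\upsilon+\delta}\log(d) P)$. Combining all pieces yields the claimed asymptotic. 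The main (though entirely routine) subtlety is the careful bookkeeping of the three conditions \eqref{mu1}--\eqref{mu3} on $\mu$: \eqref{mu1} controls the lower truncation at $P^{1/2}$, \eqref{mu2} ensures the legitimacy of the application of hypothesis \ref{II} on the whole range $k \leqslant d^{-1}P^\mu$, and \eqref{mu3} absorbs the remaining error into a power saving.
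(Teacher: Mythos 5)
Your overall strategy is the same as the paper's: restrict to $k\leqslant d^{-1}P^{\mu}$ so that hypothesis \ref{II} applies to the full inner sum $\sum_{l^{\beta_2}\leqslant P/k^{\beta_1}}f_d(k,l)$ (condition \eqref{mu2} being exactly what makes $k\leqslant d^{-1}(P^{1/\beta_2}/k^{\beta_1/\beta_2})^{\alpha}$ hold), then perform an Abel summation on $\sum_k c_{d,1}(k)k^{-\beta_1}$ using the preceding lemma, and absorb the error terms via \eqref{mu3} and \eqref{mu1}. That part of your write-up is correct and matches the paper.

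There is, however, one step that is not justified as written. You remove the range $l^{\beta_2}\leqslant P^{1/2}$ by applying hypothesis \ref{II} at the lower endpoint $L_1=P^{1/(2\beta_2)}$ as well, but you only verify the admissibility condition $k\leqslant d^{-1}L^{\alpha}$ at the \emph{upper} endpoint $L_2=(P/k^{\beta_1})^{1/\beta_2}$. Since $L_1\leqslant L_2$, the constraint at $L_1$ is the stronger one: it requires $k\leqslant d^{-1}P^{\alpha/(2\beta_2)}$, i.e.\ $\mu\leqslant \alpha/(2\beta_2)$. Condition \eqref{mu2} only gives $\mu\leqslant \alpha/(\beta_2+\alpha\beta_1)$, which implies $\mu\leqslant\alpha/(2\beta_2)$ only when $\alpha\beta_1\geqslant\beta_2$ --- false in general for the parameters of this paper (where $\alpha<1$ and $\beta_2$ can exceed $\beta_1$). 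The repair is immediate and is what the paper does: the subtracted quantity is globally $\sum_{k\leqslant d^{-1}P^{\mu}}\sum_{l\leqslant P^{1/(2\beta_2)}}f_d(k,l)=F_d(d^{-1}P^{\mu},P^{1/(2\beta_2)})$, which hypothesis \ref{I} bounds directly by $O\bigl(d^{\upsilon}P^{\beta_1\mu+1/2}\bigr)=O(d^{\upsilon}P)$ thanks to \eqref{mu1}, with no need for \ref{II} at $L_1$. With that substitution your argument is complete and coincides with the paper's proof.
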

\begin{proof}
On remarque dans un premier temps que : \[ T_{d,1}=\sum_{k\leqslant d^{-1}P^{\mu}}\sum_{k^{\beta_{1}}l^{\beta_{2}}\leqslant P}f_{d}(k,l)-F_{d}(d^{-1}P^{\mu},P^{\frac{1}{2\beta_{2}}}) \]
avec \[ F_{d}(d^{-1}P^{\mu},P^{\frac{1}{2\beta_{2}}})=O\left(d^{\upsilon}P^{\beta_{1}\mu+\frac{1}{2}}\right)=O(d^{\upsilon}P). \]
D'autre part, par l'hypoth\`ese \eqref{mu2}, on a pour tout $ k\leqslant d^{-1}P^{\mu} $, \[ k^{1+\alpha\frac{\beta_{1}}{\beta_{2}}}\leqslant d^{-(1+\alpha\frac{\beta_{1}}{\beta_{2}})}P^{(1+\alpha\frac{\beta_{1}}{\beta_{2}})\mu}\leqslant d^{-1}P^{\frac{\alpha}{\beta_{2}}}, \] et donc $ k\leqslant d^{-1}\left(P^{\frac{1}{\beta_{2}}}/k^{\frac{\beta_{1}}{\beta_{2}}}\right)^{\alpha} $. La condition \ref{II} donne alors : \begin{align*}
 T_{d,1} & =\sum_{k\leqslant d^{-1}P^{\mu}}\left(c_{d,1}(k)\left(P^{\frac{1}{\beta_{2}}}/k^{\frac{\beta_{1}}{\beta_{2}}}\right)^{\beta_{2}}+O\left(k^{D}d^{\upsilon}\left(P^{\frac{1}{\beta_{2}}}/k^{\frac{\beta_{1}}{\beta_{2}}}\right)^{\beta_{2}-\delta}\right)\right)+O(d^{\upsilon}P) \\ & =\left(\sum_{k\leqslant d^{-1}P^{\mu}}\frac{c_{d,1}(k)}{k^{\beta_{1}}}\right)P+O\left(\left(\sum_{k\leqslant d^{-1}P^{\mu}}k^{D-\beta_{1}+\delta\frac{\beta_{1}}{\beta_{2}}}\right)d^{\upsilon}P^{1-\frac{\delta}{\beta_{2}}}\right)+O(d^{\upsilon}P) \\ & =\left(\sum_{k\leqslant d^{-1}P^{\mu}}\frac{c_{d,1}(k)}{k^{\beta_{1}}}\right)P+O\left(P^{\mu(D-\beta_{1}+1+\delta\frac{\beta_{1}}{\beta_{2}})}\right)d^{\upsilon}P^{1-\frac{\delta}{\beta_{2}}}+O(d^{\upsilon}P) \\ & =\left(\sum_{k\leqslant d^{-1}P^{\mu}}\frac{c_{d,1}(k)}{k^{\beta_{1}}}\right)P+O(d^{\upsilon}P).
\end{align*}
Il nous faut \`a pr\'esent \'evaluer $ \sum_{k\leqslant d^{-1}P^{\mu}}\frac{c_{d,1}(k)}{k^{\beta_{1}}} $. Par sommation par parties, et en utilisant le lemme pr\'ec\'edent on a : \begin{multline*}
\sum_{k\leqslant d^{-1}P^{\mu}}\frac{c_{d,1}(k)}{k^{\beta_{1}}}  =d^{\beta_{1}}P^{-\mu\beta_{1}}\sum_{k\leqslant d^{-1}P^{\mu}}c_{d,1}(k)+\beta_{1}\int_{1}^{d^{-1}P^{\mu}}t^{-\beta_{1}-1}\sum_{k\leqslant t}c_{d,1}(k)dt \\  =d^{\beta_{1}}P^{-\mu\beta_{1}}\left(C_{d}d^{-\beta_{1}}P^{\mu\beta_{1}}+O\left(d^{\upsilon-\beta_{1}+\delta}P^{\mu\beta_{1}-\delta\mu}\right)\right) \\ +\beta_{1}\int_{1}^{d^{-1}P^{\mu}}t^{-\beta_{1}-1}\left(C_{d} t^{\beta_{1}}+O(d^{\upsilon} t^{\beta_{1}-\delta})\right)dt \\  =C_{d}+O\left(d^{\upsilon+\delta}P^{-\delta\mu}\right)+\beta_{1}C_{d}\log(P^{\mu})+O(d^{\upsilon}\log(d)) \\  =\beta_{1}C_{d}\log(P^{\mu})+O(d^{\upsilon+\delta})
\end{multline*}
\end{proof}

\begin{lemma}
On suppose $ 0<\mu<\min\{\frac{1}{2\beta_{1}},\frac{1}{2\beta_{2}}\} $, et on d\'efinit : \[ T_{d,2}=\sum_{d^{-1}P^{\mu}<k\leqslant P^{\frac{1}{2\beta_{1}}}}\sum_{P^{\frac{1}{2}}<l^{\beta_{2}}\leqslant P/k^{\beta_{1}}}f_{d}(k,l). \] On a alors \[ T_{d,2}=(\frac{1}{2}-\beta_{1}\mu)C_{d}P\log(P)+O(d^{\upsilon+\delta}\log(d)P). \]
\end{lemma}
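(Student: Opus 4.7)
La d\'emonstration suit la m\^eme strat\'egie que celle du lemme pr\'ec\'edent sur $T_{d,1}$, mais en inversant les r\^oles de $k$ et $l$ : on commence par intervertir l'ordre de sommation dans la d\'efinition de $T_{d,2}$. Puisque la contrainte $l^{\beta_2}>P^{1/2}$ impose $(P/l^{\beta_2})^{1/\beta_1}<P^{1/(2\beta_1)}$, la borne sup\'erieure effective pour $k$ \`a $l$ fix\'e est $K_l=(P/l^{\beta_2})^{1/\beta_1}$, et $l$ parcourt l'intervalle $P^{1/(2\beta_2)}<l\leqslant L_{\max}$ avec $L_{\max}\asymp d^{\beta_1/\beta_2}P^{(1-\mu\beta_1)/\beta_2}$ (condition de non-vacuit\'e $K_l>d^{-1}P^\mu$). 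On \'ecrit ensuite
\[ T_{d,2}=\sum_{P^{1/(2\beta_2)}<l\leqslant L_{\max}}\sum_{k\leqslant K_l}f_d(k,l)-\sum_{P^{1/(2\beta_2)}<l\leqslant L_{\max}}\sum_{k\leqslant d^{-1}P^\mu}f_d(k,l). \]

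Le second terme se majore trivialement \`a l'aide de la condition \ref{I} appliqu\'ee \`a la fonction $F_d$ : par diff\'erence, il vaut $F_d(d^{-1}P^\mu,L_{\max})-F_d(d^{-1}P^\mu,P^{1/(2\beta_2)})$, et un calcul direct des exposants, en utilisant $\mu\beta_1\leqslant 1/2$ via \eqref{mu1}, montre que cette contribution est $O(d^\upsilon P)$, absorb\'ee dans le terme d'erreur annonc\'e.

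Pour le premier terme, on applique la seconde estimation de la condition \ref{II} \`a $l$ fix\'e avec $K=K_l$, ce qui donne
\[ \sum_{k\leqslant K_l}f_d(k,l)=c_{d,2}(l)K_l^{\beta_1}+O\bigl(l^D d^\upsilon K_l^{\beta_1-\delta}\bigr)=c_{d,2}(l)\frac{P}{l^{\beta_2}}+O\bigl(l^D d^\upsilon (P/l^{\beta_2})^{1-\delta/\beta_1}\bigr). \]
L'\'etape cl\'e, et probablement la plus d\'elicate, est de v\'erifier que la condition d'uniformit\'e $l\leqslant d^{-1}K_l^\alpha$ est satisfaite sur tout l'intervalle d'int\'egration : cela se traduit par une in\'egalit\'e polynomiale en $P$ et $d$ que l'on contr\^ole gr\^ace aux conditions sym\'etriques analogues \`a \eqref{mu2}-\eqref{mu3} (quitte \`a affaiblir l\'eg\`erement la valeur de $\mu$ admise, ce qui ne change rien puisque l'\'enonc\'e ne fait qu'imposer $\mu<\min\{1/(2\beta_1),1/(2\beta_2)\}$). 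Les termes d'erreur se somment en $O(d^{\upsilon+\delta}P)$ par un argument \'el\'ementaire, en utilisant $\delta$ arbitrairement petit.

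Il reste \`a \'evaluer $\sum_l c_{d,2}(l)P/l^{\beta_2}$ sur l'intervalle $P^{1/(2\beta_2)}<l\leqslant L_{\max}$. Par une sommation d'Abel utilisant l'estimation pr\'eliminaire $\sum_{l\leqslant L}c_{d,2}(l)=C_d L^{\beta_2}+O(d^\upsilon L^{\beta_2-\delta})$, on obtient
\[ \sum_{l}c_{d,2}(l)\frac{P}{l^{\beta_2}}=\beta_2 C_d P\int_{P^{1/(2\beta_2)}}^{L_{\max}}\frac{dt}{t}+O(d^{\upsilon+\delta}P). \]
Le calcul de l'int\'egrale donne
\[ \beta_2 \log\bigl(L_{\max}/P^{1/(2\beta_2)}\bigr)=\bigl(\tfrac{1}{2}-\mu\beta_1\bigr)\log P+\beta_1\log d, \]
d'o\`u la contribution principale $\bigl(\tfrac{1}{2}-\beta_1\mu\bigr)C_dP\log P+O(C_d P\log d)=\bigl(\tfrac{1}{2}-\beta_1\mu\bigr)C_dP\log P+O(d^{\upsilon+\delta}\log(d)P)$, en utilisant $C_d\ll d^\upsilon$. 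L'obstacle principal est bien la v\'erification uniforme de la condition $l\leqslant d^{-1}K_l^\alpha$ : une fois celle-ci \'etablie pour tout $l$ dans la plage consid\'er\'ee (\'eventuellement en d\'ecoupant la sommation pr\`es du bord sup\'erieur $L_{\max}$ et en traitant ce petit r\'esidu par la condition \ref{I}), le reste de la preuve n'est qu'une reproduction fid\`ele des arguments du lemme pr\'ec\'edent.
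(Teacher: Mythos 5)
Your proposal takes a genuinely different route from the paper's, and it contains a gap that is fatal in general. You swap the order of summation and evaluate the inner sum $\sum_{k\leqslant K_{l}}f_{d}(k,l)$ via the second estimate of condition 2, which is only valid uniformly for $l\leqslant d^{-1}K_{l}^{\alpha}$. You correctly flag this as the delicate point, but the condition cannot be rescued by adjusting $\mu$ or by trimming the sum near the upper edge $L_{\max}$: it already fails at the \emph{lower} edge of the $l$-range. Indeed, at $l\asymp P^{1/(2\beta_{2})}$ one has $K_{l}\asymp P^{1/(2\beta_{1})}$, and the requirement $l\leqslant d^{-1}K_{l}^{\alpha}$ becomes $P^{1/(2\beta_{2})}\ll P^{\alpha/(2\beta_{1})}$, i.e. $\beta_{1}\leqslant\alpha\beta_{2}$ --- an inequality that has nothing to do with $\mu$ and is false for a general $(\beta_{1},\beta_{2},C_{d},D,\alpha,\upsilon,\delta)$-function (and not guaranteed in the application, where $\alpha=\min\{(d_{2}-1)/(2d_{1}),(d_{1}-1)/(2d_{2})\}$ is small while $\beta_{1}=m+1-d_{1}$). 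The region where the condition fails is a whole neighbourhood of the corner $k^{\beta_{1}}\asymp l^{\beta_{2}}\asymp P^{1/2}$, which contributes a positive proportion of the main term $P\log P$; it cannot be pushed into the error term via condition 1. (Your final arithmetic for the main term is consistent, but it rests entirely on this unjustified application of condition 2.)

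This is precisely why the paper avoids condition 2 altogether in the range of $T_{d,2}$. Its proof slices the interval $d^{-1}P^{\mu}<k\leqslant P^{1/(2\beta_{1})}$ into $J$ geometric blocks $K_{j}<k\leqslant K_{j}(1+\theta)$ with $(1+\theta)^{J}=dP^{\frac{1}{2\beta_{1}}-\mu}$, freezes the upper limit of the $l$-sum on each block (sandwiching $V_{-}(K)\leqslant V(K)\leqslant V_{+}(K)$), writes each $V_{\pm}(K_{j})$ as a difference of four values of $F_{d}$ estimated by the two-variable asymptotic of condition 1 alone, and then sums the resulting $C_{d}\beta_{1}\theta P+C_{d}\beta_{1}\theta K_{j}^{\beta_{1}}P^{1/2}+\cdots$ over $j$ with a suitable choice of $J$. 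If you wish to keep your order of summation, you would need an analogous geometric slicing in $l$, again using only condition 1; the one-variable asymptotics are simply not available in this range.
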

\begin{proof}
On fixe $ d\in \NN^{\ast} $. On consid\`ere un entier $ J $ assez grand et on d\'efinit $ \theta>0 $ via : \[ (1+\theta)^{J}=dP^{\frac{1}{2\beta_{1}}-\mu}.  \] On consid\`ere alors des r\'eels $ d^{-1}P^{\mu}\leqslant K<K'\leqslant P^{\frac{1}{2\beta_{1}}} $ avec $ K'=K(1+\theta) $. On d\'efinit alors : \[ V(K)=\sum_{K<k\leqslant K'}\sum_{P^{\frac{1}{2}}<l^{\beta_{2}}\leqslant P/k^{\beta_{1}}}f_{d}(k,l), \] \[ V_{-}(K)=\sum_{K<k\leqslant K'}\sum_{P^{\frac{1}{2}}<l^{\beta_{2}}\leqslant P/(K')^{\beta_{1}}}f_{d}(k,l), \] \[ V_{+}(K)=\sum_{K<k\leqslant K'}\sum_{P^{\frac{1}{2}}<l^{\beta_{2}}\leqslant P/K^{\beta_{1}}}f_{d}(k,l),\] et on remarque que : \[  V_{-}(K)\leqslant V(K) \leqslant V_{+}(K). \]
Par ailleurs, \begin{multline*} V_{+}(K)=F_{d}\left(K',P^{\frac{1}{\beta_{2}}}/K^{\frac{\beta_{1}}{\beta_{2}}}\right)-F_{d}\left(K,P^{\frac{1}{\beta_{2}}}/K^{\frac{\beta_{1}}{\beta_{2}}}\right) \\ -F_{d}\left(K',P^{\frac{1}{2\beta_{2}}}\right)+F_{d}\left(K,P^{\frac{1}{2\beta_{2}}}\right). \end{multline*}
Or, on a : \begin{multline*}F_{d}\left(K',P^{\frac{1}{\beta_{2}}}/K^{\frac{\beta_{1}}{\beta_{2}}}\right)-F_{d}\left(K,P^{\frac{1}{\beta_{2}}}/K^{\frac{\beta_{1}}{\beta_{2}}}\right) \\ =C_{d}((K')^{\beta_{1}}-K^{\beta_{1}})PK^{-\beta_{1}}+O\left(d^{\upsilon}(K')^{\beta_{1}}PK^{-\beta_{1}}\min\{K',P^{\frac{1}{\beta_{2}}}K^{-\frac{\beta_{1}}{\beta_{2}}}\}^{-\delta}\right) \\ =C_{d}((1+\theta)^{\beta_{1}}-1)P+O\left(d^{\upsilon+\delta}(1+\theta)^{\beta_{1}}P^{1-\mu\delta}\right),
\end{multline*}
d'apr\`es \eqref{mu1}. 
En remarquant que $ (1+\theta)^{\beta_{1}}=1+\beta_{1}\theta +O(\theta^{2}) $, on obtient : \[ F_{d}\left(K',P^{\frac{1}{\beta_{2}}}/K^{\frac{\beta_{1}}{\beta_{2}}}\right)-F_{d}\left(K,P^{\frac{1}{\beta_{2}}}/K^{\frac{\beta_{1}}{\beta_{2}}}\right)=C_{d}\beta_{1}\theta P+O(d^{\upsilon+\delta}P^{1-\mu\delta})+O(d^{\upsilon}\theta^{2}P). \] De la m\^eme mani\`ere on trouve  \[ F_{d}\left(K',P^{\frac{1}{2\beta_{2}}}\right)-F_{d}\left(K,P^{\frac{1}{2\beta_{2}}}\right)=C_{d}\beta_{1}\theta K^{\beta_{1}} P^{\frac{1}{2}}+O(d^{\upsilon}P^{1-\mu\delta})+O(d^{\upsilon}\theta^{2}P). \] On en d\'eduit : \[ V_{+}(K)=C_{d}\beta_{1}\theta P+C_{d}\beta_{1}\theta K^{\beta_{1}} P^{\frac{1}{2}}+O(d^{\upsilon}P^{1-\mu\delta})+O(d^{\upsilon}\theta^{2}P). \]
Par des arguments analogues, on obtient la m\^eme estimation pour $ V_{-}(K) $, et donc \[ V(K)=C_{d}\beta_{1}\theta P+C_{d}\beta_{1}\theta K^{\beta_{1}} P^{\frac{1}{2}}+O(d^{\upsilon+\delta}P^{1-\mu\delta})+O(d^{\upsilon}\theta^{2}P). \]
On pose \`a pr\'esent, pour tout entier $ j $ tel que $ 0\leqslant j<J $, \[ K_{j}=d^{-1}P^{\mu}(1+\theta)^{j}. \] On a alors \begin{align*}
T_{d,2} & =\sum_{0\leqslant j<J}V(K_{j}) \\ & =C_{d}\beta_{1}(J\theta) P+C_{d}\beta_{1}\theta P^{\frac{1}{2}} \sum_{j=0}^{J-1}K_{j}^{\beta_{1}} +O(d^{\upsilon+\delta}JP^{1-\mu\delta})+O(d^{\upsilon}J\theta^{2}P).
\end{align*}
Or on a : \begin{align*}
\theta \sum_{j=0}^{J-1}K_{j}^{\beta_{1}} & = \theta d^{-\beta_{1}}P^{\beta_{1}\mu}\frac{(1+\theta)^{J\beta_{1}}-1}{(1+\theta)^{\beta_{1}}-1} \\ & =  d^{-\beta_{1}}P^{\beta_{1}\mu}\frac{dP^{\frac{1}{2}-\beta_{1}\mu}-1}{\beta_{1}+O(\theta)} \\ & =\frac{1}{\beta_{1}}P^{\frac{1}{2}}+O(d^{-\beta_{1}}P^{\beta_{1}\mu})+O(P^{\frac{1}{2}}\theta).  
\end{align*}
On obtient alors : \begin{align*} T_{d,2} & =C_{d}\beta_{1}(J\theta) P+C_{d}P+O(d^{\upsilon-\beta_{1}}P^{\frac{1}{2}+\beta_{1}\mu}) \\ & \; \; \; \;+O(d^{\upsilon}\theta^{2}P) +O(d^{\upsilon+\delta}JP^{1-\mu\delta})+O(d^{\upsilon}J\theta^{2}P)\\ & =C_{d}\beta_{1}(J\theta) P+O(d^{\upsilon}J\theta^{2}P)+O(d^{\upsilon}P)+O(d^{\upsilon+\delta}JP^{1-\mu\delta}). \end{align*}

On choisit \`a pr\'esent : \[ J=\left\lfloor P^{\frac{\mu\delta}{2}}\left(\left(\frac{1}{2\beta_{1}}-\mu\right)\log(P)+\log(d)\right)\right\rfloor \] Par d\'efinition de $ \theta $ on a : \[ J\log(\theta+1)=\left(\frac{1}{2\beta_{1}}-\mu\right)\log(P)+\log(d), \] et donc \begin{multline*} \theta=J^{-1}\left(\left(\frac{1}{2\beta_{1}}-\mu\right)\log(P)+\log(d)\right) \\ +O\left(J^{-2}\left(\left(\frac{1}{2\beta_{1}}-\mu\right)\log(P)+\log(d)\right)^{2}\right), \end{multline*} et on en d\'eduit \begin{multline*} J\theta=\left(\left(\frac{1}{2\beta_{1}}-\mu\right)\log(P)+\log(d)\right) \\ +O\left(P^{-\frac{\mu\delta}{2}}\left(\left(\frac{1}{2\beta_{1}}-\mu\right)\log(P)+\log(d)\right)\right). \end{multline*} On a par cons\'equent :  \begin{multline*} T_{d,2}=C_{d}\beta_{1}\left(\frac{1}{2\beta_{1}}-\mu\right) P\log(P)+C_{d}\beta_{1}\log(d) P \\ +O(d^{\upsilon+\delta}\log(d)P^{1-\frac{\mu\delta}{2}}\log(P))+O(d^{\upsilon}P), \end{multline*} et le lemme est d\'emontr\'e.

\end{proof}
\begin{proof}[D\'emonstration de la proposition \ref{propfin}]
On \'ecrit \begin{align*}
\sum_{k^{\beta_{1}}l^{\beta_{2}}\leqslant P}f_{d}(k,l) & =\sum_{\substack{k^{\beta_{1}}l^{\beta_{2}}\leqslant P \\ P^{\frac{1}{2}}<l^{\beta_{2}}}}f_{d}(k,l)+ \sum_{\substack{k^{\beta_{1}}l^{\beta_{2}}\leqslant P \\ P^{\frac{1}{2}}<k^{\beta_{1}}}}f_{d}(k,l)-F_{d}(P^{\frac{1}{2\beta_{1}}},P^{\frac{1}{2\beta_{2}}}) \\ & =\sum_{\substack{k^{\beta_{1}}l^{\beta_{2}}\leqslant P \\ P^{\frac{1}{2}}<l^{\beta_{2}}}}f_{d}(k,l)+ \sum_{\substack{k^{\beta_{1}}l^{\beta_{2}}\leqslant P \\ P^{\frac{1}{2}}<k^{\beta_{1}}}}f_{d}(k,l)+O(d^{\upsilon}P).
\end{align*}
On remarque alors que \[\sum_{\substack{k^{\beta_{1}}l^{\beta_{2}}\leqslant P \\ P^{\frac{1}{2}}<l^{\beta_{2}}}}f_{d}(k,l)  =T_{d,1}+T_{d,2}  =\frac{1}{2}C_{d}P\log(P)+O(d^{\upsilon+\delta}\log(d)P),\]
d'apr\`es les deux lemmes pr\'ec\'edents. Par sym\'etrie, on obtient exactement le m\^eme r\'esultat pour $ \sum_{\substack{k^{\beta_{1}}l^{\beta_{2}}\leqslant P \\ P^{\frac{1}{2}}<k^{\beta_{1}}}}f_{d}(k,l) $, et la proposition est d\'emontr\'ee.  \end{proof}
\begin{rem}\label{derniererem}
Par les m\^emes arguments, on d\'emontre, sous les m\^emes hypoth\`eses que :  
\[ \sum_{k^{\beta_{1}}(l+1)^{\beta_{2}}\leqslant P}f_{d}(k,l)=C_{d}P\log(P)+O(d^{\upsilon+\delta}\log(d)P). \]
Cette remarque nous sera utile dans ce qui va suivre.
\end{rem}

\subsection{Formule asymptotique pour $ N_{d,U}(B) $}

L'id\'ee est alors d'appliquer la proposition \ref{propfin} \`a la fonction $ h_{d}(k,l) $ d\'efinie en \eqref{fonctionh}. Pour cela nous allons montrer que cette fonction est bien une $ (\beta_{1},\beta_{2},C_{d},D,\alpha,\upsilon,\delta) $-fonction (pour des constantes $ C_{d},\delta,\beta_{1},\beta_{2},\alpha,\upsilon,D $ que nous pr\'eciserons).\\

 Remarquons avant tout que, d'apr\`es la proposition \ref{propNU}, la fonction $ h $ v\'erifie bien la condition \ref{I} avec $ \beta_{1}=m+1-d_{1} $, $ \beta_{2}=n-r+1-d_{2} $, $ C_{d}=\sigma_{d} $ et $ \upsilon=m-r+\max\{\frac{(r+1)(d_{1}-1)}{2^{d_{1}-1}}+\varepsilon, 5d_{1}\} $. D'autre part, par les corollaires \ref{corollairez} et \ref{corollairezbis},  pour tout $ \zz\in \mathcal{A}_{1}^{\mu}(\ZZ) $ et $ P_{2}\leqslant P_{1} $, on a
 \begin{multline*} N_{d,\zz}(P_{1})=\mathfrak{S}_{d,\zz}J_{\zz}d^{m-r-d_{1}}l^{m-r}P_{1}^{m+1-d_{1}} +O\left( d^{\upsilon}l^{m-r+4d_{2}}P_{1}^{m+1-d_{1}-\delta}\right) \end{multline*}
 \begin{multline*} N_{d,l,\zz}(P_{1})=\mathfrak{S}_{d,\zz}J_{l,\zz}d^{m-r-d_{1}}l^{m-r}P_{1}^{m+1-d_{1}}+O\left( d^{\upsilon}l^{m-r+4d_{2}}P_{1}^{m+1-d_{1}-\delta}\right) \end{multline*}
 uniform\'ement pour tout $ \zz, l <P_{1}^{\frac{d_{1}-1}{2d_{2}}} $ et $ \zz\in \mathcal{A}_{1}^{\mu}(\ZZ) $. En notant \begin{multline} \widetilde{N}_{d,U,l}(P_{1})=\card\{(\xx,\yy,\zz)\in  \ZZ^{n+2}\cap U \; | \;  F(\xx,\yy,\zz)=0 \; |\xx|\leqslant P_{1} \\ l=\max\left(\left\lfloor\frac{|\yy|}{d|\xx|}\right\rfloor,|\zz|\right)\} \end{multline}
On a alors que \begin{multline*}
\widetilde{N}_{d,U,l}(P_{1}) = \sum_{k\leqslant P_{1}}h_{d}(k,l) =\sum_{\substack{\zz\in \mathcal{A}_{1}^{\mu}(\ZZ)\\ |\zz|= l}}N_{d,\zz}(P_{1}) + \sum_{\substack{\zz\in \mathcal{A}_{1}^{\mu}(\ZZ)\\ |\zz|\leqslant l}}N_{d,l,\zz}(P_{1}) \\  =\left(\sum_{\substack{\zz\in \mathcal{A}_{1}^{\mu}(\ZZ)\\|\zz|= l}}\mathfrak{S}_{d,\zz}J_{\zz}+ \sum_{\substack{\zz\in \mathcal{A}_{1}^{\mu}(\ZZ)\\|\zz|\leqslant l}}\mathfrak{S}_{d,\zz}J_{l,\zz}\right)l^{m-r}d^{m-r-d_{1}}P_{1}^{m+1-d_{1}} \\ +O\left( d^{\upsilon}l^{n-r+1+4d_{2}}P_{1}^{m+1-d_{1}-\delta}\right)
\end{multline*} 
uniform\'ement pour tout $  l <P_{1}^{\frac{d_{1}-1}{2d_{2}}} $. On a de m\^eme, d'apr\`es le corollaire \ref{corollairex} :  
 \begin{multline*} N_{d,\xx}(P_{2})=\mathfrak{S}_{d,\xx}J_{d,\xx}d^{m-r}k^{m-r}P_{2}^{n-r+1-d_{2}}  +O\left( d^{\upsilon}k^{m-r+4d_{1}}P_{2}^{n-r+1-d_{2}-\delta}\right), \end{multline*} uniform\'ement pour tout $ k<P_{2}^{\frac{d_{2}-1}{2d_{1}}} $ et $ \xx\in \mathcal{A}_{2}^{\lambda}(\ZZ)  $. En notant 
 \begin{multline} \widetilde{N}_{d,U,k}(P_{2})=\card\{(\xx,\yy,\zz)\in  \ZZ^{n+2}\cap U \; | \;  F(\xx,\yy,\zz)=0, \; |\xx|=k\\ \max\left(\left\lfloor\frac{|\yy|}{d|\xx|}\right\rfloor,|\zz|\right)\leqslant P_{2}\} \end{multline}
On a alors que \begin{align*}
\widetilde{N}_{d,U,k}(P_{2})& = \sum_{l\leqslant P_{2}}h_{d}(k,l) =\sum_{\substack{\xx\in \mathcal{A}_{2}^{\lambda}(\ZZ)\\ |\xx|= k}}N_{d,\xx}(P_{2})  \\ & =\sum_{\substack{\xx\in \mathcal{A}_{2}^{\lambda}(\ZZ)\\|\xx|= k}}\mathfrak{S}_{d,\xx}J_{d,\xx}k^{m-r}d^{m-r}P_{2}^{n-r+1-d_{2}} \\ & +O\left( d^{\upsilon}k^{m+1+4d_{1}}P_{2}^{n-r+1-d_{2}-\delta}\right)
\end{align*} uniform\'ement pour tout $ k<d^{-1}P_{2}^{\frac{d_{2}-1}{2d_{1}}}  $. 
Par cons\'equent, $ h_{d} $ v\'erifie bien la condition \ref{II} avec\[ c_{d,1}(k)=\sum_{\substack{\xx\in \mathcal{A}_{2}^{\lambda}(\ZZ)\\|\xx|= k}}\mathfrak{S}_{d,\xx}J_{d,\xx}k^{m-r}d^{m-r}, \]
 \[ c_{d,2}(l)=\left(\sum_{\substack{\zz\in \mathcal{A}_{1}^{\mu}(\ZZ)\\|\zz|= l}}\mathfrak{S}_{d,\zz}J_{\zz}+ \sum_{\substack{\zz\in \mathcal{A}_{1}^{\mu}(\ZZ)\\|\zz|\leqslant l}}\mathfrak{S}_{d,\zz}J_{l,\zz}\right)l^{m-r}d^{m-r-d_{1}}, \] 
\[ D=\max\{m+1+4d_{1}, n-r+1+4d_{2}\}\] et \[ \alpha=\min\{\frac{d_{2}-1}{2d_{1}}, \frac{d_{1}-1}{2d_{2}}\} .\]

On a donc montr\'e que $ h_{d} $ est une $ (m+1-d_{1},n-r+1-d_{2},\sigma_{d},D,\alpha, \upsilon,\delta) $-fonction, et donc en notant \begin{multline*}
\tilde{N}_{d,U}^{(1)}(B)=\card\left\{ (\xx,\yy,\zz)\in \ZZ^{n+2}\cap U\; | \; \xx\neq\0, \; (\yy,\zz)\neq (\0,\0), \; \right. \\ \left. \; F(d\xx,\yy,\zz)=0, \;   |\xx|^{m+1-d_{1}}\max\left( \left\lfloor\frac{|\yy|}{d|\xx|}\right\rfloor,|\zz|\right)^{n-r+1-d_{2}}\leqslant B \right\} 
\end{multline*}et \begin{multline*}
\tilde{N}_{d,U}^{(2)}(B)=\card\left\{ (\xx,\yy,\zz)\in \ZZ^{n+2}\cap U\; | \; \xx\neq\0, \; (\yy,\zz)\neq (\0,\0), \; \right. \\ \left. \; F(d\xx,\yy,\zz)=0, \;   |\xx|^{m+1-d_{1}}\max\left( \left\lfloor\frac{|\yy|}{d|\xx|}\right\rfloor+1,|\zz|+1\right)^{n-r+1-d_{2}} \leqslant B \right\} 
\end{multline*}
la proposition \ref{propfin} et la remarque \ref{derniererem} donnent : \[ \tilde{N}_{d,U}^{(i)}(B)=\sigma_{d} B\log(B)+O\left(d^{\upsilon}\log(d)B\right) \] pour $ i\in\{1,2\} $.
Par ailleurs, on observe que \[ \tilde{N}_{d,U}^{(2)}(B)\leqslant N_{d,U}(B)\leqslant \tilde{N}_{d,U}^{(1)}(B), \] et on en d\'eduit finalement :
\begin{prop}\label{preconclusion}
Si $ d_{1},d_{2}\geqslant 2 $ et $ n+2-\max\{\dim V_{1}^{\ast},\dim V_{2}^{\ast}\} >m $, ou si $ d_{1}\geqslant 2 $, $ d_{2}=1 $ et $ n+2-\max\{\dim V_{1}^{\ast},\dim V_{2}^{\ast}\} >m' $, alors pour tout $ B\geqslant 1 $, on a la formule asymptotique : \[ N_{d,U}(B)=\sigma_{d} B\log(B)+O\left(d^{\upsilon+\delta}\log(d)B\right), \] pour un certain $ \delta>0 $ arbitrairement petit. 
\end{prop}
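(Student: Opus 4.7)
Le plan consiste à appliquer la proposition\;\ref{propfin} à la fonction de comptage $h_{d}(k,l)$ définie par\;\eqref{fonctionh}, après avoir vérifié qu'elle constitue bien une $(\beta_{1},\beta_{2},C_{d},D,\alpha,\upsilon,\delta)$-fonction pour des paramètres appropriés. Les paramètres naturels sont $\beta_{1}=m+1-d_{1}$, $\beta_{2}=n-r+1-d_{2}$, $C_{d}=\sigma_{d}$, $\upsilon=m-r+\max\{(r+1)(d_{1}-1)/2^{d_{1}-1}+\varepsilon,\,5d_{1}\}$, $D=\max\{m+1+4d_{1},\,n-r+1+4d_{2}\}$ et $\alpha=\min\{(d_{2}-1)/(2d_{1}),(d_{1}-1)/(2d_{2})\}$.

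Pour la vérification de la condition\;\ref{I}, j'invoquerais directement la proposition\;\ref{propNU}, en remarquant que $\sum_{k\leqslant P_{1}}\sum_{l\leqslant P_{2}}h_{d}(k,l)=\widetilde{N}_{d,U}(P_{1},P_{2})$ par définition de $h_{d}$. Les hypothèses $d_{1},d_{2}\geqslant 2$ (ou $d_{2}=1$) et $n+2-\max\{\dim V_{1}^{\ast},\dim V_{2}^{\ast}\}>m$ (ou $>m'$) assurent que la formule asymptotique s'applique avec le terme d'erreur voulu. Pour la condition\;\ref{II}, j'utiliserais les corollaires\;\ref{corollairez} et\;\ref{corollairezbis} pour la somme partielle $\sum_{k\leqslant P_{1}}h_{d}(k,l)=\widetilde{N}_{d,U,l}(P_{1})$, ce qui donne l'expression explicite de $c_{d,2}(l)$ en termes des séries singulières et intégrales singulières locales $\mathfrak{S}_{d,\zz}, J_{\zz}, J_{l,\zz}$. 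De même, le corollaire\;\ref{corollairex} (ou\;\ref{corollairex'} dans le cas $d_{2}=1$) fournit $c_{d,1}(k)$ et la condition d'uniformité en $k<d^{-1}P_{2}^{\alpha}$.

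Une fois ces deux conditions établies, la proposition\;\ref{propfin} appliquée à $h_{d}$ fournit directement l'asymptotique
\[
\sum_{k^{\beta_{1}}l^{\beta_{2}}\leqslant B}h_{d}(k,l)=\sigma_{d}B\log(B)+O(d^{\upsilon+\delta}\log(d)B),
\]
en remarquant que $\sigma_{d}\ll d^{\upsilon}$ grâce à la remarque\;\ref{remseriesing}. Pour conclure sur $N_{d,U}(B)$ lui-même, je procéderais par encadrement : en définissant $\tilde{N}_{d,U}^{(1)}(B)$ (avec la condition $|\xx|^{\beta_{1}}\max(\lfloor|\yy|/(d|\xx|)\rfloor,|\zz|)^{\beta_{2}}\leqslant B$) et $\tilde{N}_{d,U}^{(2)}(B)$ (avec $\lfloor|\yy|/(d|\xx|)\rfloor+1$ et $|\zz|+1$ à la place), on a d'une part $\tilde{N}_{d,U}^{(i)}(B)=\sum_{k^{\beta_{1}}l^{\beta_{2}}\leqslant B}h_{d}(k,l)$ (respectivement $\sum_{k^{\beta_{1}}(l+1)^{\beta_{2}}\leqslant B}h_{d}(k,l)$) et d'autre part l'encadrement évident $\tilde{N}_{d,U}^{(2)}(B)\leqslant N_{d,U}(B)\leqslant \tilde{N}_{d,U}^{(1)}(B)$, les deux bornes ayant la même asymptotique principale d'après la proposition\;\ref{propfin} et la remarque\;\ref{derniererem}.

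La partie délicate sera surtout la vérification des conditions techniques sur les exposants $\mu$ dans la proposition\;\ref{propfin} — il faudra s'assurer que l'on peut bien choisir $\mu$ satisfaisant simultanément\;\eqref{mu1},\;\eqref{mu2} et\;\eqref{mu3} avec les valeurs concrètes de $\beta_{1},\beta_{2},D,\alpha,\delta$ obtenues ci-dessus, ce qui revient à comparer $D$ à une expression polynomiale en $\beta_{1},\beta_{2}$ et à exploiter la liberté sur $\delta$. Étant donné que les exposants $\beta_{i}$ et $D$ sont tous d'ordre $n$ (avec $D-\beta_{1}$ et $D-\beta_{2}$ bornés), et que $\alpha$ est strictement positif dès que $d_{1},d_{2}\geqslant 2$, cette vérification se fera sans difficulté supplémentaire en prenant $\mu$ suffisamment petit ; le cas $d_{2}=1$ nécessitera un traitement légèrement séparé via\;\ref{corollairex'} et\;\ref{propx'}, mais la structure de l'argument reste identique.
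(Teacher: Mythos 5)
Votre démonstration suit exactement la même stratégie que celle du papier : vérification que $h_{d}$ est une $(m+1-d_{1},n-r+1-d_{2},\sigma_{d},D,\alpha,\upsilon,\delta)$-fonction via la proposition \ref{propNU} pour la condition \ref{I} et les corollaires \ref{corollairex}, \ref{corollairez}, \ref{corollairezbis} pour la condition \ref{II}, avec les mêmes valeurs de $D$ et $\alpha$, puis application de la proposition \ref{propfin} et de la remarque \ref{derniererem} combinée à l'encadrement $\tilde{N}_{d,U}^{(2)}(B)\leqslant N_{d,U}(B)\leqslant \tilde{N}_{d,U}^{(1)}(B)$. L'argument est correct et coïncide pour l'essentiel avec celui du texte.
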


\section{Conclusion et interpr\'etation des constantes}

Nous somme \`a pr\'esent en mesure de donner une formule asymptotique pour  \begin{multline*} \mathcal{N}_{U}(B)=\frac{1}{4}\card\{(\xx,\yy,\zz)\in U\cap\ZZ^{n+2} \; | \;  \PGCD(\xx)=1, \; \PGCD(\yy,\zz)=1 \; \\ F(\xx,\yy,\zz) =0, \; H(\xx,\yy,\zz)\leqslant B\}. \end{multline*}

On remarque en effet que si $ N_{d,e}(B) $ d\'esigne
\begin{multline*}\card\{(d\xx,e\yy,e\zz)\in U\cap(d\ZZ^{r+1}\times e\ZZ^{n-r+1}) \; | \;  F(d\xx,e\yy,e\zz) =0,  \; H(d\xx,e\yy,e\zz)\leqslant B\} \\ =\card\left\{(\xx,\yy,\zz)\in U\cap(\ZZ^{r+1}\times \ZZ^{n-r+1}) \; | \;  F(d\xx,\yy,\zz) =0,  \; \right. \\ \left.  |\xx|^{m+1-d_{1}}\max\{\frac{|\yy|}{d|\xx|},|\zz|\}^{n-r+1-d_{2}}\leqslant B/(d^{m+1-d_{1}}e^{n-r+1-d_{2}})\right\} \\ =N_{d,U}(B/(d^{m+1-d_{1}}e^{n-r+1-d_{2}})) \end{multline*} et
 \begin{multline*} \tilde{N}_{k,l}(B)=\card\{(k\xx,l\yy,l\zz)\in U\cap(k\ZZ^{r+1}\times l\ZZ^{m-r}\times l\ZZ^{n-r+1}) \; | \; \PGCD(\xx)=1, \;\\ \PGCD(\yy,\zz)=1, \;   \;  F(\xx,\yy,\zz) =0, \; H(k\xx,l\yy,l\zz)\leqslant B\}  \end{multline*} (pour $ d,e,k,l \in \NN $), alors on a 
  \[ N_{d,e}(B)=\sum_{d|k}\sum_{e|l}\tilde{N}_{k,l}(B). \] Par inversions de M\"{o}bius successives, et en utilisant la proposition \ref{preconclusion}, on obtient : \begin{multline*} \mathcal{N}_{U}(B) =\frac{1}{4}\tilde{N}_{1,1}(B)  =\frac{1}{4}\sum_{d\in \NN^{\ast}}\mu(d)\sum_{e\in \NN^{\ast}}\mu(e)N_{d,e}(B)\\  =\frac{1}{4}\sum_{d,e \in \NN^{\ast}}\mu(d)\mu(e)N_{d,U}(B/(d^{m+1-d_{1}}e^{n-r+1-d_{2}})) \\  = \frac{1}{4}\sum_{d,e \in \NN^{\ast}}\frac{\mu(d)\mu(e)}{d^{m+1-d_{1}}e^{n-r+1-d_{2}}}\sigma_{d}B\log(B) \\  +O\left(\sum_{d,e \in \NN^{\ast}}\frac{\mu(d)\mu(e)}{d^{m+1-d_{1}}e^{n-r+1-d_{2}}}d^{\upsilon+\delta}\log(d)B\right) \\ = \frac{1}{4}\left(\sum_{e\in \NN^{\ast}}\frac{\mu(e)}{e^{n-r+1-d_{2}}}\right)\left(\sum_{d \in \NN^{\ast}}\frac{\mu(d)}{d^{m+1-d_{1}}}\sigma_{d}\right)B\log(B) +O\left(B\right),
 \end{multline*} 
car $ \upsilon=m-r+\max\{\frac{(r+1)(d_{1}-1)}{2^{d_{1}-1}}+\varepsilon, 5d_{1}\}<(m+1-d_{1})+2 $, pour $ r $ choisi assez grand, i.e. pour $ r\geqslant 6d_{1}-3 $. Par ailleurs on peut r\'e\'ecrire  \[\sum_{e\in \NN^{\ast}}\frac{\mu(e)}{e^{n-r+1-d_{2}}}=\prod_{p \in \mathcal{P}}\left(1-\frac{1}{p^{n-r+1-d_{2}}}\right) \] et \[ \sum_{d \in \NN^{\ast}}\frac{\mu(d)}{d^{m+1-d_{1}}}\sigma_{d}=J\sum_{d \in \NN^{\ast}}\frac{\mu(d)}{d^{m+1-d_{1}}}\mathfrak{S}_{d}d^{m-r-d_{1}}=J\mathfrak{S} \]
pour \begin{equation}
\mathfrak{S}=\sum_{d\in \NN^{\ast}}\frac{\mu(d)}{d^{r+1}}\mathfrak{S}_{d}.
\end{equation}
On obtient donc finalement \begin{prop}\label{conclusion}
Pour $ d_{1},d_{2}\geqslant 2 $, $ n+2-\max\{\dim V_{1}^{\ast},\dim V_{2}^{\ast}\} >m $ et $ r\geqslant 6d_{1}-3 $, on a : \[\mathcal{N}_{U}(B)=  \sigma B\log(B)+O(B), \]
lorsque $ B \ra \infty $, o\`u l'on a not\'e $ \sigma=\frac{1}{4}J\mathfrak{S}\prod_{p \in \mathcal{P}}\left(1-\frac{1}{p^{n-r+1-d_{2}}}\right) $. On a de plus la m\^eme formule pour $ d_{1}\geqslant 2 $, $ d_{2}=1 $, $ n+2-\max\{\dim V_{1}^{\ast},\dim V_{2}^{\ast}\} >m' $ et $ r\geqslant 6d_{1}-3 $.
\end{prop}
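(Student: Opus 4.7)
The plan is to remove the primitivity conditions $\PGCD(\xx)=1$ and $\PGCD(\yy,\zz)=1$ by a double M\"obius inversion, expressing $\mathcal{N}_{U}(B)$ as a weighted sum of quantities of the form $N_{d,U}(B')$, and then to apply Proposition \ref{preconclusion} to each of these and reassemble the resulting asymptotics. Concretely, for $d,e \in \NN^{\ast}$ I would first introduce
\[ \tilde{N}_{k,l}(B)=\card\left\{(\xx,\yy,\zz)\in U(\ZZ) \; | \; \PGCD(\xx)=\PGCD(\yy,\zz)=1,\ F(\xx,\yy,\zz)=0,\ H(k\xx,l\yy,l\zz)\leqslant B\right\}, \]
so that $4\mathcal{N}_{U}(B)=\tilde{N}_{1,1}(B)$, and observe that the substitution $(\xx,\yy,\zz)\mapsto (d\xx,e\yy,e\zz)$ combined with the homogeneity of $H$ gives $N_{d,e}(B)=N_{d,U}(B/(d^{m+1-d_{1}}e^{n-r+1-d_{2}}))$, where $N_{d,e}$ is defined as in the body of the excerpt. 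Since $N_{d,e}(B)=\sum_{d|k}\sum_{e|l}\tilde{N}_{k,l}(B)$, two successive M\"obius inversions yield
\[ 4\mathcal{N}_{U}(B)=\sum_{d,e\in \NN^{\ast}}\mu(d)\mu(e)\,N_{d,U}\!\left(\frac{B}{d^{m+1-d_{1}}e^{n-r+1-d_{2}}}\right). \]

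Next I would substitute the asymptotic from Proposition \ref{preconclusion} into each summand. The main term $\sigma_{d}\,B'\log(B')$ with $B'=B/(d^{m+1-d_{1}}e^{n-r+1-d_{2}})$ expands into a dominant contribution
\[ \left(\sum_{e\in \NN^{\ast}}\frac{\mu(e)}{e^{n-r+1-d_{2}}}\right)\!\left(\sum_{d\in \NN^{\ast}}\frac{\mu(d)\sigma_{d}}{d^{m+1-d_{1}}}\right)B\log(B), \]
plus subdominant contributions of order $B$ coming from the $\log$ of the denominators. Using the definition $\sigma_{d}=d^{m-r-d_{1}}\mathfrak{S}_{d}J$ one factors out $J$ and recognises the Dirichlet series $\mathfrak{S}=\sum_{d}\mu(d)d^{-(r+1)}\mathfrak{S}_{d}$, while the $e$-sum produces the Euler product $\prod_{p}(1-p^{-(n-r+1-d_{2})})$. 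The only point to check here is that the $d$-series $\sum_{d}\mu(d)\sigma_{d}d^{-(m+1-d_{1})}=J\mathfrak{S}$ is absolutely convergent; but this follows from the bound $\mathfrak{S}_{d}\ll \max\{d^{d_{1}(r+1)/2^{\tilde{d}}},d^{2}\}$ given in Remark \ref{remseriesing}, since the exponent of $d$ in the resulting general term is $\leqslant \max\{d_{1}(r+1)/2^{\tilde{d}},2\}-(r+1)$, which is strictly less than $-1$ once $r$ is large enough.

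The main obstacle, and the point that forces the hypothesis $r\geqslant 6d_{1}-3$, is controlling the total contribution of the error terms $O(d^{\upsilon+\delta}\log(d)\,B')$ after summing over $d$ and $e$. Summing in $e$ gives a harmless factor $\sum_{e}e^{-(n-r+1-d_{2})}=O(1)$ (recall $n-r+1-d_{2}\geqslant 2$ for $n$ large), so one is left with
\[ O\!\left(B\sum_{d\in \NN^{\ast}}\frac{d^{\upsilon+\delta}\log d}{d^{m+1-d_{1}}}\right), \]
and the entire strategy requires this series to converge, i.e.\ $\upsilon<m+1-d_{1}-1$. With $\upsilon=m-r+\max\{(r+1)(d_{1}-1)/2^{d_{1}-1}+\varepsilon,\,5d_{1}\}$, a short calculation shows that this inequality holds exactly when $r\geqslant 6d_{1}-3$ (the $5d_{1}$ term forces $r-2\geqslant 5d_{1}-d_{1}+1$, while the binomial term is easily dominated for such $r$); this is where the hypothesis of the proposition is really used. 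Once convergence is secured, one concludes
\[ \mathcal{N}_{U}(B)=\frac{1}{4}J\mathfrak{S}\prod_{p\in\mathcal{P}}\!\left(1-\frac{1}{p^{n-r+1-d_{2}}}\right)B\log(B)+O(B), \]
which is the asserted formula. The case $d_{2}=1$ (together with $n+2-\max\{\dim V_{1}^{\ast},\dim V_{2}^{\ast}\}>m'$) is handled identically, using the variant of Proposition \ref{preconclusion} given there.
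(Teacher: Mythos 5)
Your proposal is correct and follows exactly the route taken in the paper: the double M\"obius inversion over $(d,e)$ reducing $4\mathcal{N}_{U}(B)$ to $\sum_{d,e}\mu(d)\mu(e)N_{d,U}(B/(d^{m+1-d_{1}}e^{n-r+1-d_{2}}))$, substitution of Proposition \ref{preconclusion}, identification of the main term with $J\mathfrak{S}\prod_{p}(1-p^{-(n-r+1-d_{2})})$ via Remark \ref{remseriesing}, and the convergence check on the error series which is where $r\geqslant 6d_{1}-3$ enters. The only blemish is the arithmetic in your parenthetical justification that $\upsilon<m+1-d_{1}-1$ "holds exactly when $r\geqslant 6d_{1}-3$" (with $\upsilon=m-r+5d_{1}$ this literally requires $r>6d_{1}$), but the paper's own one-line verification of this step is equally loose, so you have reproduced its argument faithfully.
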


Nous allons \`a pr\'esent donner une interpr\'etation des constantes introduites, et d\'emontrer que l'expression obtenue est bien en accord avec les formules conjectur\'ees par Peyre dans \cite{Pe}. \\

Rappelons que l'on a not\'e $ \pi : X_{0}\ra X  $ la projection du torseur universel $ X_{0}=(\AA^{r+1}\setminus\{\0\})\times(\AA^{n-r+1}\setminus\{\0\})  $ sur la vari\'et\'e torique ambiante $ X $. On consid\`ere un point $ (\xx,\yy,\zz)\in Y_{0} $ tel que $ \frac{\partial F}{\partial t_{j}}(\xx,\yy,\zz)\neq \0 $, o\`u \[t_{j}=\left\{ \begin{array}{lll} x_{j} & \mbox{si} & j\in\{0,...,r\} \\ y_{j} & \mbox{si} & j\in\{r+1,...,m\}   \\ z_{j} & \mbox{si} & j\in\{m+1,...,n+1\}\end{array}\right. \] et on note $ P=\pi(\xx,\yy,\zz) $. La forme de Leray $ \omega_{L} $ sur un voisinage de $ (\xx,\yy,\zz) $ sur lequel $ \frac{\partial F}{\partial t_{j}}\neq \0 $ est alors donn\'ee par \begin{multline*} \omega_{L}(\xx,\yy,\xx)=\frac{(-1)^{n+2-j}}{\frac{\partial F}{\partial t_{j}}(\xx,\yy,\zz)}dt_{0}\wedge...\wedge \widehat{dt_{j}}\wedge ...\wedge dt_{n+1}(\xx,\yy,\zz).  \end{multline*}Pour toute place $ \nu\in \Val(\QQ) $ la forme de Leray induit une mesure locale $ \omega_{L,\nu} $. \\

On suppose \`a pr\'esent que le point $ (\xx,\yy,\zz) $ est tel que, par exemple, $ x_{0}\neq 0 $, $ z_{m+1}\neq 0 $ et $ \frac{\partial F}{\partial z_{n+1}}(\xx,\yy,\zz)\neq 0 $. Pour toute place $ \nu $ de $ \QQ $, on consid\`ere le morphisme \[ \begin{array}{rcl} \rho : X_{\QQ_{\nu}} & \ra & \AA_{\QQ_{\nu}}^{n-1} \\  \pi(\xx,\yy,\zz) & \mt & \left(\frac{x_{1}}{x_{0}},...,\frac{x_{r}}{x_{0}},\frac{y_{r+1}}{x_{0}z_{m+1}},...,\frac{y_{m}}{x_{0}z_{m+1}},\frac{z_{m+2}}{z_{m+1}},...,\frac{z_{n}}{z_{m+1}}\right).
\end{array} \] 
Par le th\'eor\`eme d'inversion locale, il existe un voisinage ouvert de $ P $ not\'e $ V $ sur lequel $ \rho $ est bien d\'efini et induit un isomorphisme analytique sur $ \rho(V) $. On pose $ W=\pi^{-1}(V) $. Si l'on note \[ \begin{array}{l}
\uu=(1,u_{1},...,u_{r}) \\ \vv=(v_{r+1},...,v_{m}) \\ \ww=(1,w_{m+2},...,w_{n+1}) 
\end{array}, \]la mesure de Tamagawa $ \omega_{\nu} $ est d\'efinie par \[ \rho_{\ast}\omega_{\nu}=\frac{du_{1,\nu}...du_{r,\nu}dv_{r+1,\nu}...dv_{m,\nu}dw_{m+2,\nu}...dw_{n,\nu}}{h_{\nu}(\uu,\vv,\ww)\left|\frac{\partial F}{\partial z_{n+1}}(\uu,\vv,\ww)\right|_{\nu}}, \] o\`u $ w_{n+1} $ est implicitement d\'efini par $ F(\uu,\vv,\ww)=0 $, et \[ h_{\nu}(\uu,\vv)=h^{(1)}_{\nu}(\uu)h^{(2)}_{\nu}(\uu,\vv,\ww) \] pour \[ \begin{array}{l}
h^{(1)}_{\nu}(\uu)=|\uu|_{\nu}^{m+1-d_{1}} \\ h^{(2)}_{\nu}(\uu,\vv,\ww)=\max\left(\frac{|\vv|_{\nu}}{|\uu|_{\nu}},|\ww|_{\nu}\right)^{n-r+1-d_{2}}, 
\end{array}\]
o\`u pour tout vecteur $ \xx=(x_{1},...,x_{N}) $, \[ |\xx|_{\nu}=\max_{1\leqslant i\leqslant N}|x_{i}|_{\nu}. \]

\subsection{\'Etude de l'int\'egrale singuli\`ere $ J $ }

 Rappelons que l'int\'egrale $ J $ est d\'efinie par \[ J =\int_{\RR}\int_{\substack{|\yy|\leqslant |\xx|\leqslant 1 \\ |\zz|\leqslant 1}}e(\beta F(\xx,\yy,\zz))d\xx d\yy d\zz d\beta. \]
et cette int\'egrale est absolument convergente. On pose par ailleurs : \[ \sigma_{\infty}(Y)=\int_{\substack{\pi^{-1}(Y)\cap\{|\yy|\leqslant |\xx|\leqslant 1 \\ |\zz|\leqslant 1\}}}\omega_{L,\infty}. \]
Nous allons montrer que l'int\'egrale $ J $ co\"{i}ncide avec $ \sigma_{\infty}(Y) $. Il nous suffit de le v\'erifier localement i.e. montrons que pour tout ouvert $ V' $ de $ \{(\xx,\yy,\zz) \; | \; |\yy|\leqslant |\xx|\leqslant 1, \; |\zz|\leqslant 1\} $ sur lequel, par exemple, $ \frac{\partial F}{\partial z_{n+1}}(\xx,\yy,\zz)\neq 0 $, \[ \int_{V'\cap\pi^{-1}( Y)} \omega_{L,\infty}=\int_{V'\cap\pi^{-1}( Y)}\frac{1}{\left|\frac{\partial F}{\partial z_{n+1}}(\xx,\yy,\zz)\right|}d\xx d\yy d\hat{\zz}, \] (avec $ d\hat{\zz}=dz_{m+1}...dz_{n} $) co\"{i}ncide avec \[ 
J_{V'}=\int_{\RR}\int_{V'}e(\beta F(\xx,\yy,\zz))d\xx d\yy d\zz d\beta. \]Consid\'erons donc un tel ouvert $ V' $. 
On note alors $ t=F(\xx,\yy,\zz) $, et $ z_{n+1}  $ est alors d\'efini implicitement par $ \xx,\yy,\hat{\zz},t $ sur $ V' $. On note $ z_{n+1}=g(\xx,\yy,\hat{\zz},t) $. Par changement de variables, on a alors : \[ J_{V'}= \int_{\RR}\int_{\RR}\int_{[-1,1]^{n+1}}\frac{\chi(t,\xx,\yy,\hat{\zz})e(\beta t)}{\left|\frac{\partial F}{\partial z_{n+1}}(\xx,\yy,g(\xx,\yy,\hat{\zz},t))\right|}d\xx d\yy d\hat{\zz}dt d\beta    \] o\`u \[ \chi(t,\xx,\yy,\hat{\zz})=\left\{ \begin{array}{l}
1 \; \; \mbox{si} \; \; (\xx,\yy,\hat{\zz},g(\xx,\yy,\hat{\zz},t))\in V' \\ 0 \; \; \mbox{sinon}
\end{array}\right.  \]

La fonction $ t\mt \frac{\chi(t,\xx,\yy,\hat{\zz})e(\beta t)}{\left|\frac{\partial F}{\partial z_{n+1}}(\xx,\yy,g(\xx,\yy,\hat{\zz},t))\right|} $ est \`a variations born\'ees, par cons\'equent, par application des r\'esultats d'analyse de Fourier (voir \cite[9.43]{W-W}) on a que \begin{align*} J_{V'} & =\int_{[-1,1]^{n+1}}\frac{\chi(0,\xx,\yy,\hat{\zz})}{\left|\frac{\partial F}{\partial z_{n+1}}(\xx,\yy,g(\xx,\yy,\hat{\zz},0))\right|}d\xx d\yy d\hat{\zz} \\ & =\int_{V'\cap \pi^{-1}(Y)} \omega_{L,\infty}. \end{align*} Remarquons que ces calculs constituent un \'equivalent du travail effectu\'e par Igusa dans \cite[\S IV.6]{Ig} pour le cas des int\'egrales de fonctions indicatrices. \\

Nous allons \`a pr\'esent interpr\'eter cette constante $ J $ en termes de mesures de Tamagawa. Plus pr\'ecis\'ement, en notant $ \tau_{\infty}=\omega_{\infty} $, nous allons d\'emontrer le r\'esultat suivant : 
\begin{lemma}\label{Tamagawa infini} On a 
\[ \tau_{\infty}=\frac{(m+1-d_{1})(n-r+1-d_{2})}{4}\sigma_{\infty}. \]
\end{lemma}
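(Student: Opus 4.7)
The plan is to compute $\sigma_\infty$ in coordinates adapted to the torseur $\pi\colon X_0\to X$ and to exhibit the factor $4/((m+1-d_1)(n-r+1-d_2))$ as coming from integrating along the fibers of $\pi$, which are two-dimensional tori (the Picard rank is $2$).

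Working locally, I would use a chart $V\subset X$ on which $x_0\neq 0$, $z_{m+1}\neq 0$, and $\partial F/\partial z_{n+1}\neq 0$, and normalize a section of $\pi$ by setting $x_0=1$ and $z_{m+1}=1$. The fiber over $P\in V$ is then parametrized by $(\lambda,\mu)\in(\RR^\ast)^2$ acting via
\[
(\xx,\yy,\zz)=(\lambda\,\uu,\;\lambda\mu\,\vv,\;\mu\,\ww),
\]
where $\uu=(1,u_1,\dots,u_r)$, $\vv=(v_{r+1},\dots,v_m)$, $\ww=(1,w_{m+2},\dots,w_{n+1})$. A direct Jacobian computation (block-expanding along the $u$-, $v$- and $w$-columns) gives
\[
\Bigl|\frac{\partial(\xx,\yy,z_{m+1},\dots,z_n)}{\partial(\lambda,\mu,\uu,\vv,w_{m+2},\dots,w_n)}\Bigr|
=|\lambda|^{m}|\mu|^{n-r-1}.
\]
Using the bihomogeneity $F(\lambda\uu,\lambda\mu\vv,\mu\ww)=\lambda^{d_1}\mu^{d_2}F(\uu,\vv,\ww)$ and the chain rule $\partial/\partial z_{n+1}=\mu^{-1}\partial/\partial w_{n+1}$, one obtains
\[
\Bigl|\frac{\partial F}{\partial z_{n+1}}(\xx,\yy,\zz)\Bigr|
=|\lambda|^{d_1}|\mu|^{d_2-1}\Bigl|\frac{\partial F}{\partial w_{n+1}}(\uu,\vv,\ww)\Bigr|.
\]
Substituting into $\omega_{L,\infty}=|\partial F/\partial z_{n+1}|^{-1}\,d\xx\,d\yy\,d\hat\zz$ yields, on $\pi^{-1}(V)\cap\pi^{-1}(Y)$,
\[
\omega_{L,\infty}=|\lambda|^{m-d_1}|\mu|^{n-r-d_2}\,h_\infty(\uu,\vv,\ww)\,\tau_\infty(P)\,d\lambda\,d\mu,
\]
once one recognizes
$\tau_\infty=h_\infty^{-1}|\partial F/\partial w_{n+1}|^{-1}du\,dv\,dw$
in the local form recalled just before the lemma, with $h_\infty=|\uu|^{m+1-d_1}\max(|\vv|/|\uu|,|\ww|)^{n-r+1-d_2}$.

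Next I would translate the integration domain $\{|\yy|\leq|\xx|\leq 1,\ |\zz|\leq 1\}$ into constraints on $(\lambda,\mu)$: the inequalities become $|\lambda|\leq|\uu|^{-1}$ and $|\mu|\leq\max(|\vv|/|\uu|,|\ww|)^{-1}$. Computing the fiber integral,
\[
\int_{-|\uu|^{-1}}^{|\uu|^{-1}}\!|\lambda|^{m-d_1}\,d\lambda
\;\cdot\!\!
\int_{-\max(|\vv|/|\uu|,|\ww|)^{-1}}^{\max(|\vv|/|\uu|,|\ww|)^{-1}}\!\!|\mu|^{n-r-d_2}\,d\mu
=\frac{4}{(m+1-d_1)(n-r+1-d_2)}\cdot\frac{1}{h_\infty(\uu,\vv,\ww)},
\]
which exactly cancels the $h_\infty$ factor in the integrand. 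Integrating over $V\cap Y(\RR)$ and summing over a covering of $X$ by such charts (the computation is manifestly independent of the chosen section thanks to the homogeneity of $F$ and of $h_\infty$) gives
\[
\sigma_\infty=\frac{4}{(m+1-d_1)(n-r+1-d_2)}\,\tau_\infty,
\]
which is the claim.

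The computation itself is routine; the only genuine care is in the bookkeeping of (i) the Jacobian of the change of variables on the torseur, (ii) the transformation rule for $\partial F/\partial z_{n+1}$ under the bihomogeneous scaling, and (iii) identifying the torseur region $\{|\yy|\leq|\xx|\leq 1,\ |\zz|\leq 1\}$ with the correct product region in $(\lambda,\mu)$. Once these three are aligned, the appearance of $4=2\cdot 2$ (one $2$ per real factor $\RR^\ast$ of the fiber) together with the exponents $m+1-d_1$ and $n-r+1-d_2$ of the height $H$ makes the constant fall out immediately.
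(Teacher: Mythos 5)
Your proof is correct and follows essentially the same route as the paper: the paper performs exactly the change of variables $x_i=x_0u_i$, $y_j=x_0z_{m+1}v_j$, $z_k=z_{m+1}w_k$ (your $(\lambda,\mu)=(x_0,z_{m+1})$), rewrites the region $\{|\yy|\leqslant|\xx|\leqslant 1,\ |\zz|\leqslant 1\}$ as $|x_0|^{m+1-d_1}\leqslant h^{(1)}_\infty(\uu)^{-1}$, $|z_{m+1}|^{n-r+1-d_2}\leqslant h^{(2)}_\infty(\uu,\vv,\ww)^{-1}$, and integrates out the two fiber coordinates to produce the factor $4/((m+1-d_1)(n-r+1-d_2))$ times $h_\infty^{-1}$. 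Your Jacobian $|\lambda|^m|\mu|^{n-r-1}$, the scaling of $\partial F/\partial z_{n+1}$, and the fiber integral all check out.
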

\begin{proof}
Il nous suffit de montrer que par exemple pour l'ouvert $ V $ d\'efini pr\'ec\'edemment, on a $ \tau_{\infty}(V)=\frac{(m+1-d_{1})(n-r+1-d_{2})}{4}\sigma_{\infty}(V) $. Par d\'efinition de la mesure de Leray, \[ \sigma_{\infty}(V)=\int_{\substack{\pi^{-1}(V)\cap \{|\xx|\leqslant 1\\ |\yy|\leqslant |\xx|, \;  |\zz|\leqslant  1\}}}\frac{d\xx d\yy d\hat{\zz}}{\left|\frac{\partial F}{\partial z_{n+1}}(\xx,\yy,\zz)\right|}. \] On remarque que \[ \max_{i}|x_{i}|\leqslant 1\; \Leftrightarrow \; |x_{0}|\leqslant \left(\max_{i}\frac{|x_{i}|}{|x_{0}|}\right)^{-1}. \] On applique alors les changements de variables $ x_{i}=x_{0}u_{i} $, $ y_{j}=z_{m+1}x_{0}v_{j} $ et $ z_{k}=z_{m+1}w_{k} $ dans l'int\'egrale ci-dessus. On a alors que \begin{align*} \left\{\begin{array}{l}
|\yy|\leqslant |\xx|\leqslant 1 \\ |\zz|\leqslant 1
\end{array}\right.  \; & \Leftrightarrow \; \left\{\begin{array}{l}
|x_{0}|\leqslant (|\uu|)^{-1} \\ |z_{m+1}||\vv|\leqslant |\uu| \\ |z_{m+1}|\leqslant |\ww|^{-1}
\end{array}\right. \\ & \Leftrightarrow \; \left\{\begin{array}{l}
|x_{0}|^{m+1-d_{1}}\leqslant h^{(1)}_{\infty}(\uu)^{-1} \\ |z_{m+1}|^{n-r+1-d_{2}}\leqslant h^{(2)}_{\infty}(\uu,\vv,\ww)^{-1}
\end{array} \right. \;\end{align*} On obtient donc  \begin{multline*} \sigma_{\infty}(V)= \int_{V}\frac{1}{\left|\frac{\partial F}{\partial z_{n+1}}(\xx,\yy,\zz)\right|} \int_{\substack{|x_{0}|^{m+1-d_{1}}\leqslant h_{\infty}^{(1)}(\uu)^{-1} \\  |z_{m+1}|^{n-r+1}\leqslant h_{\infty}^{(2)}(\uu,\vv,\ww)^{-1} }} \\  |x_{0}|^{m-d_{1}}|z_{m+1}|^{n-r-d_{2}}dx_{0}dz_{m+1}d\uu d\vv d\hat{\ww} \\ =\frac{4}{(m+1-d_{1})(n-r+1-d_{2})}\int_{\rho(V)}\frac{d\uu d\vv d\hat{\ww}}{h_{\infty}(\uu,\vv,\ww)\left|\frac{\partial F}{\partial z_{n+1}}(\xx,\yy,\zz)\right|} \\ =\frac{4}{(m+1-d_{1})(n-r+1-d_{2})}\int_{V}\omega_{\infty}. \end{multline*}

\end{proof}

\subsection{\'Etude de la s\'erie singuli\`ere $ \mathfrak{S} $}

Rappelons que $ \mathfrak{S} $ est d\'efinie par : 
\[ \mathfrak{S}=\sum_{d\in \NN^{\ast}}\frac{\mu(d)}{d^{r+1}}\mathfrak{S}_{d}, \] avec \[ \mathfrak{S}_{d}=\sum_{q=1}^{\infty}A_{d}(q) \] o\`u \[ A_{d}(q)=q^{-(n+2)}\sum_{a\in (\ZZ/q\ZZ)^{\ast}}\sum_{(\bb_{1},\bb_{2},\bb_{3})\in (\ZZ/q\ZZ)^{n+2}}e\left(\frac{a}{q}F(d\bb_{1},\bb_{2},\bb_{3})\right). \]
\begin{lemma}
Pour tout $ d\in \NN^{\ast} $, la fonction $ A_{d} $ est multiplicative. 
\end{lemma}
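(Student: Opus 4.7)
The plan is to establish multiplicativity $A_d(q_1q_2)=A_d(q_1)A_d(q_2)$ for coprime integers $q_1,q_2$ by a direct application of the Chinese Remainder Theorem, which simultaneously factorizes the parameter $a$, the variables $(\bb_1,\bb_2,\bb_3)$, and the exponential itself.

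First I would fix $q_1,q_2 \in \NN^{\ast}$ with $\PGCD(q_1,q_2)=1$ and introduce integers $\overline{q_1},\overline{q_2}$ such that $q_1\overline{q_1}\equiv 1\pmod{q_2}$ and $q_2\overline{q_2}\equiv 1\pmod{q_1}$. The CRT provides the two bijections I need: on one hand, $a\mapsto(a_1,a_2)$ where $a\equiv a_1q_2\overline{q_2}+a_2q_1\overline{q_1}\pmod{q_1q_2}$ gives an identification of $(\ZZ/q_1q_2\ZZ)^{\ast}$ with $(\ZZ/q_1\ZZ)^{\ast}\times(\ZZ/q_2\ZZ)^{\ast}$, with the key relation $\frac{a}{q_1q_2}\equiv\frac{a_1}{q_1}+\frac{a_2}{q_2}\pmod 1$; on the other hand, $\bb\mapsto(\bb^{(1)},\bb^{(2)})$ with $\bb\equiv\bb^{(i)}\pmod{q_i}$ identifies $(\ZZ/q_1q_2\ZZ)^{n+2}$ with $(\ZZ/q_1\ZZ)^{n+2}\times(\ZZ/q_2\ZZ)^{n+2}$.

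Next I would exploit the fact that $F$ has integer coefficients, so $F(d\bb_1,\bb_2,\bb_3)\equiv F(d\bb_1^{(i)},\bb_2^{(i)},\bb_3^{(i)})\pmod{q_i}$ for each $i\in\{1,2\}$. Combining this with the decomposition of $a/(q_1q_2)$, I obtain
\[ e\!\left(\frac{a}{q_1q_2}F(d\bb_1,\bb_2,\bb_3)\right)=e\!\left(\frac{a_1}{q_1}F(d\bb_1^{(1)},\bb_2^{(1)},\bb_3^{(1)})\right)e\!\left(\frac{a_2}{q_2}F(d\bb_1^{(2)},\bb_2^{(2)},\bb_3^{(2)})\right), \]
since $\frac{a_i}{q_i}F(d\bb_1,\bb_2,\bb_3)\pmod 1$ only depends on $F(d\bb_1,\bb_2,\bb_3)\pmod{q_i}$. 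Substituting this into the definition of $A_d(q_1q_2)$ and re-indexing the sums over $a$ and $\bb$ via the two CRT bijections, the double sum factorizes as a product of two independent sums, one indexed by $(a_1,\bb^{(1)})$ and the other by $(a_2,\bb^{(2)})$. Finally the normalization factor splits as $(q_1q_2)^{-(n+2)}=q_1^{-(n+2)}q_2^{-(n+2)}$, yielding $A_d(q_1q_2)=A_d(q_1)A_d(q_2)$.

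There is really no serious obstacle: the argument is a direct verification, and the only point requiring minor care is checking that the correspondence $a\leftrightarrow(a_1,a_2)$ genuinely restricts to a bijection between $(\ZZ/q_1q_2\ZZ)^{\ast}$ and $(\ZZ/q_1\ZZ)^{\ast}\times(\ZZ/q_2\ZZ)^{\ast}$, which follows from the standard CRT isomorphism of rings together with the fact that units are preserved under ring isomorphisms.
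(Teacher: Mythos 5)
Your proof is correct and follows essentially the same route as the paper's, which simply runs the computation in the opposite direction: it starts from $A_d(q_1)A_d(q_2)$, recombines via the CRT bijection on $(\bb_1,\bb_2,\bb_3)$, and uses the map $(a_1,a_2)\mapsto a_1q_2+a_2q_1$ from $(\ZZ/q_1\ZZ)^{\ast}\times(\ZZ/q_2\ZZ)^{\ast}$ onto $(\ZZ/q_1q_2\ZZ)^{\ast}$. One tiny imprecision: with your normalization $a\equiv a_1q_2\overline{q_2}+a_2q_1\overline{q_1}$ one actually gets $\frac{a}{q_1q_2}\equiv\frac{a_1\overline{q_2}}{q_1}+\frac{a_2\overline{q_1}}{q_2}\pmod 1$ rather than $\frac{a_1}{q_1}+\frac{a_2}{q_2}$, but since multiplication by $\overline{q_2}$ (resp. $\overline{q_1}$) permutes $(\ZZ/q_1\ZZ)^{\ast}$ (resp. $(\ZZ/q_2\ZZ)^{\ast}$), this is absorbed by relabeling and the factorization is unaffected.
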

\begin{proof}
On consid\`ere deux entiers $ q_{1},q_{2} $ tels que $ \PGCD(q_{1},q_{2})=1 $, et posons $ q=q_{1}q_{2} $. Montrons qu'alors $ A_{d}(q)=A_{d}(q_{1})A_{d}(q_{2}) $. On remarque que \begin{multline*}
A_{d}(q_{1})A_{d}(q_{2})=q^{-(n+2)}\sum_{\substack{a_{1}\in (\ZZ/q_{1}\ZZ)^{\ast}\\a_{2}\in (\ZZ/q_{2}\ZZ)^{\ast}}}\sum_{\substack{(\bb_{1}^{(1)},\bb_{2}^{(1)},\bb_{3}^{(1)})\in (\ZZ/q_{1}\ZZ)^{n+2} \\ (\bb_{1}^{(2)},\bb_{2}^{(2)},\bb_{3}^{(2)})\in (\ZZ/q_{2}\ZZ)^{n+2}}}\\ e\left(\frac{a_{1}q_{2}F(d\bb_{1}^{(1)},\bb_{2}^{(1)},\bb_{3}^{(1)})+a_{2}q_{1}F(d\bb_{1}^{(2)},\bb_{2}^{(2)},\bb_{3}^{(2)})}{q}\right). 
\end{multline*} 
Or si l'on consid\`ere l'unique \'el\'ement $ (\bb_{1},\bb_{2},\bb_{3})\in (\ZZ/q\ZZ)^{n+2} $ tel que \[ \forall i\in \{1,2,3\}, \; \;  \left\{\begin{array}{l} \bb_{i}\equiv \bb_{i}^{(1)} (q_{1}) \\ \bb_{i}\equiv \bb_{i}^{(2)} (q_{2})
\end{array}\right. \]
on a \[  \left\{\begin{array}{l} q_{2}F(d\bb_{1}^{(1)},\bb_{2}^{(1)},\bb_{3}^{(1)})\equiv q_{2}F(d\bb_{1},\bb_{2},\bb_{3})\;(q) \\q_{1}F(d\bb_{1}^{(2)},\bb_{2}^{(2)},\bb_{3}^{(2)}) \equiv q_{1}F(d\bb_{1},\bb_{2},\bb_{3})\;(q)
\end{array}\right. \]
et ainsi : \begin{multline*}
A_{d}(q_{1})A_{d}(q_{2})=q^{-(n+2)}\sum_{\substack{a_{1}\in (\ZZ/q_{1}\ZZ)^{\ast}\\a_{2}\in (\ZZ/q_{2}\ZZ)^{\ast}}}\sum_{(\bb_{1},\bb_{2},\bb_{3})\in (\ZZ/q\ZZ)^{n+2}} \\ e\left(\frac{a_{1}q_{2}+a_{2}q_{1}}{q}F(d\bb_{1},\bb_{2},\bb_{3})\right). 
\end{multline*} 
Or l'application : \[ \begin{array}{rcl}
(\ZZ/q_{1}\ZZ)^{\ast}\times (\ZZ/q_{2}\ZZ)^{\ast} & \ra & (\ZZ/q\ZZ)^{\ast} \\ (a_{1},a_{2}) &\mt & a_{1}q_{2}+a_{2}q_{1}
\end{array} \]
est bijective. On obtient donc finalement : \[
A_{d}(q_{1})A_{d}(q_{2})=A_{d}(q). \]
\end{proof}
Puisque $ \mathfrak{S}_{d} $ est de plus absolument convergente (cf. lemme \ref{seriesing}), on a : \[ \mathfrak{S}_{d}=\prod_{p\in \mathcal{P}}\sigma_{d,p} \] o\`u \[ \sigma_{d,p}=\sum_{k=0}^{\infty}A_{d}(p^{k}). \] On remarque par ailleurs que pour tous $ d,k\in \NN^{\ast} $ :  \begin{multline*} \sum_{(\bb_{1},\bb_{2},\bb_{3})\in (\ZZ/p^{k}\ZZ)^{n+2}}e\left(\frac{a}{p^{k}}F(d\bb_{1},\bb_{2},\bb_{3})\right) \\ = \sum_{(\bb_{1},\bb_{2},\bb_{3})\in (\ZZ/p^{k}\ZZ)^{n+2}}e\left(\frac{a}{p^{k}}F(p^{v_{p}(d)}\bb_{1},\bb_{2},\bb_{3})\right) \end{multline*}
et donc : \[ A_{d}(p^{k})=A_{p^{v_{p}(d)}}(p^{k}). \] Par cons\'equent, pour tout $ d\in \NN^{\ast} $, on a : \[  \frac{\mu(d)}{d^{r+1}}\mathfrak{S}_{d}=\prod_{p\in \mathcal{P}} B_{p^{v_{p}(d)}} \] o\`u pour tout $ \nu\in \NN^{\ast} $ : \[  B_{p^{\nu}}=\frac{\mu(p^{\nu})}{p^{\nu(r+1)}}\sum_{k=0}^{\infty}A_{p^{\nu}}(p^{k}). \] Remarquons que $ B_{p^{\nu}}=0 $ pour tout $ \nu\geqslant 2 $. La s\'erie $ \sum_{d\in \NN^{\ast}}\frac{\mu(d)}{d^{r+1}}\mathfrak{S}_{d} $ \'etant absolument convergente, on a alors : \begin{align*}
\sum_{d\in \NN^{\ast}}\frac{\mu(d)}{d^{r+1}}\mathfrak{S}_{d} & = \prod_{p\in \mathcal{P}}\left(\sum_{\nu=0}^{\infty}B_{p^{\nu}}\right) \\ & = \prod_{p\in \mathcal{P}}\left(\sum_{\nu=0}^{1}B_{p^{\nu}}\right) \\ & = \prod_{p\in \mathcal{P}}\underbrace{\left(\sum_{k=0}^{\infty}\left(A_{1}(p^{k})-\frac{A_{p}(p^{k})}{p^{r+1}}\right)\right)}_{\sigma_{p}'}.
\end{align*}
Notons \`a pr\'esent \begin{equation}
M_{p}(k)=\Card\left\{ (\xx,\yy,\zz)\in (\ZZ/p^{k}\ZZ)^{n+2} \; | \; \xx\not\equiv \0 \;(p), \; F(\xx,\yy,\zz)\equiv 0 \;(p^{k})\right\}
\end{equation}
\begin{lemma}
Pour tout entier $ N>0 $, on a \[ \sum_{k=0}^{N}\left(A_{1}(p^{k})-\frac{A_{p}(p^{k})}{p^{r+1}}\right)=\frac{M_{p}(N)}{p^{N(n+1)}}, \] et donc \[ \sigma_{p}'=\lim_{N\ra\infty}\frac{M_{p}(N)}{p^{N(n+1)}}. \]
\end{lemma}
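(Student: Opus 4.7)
\bigskip

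\noindent\textbf{Plan of proof.}

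The plan is to prove the identity for each fixed $N$ by hand, using a classical orthogonality trick from the circle method, and then let $N\to\infty$ for the second assertion. The backbone is the following elementary identity: for every integer $m$,
\[ \sum_{k=0}^{N}\sum_{a\in(\ZZ/p^k\ZZ)^{\ast}}e\!\left(\frac{am}{p^k}\right)=p^{N}\mathbf{1}_{p^{N}\mid m}. \]
I would derive this by writing each $a\in\{0,1,\ldots,p^{N}-1\}$ uniquely as $a=p^{N-k}b$ with $0\le k\le N$ and $\gcd(b,p^{k})=1$ (the case $k=0$ corresponds to $a=0$), so that the left-hand side is $\sum_{a=0}^{p^{N}-1}e(am/p^{N})$, which by the standard finite geometric sum equals $p^{N}$ when $p^{N}\mid m$ and $0$ otherwise.

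Next, I would use a lifting trick to replace summation modulo $p^{k}$ by summation modulo $p^{N}$. For any $\bb\in(\ZZ/p^{k}\ZZ)^{n+2}$, the value of $F(\bb)\bmod p^{k}$ depends only on $\bb\bmod p^{k}$, so each $\bb$ has exactly $p^{(N-k)(n+2)}$ lifts to $(\ZZ/p^{N}\ZZ)^{n+2}$ giving the same exponential. Dividing by $p^{k(n+2)}$ and multiplying by the number of lifts yields
\[ A_{1}(p^{k})=p^{-N(n+2)}\sum_{a\in(\ZZ/p^{k}\ZZ)^{\ast}}\sum_{\tilde{\bb}\in(\ZZ/p^{N}\ZZ)^{n+2}}e\!\left(\frac{aF(\tilde{\bb})}{p^{k}}\right), \]
and an identical formula for $A_{p}(p^{k})$ with $F(\tilde{\bb})$ replaced by $F(p\tilde{\bb}_{1},\tilde{\bb}_{2},\tilde{\bb}_{3})$. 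Summing over $k$ and swapping the order of summation, the key identity above collapses the inner sum. This gives
\[ \sum_{k=0}^{N}A_{1}(p^{k})=p^{-N(n+1)}\,\tilde{M}_{p}(N),\qquad \sum_{k=0}^{N}A_{p}(p^{k})=p^{-N(n+1)}\,\Card\{\tilde{\bb}\in(\ZZ/p^{N}\ZZ)^{n+2}:p^{N}\mid F(p\tilde{\bb}_{1},\tilde{\bb}_{2},\tilde{\bb}_{3})\}, \]
where $\tilde{M}_{p}(N)$ denotes the count of $(\xx,\yy,\zz)\in(\ZZ/p^{N}\ZZ)^{n+2}$ with $F(\xx,\yy,\zz)\equiv0\pmod{p^{N}}$ (without any primitivity condition).

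The main technical point, and the only one that requires real care, is the change of variables $\xx=p\tilde{\bb}_{1}$ in the expression for $\sum A_{p}(p^{k})$. As $\tilde{\bb}_{1}$ ranges over $(\ZZ/p^{N}\ZZ)^{r+1}$, the image $p\tilde{\bb}_{1}\bmod p^{N}$ ranges over the subset $\{\xx\in(\ZZ/p^{N}\ZZ)^{r+1}:\xx\equiv\0\pmod{p}\}$, each value being attained exactly $p^{r+1}$ times (the congruence $p\tilde{\bb}_{1}\equiv p\tilde{\bb}_{1}'\pmod{p^{N}}$ is equivalent to $\tilde{\bb}_{1}\equiv\tilde{\bb}_{1}'\pmod{p^{N-1}}$). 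Consequently the factor $p^{r+1}$ cancels the division by $p^{r+1}$, and we obtain
\[ \sum_{k=0}^{N}\frac{A_{p}(p^{k})}{p^{r+1}}=p^{-N(n+1)}\,\Card\bigl\{(\xx,\yy,\zz)\in(\ZZ/p^{N}\ZZ)^{n+2}:F(\xx,\yy,\zz)\equiv0\;(p^{N}),\;\xx\equiv\0\;(p)\bigr\}. \]
Subtracting from the expression for $\sum A_{1}(p^{k})$ yields exactly the count with the complementary condition $\xx\not\equiv\0\pmod{p}$, which is $M_{p}(N)/p^{N(n+1)}$. The second statement follows by taking $N\to\infty$, using that the series defining $\sigma_{p}'$ is absolutely convergent (a consequence of Lemma~\ref{seriesing}).
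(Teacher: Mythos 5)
Your proof is correct and follows essentially the same route as the paper's: your ``key identity'' is exactly the paper's decomposition of $\sum_{t=0}^{p^N-1}$ into reduced fractions $a/p^k$ combined with orthogonality, and you use the same lifting from $(\ZZ/p^k\ZZ)^{n+2}$ to $(\ZZ/p^N\ZZ)^{n+2}$ and the same $p^{r+1}$-to-one change of variables $\xx=p\tilde{\bb}_1$. The only difference is the direction of the computation (you start from the $A_d(p^k)$ and arrive at $M_p(N)$, whereas the paper starts from $M_p(N)$), which is immaterial.
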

\begin{proof} On pose $ q=p^{N} $. Il est imm\'ediat que \begin{multline*}q^{-1}\sum_{t=0}^{q-1}\sum_{(\bb_{1},\bb_{2},\bb_{3})\in (\ZZ/q\ZZ)^{n+2}}e\left(\frac{t}{q}F(\bb_{1},\bb_{2},\bb_{3})\right) \\ = \Card\left\{ (\xx,\yy,\zz)\in (\ZZ/q\ZZ)^{n+2} \; | \;  \; F(\xx,\yy,\zz)\equiv 0 \;(q)\right\},\end{multline*} et de m\^eme \begin{multline*}q^{-1}\sum_{t=0}^{q-1}\sum_{(\bb_{1},\bb_{2},\bb_{3})\in (\ZZ/q\ZZ)^{n+2}}e\left(\frac{a}{q}F(p\bb_{1},\bb_{2},\bb_{3})\right) \\ = p^{r+1}\Card\left\{ (\xx,\yy,\zz)\in (\ZZ/q\ZZ)^{n+2} \; | \; \xx\equiv \0 \; (p) \; F(\xx,\yy,\zz)\equiv 0 \;(q)\right\}\end{multline*}
On a donc \begin{multline*}
M_{p}(N)  =q^{-1}\sum_{t=0}^{q-1}\sum_{(\bb_{1},\bb_{2},\bb_{3})\in (\ZZ/q\ZZ)^{n+2}} \\ \left(e\left(\frac{t}{q}F(\bb_{1},\bb_{2},\bb_{3})\right)-\frac{1}{p^{r+1}}e\left(\frac{t}{q}F(p\bb_{1},\bb_{2},\bb_{3})\right)\right) \\  =q^{-1}\sum_{q_{1}|q}\sum_{\substack{0\leqslant a<q_{1}\\ \PGCD(a,q_{1})=1}}\sum_{(\bb_{1},\bb_{2},\bb_{3})\in (\ZZ/q\ZZ)^{n+2}} \\ \left(e\left(\frac{a}{q_{1}}F(\bb_{1},\bb_{2},\bb_{3})\right)  -\frac{1}{p^{r+1}}e\left(\frac{a}{q_{1}}F(p\bb_{1},\bb_{2},\bb_{3})\right)\right) \\  =p^{-N}\sum_{k=1}^{N}\frac{p^{N(n+2)}}{p^{k(n+2)}}\sum_{\substack{a\in (\ZZ/p^{k}\ZZ)^{\ast}}}\sum_{(\bb_{1},\bb_{2},\bb_{3})\in (\ZZ/p^{k}\ZZ)^{n+2}} \\ \left(e\left(\frac{a}{p^{k}}F(\bb_{1},\bb_{2},\bb_{3})\right)-\frac{1}{p^{r+1}}e\left(\frac{a}{p^{k}}F(p\bb_{1},\bb_{2},\bb_{3})\right)\right) \\  = p^{N(n+1)}\sum_{k=0}^{N}\left(A_{1}(p^{k})-\frac{A_{p}(p^{k})}{p^{r+1}}\right)
\end{multline*}
\end{proof}
Nous allons \`a pr\'esent interpr\'eter les constantes $ \sigma_{p}' $ en terme de mesures de Tamagawa $ \tau_{p} $ d\'efinies par \[ \tau_{p}=\left(1-\frac{1}{p}\right)^{2}\omega_{p}. \] Pour cela nous commen\c{c}ons par \'etablir deux lemmes interm\'ediaires : 

\begin{lemma}\label{lemmeinter1}
Pour tout $ N\in \NN^{\ast} $, on note \begin{multline*} W_{p}^{\ast}(N)=\{ (\xx,\yy,\zz)\in (\ZZ_{p}/p^{N})^{n+2} \; | \; \xx\not\equiv \0\; (p), \; \\ (\yy,\zz)\not\equiv \0\; (p), F(\xx,\yy,\zz)\equiv 0 \; (p^{r}) \} \end{multline*} ainsi que $ M_{p}^{\ast}(N)=\Card W^{\ast}(N) $. Il existe alors un entier $ N_{0} $ tel que pour tout $ N\geqslant N_{0} $ : \[ \int_{\substack{(\xx,\yy,\zz)\in \ZZ_{p}^{n+2} \\ \xx\not\equiv \0\; (p), \; (\yy,\zz)\not\equiv \0\; (p) \\ F(\xx,\yy,\zz)=0 }}\omega_{L,p}=\frac{M_{p}^{\ast}(N)}{p^{N(n+1)}}. \]
\end{lemma}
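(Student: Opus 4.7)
The plan is to exploit the smoothness of $Y_0$ via Hensel's lemma to make the $p$-adic integral agree stably with the normalized point-count for $N$ sufficiently large. The statement should be read with $p^{N}$ in place of $p^r$ in the congruence defining $W_p^\ast(N)$.

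First, I would establish a uniform Hensel bound. Since $Y$ is smooth, the gradient $\nabla F$ does not vanish simultaneously on the set
\[
K = \{(\xx,\yy,\zz)\in\ZZ_p^{n+2} \; | \; \xx\not\equiv\0\,(p),\; (\yy,\zz)\not\equiv\0\,(p),\; F(\xx,\yy,\zz)=0\},
\]
which is closed in a compact subset of $\ZZ_p^{n+2}$. The function $(\xx,\yy,\zz)\mapsto \min_j v_p\!\left(\partial F/\partial t_j(\xx,\yy,\zz)\right)$ is therefore upper semi-continuous and bounded on $K$; let $c_0$ denote its maximum and set $N_0 = 2c_0+1$.

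Next, I would cover $K$ by finitely many closed-open pieces $K_0,\dots,K_{n+1}$, where $K_j$ consists of points at which $\partial F/\partial t_j$ has the smallest $p$-adic valuation among the partials (with a fixed tie-breaking rule), so that on $K_j$ we have $v_p(\partial F/\partial t_j)\leq c_0$. On each $K_j$, writing $(\xx,\yy,\zz)=(\hat t_j,t_j)$, the $p$-adic implicit function theorem furnishes, locally around each point, an analytic function $\phi$ of $\hat t_j$ with $F(\hat t_j,\phi(\hat t_j))=0$, and the Leray form pulls back via the projection $\pi_j\colon(\hat t_j,t_j)\mapsto\hat t_j$ to
\[
\omega_{L,p}\bigl|_{K_j} \;=\; \bigl|\partial F/\partial t_j\bigr|_p^{-1}\,d\hat t_j \;=\; p^{v_p(\partial F/\partial t_j)}\,d\hat t_j.
\]

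For the counting side, I would fix $N\geq N_0$ and slice the piece $K_j$ by the valuation $c=v_p(\partial F/\partial t_j)$, which takes only finitely many values $\leq c_0$. On a stratum with fixed $c$: for every residue class of $\hat t_j$ mod $p^N$ that projects from an actual element of $\pi_j(K_j)$, strong Hensel's lemma (applicable because $N>2c$) gives a unique $p$-adic solution $t_j$, and the set of $t_j\bmod p^N$ satisfying $F\equiv 0\pmod{p^N}$ is then a disc of $p$-adic radius $p^{-(N-c)}$, containing exactly $p^c$ residues. Consequently, on this stratum,
\[
\frac{\#\bigl\{\text{lifts to } W_p^\ast(N)\bigr\}}{p^{N(n+1)}}
\;=\; p^{c}\cdot \frac{\#\bigl\{\hat t_j\bmod p^N : \exists\,t_j\in\ZZ_p,\;F(\hat t_j,t_j)=0\bigr\}}{p^{N(n+1)}}
\;=\; p^{c}\cdot \mu(\pi_j(K_j^{(c)})),
\]
which equals the contribution $\int_{K_j^{(c)}}\omega_{L,p}$ computed via the local formula above. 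Summing over the strata, over the pieces $K_j$, and accounting (by inclusion-exclusion on the lex order defining the partition) for the partition of $K$, yields the claimed equality.

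The main technical obstacle is ensuring that the stabilization holds uniformly on $K$ with a single $N_0$: this requires the bound $c_0$ to be finite, which is where the compactness of $K$ and the smoothness of $Y_0$ (controlling $\nabla F$ away from zero on $K$) are essential. A subsidiary subtlety is handling points where several partials share the minimal valuation, which is resolved by the tie-breaking rule partitioning $K$ cleanly before applying Hensel.
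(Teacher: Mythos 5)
Your proof follows essentially the same route as the paper's: smoothness of $Y$ plus compactness gives a uniform bound on the valuation of the gradient along the solution set, and Hensel's lemma then identifies, fibre by fibre over the complementary coordinates $\hat t_j$, the Leray measure $p^{c}\,d\hat t_j$ with the normalized count of residues $t_j \bmod p^N$ solving $F\equiv 0 \pmod{p^N}$ (you are also right that $p^{r}$ in the statement should read $p^{N}$). The bookkeeping differs slightly --- the paper compares the integral over each class of $W_p^\ast(N)$ with the count of its lifts to level $N+c$, whereas you work directly at level $N$ using the disc structure of the fibre --- but the substance is the same.

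There is, however, one concrete gap. Your $N_0=2c_0+1$ is derived from a bound $c_0$ on $\min_j v_p(\partial F/\partial t_j)$ over the set $K$ of \emph{exact} solutions only, and your count then only accounts for classes of $W_p^\ast(N)$ lying over $\hat t_j$-classes that carry an exact solution. But $M_p^\ast(N)$ counts \emph{all} classes $(\xx,\yy,\zz)\bmod p^N$ with $F\equiv 0\pmod{p^N}$, and nothing in your argument excludes classes at which every partial derivative has valuation $>c_0$: such approximate solutions, far from $K$ in the gradient sense, are not reached by Hensel's lemma with your choice of $N_0$ and would be missed by your count. To close this you need a second compactness argument, on the sets $\{|F|_p\le p^{-N}\}\cap\{\min_j v_p(\partial F/\partial t_j)>c_0\}$ intersected with $\{\xx\not\equiv\0\ (p)\}\cap\{(\yy,\zz)\not\equiv\0\ (p)\}$: their intersection over all $N$ is contained in the singular locus of $F=0$ on the relevant region, which is empty by smoothness of $Y$, so they are empty for $N$ large, and $N_0$ must be enlarged accordingly. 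The paper absorbs exactly this point into the (asserted but not detailed) claim that for $N$ large the valuation $c$ is finite and constant on every class of $W_p^\ast(N)$. Once that is supplied, the remaining identification --- $p^c$ residues of $t_j$ over each $\hat t_j$-class, matching $\int p^c\,d\hat t_j$ --- is correct, provided you also track the multiplicity of the projection $\pi_j$ (several sheets of the zero locus may lie over the same $\hat t_j$), which your notation $\mu(\pi_j(K_j^{(c)}))$ elides but which contributes equally to both sides.
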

\begin{proof}
Soit $ (\xx,\yy,\zz)\in \ZZ_{p}^{n+2} $. Dans tout ce qui suit, on note \[ [\xx,\yy,\zz]_{N}=(\xx,\yy,\zz)\mod p^{N} .\] On \'ecrit alors : \begin{align}
\int_{\substack{\{(\xx,\yy,\zz)\in \ZZ_{p}^{n+2}, \; \xx \not\equiv \0\;(p) \\ (\yy,\zz)\not\equiv \0\;(p),\; F(\xx,\yy,\zz)=0\}}}\omega_{L,p} & =\sum_{\substack{(\xx,\yy,\zz)\mod p^{N} \\ \xx\not\equiv \0\; (p), \; (\yy,\zz)\not\equiv \0\; (p) \\ F(\xx,\yy,\zz)\equiv 0 \; (p^{N})}}\int_{\substack{\{(\uu,\vv,\ww)\in \ZZ_{p}^{n+2}, \\ [\uu,\vv,\ww]_{N}=(\xx,\yy,\zz)   \\  F(\uu,\vv,\ww)=0\}}}\omega_{L,p}(\uu,\vv,\ww) \\ & =\sum_{(\xx,\yy,\zz)\in W_{p}^{\ast}(N)}\int_{\substack{\{(\uu,\vv,\ww)\in \ZZ_{p}^{n+2}, \\ [\uu,\vv,\ww]_{N}=(\xx,\yy,\zz)   \\  F(\uu,\vv,\ww)=0\}}}\omega_{L,p}(\uu,\vv,\ww).
\end{align}
Puisque $ Y $ est lisse, il existe un $ N>0 $ assez grand tel que, pour tout $ (\xx,\yy,\zz)\in (\ZZ_{p}/p^{N})^{n+2} $ tel que $ \xx\not\equiv \0\;  (p) $, $(\yy,\zz)\not\equiv \0\; (p)$, et $ F(\xx,\yy,\zz)=0 $  : \[ c=\inf_{i,j,k}\lbrace v_{p}\left(\frac{\partial F}{\partial x_{i}}(\xx,\yy,\zz)\right), \; v_{p}\left(\frac{\partial F}{\partial y_{j}}(\xx,\yy,\zz)\right), \; v_{p}\left(\frac{\partial F}{\partial z_{k}}(\xx,\yy,\zz)\right)\rbrace \] soit non nul et constant sur la classe d\'efinie par $ (\xx,\yy,\zz) $. On peut supposer que $ N>c $ et que $ c=v_{p}\left(\frac{\partial F}{\partial z_{n+1}}(\xx,\yy,\zz)\right) $. On consid\`ere $ (\uu,\vv,\ww)\in \ZZ_{p}^{n+2} $ tel que $ [\uu,\vv,\ww]_{N}=(\xx,\yy,\zz) $, et $ (\uu',\vv',\ww')\in \ZZ_{p}^{n+2} $ quelconque. On a alors \begin{multline*} 
F(\uu+\uu',\vv+\vv',\ww+\ww')=F(\uu,\vv,\ww)+\sum_{i=0}^{r}\frac{\partial F}{\partial x_{i}}(\uu,\vv,\ww)u_{i}'\\+\sum_{j=r+1}^{m}\frac{\partial F}{\partial y_{j}}(\uu,\vv,\ww)v_{j}' +\sum_{k=m+1}^{n+1}\frac{\partial F}{\partial z_{k}}(\uu,\vv,\ww)w_{k}'+G(\uu,\vv,\ww,\uu',\vv',\ww'), 
\end{multline*}

o\`u $ G(\uu,\vv,\ww,\uu',\vv',\ww') $ est une somme de termes contenant au moins deux facteurs $ u_{i}' $, $ v_{j}' $ ou $ w_{k}' $. Ainsi, on a donc, si $ (\uu',\vv',\ww')\in (p^{N}\ZZ_{p})^{n+2} $ : \[ F(\uu+\uu',\vv+\vv',\ww+\ww')\equiv F(\uu,\vv,\ww) \; (p^{N+c}). \] Par cons\'equent, l'image de $ F(\uu,\vv,\ww) $ dans $ \ZZ_{p}/p^{N+c} $ d\'epend uniquement de $ (\uu,\vv,\ww)\mod p^{N}=(\xx,\yy,\zz) $, on note alors $ F^{\ast}(\xx,\yy,\zz) $ cette image. \\

 Si $ F^{\ast}(\xx,\yy,\zz)\neq 0 $, alors l'int\'egrale \[ \int_{\substack{\{(\uu,\vv,\ww)\in \ZZ_{p}^{n+2}, \\ [\uu,\vv,\ww]_{N}=(\xx,\yy,\zz)   \\  F(\uu,\vv,\ww)=0\}}}\omega_{L,p}(\uu,\vv,\ww) \] est nulle, et l'ensemble \[ \{(\uu,\vv,\ww) \mod p^{N+c} \; | \; [\uu,\vv,\ww]_{N}=(\xx,\yy,\zz), \; F(\uu,\vv,\ww)\equiv 0\; (p^{N+c}) \} \] est vide. \\

Si $ F^{\ast}(\xx,\yy,\zz)= 0 $ alors, par le lemme de Hensel, les applications coordonn\'ees $ X_{0},...,X_{r},Y_{r+1},...,Y_{m},Z_{m+1},...,Z_{n} $ d\'efinissent un isomorphisme de \[ \{(\uu,\vv,\ww)\in \ZZ_{p}^{n+2} \; | \; [\uu,\vv,\ww]_{N}=(\xx,\yy,\zz), \; F(\uu,\vv,\ww)=0 \} \] sur \[(\xx,\yy,\hat{\zz})+(p^{N}\ZZ_{p})^{n+1}, \] o\`u $ \hat{\zz}=(z_{m+1},...,z_{n}) $. Par cons\'equent, on a : \begin{multline*}
\int_{\substack{\{(\uu,\vv,\ww)\in \ZZ_{p}^{n+2}, \\ [\uu,\vv,\ww]_{N}=(\xx,\yy,\zz)   \\  F(\uu,\vv,\ww)=0\}}}\omega_{L,p}(\uu,\vv,\ww)  \\ =\int_{(\xx,\yy,\hat{\zz})+(p^{N}\ZZ_{p})^{n+1}}p^{c}du_{0,p}...du_{r,p}dv_{r+1,p}...dv_{m,p}dw_{m+1,p}...dw_{n,p} =p^{c-N(n+1)}.
\end{multline*} 
On a d'autre part, puisque $ F(\uu,\vv,\ww) \mod p^{N+c} $ ne d\'epend que de $ (\xx,\yy,\zz) $ : \begin{multline*}
p^{-(N+c)(n+1)}\card\{(\uu,\vv,\ww)\mod p^{N+c}\; | \; [\uu,\vv,\ww]_{N}=(\xx,\yy,\zz),\; \\F(\uu,\vv,\ww)\equiv 0(p^{N+c}) \} =p^{-(N+c)(n+1)}p^{(n+1)c}  = p^{c-N(n+1)}.
\end{multline*}
On a donc finalement : \begin{multline*}
\int_{\substack{\{(\xx,\yy,\zz)\in \ZZ_{p}^{n+2}, \; \xx \not\equiv \0 \; (p) \\ (\yy,\zz)\not\equiv \0\; (p), \;  F(\xx,\yy,\zz)=0\}}}\omega_{L,p}  =\sum_{\substack{(\xx,\yy,\zz)\in W_{p}^{\ast}(N) \\ F^{\ast}(\xx,\yy,\zz)=0}}p^{c-N(n+1)} \\ = \sum_{\substack{(\xx,\yy,\zz)\in W_{p}^{\ast}(N) }}p^{-(N+c)(n+1)}\card\{(\uu,\vv,\ww)\mod p^{N+c}\; | \; [\uu,\vv,\ww]_{N}=(\xx,\yy,\zz),\;\\ F(\uu,\vv,\ww)\equiv 0(p^{N+c}) \}  = \frac{M_{p}^{\ast}(N+c)}{p^{(N+c)(n+1)}}.
\end{multline*}

D'o\`u le r\'esultat.

\end{proof}

\begin{lemma}\label{lemmeinter2}
On a \[ \int_{\substack{(\xx,\yy,\zz)\in \ZZ_{p}^{n+2} \\ \xx\not\equiv \0\; (p), \; (\yy,\zz)\not\equiv \0\; (p) \\ F(\xx,\yy,\zz)=0 }}\omega_{L,p}=\left(1-\frac{1}{p^{n-r+1-d_{2}}}\right)\int_{\substack{(\xx,\yy,\zz)\in \ZZ_{p}^{n+2} \\ \xx\not\equiv \0\; (p), \; F(\xx,\yy,\zz)=0 }}\omega_{L,p}, \] et d'autre part \[ \lim_{N\ra \infty }\frac{M_{p}^{\ast}(N)}{p^{N(n+1)}}=\left(1-\frac{1}{p^{n-r+1-d_{2}}}\right)\sigma_{p}'. \]
\end{lemma}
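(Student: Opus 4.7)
Mon plan est d'exploiter l'homogénéité de $F$ de degré $d_2$ en $(\yy,\zz)$, qui sera la clé des deux égalités. Pour la première, je décomposerais l'intégrale de droite selon que $(\yy,\zz)\not\equiv\0\;(p)$ ou $(\yy,\zz)\equiv\0\;(p)$, et l'enjeu sera d'évaluer la contribution de la seconde partie. Pour cela, j'effectuerais le changement de variables $(\yy,\zz)=p(\yy',\zz')$ avec $(\yy',\zz')\in\ZZ_p^{n-r+1}$ : l'identité $F(\xx,p\yy',p\zz')=p^{d_2}F(\xx,\yy',\zz')$ montrera que ce changement de variables bijecte $\{\xx\not\equiv\0,\,(\yy,\zz)\equiv\0\;(p),\,F=0\}$ sur $\{\xx\not\equiv\0,\,F(\xx,\yy',\zz')=0\}$. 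Il restera alors à comprendre comment la mesure $\omega_{L,p}$ se transforme. En partant de la forme locale $\omega_{L,p}=dx_0\cdots dz_n/|\partial F/\partial z_{n+1}|_p$ (quand $\partial F/\partial z_{n+1}\neq 0$), chacune des $n-r$ différentielles $dy_j$, $dz_k$ au numérateur contribuera un facteur $|p|_p=p^{-1}$, tandis que l'identité d'homogénéité $\partial F/\partial z_{n+1}(\xx,p\yy',p\zz')=p^{d_2-1}\partial F/\partial z_{n+1}(\xx,\yy',\zz')$ produira un facteur $p^{d_2-1}$ au dénominateur. Le facteur global sera donc $p^{-(n-r+1-d_2)}$, d'où
\[\int_{\substack{\xx\not\equiv\0,(\yy,\zz)\equiv\0\,(p)\\F=0}}\omega_{L,p}=\frac{1}{p^{n-r+1-d_2}}\int_{\substack{\xx\not\equiv\0\\F=0}}\omega_{L,p},\]
et la première assertion résultera d'un simple réarrangement.

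Pour la seconde égalité, je procéderais par une approche combinatoire analogue au niveau des comptes modulo $p^N$. D'après le lemme précédent, $\sigma_p'=\lim_{N\to\infty}M_p(N)/p^{N(n+1)}$. Je m'intéresserais à la différence $M_p(N)-M_p^*(N)$, qui compte les $(\xx,\yy,\zz)\;(p^N)$ avec $\xx\not\equiv\0\;(p)$, $(\yy,\zz)\equiv\0\;(p)$ et $F\equiv 0\;(p^N)$. Via la bijection $(\yy,\zz)\;(p^N)\leftrightarrow(\yy',\zz')\;(p^{N-1})$ induite par $(\yy,\zz)=p(\yy',\zz')$, la condition $F\equiv 0\;(p^N)$ se ramènera à $F(\xx,\yy',\zz')\equiv 0\;(p^{N-d_2})$. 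En dénombrant les relèvements (facteur $p^{d_2(r+1)}$ pour $\xx$ de $\;(p^{N-d_2})$ à $\;(p^N)$ et $p^{(d_2-1)(n-r+1)}$ pour $(\yy',\zz')$ de $\;(p^{N-d_2})$ à $\;(p^{N-1})$), on aura
\[M_p(N)-M_p^*(N)=p^{d_2(r+1)+(d_2-1)(n-r+1)}M_p(N-d_2).\]
Un calcul direct montre que $d_2(r+1)+(d_2-1)(n-r+1)=d_2(n+1)-(n-r+1-d_2)$, ce qui entraînera
\[\frac{M_p(N)-M_p^*(N)}{p^{N(n+1)}}=\frac{1}{p^{n-r+1-d_2}}\cdot\frac{M_p(N-d_2)}{p^{(N-d_2)(n+1)}},\]
et la conclusion résultera du passage à la limite $N\to\infty$.

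Le point le plus délicat sera la vérification rigoureuse du facteur $p^{-(n-r+1-d_2)}$ dans la transformation de la mesure de Leray $p$-adique, où il faudra soigneusement séparer la contribution du jacobien de la forme volumique ambiante (facteur $p^{-(n-r)}$) et celle provenant de $|\partial F/\partial z_{n+1}|_p$ (facteur $p^{d_2-1}$). On notera par ailleurs que les deux égalités sont cohérentes : combinées au lemme~\ref{lemmeinter1}, elles fournissent une vérification indirecte en identifiant $\int_{\xx\not\equiv\0,(\yy,\zz)\not\equiv\0,F=0}\omega_{L,p}$ à $\lim_N M_p^*(N)/p^{N(n+1)}$, offrant ainsi une garantie supplémentaire de l'exactitude du calcul.
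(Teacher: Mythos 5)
Votre proposition est correcte et suit essentiellement la même route que l'article : la première identité est exactement la relation d'échelle $\omega_{L,p}(\xx,p\yy,p\zz)=p^{-(n-r+1-d_{2})}\omega_{L,p}(\xx,\yy,\zz)$ invoquée par l'article, et votre décompte des exposants (facteur $p^{-(n-r)}$ venant des différentielles, facteur $p^{d_{2}-1}$ venant de $|\partial F/\partial z_{n+1}|_{p}$) est juste. Pour la seconde identité, l'article stratifie $M_{p}(N)$ selon la valuation $p$-adique de $(\yy,\zz)$ et somme une série géométrique, tandis que vous utilisez la récurrence en un pas $M_{p}(N)-M_{p}^{\ast}(N)=p^{d_{2}(n+1)-(n-r+1-d_{2})}M_{p}(N-d_{2})$ puis passez à la limite ; c'est la même idée, empaquetée un peu plus économiquement, et le passage à la limite est licite puisque les deux limites existent (celle de $M_{p}^{\ast}(N)/p^{N(n+1)}$ par le lemme précédent).
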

\begin{proof}
La premi\`ere partie du lemme r\'esulte du fait que : \[  \omega_{L,p}(\xx,p\yy,p\zz)=p^{-(n-r+1-d_{2})}\omega_{L,p}(\xx,\yy,\zz). \] Pour la deuxi\`eme partie, on consid\`ere un entier $ j $ tel que $ N\geqslant jd_{2}+1 $ et on consid\`ere l'ensemble \begin{multline*} \tilde{N}(j)=\Card\{\xx\in (\ZZ_{p}/p^{N}\ZZ_{p})^{r+1}, \;(\yy,\zz) \in (p^{j}\ZZ_{p}/p^{N}\ZZ_{p})^{n-r+1} \; | \; \xx\not\equiv \0 \; (p), \\  \; (\yy,\zz)\not\equiv \0\; (p^{j+1}), F(\xx,\yy,\zz)\equiv 0 \; (p^{N}) \} \end{multline*}
On remarque que, pour tout $ N>jd_{2} $ \begin{multline*} \tilde{N}(j)=\Card\{\xx\in (\ZZ/p^{N}\ZZ)^{r+1}, \;(\yy,\zz) \in (\ZZ/p^{N-j}\ZZ)^{n-r+1} \; | \; \xx\not\equiv \0 \; (p), \\  \; (\yy,\zz)\not\equiv \0\; (p), F(\xx,\yy,\zz)\equiv 0 \; (p^{N-jd_{2}}) \} \\ = p^{(r+1)jd_{2}+(n-r+1)(jd_{2}-j)}M^{\ast}(N-jd_{2}) \end{multline*}
Soit $ N_{0} $ comme dans le lemme pr\'ec\'edent, et soit $ j_{0}=\lceil (N-N_{0})/d_{2}\rceil $. On a alors \begin{multline*}
M_{p}(N)=\sum_{0\leqslant jd_{2} \leqslant N-N_{0}}\tilde{N}(j) \\ + O\left(\Card\{(\xx,\yy,\zz)\in (\ZZ/p^{N}\ZZ)^{n+2} \; | \; (\yy,\zz)\equiv \0 \; (p^{j_{0}}) \}\right) \\ =\sum_{0\leqslant jd_{2}\leqslant N-N_{0} }p^{(r+1)jd_{2}+(n-r+1)(jd_{2}-j)}M_{p}^{\ast}(N-jd_{2})+O\left(p^{N(n+2)-j_{0}(n-r+1)}\right)
\end{multline*}
Or, d'apr\`es le lemme pr\'ec\'edent : \[ \frac{M_{p}^{\ast}(N-jd_{2})}{p^{(N-jd_{2})(n+1)}}=\frac{M_{p}^{\ast}(N)}{p^{N(n+1)}}, \] on obtient donc : \begin{align*} M_{p}(N) & =\sum_{0\leqslant j\leqslant N-N_{0} }p^{-j(n-r+1)+jd_{2}}M_{p}^{\ast}(N)+O\left(p^{N(n+2)-j_{0}(n-r+1)}\right) \\ & =M_{p}^{\ast}(N)\frac{1-p^{-(N-N_{0}+1)(n-r+1-d_{2})}}{1-p^{-(n-r+1-d_{2})}}+O\left(p^{N(n+2)-j_{0}(n-r+1)}\right), \end{align*} et puisque  $ \sigma_{p}'=\lim_{N\ra\infty}\frac{M_{p}(N)}{p^{N(n+1)}} $, on obtient le r\'esultat.

\end{proof}
On d\'eduit des lemmes \ref{lemmeinter1} et \ref{lemmeinter2} que \begin{equation}\label{sigma'} \sigma_{p}'=\int_{\substack{(\xx,\yy,\zz)\in \ZZ_{p}^{n+2} \\ \xx\not\equiv \0\; (p), \; F(\xx,\yy,\zz)=0 }}\omega_{L,p}.  \end{equation}

On conclut alors en utilisant le lemme ci-dessous : 

\begin{lemma}
On pose : \[ a(p)=\left(1-\frac{1}{p}\right)^{2}\left(1-\frac{1}{p^{n-r+1-d_{2}}}\right)^{-1}. \]
On a alors \[ \int_{\substack{(\xx,\yy,\zz)\in \ZZ_{p}^{n+2} \\ \xx\not\equiv \0\; (p), \; F(\xx,\yy,\zz)=0 }}\omega_{L,p}=\int_{\substack{Y_{0}(\QQ_{p})\cap \{|\xx|_{p}= 1 \\h_{p}^{2}(\xx,\yy,\zz)\leqslant 1 \}}}\omega_{L,p}(\xx,\yy,\zz)=a(p)\omega_{p}(Y(\QQ_{p})). \]
\end{lemma}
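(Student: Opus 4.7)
La preuve suivra de pr\`es l'argument archim\'edien du lemme \ref{Tamagawa infini}, en rempla\c{c}ant les intervalles r\'eels par des disques $p$-adiques et des classes modulo $\ZZ_p^\ast$. La premi\`ere \'egalit\'e r\'esulte d'une simple traduction des conditions : un point $(\xx,\yy,\zz)\in\ZZ_p^{n+2}$ avec $\xx\not\equiv\0\pmod p$ v\'erifie $|\xx|_p=1$, et comme $(\yy,\zz)\in\ZZ_p^{n-r+1}$ entra\^ine $\max(|\yy|_p,|\zz|_p)\leqslant 1$, on a automatiquement $h_p^{(2)}(\xx,\yy,\zz)=\max(|\yy|_p/|\xx|_p,|\zz|_p)^{n-r+1-d_2}\leqslant 1$ ; r\'eciproquement, tout $\QQ_p$-point de $Y_0$ satisfaisant $|\xx|_p=1$ et $h_p^{(2)}\leqslant 1$ appartient \`a $\ZZ_p^{n+2}$ avec $\xx\not\equiv\0\pmod p$, le lieu o\`u $(\yy,\zz)=\0$ \'etant de mesure nulle.

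Pour la deuxi\`eme \'egalit\'e, je me placerai sur un ouvert $V\subset Y(\QQ_p)$ sur lequel $x_0\neq 0$, $z_{m+1}\neq 0$ et $\partial F/\partial z_{n+1}\neq 0$ (les autres cartes se traitant par sym\'etrie), et j'appliquerai le changement de variables $x_i=x_0u_i$, $y_j=z_{m+1}x_0v_j$, $z_k=z_{m+1}w_k$ de la preuve du lemme \ref{Tamagawa infini}. Par bihomog\'en\'eit\'e, $F(\xx,\yy,\zz)=x_0^{d_1}z_{m+1}^{d_2}F(\uu,\vv,\ww)$, ce qui donne
\[ \left|\frac{\partial F}{\partial z_{n+1}}(\xx,\yy,\zz)\right|_p=|x_0|_p^{d_1}|z_{m+1}|_p^{d_2-1}\left|\frac{\partial F}{\partial w_{n+1}}(\uu,\vv,\ww)\right|_p, \]
et un calcul jacobien \'el\'ementaire sur l'hypersurface $F=0$ conduit \`a
\[ \omega_{L,p}=\frac{|x_0|_p^{m-d_1}|z_{m+1}|_p^{n-r-d_2}}{\left|\partial F/\partial w_{n+1}(\uu,\vv,\ww)\right|_p}\,dx_0\,dz_{m+1}\,du_1\cdots du_r\,dv_{r+1}\cdots dv_m\,dw_{m+2}\cdots dw_n. \]

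Les conditions $|\xx|_p=1$ et $h_p^{(2)}(\xx,\yy,\zz)\leqslant 1$ se traduisent en $|x_0|_p=|\uu|_p^{-1}$ et $|z_{m+1}|_p^{n-r+1-d_2}h_p^{(2)}(\uu,\vv,\ww)\leqslant 1$, de sorte que l'int\'egrale en $(x_0,z_{m+1})$ se d\'ecouple. Le premier facteur, sur la classe $|\uu|_p^{-1}\ZZ_p^\ast$ de volume additif $|\uu|_p^{-1}(1-1/p)$, vaut $(1-1/p)/h_p^{(1)}(\uu)$. Le second, apr\`es d\'ecomposition du disque $p$-adique en r\'eunion disjointe $\bigsqcup_{k\geqslant L}p^k\ZZ_p^\ast$ pour un $L$ appropri\'e et sommation de la s\'erie g\'eom\'etrique r\'esultante (qui converge gr\^ace \`a l'hypoth\`ese $n-r+1-d_2>0$), vaut $(1-1/p)(1-1/p^{n-r+1-d_2})^{-1}h_p^{(2)}(\uu,\vv,\ww)^{-1}$. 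Leur produit \'etant exactement $a(p)/h_p(\uu,\vv,\ww)$, la comparaison avec l'expression locale $\rho_\ast\omega_p=h_p(\uu,\vv,\ww)^{-1}|\partial F/\partial w_{n+1}|_p^{-1}du_1\cdots dw_n$ donne l'identit\'e locale $\int_{\pi^{-1}(V)\cap\{|\xx|_p=1,\,h_p^{(2)}\leqslant 1\}}\omega_{L,p}=a(p)\,\omega_p(V)$.

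La principale difficult\'e ne sera pas ce calcul local mais l'organisation globale : il faudra recouvrir $Y(\QQ_p)$ par un nombre fini de cartes sur lesquelles certaines coordonn\'ees $(x_i,z_k,\partial F/\partial t_j)$ ne s'annulent pas, v\'erifier la compatibilit\'e des diff\'erents calculs locaux (qui reposent sur le r\^ole sym\'etrique jou\'e par les divers couples de diviseurs g\'en\'erateurs) et s'assurer que les lieux de bord entre cartes sont de $\omega_p$-mesure nulle. La sommation des identit\'es locales donnera alors l'\'egalit\'e globale cherch\'ee, ce qui ach\`evera la preuve.
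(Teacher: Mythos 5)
Votre preuve est correcte et suit pour l'essentiel la m\^eme route que celle du texte : on se place sur une carte o\`u $x_{0}z_{m+1}\neq 0$ et $\partial F/\partial z_{n+1}\neq 0$, on effectue le changement de variables $x_{i}=x_{0}u_{i}$, $y_{j}=z_{m+1}x_{0}v_{j}$, $z_{k}=z_{m+1}w_{k}$, et on int\`egre le long des fibres du torseur pour faire appara\^itre $a(p)/h_{p}(\uu,\vv,\ww)$; vos exposants $|x_{0}|_{p}^{m-d_{1}}|z_{m+1}|_{p}^{n-r-d_{2}}$ et la relation entre $\partial F/\partial z_{n+1}$ et $\partial F/\partial w_{n+1}$ sont exacts. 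La seule diff\'erence est que vous explicitez par une s\'erie g\'eom\'etrique (et par l'int\'egration directe sur $|x_{0}|_{p}=|\uu|_{p}^{-1}$) ce que le texte obtient en invoquant deux fois le lemme 5.4.5 de Peyre puis l'homog\'en\'eit\'e de la forme de Leray en $\xx$, ce qui revient au m\^eme.
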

\begin{proof}
Il suffit de montrer que pour tout ouvert $ V $ de $ \AA_{\QQ_{p}}^{n-1}\subset X(\QQ_{p}) $ tel que pour tout $ P=\pi(\xx,\yy,\zz)\in V $ on a (par exemple) $ x_{0}z_{m+1}\neq 0 $ et $ \frac{\partial F}{\partial z_{n+1}}(\xx,\yy,\zz)\neq 0 $ (les autres cas se traitant de fa\c{c}on analogue) l'\'egalit\'e \[ \int_{\substack{\pi^{-1}(V)\cap\pi^{-1}(Y) \\ \cap \{|\xx|_{p}= 1, \;  h_{p}^{2}(\xx,\yy,\zz)\leqslant 1 \}}}\omega_{L,p}(\xx,\yy,\zz)=a(p)\omega_{p}(V\cap Y) \] est v\'erifi\'ee. Remarquons dans un premier temps que, pour un tel ouvert $ V $, on a\begin{align*} \left(1-\frac{1}{p}\right)\omega_{p}(V\cap Y)= \left(1-\frac{1}{p}\right)\int_{V\cap Y}\frac{du_{1,p}...du_{r,p}dv_{r+1,p}...dv_{m,p}dw_{m+2,p}...dw_{n,p}}{\left|\frac{\partial F}{\partial z_{n+1}}(\uu,\vv,\ww)\right|_{p}h_{p}^{1}(\uu) h_{p}^{2}(\uu,\vv,\ww) } \\ . \end{align*}
En appliquant deux fois le lemme $ 5.4.5 $ de \cite{Pe}, on obtient alors : \begin{multline*} \left(1-\frac{1}{p}\right)^{2}\left(1-\frac{1}{p^{m+1-d_{1}}}\right)^{-1}\left(1-\frac{1}{p^{n-r+1-d_{2}}}\right)^{-1}\omega_{p}(V) \\ =\int_{\substack{\pi^{-1}(V)\cap \pi^{-1}(Y) \\ \cap \{h_{p}^{(1)}(\xx)\leqslant 1, \;h_{p}^{2}(\xx,\yy,\zz)\leqslant 1 \}}}\frac{d\xx d\yy d\hat{\zz}}{\left|\frac{\partial F}{\partial z_{n+1}}(\xx,\yy,\zz)\right|_{p}} \\ =\int_{\substack{\pi^{-1}(V)\cap \pi^{-1}(Y)\\ \cap \{h_{p}^{(1)}(\xx)\leqslant  1, \; h_{p}^{2}(\xx,\yy,\zz)\leqslant 1 \}}}\omega_{L,p}(\xx,\yy,\zz). \end{multline*}
Or, \'etant donn\'e que  \[ \omega_{L,p}(p\xx,p\yy,\zz)=p^{-(m+1-d_{1})}\omega_{L,p}(\xx,\yy,\zz), \] on a \begin{multline*} \int_{\substack{(\xx,\yy,\zz)\in \pi^{-1}(V) \cap\pi^{-1}(Y) \\ \cap\{h_{p}^{(1)}(\xx)\leqslant  1, \; h_{p}^{2}(\xx,\yy,\zz)\leqslant 1  }}\omega_{L,p} \\ =\left(1-\frac{1}{p^{m+1-d_{1}}}\right)^{-1}\int_{\substack{X_{0}(\QQ_{p})\cap \pi^{-1}(Y) \\ \cap \{|\xx|_{p}= 1, \;h_{p}^{2}(\xx,\yy,\zz)\leqslant 1 \}}}\omega_{L,p}(\xx,\yy,\zz) \end{multline*}
et on obtient le r\'esultat souhait\'e.

\end{proof}

On d\'eduit de ce lemme et de la formule \eqref{sigma'} que \[ \left(1-\frac{1}{p^{n-r+1-d_{2}}}\right)\sigma_{p}'=\left(1-\frac{1}{p}\right)^{2}\omega_{p}(Y(\QQ_{p}))=\tau_{p}(Y(\QQ_{p})). \]

\subsection{Conclusion}

Rappelons que la formule asymptotique conjectur\'ee par Peyre dans \cite{Pe}, dans sa version corrig\'ee par Batyrev et Tschinkel, pour le nombre $ \mathcal{N}_{U}(B) $ de points de hauteur born\'ee  par $ B $ sur l'ouvert $ U $ de Zariski de la vari\'et\'e $ Y $ (pour la hauteur associ\'ee au fibr\'e anticanonique $ \omega_{Y}^{-1} $) est : \begin{equation}\alpha(Y)\beta(Y)\tau_{H}(Y)B\log(B)^{\rg(\Pic(Y))-1} \end{equation} 
o\`u \[ \alpha(Y)= \frac{1}{(\rg(\Pic(Y))-1)!}\int_{\Lambda_{\eff}^{1}(Y)^{\vee}}e^{-\langle \omega_{Y}^{-1},y\rangle}dy, \] \[ \Lambda_{\eff}^{1}(Y)^{\vee}=\{y\in \Pic(Y)\otimes \RR^{\vee}\; |\; \forall x\in \Lambda_{\eff}^{1}(Y), \langle x,y\rangle\geqslant 0 \} \] et \[ \beta(Y)=\card(H^{1}(\QQ,\Pic(\overline{Y}))), \] \[ \tau_{H}(Y)=\prod_{\nu\in \Val(\QQ)}\tau_{\nu}(Y(\QQ_{\nu})). \]
Dans le cas pr\'esent on a \[ \Pic(Y)=\ZZ[\tilde{D}_{0}]\oplus\ZZ[\tilde{D}_{n+1}]\simeq \ZZ^{2}, \;\; \rg(\Pic(Y))=2, \] \[ -[K_{Y}]=(m+1-d_{1})[\tilde{D}_{0}]+ (n-r+1-d_{2})[\tilde{D}_{n+1}], \] \[\Lambda_{\eff}^{1}(Y)=\RR^{+}[\tilde{D}_{0}]+\RR^{+}[\tilde{D}_{n+1}]\simeq (\RR^{+})^{2}. \] On a par cons\'equent : \begin{align*}
\alpha(Y)=\int_{[0,+\infty[^{2}}e^{-(m+1-d_{1})t_{1}-(n-r+1-d_{2})t_{2}}dt_{1}dt_{2}=\frac{1}{(m+1-d_{1})(n-r+1-d_{2})}. 
\end{align*} 
D'autre part $ \Pic(\overline{Y})\simeq \ZZ^{3} $, et le groupe de Galois $ \Gal(\overline{\QQ}/\QQ) $ agit trivialement sur $ \Pic(\overline{Y}) $, on a donc \[ \beta(Y)=1. \] Par ailleurs, d'apr\`es ce qui a \'et\'e vu dans les sections pr\'ec\'edentes, on a \[ \prod_{p\in \mathcal{P}}\tau_{p}(Y(\QQ_{p}))=\mathfrak{S}\prod_{p\in \mathcal{P}}\left(1-\frac{1}{p^{n-r+1-d_{2}}}\right) \] et \[ \tau_{\infty}(Y(\RR))=\frac{(m+1-d_{1})(n-r+1-d_{2})}{4}J. \]
Ainsi on a : \begin{equation*} \alpha(Y)\beta(Y)\tau_{H}(Y)B\log(B)^{\rg(\Pic(Y))-1} =\frac{1}{4}\mathfrak{S}J\prod_{p\in \mathcal{P}}\left(1-\frac{1}{p^{n-r+1-d_{2}}}\right)B\log(B), \end{equation*} et on retrouve bien la formule de la proposition \ref{conclusion}. 
Nous avons donc d\'emontr\'e le r\'esultat ci-dessous : 
\begin{thm}\label{thmconcl}
Pour $ d_{1},d_{2}\geqslant 2 $, $ n+2-\max\{\dim V_{1}^{\ast},\dim V_{2}^{\ast}\} >m\geqslant 2^{d_{1}+d_{2}} $ (avec $ m\leqslant 13d_{2}(d_{1}+d_{2})2^{d_{1}+d_{2}} $) et $ r\geqslant 6d_{1}-3 $, on a : \[\mathcal{N}_{U}(B)=  C_{H}(X) B\log(B)+O(B), \] o\`u $ C_{H}(X) $ est la constante conjectur\'ee par Peyre, lorsque $ B \ra \infty $. On a de plus la m\^eme formule pour $ d_{1}\geqslant 2 $, $ d_{2}=1 $, $ n+2-\max\{\dim V_{1}^{\ast},\dim V_{2}^{\ast}\} >m'\geqslant 2^{d_{1}+d_{2}} $ et $ r\geqslant 6d_{1}-3 $.
\end{thm}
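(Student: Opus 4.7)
Mon plan consiste essentiellement à combiner la proposition\;\ref{conclusion} avec l'interprétation des constantes $J$ et $\mathfrak{S}$ en termes de mesures de Tamagawa locales. La proposition\;\ref{conclusion} fournit déjà une formule asymptotique de la forme
\[ \mathcal{N}_{U}(B)=\sigma B\log(B)+O(B),\quad \sigma=\frac{1}{4}J\mathfrak{S}\prod_{p\in \mathcal{P}}\left(1-\frac{1}{p^{n-r+1-d_{2}}}\right),\]
sous les hypothèses d'arithmétique requises. Il ne reste donc qu'à vérifier que $\sigma$ coïncide avec la constante $C_{H}(X)=\alpha(Y)\beta(Y)\tau_{H}(Y)$ prédite par Peyre, sachant que $\rg(\Pic(Y))=2$ donne un unique facteur $\log(B)$.

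Je commencerais par les facteurs \og géométriques \fg. Puisque $\Pic(Y)=\ZZ[\tilde{D}_{0}]\oplus\ZZ[\tilde{D}_{n+1}]$, que le cône effectif s'identifie à $(\RR^{+})^{2}$, et que $-K_{Y}$ a pour coordonnées $(m+1-d_{1},n-r+1-d_{2})$, un calcul direct de l'intégrale sur $\Lambda_{\Eff}^{1}(Y)^{\vee}$ donne $\alpha(Y)=((m+1-d_{1})(n-r+1-d_{2}))^{-1}$. Le groupe $\Pic(\overline{Y})\simeq \ZZ^{3}$ étant muni d'une action galoisienne triviale, on a $\beta(Y)=1$.

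Vient ensuite l'identification de $J$ avec la mesure de Tamagawa archimédienne. Par un argument d'analyse de Fourier, on écrit $J$ comme intégrale de la forme de Leray, et un changement de variables explicite (via les coordonnées $u_{i}=x_{i}/x_{0}$, $v_{j}=y_{j}/(x_{0}z_{m+1})$, $w_{k}=z_{k}/z_{m+1}$) permet d'isoler un facteur $\int|x_{0}|^{m-d_{1}}|z_{m+1}|^{n-r-d_{2}}dx_{0}dz_{m+1}$ qui produit le facteur $\frac{4}{(m+1-d_{1})(n-r+1-d_{2})}$ du lemme\;\ref{Tamagawa infini}. Ce calcul est explicite et ne présente pas de difficulté majeure.

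Le point le plus délicat sera l'interprétation des facteurs $p$-adiques. En utilisant la multiplicativité de $q\mapsto A_{d}(q)$ et le fait que $A_{d}(p^{k})$ ne dépend que de $v_{p}(d)$, on ramène $\sigma$ à un produit eulérien de facteurs $\sigma_{p}'=\lim_{N\to \infty}p^{-N(n+1)}M_{p}(N)$, où $M_{p}(N)$ compte les points mod $p^{N}$ primitifs. L'étape centrale utilise la lissité de $Y$ pour appliquer un lemme de Hensel quantitatif (identifiant $\sigma_{p}'$ à $\int\omega_{L,p}$ sur le domaine $\xx\not\equiv \0\,(p)$), suivi du lemme\;5.4.5 de \cite{Pe} pour relier la mesure de Leray à la mesure de Tamagawa $\tau_{p}$, introduisant précisément les facteurs correcteurs $(1-p^{-1})^{2}(1-p^{-(n-r+1-d_{2})})^{-1}$. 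L'obstacle principal est de contrôler uniformément en $N$ la contribution des $(\yy,\zz)$ divisibles par une puissance élevée de $p$, ce qui se résout par le découpage selon $j=v_{p}(\yy,\zz)$ effectué dans le lemme\;\ref{lemmeinter2}.

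En rassemblant ces identifications, on obtient $\alpha(Y)\beta(Y)\tau_{H}(Y)=\sigma$, ce qui conclut. Les bornes $m\geqslant 2^{d_{1}+d_{2}}$ et $r\geqslant 6d_{1}-3$ proviennent respectivement du choix optimal $b=10d_{2}$, $u=10d_{1}$ dans la définition de $m$ (section\;6), et de la condition $\upsilon<(m+1-d_{1})+2$ nécessaire pour que la série en $d$ de l'inversion de Möbius converge.
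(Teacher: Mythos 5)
Votre proposition suit exactement la démarche du texte : partir de la proposition \ref{conclusion}, calculer $\alpha(Y)$ et $\beta(Y)$ à partir de $\Pic(Y)$, identifier $J$ à $\frac{4}{(m+1-d_{1})(n-r+1-d_{2})}\tau_{\infty}$ via la forme de Leray, puis ramener $\mathfrak{S}$ au produit des densités locales $\sigma_{p}'$ par multiplicativité de $A_{d}$, lemme de Hensel et le lemme~5.4.5 de Peyre, avec la même origine pour les contraintes sur $m$ et $r$. La preuve est correcte et essentiellement identique à celle du papier.
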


\end{document}